\def\@cite#1#2{{\m@th\upshape\bfseries%
[{#1\if@tempswa{\m@th\upshape\mdseries, #2}\fi}]}}
\theoremstyle{plain}
\newtheorem{theorem}{Theorem}[section]
\newtheorem{corollary}[theorem]{Corollary}
\newtheorem{proposition}[theorem]{Proposition}
\newtheorem{lemma}[theorem]{Lemma}
\newtheorem{question}[theorem]{Question}
\theoremstyle{definition}
\newtheorem{definition}[theorem]{Definition}
\newtheorem{example}[theorem]{Example}
\newtheorem{examples}[theorem]{Examples}
\newtheorem{remark}[theorem]{Remark}
\newtheorem*{acknow}{Acknowledgements}
\theoremstyle{remark}
\numberwithin{section}{chapter}
\numberwithin{equation}{chapter}
\renewcommand{\qedsymbol}{{\vrule height5pt width5pt depth1pt}}
\renewcommand{\labelenumi}{$(\textup{\roman{enumi}})$ }
\newcommand{\bC}{{\mathds{C}}}
\newcommand{\bF}{{\mathds{F}}}
\newcommand{\bD}{{\mathds{D}}}
\newcommand{\bN}{{\mathds{N}}}
\newcommand{\bT}{{\mathds{T}}}
\newcommand{\bZ}{{\mathds{Z}}}
  \newcommand{\A}{{\mathcal{A}}}
  \newcommand{\B}{{\mathcal{B}}}
  \newcommand{\F}{{\mathcal{F}}}
  \newcommand{\G}{{\mathcal{G}}}
\renewcommand{\H}{{\mathcal{H}}}
  \newcommand{\I}{{\mathcal{I}}}
  \newcommand{\J}{{\mathcal{J}}}
  \newcommand{\M}{{\mathcal{M}}}
  \newcommand{\N}{{\mathcal{N}}}
\renewcommand{\O}{{\mathcal{O}}}
\renewcommand{\P}{{\mathcal{P}}}
  \newcommand{\R}{{\mathcal{R}}}
  \newcommand{\T}{{\mathcal{T}}}
\newcommand{\ep}{\varepsilon}
\renewcommand{\phi}{\varphi}
\def\si{\sigma}
\def\al{\alpha}
\def\be{\beta}
\def\la{\lambda}
\def\om{\omega}
\def\de{\delta}
\def\ga{\gamma}
\newcommand\vpi{\phi}
\newcommand{\rC}{{\mathrm{C}}}
\newcommand{\bbZ}{\mathbb{Z}}
\newcommand{\fA}{{\mathfrak{A}}}
\newcommand{\fD}{{\mathfrak{D}}}
\newcommand{\Be}{{\mathbf{e}}}
\newcommand{\Bi}{{\mathbf{i}}}
\newcommand{\Bj}{{\mathbf{j}}}
\newcommand{\Bn}{{\mathbf{n}}}
\newcommand{\Bt}{{\mathbf{t}}}
\newcommand{\AND}{\text{ and }}
\newcommand{\foral}{\text{ for all }}
\newcommand{\qand}{\quad\text{and}\quad}
\newcommand{\qfor}{\quad\text{for}\ }
\newcommand{\qforal}{\quad\text{for all}\ }
\newcommand{\ca}{\mathrm{C}^*}
\newcommand{\cenv}{\mathrm{C}^*_{\textup{env}}}
\newcommand{\ltwo}{\ell^2}
\newcommand{\ip}[1]{\langle #1 \rangle}
\newcommand{\bip}[1]{\big\langle #1 \big\rangle}
\newcommand{\ol}{\overline}
\newcommand{\wt}{\widetilde}
\newcommand{\wh}{\widehat}
\newcommand{\ad}{\operatorname{ad}}
\newcommand{\aug}{\operatorname{aug}}
\newcommand{\Aut}{\operatorname{Aut}}
\newcommand{\alg}{\operatorname{alg}}
\newcommand{\cois}{\operatorname{co-is}}
\newcommand{\dirlim}{\varinjlim}
\newcommand{\diag}{\operatorname{diag}}
\newcommand{\Dim}{\operatorname{dim}}
\newcommand{\dist}{\operatorname{dist}}
\newcommand{\End}{\operatorname{End}}
\newcommand{\id}{{\operatorname{id}}}
\newcommand{\iso}{{\operatorname{is}}}
\newcommand{\mt}{\emptyset}
\newcommand{\nc}{\operatorname{nc}}
\newcommand{\nd}{\operatorname{nd}}
\newcommand{\reg}{{\operatorname{rg}}}
\newcommand{\Span}{\operatorname{span}}
\newcommand{\supp}{\operatorname{supp}}
\newcommand{\sumoplus}{\operatornamewithlimits{\sum\oplus}}
\newcommand{\uni}{{\operatorname{un}}}
\newcommand{\sca}[1]{\left\langle#1\right\rangle} 
\newcommand{\nor}[1]{\left\Vert #1\right\Vert} 
\newcommand{\bo}[1]{{\mathbf{#1}}} 
\newcommand{\un}[1]{{\underline{#1}}} 
\begin{document}
\frontmatter

\title[Semicrossed Products by Semigroups]{Semicrossed Products of Operator Algebras by Semigroups}

%
\author[K.R. Davidson]{Kenneth R. Davidson}
\address{Pure Mathematics Department\\University of Waterloo\\Waterloo, ON \ N2L--3G1\\Canada}
\email{krdavids@uwaterloo.ca}
\thanks{The first author was partially supported by a grant from NSERC, Canada.}
\author[A.H. Fuller]{Adam H. Fuller\\}
\address{Mathematics Department\\University of Nebraska-Lincoln\\Lincoln, NE \ 68588--0130\\USA}
\email{afuller7@math.unl.edu}

\author[E.T.A. Kakariadis]{Evgenios T.A. Kakariadis}
\address{
\textit{Previous:}
Pure Mathematics Department\\University of Waterloo\\Waterloo, ON \ N2L--3G1\\Canada
\newline
\phantom{oo} \textit{Previous:} Department of Mathematics\\Ben-Gurion University of the Negev\\Be'er Sheva \ 84105\\Israel
\newline
\phantom{oo} \textit{Current:} School of Mathematics and Statistics\\ Newcastle University\\ Newcastle upon Tyne \ NE1 7RU\\ UK}
\email{evgenios.kakariadis@ncl.ac.uk}

\thanks{The third author was partially supported by the Fields Institute for Research in the 
Mathematical Sciences at the University of Waterloo; and by the Kreitman Foundation Post-doctoral Fellow Scholarship, and the Skirball Foundation via the Center for Advanced Studies in Mathematics at Ben-Gurion University of the Negev.}

\subjclass[2010]{47A20, 47L25, 47L65, 46L07}
\keywords{ dynamical systems of operator algebras, semicrossed products, C*-envelope, C*-crossed products.}

\begin{abstract}
We examine the semicrossed products of a semigroup action by $*$-endo\-mor\-phisms on a C*-algebra, or more generally of an action on an arbitrary operator algebra by completely contractive endomorphisms.
The choice of allowable representations affects the corresponding universal algebra.
We seek quite general conditions which will allow us to show that the C*-envelope of the semicrossed product is (a full corner of) a crossed product of an auxiliary C*-algebra by a group action.

Our analysis concerns a case-by-case dilation theory on covariant pairs. 
In the process we determine the C*-envelope for various semicrossed products of (possibly nonselfadjoint) operator algebras by spanning cones and lattice-ordered abelian semigroups.

In particular, we show that the C*-envelope of the semicrossed product of C*-dynamical systems by doubly commuting representations of $\bZ^n_+$ (by generally non-injective endomorphisms) is the full corner of a C*-crossed product. 
In consequence we connect the ideal structure of C*-covers to properties of the actions. 
In particular, when the system is classical, we show that the C*-envelope is simple if and only if the action is injective and minimal.

The dilation methods that we use may be applied to non-abelian semigroups. 
We identify the C*-envelope for actions of the free semigroup $\bF_+^n$ by automorphisms in a concrete way, and for injective systems in a more abstract manner.
We also deal with C*-dynamical systems over Ore semigroups 
when the appropriate covariance relation is considered.
\end{abstract}

\maketitle

\tableofcontents

\mainmatter

\chapter{Introduction}

The realm of operator algebras associated to dynamical systems provides a context to generate valuable examples of operator algebras, as well as a method for examining the structure of the dynamics in terms of operators acting on a Hilbert space. 
Our motivation in the current paper is derived from the growing interest in semigroup actions by (perhaps non-invertible) endomorphisms of an operator algebra and the examination of the algebraic structure of operator algebras associated to them, e.g., Carlsen-Larsen-Sims-Vittadello \cite{CLSV11}, Deaconu-Kumjian-Pask-Sims \cite{DKPS10}, Donsig-Katavolos-Manoussos \cite{DKM01}, Duncan-Peters \cite{DunPet10}, Exel-Renault \cite{ExeRen07}, Fowler \cite{Fow02}, Laca \cite{Lac00}, Larsen \cite{Lar10}, Muhly-Solel \cite{MuhSol98}, Peters \cite{Pet10}, Sims-Yeend \cite{SimYee10}, and Solel \cite{Sol06, Sol08}. 
Recent progress \cite{DavKak12, DavKat08, DavKat11, KakKat12} shows that nonselfadjoint operator algebras form a sharp invariant for classifying systems and recovering the dynamics.
Such constructions have found applications to other fields of mathematics. 
In particular, ideas and results by the first author and Katsoulis \cite{DavKat11} are used in number theory and graph theory by Cornelissen \cite{Cor12} and Cornelissen-Marcolli \cite{CorMar11, CorMar12}, leaving open the possibility for future interaction.

The main subject of the current paper is representation and dilation theory of nonselfadjoint operator algebras associated to actions of a semigroup, namely \emph{semicrossed products}, and their minimal C*-covers, namely their \emph{C*-envelopes}.
In particular we ask for classification of semicrossed products in terms of Arveson's program on the C*-envelope \cite{Arv69,KakPet12}. 
In our context this question is translated in the following way: 
\begin{quoting}
\setlength{\leftmargin}{2em}
\setlength{\rightmargin}{2em}
we ask whether the C*-envelope of a semicrossed product associated to a semigroup action over an operator algebra is a (full corner of a) C*-crossed product associated to a group action over a C*-algebra.
\end{quoting}
This question requires an interesting intermediate step, which may be formulated in terms of classification of C*-algebras:
\begin{quoting}
given a (generally non-invertible) semigroup action on an operator algebra, construct a group action over a (generally larger) C*-algebra such that the corresponding semicrossed products have strong Morita equivalent minimal C*-covers.
\end{quoting}
As a consequence of this process we may think of the C*-envelope as the appropriate candidate for generalized C*-crossed products. 
Since this is related to a natural C*-crossed product of a dilated dynamical system, we can relate the infrastructure of (non-invertible) dynamics to the structure of the C*-envelope.
In particular, we seek a connection between minimality and topological freeness of the dynamical system and the ideal structure of the C*-envelope.
In a sequel to this paper, we will examine the connection between exactness and nuclearity of the ambient C*-algebra of the dynamical system and exactness and nuclearity of the C*-envelope, as well as the existence of equilibrium states. 	 	

For this paper, a \emph{dynamical system over an abelian semigroup $P$} consists of a triplet $(A,\al,P)$, where $\al \colon P \to \End(A)$ is a semigroup action of $P$ by completely contractive endomorphisms of the operator algebra $A$. 
When $A$ is a C*-algebra we refer to $(A,\al,P)$ as \emph{C*-dynamical system}; in particular when $A$ is a commutative C*-algebra, we refer to $(A,\al,P)$ as \emph{classical system}. 
A semicrossed product is an enveloping operator algebra generated by finitely supported $A$-valued functions on $P$ such that a covariance relation holds:
\begin{equation}\label{eq:cov for abelian}
a \cdot \Bt_s = \Bt_s \cdot \al_s (a) \qforal a\in A \AND s\in P . 
\end{equation}

Some changes have to be made for this to be sensible for non-abelian semigroups.
For the free semigroup $\bF^n_+$, where there are canonical generators, this issue can be handled easily by asking for the covariance relations only for the generators.
In other words, a dynamical system over the free semigroup $\bF^n_+$ can simply be given by a family $\{\al_i\}_{i=1}^n$ of endomorphisms on $A$; and the semicrossed product will be defined accordingly by the covariance relation
\begin{equation}\label{eq:cov for free}
a \cdot \Bt_i = \Bt_i \cdot \al_i (a) \qforal a\in A \AND i=1,\dots,n. 
\end{equation}
Variants of semicrossed products depend on extra reasonable conditions on the representation $\Bt_p$, as we explain below.

The class of all semigroups is too vast, and too pathological, for a reasonable study of the dynamics of all semigroups. 
All of our semigroups embed faithfully into a group. 
In particular, they will satisfy both left and right cancellation laws.
Even in the case of abelian cancellative semigroups, where the enveloping group is the Grothendieck group, there are many difficulties in the study of their dynamical systems.
We will find that the lattice-ordered semigroups offer extra structure which allows us to consider the Nica-covariant representations \cite{Nic92, LacRae}.

We shall assume that $P$ is a subsemigroup of a group $G$; and that $G$ is generated by $P$ as a group.
Given an action of $P$ on an operator algebra $A$ by completely isometric endomorphisms, we may search for an automorphic action of $G$ on some C*-algebra containing $A$ so that the semicrossed product of $A$ by $P$ embeds naturally into a C*-crossed product.
We seek to identify the C*-envelope of the semicrossed product in such a manner.
We will explore how well we can do in a variety of settings. We also consider certain non-injective actions.

\subsection*{Operator algebras and dynamics}

The roots of nonselfadjoint operator algebras associated to dynamical systems can be traced back to the seminal work of Arveson \cite{Arv67}, where the one-variable semicrossed product was introduced in a concrete representation.
Later, Peters \cite{Pet84} described an abstract, representation invariant definition which is close to the definition used today. 
This and related work has attracted the interest of operator theorists over the last 45 years. 
For example, see \cite{AlaPet03, ArvJos69, Bus01, BusPet98, DeAPet85, Dun08, HadHoo88, Lam93, Lam96, LinMuh89, McAMuh83, MMS79, MuhSol00, OhwSai02, Pet88, Pow92, Sol83}. 
When the semigroup is $\bF_n^+$ with $n>1$, and the $\Bt_i$ form a row contraction, then the resulting operator algebra is the tensor algebra related to a C*-correspondence. 
This very general construction was introduced by Muhly and Solel \cite{MuhSol98} based on a construction of Pimsner \cite{Pim97}.
Examples include Popescu's noncommutative disc algebra \cite{Pop96}, the twisted crossed product of Cuntz \cite{Cun81}, and handles multivariable non-invertible C*-dynamical systems as well.

Significant progress on Arveson's program on the C*-envelope has been established for $P=\bZ_+$. 
The problem was fully answered in \cite{Kak11-1} for unital C*-dynamical systems. 
Dynamical systems over nonselfadjoint operator algebras have appeared earlier in separate works of the first and third author with Katsoulis starting with \cite{DavKat10}, put in the abstract context later in \cite{KakKat10}, and further examined in 
\cite{DavKat11-2, DavKat12-1}. 
The dilation theory even for this one-variable case relates to one of the fundamental problems in operator algebras, namely the existence of commutant lifting theorems (see \cite{DavKat11-2} for a complete discussion).

For tensor algebras of C*-dynamical systems, the question of the C*-envelope was fully answered by the third author with Katsoulis \cite{KakKat11}. 
However, in the language of C*-correspondences, $\bF^n_+$ is viewed as the non-involutive part of the Toeplitz-Cuntz algebra $\T_n$, rather than as a subsemigroup of $\bF^n$. 
Operator algebras associated to actions of $\bF^n_+$ (as a subgroup of $\bF^n$) on classical systems were examined by the first author and third author with Katsoulis \cite{DavKat11, KakKat12}. 
In particular dilation results were obtained for systems over metrizable spaces \cite[Theorem 2.16, Proposition 2.17, Corollary 2.18]{DavKat11}. 
These results make extensive use of spectral measures, a tool that is not applicable in full generality. 
Moreover, even though the maximal representations are identified \cite[Proposition 2.17]{DavKat11}, a connection with a C*-crossed product is not established even for homeomorphic systems. 
In fact the authors point out that the complicated nature of the maximal representations ``makes it difficult to describe the algebraic structure of the C*-envelope of the free semicrossed product'' \cite[Remark at the end of Example 2.20]{DavKat11}.

Another approach was discovered by Duncan \cite{Dun08} for C*-dynamical systems over $\bF_+^n$.
The free semicrossed product can be identified as the free product of semicrossed products over $\bZ_+$ with amalgamation on the base C*-algebra $A$.
The main result \cite[Theorem 3.1]{Dun08} then asserts that in the case of injective dynamical systems, the C*-envelope of the free semicrossed product is the free product of C*-crossed products with amalgamation $A$.
Unfortunately there is an oversight in his proof that requires some work to complete.
We do this, and adapt Duncan's methods to the non-commutative case.

A moment's thought hints that the copy of $P$ inside the semicrossed product cannot be simply contractive and still hope to answer our main question. 
Indeed the famous counterexamples of Parrott \cite{Par70} and Kaijser-Varopoulos \cite{Var74} of three commuting contractions that cannot have unitary dilations shows that the answer is negative even for the trivial system $(\bC,\id,\bZ^3_+)$. 
Therefore the representation $\Bt$ of $P$ should satisfy additional conditions, for example, we can demand that $\Bt$ be isometric, regular, Nica-covariant, etc. 
Representations of the isometric semicrossed products of abelian semigroups $P$ were examined by Duncan-Peters \cite{DunPet10} for surjective classical systems. 
The algebra examined in \cite{DunPet10} is the norm-closure of the Fock representation, and provides less information for the universal isometric object of \cite[Definition 3]{DunPet10} in which we are equally interested. Their examination is restricted to classical surjective systems. We will extend these results in several ways.

Nica-covariant semicrossed products were examined by the second author \cite{Ful12} for automorphic (possibly non-classical) dynamical systems over finite sums of totally ordered abelian semigroups. 
In the current paper we continue this successful treatment. 
We mention that Nica-covariant semicrossed products are examples of tensor algebras associated to product systems examined by various authors, e.g., \cite{DKPS10, DunPet10, Fow02, Sol06, Sol08}. 

One of the main questions in such universal constructions is the existence of gauge-invariant uniqueness theorems. 
For C*-correspondences, Katsura \cite{Kat04} proves gauge-invariant uniqueness theorems for the Toeplitz-Pimsner algebra and the Cuntz-Pimsner algebra, extending earlier work of Fowler and Raeburn \cite[Theorem 2.1]{FowRae99}, and Fowler, Muhly and Raeburn \cite[Theorem 4.1]{FMR03}. 
In contrast to C*-correspondences, the absence of a gauge-invariant uniqueness theorem for Toeplitz-Nica-Pimsner and Cuntz-Nica-Pimsner algebras makes the examination of tensor algebras of product systems rather difficult. 

One major difficulty is the absence of a notion analogous to the Cuntz-Pimsner covariant representations of C*-correspondences introduced by Katsura \cite{Kat04}, that would give the appropriate concrete picture of a Cuntz-Nica-Pimsner algebra.
Nevertheless substantial results are obtained by Carlsen, Larsen, Sims and Vittadello \cite{CLSV11}, building on previous work by Sims-Yeend \cite{SimYee10}, Deaconu-Kumjian-Pask-Sims \cite{DKPS10}, and Fowler \cite{Fow02}. 
The authors in \cite{CLSV11} prove the existence of a co-universal object in analogy to the Cuntz-Pimsner algebra of a C*-correspondences \cite[Theorem 4.1]{CLSV11}. 
They exhibit a gauge-invariant uniqueness theorem \cite[Corollary 4.8]{CLSV11} and in many cases it coincides with a universal Cuntz-Nica-Pimsner algebra \cite[Corollary 4.11]{CLSV11}. 
However, the abstract nature of their context makes the suggested CNP-representations hard to isolate in concrete examples like the ones we encounter. The authors themselves mention this in \cite[Introduction]{CLSV11}. 
In particular the suggested CNP-condition is not related to ideals of $A$, in contrast to Cuntz covariant representations of C*-correspondences in the sense of Katsura \cite{Kat04}. 
Moreover a faithful concrete CNP-representation of an arbitrary product system is not given in \cite{CLSV11, DKPS10, Fow02, SimYee10}. 

In the closing paragraph of their introduction the authors of \cite{CLSV11} wonder whether there is a connection between their co-universal object and the C*-envelope. 
This remark has multiple interpretations: firstly it suggests a generalization of the work of Katsoulis and Kribs \cite{KatKri06} on C*-correspondences, secondly it fits exactly into the framework of nonselfadjoint algebras, and thirdly it fits in Arveson's program on the C*-envelope \cite{Arv69, KakPet12}. 
It seems that this is a nice opportunity for a solid interaction between nonselfadjoint and selfadjoint operator algebras.

Before we proceed to the presentation of the main results we mention that there is a substantial literature on the examination of semigroup actions that involves the existence of transfer operators, e.g., the works of Brownlowe-Raeburn \cite{BroRae06}, Brownlowe-Raeburn-Vittadello \cite{BRV10}, Exel \cite{Exe03}, Exel-Renault \cite{ExeRen07}, an Huef-Raeburn \cite{HueRae12}, Larsen \cite{Lar10}, and a joint work of the third author with Peters \cite{KakPet12}. 
Unlike these papers, we are interested in general dynamical systems where such maps are not part of our assumptions (and may not exist). 
Nevertheless, whether such additional data contributes to a satisfactory representation and dilation theory is interesting.

\subsection*{Main results} 

We manage to deal successfully with C*-dynamical systems of semigroup actions of $\bZ^n_+$ and $\bF^n_+$.
Our theorems show that the C*-envelope of the semicrossed product is a full corner of a natural C*-crossed product for:
\begin{enumerate}
\item arbitrary C*-dynamical systems by doubly commuting representations of $\bZ^n_+$ (Theorem \ref{T: cenv corner});
\item automorphic unital C*-dynamical systems by contractive representations of $\bF^n_+$ (Theorem \ref{T: free}).
\end{enumerate}
The case of injective unital C*-dynamical systems by contractive representations of $\bF_+^n$ is also considered (Corollary \ref{C: amalg}).
Not surprisingly, en passant we settle several of the questions, and resolve several of the difficulties aforementioned. 
We accomplish more in two ways: firstly in the direction of other semigroup actions on C*-algebras (where the set of generators may be infinite), and secondly for semigroup actions over non-selfadjoint operator algebras.
Each one of them is described in the appropriate section.

Case (i) is closely related to the representation theory of product systems. 
The semicrossed products of this type can be realized as the lower triangular part of a Toeplitz-Nica-Pimsner algebra $\N\T(A,\al)$ associated to a C*-dynamical system $(A,\al,\bZ_+^n)$. 
We show that a gauge-invariant uniqueness theorem holds for $\N\T(A,\al)$, which will prove to be a powerful tool for the sequel (Theorem \ref{T: gau inv un}). 
Furthermore we construct an automorphic dilation $(\wt{B},\wt{\be},\bZ^n)$ of the (possibly non-injective) system $(A,\al,\bZ_+^n)$ which can be viewed as a new adding-tail technique. 
The usual C*-crossed product $\wt{B} \rtimes_{\wt{\be}} \bZ^n$ is then the one appearing in case (i). 
Our methods are further used to examine a Cuntz-Nica-Pimsner algebra $\N\O(A,\al)$ (Section \ref{Ss: CNP}). 
In particular we prove the existence of a faithful concrete Cuntz-Nica-Pimsner representation (Proposition \ref{P: corner}) and  a gauge-invariant uniqueness theorem for $\N\O(A,\al)$ (Theorem \ref{T: CNP}).
As a consequence $\N\O(A,\al)$ is also identified with the C*-envelope of the semicrossed product for case (i) (Corollary \ref{C: CNP cenv}).
These results provide the appropriate connection with the work of Carlsen-Larsen-Sims-Vittadello \cite{CLSV11}.
However the analysis we follow is closer to the approach in C*-correspondence theory by Katsura \cite{Kat04} since our CNP-representations are based on relations associated to ideals (Definition \ref{D: CN}).
 
In case (ii), we construct minimal isometric and unitary dilations of $n$ contractions. 
The usual Scha\"effer-type dilations cannot be used beyond trivial systems (Remark \ref{R: free id}).
This is surprising since (non-minimal) Scha\"effer-type dilations are considered to be rather flexible in dilation theory because of the extra space their ranges leave. 
As a consequence we show that the algebraic structure of the C*-envelope can be fairly simple, in contrast to what we believed a few years ago \cite[Remark after Example 2.20]{DavKat11}.
We were not able to achieve a similar result for non-surjective C*-dynamical systems, mainly due to the lack of a particular completely isometric representation.
This difficulty is strongly connected to Connes' Embedding Problem for which we provide (yet another) reformulation (Corollary \ref{C: Connes}).

In the case of injective unital C*-dynamical systems over $\bF_+^n$, we find a description of the
C*-envelope, but this description does not look like a C*-crossed product of an automorphic C*-dynamical system that extends $(A,\al,\bF_+^n)$.
We follow the approach of Duncan \cite{Dun08} and examine free products with amalgamation.
However the proof of \cite[Theorem 3.1]{Dun08} has a gap.
It is not established that the natural embedding of the free product of the nonselfadjoint operator algebras with amalgamation over $D$ into the free product of the C*-envelopes with amalgamation over $D$ is a completely isometric map.
We provide a proof of this fact, and explain how Duncan's approach applies in the non-commutative case as well.
Secondly we make use of \cite[Theorem 3.5]{Kak13} which corrects  \cite[Proposition 4.2]{Dun08} and extends it to the non-commutative setting (see also \cite[Example 3.4]{Dun08}).
Corollary \ref{C: amalg} also provides an alternative proof of Theorem \ref{T: free} for automorphic actions.
For general injective actions, the free product of the C*-envelopes is not a C*-crossed product in any sense we can determine that fits into the philosophy of this paper. That connection remains to be found.

Our investigation is focussed on working within the framework of Arveson's program on the C*-envelope.
Following the meta-Proposition \ref{P: cpd sgp}, the ``allowable'' contractive representations of a semigroup $P$ (for this paper) must be \emph{completely positive definite} (Definition \ref{D: cdp sgp}), and depend on the structure of the pair $(G,P)$ where $G$ is the group that $P$ generates. 
There is an extensive literature concerning dilations of completely positive definite maps of spanning cones, which includes regular, isometric and unitary representations of $P$.
To this we add a study of the regular contractive representation over lattice-ordered abelian semigroups. 
We show that a regular representation is contractive and Nica-covariant, i.e., it satisfies a doubly commuting property (see Definition \ref{D: Nc}), if and only if the isometric co-extension is Nica-covariant in the usual sense (Theorem \ref{T: rNc}). 
If the semigroup is an arbitrary direct sum of totally ordered semigroups, then the contractive Nica-covariant representations are automatically completely positive definite (Corollary \ref{C: rNc}). 
This improves on the second author's result \cite[Theorem 2.4]{Ful12}.

Then we proceed to the examination of semicrossed products relative to isometric covariant pairs and unitary covariant pairs. 
These results hold for semicrossed products over arbitrary abelian cancellative semigroups. 
Restricting ourselves to abelian semigroups which determine an abelian lattice-ordered group we take our results further. 
In particular we are able to show that the C*-envelope of a semicrossed product is a natural C*-crossed product for:
\begin{enumerate}[resume]
\item automorphic dynamical systems by unitary representations of spanning cones (Theorem~\ref{T: cone aut un});
\item automorphic dynamical systems by isometric representations of spanning cones (Theorem \ref{T: cone aut is});
\item automorphic C*\!-dynamical systems by contractive representa\-tions of $\bZ^2_+$ (Corollary \ref{C: cenv aut Z2});
\item arbitrary C*-dynamical systems by unitary representations of spanning cones (Theorem \ref{T: cone aut un 2});
\item automorphic C*-dynamical systems by regular representations of lattice-ordered abelian semigroups (Theorem \ref{T: reg contr});
\item injective C*-dynamical systems by regular Nica-covariant representations of lattice-ordered abelian semigroups (Theorem \ref{T: cenv inj Nc}).
\end{enumerate}
All representations of the semigroups above are completely positive definite. Let us comment further on some of the above cases.

For case (v) recall that the class of contractive representations of $\bZ^n_+$ is pathological. The Parrott \cite{Par70} and Kaijser-Varopoulos \cite{Var74} examples show that these representations are not always completely positive definite for $n \geq 3$.
On the other hand, they are completely positive definite when $n=2$ by And\^{o}'s Theorem \cite{And63}.
Hence, it is natural to ask what is the C*-envelope for C*-dynamical systems over contractive representations of $\bZ^2_+$. 
A generalized version of And\^{o}'s Theorem for automorphic C*-dynamical systems over $\bZ^2_+$ was given by Ling-Muhly \cite{LinMuh89}. 
A generalized And\^{o}'s Theorem that concerns isometric dilations of contractive representations was later given by Solel \cite{Sol06} in the context of product systems over $\bZ^2_+$.
In Theorem~\ref{T: Ando} we provide a new approach that is independent of both \cite{LinMuh89} and \cite{Sol06}.
We write down the dilation of contractive covariant pairs to isometric covariant pairs explicitly.
This is achieved without assuming invertibility or non-degeneracy of the system.
We mention that non-degeneracy plays a central role in the representation theory produced by Muhly and Solel, e.g., see the key result \cite[Lemma 3.5]{MuhSol98}.
Whether isometric covariant pairs dilate to unitary covariant pairs in the case of injective C*-dynamical systems is unclear to us, and it remains an interesting open question. 
Note that if this is true, then the contractive semicrossed product will coincide with the unitary semicrossed product, for which we compute the C*-envelope in Theorem \ref{T: cone aut un 2}.

For case (vi) we mention that the original system is not always embedded isometrically inside the semicrossed product. This happens because the representations do not account for $*$-endomorphisms with kernels. In general the actual system that embeds inside the semicrossed product is an appropriate quotient by the radical ideal (see the definition before Lemma \ref{L:radical}). For injective systems (and exactly in this case) the radical ideal is trivial.

The Nica-covariant semicrossed product over $\bZ^n_+$ is an example of a product system, and the tools that we produce may well find applications in this more general context. 
{}From one-variable C*-dynamical systems, graph algebras and C*-correspondences to multivariable semigroup actions, $k$-graphs and product systems, there are several similarities in the techniques and the arguments used, yet we show that the objects are different. 
First we note that the operator algebras we examine cannot be encoded by a C*-correspondence except in the trivial case of the disc algebra (Proposition \ref{P: not corre}). 
Thus the classes of product systems and C*-correspondences do not coincide, with the first being a proper generalization of the second. 
This discussion was motivated by an error in \cite[Section 5]{DunPet10}, where a C*-correspondence structure is given. 
They have corrected this in their current version.

The main result of \cite[Theorem 3]{DunPet10}  is superseded by our Theorem \ref{T: lr scp}.
In particular we extend the notion of the Fock algebra from \cite{DunPet10} to include non-classical systems.
This is given by a specific family of covariant representations, i.e., the \emph{Fock representations}, which are central to our analysis (Example \ref{E: Fock}).
We show that the C*-envelope of the Fock algebra is a C*-crossed product for:
\begin{enumerate}[resume]
\item injective C*-dynamical systems over spanning cones (Theorem \ref{T: lr scp});
\item automorphic dynamical systems over spanning cones (Theorem \ref{T: lr scp 2}).
\end{enumerate}
We mention that Fock representations play a crucial role in our analysis as in most of the cases they are completely isometric representations of the semicrossed products.
In the case of classical systems over $\bZ_+^n$, Donsig-Katavolos-Manoussos \cite{DKM01} describe rather successfully the ideal structure of the nonselfadjoint operator algebra generated by a Fock representation.
Obviously this description passes to the semicrossed products of classical systems over $\bZ_+^n$ that we examine here.
It is tempting to ask for similar characterizations for non-classical (possibly non C*-) dynamical systems, even over arbitrary spanning cones.

In this paper we carry out a description of the ideal structure for the C*-algebras which relates to the infrastructure of the dynamics.
For classical systems over $\bZ^n_+$, we deduce that the semigroup action is minimal and injective if and only if the Cuntz-Nica-Pimsner algebra is simple (Corollary \ref{C: min Z+}). 
As a consequence, any ideal in any C*-cover of the semicrossed product by doubly commuting representations of $\bZ_+^n$ is a boundary ideal. 
Similar results are obtained for the Nica-covariant semicrossed product of C*-algebras by spanning cones (Proposition \ref{P: Four inv}).

Spanning cones are examples of the more general class of Ore semigroups. 
Our methods also work in this context after a suitable adjustment. 
The careful reader may have already noticed that the covariance relation (\ref{eq:cov for abelian}) is only compatible with the system when $P$ is abelian. 
This form no longer holds for non-abelian semigroups, since the multiplication rule may not be associative. 
For dynamical systems $(A,\al,P)$ over (possibly non-abelian) Ore semigroups, we examine the universal object relative to the dual covariance relation of (\ref{eq:cov for abelian}), that is, we consider a right action rather than a left action: 
\begin{equation}\label{eq:dual covariant}
\Bt_s \cdot a = \al_s(a) \cdot \Bt_s \qforal a \in A \AND s \in P.
\end{equation}
Following the C*-literature on non-unital dynamical systems, we can declare the unit of $A$ to be the unit of the universal object or consider the universal object to be generated by $A$ and $P$ separately (see \cite{KakPet12} for a discussion). 
In all of these cases, we show that the C*-envelope is the full corner of a C*-crossed product for:
\begin{enumerate}[resume]
\item C*-dynamical systems by isometric representations of Ore semigroups 
(Theorems \ref{T: Ore scp}, \ref{T: Ore augmented} and \ref{T: Ore nd}).
\end{enumerate}
These results cannot be generalized to merely contractive representations because of \cite{Par70,Var74}. 
Once again the nature of the representations enforces an embedding of the actual system inside the semicrossed product by an appropriate quotient by the radical ideal.
This quotient is trivial if (and only if) the system is injective.

The dilations of the C*-dynamical systems $(A,\al,P)$ to automorphic C*-dyna\-mical systems $(\wt{B},\wt{\be},G)$ that are used and/or established herein satisfy certain minimality properties.
Moreover in most of the cases the original system extends to a semigroup action of the semigroup over $P$ on the C*-envelope.
For example the isometric representations of Ore semigroups and equation (\ref{eq:dual covariant}) imply that
\begin{equation}\label{eq:implement Ore}
\al_p (a) = \Bt_p \cdot a \cdot \Bt_p^* \qforal a\in A \AND p\in P,
\end{equation}
which extends to the $*$-endomorphism $\ad_{\Bt_p}$.
One can then apply the minimal automorphic dilation to $\ad_{\Bt_p}$.
In a recent work on Ore semigroups Larsen-Li \cite{LarLi13} show that the resulting C*-crossed product of this dilation is nothing else than the C*-crossed product $\wt{B} \rtimes_{\wt{\be}} G$.
In a sense dilating and then forming the C*-algebra is essentially the same as forming the C*-algebra first and then dilating.
It is natural to ask whether the same holds for the various automorphic dilations we construct here.

Finally we remark that the connections between our results and product systems apply only to the Nica-covariant semicrossed products and not to the semicrossed products over free semigroups. 
Indeed, the representation theory of product systems prevents the existence of unitary representations for non-directed quasi-lattice ordered groups. 
For example any isometric Nica-covariant representation of $\bF_n^+$ must be a row-isometry and hence can not be unitary \cite[Section 5]{LacRae}. 
However, unitary representations are central to our analysis being the maximal dilations of the completely positive definite representations of the semigroup.
Moving outside of this framework would instantly put us outside the context provided by Proposition \ref{P: cpd sgp}.
Such a situation may be an interesting (yet different) story.

\begin{acknow}
The authors would like to thank Elias Katsoulis for his comments on an early draft of this article. In particular, he suggested that the results of \cite{Dun08} may be extended to the non-classical setting (see Corollary~\ref{C: amalg}).

The third author would like to thank Aristides Katavolos and Elias Katsoulis for valuable conversations. 
He would also like to thank Kevin, Kiera, Shinny and Daisy Colhoun for their support 
and hospitality in Waterloo.

The third author would like to dedicate this paper in loving memory of Gitsa Kantartzi, Dimitris Mavrakis and Eva Petraki.
Thank you for magnetizing my compass.
And long after we're gone the light stays on.
\end{acknow}

\chapter{Preliminaries}

\section{Operator algebras} \label{S:op alg}

A concrete operator algebra $\A$ will mean a norm-closed subalgebra of $\B(H)$, where $H$ is some Hilbert space. 
The reader should be familiar with the operator norm structure an operator algebra carries, dilation theory, and the terminology related to the subject (for example, see \cite{Pau02}). 
An abstract operator algebra is a Banach algebra with an operator space structure which is completely isometrically isomorphic to a concrete operator algebra. 
We will not require the Blecher-Ruan-Sinclair characterization of unital operator algebras. 
Our operator algebras need not be unital, but will always come with a family of representations that provide a spatial representation.
All of our morphisms between operator algebras will be completely contractive homomorphisms. 
In particular, we will write $A \simeq B$ to mean that they are completely isometrically isomorphic. 

A \emph{representation} of an operator algebra will mean a completely contractive homomorphism $\rho\colon A\to \B(H)$, for some Hilbert space $H$. 
A \emph{dilation} of $\rho$ means a representation $\si\colon A\to\B(K)$, where $K \supset H$ and $P_H \si(a)|_H = \rho(a)$. 
A familiar result of Sarason shows that $K$ can be decomposed as $K=K_- \oplus H \oplus K_+$ so that there is a matrix form
\[ 
\si(a) = \begin{bmatrix}*&0&0\\ *&\rho(a)&0\\ *&*&*\end{bmatrix} \qforal a \in A .
\]
The dilation is called a \emph{co-extension} if $K_-=\{0\}$ and an \emph{extension} when $K_+=\{0\}$.
A representation $\rho$ is called \emph{maximal} provided that the only dilations of $\rho$ have the form $\si=\rho\oplus\si'$. 
A dilation $\si$ of $\rho$ is a \emph{maximal dilation} if it is a maximal representation. 
There are analogous definitions for maximal extensions and maximal co-extensions.

If $\A$ is an operator algebra, there may be many C*-algebras that it can generate, depending on how it is represented. 
If $C$ is a C*-algebra and $j\colon \A\to C$ is a completely isometric map such that $C=\ca(j(\A))$, then we call $(C,j)$ a \emph{C*-cover} of $\A$. 
The \emph{C*-envelope} of an operator algebra $\A$ is the unique minimal C*-cover $(\cenv(\A), \iota)$, characterized by the following co-universal property: if $(C,j)$ is a C*-cover of $\A$, then there is a (unique) $*$-epimorphism $\Phi \colon C \to \cenv(\A)$ such that $\Phi\circ j = \iota$. 

The existence of the C*-envelope can be verified in two ways. 
The earliest description is given by Hamana \cite{Ham79} as the C*-algebra generated by $\A$ inside its injective envelope, when the latter is endowed with the Choi-Effros C*-structure. 
Simplified proofs are given in \cite{Pau02,Kak11-2}. A modern independent description is given by Dritschel and McCullough \cite{DriMcC05} as the C*-algebra generated by a maximal completely isometric representation of $\A$. 
A simplified proof is given in \cite{Arv08}. 
When $\A$ is unital, a completely contractive unital map $\phi\colon A\to\B(H)$ is said to have the \emph{unique extension property} if it has a unique completely positive extension to $\ca(\A)$ and this extension is a $*$-homomorphism. 
Dritschel and McCullough prove their result by establishing that every representation of $A$ has a maximal dilation; and that every maximal representation has the unique extension property.

Both approaches imply a third way to identify the C*-envelope which goes back to Arveson's seminal paper \cite{Arv69}. 
An ideal $J$ of a C*-cover $(C,\iota)$ of $\A$ is called a \emph{boundary} if the restriction of the quotient map $q_J$ to $\A$ is completely isometric. 
Then $\cenv(\A) \simeq C/ \J$, where $\J$ is the largest boundary ideal, known as the \emph{\v{S}ilov ideal}. 
The existence of the \v{S}ilov ideal can be shown as a consequence of the existence of the injective envelope or the existence of maximal dilations. 
A direct proof that does not depend on the above is not known.

An \emph{irreducible} $*$-representation $\pi$ of $\ca(\A)$ is called a \emph{boundary representation} if $\pi|_{\A}$ has the unique extension property. 
Arveson \cite{Arv08} showed that in the separable case, $\A$ always has enough boundary representations so that their direct sum is completely isometric. 
This was established in complete generality, with a direct dilation theory proof, by the first author and Kennedy \cite{DavKen13}.

Arveson \cite{Arv11} calls an operator algebra $\A$ \emph{hyperrigid} if for every $*$-rep\-re\-sent\-ation $\pi$ of $\cenv(\A)$, $\pi|_{\A}$ has the unique extension property. 
Various examples of operator algebras share this property, for example, see Blecher-Labuschagne \cite{BleLab02} and Duncan \cite{Dun08}. A simple case when this property holds is when a unital algebra $\A$ is generated by unitary elements. 
A particular class of hyperrigid algebras are \emph{Dirichlet algebras}, i.e., algebras $\A$ such that $\A + \A^*$ is dense inside $\cenv(\A)$.

We will be constructing a number of different operator algebras from the same non-closed algebra with different families of representations. 
Influenced by Agler \cite{Agl88} and Dritschel and McCullough \cite{DriMcC05}, we make the following definition.

\begin{definition} \label{D:family}
If $A$ is an (not necessarily normed) algebra, a \emph{family} of representations will be a collection $\F$ of homomorphisms into $\B(H)$, for various Hilbert spaces $H$, such that $\F$ is
\begin{enumerate}
\renewcommand{\labelenumi}{$(\textup{\arabic{enumi}})$ }
\item closed under arbitrary direct sums;
\item closed under restriction to a reducing subspace;
\item closed under unitary equivalence.
\end{enumerate}
\end{definition}

Observe that (1) implies that the norm of any element $\rho^{(n)} \big(\big[ a_{ij} \big]\big)$ in $M_n(\B(H))$
must be bounded above, since the direct sum of any set of representations is a representation. 
It is a familiar argument that every representation of $A$ decomposes into an orthogonal direct sum of cyclic reducing subspaces, and the dimension of a cyclic reducing subspace is bounded above by the cardinality (of a dense subset) of $A$. 
Therefore we can always fix a set of Hilbert spaces, one for each cardinal up to this maximum, and consider the \emph{set} $\F_0$ of representations in $\F$ into this set of Hilbert spaces. 
Then we take the direct sum of this set of representations, and obtain a single representation $\pi_0$ in $\F$ with the property that every element of $\F$ can be obtained, up to unitary equivalence, by restricting some ampliation of $\pi_0$ to a reducing subspace. 
If $J= \ker \pi_0$, consider the completion $\wt A$ of $A/J$ with respect to the family of seminorms on $M_n(A)$
\[
 \big\| \big[ a_{ij} \big] \big\|_\F = \big\| \pi_0^{(n)}\big( \big[ a_{ij} \big] \big) \big\| 
 = \sup_{\rho\in\F_0} \big\| \rho^{(n)}\big( \big[ a_{ij} \big] \big) \big\| .
\]
By construction, $\pi_0$ yields a completely isometric representation of $\wt A$. 

With a slight abuse of notation, we may write this as a supremum over $\F$, since even though it is not a set, the collection of possible norms is a bounded set of real numbers. 
Note that property (2) allows us to provide an explicit cap on the cardinality of representations that need to be considered. 
Strictly speaking, this condition is not required, if we are willing to take a supremum over all representations in $\F$. 
The collection of all possible norms of an element of $M_n(A)$ is always a bounded set because of property (1). 
So the supremum is defined. We can then select a set of representations that attains the norm at every element of $M_n(A)$; and use this set $\F_0$ of representations as above. 
As far as we can determine, all of the families of interest to us do have property (2), which simplifies the presentation.

\begin{definition} \label{D:enveloping algebra}
If $A$ is an algebra and $\F$ is a family of representations, then the \emph{enveloping operator algebra of $A$ with respect to $\F$} is the algebra $\wt A$ constructed above. 
\end{definition}

This construction yields an operator algebra $\wt A$ containing $A$ (or a quotient of $A$) as a dense subalgebra with the property that every element of $\F$ extends uniquely to a completely contractive representation of $\wt A$. 
As we will soon see, the representations in $\F$ need not exhaust the class of completely contractive representations of $\wt A$. 
Nor does this property determine $\wt A$. 
Among operator algebras with this property, $\wt A$ is the one with the smallest operator norm structure, as it is evident that each element $\big[ a_{ij} \big] \in M_n(A)$ must have norm at least as large as $\big\| \big[ a_{ij} \big] \big\|_\F$ in order to make $\pi_0$ completely contractive.

Dritschel and McCullough \cite{DriMcC05}, following Agler \cite{Agl88}, define an \emph{extended family} of representations to be a collection $\F$ of representations of $A$ on a Hilbert space such that it is
\begin{enumerate}
\item closed under arbitrary direct sums;
\item closed under restriction to an invariant subspace;
\item if $\rho\colon A \to \B(H)$ is in $\F$ and $\pi\colon \B(H)\to\B(K)$ is a $*$-rep\-re\-sent\-ation, then $\pi\rho$ is in $\F$;
\item When $A$ is not unital, we also require that if $P$ is a net and $\pi_p\colon A\to\B(H_p)$ are representations in $\F$ such that whenever $p<q$, $H_p \subset H_q$ and $P_{H_p}\pi_q(\cdot)|_{H_p} = \pi_p(\cdot)$ (i.e., $\pi_q$ is a dilation of $\pi_p$), then there is a representation in the family which is the direct limit of this net.
\end{enumerate}

\begin{example}\label{E: not ext fam}
Our families need not satisfy (ii). 
For example, let $A = \bC[z]$ and let $\F$ consist of all representations $\rho$ such that $\rho(z)$ is unitary. 
This is readily seen to satisfy our axioms. The universal operator algebra relative to $\F$ is the disc algebra $A(\bD)$. 
Then $\rho(z) = M_z$, the multiplication operator on $L^2(\bT)$, has $H^2$ as an invariant subspace; but the restriction to $H^2$ is a proper isometry. 
This family does satisfy (iii) and (iv).

Our families also need not satisfy (iii) or (iv). 
Let $A = \bC[z]$ and let $\G$ be the family of representations which send the generator $z$ to a pure isometry. 
Again this clearly satisfies our axioms. 
The universal operator algebra relative to $\G$ is the disc algebras $A(\bD)$ represented as the Toeplitz algebra on $H^2$; and its C*-envelope is $\rC(\bT)$, the universal C*-algebra generated by a unitary. 
This family satisfies condition (ii), but not (iii) or (iv). 
Indeed, if $\rho(z) = T_z$ on $H^2$ is the unilateral shift, then composing this with any $*$-representation that kills the compacts will send $z$ to a unitary operator, which is not in our family. 
Likewise, the representations of $A(\bD)$ on $K_{-n} = \ol{\Span} \{ z^k \colon -n \leq k\} \subset L^2(\bT)$, for $n \geq 0$, are unitarily equivalent to the representation on $H^2$. 
However their direct limit yields the maximal representation which sends $z$ to the bilateral shift, which is not a pure isometry. 
Nevertheless, this yields a completely contractive representation of $A(\bD)$. 
Thus $\F$ is not an extended family, and does not contain all completely contractive representations of $A(\bD)$.
\end{example}

Even though a family need not satisfy (ii), the restriction to an invariant subspace will always yield a completely contractive representation of $\wt A$. 
Property (iii) is stronger than (3), and this may not keep the representation in our family. 
Nevertheless, we have the stronger property that if $\rho \in \F$ and we take any $*$-representation $\pi$ of $\ca(\rho(\wt A))$, then $\pi\rho$ will extend to a completely contractive representation of $\wt A$. 
Dritschel and McCullough note that property (iv) is automatic in the unital case if (i)--(iii) hold. 
This property ensures the existence of maximal dilations of $\wt A$ in the extended family. 
Again there is no assurance that maximal dilations remain in our family, but again such a direct limit of representations in $\F$ will yield a completely contractive representation of $\wt A$. 
So it follows that if we define a family in our sense, and consider the larger family of completely contractive representations of $\wt A$, then this will be an extended family.

\begin{example}\label{E: not universal}
Let $A = \bC[z_1,z_2,z_3]$ and let $\F_1$ denote the family of representations that send the generators $z_i$ to commuting isometries. 
A classical result of Ito \cite[Proposition I.6.2]{SFBK} shows that every family of commuting isometries dilates to a family of commuting unitaries. 
Evidently three commuting unitaries yield an element of $\F_1$; and it is readily apparent that these representations are maximal. 
It follows that $\wt A$ is the polydisc algebra $A(\bD^3)$, considered as a subalgebra of $\rC(\bT^3)$, which is the C*-envelope. 

However the example of Parrott \cite{Par70} shows that there are three commuting contractions which yield a contractive representation of $A(\bD^3)$ but do not dilate to commuting unitaries. 
This shows that the class $\F_2$ of contractive representations of $A(\bD^3)$ yields a different (larger) operator norm structure on $A(\bD^3)$, say $\wt A_2$, with the same Banach algebra norm. 
It also has the property that every representation in $\F_1$ extends to a completely contractive representation of $\wt A_2$. 
So $A(\bD^3)$ is isometrically isomorphic to $\wt A_2$, but is not completely isomorphic; and $\wt A_2$ has completely contractive representations which are not completely contractive on $A(\bD^3)$. 
Since the base algebra is the same, $A(\bD^3)$ is not a quotient of $\wt A_2$. As a set, $M_n(A(\bD^3))$ and $M_n(\wt A_2)$ are the same, and the norms are comparable with constants depending on $n$. 
It is a famous open question whether there is a constant independent of $n$.

Finally an example of Kaijser and Varopoulos \cite{Var74} shows that there are three commuting contractions such that the corresponding representation of the algebra $A = \bC[z_1,z_2,z_3]$ does not extend to a contractive representation of $A(\bD^3)$. 
So the family $\F_3$ of all representations of $A$ sending the generators to commuting contractions yields a larger norm, and the completion $\wt A_3$ of $A$ for this family may be strictly smaller than $A(\bD^3)$. 
(Whether it is the same set depends on whether there is a von Neumann inequality in three variables with a constant, another open problem.) 
Again any representation in $\F_1$ or $\F_2$ will extend to a completely contractive representation of $\wt A_3$. 
So the universal property of these algebras is not readily expressed in terms of extensions of representations. We need to define the norm explicitly. 
\end{example}

There is an alternative approach, the notion of the \emph{enveloping operator algebra} relative to a set of relations and generators, in the sense of Blackadar's shape theory \cite{Bla85} (appropriately extended to cover the non-separable case). 
Given such a set of relations and generators, one first constructs the enveloping C*-algebra for these relations, and then takes a suitable nonselfadjoint part. 

\section{Semigroups}\label{S:sgrp}

A \emph{semigroup} $P$ is a set that is closed under a binary associative operation. 
In this paper, we will restrict our attention to semigroups which are unital, i.e., monoids, and embed into a group $G$, which we shall assume is generated by $P$. 
The group $G$ may be specified by the context or may arise as a universal object. 
Since groups satisfy cancellation, a subsemigroup of a group must satisfy both left and right cancellation (i.e., $sa=sb$ implies $a=b$ and $as=bs$ implies $a=b$). 
If $P$ is a cancellative abelian semigroup, then one can construct the Grothendieck group $G$ of $P$ as a set of formal differences, and it satisfies $G=P-P$. 
For non-abelian semigroups $P$, left and right cancellation is necessary but not sufficient for such an embedding. 
The additional property that $G=P^{-1}P$ determines a special class called the \emph{Ore semigroups}. 
We will be interested in abelian cancellative semigroups, especially lattice-ordered abelian semigroups, as well as Ore semigroups and free semigroups.

\subsection*{Spanning cones and lattices}

Let $G$ be an abelian group with identity element $0$. 
A unital semigroup $P\subseteq G$ is called a \emph{cone}. 
A cone $P$ is a \emph{positive cone} if $P\cap -P = \{0\}$. 
A (not necessarily positive) cone $P$ is called a \emph{spanning cone} if $G=P-P$.

In a spanning cone $P$, we can define the pre-order $s \leq t$ if and only if $s +r =t$ for some $r \in P$. 
If in addition $P$ is a positive cone, then this order becomes a partial order that can be extended to a partial order on $G$.
Conversely, given a partially ordered abelian group $G$, we get the positive cone $P := \{p\in G \colon p\geq 0\}$. 

When any two elements of $G$ have a least upper-bound and a greatest lower-bound, then $(G,P)$ is called a \emph{lattice-ordered abelian group}. 
If $P \cap (-P) = \{0\}$ and $P \cup (-P) = G$, then we say that $(G,P)$ is \emph{totally ordered}.

In the following proposition, we note some facts about lattice-ordered abelian groups which will be useful in the sequel. 
We refer the reader to Goodearl \cite[Propositions 1.4, 1.5]{Goo86} for proofs.

\begin{proposition}\label{P: lat}
Let $P$ be a spanning cone of $G$. If the least upper bound $s \vee t$ exists for every pair $s,t \in P$, then $G$ is a lattice-ordered abelian group. Moreover
\begin{align*}
 (g_1 \vee g_2) + (g_1 \wedge g_2) = g_1 + g_2 \qforal g_1, g_2 \in G.
\end{align*}
\end{proposition}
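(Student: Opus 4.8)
The plan is to exploit the fact that the pre-order coming from $P$ is translation invariant. Everything rests on the observation that $s \leq t$ if and only if $s + g \leq t + g$ for every $g \in G$, which is immediate since $(t+g)-(s+g) = t-s \in P$. The order is reflexive ($s-s = 0 \in P$, as $P$ is unital) and transitive ($P$ is closed under addition), and translation by a fixed $g$ is therefore an order isomorphism of $G$. In particular, if $u \vee v$ exists then so does $(u+g)\vee(v+g)$, and it equals $(u \vee v)+g$, because translation carries the set of upper bounds of $\{u,v\}$ onto that of $\{u+g,v+g\}$ and hence the least one to the least one.

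First I would upgrade the hypothesis from pairs in $P$ to arbitrary pairs in $G$. Given $g_1,g_2 \in G$, I use $G = P-P$ to write $g_1 = a_1 - b_1$ and $g_2 = a_2 - b_2$ with $a_i,b_i \in P$, and set $h = b_1+b_2$. Then $g_1+h = a_1+b_2 \in P$ and $g_2+h = a_2+b_1 \in P$, so $(g_1+h)\vee(g_2+h)$ exists by hypothesis; translation invariance then yields $g_1 \vee g_2 = [(g_1+h)\vee(g_2+h)] - h$. Here one checks en route that the least upper bound of two elements of $P$ lies again in $P$, since it dominates each of them and hence dominates $0$.

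Next I would produce greatest lower bounds by reflecting through the origin. The map $g \mapsto -g$ reverses the order, as $s \leq t$ is equivalent to $-t \leq -s$, so it is an anti-isomorphism of the ordered group and interchanges least upper bounds with greatest lower bounds: $g_1 \wedge g_2 = -\big((-g_1)\vee(-g_2)\big)$. Since least upper bounds now exist for every pair, so do greatest lower bounds, and $(G,P)$ is lattice-ordered. If one wishes to confirm that the relation is a genuine partial order and not merely a pre-order, uniqueness of least upper bounds forces $P \cap (-P) = \{0\}$: any $x$ in this intersection satisfies $0 \leq x \leq 0$, so both $0$ and $x$ are least upper bounds of $\{0,x\}$, whence $x = 0$.

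Finally, for the identity I would show directly that $c := g_1+g_2-(g_1 \vee g_2)$ is the greatest lower bound of $g_1$ and $g_2$; the stated formula is then just a rearrangement. Writing $m = g_1 \vee g_2$, translation invariance gives $g_1 - c = m - g_2 \in P$ and $g_2 - c = m - g_1 \in P$, so $c$ is a lower bound. For maximality, let $d$ be any lower bound of $g_1,g_2$; then $g_1+g_2-d$ dominates both $g_1$ and $g_2$ (for instance $(g_1+g_2-d)-g_1 = g_2 - d \in P$), hence $g_1+g_2-d \geq m$, which rearranges to $d \leq c$. Thus $c = g_1 \wedge g_2$. The only subtle point — the main obstacle — is the first step: the hypothesis supplies least upper bounds only for pairs inside $P$, so the reduction that translates an arbitrary pair into $P$ and then translates the bound back must be justified carefully rather than assumed, after which the remainder is a formal consequence of translation invariance and the group structure.
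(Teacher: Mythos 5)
Your proof is correct. The paper itself gives no argument for this proposition --- it simply cites Goodearl \cite[Propositions 1.4, 1.5]{Goo86} --- and what you have written is a complete, self-contained rendering of exactly that standard argument: translation invariance of the order, reduction of an arbitrary pair in $G$ to a pair in $P$ via $G=P-P$, and the identity obtained by verifying that $g_1+g_2-(g_1\vee g_2)$ is the greatest lower bound. You also handle the two genuine subtleties cleanly: upper bounds in $G$ of elements of $P$ automatically lie in $P$ (so the hypothesis on pairs in $P$ is unambiguous), and antisymmetry, i.e.\ $P\cap(-P)=\{0\}$, follows from uniqueness of least upper bounds, which is needed since a spanning cone is not assumed positive.
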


When $(G,P)$ is a lattice-ordered abelian group, there is a unique decomposition of the elements $g\in G$ as $g=-s+t$ where $s \wedge t =0$. 
First note that every $g\in G$ has such a decomposition, since we can rewrite $g=-s+t \in G$ as 
\[ 
g = -(s-s\wedge t) + (t -s \wedge t) , 
\]
with $s - s \wedge t$ and $t - s \wedge t \in P$, and
\[
(s - s\wedge t) \wedge (t - s\wedge t) = s\wedge t - s \wedge t = 0.
\]
Now assume that $g = -s+t$ with $s \wedge t=0$ and let
\[
g_-:= -(g \wedge 0) = -\left( (-s+t) \wedge 0 \right) = -(t\wedge s - s) = s
\]
and
\[
g_+: = g \vee 0 = (-s+t)\vee 0 = t \vee s -s = s+t - s \wedge t -s =t.
\]
Therefore $s$ and $t$ are uniquely determined.

There are positive spanning cones such that $(G,P)$ is not a lattice, as the following examples show. 
By Proposition \ref{P: lat}, the only obstruction is that the least upper bound may not exist.

\begin{example}
Let $G = \sca{(2,0), (0,2), (1,1) }$ as a subgroup of $\bZ^2$. 
Let $P$ be the unital subsemigroup of $G$ generated by $\{(0,0), (2,0), (0,2), (1,1)\}$. 
Then $(G,P)$ is not a lattice. 
Indeed, the elements $(2,0), (1,1) \in P$ are both less than $(3,1)$ and $(2,2)$ but there is no element $(m,n)$ in $P$ such that $(2,0), (1,1) \le (m,n) \le (3,1), (2,2)$. 
Hence a unique least upper bound of $(2,0)$ and $(1,1)$ does not exist.
\end{example}

Lattice-ordered semigroups are \emph{unperforated}: if $g\in G$ and $n \ge 1$ such that $ng\in P$, then $g\in P$ \cite[Proposition 3.6]{Darnel}. 
This property excludes examples such as the following.

\begin{example}
Let $P = \{0, n \colon n \ge 2\} = \{0,2,3,4,5,\dots\}$ endowed with the ordering $m\le n$ if $n-m \in P$. 
Then $2$ and $3$ are both less than $5$ and $6$, but there is no $n$ greater than both $2$ and $3$ and less than both $5$ and $6$. 
Hence $P$ is not a lattice-ordered abelian semigroup. 
This semigroup is perforated because $1+1 \in P$ but $1\not\in P$.
\end{example}

\subsection*{Ore semigroups}

We now discuss an important class of semigroups. 
An \emph{Ore semigroup} is a (left and right) cancellative semigroup $P$ such that $Ps \cap Pt \neq \emptyset$ for all $s,t\in P$. 
Without loss of generality, we may assume that $P$ is unital. 
Every spanning cone is an Ore semigroup because in this case, $s+t$ always lies in the intersection. 
A result of Ore \cite{Ore} and Dubreil \cite{Dub} shows that this property characterizes semigroups $P$ which can be embedded into a group with the additional property that $G = P^{-1}P$. 
Laca \cite[Theorem 1.2]{Lac00} shows that any homomorphism of $P$ into a group extends to a homomorphism of $G$. 
We sketch an alternative proof.

\begin{theorem}[Ore, Dubreil, Laca] \label{T:Ore}
A semigroup $P$ can be embedded in a group $G$ with $P^{-1} P =G$ if and only if it is an Ore semigroup. In this case, the group $G$ is determined up to canonical isomorphism by the following universal property: every semigroup homomorphism $\phi$ from $P$ into a group $K$ extends uniquely to a group homomorphism $\wt{\phi}$ from $G$ to $K$ such that $\wt{\phi}(s^{-1} t) = \phi(s)^{-1} \phi(t)$.
\end{theorem}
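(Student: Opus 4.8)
The plan is to prove the two implications separately and then verify the universal property, with essentially all of the work concentrated in constructing $G$ as a group of left fractions.

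For the \emph{necessity} of the Ore condition, suppose $P$ embeds in a group $G$ with $G = P^{-1}P$. As a subsemigroup of a group, $P$ is automatically left and right cancellative. Given $s,t \in P$, the element $st^{-1} \in G$ can by hypothesis be written as a left fraction $p^{-1}q$ with $p,q \in P$; rearranging $ps = qt$ exhibits a common element of $Ps \cap Pt$, so $P$ is an Ore semigroup. For the converse I would build $G$ concretely as formal left fractions. On $P \times P$, with $(s,t)$ standing for $s^{-1}t$, define $(s,t) \sim (s',t')$ if and only if there exist $c,c' \in P$ with $cs = c's'$ and $ct = c't'$. Reflexivity and symmetry are immediate; \emph{transitivity} is the first place the Ore condition is essential, since comparing $(s,t)$ with $(s'',t'')$ through an intermediate $(s',t')$ requires producing a common left multiple of the relevant denominators, which is exactly what the condition $Ps \cap Pt \neq \emptyset$ supplies. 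Let $G$ be the set of equivalence classes.

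Multiplication is defined by clearing denominators: to form $s^{-1}t \cdot u^{-1}v$ I use the Ore condition to pick $a,b \in P$ with $at = bu$, motivated by $tu^{-1} = a^{-1}b$, and declare the product to be the class of $(as, bv)$. The identity is the class of $(s,s)$ (independent of $s$), and the inverse of $(s,t)$ is $(t,s)$. The embedding is $\iota \colon P \to G$, $p \mapsto [(1,p)]$, using the unit of $P$; injectivity follows from left cancellation, and since every class equals $\iota(s)^{-1}\iota(t)$ we obtain $G = P^{-1}P$.

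The \emph{main obstacle} is showing that this multiplication is well defined, independent both of the chosen representatives and of the auxiliary elements $a,b$, and that it is associative. Each such check reduces to locating suitable common left multiples and then cancelling, so the Ore condition together with cancellation is invoked repeatedly; carefully tracking the resulting quantifiers is the real crux of the argument, and is presumably where the sketch will be least routine.

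Finally, for the universal property, given a semigroup homomorphism $\phi \colon P \to K$ into a group, I would set $\wt{\phi}([(s,t)]) = \phi(s)^{-1}\phi(t)$. Well-definedness follows by applying $\phi$ to the relations $cs = c's'$ and $ct = c't'$ and cancelling in $K$; that $\wt{\phi}$ is a homomorphism follows from the multiplication formula and multiplicativity of $\phi$; and uniqueness is forced, since any extension must agree with $\phi$ on $\iota(P)$ and respect inverses while $G = P^{-1}P$ leaves no freedom. The asserted canonical isomorphism of $G$ then follows from this universal property in the standard way.
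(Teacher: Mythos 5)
Your proposal is correct, and it follows the classical Ore--Dubreil route rather than the one taken in the paper. You construct $G$ directly as equivalence classes of formal left fractions $(s,t)\leftrightarrow s^{-1}t$, define the multiplication by clearing denominators via the Ore condition, and then verify the group axioms, the embedding, and the universal property by hand; you correctly identify that transitivity of the equivalence and well-definedness/associativity of the product are where the Ore condition and cancellation must be invoked repeatedly, and those verifications, while tedious, are known to go through. The paper instead sidesteps exactly those verifications: it first observes that it suffices to embed $P$ into \emph{any} group $H$, since the Ore condition then shows $P^{-1}P\subseteq H$ is already closed under products, and it produces such an embedding by letting $P$ act by left translations $L_t$ on the direct limit set $P_\infty = \dirlim (P_s,\psi_s^t)$ (a concrete model of the fraction space), so that $P$ lands in the group of bijections of $P_\infty$, where associativity and invertibility are automatic. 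In effect both constructions build the same localization, but yours pays for an explicit multiplication table with the well-definedness checks, while the paper's inherits the group structure for free from the symmetric group at the cost of a less explicit picture. Two further points in your favour: you prove the necessity direction ($G=P^{-1}P$ forces $Ps\cap Pt\neq\emptyset$ by rearranging $st^{-1}=p^{-1}q$), and you verify the universal property, both of which the paper's sketch omits entirely; note only that your embedding $p\mapsto[(1,p)]$ uses the unit of $P$, which is harmless since the paper assumes $P$ unital without loss of generality.
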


\begin{proof} We only sketch the embedding into a group with the desired properties.
To construct the group $G$, first note that it suffices to embed $P$ inside some group $H$, and then define $G:= P^{-1} P \subseteq H$. 
To see that $G$ is a group, note that it is closed under inverses; so it suffices to show that it is closed under products. 
Let $s, t,x, y \in P$. We will show that $(s^{-1}t)( x^{-1} y)$ belongs to $P^{-1} P$. 
For the elements $t,x$, there are $r_1, r_2 \in P$ such that $r_1 t = r_2 x$. 
Since we are working inside a group $H$, we can use invertibility and write $tx^{-1} =r_1^{-1}r_2$. 
Therefore
\begin{align*}
s^{-1}tx^{-1}y = s^{-1}r_1^{-1}r_2y = (r_1 s)^{-1} r_2 y \in P^{-1} P.
\end{align*}

In what follows, we construct an injective semigroup homomorphism from $P$ into the group of bijections of a set. 
Define the (right) ordering in $P$ by
\begin{align*}
 s \leq t \Longleftrightarrow t \in Ps \Longleftrightarrow t=rs \text{ for some } r\in P.
\end{align*}
The cancellative property of $P$ implies that the choice of $r$ is unique. 
Let $P_s = P$ for $s\in P$, and for $s \le t$ in $P$, define connecting maps
\begin{align*}
\psi_s^t \colon P_s \to P_t \colon h \mapsto rh \quad \text{where } t=rs.
\end{align*}
Since $P$ is cancellative, these maps are injective. 
Consider the direct limit set $P_\infty = \dirlim (P_s,\psi_s^t)$. 
For each $h \in P_s$, let $[h]_s$ denote its image inside $P_\infty$. 
Now for $t\in P$, define $L_t \colon P_\infty \to P_\infty$ by the rule
\begin{align*}
 L_t [h]_s = [ph]_q \quad\text{where } ps=qt.
\end{align*}
The verification that the mapping $t \to L_t$ is an injective semigroup homomorphism of $P$ into the group of bijections on $P_\infty$ is left to the reader.
\end{proof}

We will focus on (non-abelian) Ore semigroups in Section \ref{S: Ore sgps}. 
Apart from abelian semigroups, other examples of Ore semigroups include directed subsemigroups of quasi-lattice ordered groups and normal semigroups. 
More examples are mentioned by Laca following \cite[Remark 1.3]{Lac00}.

\section{Completely positive definite functions of groups}\label{S: cpd of grps}

Let $G$ be a discrete group with unit $1$ (or $0$ when the group is abelian). 
We denote by $c_{00}(G)$ the involutive $*$-algebra of finitely supported scalar functions on $G$ with the $\ell_1$-norm. 
Let $\ca(G)$ be the enveloping C*-algebra (group C*-algebra) of $c_{00}(G)$. 
The representations of $\ca(G)$ are determined by the $*$-rep\-re\-sent\-ations of $c_{00}(G)$. 
These are in a natural bijection with the unitary representations of $G$. 
We will use the notation $\bC[G]$ for the image of $c_{00}(G)$ inside $\ca(G)$. 
That means that $\bC[G]$ inherits an operator structure from $\ca(G)$. 
The norm on $\ca(G)$ will be simply denoted by $\nor{\cdot}$. 
By definition $\bC[G]$ spans a $\nor{\cdot}$-dense subspace of $\ca(G)$.
It is easy to see that the unitary representations of $G$ coincide with the contractive representations of $G$. 

If $H$ is a Hilbert space, $c_{00}(G,H)$ will denote the space of finitely supported functions on $G$ with values in $H$, and a typical element will be written as $h = \sum_{g \in G} h_g g$.
A unital map $T\colon G \to \B(H)$ is called \emph{completely positive definite} if 
\[
 \sum_{g_i,g_j} \sca{T(g^{-1}_ig_j)h_{g_j},h_{g_i}} \geq 0 
 \qforal h = \sum_{g \in G} h_g g \in c_{00}(G,H) .
\]
Equivalently, the matrix $[T(g_i^{-1}g_j)]_{n\times n}$ is positive semidefinite for any finite subset $\{g_1,\dots,g_n\}$ of $G$. 
Applying this to the vector $h = h_1 1 + h_g g$, we obtain that
\[
\begin{bmatrix} I & T(g) \\ T(g^{-1}) & I \end{bmatrix} \ge 0.
\]
Thus $T(g^{-1})= T(g)^*$ and $\|T(g)\| \le 1$ for all $g\in G$.

Given a completely positive definite map $T\colon G \to \B(H)$, we form the Hilbert space $H\otimes_T G$ as follows. 
Let $K_0=c_{0 0}(G,H)$ equipped with the semi-inner product
\begin{equation*}
\sca{h,h'}=\sum_{g,g'\in G}\sca{T(g'^{-1}g)h_g,h'_{g'}}.
\end{equation*}
Let $N=\{h\in c_{00}(G,H)| \sca{h,h}=0\}$ and $H\otimes_T G:=\ol{K_0/N}$.

A map $T\colon G \to \B(H)$ extends linearly to a map defined on $\bC[G]$, which we will denote by the same symbol. 
When $T \colon \bC[G] \to \B(H)$ is bounded, it extends by continuity to a map defined on $\ca(G)$ which again we will denote by the same symbol. 

The basic result on completely positive definite functions on groups is \cite[Theorem I.7.1]{SFBK}.

\begin{theorem}[Sz.-Nagy] \label{T: cpd groups}
Let $G$ be a group, and let $T\colon G \to \B(H)$. Then the following are equivalent:
\begin{enumerate}
\item $T\colon G \to \B(H)$ is unital and completely positive definite.
\item $T\colon \ca(G) \to \B(H)$ is unital and completely positive.
\item $T$ dilates to a unitary representation $U\colon G \to \B(K)$. 
\end{enumerate}
\end{theorem}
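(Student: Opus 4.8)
The plan is to close the cycle of implications $(1)\Rightarrow(3)\Rightarrow(2)\Rightarrow(1)$, which establishes the equivalence of all three conditions at once; the heart of the argument is the Sz.-Nagy dilation $(1)\Rightarrow(3)$, for which the auxiliary Hilbert space $H\otimes_T G$ constructed just before the statement already does most of the bookkeeping.

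For $(1)\Rightarrow(3)$ I would start from $K := H\otimes_T G$, built from $K_0=c_{00}(G,H)$ with the semi-inner product $\sca{h,h'}=\sum_{g,g'}\sca{T(g'^{-1}g)h_g,h'_{g'}}$; complete positive definiteness of $T$ is precisely the statement that this form is positive semidefinite, so the quotient-completion makes sense. First I would embed $H$ into $K$ by sending $\xi\in H$ to the class of the function $\xi\cdot 1$ supported at the identity; unitality of $T$ gives $\sca{\xi\cdot 1,\eta\cdot 1}=\sca{T(1)\xi,\eta}=\sca{\xi,\eta}$, so the embedding is isometric and I identify $H$ with its image. Next, for each $g_0\in G$ I define $U(g_0)$ on $K_0$ by left translation, $U(g_0)\big(\sum_g \xi_g\cdot g\big)=\sum_g \xi_g\cdot(g_0 g)$. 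A direct reindexing (replacing $g,g'$ by $g_0 g, g_0 g'$, so that $(g_0 g')^{-1}(g_0 g)=g'^{-1}g$) shows $\sca{U(g_0)h,U(g_0)h'}=\sca{h,h'}$, so $U(g_0)$ preserves the semi-inner product, carries the null space $N$ into itself, and descends to an isometry of the quotient; since $U(g_0^{-1})$ is an inverse with dense range, each $U(g_0)$ extends to a unitary on $K$, and $g_0\mapsto U(g_0)$ is visibly multiplicative, hence a unitary representation. Finally, for $\xi,\eta\in H$ one computes $\sca{U(g_0)(\xi\cdot 1),\eta\cdot 1}=\sca{T(g_0)\xi,\eta}$, which is exactly $P_H U(g_0)|_H=T(g_0)$, so $U$ dilates $T$.

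For $(3)\Rightarrow(2)$, the unitary representation $U\colon G\to\B(K)$ extends by the universal property of the group C*-algebra to a $*$-representation $\pi\colon\ca(G)\to\B(K)$. The compression $x\mapsto P_H\pi(x)|_H$ is unital (since $\pi$ is unital and $U(1)=I$) and completely positive as a compression of a $*$-homomorphism; it agrees with $T$ on the group elements, hence with the linear extension of $T$ on $\bC[G]$, and therefore furnishes the asserted unital completely positive extension $T\colon\ca(G)\to\B(H)$. For $(2)\Rightarrow(1)$, given a finite set $\{g_1,\dots,g_n\}\subseteq G$, the matrix $[g_i^{-1}g_j]_{i,j}\in M_n(\ca(G))$ equals $R^*R$ for the row $R=[\,g_1\ \cdots\ g_n\,]$ of unitaries, and is thus positive; applying the completely positive map $T$ entrywise gives $[T(g_i^{-1}g_j)]_{i,j}\ge 0$, which is exactly complete positive definiteness, while unitality is inherited. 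This closes the cycle.

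The main obstacle is the dilation step $(1)\Rightarrow(3)$, and within it the verification that left translation descends to the quotient and yields genuine unitaries rather than mere isometries. The key point I would stress is that invariance of the semi-inner product under $U(g_0)$ simultaneously yields $U(g_0)N\subseteq N$ (well-definedness) and isometry, while the presence of $U(g_0^{-1})$ upgrades each isometry to a surjective, hence unitary, operator; the remaining identities (multiplicativity of $U$ and the compression formula) are then routine reindexings that I would not belabour.
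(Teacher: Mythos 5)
Your proof is correct, and it is essentially the intended argument: the paper does not prove this theorem itself but quotes it from Sz.-Nagy \cite[Theorem I.7.1]{SFBK}, having constructed the space $H\otimes_T G$ immediately beforehand and remarking afterwards that the minimal unitary dilation can be realized on $H\otimes_T G$ --- which is exactly your left-translation construction for $(1)\Rightarrow(3)$. Your closing of the cycle via $(3)\Rightarrow(2)$ (compression of the integrated $*$-representation of $\ca(G)$) and $(2)\Rightarrow(1)$ (positivity of $[g_i^{-1}g_j]=R^*R$ pushed through the completely positive map) is the standard argument and is complete as written.
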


Moreover, the unitary dilation can be chosen to be minimal by choosing $K = \bigvee_{g\in G} U(g) H$. 
We note that this minimal dilation can be realized on $H \otimes_T G$ \cite[Theorem I.7.1]{SFBK}.

Let $G_1$ and $G_2$ be two discrete groups. 
Two completely positive definite maps $T_i\colon G_i \to \B(H)$ are said to \emph{commute} if $T_1(s)T_2(t) = T_2(t) T_1(s)$ for all $s\in G_1$ and $t\in G_2$. 
Equivalently, $T_1(\ca(G_1)) \subseteq T_2(\ca(G_2))'$, since $\bC[G_i]$ is a dense subspace of $\ca(G_i)$. 
Therefore we can define the map
\[
T_1 \odot T_2 \colon G_1 \oplus G_2 \to \B(H)\colon (s,t)\mapsto T_1(s)T_2(t).
\]

\begin{theorem}\label{T: com grp}
Let $G_1, G_2$ be discrete groups and $T_1,T_2$ commuting completely positive definite maps. 
Then the mapping $T_1 \odot T_2$ of $G_1 \oplus G_2$ has a unitary dilation. 
Consequently $T_1\odot T_2$ is a completely positive definite map of $G_1 \oplus G_2$.
\end{theorem}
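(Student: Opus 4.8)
The plan is to collapse both assertions into a single positivity check and then quote Theorem~\ref{T: cpd groups}. Since $T_1\odot T_2$ is patently unital (its value at the identity of $G_1\oplus G_2$ is $T_1(e)T_2(e)=I$), the equivalence (i)$\Leftrightarrow$(iii) of Theorem~\ref{T: cpd groups} says that \emph{$T_1\odot T_2$ dilates to a unitary representation of $G_1\oplus G_2$ if and only if it is completely positive definite}. Thus the two claims of the theorem are one and the same: it suffices to prove $T_1\odot T_2$ is completely positive definite, after which the unitary dilation (and hence the ``consequently'') comes for free.

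So I would fix a finite set $(s_1,t_1),\dots,(s_n,t_n)\in G_1\oplus G_2$ and stare at the matrix whose positivity is demanded, namely
\[
\big[(T_1\odot T_2)\big((s_i,t_i)^{-1}(s_j,t_j)\big)\big]_{i,j}
=\big[T_1(s_i^{-1}s_j)\,T_2(t_i^{-1}t_j)\big]_{i,j}\in M_n(\B(H)).
\]
The key observation is that this is exactly the entrywise (Schur) product $X\circ Y$ of $X=[T_1(s_i^{-1}s_j)]_{i,j}$ and $Y=[T_2(t_i^{-1}t_j)]_{i,j}$. Now $X\geq 0$ and $Y\geq 0$ because $T_1$ and $T_2$ are each completely positive definite, and—crucially—every entry of $X$ commutes with every entry of $Y$, since the ranges of $T_1$ and $T_2$ commute by hypothesis.

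The heart of the matter is therefore an operator-valued Schur product theorem: \emph{the entrywise product of two positive elements of $M_n(\B(H))$ whose entries mutually commute is again positive}. This is the step I expect to be the main obstacle, and it is precisely where commutativity is spent (the statement fails for non-commuting entries, consistent with the Parrott/Varopoulos obstructions that force $n\leq 2$ in the contractive world). I would prove it by ampliating and compressing: regard $X$ and $Y$ as operators $\widehat X,\widehat Y$ on $\bC^n\otimes\bC^n\otimes H$ acting on disjoint pairs of legs (the first-and-third for $X$, the second-and-third for $Y$); commutativity of the entries makes $\widehat X$ and $\widehat Y$ commute, so $\widehat X\,\widehat Y\geq 0$ as a product of commuting positive operators, and compressing along the diagonal isometry $e_i\otimes h\mapsto e_i\otimes e_i\otimes h$ returns exactly $X\circ Y$. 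Hence $X\circ Y\geq 0$.

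This shows $T_1\odot T_2$ is completely positive definite, and Theorem~\ref{T: cpd groups} then supplies the minimal unitary dilation of $G_1\oplus G_2$, finishing the proof. I would also flag that the tempting alternative—dilate $T_1$ to its minimal unitary dilation on $H\otimes_{T_1}G_1$ and lift $T_2$ coordinatewise—is circular on its own, since the positivity of that lift is literally equivalent to the positivity of $T_1\odot T_2$; so some genuine input, here the commuting Schur product theorem, is unavoidable. (If an explicit dilation is wanted, the same commuting Schur positivity is what makes the lifted map completely positive definite, after which one dilates it and lifts the unitary representation $U_1$—which stays a representation—to build $U$ by hand.)
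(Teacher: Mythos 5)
Your proof is correct, but it takes a genuinely different route from the paper's. The paper argues dilation-first: it takes the minimal Stinespring dilation $V_1$ of $T_1 \colon \ca(G_1) \to \B(H)$, applies Arveson's Commutant Lifting Theorem to move $T_2$ (whose range lies in $T_1(\ca(G_1))'$) up to the dilation space, dilates the resulting map to a representation $U_2$, and lifts once more so that $V_1$ becomes a representation $U_1$ commuting with $U_2$; then $U_1 \odot U_2$ is the unitary dilation, and complete positive definiteness of $T_1 \odot T_2$ is read off as a corollary. You reverse the logical order: you verify complete positive definiteness directly and let Theorem~\ref{T: cpd groups} supply the dilation. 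Your key lemma --- the Schur product of two positive matrices in $M_n(\B(H))$ whose entries mutually commute is positive --- is true, and your sketch of it is complete: $\widehat X$ and $\widehat Y$ are positive (each is, up to a permutation of tensor legs, an ampliation of $X$ resp.\ $Y$), entrywise commutation gives $\widehat X \widehat Y = \widehat Y \widehat X$, hence $\widehat X \widehat Y = \widehat X^{1/2} \widehat Y \widehat X^{1/2} \geq 0$, and compressing by the isometry $e_i \otimes h \mapsto e_i \otimes e_i \otimes h$ does return exactly $X \circ Y$. What your route buys: it bypasses Arveson's commutant lifting theorem entirely, needing only Sz.-Nagy's theorem (already quoted as Theorem~\ref{T: cpd groups}) plus elementary operator theory, and the commuting Schur product lemma is a clean reusable fact. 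What the paper's route buys: it produces commuting unitary dilations of $T_1$ and $T_2$ separately, which is slightly finer structural information than a single unitary representation of $G_1 \oplus G_2$ compressing to $T_1 \odot T_2$. Two small points you should make explicit in a write-up: in $X = [T_1(s_i^{-1}s_j)]$ and $Y = [T_2(t_i^{-1}t_j)]$ the elements $s_i$ (resp.\ $t_i$) may repeat even though the pairs $(s_i,t_i)$ are distinct, and positivity of a positive-definite kernel evaluated on a tuple with repetitions requires a (routine) compression-by-summing argument; and unitality of $T_1 \odot T_2$, which you do note, is what licenses the appeal to Theorem~\ref{T: cpd groups}.
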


\begin{proof}
Let $V_1 \colon \ca(G_1) \to \B(K)$ be the minimal Stinespring (unitary) dilation of the unital completely positive map $T_1\colon \ca(G_1) \to \B(H)$. 
Since $V_1$ is a $*$-representation, the restriction $V_1|_{G}$ is a unitary representation. 
By Arveson's Commutant Lifting Theorem \cite[Theorem~1.3.1]{Arv69}, there is a $*$-representation $\rho\colon T_1(\ca(G_1))' \to V_1(G_1)' \subseteq \B(K)$ which dilates $\id \colon T_1(\ca(G_1))' \to T_1(\ca(G_1))'$.
Then $V_2 := \rho \circ T_2$ defines a unital completely positive map of $\ca(G_2)$ that dilates $T_2$. 
Repeat for $V_2$ to obtain a unitary dilation $U_2$ of $V_2$, hence a dilation of $T_2$, and let $\si$ be the dilation of the identity representation $\id \colon V_2(\ca(G_2))' \to V_2(\ca(G_2))'$. 
Then $U_1 = \si \circ V_1$ is a trivial dilation of $V_1$, thus a $*$-representation. 
Then $U_1$ and $U_2$ commute, hence $U_1 \odot U_2$ defines a unitary dilation of $T_1 \odot T_2$.
\end{proof}

\begin{remark}
By considering $T_1 \odot (T_2 \odot T_3)$ and inductively one can obtain the same result for any number of commuting positive definite maps.
\end{remark}

\section{Completely positive definite functions of semigroups}\label{S: cpd of sgrp}

Fix a unital subsemigroup $P$ of a discrete group $G$. 
We will now explore the representation theory for $P$. 

\begin{definition}
Let $P$ be a semigroup. 
A homomorphism $\rho \colon P \to \B(H)$ is called a \emph{contractive representation} (respectively \emph{isometric}) if $\rho(s)$ is a contraction (respectively an isometry) for all $s \in P$.
\end{definition}

We make $c_{00}(P)$ into an algebra with the usual convolution product: a typical element is a finitely non-zero sum $\sum_{s\in P} \la_s s$ and $(\la s)(\mu t) = (\la\mu)st$. 
It is clear that the contractive representations of $P$ are in a natural bijective correspondence with the representations of $c_{00}(P)$ which send the generators to contractions. 
If one desires, one can put the $\ell_1$-norm on $c_{00}(P)$; and then these are precisely the contractive representations of $(c_{00}(P), \|\cdot\|_1)$.

Blecher and Paulsen \cite{BlePau91} introduce a universal operator algebra $\O(P)$ of a semigroup $P$.
We use the notation $\bC[P]$ for the image of $c_{00}(P)$ inside $\O(P)$. 
We will consider a variety of universal operator algebras for different families of representations. 
When all contractive representations are used, this reduces to their definition. 
We note that, since the unitary representations of a group $G$ coincide with the contractive representations of $G$, it follows that $\ca(G) = \O(G)$.

\begin{definition}
Let $\F$ be a family of contractive representations of $P$. 
Then $\O(P,\F)$ denotes the enveloping operator algebra of $c_{00}(P)$ with respect to the family $\F$. 
The enveloping operator algebra with respect to the family of all contractive representations of $P$ will be denoted by $\O(P)$.
\end{definition}

\begin{remark}
By definition, the contractive representations in $\F$ are unital completely contractive representations of $\O(P,\F)$. 
But an isometric representation of $\P$ may not yield a completely isometric representation of $\O(P,\F)$. 
Again, a counterexample is provided by Parrott \cite{Par70} for $\O(\bZ^3_+)$ since isometric representations of $\bZ_+^3$ have unitary dilations, but there are contractive representations which do not.
\end{remark}

If $P$ is an abelian subsemigroup of a group $G$, it need not be true that $\O(P)$ sits completely isometrically inside $\O(G)=\ca(G)$.
Indeed, Parrott's counter-example \cite{Par70} shows that there are contractive representations of $\bZ_+^3$ which are not completely contractive. 
Therefore the $\O(\bZ_+^3)$ norm on $c_{00}(\bZ_+^3)$ is strictly greater than the one induced from $\ca(\bZ^3) \simeq \rC(\bT^3)$.
Hence $\O(\bZ_+^3)$ does not embed completely isometrically into $\O(\bZ^3)$.
One can characterize the representations of $P$ which do extend.

\begin{definition}\label{D: cdp sgp}
Let $P$ be a unital subsemigroup of a discrete group $G$. 
A representation $T\colon P \to \B(H)$ is called \emph{completely positive definite} if it has a completely positive definite extension $\wt{T}\colon G \to \B(H)$.
\end{definition}

\begin{proposition}\label{P: subsgp un}
Let $P$ be a unital subsemigroup of a discrete group $G$. 
Then $T \colon P \to \B(H)$ is a completely positive definite representation of $P$ if and only if there is a unitary representation $U \colon G \to \B(K)$ such that $T(\cdot) = P_H U|_P(\cdot) |_H$.
\end{proposition}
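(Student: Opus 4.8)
The plan is to read off both implications from the Sz.-Nagy dilation theorem for groups (Theorem \ref{T: cpd groups}) together with the definition of complete positive definiteness (Definition \ref{D: cdp sgp}), regarding $T$ as a fixed contractive representation of $P$ throughout. For the forward implication, suppose $T$ is completely positive definite. Then by Definition \ref{D: cdp sgp} it admits a completely positive definite extension $\wt T\colon G \to \B(H)$. Applying the equivalence of items (1) and (3) in Theorem \ref{T: cpd groups}, this $\wt T$ dilates to a unitary representation $U\colon G \to \B(K)$ with $K \supseteq H$ and $\wt T(g) = P_H U(g)|_H$ for every $g \in G$. Restricting to $P$ and using $\wt T|_P = T$ yields $T(s) = P_H U(s)|_H$ for all $s \in P$, which is exactly the asserted compression formula.

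For the converse, suppose there is a unitary representation $U\colon G \to \B(K)$ with $T(\cdot) = P_H U|_P(\cdot)|_H$. I would define $\wt T\colon G \to \B(H)$ by $\wt T(g) = P_H U(g)|_H$; this manifestly restricts to $T$ on $P$ and is unital, since $\wt T(1) = P_H U(1)|_H = I_H$. The remaining task is to check that $\wt T$ is completely positive definite, and this is a single Gram-matrix computation. For any $h = \sum_{g} h_g\, g \in c_{00}(G,H)$, using that each $h_{g_i} \in H$ (so that the adjoint of $P_H$ acts trivially on it) together with $U(g_i^{-1}g_j) = U(g_i)^* U(g_j)$, we obtain
\[
\sum_{g_i,g_j}\sca{\wt T(g_i^{-1}g_j)h_{g_j},\, h_{g_i}} = \sum_{g_i,g_j}\sca{U(g_j)h_{g_j},\, U(g_i)h_{g_i}} = \Big\| \sum_{g} U(g) h_g \Big\|^2 \ge 0 .
\]
Hence $\wt T$ is a unital completely positive definite extension of $T$ to $G$, so $T$ is completely positive definite by Definition \ref{D: cdp sgp}.

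I do not expect a genuine obstacle here: the whole content of the proposition is carried by Theorem \ref{T: cpd groups}, and the statement is essentially its transcription from the group $G$ to the subsemigroup $P$. The only point worth flagging is in the converse direction, where one should keep in mind that $T$ is assumed to be a representation of $P$; the compression $P_H U|_P(\cdot)|_H$ of a unitary representation is completely positive definite regardless, but its multiplicativity on $P$ is part of the standing hypothesis rather than something produced by the dilation. The substantive step is therefore just the positivity computation displayed above, which realizes the compressed map as a bona fide completely positive definite function on $G$.
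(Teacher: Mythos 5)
Your proof is correct and follows the same route as the paper: the forward direction is exactly the paper's argument (apply Theorem \ref{T: cpd groups} to the completely positive definite extension $\wt T$ and compress the resulting unitary dilation to $H$). For the converse the paper simply writes ``immediate,'' and your Gram-matrix computation is just the standard easy implication $(3)\Rightarrow(1)$ of Theorem \ref{T: cpd groups} applied to the compression $g \mapsto P_H U(g)|_H$, so you have merely made explicit what the paper leaves to the reader.
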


\begin{proof}
Suppose that $T$ is a completely positive definite function on $P$. 
By Theorem~\ref{T: cpd groups}, $\wt T$ has a unitary dilation $U \colon G \to \B(K)$. Hence $T(\cdot) = P_H U|_P(\cdot) |_H$. 
The converse is immediate.
\end{proof}

We can always co-extend a completely positive definite representation of $P$ to an isometric representation.

\begin{proposition}\label{P: subsgp co-is}
Let $P$ be a unital subsemigroup of a discrete group $G$, and let $T \colon P \to \B(H)$ be a completely positive definite representation of $P$. 
Then $T$ co-extends to an isometric representation of $P$ which is completely positive definite.
\end{proposition}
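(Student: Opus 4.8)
The plan is to build the required isometric co-extension directly from a unitary dilation of $T$. By Proposition \ref{P: subsgp un}, since $T$ is completely positive definite, there is a unitary representation $U\colon G\to\B(K)$ on a Hilbert space $K\supseteq H$ with $T(s)=P_H U(s)|_H$ for all $s\in P$. I would then introduce the subspace
\[
L=\bigvee_{s\in P}U(s)H\subseteq K,
\]
and define $V(s):=U(s)|_L$.

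The first, routine, step is to check that $L$ has the expected structural properties. Since $U(s)U(t)=U(st)$ and $st\in P$ whenever $s,t\in P$, the space $L$ is invariant under every $U(s)$ with $s\in P$; and $L\supseteq H$ because $P$ is unital and $U(1)=I$. Consequently each $V(s)=U(s)|_L$ is an isometry (a unitary restricted to an invariant subspace), the map $V\colon P\to\B(L)$ is a homomorphism, and $P_HV(s)|_H=P_HU(s)|_H=T(s)$, so $V$ is an isometric dilation of $T$ living on a space containing $H$.

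The crux of the argument --- and the step I expect to require the most care --- is verifying that this dilation is in fact a \emph{co-extension}, i.e.\ that $H$ is co-invariant for every $V(s)$. I would establish the sharper statement $V(s)^*h=T(s)^*h$ for $h\in H$ by testing the difference against the spanning vectors $U(t)h'$ of $L$ (with $t\in P$, $h'\in H$). On one side, using $U(t)h'\in L$ and $st\in P$,
\[
\sca{V(s)^*h,U(t)h'}=\sca{h,U(st)h'}=\sca{h,T(st)h'};
\]
on the other side, since $T(s)^*h\in H$,
\[
\sca{T(s)^*h,U(t)h'}=\sca{T(s)^*h,T(t)h'}=\sca{h,T(s)T(t)h'}.
\]
These two expressions coincide precisely because $T$ is a homomorphism, so $T(st)=T(s)T(t)$; hence $V(s)^*h-T(s)^*h$ is orthogonal to all of $L$ while lying in $L$, forcing $V(s)^*h=T(s)^*h\in H$. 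I would emphasize that this is exactly the point at which the multiplicativity of $T$ (not merely its complete positive definiteness) enters, and it is the reason the dilation is a co-extension rather than a generic dilation.

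Finally, I would observe that $V$ is itself completely positive definite with essentially no extra work. Because $L$ is invariant under each $U(s)$ with $s\in P$, we have $V(s)=U(s)|_L=P_LU(s)|_L$ for every $s\in P$; that is, the unitary representation $U$ of $G$ compresses to $V$ on the subspace $L$. Applying the converse direction of Proposition \ref{P: subsgp un}, now with $L$ playing the role of $H$, shows that $V\colon P\to\B(L)$ is a completely positive definite representation. Together with the co-extension property established above, this completes the proof.
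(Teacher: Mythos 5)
Your proposal is correct and is essentially the paper's own proof: both take the unitary dilation $U$ of $G$ furnished by Proposition \ref{P: subsgp un}, restrict it to the invariant subspace $\bigvee_{p\in P}U(p)H$ to obtain the isometric representation $V$, and conclude that $V$ is completely positive definite because it is a compression of a unitary representation. The only difference is that you explicitly verify the co-extension property (that $V(s)^*h=T(s)^*h$ for $h\in H$, using multiplicativity of $T$), a step the paper asserts without proof; this is a worthwhile detail but not a different argument.
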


\begin{proof}
With the notation of Proposition \ref{P: subsgp un}, let $K_+=\bigvee_{p\in P} U(p)H$; and define $V(p)=U(p)|_{K}$ for each $p\in P$. 
Then $V$ is an isometric representation of $P$ that co-extends $T$. 
Moreover $V$ is a completely positive definite map of $P$ since it is a compression of a unitary representation.
\end{proof}

\begin{remark}
Similar to the group case, given a completely positive definite representation $T\colon P\to \B(H)$, we form a Hilbert space $H\otimes_T P$. 
To this end, let $\wt{T}$ be the completely positive definite extension of $T$ to $G$. 
Let $K_0= c_{00}(P,H)$ together with the semi-inner product
\begin{equation*}
\sca{h,h'}=\sum_{s,t\in P}\sca{ \wt{T}(t^{-1}s)h(s),h'(t)}.
\end{equation*}
As before, we let $N=\{h\in c_{00}(P,H) | \sca{h,h}=0\}$ and define $H\otimes_T P$ as the completion of $K_0/N$. 
Note that $H\otimes_T P$ embeds isometrically into $H\otimes_{\wt{T}} G$ because the definition of $\|h\|^2$ for $h \in H\otimes_T P$ is independent of whether it is considered in this space or in $H\otimes_{\wt{T}} G$.
With a simple modification of the proof of \cite[Theorem I.7.1]{SFBK}, one can realize the dilation $V$ of $T$ in Proposition \ref{P: subsgp co-is} on the Hilbert space $H\otimes_T P$. 
\end{remark}

On the other hand, for Ore semigroups, isometric representations are automatically completely positive definite. 
Laca \cite{Lac00} establishes this by constructing the unitary dilation. 
Here is an alternate argument.

\begin{proposition}[Laca] \label{P: iso span}
Let $P$ be an Ore semigroup and let $T \colon P \to \B(H)$ be an \textit{isometric} representation. 
Then $T$ is completely positive definite.
\end{proposition}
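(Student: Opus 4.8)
The plan is to construct a unitary representation $U$ of $G=P^{-1}P$ on a Hilbert space containing $H$ that dilates $T$; by Proposition~\ref{P: subsgp un} this is precisely what it means for $T$ to be completely positive definite. I would build $U$ by mirroring, at the level of Hilbert spaces, the direct-limit construction used in the proof of Theorem~\ref{T:Ore}, exploiting that $T$ carries $P$ into isometries. Equip $P$ with the right ordering $s\le t \Leftrightarrow t\in Ps$, set $K_s=H$ for each $s\in P$, and for $s\le t$ with $t=rs$ define the connecting map $\psi_s^t\colon K_s\to K_t$, $h\mapsto T(r)h$. Since $T$ is a homomorphism into isometries, each $\psi_s^t$ is isometric and the maps are compatible, so the direct-limit Hilbert space $K_\infty=\dirlim(K_s,\psi_s^t)$ is defined; write $[h]_s$ for the image of $h\in K_s$. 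The unit $e\in P$ gives an isometric copy $H\cong\{[h]_e\}$ of $H$ inside $K_\infty$.

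Next I would define, for each $t\in P$, an operator $U_t$ on $K_\infty$ by $U_t[h]_s=[T(p)h]_q$, where $p,q\in P$ are chosen with $ps=qt$; such a pair exists by the Ore condition $Ps\cap Pt\neq\emptyset$. Taking $(p,q)=(t,e)$ at the level $s=e$ gives $U_t[h]_e=[T(t)h]_e$, so $U_t$ restricts to $T(t)$ on the copy of $H$ and maps back into $H$; thus the compression of $U_t$ to $H$ is exactly $T(t)$.

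The main obstacle, and the technical heart of the argument, is to show that $U_t$ is well defined --- independent both of the choice of $(p,q)$ and of the representative level $s$. For the first, suppose $ps=qt$ and $p's=q't$; using the Ore condition pick $a,a'$ with $aq=a'q'=u$, so that $aps=aqt=ut=a'q't=a'p's$, and right cancellation of $s$ yields $ap=a'p'$. Pushing $[T(p)h]_q$ and $[T(p')h]_{q'}$ up to level $u$ then gives $[T(ap)h]_u=[T(a'p')h]_u$, so the two choices agree. Independence of the level follows formally: if $s'=rs$, any pair $(\bar p,\bar q)$ with $\bar p s'=\bar q t$ also satisfies $(\bar p r)s=\bar q t$, so both computations return $[T(\bar p r)h]_{\bar q}$.

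Once $U_t$ is well defined, the remaining verifications are routine. Isometry is immediate: for two vectors at a common level $s$, a single pair $(p,q)$ gives $\sca{U_t[h]_s,U_t[h']_s}=\sca{T(p)h,T(p)h'}=\sca{h,h'}$ since $T(p)$ is an isometry. Surjectivity holds because $U_t[k]_{qt}=[k]_q$ (take $(p,q)=(e,q)$), so $U_t$ has dense range and is therefore unitary. A short computation, identical to the one for the bijections $L_t$ in Theorem~\ref{T:Ore}, shows $U_{t_1}U_{t_2}=U_{t_1t_2}$, so $t\mapsto U_t$ is a homomorphism of $P$ into the unitary group $\U(K_\infty)$. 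By the universal property in Theorem~\ref{T:Ore} it extends to a group homomorphism $\wt{U}\colon G\to\U(K_\infty)$, that is, a unitary representation of $G$ with $P_H\wt{U}|_P(\cdot)|_H=T$. Proposition~\ref{P: subsgp un} then yields that $T$ is completely positive definite.
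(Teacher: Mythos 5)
Your proof is correct, but it takes a genuinely different route from the paper's — in fact it is essentially Laca's original argument, which the paper explicitly sets aside (``Laca establishes this by constructing the unitary dilation. Here is an alternate argument.''). You construct the dilation concretely: a direct limit of copies of $H$ along the right order on $P$ (directed precisely by the Ore condition), unitaries $U_t[h]_s=[T(p)h]_q$ for $ps=qt$, well-definedness via common multiples and cancellation, and then Proposition~\ref{P: subsgp un} converts the existence of the unitary dilation into complete positive definiteness; your well-definedness and homomorphism verifications are sound, and the remaining routine points (directedness of the order, linearity on the dense union) are harmless omissions. The paper instead never leaves $\B(H)$: it defines the extension $\wt{T}(g)=T(s)^*T(t)$ for $g=s^{-1}t$, proves it is well defined by the same common-multiple/cancellation manipulation you use (choose $r=xs=yu$, deduce $xt=yv$, and use $T(x)^*T(x)=I$), and then checks positivity of $\big[\wt{T}(g_i^{-1}g_j)\big]$ directly: choosing a common left denominator $g_i=r^{-1}s_i$ gives $g_i^{-1}g_j=s_i^{-1}s_j$, so the matrix is $\big[T(s_i)^*T(s_j)\big]=X^*X\ge 0$ with $X=\big[T(s_1)\ \dots\ T(s_n)\big]$. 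What each approach buys: the paper's is shorter, avoids direct limits entirely, and stays at the level of the definition of complete positive definiteness, with the dilation itself only coming afterwards from Sz.-Nagy (Theorem~\ref{T: cpd groups}); yours produces the dilation explicitly — with $H$ invariant under each $U_t$, so $U$ is an extension of $T$ — which carries more structural information and is the same direct-limit technique the paper reuses later for covariant pairs (e.g., Theorem~\ref{T: cone aut is}). Both arguments hinge in exactly the same way on the Ore condition together with cancellation.
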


\begin{proof}
By Theorem~\ref{T:Ore}, $P$ embeds into a group $G$ such that $G = P^{-1} P$. 
We will extend $T$ to a completely positive definite function on the group $G$ by setting 
\[ 
\wt{T}(g)=T(s)^*T(t) \qfor g= s^{-1}t . 
\]
We first show that this is well defined. 
Suppose that $g = s^{-1}t = u^{-1}v$ for $s,t,u,v \in P$.
Since $Ps \cap Pu$ contains an element $r$, there are elements $x,y\in P$ so that $r = xs = yu$. 
Then 
\[ 
r^{-1}xt = s^{-1} x^{-1}xt = s^{-1}t = u^{-1}v = u^{-1}y^{-1}yv = r^{-1}yv .
\]
Therefore $xt=yv$. 
We compute
\begin{align*}
 T(s)^*T(t) &= T(s)^*T(x)^*T(x)T(t) = T(xs)^* T(xt) \\&
 = T(yu)^*T(yv) = T(u)^* T(y)^*T(y)T(v) = T(u)^* T(v) .
\end{align*}

If $g_1,\dots,g_n$ are elements of $G$, we can use the argument above to show that there are elements $r,s_1,\dots,s_n$ in $P$ so that $g_i = r^{-1}s_i$. 
It follows that $g_i^{-1}g_j = s_i^{-1}s_j$. 
Let $X = \big[ T(s_1) \ \dots \ T(s_n) \big]$. 
Then for $h = (h_1,\dots, h_n)$ in $H^{(n)}$, 
\[
 \bip{ \big[ \wt T(s_i^{-1}s_j) \big] h, h } = \bip{ \big[ T(s_i)^* T(s_j) \big] h, h } =\ip{ X^*X h,h} = \|Xh\|^2 \ge 0 .
\]
Hence $T$ is a completely positive definite function on $P$.
\end{proof}

\begin{examples}\label{E: iso span}
There are many other cases where representations of certain semigroups are completely positive definite. 
The first two are special cases of Proposition~\ref{P: iso span}.

\begin{enumerate}[itemsep=2pt,parsep=2pt,leftmargin=1cm]
\renewcommand{\labelenumi}{(\arabic{enumi})}
\item Isometric representations of spanning cones (see \cite[Theorem 5.4]{Pau02}).

\item Contractive representations of totally ordered lattices $(G,P)$, Mlak \cite{Mla66}. 

\item Contractive representations of $(\bZ^2, \bZ_+^2)$, And\^{o} \cite{And63}.

\item Doubly commuting contractive representations of $(\bZ^n, \bZ_+^n)$. However, it is not true that $\wt{T}$ is completely positive definite for all contractive representations of $(\bZ^n, \bZ_+^n)$ if $n\ge3$, as Parrott's example \cite{Par70} is a contractive representation of $\bZ_+^3$ with no unitary dilation.

\item More generally, if $(G_i,P_i)$ are totally ordered abelian groups for $i \in I$, and $T \colon P= \oplus_{i \in I} P_i \to \B(H)$ is a contractive representation such that $T(s_i) T(s_j)^* = T(s_j)^* T(s_i)$ for $i \neq j$, (Proposition~\ref{P: reg dc}).

\item Contractive representations of the free semigroup (Remark \ref{R: free id}).
\end{enumerate}
\end{examples}

Now we have set the context in order to argue for the advantage of completely positive definite representations. 
Following Arveson's program on the C*-envelope, one tries to associate the C*-envelope of a nonselfadjoint operator algebra to a natural C*-object. 
For a subsemigroup $P$ of a group $G$ generated by $P$, let $\F_{\textup{cpd}}$ denote the family of completely positive definite functions on $P$. 
Assume that $G$ is discrete, and let $\F_{\textup{un}}$ denote the family of restrictions to $P$ of unitary representations of $G$.
If the goal is to prove that the C*-envelope of $\O(P,\F)$ is $\ca(G)$, then there is only one way to go.

\begin{proposition}\label{P: cpd sgp}
Let $P$ be a subsemigroup of a discrete group $G$, such that $P$ generates $G$, and let $\F$ be a family of contractive representations of $c_{00}(P)$. 
Then the following are equivalent:
\begin{enumerate}
\item $\F \subseteq \F_{\textup{cpd}}$ and every $\rho \in \F_{\textup{un}}$ defines a completely contractive representation of $\O(P,\F)$;
\item the inclusion $P \to \bC[G]$ lifts to a completely isometric homomorphism of $\O(P,\F)$ into $\ca(G)$;
\item $\cenv(\O(P,\F)) \simeq \ca(G)$.
\end{enumerate}
\end{proposition}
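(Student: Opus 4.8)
The plan is to prove the cycle $(1)\Rightarrow(2)\Rightarrow(3)\Rightarrow(1)$, using Proposition~\ref{P: subsgp un} to pass between completely positive definite representations of $P$ and compressions of unitary representations of $G$, together with the Dritschel--McCullough description of the C*-envelope as the C*-algebra generated by a maximal completely isometric representation \cite{DriMcC05}.

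For $(1)\Rightarrow(2)$, the inclusion $p\mapsto p\in\bC[G]$ extends linearly to a homomorphism $j\colon c_{00}(P)\to\ca(G)$, and I would verify that it is completely isometric by comparing the two matrix norms directly. The norm on $\ca(G)$ of an element of $M_n(c_{00}(P))$ is a supremum of $\|U^{(n)}(\cdot)\|$ over unitary representations $U$ of $G$, and each such $U|_P$ lies in $\F_{\textup{un}}$; so the hypothesis that every member of $\F_{\textup{un}}$ is completely contractive on $\O(P,\F)$ gives $\|\cdot\|_{\ca(G)}\le\|\cdot\|_\F$. Conversely, $\F\subseteq\F_{\textup{cpd}}$ means, by Proposition~\ref{P: subsgp un}, that each $\rho\in\F$ is a compression $P_H U|_P(\cdot)|_H$ of a unitary representation, whence $\|\rho^{(n)}(\cdot)\|\le\|U^{(n)}(\cdot)\|\le\|\cdot\|_{\ca(G)}$; taking the supremum over $\F$ yields $\|\cdot\|_\F\le\|\cdot\|_{\ca(G)}$. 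Equality of the two norms is precisely statement (2).

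For $(3)\Rightarrow(1)$, suppose $\cenv(\O(P,\F))\simeq\ca(G)$. Any $\rho\in\F$ is a unital completely contractive representation of $\O(P,\F)$, which sits completely isometrically inside $\ca(G)$, so by Arveson's extension theorem it extends to a unital completely positive map on $\ca(G)$; Theorem~\ref{T: cpd groups} then shows that its restriction to $G$ is completely positive definite and extends $\rho$, i.e.\ $\rho\in\F_{\textup{cpd}}$. Moreover, each $\rho\in\F_{\textup{un}}$ is the restriction to $P$ of a unitary representation $U$ of $G$, which induces a $*$-representation of $\ca(G)\simeq\cenv(\O(P,\F))$, and the restriction of a $*$-representation of the ambient C*-algebra to $\O(P,\F)$ is automatically completely contractive and agrees with $\rho$ on the generators. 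This gives (1).

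The crux is $(2)\Rightarrow(3)$, and the main obstacle is a maximality argument. Fix a faithful representation of $\ca(G)$ and let $\sigma_0$ be the resulting representation of $\O(P,\F)$ on $H$; by (2) it is completely isometric, and since $P$ generates $G$ we have $\ca(\sigma_0(\O(P,\F)))=\ca(G)$, so $(\ca(G),\sigma_0)$ is a C*-cover. By Dritschel--McCullough it then suffices to show that $\sigma_0$ is a \emph{maximal} representation, for then $\ca(G)=\ca(\sigma_0(\O(P,\F)))\simeq\cenv(\O(P,\F))$. The key rigidity point is that each generator $\sigma_0(p)$, $p\in P$, is a \emph{unitary} in $\ca(G)$. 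If $\Sigma$ is any dilation of $\sigma_0$ on $K\supseteq H$, then each $\Sigma(p)$ is a contraction whose compression to $H$ is the unitary $\sigma_0(p)$; from $\|\Sigma(p)h\|\ge\|\sigma_0(p)h\|=\|h\|\ge\|\Sigma(p)h\|$ for $h\in H$ one gets $\Sigma(p)H\subseteq H$ with $\Sigma(p)|_H=\sigma_0(p)$, and then the estimate $\Sigma(p)^*\Sigma(p)\le I$ forces the remaining off-diagonal block to vanish, so that $H$ reduces $\Sigma(p)$. As the $\Sigma(p)$ generate $\Sigma(\O(P,\F))$, the subspace $H$ reduces the whole image and $\Sigma=\sigma_0\oplus\Sigma'$, proving maximality. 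I would carry out this reducing-subspace computation carefully, since it is exactly where the unitarity of the generators (guaranteed by working inside $\ca(G)$) is indispensable and where merely isometric or contractive generators would fail.
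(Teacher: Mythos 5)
Your proof is correct, and for two of the three implications it is essentially the paper's own argument written out in finer detail. Your $(1)\Rightarrow(2)\Rightarrow(3)$ is the paper's proof of (i)$\Rightarrow$(iii) split into two steps: the norm comparison in $(1)\Rightarrow(2)$ is exactly what the paper extracts from Proposition~\ref{P: subsgp un} together with the hypothesis on $\F_{\textup{un}}$, and your reducing-subspace computation in $(2)\Rightarrow(3)$ is a careful expansion of the paper's one-line assertion that the embedding is maximal ``because unitaries have only trivial dilations,'' after which both you and the paper invoke the Dritschel--McCullough description of the C*-envelope \cite{DriMcC05} and the fact that $P$ generates $G$. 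Where you genuinely diverge is $(3)\Rightarrow(1)$: the paper takes $\rho\in\F$, passes to a maximal dilation $\si$ (made completely isometric by adding a maximal completely isometric summand), identifies $\ca(\si(\O(P,\F)))$ with $\cenv(\O(P,\F))\simeq\ca(G)$ by a $*$-isomorphism carrying $\si(s)$ to $U(s)$, concludes the $\si(s)$ are unitaries, and only then applies Proposition~\ref{P: subsgp un} to the compression $\rho$; you instead extend $\rho$, viewed as a unital completely contractive map on $\O(P,\F)\subseteq\ca(G)$, to a unital completely positive map on $\ca(G)$ via Arveson's extension theorem and quote Sz.-Nagy's Theorem~\ref{T: cpd groups} to get complete positive definiteness of its restriction to $G$. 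Your route is shorter and bypasses maximal dilations entirely, trading them for operator-system machinery; the paper's route stays inside the dilation-theoretic framework it uses throughout the rest of the memoir. Do note that both arguments rest on the same unstated convention about statement (3): that the isomorphism $\cenv(\O(P,\F))\simeq\ca(G)$ matches the copy of $s\in P$ with the canonical unitary $U(s)$ (the paper asserts this when it writes $\Phi(\si(s))=U(s)$; you use it when you say the UCP extension's restriction to $G$ ``extends $\rho$,'' and again when you say $*$-representations of $\ca(G)$ ``agree with $\rho$ on the generators''). With that --- clearly intended --- reading of (3), both proofs are complete.
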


\begin{proof}
Assume that (i) holds. 
By Proposition \ref{P: subsgp un}, and since the unitary representations of $G$ define completely contractive representations of $\O(P,\F)$, we obtain that $\O(P,\F)$ embeds in $\ca(G)$ via a unital completely isometric representation that sends the generators indexed by $P$ to unitaries. 
This embedding is a maximal representation because unitaries have only trivial dilations. Since $P$ generates $G$, the C*-algebra of the range of $\O(P,\F)$ contains the generators of $\ca(G)$. 
Therefore $\ca(G)$ is the C*-envelope.

Assume that (iii) holds. 
First note that $\O(P,\F)$ embeds in $\ca(G)$ completely isometrically. 
Therefore the unitary representations of $G$ define completely contractive representations of $\O(P,\F)$. 
Now let $\rho \in \F$, and let $\si$ be a maximal dilation of $\rho$. 
We may assume that $\rho$ is completely isometric by taking the direct sum with a maximal completely isometric representation. 
Therefore $\ca(\si(\O(P,\F))) \simeq \cenv(\O(P,\F)) \simeq \ca(G)$, by a unique $*$-isomorphism $\Phi \colon \ca(\si(\O(P,\F))) \to \ca(G)$ such that $\Phi(\si(s))=U(s)$ for every $s\in P$, where $U$ denotes a universal unitary representation of $G$. 
Thus $\si(s)$ is a unitary for every $s\in P$. 
Hence $\rho$ has a unitary maximal dilation $\si$ and Proposition \ref{P: subsgp un} shows that $\rho$ is completely positive definite.

Now, the equivalence of (ii) and (iii) is immediate, since unitaries have trivial dilations. 
\end{proof}

We note the following corollary of Proposition \ref{P: cpd sgp} for the special case when $(G,P)$ is totally ordered. 
In this case, the positive spanning cone $P$ satisfies $P\cup -P=G$. 
As noted above, Mlak's Theorem \cite{Mla66} shows that all contractive representations of $P$ are automatically completely positive definite.

\begin{corollary}
Let $P$ be a spanning cone of $G$ that defines a total ordering on $G$. 
Then $\O(P)$ is a Dirichlet algebra.
\end{corollary}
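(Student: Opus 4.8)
The plan is to identify the C*-envelope of $\O(P)$ with $\ca(G)$ by invoking Proposition \ref{P: cpd sgp}, and then to read off the Dirichlet property from the concrete picture of the embedding together with the defining property of a total order, namely $P \cup (-P) = G$. So the argument splits into two steps: first establish that the hypotheses of Proposition \ref{P: cpd sgp} are met, and second perform the density computation inside $\ca(G)$.

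For the first step, take $\F$ to be the family of \emph{all} contractive representations of $P$, so that $\O(P,\F)=\O(P)$. Since $(G,P)$ is totally ordered it is in particular lattice-ordered (any two comparable elements have a meet and a join), so Mlak's Theorem \cite{Mla66}, recorded in Example \ref{E: iso span}(2), shows that every contractive representation of $P$ is completely positive definite; that is, $\F \subseteq \F_{\textup{cpd}}$. Moreover the restriction to $P$ of any unitary representation of $G$ is a contractive representation of $P$, hence lies in $\F$ and thus, by the very construction of $\O(P)$, defines a completely contractive representation of $\O(P)$; in particular $\F_{\textup{un}}$ consists of completely contractive representations of $\O(P)$. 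Thus condition $(\textup{i})$ of Proposition \ref{P: cpd sgp} holds, and the proposition yields both $\cenv(\O(P)) \simeq \ca(G)$ and a completely isometric homomorphism of $\O(P)$ into $\ca(G)$ extending the inclusion $P \to \bC[G]$.

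For the second step I would work inside this copy of $\O(P)$ in $\ca(G)$. Under the embedding, $\bC[P]$ maps onto $\Span\{g : g\in P\}$; since $P$ is a semigroup this linear span is already an algebra, so $\O(P)$ is identified with $\ol{\Span}\{g : g\in P\}$. Taking adjoints, and using $g^* = -g$ for $g\in G$ under the universal unitary representation, gives $\O(P)^* = \ol{\Span}\{-g : g \in P\} = \ol{\Span}\{g : g \in -P\}$. Consequently $\O(P)+\O(P)^*$ contains $\Span\{g : g\in P\cup(-P)\}$. Now the total-order hypothesis is exactly what is needed: $P \cup (-P) = G$, so this span equals $\Span\{g : g\in G\} = \bC[G]$, which is dense in $\ca(G)$ by construction. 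Hence $\O(P)+\O(P)^*$ is dense in $\ca(G) = \cenv(\O(P))$, which is precisely the assertion that $\O(P)$ is a Dirichlet algebra.

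There is no serious obstacle here; the content has already been front-loaded into Proposition \ref{P: cpd sgp} and Mlak's Theorem. The one point that genuinely matters — and the place where the total-order hypothesis (as opposed to merely a spanning or lattice cone) is essential — is the identity $P \cup (-P) = G$, which is what forces the positive and negative parts together to exhaust the group algebra rather than only a proper dense subalgebra; for a general lattice cone the sum $\O(P)+\O(P)^*$ need not be dense.
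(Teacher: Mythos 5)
Your proposal is correct and follows essentially the same route as the paper: Mlak's Theorem to see that all contractive representations of $P$ are completely positive definite, Proposition \ref{P: cpd sgp} to identify $\cenv(\O(P))$ with $\ca(G)$ and embed $\bC[P]$ into $\bC[G]$, and then the observation that $P \cup (-P) = G$ forces $\bC[P] + \bC[P]^*$ to span a dense subset of $\ca(G)$. Your write-up merely makes explicit the verification of condition (i) of the proposition, which the paper leaves implicit.
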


\begin{proof}
By \cite{Mla66} we obtain that $\O(P) \simeq \O(P,\F_\textup{cpd})$, where $\F_{\textup{cpd}}$ is the class of completely positive definite maps. 
In Proposition \ref{P: cpd sgp}, we proved that the C*-envelope of $\O(P,\F_\textup{cpd})$ is $\ca(G)$.
Hence $\bC[P] \subseteq \O(P)$ embeds in $\bC[G]$. 
Moreover we see that $\bC[P] + \bC[P]^*$ spans a dense subset of $\ca(G)$, since $P\cup P^{-1} =G$.
Thus it is a Dirichlet algebra.
\end{proof}

\section{Lattice-ordered abelian groups}

Let $(G,P)$ be a lattice-ordered abelian group. 
By the uniqueness of the decomposition $g= g_+-g_-$, we can extend a representation $T \colon P \to \B(H)$ to a map $\wt{T} \colon G \to \B(H)$ such that 
\[ 
\wt{T}(g) := T_{g_-}^*T_{g_+} \qfor g= g_+-g_- \in G. 
\] 
This extension is called the \emph{regular extension} of $T$. 
In the literature this appears in the following form.

\begin{definition}
A contractive representation $T \colon P \to \B(H)$ is called \emph{regular} if there exists a unitary representation $U \colon G \to \B(K)$ with $H \subseteq K$ such that
\[
P_H U_g|_H = T_{g_-}^* T_{g_+},
\]
for all $g\in G$. 
Equivalently, a representation $T$ is regular if and only if its regular extension $\wt{T}$ is completely positive definite.
\end{definition}

\begin{remark}\label{R: iso reg}
Note that isometric representations are automatically regular. 
Indeed, if $V$ is an isometric representation of $P$ on $H$ then $V$ is completely positive definite by Proposition \ref{P: iso span}. 
If $U$ is a unitary extension of $V$ then 
\begin{align*}
P_H U_g|_H &=P_H U_{g_-}^*U_{g_+}|_H= P_H U_{g_-}^*P_H U_{g_+}|_H
= V_{g_-}^*V_{g_+}.
\end{align*}
Thus $V$ is regular.
\end{remark}

\begin{remark}\label{R: reg min dil}
The unitary representation in the definition of the regular representation above can be chosen to be minimal, i.e., $\bigvee_{g \in G} U_g H =K$. 
Indeed, if $T$ is regular by a unitary $U$ acting on $L$ that contains $H$, let $K = \bigvee_{g \in G} U_g H$. 
Then $K$ is a reducing subspace for $U$, hence we get the unitary representation $\wt{U} \colon G \to \B(K)$ with $\wt{U}=U|_K$.
Since $H \subseteq K \subseteq L$ it is easy to check that $T$ is regular by $\wt{U}$.

Moreover, when $T$ is regular by a minimal unitary representation $U$, then $U$ is unitarily equivalent to the dilation $U'$ of $T$ acting on $H \otimes_T G$. 
This comes from the fact that the unique minimal Stinespring dilation of the compression $(P_H \pi(\cdot)|_H, H)$ of a $*$-representation $(\pi,K)$ with $[\pi(A)H] =K$ is unitarily equivalent to the $*$-representation $\pi$.
\end{remark}

As pointed out in Proposition~\ref{P: iso span}, the only obstruction to a unitary dilation is the inability to co-extend to an isometric representation. 
This implies the following characterization of regular representations.

\begin{proposition}\label{P: reg is}
Let $(G,P)$ be a lattice-ordered abelian group. 
A representation $T$ of $P$ is regular if and only if it co-extends to an isometric representation $V$ such that
\[
P_H V_{g_-}^* V_{g_+} |_H = T_{g_-}^* T_{g_+},
\]
for all $g \in G$.
\end{proposition}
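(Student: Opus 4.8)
The plan is to prove both implications by passing through the unitary dilation of the regular extension $\wt T(g) = T_{g_-}^* T_{g_+}$ and the isometric co-extension obtained by restricting that dilation to the subspace generated by $P$. Neither direction requires new dilation theory; everything reduces to Proposition \ref{P: subsgp un}, Proposition \ref{P: subsgp co-is}, Proposition \ref{P: iso span} and Theorem \ref{T: cpd groups}, together with careful bookkeeping of the decomposition $g = g_+ - g_-$.

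For the forward implication I would start from the definition of regularity: there is a unitary representation $U \colon G \to \B(K)$ with $H \subseteq K$ and $P_H U_g|_H = \wt T(g)$ for every $g \in G$. Since this exhibits $\wt T$ as completely positive definite (and $\wt T$ restricts to $T$ on $P$), the representation $T$ is completely positive definite; restricting $U$ to $K_+ = \bigvee_{p \in P} U_p H$ yields, exactly as in Proposition \ref{P: subsgp co-is}, an isometric co-extension $V_p = U_p|_{K_+}$ of $T$. The only thing left is to check the displayed compression relation, and here the key computation is that for $h \in H$ and $g = g_+ - g_-$,
\[
V_{g_-}^* V_{g_+} h = P_{K_+} U_{g_-}^* U_{g_+} h = P_{K_+} U_g h,
\]
because $g_+ \in P$ forces $U_{g_+} h \in K_+$. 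Compressing once more to $H$ and using $P_H P_{K_+} = P_H$ gives $P_H V_{g_-}^* V_{g_+} h = P_H U_g h = \wt T(g) h = T_{g_-}^* T_{g_+} h$, as required.

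For the reverse implication I would begin from an isometric co-extension $V$ on $K_+ \supseteq H$ satisfying the compression relation. Because $(G,P)$ is lattice-ordered, $P$ is a spanning cone and hence an Ore semigroup, so Proposition \ref{P: iso span} applies and shows $V$ is completely positive definite; its regular extension is precisely $\wt V(g) = V_{g_-}^* V_{g_+}$. Dilating $\wt V$ to a unitary representation $U \colon G \to \B(K)$ with $K \supseteq K_+$ and $P_{K_+} U_g|_{K_+} = \wt V(g)$ by Theorem \ref{T: cpd groups}, I would then compress to $H$: since $H \subseteq K_+$,
\[
P_H U_g|_H = P_H \big( P_{K_+} U_g|_{K_+} \big)\big|_H = P_H V_{g_-}^* V_{g_+}|_H = T_{g_-}^* T_{g_+} = \wt T(g).
\]
Thus $U$ implements $\wt T$ by compression, so $\wt T$ is completely positive definite and $T$ is regular by definition.

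The genuine content here is light, so the main obstacle is bookkeeping rather than a hard estimate: one must keep straight that the cited results deliver unitary dilations of the regular extensions $\wt T$ and $\wt V$ over the whole group (not merely over $P$), and that restriction to $K_+$ gives an honest isometric representation of $P$ with $H$ still co-invariant. The one point that genuinely must not be skipped is invoking Proposition \ref{P: iso span} in the reverse direction, which relies on observing that a lattice-ordered $P$ is Ore so that the isometric representation $V$ is \emph{automatically} completely positive definite; this is exactly what makes the isometric co-extension, rather than a merely contractive one, the correct intermediate object.
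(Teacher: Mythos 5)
Your proposal is correct and takes essentially the same route as the paper's proof: the forward direction restricts the unitary dilation of $\wt{T}$ to $\bigvee_{p\in P} U_p H$ and verifies the compression identity using exactly the observation that $g_+ \in P$ keeps $U_{g_+}H$ inside that subspace, which is the paper's computation. Your converse, which deduces that the isometric co-extension $V$ is completely positive definite from Proposition \ref{P: iso span} (lattice-ordered $\Rightarrow$ spanning cone $\Rightarrow$ Ore) and then compresses a unitary dilation of $\wt{V}$ to $H$, is the paper's argument with Remark \ref{R: iso reg} unwound to its source, so the difference is purely cosmetic.
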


\begin{proof}
Suppose that $T\colon P \to \B(H)$ is regular, and hence completely positive definite.
Let $U\colon G \to \B(K)$ be the minimal unitary dilation of $T$.
Set $L = \bigvee_{p\in P} U_p H$. 
Then $V_p = U_p|_L$ is a minimal isometric dilation of $T$.
For each $g \in G$, write $g= g_+ - g_-$.
Then
\begin{align*}
 T_{g_-}^* T_{g_+} &= P_H U_{g_-}^* U_{g_+} |_H \\&= 
 P_H P_L U_{g_-}^* U_{g_+} P_L |_H \\&=
 P_H P_L U_{g_-}^* P_L U_{g_+} P_L |_H \\&=
 P_H V_{g_-}^* V_{g_+} |_H.
\end{align*}

Conversely if $T$ co-extends to an isometric representation $V\colon P \to \B(K)$, then $V$ is regular by Remark~\ref{R: iso reg}. Let $U$ be a unitary dilation of $V$.
Then $U|_P$ is necessarily an extension of $V$, and thus a unitary dilation of $T$. We compute
\begin{align*}
P_H U_g |_H
& =
P_H U_{g_-}^* U_{g_+} |_H \\&
 =
P_H P_L U_{g_-}^* U_{g_+} P_L |_H \\&
 =
P_H V_{g_-}^* V_{g_+} |_H 
 =
T_{g_-}^* T_{g_+} .
\end{align*}
Therefore $T$ is regular.
\end{proof}

In contrast to the group case, tensoring representations of semigroups requires extra caution. 
In fact this boils down to one of the fundamental questions in dilation theory, the existence of commutant lifting theorems (see \cite{DavKat11-2} for a full discussion).

\begin{definition}
Let $P_i$ be unital semigroups and $T_i \colon P_i \to \B(H)$ be contractive representations, for $i=1,2$. 
We say that $T_1, T_2$ \emph{doubly commute} if for all $s \in P_1$ and $t \in P_2$,
\[ 
T_1(s)T_2(t) = T_2(t) T_1(s) \qand T_1(s) T_2(t)^*= T_2(t)^* T_1(s) .
\]
\end{definition}

For a possibly infinite family $\{(G_i,P_i)\}_{i \in I}$ of lattice-ordered abelian groups, we can define the lattice-ordered abelian group $(G,P)$ where $G = \oplus_i G_i$ and $P= \oplus_i P_i$, with
\begin{align*}
(g_i) \vee (f_i) = (g_i \vee f_i), \AND (g_i) \wedge (f_i) = (g_i \wedge f_i).
\end{align*}
Note that the elements $(g_i) \in G$ are finitely supported.

\begin{proposition}\label{P: reg dc}
Let $\{(G_i,P_i)\}_{i \in I}$ be a possibly infinite family of lattice-ordered abelian groups. 
Suppose that $T_i \colon P_i \to \B(H)$ are regular representations which pairwise doubly commute. 
Then the representation $T \colon \oplus_i P_i \to \B(H)$ given by $T((p_i)_i)=\prod_i T_i(p_i)$ is regular.
\end{proposition}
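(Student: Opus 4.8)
The plan is to show directly that the regular extension $\wt{T}$ of $T$ is completely positive definite, which by the very definition of regularity is equivalent to the conclusion. The guiding idea is to recognise $\wt{T}$ as the $\odot$-product of the individual regular extensions $\wt{T}_i$ and then to invoke Theorem~\ref{T: com grp}. First I would record that, since each $T_i$ is regular, its regular extension $\wt{T}_i\colon G_i \to \B(H)$, given by $\wt{T}_i(g) = T_i(g_-)^*\,T_i(g_+)$, is completely positive definite.

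Next I would use the double-commuting hypothesis to show that for $i \neq j$ the ranges of $\wt{T}_i$ and $\wt{T}_j$ commute. Since $T_i(s)$ commutes with both $T_j(t)$ and $T_j(t)^*$, and (taking adjoints of these relations) so do the adjoints $T_i(s)^*$, every factor appearing in $\wt{T}_i(g) = T_i(g_-)^*T_i(g_+)$ commutes with every factor of $\wt{T}_j(g')$, whence $\wt{T}_i(g)\,\wt{T}_j(g') = \wt{T}_j(g')\,\wt{T}_i(g)$. Thus $\{\wt{T}_i\}_{i\in I}$ is a family of pairwise commuting completely positive definite maps. I would then identify the $\odot$-product with $\wt{T}$: for $g = (g^{(i)})_i \in \oplus_i G_i$ the lattice operations are componentwise, so $g_+ = (g_+^{(i)})_i$ and $g_- = (g_-^{(i)})_i$, giving $T_{g_\pm} = \prod_i T_i(g_\pm^{(i)})$. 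Splitting the adjoint as $\big(\prod_i T_i(g_-^{(i)})\big)^* = \prod_i T_i(g_-^{(i)})^*$ and regrouping by index via double commutation yields
\[
\wt{T}(g) = T_{g_-}^* T_{g_+} = \prod_i T_i(g_-^{(i)})^*\, T_i(g_+^{(i)}) = \prod_i \wt{T}_i(g^{(i)}),
\]
which is exactly $\big(\bigodot_i \wt{T}_i\big)(g)$.

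Finally I would apply Theorem~\ref{T: com grp}. Complete positive definiteness is tested on finite subsets $\{g_1,\dots,g_n\}$ of $G = \oplus_i G_i$, and any such set has support contained in a finite $F \subseteq I$; hence the positivity of the matrix $[\wt{T}(g_k^{-1}g_l)]$ reduces to the statement that $\bigodot_{i \in F} \wt{T}_i$ is completely positive definite on $\oplus_{i \in F} G_i$, which follows from Theorem~\ref{T: com grp} together with the inductive remark following it. Letting $F$ range over all finite subsets shows $\wt{T} = \bigodot_i \wt{T}_i$ is completely positive definite on $G$, so $T$ is regular.

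The main obstacle I anticipate lies in the middle step: correctly passing the adjoint through the several factors of $T_{g_-}$ and regrouping them by index relies essentially on \emph{double} commutation, since one must commute each $T_i(g_-^{(i)})^*$ past the $T_j(g_+^{(j)})$ for $j \neq i$, something ordinary commutativity of the $T_i$ would not provide. The infinite index set requires only the routine observation that every finite positivity test involves finitely many coordinates, so it presents no genuine difficulty once the finite case is settled.
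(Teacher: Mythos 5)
Your proposal is correct and follows essentially the same route as the paper: the paper likewise shows (via the doubly commuting hypothesis) that the regular extension of the product representation coincides with the $\odot$-product of the individual regular extensions $\wt{T_i}$, invokes Theorem~\ref{T: com grp} (with the inductive remark) to get complete positive definiteness in the finite case, and handles infinite $I$ by the same finite-support reduction. The only cosmetic difference is organizational — the paper does $|I|=2$ explicitly and then inducts, while you identify $\wt{T}=\bigodot_i \wt{T}_i$ globally before restricting to finitely many coordinates.
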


\begin{proof}
First we show that $T_1\odot T_2$ is completely positive definite. 
Then induction finishes the proof when $|I|<\infty$. 
A direct computation shows that
\begin{align*}
\wt{T_1}(g_1) \wt{T_2}(g_2)
& =
T_1(-g_1 \wedge 0)^*T_2(-g_2 \wedge 0)^* T_1(g_1 \vee 0) T_2(g_2 \vee 0)\\
& =
\wt{T_1 \odot T_2}(g_1,g_2) .
\end{align*}
Similarly $\wt{T_2}(g_2) \wt{T_1}(g_1) = \wt{T_1 \odot T_2}(g_1,g_2)$ for all $g_i \in G_i$, $i=1, 2$. 
Hence by Theorem \ref{T: com grp} we obtain that $\wt{T_1 \odot T_2} = \wt{T_1} \odot \wt{T_2}$ is a completely positive definite function on $G_1 \oplus G_2$.

Now assume that $I$ is infinite. 
Let $\wt{T} \colon G \to \B(H)$ be the regular extension of $T$ on the group $G = \oplus_{i\in I} G_i$, with $G_i = -P_i + P_i$, and let $(p_i^{(1)}), \dots, (p_i^{(n)})$ be finitely many points in $P$. By definition the elements $(p_i^{(k)})$ are finitely supported. 
Hence they all belong to a subsemigroup $P'$ of $P$ of the form $P'=\oplus_{m=i_1}^{i_N} P_{m}$. 
Consequently the elements $-(p_i^{(k)})+ (p_i^{(l)})$ are in the subgroup $G' = \oplus_{m=i_1}^{i_N} G_{m}$ of $G$. 
It is easy to check that the regular extension of $T|_{P'}$ coincides with $\wt{T}|_{G'}$. 
By the finite case, the mapping $\wt{T}|_{G'}$ is completely positive definite.
Therefore the matrix
\begin{align*}
[\wt{T}(-(p_i^{(k)})+ (p_i^{(l)}))] = [\wt{T}|_{G'}(-(p_i^{(k)})+ (p_i^{(l)}))] \ge 0 .
\end{align*}
This is true for any $n$ elements in $P$, and therefore $T$ is regular.
\end{proof}

Mlak's Theorem \cite{Mla66} shows that if $(G,P)$ is a totally ordered abelian group, then every contractive representation of $P$ is regular. 
Thus we obtain the following corollary of Proposition \ref{P: reg dc}.

\begin{corollary} \label{C: reg Zn}
Doubly commuting representations of $\bZ^n_+$ are regular.
\end{corollary}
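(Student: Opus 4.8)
The plan is to realize $\bZ^n_+$ as the finite direct sum $\bigoplus_{i=1}^n \bZ_+$ of totally ordered cones and then invoke Proposition \ref{P: reg dc}. A doubly commuting representation $T \colon \bZ^n_+ \to \B(H)$ is determined by the $n$ contractions $T_i := T(e_i)$, where $e_i$ is the $i$-th standard generator; by hypothesis these satisfy $T_i T_j = T_j T_i$ and $T_i T_j^* = T_j^* T_i$ for $i \neq j$. Let $T^{(i)} \colon \bZ_+ \to \B(H)$ be the single-variable representation generated by $T_i$, so that $T^{(i)}(p) = T_i^p$. Then the doubly commuting relations on the generators say precisely that the $T^{(i)}$ pairwise doubly commute in the sense of the definition preceding Proposition \ref{P: reg dc}, and since $T$ is a homomorphism of an abelian semigroup we have $T((p_i)_i) = \prod_i T_i^{p_i} = \prod_i T^{(i)}(p_i)$, which is exactly the product representation appearing in that proposition.

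First I would note that each $(\bZ, \bZ_+)$ is a totally ordered abelian group, so Mlak's Theorem \cite{Mla66} applies: every contractive representation of $\bZ_+$ is regular. In particular each $T^{(i)}$ is a regular representation of the lattice-ordered group $(\bZ, \bZ_+)$. Feeding these regular, pairwise doubly commuting single-variable representations into Proposition \ref{P: reg dc} (with the finite index set $I=\{1,\dots,n\}$) immediately yields that $T$ is regular.

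The content of the argument is therefore carried entirely by the two inputs, and there is no genuine obstacle beyond matching hypotheses. The only point requiring a moment's attention is the translation between ``doubly commuting representation of $\bZ^n_+$'' and ``pairwise doubly commuting single-variable representations'': the commutativity $T_i T_j = T_j T_i$ holds automatically because $T$ is a homomorphism of an abelian semigroup, so the doubly commuting hypothesis supplies precisely the remaining relations $T_i T_j^* = T_j^* T_i$ needed to apply Proposition \ref{P: reg dc}.
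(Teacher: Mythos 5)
Your proof is correct and is essentially the paper's own argument: the corollary is stated there as an immediate consequence of Mlak's Theorem (giving regularity of each single-variable representation of the totally ordered cone $\bZ_+$) together with Proposition \ref{P: reg dc} applied to the decomposition $\bZ_+^n = \bigoplus_{i=1}^n \bZ_+$. Your added remark that commutativity of the $T_i$ is automatic, so the doubly commuting hypothesis supplies exactly the adjoint relations needed, is a fair and accurate reading of the translation step.
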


We want to examine semigroups of lattice-ordered abelian groups by considering representations which exploit this structure. 
The notion of doubly commuting representations of $\bZ^n_+$ can be extended to lattice-ordered abelian groups in the following form.

\begin{definition}\label{D: Nc}
Let $(G,P)$ be a lattice-ordered abelian group. 
A contractive representation $T \colon P \to \B(H)$ is called \emph{Nica-covariant} if
\[ 
 T_s^*T_t=T_tT_s^* \qforal s,t \in P \text{ with }s \wedge t =0 .
\]
\end{definition}

This definition generalizes the notion of isometric Nica-covariant representations initiated by Nica \cite{Nic92}. 
Isometric Nica-covariant representations can also be characterized by properties of their range projections.

\begin{proposition} \label{P:Nc isom}
Let $(G,P)$ be a lattice-ordered abelian group and let $V$ be an isometric representation of $P$ in $\B(H)$. 
Then the following are equivalent:
\begin{enumerate}
\item $V$ is Nica-covariant.
\item $V_s^*V_t = V_{t - s\wedge t} V_{s - s \wedge t}^* = V_{s\vee t - s} V_{s \vee t - t}^*$ for all $s,t \in P$.
\item $V_sV_s^*V_tV_t^*=V_{s \vee t} V_{s \vee t}^*$ for all $s,t\in P$.
\end{enumerate}
\end{proposition}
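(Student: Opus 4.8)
The plan is to prove the cyclic chain of implications (i) $\Rightarrow$ (ii) $\Rightarrow$ (iii) $\Rightarrow$ (i). The first thing to observe is that the two right-hand expressions in (ii) are literally equal, independently of any hypothesis on $V$: the lattice identity $(s\vee t)+(s\wedge t)=s+t$ from Proposition~\ref{P: lat} gives $t-s\wedge t = s\vee t - s$ and $s-s\wedge t = s\vee t -t$, so $V_{t-s\wedge t}V_{s-s\wedge t}^* = V_{s\vee t -s}V_{s\vee t -t}^*$. Thus (ii) amounts to the single identity $V_s^*V_t = V_{t-s\wedge t}V_{s-s\wedge t}^*$. Throughout I will write $d:=s\wedge t$ and factor $s=d+a$, $t=d+b$ with $a,b\in P$; by translation invariance of the lattice operations, $a\wedge b=(s-d)\wedge(t-d)=(s\wedge t)-d=0$.

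For (i) $\Rightarrow$ (ii), the point is to factor on the correct side. Since $P$ is abelian and $V$ is a homomorphism, $V_s=V_dV_a$, hence $V_s^*=V_a^*V_d^*$, while $V_t=V_dV_b$; the isometry relation $V_d^*V_d=I$ then collapses $V_s^*V_t=V_a^*V_d^*V_dV_b=V_a^*V_b$. Because $a\wedge b=0$, Nica-covariance applies and yields $V_a^*V_b=V_bV_a^*=V_{t-d}V_{s-d}^*$, which is (ii). For (ii) $\Rightarrow$ (iii) I substitute (ii) into the middle of $V_sV_s^*V_tV_t^*$ and regroup: $V_sV_{t-d}=V_{s+t-d}=V_{s\vee t}$ and $V_{s-d}^*V_t^*=(V_tV_{s-d})^*=V_{s\vee t}^*$, again using $s+t-s\wedge t=s\vee t$, so the product telescopes to $V_{s\vee t}V_{s\vee t}^*$.

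The delicate step is (iii) $\Rightarrow$ (i). Writing $Q_p:=V_pV_p^*$ for the range projection, hypothesis (iii) reads $Q_sQ_t=Q_{s\vee t}$ (in particular the range projections commute, as the right side is self-adjoint). Fix $a,b\in P$ with $a\wedge b=0$, so that $a\vee b=a+b$. Left multiplication by the isometry $V_a$ is injective, so the target identity $V_a^*V_b=V_bV_a^*$ is equivalent to $Q_aV_b=V_{a+b}V_a^*$. To obtain the latter I use $V_b=Q_bV_b$ to write $Q_aV_b=Q_aQ_bV_b=Q_{a+b}V_b=V_{a+b}V_{a+b}^*V_b$, and then $V_{a+b}^*V_b=(V_bV_a)^*V_b=V_a^*V_b^*V_b=V_a^*$, giving exactly $Q_aV_b=V_{a+b}V_a^*$. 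The main obstacle, and the only place where care is genuinely required, is precisely this bookkeeping with one-sided factorizations: commuting isometries need not doubly commute, so one may not freely move $V_d^*$ or $V_a^*$ past the other generators. The proof works because abelianness of $P$ makes both factorizations $V_{a+b}=V_aV_b=V_bV_a$ available, and each cancellation is engineered so that an $I=V_p^*V_p$ (never a proper range projection $V_pV_p^*$) is produced at the crucial moment.
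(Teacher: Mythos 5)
Your proof is correct and follows essentially the same route as the paper's: the paper's one-line proof cites exactly the two facts you exploit, namely that $(s-s\wedge t)\wedge(t-s\wedge t)=0$ and that isometry cancellation gives $V_s^*V_t = V_{s-s\wedge t}^*V_{t-s\wedge t}$, and declares the equivalences immediate. Your write-up simply supplies the details the paper leaves to the reader, including the range-projection bookkeeping in the implication (iii) $\Rightarrow$ (i), and every step checks out.
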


\begin{proof}
The proof is immediate since $(s - s \wedge t) \wedge (t - s \wedge t) = 0$ and $V_s^*V_t = V_{s- s\wedge t}^* V_{t - s \wedge t}$ for all $s,t \in P$.
\end{proof}

\begin{theorem}\label{T: rNc}
Let $(G,P)$ be a lattice-ordered abelian group. 
Then a regular representation $T \colon P \to \B(H)$ is Nica-covariant if and only if 
its isometric co-extension to $H \otimes_T P$ is Nica-covariant.
\end{theorem}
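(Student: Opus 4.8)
The plan is to realize the minimal isometric co-extension $V$ on $H\otimes_T P$ concretely inside the minimal unitary dilation $U\colon G\to\B(L)$ of $\wt T$ on $L=H\otimes_{\wt T}G$, and then exploit the explicit inner product. Write $\xi\otimes g$ for the image in $L$ of the $H$-valued function supported at $g$ with value $\xi$, so that $\ip{\xi\otimes g,\eta\otimes g'}=\ip{\wt T(g-g')\xi,\eta}$ and $U_r(\xi\otimes g)=\xi\otimes(g+r)$. Then $H=\{\xi\otimes 0\}$, the co-extension space is $K_+=H\otimes_T P=\ol{\Span}\{\xi\otimes p:p\in P\}=\bigvee_{p\in P}U_pH$, and $V_p=U_p|_{K_+}$ while $V_p^*=P_{K_+}U_{-p}|_{K_+}$. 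Since $V$ co-extends $T$, the space $H$ is co-invariant, so $V_p^*|_H=T_p^*$.

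For the easy direction ($V$ Nica-covariant $\Rightarrow T$ Nica-covariant), fix $s,t\in P$ with $s\wedge t=0$. Then $(t-s)_+=t$ and $(t-s)_-=s$, so the regularity relation of Proposition~\ref{P: reg is} gives $T_s^*T_t=P_H V_s^*V_t|_H$. Applying Nica-covariance of $V$ replaces $V_s^*V_t$ by $V_tV_s^*$, and co-invariance of $H$ (so that $V_s^*H\subseteq H$ with $V_s^*|_H=T_s^*$) collapses $P_H V_tV_s^*|_H$ to $T_tT_s^*$. Hence $T$ is Nica-covariant.

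The hard direction ($T$ Nica-covariant $\Rightarrow V$ Nica-covariant) is the main obstacle, and its crux is a compression formula:
\[
P_{K_+}(h\otimes m)=(T_{m_-}^*h)\otimes m_+ \qfor m=m_+-m_-\in G.
\]
I would prove it by checking that $h\otimes m-(T_{m_-}^*h)\otimes m_+$ is orthogonal to $\eta\otimes r$ for every $r\in P$, which reduces to the operator identity $\wt T(m-r)=\wt T(m_+-r)\,T_{m_-}^*$. This is exactly where Nica-covariance enters: expanding $\wt T(m_+-r)=T_{r-m_+\wedge r}^*\,T_{m_+-m_+\wedge r}$, one commutes $T_{m_+-m_+\wedge r}$ past $T_{m_-}^*$ — legitimate since $(m_+-m_+\wedge r)\wedge m_-\le m_+\wedge m_-=0$ — and then matches the outcome with the analogous expansion of $\wt T(m-r)$ using the disjointness lattice identity $m_+\wedge(m_-+r)=m_+\wedge r$, valid because $m_+\wedge m_-=0$ and $r\ge 0$.

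With the formula available, I would verify $V_s^*V_t=V_tV_s^*$ for $s\wedge t=0$ by testing on the total set $\{h\otimes q:q\in P\}$, where both operators are bounded. Writing $m=q-s$, the left side is $P_{K_+}(h\otimes(m+t))$ and the right side is $U_tP_{K_+}(h\otimes m)$; by the compression formula these become $(T_{(m+t)_-}^*h)\otimes(m+t)_+$ and $(T_{m_-}^*h)\otimes(m_++t)$. They agree once one knows $(m+t)_+=m_++t$ and $(m+t)_-=m_-$, which follow from $m_-\wedge t=0$ — indeed $m_-=s-s\wedge q\le s$ and $s\wedge t=0$ — together with the same disjointness lemma (so that $(m_++t)-m_-$ is the canonical decomposition of $m+t$). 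Thus the decisive ingredients are the compression formula and the two elementary lattice facts about disjoint elements; everything else is bookkeeping with the decomposition $g=g_+-g_-$, and I expect the establishment of the compression formula via the Nica-covariant commutation to be the only genuinely delicate step.
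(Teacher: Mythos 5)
Your proposal is correct, and it differs from the paper's proof in architecture though not in its computational core. You work inside the minimal unitary dilation $L = H\otimes_{\wt{T}}G$ and make the key lemma an explicit compression formula, $P_{K_+}(h\otimes m) = (T_{m_-}^*h)\otimes m_+$ for $m \in G$, from which Nica-covariance of $V$ follows by a one-line check on the spanning vectors $h\otimes q$; in effect you produce a closed-form expression $V_s^*(h\otimes q) = T_{(q-s)_-}^*h\otimes (q-s)_+$ for the adjoints on all of $H\otimes_T P$. The paper never leaves $K = H\otimes_T P$: it computes matrix elements $\sca{V_s^*V_t\, h\otimes e_0, V_p\, k\otimes e_0}$, reduces them to $\wt{T}$-expressions by regularity, and then runs a two-stage argument --- first establishing $V_s^*V_t|_H = V_tV_s^*|_H$, then a second computation pushing the identity from $H$ up to $\bigvee_{p\in P}V_pH = K$. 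Both proofs turn on exactly the same two facts at the crucial moment: the lattice identity $a\wedge(b+c) = a\wedge c$ when $a\wedge b = 0$ and $c \ge 0$, and the commutation $T_xT_y^* = T_y^*T_x$ for $x\wedge y = 0$ supplied by Nica-covariance of $T$. What your route buys is conceptual economy: the single projection formula replaces the paper's two rounds of matrix-element bookkeeping, and it yields a concrete model of the co-extension (the action of $V_s^*$ on the whole Fock-type space) that is of independent use. What it costs is the need to identify the minimal isometric co-extension inside the unitary dilation space; this step is legitimate and is in fact already supplied by the paper itself (Remark \ref{R: reg min dil}, Proposition \ref{P: reg is}, and the remark following Proposition \ref{P: subsgp co-is}), so nothing is missing. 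Your easy direction coincides with the paper's argument for that implication.
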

\begin{proof}
Since $T$ is regular, it is completely positive definite. 
Let $V\colon P \to \B(K)$ be the isometric co-extension to $K= H \otimes_T P$. 
For simplicity, we will denote by the same letter the extension of $T$ and $V$ to $G$. 
We will show that $V$ is also Nica-covariant.

First we show that $V_s^* V_t|_{H}=V_t V_s^*|_{H}$ when $s\wedge t=0$. It is enough to show that
\begin{align*}
\sca{V_s^*V_t h \otimes e_0,V_p k \otimes e_0}_K
=
\sca{V_tV_s^* h \otimes e_0,V_p k \otimes e_0}_K,
\end{align*}
for any $p\in P$ and $h,k \in H$, since $K = \bigvee_{p \in P} V_p H$. We compute
\begin{align*}
\sca{V_s^*V_t h \otimes e_0,V_p k \otimes e_0}_K
& =
\sca{V_p^*V_s^*V_t h \otimes e_0, k \otimes e_0}_K\\
& =
\sca{V_{(t-s-p)_-}^*V_{(t-s-p)_+} h \otimes e_0, k \otimes e_0}_K\\
& =
\sca{T_{(t-s-p)_-}^*T_{(t-s-p)_+}h,k}_H,
\end{align*}
with the last equality coming from the regularity of the dilation. 
Since $s\wedge t=0$, then
\begin{equation*}
s \leq t\vee(s+p)-t = (t-s-p)_-.
\end{equation*}
Moreover, it follows that $t \wedge (s+p)=t\wedge p$. 
Indeed, it is immediate that $t \wedge p \leq t \wedge (s +p)$. 
For $q = t \wedge (s+p)$, we get that $q \leq t, s+p$, and so
\[
 q - p \leq (t - p) \wedge s \leq t \wedge s = 0.
\]
Hence $q \leq p$ which yields that $t \wedge (s+p) \leq t \wedge s$. 
Therefore $t\vee(s+p)-s=t\vee p$.
Consequently
\begin{equation*}
 (t-s-p)_- -s=t\vee p-t=(t-p)_-.
\end{equation*}
Similarly, $(t-s-p)_+=(t-p)_+$. Hence
\begin{equation*}
 T_{(t-s-p)_-}^*T_{(t-s-p)_+}=T_{(t-p)_-}^*T_{(t-p)_+}T_s^*.
\end{equation*}
Putting all this together we have
\begin{align*}
\sca{V_s^*V_t h \otimes e_0,V_p k \otimes e_0}_K
 & =
\sca{T_{(t-p)_-}^*T_{(t-p)_+}T_s^*h,k}_H\\
 & =
\sca{P_H V_{(t-p)_-}^*V_{(t-p)_+}P_H V_s^* h \otimes e_0, k \otimes e_0}_K\\
 & =
 \sca{ V_{(t-p)_-}^*V_{(t-p)_+} V_s^* h \otimes e_0, k \otimes e_0}_K\\
 & =
 \sca{V_p^* V_t V_s^* h \otimes e_0, k \otimes e_0}_K\\
 & =
 \sca{V_t V_s^* h \otimes e_0,V_p k \otimes e_0}_K.
\end{align*}

We will now extend this result to all of $K$. 
Once again, it suffices to show that $V_s^* V_t V_p|_H = V_t V_s^* V_p|_H$ for $p\in P$, since $K = \bigvee_{p \in P} V_p H$. We compute
\begin{align*}
 V_s^* V_t V_p |_H
 & =
 V_s^* V_{t+p}|_H\\
 & =
 V_{s - s \wedge (t+p)}^* V_{t+p - s \wedge (t+p)}|_H \\
 & =
 V_{t+p - s \wedge (t+p)} V_{s - s \wedge (t+p)}^* |_H,
\end{align*}
by the first part of the proof. Since $s \wedge t = 0$, then $s \wedge (t +p)= s \wedge p$. 
Therefore
\begin{align*}
 V_s^* V_t V_p |_H
 & =
 V_{t+p - s \wedge (t+p)} V_{s - s \wedge (t+p)}^* |_H \\
 & =
 V_{t+p - s \wedge p} V_{s - s \wedge p}^* |_H \\
 & =
 V_t V_{p - s \wedge p} V_{s - s \wedge p}^* |_H \\
 & =
 V_t V_{s - s \wedge p}^* V_{p - s \wedge p} |_H \\
 & =
 V_t V_s^* V_p |_H .
\end{align*}
Thus $V$ is Nica-covariant.

Conversely, suppose that the isometric co-extension $V$ of $T$ is Nica-covariant. 
Let $\wt{V}$ be the extension of $V$ to $G$ such that $\wt{V}(-s+t)=V^*_sV_t$. 
Then for $g =-s +t$ with $s \wedge t =0$ we obtain that $\wt{T}(g) = T_s^*T_t$, hence
\begin{align*}
 T_s^*T_t
 =
 \wt{T}(g) 
 =
 P_H \wt{V}(g) |_H 
 =
 P_H V_s^* V_t |_H 
 =
P_H V_{t} V_{s}^* |_H 
 =
 T_{t} T_{s}^*,
\end{align*}
where we used the fact that $V$ is a co-extension of $T$.
\end{proof}

\begin{question}
 Is the regular extension of a Nica-covariant representation always completely positive definite?%
 \footnote{At the time our paper was accepted for publication, this question had recently been resolved positively by Boyu Li in arXiv.1503.03046v1[math.OA].}
\end{question}

We were unable to resolve this in general.
It does have a positive answer in some cases.
For example, we have the following generalization of \cite[Theorem 2.5]{Ful12}.

\begin{corollary}\label{C: rNc}
Let $(G_i,P_i)$ be a possibly infinite set of totally ordered groups, and let $P = \oplus_{i \in I} P_i$. 
If $T \colon P \to \B(H)$ is Nica-covariant, then it has a regular isometric Nica-covariant co-extension.
\end{corollary}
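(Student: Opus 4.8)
The plan is to reduce the statement to an application of Theorem \ref{T: rNc}, whose hypothesis requires that $T$ itself be regular; so the first task is to upgrade the Nica-covariant $T \colon P \to \B(H)$ to a regular representation, and for this I would invoke Proposition \ref{P: reg dc}. Concretely, I would set $T_i := T|_{P_i}$, the restriction of $T$ to the $i$-th summand $P_i \subseteq P$, viewed as a contractive representation on the same space $H$. Since each $(G_i,P_i)$ is totally ordered, Mlak's Theorem \cite{Mla66} guarantees that every contractive representation of $P_i$ is regular; in particular each $T_i$ is regular.

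Next I would verify that the family $\{T_i\}_{i\in I}$ pairwise doubly commutes. The commutation $T_i(s)T_j(t) = T_j(t)T_i(s)$ is immediate, since $P$ is abelian and $T$ is a homomorphism. For the remaining relation I would observe that for $s\in P_i$ and $t\in P_j$ with $i\neq j$ the elements are supported in disjoint coordinates, so the coordinate-wise meet satisfies $s\wedge t=0$. The Nica-covariance hypothesis then yields $T_s^* T_t = T_t T_s^*$, and taking adjoints gives $T_s T_t^* = T_t^* T_s$, which is exactly the second doubly-commuting relation. Feeding the regular, pairwise doubly commuting family $\{T_i\}_{i\in I}$ (with $I$ possibly infinite) into Proposition \ref{P: reg dc} then shows that $T((p_i)_i) = \prod_i T_i(p_i) = \prod_i T(p_i) = T\big(\sum_i p_i\big)$ is regular, the products being finite because elements of $\oplus_i P_i$ are finitely supported.

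Finally, with $T$ now known to be both regular and Nica-covariant, I would let $V$ be the isometric co-extension of $T$ to $H\otimes_T P$ furnished by Proposition \ref{P: subsgp co-is}. Theorem \ref{T: rNc} then immediately gives that $V$ is Nica-covariant, while $V$ is regular automatically because it is isometric (Remark \ref{R: iso reg}). Thus $V$ is the desired regular isometric Nica-covariant co-extension. I do not expect a serious obstacle here: the only genuinely delicate point is the disjoint-support observation $s\wedge t = 0$ that converts Nica-covariance into double commutation, while the passage to an infinite index set is already absorbed into Proposition \ref{P: reg dc} and the real content is borrowed wholesale from Theorem \ref{T: rNc}.
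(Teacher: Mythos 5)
Your proposal is correct and follows exactly the paper's own route: the paper's proof is the one-line observation that the result ``follows by Proposition~\ref{P: reg dc} and Theorem~\ref{T: rNc}, since the Nica-covariance condition is equivalent to doubly commuting $T_i$,'' and your argument simply fills in the details of that equivalence (Mlak's theorem for regularity of the restrictions $T_i = T|_{P_i}$, the disjoint-support computation giving $s \wedge t = 0$, and the identification $T((p_i)_i) = \prod_i T_i(p_i)$) together with Remark~\ref{R: iso reg} for regularity of the isometric co-extension.
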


\begin{proof}
The proof follows by Proposition \ref{P: reg dc} and Theorem \ref{T: rNc}, since the Nica-covariance condition is equivalent to doubly commuting $T_i$. 
\end{proof}

\begin{example}\label{Ex: dil Z+n}
Let $\Bi$, for $1 \le \Bi \le n$, denote the standard generators for $\bZ_+^n$. 
A representation of $\bZ_+^n$ is Nica-covariant if and only if it doubly commutes in the usual sense, i.e., $T_{\bo{i}}^* T_{\bo{j}} = T_{\bo{j}} T_{\bo{i}}^*$ for $1 \le \Bi,\Bj \le n$. 
By both Corollary~\ref{C: reg Zn} and Corollary~\ref{C: rNc} we can dilate doubly commuting contractions to commuting isometries. We can, however, prove this directly.

There is a standard method for dilating a Nica-covariant representation of $\bZ_+^n$ to a unitary representation, which amounts to dilating the $T_\Bi$'s to commuting unitaries. 
The Scha\"effer dilation of $T_\bo1$ on $\bigoplus_{n=-\infty}^\infty H$ is
\[
 U_\bo1 = \begin{bmatrix}
 \ddots&&&&&\\
 &I&&&&\\
 &&D_{T_\bo1^*} &T_\bo1 && \\
 &&-T_\bo1^*& D_{T_\bo1} & \\ 
 &&&& I & \\ 
 &&&&& \ddots 
 \end{bmatrix} .
\]
For $2 \le \Bi \le n$, dilate $T_\Bi$ to $\bigoplus_{n=-\infty}^\infty T_\Bi$. 
These $n$ operators still $*$-commute because the coefficients of $U_\bo1$ belong to $\ca(T_\bo1)$. 
Moreover any $T_\Bi$ which is already unitary remains unitary on amplification. Dilate each $T_\Bi$ to a unitary in turn to obtain the desired unitary dilation.

Alternatively, one can dilate doubly commuting contractions simultaneously. 
Let $K = H \otimes \ell^2(\bZ_+^n)$ and define the isometries
\begin{align*}
V_{\bo{i}} (\xi \otimes e_{\un{x}})
=
\begin{cases}
\xi \otimes e_{\un{x} + \bo{i}} 
& \text{ when } \bo{i} \in \supp(\un{x}) \\
T_{\bo{i}}\xi \otimes e_{\un{x}} + D_{T_\Bi}\xi \otimes e_{\un{x} + \bo{i}} 
& \text{ when } \bo{i} \notin \supp(\un{x}),
\end{cases}
\end{align*}
for all $\un{x} = \sum_{\bo{i}=\bo1}^\bo{n} x_{\bo{i}} \bo{i} \in \bZ_+^n$, where $\supp(\un{x}) = \{\bo{i} : x_{\bo{i}} \neq 0 \}$.
We leave the details to the interested reader to check that the $V_{\bo{i}}$ form a family of commuting isometries.
\end{example}

\chapter{Semicrossed products by abelian semigroups}

\section{Defining semicrossed products by abelian semigroups}\label{S: pr on scp}

Let $P$ be a spanning cone of an abelian group $G$. 
A \emph{dynamical system} $(A,\al,P)$ consists of a semigroup homomorphism $\al\colon P \to \End(A)$ of $P$ into the completely contractive endomorphisms of an operator algebra $A$. 
When $A$ is a C*-algebra, we will refer to $(A,\al,P)$ as a \emph{C*-dynamical system}. 
The system $(A,\al,P)$ will be called \emph{automorphic} (respectively \emph{injective}, \emph{unital}) if $\al_s$ is an automorphism (respectively injective, unital) for all $s\in S$.

\begin{definition}\label{D: cov rel abel}
Let $(A, \al, P)$ be a dynamical system. A \emph{covariant pair} for $(A, \al, P)$ is a pair $(\pi, T)$ such that
\begin{enumerate}
 \item $\pi \colon A \to \B(H)$ is a (completely contractive) representation of $A$;
 \item $T \colon P \to \B(H)$ is a (contractive) representation of $P$;
 \item $\pi(a)T_s = T_s \pi\al_s(a)$ for all $s\in P$ and $a\in A$.
\end{enumerate}
If the representation $T$ of $P$ is contractive/isometric/regular/Nica-covariant we call the covariant pair $(\pi, T)$ \emph{contractive/isometric/regular/Nica-covari\-ant.}
\end{definition}

\begin{remark}
Our main objective is to relate a crossed product to the C*-envelope of a semicrossed product. 
Since we would like this to work at least for $A = \bC$, by Proposition \ref{P: cpd sgp} we have to require that $T$ in a pair $(\pi,T)$ must be a completely positive definite map of $P$. 
The Parrott counterexample implies that $(\pi,T)$ cannot be merely contractive.
\end{remark}

We will define a universal algebra with respect to a family of covariant pairs. 
Given $(A,\al,P)$, define an algebraic structure on $c_{00}(P,A)$ by
\[
 (e_s \otimes a) \cdot (e_t \otimes b) = e_{s+t} \otimes (\al_t(a) b) \qforal s,t \in P \AND a,b \in A .
\]
Call this algebra $c_{00}(P,\al,A)$. 
Commutativity of $P$ makes this rule associative. 
Each covariant pair $(\pi, T)$ of $(A, \al, P)$ determines a representation $(T\times\pi)$ of $c_{00}(P,\al, A)$ into $\B(H)$ by the rule
\[ 
(T\times\pi)(e_s \otimes a) = T_s \pi(a) \qforal a \in A \AND s \in P .
\]
A collection of covariant pairs $\F$ will be called a \emph{family} provided that the corresponding collection of representations $\{T\times\pi \colon (\pi,T) \in \F \}$ is a family in the sense of Definition~\ref{D:family}.

\begin{definition}
Given a family $\F$ of covariant pairs for $(A,\alpha, P)$, the \emph{semicrossed product $A\times_{\al}^{\F}P$ of $A$ by $P$ with respect to $\F$} is the operator algebra completion of $c_{00}(P,\al,A)$ (or a quotient of it) with respect to the matrix seminorms determined by the covariant pairs in $\F$. 
That is, if $\sum_{s\in P} e_s \otimes A_s$ is an element of $M_n\big(c_{00}(P,\al,A)\big)$, where $A_s \in M_n(A)$ and $A_s=0$ except finitely often, the norm is given by
\[
 \big\| \sum_{s\in P} e_s \otimes A_s \big\| = 
 \sup\big\{ \big\| \sum_{s\in P} (T_s \otimes I_n) \pi^{(n)}(A_s) \big\|_{\B(H^{(n)})} \colon (\pi,T) \in \F \big\} .
\]
\end{definition}

As we argued in Section~\ref{S:op alg}, every covariant pair will be unitarily equivalent to a direct sum of covariant pairs from a set $\F_0$, where the range is restricted to a fixed set of Hilbert spaces of bounded cardinality. 
So the supremum can be taken over the set $\F_0$. 
However, even though $\F$ is not a set, the collection of norms is a set of real numbers; and the supremum is finite since 
\[ 
\big\| \sum_{s\in P} (T_s \otimes I_n) \pi^{(n)}(A_s) \big\| \le \sum_{s\in P} \| A_s \| .
\]
When $(\pi,T) \in \F$, it is clear from the definition of the seminorms that this extends to a completely contractive representation of $A\times_{\al}^{\F}P$, which is also denoted by $(T\times\pi)$. 

\begin{definition}
Let $(A, \al, P)$ be a dynamical system. 
We will consider the following semicrossed products:
\begin{enumerate}
\item $A\times_\alpha P$ is the \emph{semicrossed product} determined by the contractive covariant pairs of $(A,\alpha, P)$;
\item $A\times_{\al}^{\iso}P$ is the \emph{isometric semicrossed product} determined by the isometric covariant pairs of $(A,\alpha, P)$;
\item $A\times_{\al}^{\uni}P$ is the \emph{unitary semicrossed product} determined by the unitary covariant pairs of $(A,\alpha, P)$.
\end{enumerate}
When $(G,P)$ is a lattice-ordered abelian group we will also consider
\begin{enumerate}[resume]
\item $A\times_\al^{\reg} P$ is the \emph{regular semicrossed product} determined by the regular covariant pairs of $(A,\alpha, P)$;
\item $A\times_\al^{\nc} P$ is the \emph{Nica-covariant semicrossed product} determined by the regular%
 \footnote{Nica-covariant pairs are automatically regular. See Boyu Li: arXiv.1503.03046v1[math.OA].} %
Nica-covariant covariant pairs of $(A,\alpha, P)$.
\end{enumerate}
\end{definition}

These choices do not always yield an algebra that contains $A$ completely isometrically. 
However we normally seek representations which do contain $A$ completely isometrically. 
When it does not, it is generally because the endomorphisms are not faithful, but the allowable representations do not account for this. 
This would happen, for example, if we restricted the covariant pairs so that $T$ had to be unitary while some $\alpha_s$ has kernel. 

However, the Fock representations in Example \ref{E: Fock} show that the contractive, isometric and Nica-covariant semicrossed products do always contain such a copy of $A$.

\begin{example}\label{E: Fock} (Fock representation) 
Let $\pi \colon A \to \B(H)$ be a representation of $A$ and let $\wt H = H \otimes \ltwo(P)$. 
Define the orbit representation $\wt{\pi} \colon A \to \B(\wt H)$ and $V \colon P \to \B(\wt H)$ by
\[
 \wt{\pi}(a) \xi \otimes e_s = (\pi\al_s(a)\xi) \otimes e_s 
 \qand
 V_t (\xi \otimes e_s) = \xi \otimes e_{t+s} 
\]
for all $a \in A$, $\xi\in H$ and $s \in P$.
The pair $(\wt{\pi},V)$ satisfies the covariance relation since
\begin{align*}
 \wt{\pi}(a) V_t (\xi \otimes e_s)
 =
 (\pi\al_s\al_t(a) \xi) \otimes e_{t +s}
 =
 V_t \wt{\pi}\al_t(a) (\xi \otimes e_s),
\end{align*}
for all $a\in A$ and $s \in P$, where we have used that $P$ is abelian. Moreover,
\begin{align*}
\Big\| V_t \big( \sum_s \xi_s \otimes e_s \big) \Big\|^2_{\wt H}
& =
\Big\| \sum_s \xi_s \otimes e_{s+t} \Big\|^2_{\wt H}
\\&=
\sum_s \| \xi_s \|^2
 =
\Big\| \sum_s \xi_s \otimes e_s \Big\|^2_{\wt H} .
\end{align*}
Hence $V$ is an isometric representation with adjoint
\[
V_t^*(\xi \otimes e_s) =
\begin{cases}
\xi \otimes e_{s'} & \quad\text{if } s=s'+t \\
0 & \quad\text{otherwise}.
\end{cases}
\]
Therefore $V_tV_t^* (\xi \otimes e_p) = \xi \otimes e_p$ when $t \geq p$. Thus, if $(G,P)$ is an abelian lattice-ordered group,
\begin{align*}
V_sV_s^*V_tV_t^*(\xi \otimes e_p)
& =
\begin{cases}
\xi \otimes e_p & \quad\text{when } s,t \leq p \\
0 & \quad\text{otherwise}
\end{cases} \\
& =
\begin{cases}
\xi \otimes e_p & \quad\text{when } s \vee t \leq p \\
0 & \quad\text{otherwise}
\end{cases} \\
& =
V_{s \vee t} V_{s \vee t}^* (\xi \otimes p).
\end{align*}
Hence, by Proposition~\ref{P:Nc isom}, the Fock representation is a Nica-covariant isometric representation of $(A,\al, P)$.

Moreover $V \times \wt{\pi}$ is faithful on the (dense) subset of polynomials on $P$ with co-efficients from $A$ whenever $\pi$ is a faithful representation of $A$.

When $(A,\al,P)$ is an automorphic dynamical system, with the appropriate modifications to the Fock representation, we obtain a unitary covariant pair $(\wh{\pi},U)$ on $\wh H = H \otimes \ltwo(G)$. 
We will call this representation the \emph{bilateral Fock representation}, for distinction.

In particular, when $(A,\al,P)$ is a C*-dynamical system, the bilateral Fock representation is the restriction of the (dual of) the left regular representation of $(A,\al,G)$.
\end{example}

\begin{remark}
For C*-crossed products of automorphic C*-dynamical systems $\al \colon A \rightarrow \Aut(A)$, one considers unitary pairs $(\pi,U)$ that satisfy $\pi\al_g = \ad_{U_g} \pi$ for all $g \in G$. 
Equivalently $U_g\pi(a) = \pi\al_g(a) U_g$ for all $a\in A$ and $g\in G$. 
This is the dual of the covariance relation that we use.

Nevertheless, when the group $G$ is abelian and because the $U_g$ are unitaries, the C*-crossed product can be defined as the universal C*-algebra with respect to unitary pairs $(\pi,U)$ such that $\pi(a) U_g = U_g \pi\al_g(a)$ for all $a\in A$ and $g\in G$. 
Indeed it is easy to see that $U_g \pi(a) = \pi\al_g(a) U_g$ is equivalent to $\pi(a) U_g^* = U_g^* \pi\al_g(a)$ for all $a\in A$ and $g\in G$. 
The tricky part is to see that since $G$ is abelian, $U$ is a unitary representation of $G$ if and only if $U^*$ is a unitary representation of $G$. 

In this setting (the dual of) the left regular representation is defined on $H \otimes \ltwo(G)$ by
\begin{align*}
\widehat{\pi}(a) \xi \otimes e_g = (\pi\al_g(a)\xi) \otimes e_g
\qand
U_g (\xi \otimes e_h) = \xi \otimes e_{g+h},
\end{align*}
for all $a\in A$ and $g \in G$, where $\pi \colon A \rightarrow \B(H)$ is a faithful representation of $A$. 
Since $G$ is amenable, the pair $(\wh{\pi},U)$ defines a $*$-isomorphism of the C*-crossed product $A \rtimes_\al G$. 
Notice that we do not require $\pi$ to be non-degenerate; the standard gauge invariance argument does not require non-degeneracy to obtain that the canonical $*$-epimorphism $\Phi \colon A \rtimes_\al G \rightarrow \ca(\{U_g \wh{\pi}(a) \colon g\in G, a\in A\})$ is injective.
\end{remark}

\section{The unitary semicrossed product} \label{S: un scp}

The unitary covariant representations can only exist in abundance when the action is completely isometric.
For otherwise, there will be a kernel that cannot be avoided. 
However when unitary covariant representations do exist, it is very useful. 
So we begin with an analysis of this semicrossed product. Our objective is to examine two cases of dynamical systems $(A,\al,P)$, where $P$ is a spanning cone of a group $G$ and:
\begin{enumerate}
\item $\al_s$ are completely isometric automorphisms of $A$;
\item $\al_s$ are $*$-endomorphisms of a C*-algebra $A$.
\end{enumerate}

\subsection*{Completely isometric automorphisms} \label{Ss: un scp i}

Let $(A,\al,P)$ be a dynamical system where the $\al_s$ are completely isometric automorphisms of an operator algebra $A$. 
Then the action $\al \colon P \rightarrow \Aut(A)$ extends uniquely to an action of the group $G$ on $A$ by the rule
\begin{align*}
\al_g(a) = \al_{s}^{-1} \al_t (a), \text{ when } g=-s+t.
\end{align*}
Thus we can and will write $\al_{-s} = \al_s^{-1}$ for all $s\in P$. 
Moreover, every completely isometric automorphism $\al$ of $A$ extends to a necessarily unique $*$-automorphism of $\cenv(A)$ \cite[Theorem~2.2.5]{Arv69}, which we will denote by the same symbol $\al$. 
Therefore we obtain an action $\al \colon G \to \Aut(\cenv(A))$ that extends $\al \colon P \to \Aut(A)$.

Furthermore we note that a unitary representation $U \colon P \rightarrow \B(H)$ extends to the whole group $G$ by defining $U(g):=U_s^*U_t$ for $g=-s+t$.

\begin{theorem} \label{T: cone aut un}
Let $P$ be a spanning cone of an abelian group $G$ which acts on an operator algebra $A$ by completely isometric automorphisms.
Then 
\[
\cenv(A \times_\al^{\uni} P) \simeq \cenv(A) \rtimes_\al G.
\]
\end{theorem}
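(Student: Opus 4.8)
The plan is to realize $A\times_\al^{\uni}P$ completely isometrically inside $\cenv(A)\rtimes_\al G$ so that it generates it, and then to identify the crossed product as the C*-envelope by showing that the \v{S}ilov ideal of this cover vanishes.

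First I would set up the natural homomorphism. Writing $u_g$ for the canonical unitaries of $\cenv(A)\rtimes_\al G$ and using the inclusion $A\subseteq\cenv(A)$, the assignment $e_s\otimes a\mapsto u_s\,a$ (for $s\in P$) respects the multiplication of $c_{00}(P,\al,A)$ and extends to a homomorphism $\Phi$. It is completely contractive for the trivial reason that every covariant pair $(\rho,V)$ of $(\cenv(A),\al,G)$ restricts to a unitary covariant pair $(\rho|_A,V|_P)$ of $(A,\al,P)$, so the crossed product norm of $\Phi(x)$ is dominated by the $A\times_\al^{\uni}P$-norm of $x$.

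The main work, and the main obstacle, is the reverse inequality: \textbf{every unitary covariant pair $(\pi,U)$ of $(A,\al,P)$ dilates to a covariant pair $(\rho,W)$ of $(\cenv(A),\al,G)$ with $\rho$ a $\ast$-representation and $W$ unitary}. I would proceed as follows (unitizing if necessary). Extend $U$ to a unitary representation of $G$ by $U_{-s+t}=U_s^*U_t$, so that $\pi\al_g(\cdot)=U_g^*\pi(\cdot)U_g$ on $A$. By Arveson's extension theorem choose a unital completely positive $\phi_0\colon\cenv(A)\to\B(H)$ with $\phi_0|_A=\pi$. Since $G$ is abelian, hence amenable, I would average against an invariant mean $m$ and set $\sca{\phi(b)\xi,\eta}=m_g\sca{U_g\phi_0(\al_g(b))U_g^*\xi,\eta}$; this yields a unital completely positive extension of $\pi$ that is now genuinely $G$-equivariant, $\phi\al_g=\ad_{U_g^*}\phi$ on all of $\cenv(A)$. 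Taking the minimal Stinespring dilation $(\rho,K)$ of $\phi$ with $H\subseteq K$, the equivariance is exactly what is needed to define $W_g(\rho(c)\xi):=\rho(\al_{-g}(c))U_g\xi$ as a unitary on $K$; a direct check shows that $W$ is a unitary representation of $G$ extending $U$ (with $H$ reducing for $W$) and that $\rho(b)W_g=W_g\rho\al_g(b)$. Because $H$ reduces $W$ and $P_H\rho(\cdot)|_H=\pi$ on $A$, one obtains $(U\times\pi)(x)=P_H(W\times\rho)(\Phi(x))|_H$, so the $A\times_\al^{\uni}P$-norm is dominated by the crossed product norm. Hence $\Phi$ is completely isometric; and since the $u_s$ ($s\in P$) together with $A$ generate all $u_g$ (as $G=P-P$, $u_{t-s}=u_tu_s^*$) and all of $\cenv(A)$, the image generates $\cenv(A)\rtimes_\al G$.

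Finally I would show $(\cenv(A)\rtimes_\al G,\Phi)$ is the C*-envelope by proving that its \v{S}ilov ideal $J$ (as a cover of $A\times_\al^{\uni}P$) is zero. The dual action $\gamma$ of the compact group $\wh{G}$, fixing $\cenv(A)$ and scaling $u_g$ by $\chi(g)$, restricts on the copy of $A\times_\al^{\uni}P$ to the gauge automorphisms $e_s\otimes a\mapsto\chi(s)\,e_s\otimes a$, which are complete isometries; hence each $\gamma_\chi(J)$ is again a boundary ideal, and maximality of $J$ forces $\gamma_\chi(J)=J$, i.e. $J$ is gauge-invariant. Setting $I:=J\cap\cenv(A)$, the boundary property makes $A\to(\cenv(A)\rtimes_\al G)/J$ completely isometric; as this factors through the copy $\cenv(A)/I$ of $\cenv(A)$ sitting inside the quotient, $A\to\cenv(A)/I$ is completely isometric, so $I$ is a boundary ideal of the cover $\cenv(A)$ of $A$. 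Since $\cenv(A)$ is the C*-envelope of $A$, this gives $I=0$. Averaging $\gamma$ over $\wh{G}$ produces a conditional expectation $E\colon\cenv(A)\rtimes_\al G\to\cenv(A)$, faithful because $G$ is amenable; gauge-invariance gives $E(J)\subseteq J\cap\cenv(A)=I=0$, and faithfulness of $E$ then forces $J=0$. Therefore $\cenv(A\times_\al^{\uni}P)\simeq\cenv(A)\rtimes_\al G$. I expect the equivariant dilation of the third paragraph---turning a merely completely contractive covariant pair into a $\ast$-covariant one while keeping the group action unitary---to be the crux; the amenability of $G$, used both for the invariant-mean averaging and for the faithfulness of $E$, is what makes the argument go through.
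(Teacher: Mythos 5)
Your proposal is correct in substance, and its central mechanism is genuinely different from the paper's. Where you prove complete isometry of the embedding by dilating an \emph{arbitrary} unitary covariant pair $(\pi,U)$ of $(A,\al,P)$ to a covariant pair $(\rho,W)$ of $(\cenv(A),\al,G)$ --- Arveson extension, averaging against an invariant mean on the (abelian, hence amenable) group $G$ to make the u.c.p.\ extension equivariant, then implementing the action unitarily on the minimal Stinespring space (your inner-product computation for well-definedness of $W_g$ does go through) --- the paper instead works with a single \emph{completely isometric} unitary covariant representation $(\rho,U)$: it forms the auxiliary C*-cover $B=\ca(\rho(A))$ with automorphic action $\be_g = \ad_{U_g^*}$, shows the semicrossed product embeds completely isometrically into $B\rtimes_\be G$ (universal property in one direction, restriction of the left regular representation in the other), and then passes from $B$ to $\cenv(A)$ by taking a maximal dilation of $\rho$ and again comparing norms through left regular representations. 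Both routes lean on amenability: yours for the invariant mean and for faithfulness of the conditional expectation, the paper's for the identification of the full crossed product with the reduced one. Your route buys a stronger, reusable per-representation statement (an equivariant, commutant-lifting-flavoured dilation theorem for every unitary covariant pair); the paper's avoids the averaging construction by exploiting the Dritschel--McCullough maximal dilation machinery. Your \v{S}ilov ideal argument is essentially the paper's with the ``standard argument'' spelled out (gauge invariance of $\J$, the boundary-ideal property of $\J\cap\cenv(A)$, faithfulness of the expectation), and it is fine.

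The one place you are too quick is generation. The clause ``the $u_s$ together with $A$ generate all $u_g$, as $u_{t-s}=u_tu_s^*$'' presupposes that the unitaries $u_t,u_s$ themselves lie in the C*-algebra generated by the image of $\Phi$; that is automatic when $A$ is unital (take $a=1$), but the theorem is stated for arbitrary operator algebras, and in the non-unital case the generated C*-algebra contains a priori only the monomials $u_s a$ with $a\in A$. This is precisely why the paper devotes a substantial computation to producing $W_s\wh{\si}(b)^*$ via approximate identities in $\ca(bb^*)$, and thence all monomials $W_g\wh{\si}(x)$, $x\in\cenv(A)$, $g\in G$. The gap is fixable along similar lines: the generated algebra contains $\cenv(A)$ and the elements $z\,(u_sa)\,y = u_s\al_s(z)ay$ for $z,y\in\cenv(A)$, and $\ol{\Span}\big(\cenv(A)A\cenv(A)\big)$ is a closed self-adjoint ideal of $\cenv(A)$ containing the generating set $A$, hence equals $\cenv(A)$; this yields all $u_sx$ with $x\in\cenv(A)$, and then products $(u_sz)^*(u_ty)$ together with an approximate identity give all $u_gx$, $g\in G$. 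As written, though, your one-line justification does not cover the stated generality.
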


\begin{proof}
We first observe that $A$ sits completely isometrically inside $A \times_\al^{\uni} P$.
This follows by taking a faithful representation of $\cenv(A) \rtimes_\al G$. 
The restriction to $A$ is completely isometric and the restriction of the unitary representation of $G$ to $P$ yields the required unitary representation of $P$ satisfying the covariance relations.

We construct another automorphic C*-dynamical system $(B,\be,G)$ that extends $(A,\al,P)$. 
Let $(\rho,U)$ be a completely isometric unitary representation of $A \times_\al^{\uni} P$ in $\B(H)$. 
Then $B:= \ca(\rho(A)) \subseteq \B(H)$ is a C*-cover of $A$ because $\rho|_A$ is completely isometric. 
As noted prior to the proof, we can extend $U$ to a representation of $G$ which implements completely isometric automorphisms of $\rho(A)$. 
Therefore they also implement completely isometric isomorphisms of $\rho(A)^*$. 
Consequently each $\ad_{U_g^*}$ implements a $*$-endomorphism of $B$. 
As $\ad_{U_{-g}^*} = \ad_{U_g^*}^{-1}$, these are $*$-automorphisms of $B$. 
Let $\be_g = \ad_{U_g^*}$. Since $A \times_\al^{\uni} P$ is represented completely isometrically in $\B(K)$, we obtain that the automorphic C*-dynamical system $(B,\be,G)$ extends $(A,\al,P)$.

Next we show that the embedding of $A \times_\al^{\uni} P$ into $B \rtimes_\be G$ is completely isometric.
By the universal property of $B \rtimes_\be G$, it has a $*$-representation into $\B(H)$ extending the covariant pair $(\id_B,U)$. 
Thus the $A \times_\al^{\uni} P$ norm on $c_{00}(P,\al,A)$, which coincides with the norm under $U \times \rho$ and hence with the norm in $\B(H)$, is completely dominated by the norm on $B \rtimes_\be G$.
On the other hand, since $G$ is abelian, the norm on $B \rtimes_\be G$ coincides with the left regular representation on $\wh H = H \otimes \ltwo(G)$. 
Let $\wh\rho$ denote the representation of $B$ on $\wh H$ and let $V_s$ denote the bilateral shifts. 
Then $(\wh\rho|_A,V|_P)$ is a unitary covariant pair for $(A,\al,P)$. 
Hence the $B \rtimes_\be G$ norm on $c_{00}(P,\al,A)$ is completely dominated by the $A \times_\al^{\uni} P$ norm. 
It follows that they coincide.

Let $\si$ be a maximal dilation of $\rho$ on a Hilbert space $K$. 
Since $\rho$ is a complete isometry, $\si$ extends to a faithful $*$-representation of $\cenv(A)$. 
Therefore the left regular representation $(\wh\si,W)$ on $\wh K = K \otimes \ltwo(G)$ is a faithful $*$-representation of $\cenv(A) \rtimes_\al G$. 
Restrict this representation to $c_{00}(P,\al,A)$ and compress it to $\wh H$. 
Notice that this is the restriction of the left regular representation of $B \rtimes_\be G$ to $c_{00}(P,\al,A)$, which was shown to be a complete isometry in the previous paragraph. 
Therefore the $\cenv(A) \rtimes_\al G$ norm on $c_{00}(P,\al,A)$ completely dominates the $A \times_\al^{\uni} P$ norm. 
Conversely, the restriction of $W\times \wh\si$ to $c_{00}(P,\al,A)$ is a unitary covariant representation. 
And thus the $\cenv(A) \rtimes_\al G$ norm on $c_{00}(P,\al,A)$ is completely dominated by the $A \times_\al^{\uni} P$ norm. 
Therefore they coincide, and $A \times_\al^{\uni} P$ sits completely isometrically inside $\cenv(A) \rtimes_\al G$.

It must be shown that $A \times_\al^{\uni} P$ generates $\cenv(A) \rtimes_\al G$ as a C*-algebra, so that it is a C*-cover. 
The C*-algebra generated by $A \times_\al^{\uni} P$ in $\cenv(A) \rtimes_\al G$ contains $\ca(A) = \cenv(A)$ as well as terms of the form $W_s \wh{\si}(a)$ for $s\in P$ and $a\in A$. 
Furthermore it also contains terms of the form $W_s \wh{\si}(b)^* \wh{\si}(a)$ for $s \in P$ and $a, b \in A$, since
\[
W_s \wh{\si}(b)^* \wh{\si}(a) = \wh{\si}\al_{-s}(b)^* W_s^* W_{2s} \wh{\si}(a) = (W_s \wh{\si}\al_{-s}(b))^* W_{2s} \wh{\si}(a).
\]
In particular the C*-cover contains the terms $W_s \wh{\si}(b)^* \wh{\si}(b) \wh{\si}(b)^*$ and the terms $W_s \wh{\si}(b)^* \wh{\si}(b) \wh{\si}(b)^* \wh{\si}(x)$ for all $x \in \cenv(A)$.
Thus it contains the terms $W_s \wh{\si}(b)^* \wh{\si}(x)$ for all $x = \sum_{n=1}^k c_n (bb^*)^n$, since
\[
W_s \wh{\si}(b)^* \wh{\si}(x) = c_1 W_s \wh{\si}(b)^* \wh{\si}(b) \wh{\si}(b)^* + W_s \wh{\si}(b)^* \wh{\si}(b) \wh{\si}(b)^* \wh{\si}(x')
\]
for $x' = \sum_{n=2}^k c_{n} (bb^*)^{n-1} \in \cenv(A)$. 
Hence the C*-algebra generated by $A \times_\al^{\uni} P$ in $\cenv(A) \rtimes_\al G$ contains the terms
\[
W_s \wh{\si}(b)^* \wh{\si}(x) \foral x \in \ca(bb^*).
\]
By considering an approximate identity $(e_i)$ in $\ca(bb^*)$ we get that $W_s \wh{\si}(b)^*$ is in $\ca(A \times_\al^{\uni} P)$, since $b^* = \lim_i b^* e_i$.
Therefore the C*-cover of $A \times_\al^{\uni} P$ contains all elements of the form
\[
W_s \wh{\si}(a_1)^{\ep_1}\wh{\si}(b_1)^* \cdots \wh{\si}(a_n) \wh{\si}(b_n)^* \wh{\si}(a_{n+1})^{\ep_2},
\]
for all $a_i, b_i \in A$, $n \geq 1$ and $\ep_1, \ep_2 = 0,1$. 
Consequently it contains the monomials $W_s \wh{\si}(x)$ for all $s \in P$ and $x \in \cenv(A)$. 
For $g \in G$ let $s, t \in P$ such that $g =-s +t$; then
\begin{align*}
W_g \wh{\si}(x^*y) = W_s^* W_t \wh{\si}(x^*y) = (W_s \wh{\si} \al_{-t+s}(x))^* \cdot W_t \wh{\si}(y).
\end{align*}
Thus the monomials of the form $W_g \wh{\si}(x)$ are in $\ca(\iota(A \times_\al^{\uni} P))$, for all $g\in G$ and $x \in \cenv(A)$. 
These are the generators of the crossed product $\cenv(A) \rtimes_\al G$, which shows that it is a C*-cover.

Finally to show that $\cenv(A) \rtimes_\al G$ is the C*-envelope. 
Suppose that the \v{S}ilov ideal $\J$ is non-trivial. 
There is a gauge action $\{\ga_{\hat g}\}_{\hat g \in \wh G}$ of the dual group $\wh G$ on $\cenv(A) \rtimes_{\al} G$ given by $\ga_{\hat g}(U_g a) = \ip{\hat g,g} U_g a$. 
The subalgebra $A \times_\al^{\uni} P$ is invariant under $\ga_{\hat g}$. 
Therefore the same is true for $\J$. 
A standard argument shows that if $\J \ne \{0\}$, then $\J$ has non-trivial intersection with the fixed point algebra $\cenv(A)$. 
Let $ 0 \neq x \in \cenv(A) \cap \J$ and let $I_1$ be the ideal of $x$ in $\cenv(A)$ and $I_2$ be the ideal of $x$ in $\cenv(A) \rtimes_\al G$. Since $I_1 \subseteq I_2 \subseteq \J$ we get that
\begin{align*}
\nor{a} = \nor{\wh{\si}(a) + \J} \leq \nor{\wh{\si}(a) + I_2} \leq \nor{\wh{\si}(a) + I_1} \leq \nor{a},
\end{align*}
for all $a\in A$. 
Similar arguments for the matrix levels show that $I_1$ is a boundary ideal of $A$ in $\cenv(A)$, thus $I_1 = (0)$. 
In particular $x = 0$ which is a contradiction.
\end{proof}

\subsection*{C*-dynamical systems} \label{Ss: un scp ii}

We will calculate the $\ca$-envelope of $A \times_\al^{\uni} P$ when $(A, \al, P)$ is a $\ca$-dynamical system. 
First we will assume that $\al_s$ are injective. 
Then we will explain what happens when we drop that assumption. 
The price paid for this is that the unitary semicrossed product only contains a quotient of $A$. 

Let $(A,\al,P)$ be a C*-dynamical system. 
As each $\alpha_s$ is completely contractive, it is a $*$-endo\-morphism 
of $A$. 
When $\al$ is injective and $P=\bN$, Peters \cite{Pet84} showed how to extend this to an automorphic C*-system. 
Then the third author and Katsoulis \cite[Theorem~2.6]{KakKat10} identify the C*-envelope of $A \times_\al^{\iso}\bZ_+$ as a crossed product C*-algebra.
Laca \cite{Lac00} showed that injective systems over any Ore semigroup $P$ can be extended to an automorphic system.
We first explain his construction.

Let $(A,\al,P)$ be an injective C*-dynamical system. 
Set $A_s = A$ for each $s\in P$; and define connecting maps $\al_{t-s} \colon A_s \to A_t$ for $s \le t$.
Let $\wt A = \dirlim (A_s,\al_{t-s})$ be the direct limit C*-algebra. 
There are $*$-homomorphisms $\omega_s\colon A_s\to\wt A$ so that $\omega_s = \omega_t \al_{t-s}$ for $s \le t$. 
Moreover, since each $\al_s$ is injective, the $\omega_s$ are also injective; and $\ol{\bigcup_{t\in P} \omega_t(A_t)} = \wt A$. 
The commutative diagram for $p,s,t \in P$
\[
\xymatrix{A_s \ar[r]^{\al_{t-s}} \ar[d]_{\al_p} & A_t \ar[r]^{\omega_t} \ar[d]_{\al_p} & \wt{A} \ar[d]^{\wt\al_p} \\
 A_s \ar[r]^{\al_{t-s}} & A_t \ar[r]^{ \omega_t} & \wt{A} }
\]
defines a map $\wt\al_p \in \End(\wt A)$ so that $\wt\al_p\omega_s = \omega_s\al_p$. 
Since $\al_p$ is injective, so is $\wt\al_p$. However one can also see that for $a \in A_{t+p}$, 
\[ 
\wt\al_p \omega_{t+p}(a)= \omega_{t+p} \al_p(a) = \omega_t(a) .
\]
Thus the image of $\wt\al_p$ contains $\ol{\bigcup_{t\in P} \omega_t(A_t)} = \wt A$.
Therefore $\wt\al_p$ is an automorphism. Furthermore it is easy to check that this is a semigroup homomorphism.
As usual, we can extend this to a homomorphism of $G$ into $\Aut(\wt A)$ by setting $\wt\al_{-s+t} = \wt\al_s^{-1}\wt\al_t$.
The C*-dynamical system $(\wt A, \wt \al, G)$ is called the \emph{automorphic extension} of $(A,\al,P)$.

Now suppose that $(A, \al, P)$ is a C*-dynamical system over a spanning cone $P$ which is not injective.
Define 
\[ 
R_\al = \ol{\bigcup_{s \in P} \ker\al_s} .
\]
Since the kernels are directed, i.e., $\ker \al_s\subset \ker \al_t$ when $s \le t$, it is clear that $R_\al$ is an ideal. 
Let $\dot A = A/R_\al$ and let $q\colon A\to\dot A$ be the quotient map.
It is easy to see that $\al_s(R_\al) \subset R_\al$, and hence there is an induced action $\dot\al$ of $P$ on $\dot A$ by
\[ 
\dot\al_s(\dot a) = q(\al_s(a)) \qfor a \in A \AND s \in P .
\]
It is a standard argument to show that
\[ 
\|q(a) \| = \inf_{s \in P} \| a + \ker\al_s \| = \lim_{s\in P} \|\al_s(a)\| ,
\]
where the limit is taken along $P$ considered as a directed set.
It follows that for $\dot a = q(a) \in \dot A$, 
\[ 
\| \dot\al_s(\dot a) \| = \lim_{t\in P} \| \al_{s+t}(a) \| = \| \dot a\| .
\]
Thus $(\dot{A},\dot{\al},P)$ is an injective system.
This injective system embeds into its automorphic extension $(\wt{A},\wt{\al},G)$.
Let $\dot \omega_t\colon A_t\to\wt{A}$ be the corresponding $*$-embeddings of $\dot A_t$ into $\wt{A}$.

We can also define a direct limit system given by $A_s = A$ and connecting maps $\al_{t-s}\colon A_s \to A_t$.
It is not difficult to see that this system has the same direct limit; and the maps factoring through $\dot A$ are commutative:
\[
\xymatrix{A_s \ar[r]^{\al_{t-s}} \ar[d]_q & A_t \ar[r]^{\omega_t} \ar[d]_q & \wt{A} \ar[d]_{\id} \\
\dot A_s \ar[r]^{\dot\al_{t-s}} & \dot A_t \ar[r]^{\dot \omega_t} & \wt{A} }.
\]
So we call $(\wt A, \wt \al, G)$ the automorphic extension of $(A,\al,P)$ as well.

The crucial observation is the following:

\begin{lemma} \label{L:C*uni}
Every unitary covariant pair for $(A,\al,P)$ factors through the quotient by $R_\al$.
Hence $A \times_\al^{\uni} P \simeq \dot{A} \times_{\dot{\al}}^{\uni} P$.
\end{lemma}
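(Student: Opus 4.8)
The plan is to show first that every unitary covariant pair annihilates $R_\al$, and then to use the resulting bijection between covariant pairs to match up the two semicrossed-product norms.

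For the key step I would exploit that the $U_s$ are \emph{unitary}. Rewriting the covariance relation as
\[
\pi\al_s(a) = U_s^* \pi(a) U_s \qforal a\in A \AND s\in P,
\]
I see that if $a\in\ker\al_s$ then $U_s^*\pi(a)U_s = 0$, and invertibility of $U_s$ forces $\pi(a)=0$. Hence $\ker\al_s\subseteq\ker\pi$ for every $s$, and since $\ker\pi$ is closed, $R_\al = \ol{\bigcup_{s\in P}\ker\al_s}\subseteq\ker\pi$. Thus $\pi$ factors as $\pi=\dot\pi\circ q$ through the quotient $q\colon A\to\dot A$, with $\dot\pi$ completely contractive because $q$ is a complete quotient map. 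This is precisely the step that breaks down for merely isometric or contractive representations of $P$, where $U_s$ need not be invertible; it is the whole reason the unitary semicrossed product sees only the quotient $\dot A$.

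Next I would verify that factoring through $q$ gives a bijection between unitary covariant pairs of $(A,\al,P)$ and of $(\dot A,\dot\al,P)$ that fixes the Hilbert space and the unitary $U$. In one direction, $(\dot\pi,U)$ is covariant for $(\dot A,\dot\al,P)$ since for $\dot a=q(a)$,
\[
\dot\pi(\dot a)U_s = \pi(a)U_s = U_s\pi\al_s(a) = U_s\dot\pi\dot\al_s(\dot a).
\]
In the other, any unitary covariant pair $(\rho,U)$ of $(\dot A,\dot\al,P)$ pulls back to the pair $(\rho\circ q,U)$ for $(A,\al,P)$, and the two assignments are mutually inverse because $q$ is onto.

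Finally I would transport this to norms. The map $q$ induces a surjective homomorphism $\Phi_0\colon c_{00}(P,\al,A)\to c_{00}(P,\dot\al,\dot A)$, $e_s\otimes a\mapsto e_s\otimes q(a)$, which one checks respects the twisted product. For $X=\sum_s e_s\otimes A_s$ in $M_n(c_{00}(P,\al,A))$ the correspondence of pairs gives $\pi^{(n)}(A_s)=\dot\pi^{(n)}(q^{(n)}(A_s))$, so the two defining suprema run over identical sets of operators and agree; hence $\Phi_0$ preserves the matrix seminorms at every level. Passing to the completions of the respective quotients then yields the completely isometric isomorphism $A\times_\al^{\uni}P\simeq\dot A\times_{\dot\al}^{\uni}P$. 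The only point that genuinely requires the unitarity hypothesis is the first step; everything after it is routine bookkeeping with the covariance relation.
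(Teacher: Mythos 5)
Your proof is correct and follows essentially the same route as the paper: unitarity of $U_s$ together with the covariance relation forces $\pi$ to annihilate $R_\al$, after which the unitary covariant pairs of $(A,\al,P)$ and of $(\dot A,\dot\al,P)$ are in bijective correspondence and the two universal (matrix) norms coincide. The only cosmetic difference is that the paper obtains the factorization from the norm estimate $\|\pi(a)\| = \|\pi\al_s(a)\| \le \|\al_s(a)\|$ combined with the identity $\|q(a)\| = \lim_{s\in P}\|\al_s(a)\|$, whereas you deduce $\ker\al_s \subseteq \ker\pi$ from invertibility of $U_s$ and then take closures --- the same mechanism in pointwise form.
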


\begin{proof}
Let $(\pi,U)$ be a unitary covariant pair for $(A,\al,P)$.
Then
\[
 \| \pi(a)\| = \lim_{s\in P} \| \pi(a) U_s \| = \lim_{s\in P} \| U_s \pi\al_s (a) \|
 \le \lim_{s\in P} \| \al_s (a) \| = \|q(a)\| .
\]
Thus $\pi$ factors through the quotient to $\dot A$, say $\pi = \pi' q$.
Hence $(\pi',U)$ is a covariant representation of $(\dot{A},\dot{\al},P)$.
Therefore $A \times_\al^{\uni} P \simeq \dot{A} \times_{\dot{\al_s}}^{\uni} P$.
\end{proof}

\begin{theorem}\label{T: cone aut un 2}
Let $(A,\al,P)$ be a C*-dynamical system over a spanning cone $P$ of an abelian group $G$. 
Let $(\wt{A},\wt{\al},G)$ be the automorphic direct limit 
C*-dynamical system associated to $(A,\al,P)$.
Then 
\[ \cenv(A \times_\al^{\uni} P) \simeq \wt{A} \rtimes_{\wt{\al}} G .\]
\end{theorem}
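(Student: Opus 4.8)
The plan is to reduce to the injective case and then exhibit $\wt A\rtimes_{\wt\al}G$ as a C*-cover of the semicrossed product whose \v{S}ilov ideal is trivial. First I would invoke Lemma \ref{L:C*uni}: the unitary semicrossed product only sees the quotient $\dot A=A/R_\al$, so $A\times_\al^{\uni}P\simeq\dot A\times_{\dot\al}^{\uni}P$, and by construction the system $(\wt A,\wt\al,G)$ in the statement is the automorphic extension of the \emph{injective} system $(\dot A,\dot\al,P)$. Hence it suffices to treat an injective system, which I now take $(A,\al,P)$ to be, with direct-limit embeddings $\omega_s\colon A_s\to\wt A$ satisfying $\omega_s=\wt\al_s^{-1}\omega_0$ and $\wt\al_p\omega_s=\omega_s\al_p$.

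The engine of the argument is a bijection between unitary covariant pairs of $(A,\al,P)$ and of $(\wt A,\wt\al,P)$. Given a unitary covariant pair $(\pi,U)$ of $(A,\al,P)$ on $H$, I set $\pi_s=\ad_{U_s}\circ\pi$. The covariance relation gives $\pi\al_s(a)=U_s^*\pi(a)U_s$, and together with $U_{s+r}=U_sU_r$ this yields $\pi_t\circ\al_{t-s}=\pi_s$ for $s\le t$; so the family $(\pi_s)$ is compatible with the direct limit and induces a $*$-representation $\wt\pi\colon\wt A\to\B(H)$ with $\wt\pi\circ\omega_0=\pi$. A short computation using $\wt\al_s\omega_r=\omega_r\al_s$ shows $(\wt\pi,U)$ is a unitary covariant pair for $(\wt A,\wt\al,P)$, while restricting any unitary covariant pair $(\sigma,U)$ of $(\wt A,\wt\al,P)$ along $\omega_0$ returns such a pair of $(A,\al,P)$; the identities $\omega_s=\wt\al_s^{-1}\omega_0$ and $\sigma\wt\al_s=\ad_{U_s^*}\sigma$ confirm these assignments are mutually inverse.

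Since the correspondence respects direct sums and Hilbert spaces, and since amenability of $G$ computes the norm on $\wt A\rtimes_{\wt\al}G$ as a supremum over unitary covariant pairs of $(\wt A,\wt\al,G)$ restricted to $P$, the map $e_s\otimes a\mapsto U_s\,\omega_0(a)$ should define a \emph{completely isometric} homomorphism of $A\times_\al^{\uni}P$ into $\wt A\rtimes_{\wt\al}G$: the supremum defining the semicrossed-product norm on $M_n(c_{00}(P,\al,A))$ ranges over exactly the same scalars as the norm of the image. Next I would show the image generates $\wt A\rtimes_{\wt\al}G$ as a C*-algebra, exactly as in the proof of Theorem \ref{T: cone aut un}: the generated C*-algebra contains $\omega_0(A)$, the monomials $U_s\omega_0(a)$, the products $U_s\omega_0(a)\omega_0(b)^*U_s^*=\ad_{U_s}(\omega_0(ab^*))=\omega_s(ab^*)$, and (via an approximate identity of $\ca(bb^*)$) the elements $U_s\omega_0(b)^*$; since $\wt A=\ol{\bigcup_s\omega_s(A)}$ this produces all of $\wt A$ and then all generators $U_g x$ of the crossed product, so $(\wt A\rtimes_{\wt\al}G,\iota)$ is a C*-cover.

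The remaining, and hardest, step is to show the \v{S}ilov ideal $\J$ is zero, and here lies the main obstacle: the gauge-fixed-point algebra is the \emph{whole} of $\wt A$, while the semicrossed product contains only the level-zero copy $\omega_0(A)$. The gauge action $\ga_{\hat g}(U_gx)=\ip{\hat g,g}U_gx$ of $\wh G$ fixes each monomial $U_s\omega_0(a)$ up to a scalar, so it preserves $A\times_\al^{\uni}P$ and hence, by maximality, leaves $\J$ invariant; averaging with the faithful conditional expectation onto $\wt A$ then forces $\J\cap\wt A\neq\{0\}$ whenever $\J\neq\{0\}$. The key is to propagate the complete isometry of $q_\J$ from level $0$ to every level: since $q_\J$ is completely isometric on $A\times_\al^{\uni}P$, the image $q_\J(U_s)$ is unitary and $q_\J$ is completely isometric on $\omega_0(A)$, so $q_\J(\omega_s(a))=\ad_{q_\J(U_s)}(q_\J(\omega_0(a)))$ shows $q_\J$ is isometric on each $\omega_s(A)$ and, by continuity, injective on $\wt A=\ol{\bigcup_s\omega_s(A)}$. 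This contradicts $\J\cap\wt A\neq\{0\}$, forcing $\J=\{0\}$ and giving $\cenv(A\times_\al^{\uni}P)\simeq\wt A\rtimes_{\wt\al}G$. Throughout, the delicate bookkeeping will be tracking the adjoint/sign conventions so that $\ad_{U_s}=\wt\al_{-s}$ on $\wt A$ stays consistent with $\omega_s=\wt\al_s^{-1}\omega_0$.
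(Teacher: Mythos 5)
Your proposal is correct and follows essentially the same route as the paper's proof: reduction to the injective case via Lemma~\ref{L:C*uni}, extension of a unitary covariant pair $(\pi,U)$ of $(A,\al,P)$ to one of $(\wt{A},\wt{\al},G)$ by $\wt\pi\omega_s(a)=U_s\pi(a)U_s^*$, the two-sided comparison of norms giving the completely isometric embedding, a generation argument for the C*-cover, and gauge-invariance plus conjugation by the $U_s$ to kill the \v{S}ilov ideal. Your only deviations are cosmetic: the approximate-identity trick imported from Theorem~\ref{T: cone aut un} is unneeded here since $\omega_0$ is a $*$-representation (so $U_s\omega_0(b)^*=U_s\omega_0(b^*)$ is already a generator); the identification of the crossed-product norm with the supremum over unitary covariant pairs follows from universality of the full crossed product, not amenability of $G$; and your ``propagate the isometry of $q_\J$ from $\omega_0(A)$ to all of $\wt{A}$'' argument is the paper's step of conjugating a nonzero element of $\J\cap\wt{\al}_{-s}(A)$ back into $\rho(A)$ in different clothing.
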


\begin{proof}
By Lemma~\ref{L:C*uni}, we may assume that $(A,\al,P)$ is injective. 
First we show that $A \times_\al^{\uni} P$ embeds in $\wt{A} \rtimes_{\wt{\al}} G$. 
Let $(\pi,U)$ be a unitary covariant pair. 
We will extend $\pi$ to a representation $\wt\pi$ of $\wt{A}$ such that $(\wt\pi,U)$ 
is a unitary covariant pair of $(\wt{A},{\wt{\al}},G)$. 

To this end, we define $\wt\pi \omega_s(a) = U_s \pi(a) U_s^*$.
Observe that $\wt\pi$ is well defined. Indeed, let $s,t\in P$ and $a \in A_s$.
Then $\al_{t-s}(a) \in A_t$ and
\[
 U_t \pi\al_{t-s}(a) U_t^* = U_s \pi(a) U_{t-s} U_t^* = U_s \pi(a) U_s^*.
\]
This defines a $*$-representation on a dense subalgebra of $\wt{A}$, and so extends by continuity to
a representation $\wt\pi$ on $\wt{A}$.

Next we extend the unitary representation of $P$ to a unitary representation of $G$
by setting $U_{s-t} = U_s U_t^*$. 
The details are routine.

Now we show that $U_t$ implements $\wt\al_t$ for $t\in P$. 
Let $s\in P$ and $a \in A_s$, then we have
\begin{align*}
 \wt\pi \omega_s(a) U_t &= U_s \pi(a) U_s^* U_t 
 = U_s \pi(a) U_t U_s^* \\&
 = U_s U_t \pi\al_t(a) U_s^* 
 = U_t U_s \pi\al_t(a) U_s^* \\&
 = U_t \pi \omega_s \al_t (a) 
 = U_t \wt\pi \wt\al_t ( \omega_s(a)).
\end{align*}
By continuity, we obtain the covariance relation
\[ 
\wt\pi(b) U_t = U_t \wt\pi \wt\al_t(b) \qforal b \in\wt{A} \AND t \in P .
\]
Since each $U_t$ is unitary, this extends to a covariant representation of $(\wt{A},\wt\al,G)$.
Therefore $(\wt\pi,U)$ defines a representation of the crossed product $\wt{A} \rtimes_{\wt{\al}} G$. 
So the norm on $\wt{A} \rtimes_{\wt{\al}} G$ completely dominates the norm on $A \times_\al^{\uni} P$.

Conversely, any unitary covariant pair of $(\wt{A},{\wt{\al}},G)$ defines a unitary covariant pair of $(A,\al,P)$ by restriction.
Hence the norm on $A \times_\al^{\uni} P$ induced from $\wt{A} \rtimes_{\wt{\al}} G$ is completely dominated by the universal norm.
Therefore the embedding of $A \times_\al^{\uni} P$ into $\wt{A} \rtimes_{\wt{\al}} G$ is completely isometric.

Now we show that $\wt{A} \rtimes_{\wt{\al}} G$ is a C*-cover of $A \times_\al^{\uni} P$. 
Recall that $\wt A$ is the closed union of $\om_s(A) = \wt\al_s^{-1}(\om_0(A))$ for $s \in P$. 
For simplicity of notation, we suppress the use of $\om_0$ and consider $A \subseteq \wt{A}$. 
Let $(\rho,U)$ be a faithful representation of $\wt{A} \rtimes_{\wt{\al}} G$. 
We need to show that the monomials $U_t \rho(a)$, for $t \in P$ and $a\in A$, generate the monomials $U_g \rho(x)$ for $g \in G$ and $x \in \wt{A}$. 
First note that for $s \in P$ and $x = \wt\al_s^{-1}(ab^*) \in \wt\al_s^{-1}(A)$, we obtain
\begin{align*}
U_t\rho(x) = U_t\rho\wt{\al}_s^{-1}(ab^*) = \big(U_{t+s} \rho(a)\big) \big( U_s\rho(b)\big)^* \in \ca(A \times_\al^{\uni} P).
\end{align*}
Thus $U_t \rho(x) \in \ca(A \times_\al^{\uni} P)$ for all $t \in P$ and $x\in \wt{A}$. 
Now notice that for $g=-s+t$ and $x = y^*z \in \wt{A}$, we can write
\begin{align*}
U_g \rho(x) = U_s^*U_t \rho(y)^*\rho(z) 
= \rho\wt{\al}_{s-t}(y)^*U_s^* U_t\rho(z) 
= \big(U_s \rho\wt{\al}_{s-t}(y)\big)^* \big(U_t\rho(z)\big) .
\end{align*}
This shows that $\wt{A} \rtimes_{\wt{\al}} G$ is a C*-cover of $A \times_\al^{\uni} P$.

As in the proof of Theorem~\ref{T: cone aut un}, we make use of the gauge action $\{\ga_{\hat g}\}_{\hat g \in \wh G}$ of the dual group $\wh G$ on $\wt{A} \rtimes_{\wt{\al}} G$ given by $\ga_{\hat g}(U_g a) = \ip{\hat g,g} U_g a$. 
Let $\J$ be the \v{S}ilov ideal $\J$ in $\wt{A} \rtimes_{\wt{\al}} G$ for $A \times_\al^{\uni} P$. 
The subalgebra $A \times_\al^{\uni} P$ is invariant under $\ga_{\hat g}$; and therefore the same is true for $\J$. 
A standard argument shows that if $\J \ne \{0\}$, then it has a non-trivial intersection with the fixed point algebra $\wt{A}$. 
Hence there would be an $s\in P$ such that $\J \cap \rho\wt{\al}_{-s}(A) \neq \{0\}$. 
Consequently, there would be an $a\in A$ such that $U_s \rho(a) U_s^* \in \J$, whence $\rho(a) \in \J$. 
However $\J \cap \rho(A) = \{0\}$ since the quotient by $\J$ is completely isometric on $A \times_\al^{\uni} P$. 
Therefore $\J=\{0\}$ and $\wt{A} \rtimes_{\wt{\al}} G$ is the C*-envelope of $A \times_\al^{\uni} P$.
\end{proof}

\section{The isometric semicrossed product}\label{S: is scp}

Laca \cite[Theorem 1.4]{Lac00} shows how to dilate an isometric representation (with a cocycle) of an Ore semigroup to a unitary representation. 
This includes the case of any spanning cone. 
A combination of the techniques of the third author with Katsoulis \cite{KakKat10} and the second author \cite{Ful12} allows us to dilate isometric covariant representations to unitary covariant representations. 
Our objective is to prove the following theorem.

\begin{theorem}\label{T: cone aut is}
Let $P$ be a spanning cone of $G$ that acts on an operator algebra $A$ by completely isometric automorphisms. 
Then every covariant isometric pair $(\pi,V)$ dilates to a covariant unitary pair of $(A,\al,P)$. 
Consequently 
\[ 
A \times_\al^{\iso} P \simeq A \times_\al^{\uni} P ,
\]
and therefore
\[ 
\cenv(A \times_\al^{\iso} P) \simeq \cenv(A) \rtimes_{\al} G .
\]
\end{theorem}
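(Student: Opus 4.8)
The plan is to reduce both displayed assertions to the first sentence of the theorem, and then to prove that dilation statement by an explicit inductive-limit construction that dilates the isometric pair $(\pi,V)$ and the representation $\pi$ \emph{simultaneously}. The two norm/envelope consequences are essentially formal once the dilation statement is in hand. Every unitary covariant pair is in particular an isometric covariant pair, so the family defining $A\times_\al^{\uni}P$ is contained in the family defining $A\times_\al^{\iso}P$; hence the isometric matrix seminorms dominate the unitary ones on $c_{00}(P,\al,A)$ at every matrix level. Conversely, if each isometric covariant pair admits a covariant \emph{unitary} dilation $(\rho,U)$ with $H$ sitting as an invariant subspace, then $V\times\pi$ is the restriction of $U\times\rho$ to that invariant subspace, so the unitary seminorms dominate the isometric ones. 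The two seminorms therefore coincide, giving $A\times_\al^{\iso}P\simeq A\times_\al^{\uni}P$ completely isometrically, and the envelope formula then follows at once from Theorem~\ref{T: cone aut un}.

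For the construction, I would first realise the unitary dilation of $V$ alone. Since $P$ is a spanning cone it is directed, and $V$ is isometric, so I realise the minimal unitary dilation as the inductive limit $K=\dirlim(H_p,V_{q-p})$, where $H_p=H$ and the connecting maps for $p\le q$ are the isometries $V_{q-p}$. Writing $\eta_p\colon H\to K$ for the canonical isometric embeddings, so that $\eta_p=\eta_q V_{q-p}$ for $p\le q$ and $\bigcup_p\eta_pH$ is dense, I set $U_s\eta_p\xi=\eta_p V_s\xi$. A short check (using commutativity of $P$) shows that $U_s$ is well defined, isometric with dense range, hence unitary; that $s\mapsto U_s$ is a semigroup homomorphism with $\eta_0H\cong H$ invariant and $U_s|_{\eta_0H}=V_s$; and so it extends to a unitary representation of $G$. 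This is exactly Laca's unitary dilation of an isometric representation of an Ore semigroup (cf.\ Proposition~\ref{P: iso span}), specialised to the cone $P$.

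The heart of the matter, and the main obstacle, is to extend $\pi$ to a \emph{covariant} representation $\rho$ on the \emph{same} space $K$. Here I would use that the system is automorphic: since each $\al_p$ is invertible, $\al$ extends to an action of $G$, and I can define
\[
\rho(a)\,\eta_p\xi=\eta_p\,\pi(\al_{-p}(a))\,\xi.
\]
The crucial verification is that this is well defined on the inductive limit, i.e.\ compatible with the connecting maps; this is precisely where the covariance relation $\pi(b)V_r=V_r\pi\al_r(b)$ is invoked, applied to $b=\al_{-(p+r)}(a)$. Granting well-definedness, $\rho$ is a completely contractive homomorphism, being a copy of $\pi\al_{-p}$ on each nested piece $\eta_pH$; the pair $(\rho,U)$ satisfies $\rho(a)U_s=U_s\rho\al_s(a)$ by one further application of the original covariance relation (to $b=\al_{-p}(a)$); and $\eta_0H$ is invariant under $U\times\rho$ with $(U\times\rho)|_{\eta_0H}=V\times\pi$. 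Thus $(\rho,U)$ is the desired covariant unitary dilation.

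I expect the delicate point to be exactly this simultaneous extension: producing $\rho$ compatibly with the unitary $U$ while preserving covariance and complete contractivity. The well-definedness computation on the inductive limit genuinely relies on the automorphy of $\al$, so that $\al_{-p}$ is available; without invertibility there is no candidate for $\rho$ on the added levels of the tower, which is precisely why the non-injective $C^*$-dynamical case must be handled separately (through the quotient by $R_\al$ and the automorphic extension).
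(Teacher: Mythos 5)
Your proposal is correct and follows essentially the same route as the paper: the paper likewise forms the direct limit Hilbert space $\wt H = \dirlim(H_s, V_{t-s})$, defines $U_p\omega_s\xi = \omega_s V_p\xi$ (well defined by commutativity of $P$, unitary by surjectivity on the dense union), extends $\pi$ level-wise by $\rho(a)\omega_s\xi = \omega_s\pi\al_{-s}(a)\xi$ using automorphy, and verifies covariance of $(\rho,U)$ by exactly the applications of the covariance relation you indicate, before invoking Theorem~\ref{T: cone aut un}. Your identification of the simultaneous extension of $\pi$ as the delicate step, and of automorphy as the reason the non-injective C*-case needs separate treatment, matches the paper's own structure.
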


\begin{proof}
Let $(\pi,V)$ be a covariant isometric representation of $(A,\al,P)$ acting on a Hilbert space $H$. 
Let $\wt H$ be the direct limit Hilbert space associated to the directed system $H_s = H$ with connecting maps
\[
v_s^t \colon H_s \to H_t\quad\text{by}\quad \xi \mapsto V_{t-s} h \quad\text{ when } s \leq t .
\]
Let $\omega_s \colon H_s \to \wt H$ be the associated maps into the direct limit. 
Since each $v_s^t$ is an isometry, each $\omega_s$ is also an isometry. 

For every $p\in P$, we define an operator $U_p \colon \wt H \to \wt H$ such that $U_p\omega_sh=\omega_sV_ph$. 
To show the $U_p$ are well-defined, let $u_p^s \colon H_s \to H_s$ such that $u_p^s\xi = V_p \xi$. 
Then for $s \leq t$, the diagram
\[
\xymatrix{
H_s \ar[rr]^{v_s^t} \ar[d]_{u_p^s} & & H_t \ar[d]_{u_p^t} \\
H_s \ar[rr]^{v_s^t} & & H_t
}
\]
is commutative.
Therefore the family $\{u_p^t\}$ is compatible with the directed system and define an operator $U_p$ in $\B(\wt H)$. 
Since $U_p|_{\omega_s H}$ is an isometry, $U_p$ is an isometry. 
Moreover for every $y= \omega_s \xi \in H_s$, there is a $x = \omega_{s+p} \xi \in H_p$ such that
\[
U_p x = U_p \omega_{s+p} \xi = \omega_{s+p} V_p \xi = \omega_s \xi =y.
\]
Therefore the restriction of $U_p$ to a dense subspace of $\wt H$ is isometric and onto. 
Thus $U_p$ is a unitary in $\B(\wt H)$.

In fact, $\{U_p\}$ is a unitary representation of $P$. 
For $p_1,p_2 \in P$ and for every $s\in P$ and $\xi \in H$, we obtain
\[
 U_{p_1} U_{p_2} \omega_s \xi = U_{p_1} \omega_s V_{p_2} \xi 
 = \omega_s V_{p_2 +p_1} \xi = U_{p_1 + p_2} \omega_s \xi.
\]
Additionally, $U_p \omega_0 \xi = \omega_0 V_p \xi$, for all $\xi \in H$ and $p \in P$. 
Therefore the representation $U \colon P \to \B(\wt H)$ is a unitary extension of the isometric representation $V \colon P \to \B(H)$.

Now we extend the representation $\pi \colon A \to \B(H)$ to a representation $\rho \colon A \to \B(\wt H)$. 
To this end, fix $a\in A$ and define $\rho_0^s(a) \colon H_s \to H_s$ such that
\[
\rho_0^s(a) \xi = \pi\al_{-s}(a)\xi.
\]
The family $\{\rho_0^s(a)\}$ is compatible with the directed system, since the diagram
\[
\xymatrix{
H_s \ar[rr]^{v_s^t} \ar[d]_{\rho_0^s(a)} & & H_t\ar[d]_{\rho_0^t(a)}\\
H_s \ar[rr]^{v_s^t} & & H_t
}
\]
is commutative.
Therefore it defines an operator $\rho(a) \in \B(\wt H)$. 
Since $\rho$ is a completely contractive representation of $A$ on every $H_s$-level, it follows that $\rho$ is a completely contractive representation of $A$.

It remains to prove that $(\rho,U)$ defines a covariant unitary representation of $(A,\al,P)$ and that $U \times \rho$ is a dilation of $V \times \pi$.
The second part is immediate since both $\rho$ and $U$ are extensions of $\pi$ and $V$ respectively. 
For the first part it suffices to prove that $(\rho,U)$ is a covariant pair on every $H_s$-level. 
For $\xi \in H$, we obtain that 
\begin{align*}
\rho(a)U_p \omega_s \xi
& =
\rho(a) \omega_s V_p\xi 
 =
\omega_s \pi\al_{-s}(a) V_p \xi
 =
\omega_s V_p \pi\al_p\al_{-s}(a) \xi \\
& =
U_p \omega_s \pi\al_{-s}\al_p(a) \xi
 =
U_p \rho\al_p(a) \omega_s \xi.
\end{align*}

The last statement of the theorem follows from Theorem~\ref{T: cone aut un}.
\end{proof}

\begin{remark} \label{R:C*coiso}
We observe that the argument used in the proof of Lemma~\ref{L:C*uni} requires only for $U_s$ to be co-isometries.
Thus every co-isometric covariant pair for a C*-dynamical $(A,\al,P)$ also factors through the quotient by $R_\al$. That is, if $A \times_\al^{\cois} P$ is the universal operator algebra relative to co-isometric covariant pairs, then our arguments show that $A \times_\al^{\cois} P \simeq \dot{A} \times_{\dot{\al}}^{\cois} P$.
In fact, with some work, one can show that
\[ 
A \times_\al^{\cois} P \simeq A \times_\al^{\uni} P, 
\] 
and hence
\[
\cenv(A \times_\al^{\cois} P)\simeq \wt{A} \rtimes_{\wt{\al}} G. 
\]

To this end we will show that a co-isometric covariant pair $(\pi,V)$ dilates to a unitary covariant pair of $(A,\al,P)$. 
Without loss of generality we may assume that $(A,\al, P)$ is injective. 
Since $A$ is selfadjoint, the covariance relation $\pi(a)V_s = V\pi\al_s(a)$ implies that $V_s^* \pi(a) = \pi\al_s(a) V_s^*$ for all $a\in A$ and $s \in P$. 
Note that commutativity of $P$ implies that $V^* \colon P \to \B(H)$ is a semigroup homomorphism by isometries. 
Therefore it suffices to show that a pair $(\pi,V)$, such that $V \colon P \to \B(H)$ is an isometric representation with $V_s\pi(a) = \pi\al_s(a) V_s$ for all $a\in A, s \in P$, extends to a pair $(\rho,U)$ such that $U \colon P \to \B(K)$ is a unitary representation and $U_s \rho(a) = \rho\al_s(a) U_s$ for all $a\in A, s\in P$.

Let $\wt H$ be the direct limit Hilbert space and $U \colon P \to \B(\wt H)$ be the unitary representation as constructed in Theorem \ref{T: cone aut is}. We define the $*$-representation $\rho \colon A \to \B(\wt H)$ by the rule
\begin{align*}
\rho(a) w_s \xi = w_s \pi\al_s(a) \xi.
\end{align*}
It is well defined since for $s \leq t$ we obtain
\begin{align*}
w_t \pi\al_t(a) V_{t-s} \xi = w_t V_{t-s} \pi \al_{t-(t-s)}(a) \xi = w_t V_{t-s} \pi\al_s(a) \xi = w_s \pi\al_s(a) \xi.
\end{align*}
Now to show that $(\rho, U)$ satisfies $U_s \rho(a) = \rho\al_s(a) U_s$ for all $a\in A$ and $s\in P$, let $\xi \in H$ and $t \in P$. 
Then
\begin{align*}
U_s \rho(a) w_t \xi
& =
U_s w_t \pi\al_t(a) \xi
 =
w_t V_s \pi\al_t(a) \xi
 =
w_t \pi\al_{s+t} V_s \xi,
\end{align*}
and
\begin{align*}
\rho\al_s(a) U_s w_t \xi
=
\rho\al_s(a) w_t V_s \xi 
=
w_t \pi\al_t(\al_s(a)) V_s \xi 
=
w_t \pi\al_{t+s}(a) V_s \xi,
\end{align*}
and the proof of our claim is complete.
\end{remark}

\section{The contractive semicrossed product by $\bZ_+^2$}

One of the famous interesting pathologies in dilation theory is that one can dilate two commuting contractions to commuting unitaries by And\^{o}'s Theorem \cite{And63}, but not three or more \cite{Par70,Var74}. 
Therefore the contractive representations of $\bZ^2_+$ form an interesting family of completely positive definite functions. 
We can treat automorphic C*-dynamical systems in this case. 
The following corollary to Theorem~\ref{T: cone aut is} is known to experts.

\begin{corollary}\label{C: cenv aut Z2}
If $(A,\al,\bZ^2_+)$ is an automorphic C*-dynamical system, then 
\begin{align*}
\cenv(A \times_\al \bZ^2_+) \simeq A \rtimes_\al \bZ^2 .
\end{align*}
\end{corollary}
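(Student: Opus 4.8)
The plan is to deduce everything from Theorem~\ref{T: cone aut is} once one identifies the contractive semicrossed product with the isometric one. Two preliminary observations set the stage. First, since $A$ is a C*-algebra on which $\al$ acts by $*$-automorphisms, $A$ is its own C*-envelope, so $\cenv(A) = A$. Second, $\bZ^2_+$ is a spanning cone of $\bZ^2$. Hence Theorem~\ref{T: cone aut is} applies directly and yields $A \times_\al^{\iso} \bZ^2_+ \simeq A \times_\al^{\uni} \bZ^2_+$ together with
\[
 \cenv(A \times_\al^{\iso} \bZ^2_+) \simeq \cenv(A) \rtimes_\al \bZ^2 = A \rtimes_\al \bZ^2 .
\]
Thus it suffices to prove $A \times_\al \bZ^2_+ \simeq A \times_\al^{\iso} \bZ^2_+$, and the whole content of the corollary is concentrated in this identification of norms.

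Both algebras are completions of $c_{00}(\bZ^2_+,\al,A)$ with respect to their respective families of covariant pairs. Every isometric covariant pair is in particular contractive, so the isometric matrix-seminorms are automatically dominated by the contractive ones. For the reverse inequality I would show that every contractive covariant pair $(\pi,T)$ \emph{co-extends} to an isometric covariant pair $(\rho,V)$. Because a co-extension is lower triangular in Sarason form, compression back to $H$ recovers $T\times\pi$ on all of $c_{00}(\bZ^2_+,\al,A)$, so that $\|(T\times\pi)(\cdot)\|$ is dominated by $\|(V\times\rho)(\cdot)\|$, and hence by the isometric norm, at every matrix level. Taking the supremum over all contractive covariant pairs then gives the reverse domination, and therefore $A \times_\al \bZ^2_+ \simeq A \times_\al^{\iso} \bZ^2_+$.

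The remaining step is thus a covariant version of And\^{o}'s theorem \cite{And63}. Writing $T_1 = T(1,0)$ and $T_2 = T(0,1)$, a contractive covariant pair is exactly a pair of commuting contractions satisfying $\pi(a)T_i = T_i\,\pi\al_i(a)$ for $i=1,2$. I would run And\^{o}'s isometric dilation of $(T_1,T_2)$, built from the defect operators $D_i = (I - T_i^*T_i)^{1/2}$ on an enlargement of $H$, while simultaneously transporting $\pi$ to a representation $\rho$ of the larger space through the automorphisms $\al_i$ (so that on the adjoined defect summands $\rho$ is prescribed by $\pi\al_i^{\pm1}$). The target is a pair of commuting isometries $V_1,V_2$ co-extending $T_1,T_2$ and a representation $\rho$ extending $\pi$ with $\rho(a)V_i = V_i\,\rho\al_i(a)$; this is precisely an isometric covariant co-extension, which by the previous paragraph is all that the norm identification requires (the passage to a unitary pair being already subsumed in Theorem~\ref{T: cone aut is}).

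The main obstacle is this last construction. And\^{o}'s theorem guarantees a commuting isometric dilation of $(T_1,T_2)$, but the gluing isometry that And\^{o} is free to choose must be selected so that \emph{both} covariance relations survive on the dilation space. Since $\al$ is automorphic the two relations are rigidly linked, and checking that a single And\^{o} dilation can be arranged to be covariant for $\al_1$ and $\al_2$ simultaneously is the technical heart of the argument. This is exactly the point addressed by the generalized And\^{o} theorems of Ling--Muhly \cite{LinMuh89} and Solel \cite{Sol06}, and it is where the non-uniqueness inherent in And\^{o}'s construction must be exploited; I expect the bookkeeping of the defect spaces against the action $\al$ to be the delicate part of writing the proof in full.
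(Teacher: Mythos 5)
Your proposal is correct and follows essentially the same route as the paper: the paper's proof likewise reduces the corollary to Theorem~\ref{T: cone aut is} and to the identification $A \times_\al \bZ^2_+ \simeq A \times_\al^{\iso} \bZ^2_+$, which it obtains by citing exactly the Ling--Muhly covariant And\^{o} theorem \cite{LinMuh89} for automorphic systems that you invoke. (The paper does also supply, in Theorem~\ref{T: Ando}, a self-contained covariant And\^{o} dilation valid for arbitrary, not necessarily automorphic, C*-dynamical systems over $\bZ_+^2$, which gives an alternative proof bypassing \cite{LinMuh89} and \cite{Sol06}; your sketch of transporting $\pi$ through the defect summands is in the spirit of that construction, but your actual proof, like the paper's, rests on the cited dilation theorem.)
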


\begin{proof}
The proof is immediate by Theorem \ref{T: cone aut is} once we demonstrate that $A \times_\al \bZ^2_+ \simeq A \times_{\al}^{\iso} \bZ^2_+$. 
This follows from a result of Ling and Muhly \cite{LinMuh89}, who prove that a contractive covariant pair of an automorphic C*-dynamical system co-extends to an isometric covariant pair.
\end{proof}

A generalized And\^{o}'s Theorem is established for product systems over $\bZ^2_+$ by Solel \cite[Theorem 4.4]{Sol06}. 
In \cite[Corollary 4.6]{Sol06} Solel applies his result to C*-dynamical systems $(A, \al, \bZ_+^2)$ where the action $\al$ extends to an action on the multiplier algebra $M(A)$. 
It is shown that any contractive covariant pair $(\pi, T)$ for $(A, \al, \bZ_+^2)$ can be dilated to a covariant pair by commuting partial isometries. 
When the system is in particular unital (or non-degenerate) then the partial isometries turn out to be isometries.

We mention that a non-degeneracy assumption is central to the representation theory produced by Muhly and Solel, starting with their seminal paper on tensor algebras of C*-correspondences \cite{MuhSol98}. 
For example, see the key result \cite[Lemma 3.5]{MuhSol98}. However, there are interesting degenerate C*-correspondences, such as C*-correspondences arising from non-surjective C*-dynamical systems.

Below we provide an alternative approach to finding a version of And\^{o}'s Theorem for contractive covariant pairs. 
We show that any contractive covariant pair $(\pi, T)$ of an arbitrary C*-dynamical system $(A, \al, \bZ_+^2)$ can be dilated to an isometric covariant pair. 
We manage to do so without any assumption about extensions to $M(A)$, and without assuming that the system is automorphic. 
In this way we provide an alternate proof of Corollary \ref{C: cenv aut Z2} that bypasses \cite{LinMuh89} and \cite{Sol06}. 
Our method is based on the proof of an And\^{o}-type result in \cite{DPY10} for row-contractions satisfying certain commutation relations.

\begin{theorem}\label{T: Ando}
Let $(A, \alpha, \bZ_+^2)$ be C*-dynamical system. 
Let $(\pi,T)$ define a contractive covariant pair of $(A, \alpha, \bZ_+^2)$ on a Hilbert space $\H$. 
Then $(\pi, T)$ co-extends to an isometric covariant pair $(\si, V)$ of $(A, \alpha, \bZ_+^2)$. Hence
\[ 
A \times_\al \bZ^2_+ \simeq A \times_\al^{\iso} \bZ^2_+. 
\]
\end{theorem}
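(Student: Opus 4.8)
The plan is to construct, for a given contractive covariant pair $(\pi,T)$, an explicit isometric covariant co-extension $(\si,V)$ in the spirit of And\^o's dilation, while keeping track of the twist by $\al$. First I would record the reductions that make the bookkeeping possible. Since $A$ is a C*-algebra and $\pi$ is completely contractive, $\pi$ is a $*$-representation. Writing $T_1 = T_{\bo1}$, $T_2 = T_{\bo2}$ and $\al_1 = \al_{\bo1}$, $\al_2 = \al_{\bo2}$, the covariance relation $\pi(a)T_i = T_i\pi\al_i(a)$ together with $\pi(a^*) = \pi(a)^*$ and $\al_i(a^*)^* = \al_i(a)$ yields, by replacing $a$ with $a^*$ and taking adjoints, the dual relation $T_i^*\pi(a) = \pi\al_i(a)T_i^*$. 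Combining the two shows that $\pi\al_i(a)$ commutes with $T_i^*T_i$, hence with the defect operator $D_i := (I - T_i^*T_i)^{1/2}$; in particular each defect space $\mathcal D_i := \ol{\ran D_i}$ reduces $\pi\al_i(A)$, so $\pi\al_i$ restricts to a $*$-representation there. The target is a co-extension $\si = \pi\oplus\rho$ on $K = \H\oplus\mathcal L$ together with commuting isometries $V_1, V_2$ of the block form $V_i = \left[\begin{smallmatrix} T_i & 0 \\ B_i & C_i\end{smallmatrix}\right]$ satisfying $B_i^*B_i = D_i^2$, $C_i^*C_i = I$, $B_i^*C_i = 0$ (isometry), the intertwining relations $\rho(a)B_i = B_i\pi\al_i(a)$ and $\rho(a)C_i = C_i\rho\al_i(a)$ (covariance), together with $V_1V_2 = V_2V_1$; the block form already forces $\H$ to be co-invariant and $P_\H V_i|_\H = T_i$.

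Next, the model space. Following the And\^o dilation, I would build $\mathcal L$ from copies of $\mathcal D_1$ and $\mathcal D_2$ graded by $\bZ_+^2$, with the grade $\bo m = (m_1,m_2)$ carrying the representation $\rho$ twisted by $\pi\al_1^{m_1}\al_2^{m_2}$ restricted to the appropriate defect space, and with $C_1, C_2$ acting as the two shifts that raise the grading by $\bo1$ and $\bo2$. The crucial ingredient is the And\^o identity
\[
\nor{D_1 T_2 h}^2 + \nor{D_2 h}^2 = \nor{D_2 T_1 h}^2 + \nor{D_1 h}^2 = \nor{h}^2 - \nor{T_1 T_2 h}^2 \qforal h\in\H,
\]
which holds precisely because $T_1 T_2 = T_2 T_1$. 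This identity produces an isometry from $\ol{\{D_1T_2 h\oplus D_2 h\}}$ onto $\ol{\{D_2 T_1 h \oplus D_1 h\}}$ inside $\mathcal D_1\oplus\mathcal D_2$, which extends to a unitary $W$ (enlarging the defect spaces to infinite multiplicity if necessary). The unitary $W$ is the device that lets $C_1, C_2$ be defined so that the two routes from grade $\bo0$ to grade $\bo1+\bo2$ agree, i.e.\ so that $V_1 V_2 = V_2 V_1$.

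Then I would verify the requirements. That the $V_i$ are isometries co-extending $T_i$ is built into the block form, and $\si = \pi\oplus\rho$ restricts to $\pi$ on $\H$. The representation $\rho$ is defined grade-by-grade using the restricted representations $\pi\al^{\bo m}|_{\mathcal D_i}$, and the intertwining relations $\rho(a)B_i = B_i\pi\al_i(a)$, $\rho(a)C_i = C_i\rho\al_i(a)$ must be checked to be consistent with $W$. Here the commutativity $\al_1\al_2 = \al_2\al_1$ enters in exactly the same place as $T_1T_2 = T_2 T_1$: using the dual relation one computes, for instance, $\pi\al_1(a)\,D_1 T_2 h = D_1 T_2\,\pi\al_1\al_2(a)h$ and $\pi\al_2(a)\,D_2 T_1 h = D_2 T_1\,\pi\al_1\al_2(a)h$, so that the \emph{same} twisted representation $\pi\al_1\al_2$ governs the two sides of $W$.

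\emph{The main obstacle} is precisely this last compatibility: arranging a single space and rotation $W$ for which $V_1$ and $V_2$ simultaneously commute and are covariant for $\si = \pi\oplus\rho$. The naive assignment of $\pi\al_1(a)|_{\mathcal D_1}\oplus\pi\al_2(a)|_{\mathcal D_2}$ to $\mathcal D_1\oplus\mathcal D_2$ fails to preserve the graph subspace on which $W$ is first defined, so the representation attached to each grade must be chosen so that $W$ is genuinely $*$-intertwining; making these choices coherent across all grades is the technical heart, and is exactly where the two-variable hypothesis is indispensable (the And\^o identity, and hence $W$, has no counterpart for three commuting contractions, matching the Parrott and Kaijser--Varopoulos counterexamples). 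Once the covariant isometric co-extension is in hand, the final statement follows formally: since every contractive covariant pair co-extends to an isometric one, the families of contractive and isometric covariant pairs induce the same matrix seminorms on $c_{00}(\bZ_+^2,\al,A)$, whence the identity on this dense subalgebra extends to a completely isometric isomorphism $A\times_\al\bZ_+^2 \simeq A\times_\al^{\iso}\bZ_+^2$.
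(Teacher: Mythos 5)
Your reductions and framework are all correct: the dual relation $T_i^*\pi(a)=\pi\al_i(a)T_i^*$, the commutation of $\pi\al_i(a)$ with the defect $D_i$, the lower-triangular ansatz for the $V_i$ with its covariance conditions, the computations showing that the twisted representation $\pi\al_1\al_2$ governs both graph subspaces $\ol{\{D_1T_2h\oplus D_2h\}}$ and $\ol{\{D_2T_1h\oplus D_1h\}}$, and the closing formal deduction that a covariant isometric co-extension forces $A\times_\al\bZ^2_+\simeq A\times_\al^{\iso}\bZ^2_+$. But the proof is missing at exactly the step that constitutes the theorem: the unitary $W$ is never constructed. You assert that the Andô-identity isometry between the two graph subspaces ``extends to a unitary $W$ (enlarging the defect spaces to infinite multiplicity if necessary).'' In the scalar Andô theorem that extension is a dimension count; in the covariant setting the extension must in addition intertwine the $*$-representations carried by the orthogonal complements of the two graphs (these complements are reducing for the natural representations you identify), and two representations of a C*-algebra on Hilbert spaces of equal dimension need not be unitarily equivalent. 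Enlarging multiplicity helps only if it is arranged so that both complements become infinite ampliations of one and the same representation, coherently across all grades and compatibly with the commutation requirement $V_1V_2=V_2V_1$. Your proposal names this as ``the main obstacle'' and ``the technical heart'' and then stops, so the argument is incomplete precisely where it has to do work.

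For contrast, the paper's proof avoids the Andô identity altogether (its model is the Davidson--Power--Yang dilation argument for rank-$2$ graph algebras). It forms Schaeffer-type, deliberately non-minimal, dilations $V_{\bo1},V_{\bo{2}}$ on $K=\H\oplus\big(N\otimes\ell^2(\bZ^2_{>\un0})\big)$ with $N=\H^{(\infty)}$ --- full infinitely ampliated copies of $\H$ at every grade, not defect spaces --- and compares the two isometric dilations $X=V_{\bo1}V_{\bo{2}}$ and $Y=V_{\bo{2}}V_{\bo1}$ of the single contraction $T_{\bo1}T_{\bo{2}}$. The minimal parts $X_m,Y_m$ are unitarily equivalent by uniqueness of minimal isometric dilations, and that canonical unitary $W_m$ intertwines $\si$ because it is defined through the covariance relation; the residual parts $X_0,Y_0$ are shifts of infinite multiplicity whose wandering spaces, thanks to the ampliation built into $N$, can both be written in the form $U_X\oplus\big((U_X\oplus U_X^\perp)\oplus(U_X\oplus U_X^\perp)\oplus\cdots\big)\oplus\big((U_Y\oplus U_Y^\perp)\oplus\cdots\big)$, respectively $U_Y\oplus(\text{the same tail})$, so that an explicit shift (``Hilbert hotel'') unitary $U$ between them intertwines the ampliated representations. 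Setting $W=W_m\oplus U$, the pair $V_{\bo1}'=V_{\bo1}W$, $V_{\bo{2}}'=W^*V_{\bo{2}}$ commutes, remains covariant, and co-extends $(T_{\bo1},T_{\bo{2}})$. The infinite ampliation installed in the space from the outset is precisely the device that converts the dimension-counting step of the classical proof into a representation-theoretic one; some substitute for it would have to be built into your graded construction before your outline becomes a proof.
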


\begin{proof}
For simplicity we will use $\bo1 \equiv (1,0)$, $\bo{2} \equiv (0,1)$ and $\un0\equiv (0,0)$. 
An arbitrary element of $\bZ^2$ will be denoted by $\un{m} \equiv (m_1,m_2)$. 
Also we write $\bZ^2_{>\un0}$ for the non-zero elements in $\bZ^2_+$.

Let $(\pi,T)$ be a contractive covariant pair acting on a Hilbert space $H$. 
Define the Hilbert spaces $N = H^{(\infty)}$ and $L = N \otimes \ell^2(\bZ^2_{> \un0})$, and set
\[
 K = H \oplus L = H \oplus \sideset{}{^\oplus}\sum_{\un{m} \in \bZ^2_{> \un0}} N_{\un{m}} .
\]
We can view $K$ as a direct sum over the grid of $\bZ^2_+$ where $H$ sits on the $\un0$-position, and 
$N$ sits on every other position. 
We will use the notation $\xi \otimes e_{\un0}$ for $\xi \in H$ to represent a vector in the $\un0$-position and use $\eta \otimes e_{\un{m}}$ with $\eta \in N$ to represent a vector in the copy $N_{\un{m}}$ of $N$ in the $\un{m}$-th position.

We define the $*$-representation $\pi^{(\infty)} \colon A \to \B(N)$ by
\[
\pi^{(\infty)}(a) (\xi_k) = (\pi(a) \xi_k),
\]
for every $a\in A$ and $\xi_k \in H$. Moreover we define the $*$-representation $\wt{\pi} \colon A \to \B(L)$ by
\[
\wt{\pi}(a) (\eta \otimes e_{\un{m}}) = (\pi^{(\infty)}\al_{\un{m}}(a) \eta) \otimes e_{\un{m}}, 
\]
for every $a\in A$, $\eta \in N$ and $\un{m} \in \bZ^2_{> \un0}$. 
Therefore the mapping $\si := \pi \oplus \wt{\pi}$ defines a $*$-representation of $A$ in $\B(K)$.

We will first dilate $T_\bo1$ and $T_\bo{2}$ separately, before creating a joint commuting dilation of $T_\bo1$ and $T_\bo{2}$. 
Let $D_\bo1$ be the defect operator $(I-T_\bo1^*T_{\bo1})^{1/2}$ for $T_\bo1$. 
We will consider $D_\bo1$ as an operator from $H$ into the first copy of $H$ in $N$ in the $(1,0)$-position in $L$, i.e.,
\[
D_{\bo1} (\xi\otimes e_{\un0}) = (D_{\bo1} \xi, 0, 0, \dots) \otimes e_{\bo1}. 
\]
Define $V_\bo1$ as
\begin{equation*}
V_\bo1=\begin{bmatrix} T_\bo1 & 0 \\
\begin{bmatrix} D_\bo1\\ 0\end{bmatrix}& L_{\bo1} \end{bmatrix}
\end{equation*}
where $L_{\bo1}$ is the shift operator on $L$ sending the $\un{m}$-position to the $(\un{m} + \bo1)$-position, e.g.,
\[
L_{\bo1} (\eta \otimes e_{(m,n)}) = \eta \otimes e_{(m+1,n)}.
\]
Similarly, we view the defect operator $D_\bo{2}=(I-T_\bo{2}^*T_\bo{2})^{1/2}$ as an operator from $H$ into the first copy of $H$ in $N$ in the $(0,1)$-position in $L$. 
We define $V_\bo{2}$ as
\begin{equation*}
V_\bo{2}
=
\begin{bmatrix} 
T_\bo{2} & 0 \\ 
\begin{bmatrix} D_\bo{2}\\ 0\end{bmatrix} & L_{\bo{2}}
\end{bmatrix}
\end{equation*}
where $L_{\bo{2}}$ is the operator on $L$ that shifts the second coordinate:
\[ 
L_2(\eta\otimes e_{(m_1,m_2)}) = \eta \otimes e_{(m_1,m_2+1)} .
\]
Note that $V_\bo1$ and $V_\bo{2}$ are isometric dilations of $T_\bo1$ and $T_\bo{2}$, respectively, and that $L_{\bo1} L_{\bo{2}} = L_{\bo{2}} L_{\bo1}$.

\smallskip
\noindent {\em\textbf{Claim.}} With this notation, we have
\begin{equation}\label{eq: cov rel}
 \si(a) V_{\bo1} = V_{\bo1} \si\al_{\bo1}(a)
 \qand 
 \si(a) V_{\bo{2}} = V_{\bo{2}} \si\al_{\bo{2}}(a)
 \qforal a\in A.
\end{equation}
{\em\textbf{Proof of Claim.}} For $a\in A$ and $\xi \in H$ we obtain
\begin{align*}
\si(a) V_{\bo1}(\xi \otimes e_{\un0})
& =
\si(a)\left( (T_{\bo1}\xi) \otimes e_{\un0} + (D_{\bo1}\xi,0, \dots) \otimes e_{\bo1} \right) \\
& =
(\pi(a)T_{\bo1}\xi) \otimes e_{\un0} + \left(\pi^{(\infty)}\al_{\bo1}(a) (D_{\bo1}\xi,0, \dots) \right) \otimes e_{\bo1} \\
& =
(T_{\bo1} \pi\al_{\bo1}(a) \xi) \otimes e_{\un0} + (\pi\al_{\bo1}(a) D_{\bo1}\xi,0, \dots) \otimes e_{\bo1} \\
& =
(T_{\bo1} \pi\al_{\bo1}(a) \xi) \otimes e_{\un0} + (D_{\bo1}\pi\al_{\bo1}(a) \xi,0, \dots) \otimes e_{\bo1},
\end{align*}
where we have used that $\pi(a) T_{\bo1} = T_{\bo1} \pi\al_{\bo1}(a)$, and therefore 
\[ 
\pi\al_{\bo1}(a) T_{\bo1}^* T_{\bo1} = T_{\bo1}^*T_{\bo1} \pi\al_{\bo1}(a) 
\]
and $\pi\al_{\bo1}(a) D_{\bo1} = D_{\bo1} \pi\al_{\bo1}(a)$. 
Moreover, the claim holds for vectors in $H$ since
\begin{align*}
V_{\bo1} \si\al_{\bo1}(a) (\xi \otimes e_{\un0})
& =
V_{\bo1} \left( (\pi\al_{\bo1}(a) \xi) \otimes e_{\un0} \right) \\
& =
(T_{\bo1} \pi\al_{\bo1}(a) \xi) \otimes e_{\un0} 
+ (D_{\bo1} \pi\al_{\bo1}(a) \xi, 0, \dots) \otimes e_{\bo1} \\
&= \si(a) V_1 (\xi \otimes e_0 ) .
\end{align*}
In addition, for every $a\in A$ and $\eta \in N$, we obtain
\begin{align*}
\si(a) V_{\bo1} (\eta \otimes e_{\un{m}})
=
\si(a) (\eta \otimes e_{\un{m} + \bo1})
=
(\pi^{(\infty)} \al_{\un{m} + \bo1}(a) \eta ) \otimes e_{\un{m} + \bo1},
\end{align*}
and
\begin{align*}
V_{\bo1} \si\al_{\bo1}(a) (\eta \otimes e_{\un{m}})
& =
V_{\bo1} \left( (\pi^{(\infty)} \al_{\un{m}} \al_{\bo1}(a) \eta) \otimes e_{\un{m}} \right) \\
& =
(\pi^{(\infty)} \al_{\un{m} + \bo1} (a) \eta) \otimes e_{\un{m} + \bo1}.
\end{align*}
Therefore $\si(a) V_{\bo1} = V_{\bo1} \si\al_{\bo1}(a)$, for all $a\in A$. 
A similar computation shows that $\si(a) V_{\bo{2}} = V_{\bo{2}} \si\al_{\bo{2}}(a)$, for all $a\in A$, and the proof of the claim is complete.

\smallskip

The isometries $X:= V_{\bo1} V_{\bo{2}}$ and $Y:= V_{\bo{2}} V_{\bo1}$ are both dilations of the single contraction $T_{\bo1} T_{\bo{2}} = T_{(1,1)} = T_{\bo{2}} T_{\bo1}$.
Then
\begin{align*}
X = X_m \oplus X_0, \text{ and } Y = Y_m \oplus Y_0,
\end{align*}
where $X_m$ and $Y_m$ are minimal isometric dilations of $T_{(1,1)}$ on the spaces 
\begin{align*}
M_X = \bigvee_{n=0}^\infty X^n H, \text{ and } M_Y = \bigvee_{n=0}^{\infty} Y^n H, 
\end{align*}
respectively. 
Note that by the covariance relation \eqref{eq: cov rel}, $M_X$ and $M_Y$ are invariant subspaces for $\si(A)$, hence reducing. 
Consequently $K_X: = K \ominus M_X$ and $K_Y := K \ominus M_Y$ are also reducing subspaces for $\si(A)$.

Since $X_m$ and $Y_m$ are minimal isometric dilations of the same contraction, they are unitarily equivalent by a unitary $W_m$ defined by 
\begin{align*}
W_m X^n (\xi \otimes e_{\un0}) = Y^n (\xi \otimes e_{\un0})
\end{align*}
for all $\xi \in H$. 
For every $a \in A$ and $\xi \in H$ a repeated application of \eqref{eq: cov rel} gives
\begin{align*}
 \si (a) W_m X^n (\xi \otimes e_{\un0}) =
 W_m \si (a) X^n (\xi \otimes e_{\un0}).
\end{align*}
Thus $W_m \si(\cdot)|_{M_X} = \si(\cdot) W_m|_{M_X}$. 

We aim to show that $X_0$ and $Y_0$ are also unitarily equivalent via a unitary that commutes with $\si(A)$. 
First note that
\[
 X_0 = X|_{K_X} = (X|_{H^{\perp}})|_{K_X} = (L_{\bo1} L_{\bo{2}}) |_{K_X}
\]
Similarly, $Y_0=(L_{\bo1} L_{\bo{2}})|_{K_Y}$. 
It is not hard to see that $X_0$ and $Y_0$ are shifts of infinite multiplicity, and hence they are unitarily equivalent. 
We will give an explicit description of a unitary implementing this unitary equivalence. 
The unitary we construct will commute with $\si(A)$.
To this end, let $W_X$ denote the kernel of $X_0^*$, and let $W_Y$ denote the kernel of $Y_0^*$. 
Then
\[
 W_X \!=\!
 \left( (H \oplus N_{(1,0)} \oplus N_{(1,1)} \oplus N_{(0,1)}) \ominus (H \vee X H)\right) 
 \oplus (\bigoplus_{m \geq 2} N_{(m,0)} \oplus N_{(0,m)}),
\]
and
\[
 W_Y \!=\!
 \left( (H \oplus N_{(1,0)} \oplus N_{(1,1)} \oplus N_{(0,1)}) \ominus (H \vee Y H)\right)
 \oplus (\bigoplus_{m \geq 2} N_{(m,0)} \oplus N_{(0,m)}).
\]
As $X_0$ and $Y_0$ are pure isometries, $W_X$ and $W_Y$ are wandering subspaces for $X_0$ and $Y_0$, respectively. Further 
\[
 K_X = \bigvee_{n=0}^\infty X_0^n W_X = \bigvee_{n=0}^\infty X^n W_X,
\]
and
\[
 K_Y = \bigvee_{n=0}^\infty Y_0^n W_Y = \bigvee_{n=0}^\infty Y^n W_Y.
\]

We will construct a unitary $U \colon W_X \to W_Y$ that intertwines $\si(\cdot)|_{W_X}$ and $\si(\cdot)|_{W_Y}$. 
First we let $U$ be the identity on $N_{(m,0)}$ and $N_{(0,n)}$ for $m,n \geq 2$. 
The difficult part is to define the unitary on the parts sitting inside $N_{(1,0)} \oplus N_{(1,1)} \oplus N_{(0,1)}$. Let
\[ 
W_X'=W_X \cap (N_{(1,0)} \oplus N_{(1,1)} \oplus N_{(0,1)}) 
\]
and
\[ 
W_Y'=W_Y \cap (N_{(1,0)} \oplus N_{(1,1)} \oplus N_{(0,1)}). 
\]

Since $N = H^{(\infty)}$ we re-arrange $N_{(1,0)} \oplus N_{(1,1)} \oplus N_{(0,1)}$ along the copies of $H$, i.e.,
\begin{align*}
 N_{(1,0)} \oplus N_{(1,1)} \oplus N_{(0,1)}
 &\simeq \\
 & \hspace{-2cm} \simeq
 (H^1_{(1,0)} \oplus H^1_{(1,1)} \oplus H^1_{(0,1)}) \oplus (H^2_{(1,0)} \oplus H^2_{(1,1)} 
 \oplus H^2_{(0,1)}) \oplus \dots,
\end{align*}
where $H^1_{(1,0)}$ denotes the first copy of $H$ in $N_{(1,0)}$ etc. The computation
\begin{align*}
 X (\xi \otimes e_{\un0})
 & =
 V_{\bo1} V_{\bo{2}} (\xi \otimes e_{\un0}) \\
 & =
 V_{\bo1} ((T_{\bo{2}}\xi) \otimes e_{\un0} + (D_{\bo{2}}\xi, 0, \dots)\otimes e_{\bo{2}}) \\
 & =
 (T_{\bo1} T_{\bo{2}}\xi) \otimes e_{\un0} + (D_{\bo1} T_{\bo{2}} \xi, 0, \dots) \otimes e_{\bo1} 
 + (D_{\bo{2}}\xi, 0, \dots) \otimes e_{(1,1)},
\end{align*}
for $\xi \in H$, shows that $W_X'$ is of the form
\begin{align*}
 W_X' &= U_X \oplus \left(H^2_{(1,0)} \oplus H^2_{(1,1)} \oplus H^2_{(0,1)} \right) \oplus \dots \\
 &= U_X \oplus (H \oplus H \oplus H)^{(\infty)} \oplus (H \oplus H \oplus H)^{(\infty)} \oplus \dots,
\end{align*}
where $U_X$ is a subspace of $H \oplus H \oplus H$. Similarly $W_Y'$ is of the form
\begin{align*}
 W_Y' &= U_Y \oplus \left(H^2_{(1,0)} \oplus H^2_{(1,1)} \oplus H^2_{(0,1)} \right) \oplus \dots\\
 &= U_Y \oplus (H \oplus H \oplus H)^{(\infty)} \oplus (H \oplus H \oplus H)^{(\infty)} \oplus \dots,
\end{align*}
with $U_Y$ being a subspace of $H \oplus H \oplus H$. Therefore we can write $W_X'$ as
\[
 U_X \oplus \big( (U_X \oplus U_X^\perp) \oplus (U_X \oplus U_X^\perp) \oplus \cdots \big) 
\oplus \big( (U_Y \oplus U_Y^\perp) \oplus (U_Y \oplus U_Y^\perp) \oplus \cdots \big)
\]
and $W_Y'$ as
\[
U_Y \oplus \big( (U_X \oplus U_X^\perp) \oplus (U_X \oplus U_X^\perp) \oplus \cdots \big) 
\oplus \big( (U_Y \oplus U_Y^\perp) \oplus (U_Y \oplus U_Y^\perp) \oplus \cdots \big) .
\]
We define $U$ by the rule
\begin{align*}
U \big( \xi_0 , \big( \xi_m \!+\! \xi_m' \big)_{m\ge1} &, \big( \zeta_n \!+\! \zeta_n'\big)_{n\ge1} \big) \!=\! \big( \zeta_1 , \big( \xi_{m-1} \!+\! \xi_m' \big)_{m\ge1} , \big( \zeta_{n+1} \!+\! \zeta_n'\big)_{n\ge1} \big) .
\end{align*}
where $\xi_m \in U_X$, $\xi_m' \in U_X^\perp$, $\zeta_n \in U_Y$ and $\zeta_n'\in U_Y^\perp$ for each non-negative integer $n, m$. That is, the operator $U$ is defined by the diagram
\begin{align*}
\xymatrix@C=.05em@R=2.5em{
U_X \ar[drr] & \oplus 
& \big( (U_X \ar[drrrr]|!{[d];[rrrr]}\hole 
& \oplus 
& U_X^\perp ) \ar[d] 
& \oplus 
& \cdots \big) 
& \oplus 
& \big( (U_Y \ar[dllllllll]|!{[d];[llllllll]}\hole 
& \oplus 
& U_Y^\perp ) \ar[d] 
& \oplus & \cdots \big) \ar@{~}[dllll]|!{[d];[llll]}\hole 
\\
U_Y 
& \oplus 
& \big( (U_X 
& \oplus 
& U_X^\perp ) 
& \oplus 
& \cdots \big) 
& \oplus 
& \big( (U_Y 
& \oplus 
& U_Y^\perp ) 
& \oplus 
& \cdots \big) .
}
\end{align*}
Thus, $U$ is a unitary.
Recall that the restriction of $\si$ to $N_{(1,0)} \oplus N_{(1,1)} \oplus N_{(0,1)}$ is the 
ampliation of $\pi\al_{(1,0)} \oplus \pi\al_{(1,1)} \oplus \pi\al_{(0,1)}$. 
Therefore, a straightforward computation shows that $U$ intertwines with 
$\si|_{W_X'}$ and $\si|_{W_Y'}$.

Consequently, we have constructed a unitary operator $U \colon W_X \to W_Y$ that intertwines $\si(\cdot)|_{W_X}$ and $\si(\cdot)|_{W_Y}$. 
Moreover $U X_0 k = Y_0 U k$, for all $k\in W_X$, since $X_0$ and $Y_0$ are shifts on the $\ell^2(\bZ^2_{> \un0})$-grading and $U$ preserves this grading. 
Recall that $W_X$ (resp. $W_Y)$ is an $X$-cyclic space (resp. $Y$-cyclic space) for $K_X$ (resp. $K_Y$). 
Therefore we can extend the operator to a unitary from $K_X$ to $K_Y$, which we denote by the same symbol $U$, by the rule
\begin{align*}
U X^n k = Y^n U k,
\end{align*}
for every $k\in W_X$. Consequently $UX_0 = Y_0 U$. Moreover, we have that
\begin{align*}
 \si(a) UX^n k
& = 
\si(a)Y^n Uk\\
& = 
Y^n \si\alpha_{(n,n)}(a) U k\\
& = 
Y^n U \si\alpha_{(n,n)}(a) k\\
& = 
U X^n \si\alpha_{(n,n)}(a) k\\
& = 
U \si(a)X^n k,
\end{align*}
for all $a\in A$ and $k \in K_X$. 
Therefore $U \colon K_X \to K_Y$ intertwines $\si(\cdot)|_{K_X}$ and $\si(\cdot)|_{K_Y}$.

For the final part of the proof, let $W$ be the unitary on $K$ given by $W=W_m\oplus U$. 
By construction we have that $WXW^*=Y$ and that $W$ commutes with $\si$. 
We are now ready to define our isometric dilation of the representation $T$. 
Let
\begin{align*}
V_\bo1' = V_\bo1 W, \text{ and } V_\bo{2}' = W^*V_\bo{2}.
\end{align*}	
Note that, as $W$ leaves $H$ fixed in $K$, $V_{\bo1}'$ is an isometric dilation of $T_{\bo1}$ and $V_{\bo{2}}'$ is an isometric dilation of $T_{\bo{2}}$. 
In addition we obtain
\begin{align*}
\si(a)V_{\bo1}' = \si(a) V_{\bo1} W = V_{\bo1} \si\al_{\bo1}(a) W = V_{\bo1}' \si\alpha_{\bo1}(a),
\end{align*}
and
\begin{align*}
\si(a)V_{\bo{2}}' = \si(a) W V_{\bo{2}} = W \si(a) V_{\bo{2}} = V_{\bo{2}}' \si\alpha_{\bo{2}}(a),
\end{align*}
for all $a\in A$. 
Finally we have that
\begin{align*}
V_{\bo1}'V_{\bo{2}}'
&=V_{\bo1}WW^*V_{\bo{2}} \\&
= X = W^*YW \\&
= W^*V_{\bo{2}}V_{\bo1}W 
= V_{\bo{2}}'V_{\bo1}'.
\end{align*}
Thus $V_{\bo1}'$ and $V_{\bo{2}}'$ define an isometric representation $V$ of $\bZ_+^2$. 
Hence $(\si, V)$ is an isometric covariant representation of $(A,\alpha,\bZ_+^2)$ which dilates $(\pi, T)$.
\end{proof}

\begin{remark}
Of course we cannot replace $A$ by a nonselfadjoint operator algebra in Corollary \ref{C: cenv aut Z2}. 
If $A$ is the disc algebra then this leads to a contradiction by Parrot's example \cite{Par70}. 
Nevertheless, we wonder whether Corollary \ref{C: cenv aut Z2} holds for injective C*-dynamical systems, since the intermediate step, Theorem \ref{T: Ando}, works for such systems.
\end{remark}

\section{The Fock algebra}\label{S: lr scp}

In this section we turn our attention to an algebra closely related to the semicrossed product: the Fock algebra. 
This is the universal algebra for Fock representations. 
We will see that the methods we have used to calculate the C*-envelopes of semicrossed products can be readily adapted to this setting. 
Again, we will find that the C*-envelope is a group crossed-product C*-algebra.

Fock algebras have previously been studied by Duncan and Peters \cite{DunPet10} in the setting of classical dynamical systems. 
In the general setting this translates in the following definition.

\begin{definition}
Let $P$ be an abelian semigroup. 
We define the \emph{Fock algebra} $\A(A,\al,P)$ be the universal algebra relative to Fock pairs $(\wt{\pi},V)$ associated to completely contractive representations $\pi$ of $A$.
\end{definition}

We will show that we can in fact define the Fock algebra for $(A, \al, P)$ using just one Fock representation. 
Recall that the C*-cover $(\ca_{\max}(A),j)$ of a nonselfadjoint operator algebra $A$ is defined by the following universal property: every completely contractive representation $\pi \colon A \rightarrow \B(H)$ lifts to a (necessarily unique) $*$-homomoprhism $\pi' \colon \ca_{\max}(A) \to \B(H)$ such that $\pi' j =\pi$ (see \cite[Proposition 2.4.2]{BleLeM04}). 
Therefore the system $\al \colon P \to \End(A)$ lifts to a C*-system $\al \colon P \rightarrow \End(\ca_{\max}(A))$.

\begin{proposition}\label{P: lr scp}
Let $(A,\al,P)$ be a dynamical system over a spanning cone $P$ of $G$. 
Then
\begin{align*}
\A(A,\al,P) \hookrightarrow \A(\ca_{\max}(A),\al,P)
\end{align*}
and the Fock pair $(\wt{\pi_u},V_u)$ associated to a universal representation $(\pi_u,H_u)$ of $\ca_{\max}(A)$ defines a completely isometric representation of the Fock algebra.
\end{proposition}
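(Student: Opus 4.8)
The plan is to reduce the whole computation to the C*-dynamical system $(\ca_{\max}(A),\al,P)$, where the base algebra is selfadjoint and the representation theory of C*-algebras is available. The starting observation is that, by the defining universal property of $\ca_{\max}(A)$, the completely contractive representations $\pi$ of $A$ are in bijection with the $*$-representations $\pi'$ of $\ca_{\max}(A)$ via $\pi = \pi' \circ j$; recall also that a completely contractive homomorphism of a C*-algebra is automatically a $*$-homomorphism, so these are exactly the representations parametrizing the Fock pairs of $(\ca_{\max}(A),\al,P)$. First I would record that the map $e_s \otimes a \mapsto e_s \otimes j(a)$ extends to an injective algebra homomorphism $c_{00}(P,\al,A) \to c_{00}(P,\al,\ca_{\max}(A))$; this uses only that the lifted action satisfies $\al_s \circ j = j \circ \al_s$, so that $j$ is equivariant and the multiplication rule is respected.

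For the embedding being completely isometric, the key point is that the Fock pairs on the two sides match up under this bijection. If $\pi = \pi' \circ j$, then for $a \in A$ one has $\wt{\pi'}(j(a)) = \wt{\pi}(a)$, because $\wt{\pi'}(j(a))(\xi \otimes e_s) = (\pi'\al_s(j(a))\xi)\otimes e_s = (\pi\al_s(a)\xi)\otimes e_s$, while the isometric part $V$ of the Fock pair depends only on $P$ and $H$, not on the base algebra. Consequently, for any $X = \sum_s e_s \otimes A_s$ in $M_n(c_{00}(P,\al,A))$, the operator $(V \times \wt\pi)^{(n)}(X)$ coincides with $(V \times \wt{\pi'})^{(n)}$ applied to the image of $X$. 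Since $\pi \leftrightarrow \pi'$ is a bijection, the two families of operators over which the defining suprema are taken are identical, so the two norms agree at every matrix level and the embedding is completely isometric. (The faithful representation $\pi_u \circ j$ of $A$ shows in passing that $\pi_u$ does induce a legitimate Fock pair of $A$.)

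It then remains to prove that the single Fock pair $(\wt{\pi_u},V_u)$ computes the Fock-algebra norm of $(\ca_{\max}(A),\al,P)$, which by the previous paragraph also computes the norm of $\A(A,\al,P)$. Here I would invoke the defining property of a universal representation: every $*$-representation $\rho$ of $\ca_{\max}(A)$ is unitarily equivalent to the restriction of some ampliation $\pi_u^{(\kappa)}$ to a reducing subspace $H_\rho$. The plan is to transport this to the Fock level. Since ampliation commutes with the Fock construction, $\widetilde{\pi_u^{(\kappa)}} = (\wt{\pi_u})^{(\kappa)}$ with isometric part $V_u^{(\kappa)}$; and because $H_\rho$ is reducing for $\rho(\ca_{\max}(A))$, hence invariant under every $\rho\al_s(a)$, the subspace $H_\rho \otimes \ltwo(P)$ reduces $\widetilde{\pi_u^{(\kappa)}}$ and is preserved by the shifts, with restriction equal to $(\wt\rho,V_\rho)$. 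Thus $(V_\rho \times \wt\rho)$ is unitarily equivalent to a restriction of the ampliation $(V_u \times \wt{\pi_u})^{(\kappa)}$ to a reducing subspace, so at every matrix level $\|(V_\rho \times \wt\rho)^{(n)}(X)\| \le \|(V_u \times \wt{\pi_u})^{(n)}(X)\|$. As $\pi_u$ is faithful, its Fock pair is itself one of those over which the supremum is taken, so this single pair attains the supremum and is completely isometric.

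The main obstacle I anticipate is the bookkeeping in the last paragraph: one must check carefully that $H_\rho$ being reducing for the C*-algebra $\rho(\ca_{\max}(A))$ is exactly what guarantees that $H_\rho \otimes \ltwo(P)$ reduces the Fock representation and is left invariant by the shift $V$, so that the restriction of the ampliated Fock pair really is the Fock pair of $\rho$. The essential conceptual point, and the reason the reduction of the first two paragraphs is needed, is that $A$ itself carries no analogue of GNS theory, so the ``subrepresentation of an ampliation of the universal representation'' argument only becomes available after passing to $\ca_{\max}(A)$.
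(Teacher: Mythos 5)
Your proposal is correct and follows essentially the same route as the paper: first identify Fock pairs of $(A,\al,P)$ bijectively with Fock pairs of $(\ca_{\max}(A),\al,P)$ via the universal property of $\ca_{\max}(A)$ (giving the completely isometric embedding), then reduce to the C*-case and observe that every $*$-representation is a subrepresentation of (an ampliation of) $\pi_u$, a fact which passes to the Fock level and shows $(\wt{\pi_u},V_u)$ attains the supremum. Your treatment is in fact slightly more careful than the paper's, which speaks only of direct summands of $\pi_u$ and suppresses the ampliation, and your closing remark about faithfulness of $\pi_u$ is harmless but not needed for that step.
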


\begin{proof}
By the universal property of $\ca_{\max}(A)$ a Fock pair $(\wt{\pi},V)$ of $(A,\al,P)$ extends to a Fock pair $(\wt{\pi'},V)$ of $(\ca_{\max}(A),\al,P)$. 
Conversely, every Fock pair $(\wt{\pi},V)$ of $(\ca_{\max}(A),\al,P)$ defines a Fock pair $(\wt{\pi}|_A,V)$ since $A$ embeds completely isometrically in $\ca_{\max}(A)$. 
Therefore $\A(A,\al,P) \hookrightarrow \A(\ca_{\max}(A),\al,P)$ completely isometrically.

Now without loss of generality assume that $(A,\al,P)$ is a C*-dynamical system. 
Then the norm obtained by $(\wt{\pi}_u,V_u)$ where $(\pi_u,H_u)$ is the universal representation of $A$ is dominated by the norm of the Fock algebra. 
Conversely every Fock pair $(\wt{\pi},V)$ is unitarily equivalent to a direct summand of $(\wt{\pi}_u,V_u)$, since $\pi$ is unitarily equivalent to a direct summand of $\pi_u$. 
Therefore the norm obtained by $(\wt{\pi}_u,V_u)$ dominates the norm on the Fock algebra. 
Similar arguments apply to every matrix level and the proof is complete. 
\end{proof}

\begin{remark}
The previous proposition shows that the Fock algebra coincides with the object examined by Duncan and Peters \cite{DunPet10}. 
In \cite{DunPet10} this operator algebra goes by the name of the left regular algebra. 
\end{remark}

\begin{theorem}\label{T: lr scp}
Let $P$ be a spanning cone of an abelian group $G$ that acts on a C*-algebra $A$ by injective $*$-endomorphisms, and let $(\wt{A},\wt{\al},G)$ be the automorphic direct limit C*-dynamical system associated to $(A,\al,P)$.
Then there is a natural completely isometric isomorphism
\[ 
\A(A,\al,P) \simeq A \times_\al^\uni P 
\]
and therefore
\[ 
\cenv(\A(A,\al,P)) \simeq \wt{A} \rtimes_{\wt{\al}} G.
\]
\end{theorem}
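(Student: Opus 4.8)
The second isomorphism will follow immediately from the first together with Theorem~\ref{T: cone aut un 2}, so the plan is to establish the completely isometric identification $\A(A,\al,P) \simeq A \times_\al^\uni P$. Since both algebras are completions of $c_{00}(P,\al,A)$ under the matrix seminorms coming, respectively, from Fock pairs and from unitary covariant pairs, it suffices to show that for each $n$ the two families of seminorms on $M_n(c_{00}(P,\al,A))$ coincide. I would prove the two inequalities separately, the bridge in both directions being the bilateral Fock representation of the automorphic extension $(\wt A,\wt\al,G)$ together with the observation that the unilateral Fock pair is its restriction to the invariant corner indexed by $\ltwo(P) \subseteq \ltwo(G)$.

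For the inequality ``Fock $\le$ unitary'' I would first invoke Proposition~\ref{P: lr scp}: the Fock norm is attained by the single pair $(\wt{\pi_u},V_u)$ coming from a universal representation $\pi_u$ of the C*-algebra $A$. Every state of $A$ extends to a state of $\wt A$, so each GNS representation of $A$ is a reducing summand of the restriction to $\om_0(A)\cong A$ of a representation $\rho$ of $\wt A$; summing, $\pi_u$ is unitarily equivalent to a summand of $\rho\,\om_0$. The bilateral Fock pair of $\rho$ on $K\otimes\ltwo(G)$, restricted to $\om_0(A)$, is a unitary covariant pair of $(A,\al,P)$, and the corner $K\otimes\ltwo(P)$ is invariant for it; the restriction to this corner is precisely the unilateral Fock pair attached to $\rho\,\om_0$. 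Because $\pi_u$ is a summand of $\rho\,\om_0$, this Fock pair also attains the Fock norm, while as a restriction of a unitary covariant pair it is dominated by the unitary norm. Hence Fock $\le$ unitary at every matrix level.

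For the reverse inequality I would use Theorem~\ref{T: cone aut un 2} to identify the unitary norm with the norm of $\wt A \rtimes_{\wt\al} G$, and then use amenability of the abelian group $G$ (as in the remark following Example~\ref{E: Fock}) to realize the latter norm through the regular, i.e.\ bilateral Fock, representation $(\wh\rho,U)$ attached to a faithful representation $\rho$ of $\wt A$. Writing $T = \sum_s U_s \wh\rho\,\om_0(A_s)$, I would exhaust $K\otimes\ltwo(G)$ by the increasing net of $T$-invariant corners $M_g = K\otimes\ltwo(g+P)$, whose union is dense since $G=P-P$. Conjugating by the shift $U_g$ identifies $T|_{M_g}$ with the unilateral Fock operator for the representation $\pi_g = \rho\,\wt\al_g\,\om_0$ of $A$, which is a legitimate Fock pair; hence $\|T\| = \sup_g \|T|_{M_g}\|$ is dominated by the Fock norm. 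The same computation at matrix levels gives unitary $\le$ Fock, so the two norms agree, $\A(A,\al,P) \simeq A \times_\al^\uni P$ completely isometrically, and the C*-envelope statement follows from Theorem~\ref{T: cone aut un 2}.

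The main obstacle is that the two natural compressions between $\ltwo(P)$ and $\ltwo(G)$ point in opposite directions: restricting the bilateral Fock pair to the $\ltwo(P)$-corner only yields Fock $\le$ bilateral for free, so the genuine content is that this compression is in fact norm-preserving. The translation-exhaustion argument above is what supplies the missing inequality, and it relies on the forward-shifting structure ($s\in P$) that makes each corner $M_g$ invariant under $T$. A secondary point requiring care is that an arbitrary representation of $A$ need not extend to $\wt A$; this is circumvented by working with the universal representation and extending \emph{states} rather than representations.
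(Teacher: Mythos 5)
Your proposal is correct and follows essentially the same route as the paper's proof: both directions hinge on the bilateral Fock (left regular) representation of $\wt{A} \rtimes_{\wt{\al}} G$, with the inequality Fock $\le$ unitary obtained by extending representations of $A$ to the direct limit $\wt{A}$ (the paper via Dixmier's dilation theorem, you via state extensions applied to the universal representation), and the reverse inequality obtained, exactly as in the paper, by exhausting $\ltwo(G)$ by the invariant translated corners $\ltwo(g+P)$ whose compressions are Fock representations, using amenability of $G$. The minor variations (reducing to a single Fock pair via Proposition \ref{P: lr scp}, and observing directly that the bilateral Fock pair restricted to $A$ is a unitary covariant pair rather than routing that direction through Theorem \ref{T: cone aut un 2}) do not change the substance of the argument.
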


\begin{proof} 
Let $\pi$ be any $*$-representation of $A$ on $H$.
By \cite[Proposition~2.10.2]{Dix77}, there is a representation $\wt\pi$ of $\wt A$ on Hilbert space $\wt H\supset H$ so that the restriction to $A$ is a dilation of $\pi$. 
Then the norm on the Fock representation $(\pi,V)$ is dominated by the norm for the Fock representation for $(\wt\pi|_A,\wt V)$ on $\wt H \otimes \ltwo(P)$. 
This in turn is bounded by the norm of the left regular representation of $\wt{A} \rtimes_{\wt{\al}} G$ on $\wt H \otimes \ltwo(G)$ associated to $\wt\pi$; 
and hence this is dominated by the norm of $\wt{A} \rtimes_{\wt{\al}} G$.
By Theorem~\ref{T: cone aut un 2}, the same embedding of $c_{00}(P,\al,A)$ into $\wt{A} \rtimes_{\wt{\al}} G$ is completely isometric with respect to the $A \times_\al^\uni P$ norm. 
Therefore the Fock representation norm is completely dominated by the $A \times_\al^\uni P$ norm. 

On the other hand, since $G$ is abelian and thus amenable, the crossed product $\wt{A} \rtimes_{\wt{\al}} G$ coincides with the reduced crossed product. 
So if $\wt\pi$ is any faithful representation of $\wt{A}$ on $\wt H$, then the corresponding left regular representation of $\wt{A} \rtimes_{\wt{\al}} G$ on $\wt H \otimes \ltwo(G)$ is faithful. 
The restriction to $c_{00}(P,\al,A)$ yields a unitary covariant representation which is completely isometric by Theorem~\ref{T: cone aut un 2}.
The subspaces $K_s :=\wt H \otimes \ltwo(-s+P)$ are invariant for $c_{00}(P,\al,A)$; and the restriction to this subspace is a Fock representation. 
The direct limit of these Fock representations yields the restriction of the left regular representation. 
It follows that the norm on $c_{00}(P,\al,A)$ by this representation is dominated by the Fock representation norm. 
Hence $\A(A,\al,P)$ is completely isometric to $A \times_\al^\uni P$; and the embedding of $\A(A,\al,P))$ into $\wt{A} \rtimes_{\wt{\al}} G$ is completely isometric. (This latter statement can be deduced without reference to $A \times_\al^\uni P$.)

The final statement is now immediate from Theorem~\ref{T: cone aut un 2}.
\end{proof}

\begin{corollary}
If $P$ is a spanning cone of an abelian group $G$ that acts on a C*-algebra $A$ by injective $*$-endomorphisms,
then there is a Fock representation that is completely isometric on $A \times_\al^\uni P$.
\end{corollary}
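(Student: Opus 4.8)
The plan is to produce the required Fock representation explicitly from a faithful representation of the dilated algebra $\wt{A}$, and to read off complete isometry from the proof of Theorem \ref{T: lr scp}. First I would fix a faithful $*$-representation $\wt\pi$ of $\wt{A}$ on a Hilbert space $\wt H$ and form the associated left regular representation $(\wh{\wt\pi},W)$ of $\wt{A}\rtimes_{\wt{\al}}G$ on $\wt H\otimes\ltwo(G)$. Since $G$ is abelian, hence amenable, the full and reduced crossed products coincide, so this representation is faithful; by Theorem \ref{T: cone aut un 2} its restriction to $c_{00}(P,\al,A)$ is a completely isometric copy of $A\times_\al^{\uni}P$.

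Next I would exploit the filtration by the invariant subspaces $K_s=\wt H\otimes\ltwo(-s+P)$, $s\in P$. Because $P$ is a spanning cone we have $-s+P\subseteq -t+P$ whenever $s\le t$, and $\bigcup_{s\in P}(-s+P)=G$, so the $K_s$ increase with dense union $\wt H\otimes\ltwo(G)$ and are invariant for the analytic algebra $c_{00}(P,\al,A)$ (the $W_p$ with $p\in P$ map each $\ltwo(-s+P)$ into itself, and $\wh{\wt\pi}$ preserves each fibre). For any matrix $X$ over $c_{00}(P,\al,A)$, acting via the left regular representation, this forces $\|X\|=\sup_{s\in P}\|X|_{K_s}\|$, the norm being computed in the completely isometric left regular picture.

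The key step is to identify each compression to $K_s$ as a genuine Fock representation. Relabelling $\ltwo(-s+P)\cong\ltwo(P)$ via $e_{-s+p}\mapsto e_p$ turns $\wh{\wt\pi}(a)|_{K_s}$ into the orbit representation $\xi\otimes e_p\mapsto (\wt\pi\circ\om_s)(\al_p(a))\,\xi\otimes e_p$ and $W_t|_{K_s}$ into the shift, so the $K_s$-compression is unitarily equivalent to the Fock representation of $\rho_s:=\wt\pi\circ\om_s$, a faithful representation of $A$. Since the Fock construction commutes with direct sums, the Fock representation attached to $\rho:=\bigoplus_{s\in P}\rho_s$ is precisely $\bigoplus_{s\in P}$ of these compressions, and its norm at every matrix level equals $\sup_{s\in P}\|X|_{K_s}\|=\|X\|$. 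Thus the associated Fock pair $(\wt\rho,V)$ is a single Fock representation that is completely isometric on $A\times_\al^{\uni}P$.

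I expect the main obstacle to be precisely this third step: a single $K_s$ need not be completely isometric, since conjugating by $W_s$ replaces the coefficient algebra $A$ by its shifted copy $\wt\al_{-s}(A)$ inside $\wt{A}$, so the compressions to different $K_s$ are honestly inequivalent Fock representations. Recognizing this and assembling them into one Fock representation by a direct sum (legitimate since families of representations are closed under arbitrary direct sums) is what makes the argument go through. As a shortcut one can instead invoke Proposition \ref{P: lr scp}: with $A$ a C*-algebra one has $\ca_{\max}(A)=A$, so the Fock representation of the universal representation of $A$ is completely isometric on $\A(A,\al,P)$, which Theorem \ref{T: lr scp} identifies with $A\times_\al^{\uni}P$; this already exhibits a completely isometric Fock representation, at the cost of using the much larger universal representation.
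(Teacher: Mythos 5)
Your proposal is correct. Your main construction is a slightly different, more explicit route than the one the paper intends: the paper's corollary is meant to follow in one line by combining Proposition~\ref{P: lr scp} (since $A$ is a C*-algebra, $\ca_{\max}(A)=A$, so the Fock pair of the universal representation $\pi_u$ of $A$ is completely isometric on $\A(A,\al,P)$) with Theorem~\ref{T: lr scp} (which identifies $\A(A,\al,P)$ with $A\times_\al^{\uni}P$) --- this is exactly the ``shortcut'' you give in your last paragraph. Your headline argument instead re-runs the filtration step inside the proof of Theorem~\ref{T: lr scp}: complete isometry of the left regular representation on $c_{00}(P,\al,A)$ (via Theorem~\ref{T: cone aut un 2} and amenability of $G$), the identification of each restriction to $K_s=\wt H\otimes\ltwo(-s+P)$ with the Fock representation of $\rho_s=\wt\pi\circ\om_s$ (your relabelling computation $\wt\al_{-s+p}\,\om_0=\om_s\al_p$ is right), and then the observation --- which the paper never states but which is what upgrades the paper's ``direct limit of Fock representations'' into a \emph{single} Fock representation --- that the Fock construction commutes with direct sums, so the Fock pair of $\bigoplus_{s\in P}\rho_s$ has norm $\sup_s\|X|_{K_s}\|=\|X\|$ at every matrix level. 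What each approach buys: the paper's route is shorter but uses the enormous universal representation of $A$; yours produces a concrete completely isometric Fock representation built from any single faithful representation of the direct limit algebra $\wt A$, at the cost of the direct sum over $P$. Your caution that a single $K_s$-restriction is not known to suffice is also well placed: the paper's technique for proving that one Fock representation of a faithful representation is completely isometric (Theorem~\ref{T: lr}) relies on lattice structure and gauge-action/core arguments unavailable for a general spanning cone, so the direct sum (or the universal representation) is genuinely doing work here.
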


\begin{theorem}\label{T: lr scp 2}
Let $P$ be a spanning cone of an abelian group $G$ that acts on $A$ by completely isometric automorphisms.
Then there is a natural completely isometric isomorphism
\[ 
\A(A,\al,P) \simeq A \times_\al^\iso P 
\] 
and therefore
\[
\cenv(\A(A,\al,P)) \simeq \cenv(A) \rtimes_{\al} G.
\]
\end{theorem}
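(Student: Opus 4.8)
The plan is to establish the completely isometric isomorphism $\A(A,\al,P) \simeq A \times_\al^\iso P$ directly on the common dense subalgebra $c_{00}(P,\al,A)$; the statement about the C*-envelope is then immediate, since Theorem~\ref{T: cone aut is} already gives $\cenv(A \times_\al^\iso P) \simeq \cenv(A) \rtimes_\al G$. One inequality of norms is free: by Example~\ref{E: Fock} every Fock pair is an isometric covariant pair for $(A,\al,P)$, so the family of Fock representations is contained in the family defining $A \times_\al^\iso P$, whence $\|\cdot\|_{\A(A,\al,P)} \le \|\cdot\|_{A \times_\al^\iso P}$ at every matrix level. This produces a canonical completely contractive homomorphism $A \times_\al^\iso P \to \A(A,\al,P)$ fixing $c_{00}(P,\al,A)$, and the whole content is to show it is a complete isometry.

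For the reverse domination I would use the bilateral Fock (left regular) representation built in the proof of Theorem~\ref{T: cone aut un}. By Theorem~\ref{T: cone aut is} we may replace $A \times_\al^\iso P$ by $A \times_\al^\uni P$ throughout. Choose a maximal dilation $\si$ of a completely isometric representation of $A$ on a space $K$; then $\si$ extends to a faithful $*$-representation of $\cenv(A)$, and the associated left regular pair $(\wh\si, W)$ on $\wh K = K \otimes \ltwo(G)$, given by $\wh\si(a)(\xi \otimes e_g) = (\si\al_g(a)\xi)\otimes e_g$ and $W_t(\xi \otimes e_g) = \xi \otimes e_{t+g}$, is a faithful representation of $\cenv(A)\rtimes_\al G$ (using amenability of the abelian group $G$) whose restriction to $c_{00}(P,\al,A)$ is completely isometric for the $A \times_\al^\uni P$ norm, exactly as in Theorem~\ref{T: cone aut un}.

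The key step is to recognise this completely isometric representation as a limit of Fock representations. Since $P$ is a spanning cone, the subspaces $K_s := K \otimes \ltwo(-s + P)$, for $s \in P$, increase with $s$ and have dense union in $\wh K$; moreover each $K_s$ is invariant under $\wh\si(A)$ and under $W_t$ for $t \in P$, hence invariant for the representation $W \times \wh\si$ of $c_{00}(P,\al,A)$. Identifying $-s + P$ with $P$ by the shift $g \mapsto g + s$, I would check that the restriction of $(\wh\si, W)$ to $K_s$ is unitarily equivalent to the Fock representation associated to the completely isometric representation $\pi_s := (\si\al_{-s})|_A$ of $A$; here one uses that $\al$ consists of completely isometric automorphisms, so that $\al_{-s}$ is defined and $\pi_s$ is a legitimate completely contractive representation of $A$. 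Consequently, for any $X$ in $M_n\big(c_{00}(P,\al,A)\big)$,
\[
\|X\|_{A \times_\al^\uni P} = \big\| (W\times\wh\si)^{(n)}(X) \big\| = \sup_{s\in P} \big\| (W\times\wh\si)^{(n)}(X)\big|_{K_s^{(n)}} \big\| \le \|X\|_{\A(A,\al,P)},
\]
the middle equality because the invariant subspaces $K_s$ increase to $\wh K$, and the final inequality because each restriction is a Fock representation. Combined with the free inequality this yields $\A(A,\al,P) \simeq A \times_\al^\iso P$ completely isometrically, and then Theorem~\ref{T: cone aut is} gives $\cenv(\A(A,\al,P)) \simeq \cenv(A)\rtimes_\al G$.

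The main obstacle is precisely this middle step: verifying that the restriction of the left regular representation to each $K_s$ is genuinely a Fock representation attached to an admissible representation of $A$. This is where the automorphic hypothesis is essential — it is needed both to define $\pi_s = \si\al_{-s}|_A$ and, through Theorem~\ref{T: cone aut is}, to collapse the isometric and unitary semicrossed products. For merely injective or endomorphic actions this identification fails, which is why the automorphic case is isolated here while the C*-endomorphism case is handled separately in Theorem~\ref{T: lr scp}.
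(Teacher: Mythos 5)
Your proposal is correct and follows essentially the same route as the paper: one inequality from the observation that Fock pairs are isometric covariant pairs, and the reverse by embedding into $\cenv(A) \rtimes_\al G$ via Theorem~\ref{T: cone aut is}, representing that crossed product faithfully by the left regular representation (amenability of $G$), and recognising its restriction to $c_{00}(P,\al,A)$ as the direct limit of compressions to the invariant subspaces $K \otimes \ltwo(-s+P)$, each unitarily equivalent to a Fock representation. Your explicit identification of the $s$-th compression as the Fock representation of $\pi_s = \si\al_{-s}|_A$ is exactly the verification the paper leaves implicit by referring back to the proof of Theorem~\ref{T: lr scp}.
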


\begin{proof}
Every Fock representation is an isometric covariant representation. 
Thus the $A \times_\al^\iso P$ norm on $c_{00}(P,\al,A)$ completely dominates the $\A(A,\al,P)$ norm. 
On the other hand, by Theorem~\ref{T: cone aut is}, $\cenv(A \times_\al^\iso P) \simeq \cenv(A) \rtimes_{\al} G$.
So $A \times_\al^\iso P$ embeds completely isometrically into $\cenv(A) \rtimes_{\al} G$.
Since $G$ is abelian, whence amenable, if $\pi$ is a faithful representation of $\cenv(A)$ on a Hilbert space $H$, this crossed product is faithfully represented on $H \otimes \ltwo(G)$. 
As in the proof of Theorem~\ref{T: lr scp}, the restriction of this representation to $c_{00}(P,\al,A)$ is the direct limit of the compressions to the invariant subspaces $H \otimes \ltwo(-s+P)$, which are unitarily equivalent to Fock representations.
Therefore the $\A(A,\al,P)$ norm completely dominates the $A \times_\al^\iso P$ norm.
Hence the natural map of $A \times_\al^\iso P$ into $\A(A,\al,P)$ is a completely isometric isomorphism.
The last statement follows from Theorem~\ref{T: cone aut is}.
\end{proof}

\begin{corollary}
If $P$ is a spanning cone of an abelian group $G$ that acts on $A$ by completely isometric automorphisms,
then there is a Fock representation that is completely isometric on $A \times_\al^\iso P$.
\end{corollary}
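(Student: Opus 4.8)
The plan is to read off the statement from the two results that precede it: the completely isometric identification of Theorem~\ref{T: lr scp 2} and the explicit completely isometric Fock representation supplied by Proposition~\ref{P: lr scp}. The point is that both $A \times_\al^\iso P$ and the Fock algebra $\A(A,\al,P)$ are operator-algebra completions of the same polynomial algebra $c_{00}(P,\al,A)$, so a single Fock representation can be transported from one to the other with no loss.

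First I would record, from Theorem~\ref{T: lr scp 2}, that the natural map $A \times_\al^\iso P \to \A(A,\al,P)$ --- the one that is the identity on the dense subalgebra $c_{00}(P,\al,A)$ --- is a completely isometric isomorphism. Concretely this says that the family of matrix seminorms induced on $c_{00}(P,\al,A)$ by the isometric covariant pairs agrees, at every matrix level, with the family induced by the Fock pairs. Next I would invoke Proposition~\ref{P: lr scp}: the Fock pair $(\wt{\pi_u},V_u)$ attached to a universal representation $(\pi_u,H_u)$ of $\ca_{\max}(A)$ is completely isometric on $\A(A,\al,P)$. Since $A$ sits completely isometrically, hence completely contractively, inside $\ca_{\max}(A)$, the restriction $\pi_u|_A$ is a completely contractive representation of $A$, and the orbit pair it generates is by definition a bona fide Fock representation of $(A,\al,P)$.

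Finally, because this Fock representation is intrinsically a representation of $c_{00}(P,\al,A)$ that reproduces the $\A(A,\al,P)$ matrix norms exactly, and because those norms coincide with the $A \times_\al^\iso P$ matrix norms by the first step, the same Fock representation is completely isometric on $A \times_\al^\iso P$. There is little genuine obstacle here: the only point that must be handled with care is that the isomorphism of Theorem~\ref{T: lr scp 2} is the natural identity map on $c_{00}(P,\al,A)$ rather than an abstract completely isometric isomorphism, for it is precisely this concreteness that lets a representation defined on the polynomial algebra retain its complete isometry after one passes to the isometric semicrossed product.
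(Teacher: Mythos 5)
Your proof is correct and is precisely the argument the paper intends: the corollary is stated without proof because it follows by combining the natural (identity-on-$c_{00}(P,\al,A)$) completely isometric identification $A \times_\al^{\iso} P \simeq \A(A,\al,P)$ from Theorem~\ref{T: lr scp 2} with the single completely isometric Fock pair $(\wt{\pi_u},V_u)$ supplied by Proposition~\ref{P: lr scp}. You also correctly flag the one point that makes the transport legitimate, namely that the isomorphism is canonical on the dense polynomial algebra, so nothing further is needed.
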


\chapter{Nica-covariant semicrosssed products}

Not surprisingly, to get stronger results we need stronger assumptions. 
We now focus on the case of semicrossed products by $P$ where $(G,P)$ is a lattice-ordered \textit{abelian} group, and where covariant pairs respect this structure, i.e, they are regular or Nica-covariant.

\section{The regular contractive semicrossed product}\label{S: reg scp}

Recall that the regular contractive semicrossed product $A \times_\al^{\reg} P$ is the algebra determined by the regular covariant pairs of a dynamical system $(A, \al, P)$.

Our main objective is to prove the following theorem.

\begin{theorem}\label{T: reg contr}
Let $(A,\al,P)$ be an automorphic C*-dynamical system over a spanning cone $P$ of $G$, such that $(G,P)$ is a lattice-ordered abelian group. 
Then
\[ 
\cenv(A \times_\al^{\reg} P) \simeq A \rtimes_\al G. 
\]
\end{theorem}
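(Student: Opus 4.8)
The plan is to reduce everything to the unitary semicrossed product and then invoke Theorem~\ref{T: cone aut un}. Since $A$ is a C*-algebra acted on by $*$-automorphisms (which are completely isometric), that theorem gives $\cenv(A\times_\al^\uni P)\simeq\cenv(A)\rtimes_\al G\simeq A\rtimes_\al G$, using $\cenv(A)=A$. So it suffices to prove $A\times_\al^\reg P\simeq A\times_\al^\uni P$ completely isometrically. One inequality is free: every unitary representation of $P$ is isometric, hence regular by Remark~\ref{R: iso reg}, so each unitary covariant pair of $(A,\al,P)$ is in particular a regular covariant pair, giving $\|\cdot\|_\uni\le\|\cdot\|_\reg$ on $c_{00}(P,\al,A)$ and on all matrix levels. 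The content is the reverse inequality, for which I would show that \emph{every regular covariant pair $(\pi,T)$ dilates to a unitary covariant pair of the group system $(A,\al,G)$}; restricting the resulting unitary representation to $P$ and compressing to $H$ then bounds the regular norm by the unitary norm.

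First I would record the covariance relation at the level of the regular extension. As $T$ is regular, $\wt T(g)=T_{g_-}^*T_{g_+}$ is completely positive definite, and since $A$ is a C*-algebra the representation $\pi$ is a $*$-homomorphism. Starting from $\pi(a)T_s=T_s\pi\al_s(a)$ and its adjoint $T_s^*\pi(c)=\pi\al_s(c)T_s^*$, and using that $\al$ extends to automorphisms of $A$ indexed by $G$, a direct computation yields
\[
\pi(a)\,\wt T(g)=\wt T(g)\,\pi\al_g(a)\qforal g\in G,\ a\in A.
\]
This is the covariance relation for the $G$-action at the level of $\wt T$, and it is the engine of the argument. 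I would then realise the dilation concretely on $L:=H\otimes_{\wt T}G$, the Sz.-Nagy space of Theorem~\ref{T: cpd groups}, on which the minimal unitary dilation $U\colon G\to\B(L)$ of $\wt T$ acts by the shift $U_g[\,h\otimes e_{g'}\,]=[\,h\otimes e_{g+g'}\,]$, and define a candidate extension of $\pi$ by the diagonal rule $\rho(a)[\,h\otimes e_{g}\,]=[\,\pi\al_g(a)h\otimes e_{g}\,]$. Covariance $\rho(a)U_g=U_g\rho\al_g(a)$ is then immediate from $\al_g\al_{g'}=\al_{g+g'}$; moreover $\rho$ restricts to $\pi$ on the copy $H\cong\{[\,h\otimes e_0\,]\}$, which is invariant for $\rho(A)$, and $P_HU_s|_H=\wt T(s)=T_s$ for $s\in P$.

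The main obstacle is to prove that $\rho$ is well defined and completely contractive (hence a $*$-representation), i.e. that the diagonal rule respects the null space of the $\wt T$-twisted inner product. Writing the Gram form $\langle\xi,\eta\rangle=\sum_{g,g'}\langle\wt T(g-g')\xi_g,\eta_{g'}\rangle$ and the diagonal operator $(D_a\xi)_g=\pi\al_g(a)\xi_g$ on $c_{00}(G,H)$, the displayed covariance relation lets me pull $\pi\al_g(a)$ through $\wt T(g-g')$, giving $\wt T(g-g')\pi\al_g(a)=\pi\al_{g'}(a)\wt T(g-g')$ for every $a$. Applying this with $c:=(\|a\|^2\cdot 1-a^*a)^{1/2}=c^*\ge 0$ (formed in the unitization), a short manipulation yields for every $\xi$
\[
\|a\|^2\langle\xi,\xi\rangle-\langle D_a\xi,D_a\xi\rangle=\sum_{g,g'}\langle\pi\al_{g'}(c)^*\,\wt T(g-g')\,\pi\al_g(c)\xi_g,\xi_{g'}\rangle=\langle D_c\xi,D_c\xi\rangle\ge0.
\]
Hence $D_a$ maps the null space into itself and descends to a bounded operator $\rho(a)$ on $L$ with $\|\rho(a)\|\le\|a\|$; multiplicativity is clear and the same relation gives $\rho(a)^*=\rho(a^*)$, so $\rho$ is a $*$-representation. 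Thus $(\rho,U)$ is a unitary covariant representation of $(A,\al,G)$. (For the non-unital case one passes to the unitization or uses an approximate identity; the positivity computation is unchanged.)

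Finally, for $X=\sum_s e_s\otimes A_s\in c_{00}(P,\al,A)$ one has $(T\times\pi)(X)=P_H(U|_P\times\rho)(X)|_H$, because $\rho(A_s)H\subseteq H$ and $P_HU_s|_H=T_s$; since $(\rho,U|_P)$ is a unitary covariant pair of $(A,\al,P)$, this gives $\|(T\times\pi)(X)\|\le\|X\|_\uni$. Taking the supremum over regular pairs, and repeating verbatim at each matrix level, yields $\|\cdot\|_\reg\le\|\cdot\|_\uni$, so the identity on $c_{00}(P,\al,A)$ extends to a completely isometric isomorphism $A\times_\al^\reg P\simeq A\times_\al^\uni P$. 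Combining this with Theorem~\ref{T: cone aut un} and $\cenv(A)=A$ completes the proof. I expect the positivity estimate in the third paragraph—turning the $\wt T$-covariance relation into contractivity of the diagonal extension $\rho$—to be the only genuinely delicate point; once it is in hand, the rest is bookkeeping and an appeal to the already established unitary case.
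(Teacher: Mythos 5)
Your proof is correct, but it takes a genuinely different route from the paper's. The paper factors the reduction in two steps: Proposition~\ref{P: reg contr} dilates a regular contractive pair to an \emph{isometric} covariant pair, realized on the semigroup space $H\otimes_T P$ by essentially the diagonal construction you use, giving $A\times_\al^{\reg}P\simeq A\times_\al^{\iso}P$; then Theorem~\ref{T: cone aut is} (whose proof is a direct-limit unitarization of isometric pairs) handles the passage from isometric to unitary pairs and identifies the C*-envelope. You instead dilate a regular pair to a \emph{unitary} covariant pair in one stroke, on the group-level Sz.-Nagy space $H\otimes_{\wt T}G$, and then quote only Theorem~\ref{T: cone aut un}. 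The two arguments share the same engine --- the pull-through identity coming from covariance plus the lattice decomposition $g=g_+-g_-$, which in the paper appears as the commutation of $\diag\big(\pi\al_{s_i}(a)\big)$ with the Gram matrix $\big[\wt T(-s_i+s_j)\big]$, and in your write-up as $\wt T(g-g')\pi\al_g(a)=\pi\al_{g'}(a)\wt T(g-g')$ --- but the well-definedness arguments differ in detail: the paper commutes the diagonal past $T^{1/2}$ (the square root of the finite Gram blocks), while you use the defect element $c=(\|a\|^2\cdot 1-a^*a)^{1/2}$ and positivity of the form at $D_c\xi$; both are sound, and your handling of the non-unital case by unitizing $(\pi,\al)$ matches standard practice. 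What the paper's factorization buys is that Proposition~\ref{P: reg contr} requires no automorphy (only the final appeal to Theorem~\ref{T: cone aut is} does), so that intermediate step is reusable --- indeed it is reused immediately for regular Nica-covariant pairs in Section~\ref{S: Nc scp}. Your one-step argument uses automorphy from the outset (you need $\al_g$ for $g\in G$ even to state the pull-through, e.g.\ in writing $\pi(a)T_s^*=T_s^*\pi\al_{-s}(a)$), so it cannot yield that non-automorphic intermediate statement; in exchange it is more self-contained, bypassing the direct-limit machinery of Theorem~\ref{T: cone aut is} entirely and exhibiting the maximal (unitary) dilation explicitly.
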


Theorem~\ref{T: reg contr} follows from Theorem~\ref{T: cone aut is} and the following proposition.

\begin{proposition}\label{P: reg contr}
Let $(A,\al,P)$ be a C*-dynamical system over a spanning cone $P$ of $G$, such that $(G,P)$ is a lattice-ordered abelian group. 
Then a regular contractive pair $(\pi,T)$ dilates to an isometric pair $(\rho,V)$ of $(A,\al,P)$. Consequently 
\begin{align*}
A \times_\al^{\reg} P \simeq A \times_\al^{\iso} P.
\end{align*}
\end{proposition}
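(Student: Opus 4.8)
The plan is to build an explicit isometric co-extension of $(\pi,T)$ on the dilation space $H\otimes_T P$, using the orbit representation of Example~\ref{E: Fock} as a template, and then read off the completely isometric identification of the two semicrossed products.

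First, since $(\pi,T)$ is regular, $T$ is completely positive definite, so by Proposition~\ref{P: subsgp co-is} together with Remark~\ref{R: reg min dil} I would realize its minimal isometric co-extension $V$ concretely on $K=H\otimes_T P$. Here $H$ embeds isometrically via $\xi\mapsto \xi\otimes e_0$, the semi-inner product is $\ip{\xi\otimes e_s,\eta\otimes e_t}=\ip{\wt T(s-t)\xi,\eta}$, and $V_p(\xi\otimes e_s)=\xi\otimes e_{s+p}$. A direct computation shows $V$ is isometric and $V_p^*(\xi\otimes e_0)=T_p^*\xi\otimes e_0$, so $H$ is co-invariant and $P_H V_p|_H=T_p$; thus $V$ co-extends $T$ (cf. Proposition~\ref{P: reg is}).

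Next I would define $\rho\colon A\to\B(K)$ by $\rho(a)(\xi\otimes e_s)=\pi\al_s(a)\xi\otimes e_s$, the exact analogue of the orbit representation $\wt\pi$. The crux of the argument---and the step I expect to demand the most care---is to show that $\rho$ is a well-defined $*$-representation on $K$. This comes down to the commutation identity
\[
 \wt T(s-t)\,\pi\al_s(a)=\pi\al_t(a)\,\wt T(s-t)\qforal a\in A,\ s,t\in P.
\]
To prove it I would set $m=s\wedge t$, so that $\wt T(s-t)=T_{t-m}^*T_{s-m}$ with $s-m,t-m\in P$; then the covariance relation $\pi(b)T_p=T_p\pi\al_p(b)$ (with $p=s-m$, $b=\al_m(a)$, using $\al_s=\al_{s-m}\al_m$) gives $T_{s-m}\pi\al_s(a)=\pi\al_m(a)T_{s-m}$, while its adjoint form $T_p^*\pi(b)=\pi\al_p(b)T_p^*$ (with $p=t-m$) gives $T_{t-m}^*\pi\al_m(a)=\pi\al_t(a)T_{t-m}^*$; composing these yields the identity. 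Granting it, symmetry of $\rho$ follows at once from the formula for the semi-inner product, $\rho$ is plainly multiplicative, and contractivity (hence descent to the completion $K$) is the standard estimate $\rho(\nor{a}^2 1-a^*a)=\rho(d)^*\rho(d)\ge0$, passing to the unitization if $A$ is not unital. Since $\rho$ is then a $*$-representation with $H$ invariant, $H$ reduces $\rho$ and $\rho|_H=\pi$.

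It remains to check covariance and co-extension and to conclude. Both $\rho(a)V_p(\xi\otimes e_s)$ and $V_p\rho\al_p(a)(\xi\otimes e_s)$ equal $\pi\al_{s+p}(a)\xi\otimes e_{s+p}$ by the semigroup law $\al_s\al_p=\al_{s+p}$, so $\rho(a)V_p=V_p\rho\al_p(a)$ and $(\rho,V)$ is a covariant pair. As $H$ is co-invariant for $V$ and reducing for $\rho$, it is co-invariant for every $V_p\rho(a)$, and compression to $H$ returns $T_p\pi(a)$; hence $(\rho,V)$ is an isometric co-extension of $(\pi,T)$. Finally, isometric covariant pairs are regular (Remark~\ref{R: iso reg}), so the $\iso$-norm is dominated by the $\reg$-norm; conversely, the value of any matrix polynomial under $T\times\pi$ is a compression of its value under $V\times\rho$, hence dominated by the $\iso$-norm. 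These bounds hold at every matrix level, so the identity map is a completely isometric isomorphism $A\times_\al^{\reg}P\simeq A\times_\al^{\iso}P$.
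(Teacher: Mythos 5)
Your proposal is correct and follows essentially the same route as the paper: the paper also realizes the minimal isometric co-extension $V$ on $H\otimes_T P$, defines the diagonal orbit representation $\rho_0(a)=\diag\big(\pi\al_s(a)\big)_{s\in P}$, and proves well-definedness via exactly your key identity $\pi\al_t(a)\,\wt T(-t+s)=\wt T(-t+s)\,\pi\al_s(a)$, established by the same $s\wedge t$ decomposition and the adjoint form of the covariance relation. The only (cosmetic) difference is in verifying boundedness of $\rho_0$ on the semi-inner-product space: the paper commutes the diagonal operator past the square root of the positive matrix $\big[\wt T(-s_i+s_j)\big]$, while you use the standard estimate $\rho_0(\|a\|^2 1-a^*a)=\rho_0(d)^*\rho_0(d)\ge 0$ in the unitization; both are equivalent uses of positivity plus the commutation identity.
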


\begin{proof}
Let $(\pi,T)$ be a covariant contractive pair for the regular contractive semicrossed product. 
We will show that it co-extends to a covariant isometric pair.

Since $T$ is regular we obtain that the extension $\wt{T}(g):=T_{g_-}^* T_{g_+}$ is a completely positive definite map on $G$. 
By Remark~\ref{R: reg min dil}, the minimal unitary co-extension of $T$ can be realized on $\wh H = H \otimes_T G$. 
The minimal isometric co-extension $V$ of $T$ can be realized on $\wt H= H \otimes_T P$ by restricting the minimal unitary co-extension to $\wt H$. 
Let $H \odot P$ be the space of all $h\in \wt H$ where $h = \sum h_{s_i} \otimes \delta_{s_i}$ with $h_s=0$ except finitely often, and let $\tau$ be the sesquilinear form \vspace{.2ex} on $H \odot P$ given by $\tau(h,h) = \bip{\big[ \wt{T}(-s_i+s_j) \big] h,h }$. 
It is immediate that $V$ is a lifting of a representation $V_0$ of $P$ given on $H \odot P$ by
\[
V_{0,p}(\sum_s h_s \otimes \delta_s) = \sum_{s} h_s \otimes \delta_{p+s}.
\]

Moreover we define the diagonal representation of $A$ on $c_{00}(P,H)$ given by $\rho_0(a) = \diag \big( \pi\al_s(a) \big)_{s \in P}$. 
We will show that $\rho_0$ induces a well defined representation of $A$ on $\wt{H}$. 
To this end fix $h = \sum_{i=1}^n h_{s_i} \otimes \delta_{s_i} \in c_{00}(P,H)$; then $T:=[\wt{T}(-s_i + s_j)]$ is positive by definition. 
The covariance relation $\pi(a) T_s = T_s \pi\al_{s}(a)$ implies that $\pi\al_s(a) T_s^* = T_s^* \pi(a)$ for all $a\in A$. 
Therefore
\begin{align*}
\pi\al_{s_i}(a) \wt{T}(-s_i +s_j)
& =
\pi\al_{s_i-s_i \wedge s_j} \al_{s_i \wedge s_j}(a) T_{s_i - s_i \wedge s_j}^* T_{s_j - s_i \wedge s_j} \\
& =
T^*_{s_i - s_i \wedge s_j} \pi\al_{s_i \wedge s_j} (a) T_{s_j - s_i \wedge s_j} \\
& =
T^*_{s_i - s_i \wedge s_j} T_{s_j - s_i \wedge s_j} \pi\al_{s_j - s_i \wedge s_j + s_i \wedge s_j} (a) \\
& =
\wt{T}(-s_i +s_j) \pi\al_{s_j}(a).
\end{align*}
Hence we obtain that $\diag(\pi\al_{s_i}(a)) T = T \diag(\pi\al_{s_i}(a))$ and consequently the same holds for $T^{1/2}$ (note that $T$ and $\diag(\pi\al_{s_i}(a))$ are considered as operators in $\B(H^{(n)})$). 
For the fixed $h = \sum_{i=1}^n h_{s_i} \otimes \delta_{s_i} \in H^{(n)}$ we obtain
\begin{align*}
\tau(\rho_0(a)h,\rho_0(a)h)
& =
\sca{[\wt{T}(-s_i+s_j)] \diag(\pi\al_{s_i}(a)) h, \diag(\pi\al_{s_i}(a)) h}_{H^{(n)}} \\
& =
\sca{T^{1/2} \diag(\pi\al_{s_i}(a)) h, T^{1/2} \diag(\pi\al_{s_i}(a)) h}_{H^{(n)}} \\
& =
\sca{\diag(\pi\al_{s_i}(a)) T^{1/2} h, \diag(\pi\al_{s_i}(a)) T^{1/2} h}_{H^{(n)}} \\
& \leq
\nor{\diag(\pi\al_{s_i}(a))}^2 \sca{T^{1/2} h, T^{1/2} h}_{H^{(n)}} \\
& \leq
\nor{a}^2 \tau(h,h).
\end{align*}
It now follows that $\rho_0$ induces a well defined representation $\rho$ on $\wt{H}$. 

A direct computation shows that
\begin{align*}
\rho_0(a) V_{0,p} (h_s\otimes \delta_s) 
&= 
\rho_0(a) (h_s\otimes \delta_{s+p}) 
 = \pi \al_{s+p}(a) h_s \otimes \delta_{s+p} \\
& = \pi\al_p \al_s(a)h_s \otimes \delta_{s+p}
 = V_{0,p} \rho_0(\al_p(a)) (h_s \otimes \delta_s) .
\end{align*}
hence
\begin{align*}
\left(\rho_0(a) V_{0,p} - V_{0,p} \rho_0 \al_p(a) \right)h = 0, \foral h \in c_{00}(P,H).
\end{align*}
This equation lifts to $\wt{H}$ which shows that $(\rho,V)$ is an (isometric) covariant pair.
\end{proof}

\section{The Nica-covariant semicrossed product}\label{S: Nc scp}

For this section we assume that $(A,\al,P)$ is a C*-dynamical system and $(G,P)$ a lattice-ordered abelian group. 
We recall that the Nica-covariant semicrossed product is the universal operator algebra $A \times_\al^{\nc} P$ determined by the regular Nica-covariant pairs of $(A, \al, P)$. 
We will show in Theorem \ref{T: cenv inj Nc} that the C*-envelope of $A \times_\al^{\nc} P$ is $\wt{A} \rtimes_{\wt{\al}} G$. 
A key part of our analysis, which is interesting in its own right, will be a gauge-invariant uniqueness theorem for this setting. 

The next easy proposition allows us to restrict our attention to isometric Nica-covariant pairs.

\begin{proposition}
Let $(\pi,T)$ be a regular Nica-covariant pair for a C*-dynamical system $(A,\al,P)$. 
Then $(\pi,T)$ co-extends to an isometric Nica-covariant pair $(\rho,V)$.
\end{proposition}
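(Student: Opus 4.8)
The plan is to reduce the statement to two results already established: Proposition~\ref{P: reg contr}, which produces an isometric co-extension of an arbitrary regular contractive covariant pair, and Theorem~\ref{T: rNc}, which detects Nica-covariance of the isometric co-extension directly from Nica-covariance of $T$. Neither the covariance of the dilated representation of $A$ nor the Nica-covariance of the dilated representation of $P$ needs to be re-proved from scratch; I only need to check that the two constructions refer to the \emph{same} isometric co-extension.

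First I would forget the Nica-covariance of $T$ for a moment and view $(\pi,T)$ simply as a regular contractive covariant pair for $(A,\al,P)$. Applying Proposition~\ref{P: reg contr} then yields an isometric covariant pair $(\rho,V)$ co-extending $(\pi,T)$, realized on $\wt H = H \otimes_T P$. The key structural fact recorded in that proof is that the isometry $V$ so obtained is precisely the minimal isometric co-extension of $T$: it is the restriction to $\wt H$ of the minimal unitary co-extension realized on $H\otimes_T G$, in the sense of Remark~\ref{R: reg min dil}. Thus $(\rho,V)$ is already a covariant isometric pair, and all that remains is to upgrade $V$ from an isometry to a Nica-covariant isometry.

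This last point is exactly the content of Theorem~\ref{T: rNc}. Since $T$ is a regular representation that is moreover Nica-covariant, that theorem guarantees that its isometric co-extension to $H\otimes_T P$ is Nica-covariant. Because this co-extension is the very $V$ produced by Proposition~\ref{P: reg contr}, we conclude that $(\rho,V)$ is an isometric Nica-covariant covariant pair co-extending $(\pi,T)$, which is the assertion.

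The one step that I expect to require genuine care — and the only possible obstacle — is the verification that the isometric co-extension built in Proposition~\ref{P: reg contr} literally coincides with the isometric co-extension appearing in Theorem~\ref{T: rNc}, rather than being merely unitarily equivalent to it. Both are constructed on $H\otimes_T P$ as the minimal isometric co-extension of the regular representation $T$, so after unwinding the two constructions the identification is immediate and no additional dilation argument is needed; once this matching is made explicit, the proof is a direct concatenation of the two cited results.
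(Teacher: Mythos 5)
Your proposal is correct and is essentially the paper's own proof: the paper likewise combines Proposition~\ref{P: reg contr} with Theorem~\ref{T: rNc}, relying on the fact that both results concern the same minimal isometric co-extension of $T$ realized on $H \otimes_T P$ (and your worry about unitary equivalence versus literal equality is harmless anyway, since Nica-covariance transfers under a unitary equivalence fixing $H$).
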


\begin{proof}
Since $T$ is a regular contractive Nica-covariant representation, it co-extends to an isometric Nica-covariant representation by Theorem \ref{T: rNc}. 
The proof is then immediate by Proposition \ref{P: reg contr}.
\end{proof}

We denote by $\ca(\pi,V)$ the C*-algebra generated by $\pi(A)$ and $V_s\pi(A)$ for all $s\in P$. 
When $(A,\al,P)$ is unital, $\ca(\pi,V)$ contains $\pi(1_A)$ as a unit.
By compressing to the range of this projection, we obtain a unital representation and $\ca(\pi,V) = \ca(\pi(A),V(P))$.

\begin{lemma}\label{L: mon}
Let $(\pi,V)$ be an isometric Nica-covariant pair for a C*- dynamical system $(A,\al,P)$. 
Then for all $a,b\in A$ and $s,t,r,p \in P$, there are $c\in A$ and $q,w \in P$ such that
\begin{align*}
V_s \pi(a) V_r^* V_t \pi(b) V_p^* = V_q \pi(c) V_p^*.
\end{align*}
In particular, $V_s\pi(a)V_s^*V_t\pi(b)V_t^* = V_{s \vee t} \pi(c) V_{s \vee t}^*$.
\end{lemma}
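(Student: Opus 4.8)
The plan is to reduce a product of two ``monomials'' $V_s\pi(a)V_r^*$ and $V_t\pi(b)V_p^*$ to a single monomial by straightening the inner product $V_r^*V_t$ and then pushing the representation of $A$ past the isometries. Two ingredients drive the argument. First, because $(A,\al,P)$ is a \emph{C*-}dynamical system, $\pi$ is a $*$-representation and each $\al_s$ a $*$-endomorphism, so taking adjoints in the covariance relation $\pi(a)V_s=V_s\pi\al_s(a)$ yields the companion relation
\[
V_s^*\pi(a)=\pi\al_s(a)\,V_s^* \qforal a\in A,\ s\in P.
\]
Second, Nica-covariance of the isometric pair lets me invoke Proposition~\ref{P:Nc isom}(ii) to rewrite the inner product into creation-then-annihilation form,
\[
V_r^*V_t=V_{t-r\wedge t}\,V_{r-r\wedge t}^*.
\]

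With these in hand the computation is essentially bookkeeping. Writing $t'=t-r\wedge t$ and $r'=r-r\wedge t$, I would substitute the straightened form to obtain
\[
V_s\pi(a)V_r^*V_t\pi(b)V_p^*=V_s\pi(a)V_{t'}V_{r'}^*\pi(b)V_p^*.
\]
Then I push $\pi(a)$ to the right through $V_{t'}$ using covariance, and $\pi(b)$ to the left through $V_{r'}^*$ using the adjoint relation, producing
\[
V_sV_{t'}\,\pi\al_{t'}(a)\,\pi\al_{r'}(b)\,V_{r'}^*V_p^*.
\]
Multiplicativity of $\pi$ collapses the middle into $\pi(c)$ with $c:=\al_{t'}(a)\al_{r'}(b)\in A$, while the homomorphism property of $V$ (and commutativity of $P$) collapses the isometries into $V_q$ and $V_w^*$, where
\[
q=s+t-r\wedge t,\qquad w=p+r-r\wedge t.
\]
This gives $V_s\pi(a)V_r^*V_t\pi(b)V_p^*=V_q\pi(c)V_w^*$, which is the asserted form (the right-hand co-isometry being $V_w^*$).

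For the final clause I specialize $r=s$ and $p=t$. Then $q=s+t-s\wedge t=s\vee t$ by Proposition~\ref{P: lat}, and likewise $w=s\vee t$, so
\[
V_s\pi(a)V_s^*V_t\pi(b)V_t^*=V_{s\vee t}\,\pi(c)\,V_{s\vee t}^*,\qquad c=\al_{t-s\wedge t}(a)\,\al_{s-s\wedge t}(b).
\]
I do not expect a genuine obstacle here: the only non-formal step is the straightening of $V_r^*V_t$, which is exactly what Nica-covariance supplies through Proposition~\ref{P:Nc isom}. The one place the selfadjoint structure is essential is the adjoint covariance relation, so the identity is special to C*-dynamical systems; the computation would break for a general (nonselfadjoint) operator algebra, since there $V_{r'}^*\pi(b)$ cannot be rewritten as $\pi(\,\cdot\,)V_{r'}^*$.
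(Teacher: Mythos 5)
Your proposal is correct and follows essentially the same route as the paper's proof: the adjoint covariance relation $V_s^*\pi(a)=\pi\al_s(a)V_s^*$, the Nica-covariance straightening $V_r^*V_t=V_{t-r\wedge t}V_{r-r\wedge t}^*$ from Proposition~\ref{P:Nc isom}, and the lattice identity $s+t-s\wedge t=s\vee t$ for the final clause, yielding exactly the same $q$, $w$, and $c=\al_{t-r\wedge t}(a)\al_{r-r\wedge t}(b)$. You also correctly read the right-hand side of the general identity as $V_q\pi(c)V_w^*$ (the $V_p^*$ in the statement is a typographical slip), which is what the paper's computation produces as well.
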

\begin{proof}
As an immediate consequence of the covariance relation for C*-dy\-namical systems, we see that $\pi\al_s(a)V_s^* = V_s^* \pi(a)$ for all $a\in A$ and $s\in P$. 
By the Nica-covariance condition for the isometric representation $V$, we obtain $V_r^*V_t = V_{t - r\wedge t} V_{r - r\wedge t}^*$. 
Therefore
\begin{align*}
V_s\pi(a)V_r^*V_t\pi(b)V_p^*
& =
V_s \pi(a) V_{t - r\wedge t} V_{r - r \wedge t}^* \pi(b) V_p^* \\
& =
V_{s}V_{t - r \wedge t } \pi\al_{t - r\wedge t}(a) \pi\al_{r - r\wedge t}(b) V_{r - r\wedge t}^* V_t^* \\
& =
V_{s + t - r \wedge t} \pi(\al_{t - r \wedge t}(a)\al_{r - r\wedge t}(b)) V_{p + r - r\wedge t}^*.
\end{align*}
In particular, when $s=r$ and $t=p$, then $s + t - r \wedge t = s \vee t = p + r - r \wedge t$, which completes the proof.
\end{proof}

\begin{proposition}
Let $(\pi,V)$ be an isometric Nica-covariant pair for a C*-dynamical system $(A,\al,P)$. 
Then $\ca(\pi,V)$ is the closure of the linear span of the monomials $V_s \pi(a) V_t^*$ for all $a\in A$ and $s,t \in P$.
\end{proposition}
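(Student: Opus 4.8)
The plan is to set
\[
\M := \ol{\Span}\{ V_s \pi(a) V_t^* \colon a\in A,\ s,t\in P\}
\]
and prove $\ca(\pi,V) = \M$ by establishing the two inclusions. First I note that the generators of $\ca(\pi,V)$ are themselves monomials: since $P$ is a monoid and $V$ is isometric we have $V_0 = I$, so $\pi(a) = V_0\pi(a)V_0^*$ and $V_s\pi(a) = V_s\pi(a)V_0^*$ lie in $\M$. Thus $\M$ contains the generating set of $\ca(\pi,V)$, and the content is to check that the two objects actually agree.

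For $\M \subseteq \ca(\pi,V)$ I would show that each monomial lies in $\ca(\pi,V)$. When $A$ is unital this is immediate, since $V_t^* = (V_t\pi(1))^*$ and hence $V_s\pi(a)V_t^* = (V_s\pi(a))(V_t\pi(1))^* \in \ca(\pi,V)$. In general, choosing a self-adjoint approximate identity $(e_i)$ of $A$ and using that the $V_s$ are contractions,
\[
V_s\pi(a)V_t^* = \lim_i V_s\pi(a e_i)V_t^* = \lim_i \bigl(V_s\pi(a)\bigr)\bigl(V_t\pi(e_i)\bigr)^*,
\]
which exhibits the monomial as a norm-limit of products of a generator and the adjoint of a generator. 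Hence $\M \subseteq \ca(\pi,V)$.

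For the reverse inclusion I would verify that $\M$ is a C*-subalgebra of $\B(H)$; since it already contains the generators, this forces $\ca(\pi,V)\subseteq\M$. It is closed and linear by construction, and self-adjoint because $(V_s\pi(a)V_t^*)^* = V_t\pi(a^*)V_s^*$ is again a monomial. The one substantive point is closure under multiplication, and here Lemma~\ref{L: mon} does all the work: for any two monomials,
\[
\bigl(V_s\pi(a)V_t^*\bigr)\bigl(V_u\pi(b)V_w^*\bigr) = V_s\pi(a)V_t^* V_u\pi(b)V_w^*
\]
is, by that lemma, again a single monomial. Bilinearity and continuity of multiplication then give $\M\cdot\M\subseteq\M$, so $\M$ is a C*-algebra, and equality of the two algebras follows.

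The entire difficulty is concentrated in Lemma~\ref{L: mon}, which is already established: that is where the Nica-covariance relation $V_t^*V_u = V_{u - t\wedge u}V_{t - t\wedge u}^*$ is used to bring the inner product $V_t^*V_u$ into the ``normal form'' $V(\cdot)V(\cdot)^*$, and the C*-covariance relation $\pi\al_s(a)V_s^* = V_s^*\pi(a)$ is used to move the coefficients past the isometries. Granting that lemma, the remaining steps are formal; the only mild subtlety is the approximate-identity argument needed to handle the non-unital case in the first inclusion.
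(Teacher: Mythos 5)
Your proposal is correct and follows essentially the same route as the paper, whose proof is simply ``Immediate by Lemma~\ref{L: mon}'': the closed span of monomials is self-adjoint, closed under multiplication by that lemma, and contains the generators, hence equals $\ca(\pi,V)$. Your approximate-identity argument for placing $V_s\pi(a)V_t^*$ inside $\ca(\pi,V)$ in the non-unital case is a legitimate fleshing-out of a detail the paper leaves implicit, not a different method.
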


\begin{proof}
Immediate by Lemma \ref{L: mon}.
\end{proof}

\subsection*{Analysis of the cores}\label{Ss: Nc scp cores}

We let $\wh {G}$ denote the dual of the abelian group $G$. 
We say that a pair $(\pi,V)$ admits a gauge action if there is a point-norm continuous family $\{ \ga_{\hat{g}} \}_{\hat{g} \in \wh{G}}$ of $*$-automorphisms of $\ca(\pi,V)$ such that
\[
\ga_{\hat{g}}(V_s) = \hat{g}(s)V_s \qand \ga_{\hat{g}}(\pi(a)) = \pi(a) \qforal a\in A \AND s\in P .
\]
We write $\ca(\pi,V)^\ga$ for the fixed point algebra with respect to this action, i.e., the range $E(\ca(\pi,V))$ where
\[
E(X) = \int_{\wh{G}} \ga_{\hat{g}}(X) d\hat{g}.
\]
A standard C*-argument yields the following Lemma.

\begin{lemma}\label{L: fix}
Let $(\pi,V)$ be an isometric Nica-covariant pair of a C*- dynamical system $(A,\al,P)$ that admits a gauge action $\{ \ga_{\hat{g}} \}_{\hat{g} \in \wh{G}}$. 
Then the fixed point algebra $\ca(\pi,V)^\ga$ is the closure of the linear span of monomials of the form $V_s \pi(a) V_s^*$ for $s\in P$ and $a\in A$.
\end{lemma}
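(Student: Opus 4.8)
The plan is to realise the fixed-point algebra as the range of the Haar-averaging projection $E$ and to compute $E$ explicitly on the monomials spanning a dense subalgebra. First I would invoke the preceding proposition, which identifies $\ca(\pi,V)$ as the closed linear span of the monomials $V_s\pi(a)V_t^*$ for $s,t\in P$ and $a\in A$. Since each $\ga_{\hat g}$ is a $*$-automorphism with $\ga_{\hat g}(V_s)=\hat g(s)V_s$ and $\ga_{\hat g}(\pi(a))=\pi(a)$, it follows that $\ga_{\hat g}(V_t^*)=\overline{\hat g(t)}\,V_t^*$, and hence
\[
\ga_{\hat g}\big(V_s\pi(a)V_t^*\big)=\hat g(s)\,\overline{\hat g(t)}\;V_s\pi(a)V_t^*=\hat g(s-t)\,V_s\pi(a)V_t^*,
\]
where I have used that $\hat g$ is a character of the abelian group $G$ and that $s,t\in P\subseteq G$.

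Next I would apply the expectation. Because $G$ is discrete, its dual $\wh G$ is compact, and together with the point-norm continuity of $\hat g\mapsto\ga_{\hat g}$ this guarantees that $E(X)=\int_{\wh G}\ga_{\hat g}(X)\,d\hat g$ (integration against normalised Haar measure) is a well-defined contractive linear projection whose range is exactly the fixed-point algebra $\ca(\pi,V)^\ga$. Evaluating on a monomial and pulling out the scalar gives
\[
E\big(V_s\pi(a)V_t^*\big)=\Big(\int_{\wh G}\hat g(s-t)\,d\hat g\Big)\,V_s\pi(a)V_t^*.
\]
By orthogonality of characters the integral equals $1$ when $s-t=0$, i.e.\ $s=t$ (the embedding $P\hookrightarrow G$ makes this equivalent to equality in $P$), and equals $0$ otherwise. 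Thus $E$ fixes each $V_s\pi(a)V_s^*$ and annihilates every $V_s\pi(a)V_t^*$ with $s\neq t$.

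Finally, since $E$ is norm-continuous and $\ca(\pi,V)$ is the closure of the span of all the monomials, $E$ commutes with passing to the closure, so that
\[
\ca(\pi,V)^\ga=E\big(\ca(\pi,V)\big)=\ol{\Span}\{V_s\pi(a)V_s^*:\ s\in P,\ a\in A\},
\]
which is the assertion. The only point needing any care is the justification that $E$ is a norm-continuous projection onto $\ca(\pi,V)^\ga$—so that the range computation and the interchange of $E$ with the closure are legitimate; everything else is the explicit character computation above. I expect this to be entirely routine, which is precisely why it can be labelled a standard C*-argument.
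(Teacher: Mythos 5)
Your proof is correct and is precisely the ``standard C*-argument'' the paper invokes without writing out: average over the gauge action to obtain the contractive projection $E$ onto the fixed point algebra, apply it to the spanning monomials $V_s\pi(a)V_t^*$ from the preceding proposition, and use orthogonality of characters of the compact group $\wh{G}$ to kill the terms with $s\neq t$. Nothing is missing; the points you flag as needing care (that $E$ is a norm-continuous projection with range exactly $\ca(\pi,V)^\ga$, and the interchange of $E$ with closed spans) are indeed routine and handled correctly.
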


A finite subset $F$ of $P$ is called \emph{grid} if it is $\vee$-closed. 
We write $\vee F$ for the least upper bound of all elements of $F$.

The following Proposition will soon be superseded by Corollary \ref{C: cores}.

\begin{proposition}\label{P: cores 1}
Let $(\pi,V)$ be an isometric Nica-covariant pair for a C*-dynamical system $(A,\al,P)$ that admits a gauge action $\{ \ga_{\hat{g}} \}_{\hat{g} \in \hat{G}}$. 
Then the fixed point algebra $\ca(\pi,V)^\ga = \overline{\bigcup_{F: \textup{grid }} B_F^\pi}$, where
\[
B_F^\pi = \overline{\Span} \{ V_s \pi(a) V_s^* \colon a\in A, s\in F \},
\]
are C*-subalgebras of $\ca(\pi,V)^\ga$.
\end{proposition}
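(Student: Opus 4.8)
The plan is to deduce both assertions directly from Lemma~\ref{L: mon} and Lemma~\ref{L: fix}, with the grid condition (i.e.\ $\vee$-closedness) being exactly what is needed to keep products of generators inside $B_F^\pi$.

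First I would check that each $B_F^\pi$ is a C*-subalgebra of $\ca(\pi,V)^\ga$. Containment in the fixed point algebra is immediate: by Lemma~\ref{L: fix} every generator $V_s\pi(a)V_s^*$ lies in $\ca(\pi,V)^\ga$, and a norm-closed span of such elements does too. Self-adjointness of $B_F^\pi$ is clear, since $(V_s\pi(a)V_s^*)^* = V_s\pi(a^*)V_s^*$ with $a^*\in A$ and $s\in F$. For multiplicativity I would invoke the ``in particular'' clause of Lemma~\ref{L: mon}: for $s,t\in F$ and $a,b\in A$,
\[
V_s\pi(a)V_s^*\,V_t\pi(b)V_t^* = V_{s\vee t}\pi(c)V_{s\vee t}^*
\]
for a suitable $c\in A$ (explicitly $c=\al_{t-s\wedge t}(a)\,\al_{s-s\wedge t}(b)$). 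Since $F$ is $\vee$-closed, $s\vee t\in F$, so the right-hand side is again a generator of $B_F^\pi$. Bilinearity and norm-continuity of multiplication then extend this from generators to arbitrary elements of the closed span, so $B_F^\pi$ is closed under products. Together with self-adjointness and norm-closedness, this makes $B_F^\pi$ a C*-algebra.

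Next I would establish the equality $\ca(\pi,V)^\ga = \overline{\bigcup_{F}B_F^\pi}$. The inclusion $\supseteq$ follows from the previous paragraph. For $\subseteq$, note by Lemma~\ref{L: fix} that $\ca(\pi,V)^\ga$ is the closed linear span of the monomials $V_s\pi(a)V_s^*$ with $s\in P$, $a\in A$, so it suffices to place each finite linear combination $\sum_{i=1}^n V_{s_i}\pi(a_i)V_{s_i}^*$ inside some $B_F^\pi$. Taking $F$ to be the $\vee$-closure of $\{s_1,\dots,s_n\}$ does the job: in a lattice this closure consists of the finitely many joins $\bigvee_{i\in S}s_i$ over nonempty $S\subseteq\{1,\dots,n\}$, so $F$ is a finite grid and the combination lies in $B_F^\pi$ by construction. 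Hence the dense subspace furnished by Lemma~\ref{L: fix} sits in $\bigcup_F B_F^\pi$, giving $\subseteq$ after taking closures. Finally I would remark that the grids are directed under inclusion (the $\vee$-closure of $F_1\cup F_2$ is a grid containing both), so that $F_1\subseteq F_2$ gives $B_{F_1}^\pi\subseteq B_{F_2}^\pi$, and the union is an increasing union of C*-algebras, consistent with its closure being the C*-algebra $\ca(\pi,V)^\ga$.

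The computations here are all routine; the one point requiring a little care is the combinatorial fact that the $\vee$-closure of a finite subset of a lattice is again finite, which is what guarantees that the grids $F$ can always be taken finite while still absorbing the relevant products. I do not expect any genuine obstacle beyond this bookkeeping.
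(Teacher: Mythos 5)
Your proof is correct and follows essentially the same route as the paper's: both arguments combine Lemma~\ref{L: fix} (to realize $\ca(\pi,V)^\ga$ as the closure of the union of the $B_F^\pi$, enlarging an arbitrary finite set to its finite $\vee$-closure) with the ``in particular'' clause of Lemma~\ref{L: mon} (to see that $\vee$-closedness of $F$ makes $B_F^\pi$ closed under multiplication, hence a C*-algebra). Your write-up merely spells out the bookkeeping that the paper leaves implicit.
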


\begin{proof}
By Lemma \ref{L: fix}, we obtain that $\ca(\pi, V)^\ga$ is the inductive limit (under inclusion) of $B_F^\pi$ where $F$ is a finite subset of $P$. 
When $F$ is not a grid, we can enlarge it so as to obtain a subset denoted by $[F]$ which is a grid (note that $\vee(F) = \vee([F])$). 
Thus it suffices to show that the $B_F^\pi$ are C*-algebras.

It is clear that $B_F^\pi$ is closed and selfadjoint. 
For $s, t \in F$, $s \vee t$ belongs to $F$ since $F$ is a grid, and
\[
V_s \pi(a) V_s^* V_t \pi(b) V_t^* = V_{s \vee t} \pi(c) V_{s \vee t}^* \in B_F^\pi,
\]
by Lemma \ref{L: mon}.
So $B_F^\pi$ is a $*$-algebra.
\end{proof}

Fix a faithful representation $(\pi,H)$ of $A$.
Let the Fock representation be $(\wt\pi, V)$ acting on $\wt H= H \otimes \ell^2(P)$. 
For any $\hat{g} \in \wh{G}$, let the unitary operator $u_{\hat{g}}$ be defined by 
\[ 
u_{\hat{g}}( \xi \otimes e_s) = \hat{g}(s)\xi \otimes e_s .
\]
For $\ga_{\hat{g}}:=\ad_{u_{\hat{g}}}$, it is immediate that
\[
 \ga_{\hat{g}}(V_s) = \hat{g}(s)V_s \qand \ga_{\hat{g}}(\wt{\pi}(a)) = \wt{\pi}(a).
\]
Since $\ca(\wt{\pi},V)$ is the closure of the linear span of monomials, an $\varepsilon/3$-argument shows that the family $\{ \ga_{\hat{g}} \}_{\hat{g} \in \wh{G}}$ is a point-norm continuous action, and thus is a gauge action.

The following proposition asserts that the linear span is already norm closed. 

\begin{proposition}\label{P: cores 2}
Let $(\wt{\pi},V)$ be a Fock representation of $(A,\al,P)$ where $\pi$ is a faithful representation of $A$. 
Then for any grid $F$,
\[
B_F^{\wt\pi} = \Span \{ V_s \wt\pi(a) V_s^* \colon a\in A, s\in F \}.
\]
\end{proposition}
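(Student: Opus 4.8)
The plan is to show that the coefficient map
\[
\Phi \colon \bigoplus_{s\in F} A \to \B(\wt{H}), \qquad (a_s)_{s\in F} \mapsto \sum_{s\in F} V_s\wt{\pi}(a_s)V_s^*,
\]
is a topological isomorphism onto its range. Since $\bigoplus_{s\in F}A$ is complete (a finite direct sum of copies of the C*-algebra $A$) and $\Phi$ is clearly bounded, with $\|\Phi((a_s))\|\le\sum_{s\in F}\|a_s\|$, it suffices to bound $\Phi$ below; the range is then automatically closed, and it is precisely $\Span\{V_s\wt{\pi}(a)V_s^* : a\in A,\ s\in F\}$, which gives the claim.

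First I would record the block structure of the generators with respect to the grading $\wt{H} = \bigoplus_{p\in P}(H\otimes e_p)$. A direct computation using $V_s(\xi\otimes e_t)=\xi\otimes e_{s+t}$ and $\wt{\pi}(a)(\xi\otimes e_t)=\pi\al_t(a)\xi\otimes e_t$ gives
\[
V_s\wt{\pi}(a)V_s^*(\xi\otimes e_p) = \begin{cases} (\pi\al_{p-s}(a)\xi)\otimes e_p & \text{if } p\ge s,\\ 0 & \text{otherwise.}\end{cases}
\]
Hence each $V_s\wt{\pi}(a_s)V_s^*$ is diagonal in this grading, so compressing $X:=\Phi((a_s))$ to the $p$-th block yields $X_p = \sum_{s\in F,\ s\le p}\pi\al_{p-s}(a_s)$, and $\|X_p\|\le\|X\|$.

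The lower bound then comes from an induction on the partial order of the grid $F$, ranking each $s\in F$ by the length of the longest chain below it in $F$ (finite, as $F$ is finite). For $s$ minimal, the only surviving term in $X_s$ is $t=s$, so $X_s=\pi\al_0(a_s)=\pi(a_s)$; since $\pi$ is a faithful, hence isometric, representation of the C*-algebra $A$, this gives $\|a_s\|=\|X_s\|\le\|X\|$. For general $s$, isolating the $t=s$ term yields $\pi(a_s)=X_s-\sum_{t\in F,\ t<s}\pi\al_{s-t}(a_t)$, whence
\[
\|a_s\|=\|\pi(a_s)\|\le\|X\|+\sum_{t\in F,\ t<s}\|a_t\|,
\]
using that $\pi$ and every $\al_g$ are contractive. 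By the inductive hypothesis each $\|a_t\|$ with $t<s$ is dominated by a constant multiple of $\|X\|$, so the same holds for $\|a_s\|$; taking the maximum over the finite grid produces a constant $C=C(F)$ with $\max_{s\in F}\|a_s\|\le C\|X\|$, which is the required lower bound.

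The main obstacle is precisely this uniform lower bound: a priori the coefficients $a_s$ need not be recoverable from $X$, and one must exploit the triangular structure of the grid $F$ together with the faithfulness of $\pi$ to peel the coefficients off one rank at a time. The block computation in the second step is routine, and once the lower bound is in hand, both the injectivity of $\Phi$ and the closedness of the span follow from the standard fact that a bounded-below bounded linear map out of a Banach space has closed range.
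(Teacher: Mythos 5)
Your proof is correct. The combinatorial heart coincides with the paper's: both exploit the triangular structure of $\sum_{s\in F} V_s\wt\pi(a_s)V_s^*$ with respect to the order on the grid, recover coefficients starting from minimal elements of $F$, and use that a faithful $*$-representation is isometric. (Your block compression $X_p$ is in fact the same operator as the paper's corner $V_p^* X V_p|_H$, under the identification $H\otimes e_0 \cong H\otimes e_p$.) Where you genuinely diverge is in the functional-analytic packaging. The paper argues sequentially: given $X=\lim_i X_i$ with $X_i$ in the span, it compresses at a minimal $t\in F$ to see that $(a_{t,i})_i$ is Cauchy, subtracts the limit term, and recurses on $F\setminus\{t\}$ (which is again a grid). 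You instead prove an a priori norm estimate $\max_{s\in F}\|a_s\|\le C(F)\,\bigl\|\sum_{s\in F}V_s\wt\pi(a_s)V_s^*\bigr\|$, i.e.\ that the coefficient map $\bigoplus_{s\in F}A\to\B(\wt H)$ is bounded below, and then invoke the standard fact that a bounded-below operator on a Banach space has closed range. Your version is more quantitative and arguably cleaner: the explicit norm equivalence is a stronger, reusable statement --- for instance, applied to differences $X_i-X_j$ it immediately yields the Cauchyness of coefficients that the paper re-derives by hand when it repeats this argument through the canonical $*$-epimorphism in the proposition for the universal pair (Proposition \ref{P: cores 3}). The paper's version, on the other hand, needs no Banach-space machinery and is already in exactly the sequential form in which it gets reused there.
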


\begin{proof}
We will show that if $X = \lim_i X_i$, for $X_i = \sum_{s \in F} V_s \wt{\pi}(a_{s,i}) V_s^*$, then there are $a_s \in A$ such that $X = \sum_{s \in F} V_s \wt{\pi}(a_s) V_s^*$. 
Since $F$ is finite, it has minimal elements. Let $t$ be a minimal element of $F$. 
Then $V_s^*(\xi \otimes e_t) = 0$ for all $s\in F\setminus\{t\}$. 
Therefore we obtain
\begin{align*}
 V_t^* X V_t|_H 
 & = 
 \lim_i V_t^* X_i V_t|_H \\
 & = 
 \lim_i \sum_{s \in F} V_t^* V_s \wt{\pi}(a_{s,i}) V_s^* V_t|_H \\
 & = 
 \lim_i \pi(a_{t,i}).
\end{align*}
Therefore $(\pi(a_{t,i}))_i$, and consequently $(a_{t,i})_i$ is Cauchy in $A$.
Thus there is an $a_t \in A$ such that $\lim_i a_{t,i} = a_t$. 
Hence
\[
 X - V_t \wt{\pi}(a_t) V_t^* = \lim_i X_i - V_t \wt{\pi}(a_{t,i}) V_t^* = \lim_i X_i',
\]
where
\[
 X_i' = \sum_{s \in F, s \neq t} V_s \wt{\pi}(a_{s,i}) V_s^*.
\]

Note that $F\setminus\{t\}$ is also a grid. 
Repeat this argument several times until $F$ is exhausted. 
We deduce that $\lim_i a_{s,i} = a_s$ exists for all $s \in F$. 
It follows that $X = \sum_{s \in F} V_s \wt{\pi}(a_s) V_s^*$.
\end{proof}

We will write $(\pi_u,V_u)$ for a universal isometric Nica-covariant pair for the C*-dynamical system $(A,\al,P)$. 
Then the C*-algebra $\ca(\pi_u,V_u)$ has the following universal property: for every isometric Nica-covariant pair $(\pi,V)$ there is a canonical $*$-epimorphism
\[ 
\Phi \colon \ca(\pi_u,V_u) \to \ca(\pi,V).
\] 
In particular, $\ca(\pi_u,V_u)$ admits a gauge action $\{\be_{\hat{g}}\}_{\hat{g} \in \hat{G}}$. 
(Again a simple $\varepsilon/3$-argument shows that this family is point-norm continuous.)

\begin{proposition}\label{P: cores 3}
Let $(\pi_u,V_u)$ be a universal isometric Nica-covariant pair for $(A,\al,P)$. 
Then $\ca(\pi_u,V_u)^\be = \overline{\bigcup_{F\colon \textup{grid }} \B_F}$ where
\[
 \B_F = B_F^{\pi_u}= \Span \{ V_{u,s} \pi_u(a) V_{u,s}^* \colon a\in A, s\in F \},
\]
are C*-subalgebras of $\ca(\pi_u,V_u)^\be$.
\end{proposition}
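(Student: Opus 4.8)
The plan is to deduce this directly from the two preceding propositions: Proposition \ref{P: cores 1} supplies the inductive-limit description of the fixed-point algebra together with the fact that the relevant spaces are C*-subalgebras, while Proposition \ref{P: cores 2} is the tool that lets me replace the closed span $\overline{\Span}$ occurring there by an honest (already norm-closed) linear span.

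First I would apply Proposition \ref{P: cores 1} verbatim to the universal pair. This is legitimate because $\ca(\pi_u,V_u)$ admits the gauge action $\{\be_{\hat g}\}_{\hat g\in\wh G}$ noted just before the statement, so the hypotheses are met. It yields $\ca(\pi_u,V_u)^\be = \overline{\bigcup_{F\colon \textup{grid}} B_F^{\pi_u}}$, each $B_F^{\pi_u}=\overline{\Span}\{V_{u,s}\pi_u(a)V_{u,s}^* \colon a\in A,\, s\in F\}$ being a C*-subalgebra. It then remains only to show that, for the universal pair, the closure in the definition of $B_F^{\pi_u}$ is superfluous; once the span is shown to be closed it is identified with $\B_F$, and the C*-subalgebra property is inherited from Proposition \ref{P: cores 1}.

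To prove closedness I would transport the coefficient-extraction argument of Proposition \ref{P: cores 2} from the Fock algebra to the universal algebra through the canonical $*$-epimorphism. Fix a faithful representation $\pi$ of $A$ and let $(\wt\pi,V)$ be the associated Fock pair on $\wt H = H\otimes\ell^2(P)$; since $(\wt\pi,V)$ is an isometric Nica-covariant pair, the universal property provides a $*$-epimorphism $\Phi\colon \ca(\pi_u,V_u)\to\ca(\wt\pi,V)$ with $\Phi(V_{u,s})=V_s$ and $\Phi(\pi_u(a))=\wt\pi(a)$. Suppose $X=\lim_i X_i$ in $\ca(\pi_u,V_u)$ with $X_i=\sum_{s\in F} V_{u,s}\pi_u(a_{s,i})V_{u,s}^*$. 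Applying $\Phi$ and using its continuity gives $\Phi(X)=\lim_i \sum_{s\in F} V_s\wt\pi(a_{s,i})V_s^*$ inside $\ca(\wt\pi,V)\subseteq\B(\wt H)$. Now I run the spatial computation from the proof of Proposition \ref{P: cores 2}: for a minimal $t\in F$ one has $V_s^*(\xi\otimes e_t)=0$ for every $s\in F\setminus\{t\}$, so compressing gives $V_t^*\Phi(X_i)V_t|_H=\pi(a_{t,i})$; hence $(\pi(a_{t,i}))_i$ converges, and since $\pi$ is faithful the sequence $(a_{t,i})_i$ is Cauchy in $A$. Peeling off minimal elements of the grid in turn (each $F\setminus\{t\}$ being again a grid) shows that every $(a_{s,i})_i$ converges, say to $a_s\in A$.

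Finally I would transfer back to the universal algebra: from $a_{s,i}\to a_s$ and continuity of multiplication by the fixed operators $V_{u,s},V_{u,s}^*$, we get $X_i\to\sum_{s\in F}V_{u,s}\pi_u(a_s)V_{u,s}^*$ in $\ca(\pi_u,V_u)$, whence by uniqueness of limits $X=\sum_{s\in F}V_{u,s}\pi_u(a_s)V_{u,s}^*$ already lies in the un-closed span $\B_F$. The point requiring care — and the main obstacle — is precisely this closedness step: there is no convenient concrete model of the universal C*-algebra in which to extract coefficients, so the extraction must be performed in the Fock representation and pushed back through $\Phi$. It is worth stressing that this works even though $\Phi$ need not be injective, because the compression recovers the operators $\pi(a_{s,i})$ and it is the \emph{faithfulness of $\pi$}, not injectivity of $\Phi$, that converts convergence in $\B(H)$ into convergence of the coefficients in $A$. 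The remaining ingredients — continuity of $\Phi$ and of the algebra operations, and that $F\setminus\{t\}$ is again a grid — are routine.
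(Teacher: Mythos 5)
Your proof is correct and follows essentially the same route as the paper: reduce via Proposition \ref{P: cores 1} to showing the span is closed, push a convergent sequence through the canonical $*$-epimorphism $\Phi$ onto the Fock representation, extract the coefficients there by the compression argument of Proposition \ref{P: cores 2} (using faithfulness of $\pi$, not injectivity of $\Phi$), and pull the limit back. The paper's proof is exactly this, so no changes are needed.
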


\begin{proof}
In view of Proposition \ref{P: cores 1}, it suffices to show that the linear span coincides with its closure. 
To this end, let $X = \lim_i X_i$ where
\[ 
X_i = \sum_{s \in F} V_{u,s} \pi_u(a_{s,i}) V_{u,s}^*.
\]

Fix a faithful representation $(\pi, H)$ of $A$ and let $(\wt{\pi},V)$ be the Fock representation. 
There is a canonical $*$-epimorphism $\Phi \colon \ca(\pi_u,V_u) \to \ca(\wt{\pi},V)$. 
Therefore $\Phi(X) = \lim_i \Phi(X_i)$. 
By following the arguments in the proof of Proposition \ref{P: cores 2} for $\Phi(X)$, we get that for every $s \in F$, there is an $a_s$ such that $\lim_i a_{s,i} = a_s$. 
Thus
\begin{align*}
 X - \sum_{s \in F} V_{u,s} \pi(a_s) V_{u,s}^*
 & =
 \lim_i X_i - \lim_i \sum_{s \in F} V_{u,s} \pi(a_{s,i}) V_{u,s}^* \\
 & =
 \lim_i X_i - \sum_{s \in F} V_{u,s} \pi(a_{s,i}) V_{u,s}^* =0 .
\end{align*}
Hence $X$ lies in $\Span \{ V_{u,s} \pi_u(a) V_{u,s}^* \colon a\in A, s\in F \}$.
\end{proof}

\begin{corollary}\label{C: cores}
Let $(\pi,V)$ be an isometric Nica-covariant pair for $(A,\al,P)$ that admits a gauge action $\{ \ga_{\hat{g}} \}_{\hat{g} \in \hat{G}}$. 
Then the fixed point algebra $\ca(\pi,V)^\ga = \overline{\bigcup_{F: \textup{grid }} B_F^\pi}$, where
\[
B_F^\pi = \Span \{ V_s \pi(a) V_s^* \colon a\in A, s\in F \},
\]
are C*-subalgebras of $\ca(\pi,V)^\ga$.
\end{corollary}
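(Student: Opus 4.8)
The plan is to observe that this Corollary differs from Proposition~\ref{P: cores 1} only in that the closures on the spans have been dropped: Proposition~\ref{P: cores 1} already gives $\ca(\pi,V)^\ga = \overline{\bigcup_{F\colon \textup{grid}} \overline{\Span}\{V_s \pi(a) V_s^* \colon a\in A, s\in F\}}$ and that each $\overline{\Span}\{\dots\}$ is a C*-algebra. So the entire new content is the assertion that for every grid $F$ the linear span
\[
B_F^\pi = \Span\{ V_s \pi(a) V_s^* \colon a\in A, s\in F \}
\]
is already norm-closed, i.e. $B_F^\pi = \overline{B_F^\pi}$. (Note each such monomial is indeed fixed by the gauge action, since $\ga_{\hat g}(V_s\pi(a)V_s^*) = \hat g(s)\overline{\hat g(s)} V_s\pi(a)V_s^* = V_s\pi(a)V_s^*$, so $B_F^\pi$ genuinely lives inside $\ca(\pi,V)^\ga$.) My strategy is to deduce this closedness by transporting the closedness result for the universal pair, Proposition~\ref{P: cores 3}, through the canonical epimorphism.

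First I would invoke the universal isometric Nica-covariant pair $(\pi_u,V_u)$ and the canonical $*$-epimorphism $\Phi\colon \ca(\pi_u,V_u) \to \ca(\pi,V)$ determined by $\Phi(V_{u,s}) = V_s$ and $\Phi(\pi_u(a)) = \pi(a)$. Applying $\Phi$ to a monomial gives
\[
\Phi\big( V_{u,s} \pi_u(a) V_{u,s}^* \big) = V_s \pi(a) V_s^*,
\]
so by linearity $\Phi$ carries the spanning set of $\B_F = \Span\{V_{u,s}\pi_u(a)V_{u,s}^* \colon a\in A, s\in F\}$ onto the spanning set of $B_F^\pi$; hence $\Phi(\B_F) = B_F^\pi$ as linear spaces.

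Next I would use that $\B_F$ is a C*-subalgebra of $\ca(\pi_u,V_u)$ by Proposition~\ref{P: cores 3}. The restriction $\Phi|_{\B_F}$ is then a $*$-homomorphism between C*-algebras, and such maps automatically have norm-closed range: the image of a C*-algebra under a $*$-homomorphism is a C*-subalgebra. Therefore $B_F^\pi = \Phi(\B_F)$ is itself a C*-algebra, in particular norm-closed, so $B_F^\pi = \overline{B_F^\pi}$ exactly as claimed. Substituting this identity into the description furnished by Proposition~\ref{P: cores 1} yields $\ca(\pi,V)^\ga = \overline{\bigcup_{F\colon \textup{grid}} B_F^\pi}$ with the $B_F^\pi$ being (closed) C*-subalgebras, completing the proof.

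The only point requiring care—and what I regard as the crux—is the standard but essential fact that a $*$-homomorphism of C*-algebras has closed range, which is precisely what upgrades the ``$\overline{\Span}$'' of Proposition~\ref{P: cores 1} to the bare ``$\Span$''; everything else is the bookkeeping that $\Phi$ intertwines the generators and sends $\B_F$ onto $B_F^\pi$. I do not expect any genuine obstacle, since the heavy lifting (closedness of the span at the universal level) was already carried out in Propositions~\ref{P: cores 2} and~\ref{P: cores 3}.
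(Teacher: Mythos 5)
Your proposal is correct and follows essentially the same route as the paper: both reduce to Proposition~\ref{P: cores 1}, push the already-closed core $\B_F$ of Proposition~\ref{P: cores 3} through the canonical $*$-epimorphism $\Phi$, and invoke the closed-range property of $*$-homomorphisms to conclude $B_F^\pi = \Phi(\B_F) = \Span\{V_s\pi(a)V_s^*\colon a \in A,\ s\in F\}$. The only cosmetic difference is that you identify $\Phi(\B_F)$ with $B_F^\pi$ by evaluating $\Phi$ directly on the spanning monomials, whereas the paper phrases this via the fact that $\Phi$ intertwines the two gauge actions; the content is the same.
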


\begin{proof}
In view of Proposition \ref{P: cores 1}, it suffices to show that the linear span is closed. 
To this end, let $\{\ga_{\hat{g}}\}_{\hat{g} \in \wh{G}}$ be the gauge action on $\ca(\pi,V)$, and let $\{\be_{\hat{g}}\}_{\hat{g} \in \wh{G}}$ be the gauge action on $\ca(\pi_u,V_u)$.
Then $\Phi$ intertwines with $\ga_{\hat{g}}$ and $\be_{\hat{g}}$. 
Thus $\Phi(\B_F) = B_F^\pi$ for the cores $B_F$ of $\ca(\pi,V)$.
Therefore $B_F^\pi = \overline{\Span} \{V_s\pi(a)V_s^* \colon s\in F\}$. 
Since $\Phi$ is a $*$-homomorphism, it has closed range. Thus
\[
 B_F^\pi = \Phi(\B_F) = \Span \{V_s\pi(a)V_s^* \colon s\in F\},
\]
and the proof is complete.
\end{proof}

\begin{theorem}\label{T: lr}
Let $(\wt\pi,V)$ be the Fock representation of $(A,\al,P)$ for a faithful representation $\pi$ of $A$.
Then the canonical $*$-epimorphism 
\[
\Phi \colon \ca(\pi_u,V_u) \to \ca(\wt{\pi}, V) 
\]
is a $*$-isomorphism.
Consequently, the Fock representation is a unital completely isometric representation of the Nica-covariant semicrossed product $A \times_\al^{\nc} P$.
\end{theorem}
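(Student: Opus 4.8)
The assertion is a gauge-invariant uniqueness theorem, and the plan is to prove that the canonical epimorphism $\Phi$ is injective by the familiar two-step strategy: first cut the problem down to the fixed-point algebra by means of a faithful conditional expectation, and then settle injectivity on the fixed-point algebra using the concrete description of the cores obtained in Propositions~\ref{P: cores 1}--\ref{P: cores 3}.

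First I would record the equivariance of $\Phi$. The epimorphism carries $\pi_u(a)$ to $\wt{\pi}(a)$ and $V_{u,s}$ to $V_s$, so it intertwines the gauge action $\{\be_{\hat g}\}_{\hat g\in\wh G}$ on $\ca(\pi_u,V_u)$ with the gauge action $\{\ga_{\hat g}\}_{\hat g\in\wh G}$ on $\ca(\wt{\pi},V)$, that is $\Phi\be_{\hat g}=\ga_{\hat g}\Phi$. Since $G$ is discrete, $\wh G$ is compact, so averaging over $\wh G$ produces faithful conditional expectations $E_u(x)=\int_{\wh G}\be_{\hat g}(x)\,d\hat g$ and $E_{\wt{\pi}}$ onto the respective fixed-point algebras, and equivariance gives $\Phi E_u=E_{\wt{\pi}}\Phi$. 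If $x\in\ker\Phi$ then $\Phi(E_u(x^*x))=E_{\wt{\pi}}(\Phi(x^*x))=0$, so $E_u(x^*x)$ lies both in $\ker\Phi$ and in the fixed-point algebra; once we know $\Phi$ is injective there, $E_u(x^*x)=0$, and faithfulness of $E_u$ forces $x=0$. Thus it suffices to prove that $\Phi$ is injective on $\ca(\pi_u,V_u)^\be=\overline{\bigcup_F\B_F}$.

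Because the $\B_F$ (over grids $F$) form a directed family of C*-subalgebras with $\Phi(\B_F)=B_F^{\wt{\pi}}$, and an injective $*$-homomorphism is isometric, it is enough to show that $\Phi|_{\B_F}$ is injective for each grid $F$: the resulting isometry then passes to the union and to its closure. This is the heart of the argument. A typical element of $\B_F$ has the form $X=\sum_{s\in F}V_{u,s}\pi_u(a_s)V_{u,s}^*$, with $\Phi(X)=\sum_{s\in F}V_s\wt{\pi}(a_s)V_s^*$. The key is that the coefficients are recoverable in the Fock model: for a minimal $t\in F$ the vectors $\xi\otimes e_t$ satisfy $V_s^*(\xi\otimes e_t)=0$ for every $s\in F\setminus\{t\}$, so the compression of $\Phi(X)$ to the $t$-th summand equals $\pi(a_t)$, exactly as in the proof of Proposition~\ref{P: cores 2}. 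Since $\pi$ is faithful, $\Phi(X)=0$ forces $a_t=0$; stripping $t$ off and inducting over the grid $F\setminus\{t\}$ yields $a_s=0$ for all $s$, whence $X=0$. Therefore $\Phi|_{\B_F}$ is injective, regardless of any extra relations the monomials might satisfy in the universal algebra.

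Combining the two steps, $\Phi$ is an injective $*$-epimorphism, hence a $*$-isomorphism. For the final statement, the universal isometric Nica-covariant pair $(\pi_u,V_u)$ realizes $A \times_\al^{\nc} P$ completely isometrically inside $\ca(\pi_u,V_u)$ as the closed algebra generated by the monomials $V_{u,s}\pi_u(a)$; since $\Phi$ is a $*$-isomorphism, hence completely isometric, and sends $V_{u,s}\pi_u(a)$ to $V_s\wt{\pi}(a)$, the Fock pair $(\wt{\pi},V)$ gives a completely isometric representation of $A \times_\al^{\nc} P$, which is unital when $A$ is. The main obstacle is the injectivity of $\Phi$ on the cores $\B_F$: the preceding propositions identify these cores and establish that their spans are already closed, so the remaining genuine content is precisely the spatial recovery of the coefficients $a_s$ in the Fock representation, while the conditional-expectation reduction is standard.
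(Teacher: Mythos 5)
Your proof is correct and follows essentially the same route as the paper's: equivariance of $\Phi$ with respect to the gauge actions reduces the problem to the fixed-point algebra (you merely spell out the faithful conditional-expectation argument that the paper cites as ``a standard C*-argument''), and injectivity on each core $\B_F$ is obtained exactly as in the paper by compressing at a minimal $t \in F$ to recover $\pi(a_t)$ in the Fock space, invoking faithfulness of $\pi$, and inducting over the grid $F\setminus\{t\}$. The concluding step, that the $*$-isomorphism $\Phi$ restricts to a complete isometry on $A \times_\al^{\nc} P \subseteq \ca(\pi_u,V_u)$, is also the paper's.
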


\begin{proof}
Let $\{\ga_{\hat{g}}\}_{\hat{g} \in \wh{G}}$ be the gauge action on $\ca(\wt{\pi},V)$. 
Then $\Phi$ intertwines with $\be_{\hat{g}}$ and $\ga_{\hat{g}}$. 
Thus by a standard C*-argument, it suffices to show that the restriction of $\Phi$ 
to the fixed point algebra is $*$-injective. 
Since $\ca(\pi_u,V_u)^\be$ is an inductive limit, it suffices to show that the restriction of $\Phi$ to $\B_F$ is $*$-injective, where $F$ is a grid.

To this end, let $X = \sum_{s \in F} V_{u,s} \pi_u(a_s) V_{u,s}^* \in \B_F$, such that
\[
\Phi(X) = \sum_{s \in F} V_s \wt{\pi}(a_s) V_s^* =0.
\]
If $t$ is a minimal element of $F$, we obtain
\begin{align*}
 \pi(a_t)
 & =
 \sum_{s \in F} V_t^*V_s \wt{\pi}(a_s) V_s^* V_t|_H = V_t^*\Phi(X) V_t|_H =0.
\end{align*}
Since $\pi$ is faithful, $a_t =0$. 
Now $F\setminus\{t\}$ is also a grid; and we repeat the process to establish that $a_s=0$ for all $s \in F$. 
Therefore $X=0$.

For the second statement, note that $A \times_\al^{\nc} P \subseteq \ca(\pi_u,V_u)$ by definition. 
The proof is then complete by recalling that injective $*$-representations are completely isometric.
\end{proof}

\subsection*{A gauge-invariant uniqueness theorem}\label{Ss: Nc scp gauge}

Following similar ideas, we can prove a gauge-invariant uniqueness theorem for the C*-algebras $\ca(\pi,V)$ of isometric Nica-covariant pairs $(\pi,V)$. 
We will denote by $B_{(0,\infty)}$ the ideal in $\ca(\pi,V)$ generated by the monomials $V_s \pi(a) V_s^*$ such that $s \neq 0$. 
Also, we will denote by $I_{(\pi,V)}$ the ideal of $A$ given by
\[
I_{(\pi,V)} := \{a\in A \colon \pi(a) \in B_{(0,\infty)}\}.
\]

\begin{remark}
For the semigroup $S=\bZ_+$ the above relation reduces to the known property for faithful representations of Toeplitz-Cuntz-Pimsner algebras of C*-correspondences \cite{FowRae99, Kat04}. 
The same thing is true here. If $I_{(\pi,V)} = (0)$, then $\pi$ is faithful. 
Moreover if $r \neq 0$, then $V_r \pi(a) V_r^* \notin \pi(A)$ unless it is $0$. 

Furthermore $I_{(\wt{\pi},V)} = (0)$ for the Fock representation $(\wt{\pi},V)$. 
Indeed, a direct computation gives that $V_s\wt{\pi}(a)V_s^*|_H = 0$ for all $a\in A$ and $0 \neq s \in P$. 
Hence $B_{(0, \infty)}|_H = (0)$. Thus, if $\wt{\pi}(a) = X \in B_{(0,\infty)}$ for some $a \in A$ then
\[ 
\pi(a) = \wt{\pi}(a)|_H = X|_H = 0. 
\]
Therefore $a=0$, since $\pi$ is faithful.

Consequently, it follows from Theorem \ref{T: lr} that $I_{(\pi_u,V_u)}=(0)$, where $(\pi_u, V_u)$ is a universal Nica-covariant representation.
\end{remark}

\begin{theorem}\label{T: gau inv un}
Let $(\pi_u,V_u)$ be a universal isometric Nica-covariant pair of a C*-dynamical system $(A,\al,P)$. 
Then $(\pi,V)$ defines a faithful representation of $\ca(\pi_u,V_u)$ if and only if $(\pi,V)$ is an isometric Nica-covariant pair with $I_{(\pi,V)} = (0)$ that admits a gauge action.
\end{theorem}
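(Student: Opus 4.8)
The plan is to analyze the canonical $*$-epimorphism $\Phi\colon\ca(\pi_u,V_u)\to\ca(\pi,V)$; since it is always surjective, $(\pi,V)$ is faithful on $\ca(\pi_u,V_u)$ exactly when $\Phi$ is injective. The easy direction assumes $\Phi$ is an isomorphism. Then I would transport the gauge action $\{\be_{\hat g}\}$ of $\ca(\pi_u,V_u)$ by setting $\ga_{\hat g}=\Phi\be_{\hat g}\Phi^{-1}$; checking on generators gives $\ga_{\hat g}(V_s)=\hat g(s)V_s$ and $\ga_{\hat g}(\pi(a))=\pi(a)$, with point-norm continuity inherited, so $(\pi,V)$ admits a gauge action. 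Moreover $\Phi$ carries $\pi_u(A)$ onto $\pi(A)$ and the ideal generated by the positive-degree monomials onto $B_{(0,\infty)}$, so the equality $I_{(\pi_u,V_u)}=(0)$ recorded in the Remark after Theorem~\ref{T: lr} transfers to $I_{(\pi,V)}=(0)$.

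For the substantive converse, assume $(\pi,V)$ is isometric Nica-covariant with $I_{(\pi,V)}=(0)$ and a gauge action $\{\ga_{\hat g}\}$. Because $\Phi$ intertwines $\be_{\hat g}$ and $\ga_{\hat g}$, it commutes with the conditional expectations $E_u(\cdot)=\int_{\wh G}\be_{\hat g}(\cdot)\,d\hat g$ and $E(\cdot)=\int_{\wh G}\ga_{\hat g}(\cdot)\,d\hat g$, which are faithful since $\wh G$ is compact. The standard gauge-invariance argument then reduces injectivity of $\Phi$ to injectivity on the fixed-point algebra: if $\Phi(x)=0$ then $\Phi(E_u(x^*x))=E(\Phi(x^*x))=0$, and faithfulness of $E_u$ together with injectivity of $\Phi$ on $\ca(\pi_u,V_u)^\be$ forces $x=0$. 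By Proposition~\ref{P: cores 3}, $\ca(\pi_u,V_u)^\be=\overline{\bigcup_F\B_F}$ is an inductive limit over grids $F$, so it suffices to prove that $\Phi|_{\B_F}$ is injective for each grid $F$.

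This is where $I_{(\pi,V)}=(0)$ enters. Given $X=\sum_{s\in F}V_{u,s}\pi_u(a_s)V_{u,s}^*$ with $\Phi(X)=0$, I would pick a minimal element $t$ of $F$ and compress by $V_t$. Using $V_t^*V_s=V_{s-s\wedge t}V_{t-s\wedge t}^*$ from Proposition~\ref{P:Nc isom} together with the covariance identity $V_r^*\pi(a)V_r=\pi\al_r(a)$, one obtains
\[
V_t^*\Phi(X)V_t=\pi(a_t)+\sum_{s\in F,\,s\neq t}V_{s-s\wedge t}\,\pi\al_{t-s\wedge t}(a_s)\,V_{s-s\wedge t}^*.
\]
Minimality of $t$ forces $s\not\le t$ for $s\neq t$, hence $s-s\wedge t\neq0$, so each summand is a positive-degree monomial lying in $B_{(0,\infty)}$. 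Since $\Phi(X)=0$, this yields $\pi(a_t)\in B_{(0,\infty)}$, i.e.\ $a_t\in I_{(\pi,V)}=(0)$, so $a_t=0$. As $F\setminus\{t\}$ is again a grid, iterating over minimal elements exhausts $F$ and gives $X=0$.

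The main obstacle is this core computation: one must confirm that compression by a minimal $V_t$ leaves the diagonal term $\pi(a_t)$ untouched while pushing every other monomial into strictly positive degree, so that the hypothesis $I_{(\pi,V)}=(0)$ can be invoked to peel off $a_t$ one minimal element at a time. By contrast, the surrounding reduction, via the gauge-invariance argument and the faithful conditional expectations onto the cores, is routine.
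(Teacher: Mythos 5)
Your proposal is correct and follows essentially the same route as the paper's proof: both directions match, including the reduction via the gauge action to the cores $\B_F$ (using Proposition~\ref{P: cores 3}), the compression by $V_t$ for a minimal $t\in F$ combined with Proposition~\ref{P:Nc isom} and the covariance identity to show $\pi(a_t)\in B_{(0,\infty)}$, and the induction over minimal elements. The only differences are presentational — the paper writes $\pi(a_t)=-\sum_{s\neq t}V_{s-s\wedge t}\pi\al_{t-s\wedge t}(a_s)V_{s-s\wedge t}^*$ directly from $\Phi(X)=0$ rather than displaying $V_t^*\Phi(X)V_t$, and it leaves the standard expectation/fixed-point-algebra reduction and the forward implication implicit, both of which you spell out correctly.
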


\begin{proof}
The forward implication is immediate since $(\pi_u,V_u)$ has all of these properties by Theorem \ref{T: lr}.

For the converse, it suffices to show that the restriction of $\Phi$ to the fixed point algebra is injective. 
Equivalently that the restriction of $\Phi$ to the cores $\B_F$, where $F$ is $\vee$-closed, is injective.

By Proposition \ref{P: cores 3}, a typical element in $\B_F \cap \ker\Phi$ has the form $X = \sum_{s\in F} V_{u,s}\pi_u(a_s) V_{u,s}^*$. 
Consequently we obtain that $\sum_{s\in F} V_s \pi(a_s) V_s^* =0$. 
Let $t \in F$ be a minimal element. Then
\begin{align*}
\pi(a_t)
& =
V_t^* V_t \pi(a_t) V_t^* V_t \\
& =
- \sum_{s \neq t} V_t^* V_s\pi(a_s) V_s^* V_t \\
& =
- \sum_{s \neq t} V_{s- s\wedge t} V_{t- s\wedge t}^* \pi(a_s) V_{t- s\wedge t} V_{s - s\wedge t}^* \\
& =
- \sum_{s \neq t} V_{s-s\wedge t} \pi\al_{t- s \wedge t}(a) V_{s - s\wedge t}^*.
\end{align*}

Note that $s - s\wedge t \neq 0$. Indeed, if $s=s\wedge t$ then $s \leq t$. 
Since $s\neq t$ we get that $s < t$, which contradicts the minimality of $t$. 
Therefore $\pi(a_t) \in B_{(0,\infty)}$, and hence $a_t =0$. 
Induction on minimal elements of $F$ finishes the proof.
\end{proof}

\subsection*{The C*-envelope}\label{Ss: Nc scp inj}

Recall that $(\wt{A}, \wt{\al},G)$ is the minimal automorphic extension of an injective C*-system $(A,\al,P)$.

\begin{theorem}\label{T: cenv inj Nc}
Let $(A,\al,P)$ be an injective C*-dynamical system. Then 
\[ 
\cenv(A \times_\al^{\nc} P) \simeq \wt{A} \rtimes_{\wt\al} G . 
\]
\end{theorem}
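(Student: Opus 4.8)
The plan is to sidestep a direct C*-envelope computation and instead identify $A \times_\al^{\nc} P$ with an operator algebra whose C*-envelope has already been determined, namely the Fock algebra $\A(A,\al,P)$ of Theorem~\ref{T: lr scp}. Since both $A \times_\al^{\nc} P$ and $\A(A,\al,P)$ are completions (here, genuine completions and not proper quotients, because injectivity forces the radical ideal to vanish) of the \emph{same} algebra $c_{00}(P,\al,A)$ with respect to families of covariant pairs, the whole problem reduces to comparing the two matrix seminorms induced on $c_{00}(P,\al,A)$ and then invoking the earlier theorem.

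For the first inequality I would observe that every Fock pair is a regular Nica-covariant pair: it is isometric and Nica-covariant by Example~\ref{E: Fock}, and isometric pairs are regular by Remark~\ref{R: iso reg}. Consequently the $\nc$-norm, being a supremum over the larger family of regular Nica-covariant pairs, dominates the Fock algebra norm at every matrix level. For the reverse inequality I would fix a faithful representation $\pi$ of $A$ and pass to the associated Fock representation $(\wt\pi,V)$. By Theorem~\ref{T: lr} this one pair is a \emph{completely isometric} representation of $A \times_\al^{\nc} P$, so it already computes the $\nc$-norm; but it is itself a Fock pair, so its norm is bounded by the Fock algebra norm. The two estimates show the matrix-norm structures agree, and hence the canonical map extends to a completely isometric isomorphism $A \times_\al^{\nc} P \simeq \A(A,\al,P)$.

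With this identification the theorem follows at once, since the C*-envelope is an invariant of the completely isometric isomorphism class: Theorem~\ref{T: lr scp} gives $\cenv(\A(A,\al,P)) \simeq \wt{A} \rtimes_{\wt\al} G$ for injective systems, and therefore $\cenv(A \times_\al^{\nc} P) \simeq \wt{A} \rtimes_{\wt\al} G$ as well.

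I expect the genuine difficulty to lie entirely in the two quoted results rather than in this reduction. The technical heart is Theorem~\ref{T: lr}, whose proof depends on the analysis of the cores $\B_F$ together with the gauge-invariant uniqueness machinery: one must show that the canonical $*$-epimorphism $\ca(\pi_u,V_u)\to\ca(\wt\pi,V)$ is injective on each core $\B_F$ and hence, via the gauge action, on the full fixed-point algebra. Were one to insist on a self-contained argument, the main obstacle would instead be exhibiting $\wt{A} \rtimes_{\wt\al} G$ as a C*-cover with trivial \v{S}ilov ideal --- equivalently, showing that a maximal dilation of the Fock representation promotes the isometries $V_s$ to unitaries while preserving both the covariance relation and Nica-covariance. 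This is precisely the point at which injectivity enters, through Laca's automorphic extension $(\wt{A},\wt\al,G)$, and it would mirror the dual-group gauge argument already used in the proofs of Theorems~\ref{T: cone aut un} and~\ref{T: cone aut un 2}.
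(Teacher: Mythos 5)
Your proposal is correct, and every step checks out against the paper's lemmas: Fock pairs are isometric Nica-covariant (Example \ref{E: Fock}) hence regular (Remark \ref{R: iso reg}), so the Fock-algebra norm is dominated by the $\nc$-norm; and Theorem \ref{T: lr} says one single Fock pair, built from a faithful representation of $A$, attains the $\nc$-norm, giving the reverse domination and hence $A \times_\al^{\nc} P \simeq \A(A,\al,P)$, after which Theorem \ref{T: lr scp} finishes. The paper's own proof is close in spirit but organized differently: it never mentions the Fock algebra. Instead it fixes a faithful representation $\rho$ of $\wt{A}$, observes that the left regular representation $(\wh{\rho}|_A, U|_P)$ of $\wt{A} \rtimes_{\wt\al} G$ (faithful since $G$ is abelian, hence amenable) is a completely contractive representation of $A \times_\al^{\nc} P$ dilating the Fock representation $(\wt{\rho}|_A, V|_P)$, which is completely isometric by Theorem \ref{T: lr}; this yields a canonical completely isometric embedding $\iota \colon A \times_\al^{\nc} P \to \wt{A} \rtimes_{\wt\al} G$, and then the C*-cover and trivial \v{S}ilov ideal claims are run as in the last part of Theorem \ref{T: cone aut un 2}. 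So both arguments rest on the same two pillars --- Theorem \ref{T: lr} and the gauge-action machinery of Theorem \ref{T: cone aut un 2} --- but your factorization through the Fock algebra invokes Theorem \ref{T: lr scp} as a black box and avoids repeating the \v{S}ilov ideal argument, while the paper's construction produces the explicit embedding carrying each generator $\Bt_s$ to the unitary $U_s$, a concrete picture that is reused later (e.g.\ in Corollary \ref{C: un ext pr} on hyperrigidity). One minor quibble: your parenthetical that injectivity is needed to rule out proper quotients is unnecessary --- the Fock representation over a faithful $\pi$ is injective on $c_{00}(P,\al,A)$ for \emph{any} C*-dynamical system (Example \ref{E: Fock}), and in any case equality of the two families of matrix seminorms already forces the canonical completely isometric isomorphism; injectivity is genuinely needed only where you use it, namely in the hypotheses of Theorem \ref{T: lr scp}.
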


\begin{proof}
Since $G$ is abelian, the crossed product $ \wt{A} \rtimes_{\wt\al} G$ coincides with the reduced crossed product. 
That is, the left regular representation of $ \wt{A} \rtimes_{\wt\al} G$ is faithful. 
Fix a faithful representation $(\rho,H)$ of $\wt{A}$. 
Let $(\wt{\rho},V)$ be the Fock representation acting on $\wt H = H \otimes \ell^2(P)$; and let $(\wh{\rho},U)$ be the left regular representation that acts on $\wh H = H \otimes \ell^2(G)$. 
Then $(\wh{\rho}|_A, U|_P)$ is a completely contractive representation of $A \times_\al^{\nc} P$ which dilates $(\wt{\rho}|_A, V|_P)$. 
In turn, the latter is a unital completely isometric representation of $A \times_\al^{\nc} P$ by Theorem \ref{T: lr}. 
Therefore there is a canonical unital completely isometric representation $\iota \colon A \times_\al^{\nc} P \to \wt{A} \rtimes_{\wt\al} G$. 
The rest follows as in the last part of Theorem \ref{T: cone aut un 2}.
\end{proof}

\begin{corollary}\label{C: un ext pr}
If $(A,\al,P)$ is an injective unital C*-dynamical system, then $A \times_\al^{\nc} P$ is hyperrigid.
\end{corollary}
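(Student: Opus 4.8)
The plan is to combine Theorem~\ref{T: cenv inj Nc}, which identifies $\cenv(A \times_\al^{\nc} P)$ with $\wt{A} \rtimes_{\wt\al} G$, with the standard multiplicative-domain argument for the unique extension property. Write $\iota \colon A \times_\al^{\nc} P \to \wt{A} \rtimes_{\wt\al} G$ for the canonical completely isometric embedding furnished by that theorem. Under $\iota$ the copy of $A$ lands inside $\wt{A}$ and each generator $\Bt_p$ is carried to the canonical unitary $U_p$ of the crossed product, so that $\iota(A \times_\al^{\nc} P)$ is generated as an operator algebra by $A$ together with $\{U_p : p \in P\}$. Since $(A,\al,P)$ is unital, $A \times_\al^{\nc} P$ is a unital operator algebra, so the unital form of the unique extension property is the relevant one.

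First I would fix a unital $*$-representation $\pi$ of $C := \cenv(A \times_\al^{\nc} P) \simeq \wt{A} \rtimes_{\wt\al} G$ on a Hilbert space $H$, and let $\psi \colon C \to \B(H)$ be an arbitrary completely positive extension of $\pi|_{\iota(A \times_\al^{\nc} P)}$. Because $1 \in A \subseteq \iota(A \times_\al^{\nc} P)$, the map $\psi$ is automatically unital, hence u.c.p. The goal is to prove $\psi = \pi$, which is precisely the assertion that $\pi|_{A \times_\al^{\nc} P}$ has the unique extension property; as $\pi$ is arbitrary, this yields hyperrigidity.

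Next I would show that the generators lie in the multiplicative domain $\mathcal{D}$ of $\psi$. For $a \in A$ the elements $a$, $a^*a$ and $aa^*$ all lie in $\iota(A \times_\al^{\nc} P)$, so $\psi(a^*a) = \pi(a^*a) = \pi(a)^*\pi(a) = \psi(a)^*\psi(a)$ and symmetrically, whence $A \subseteq \mathcal{D}$. For each $p \in P$ the operator $\psi(U_p) = \pi(U_p)$ is unitary and $\psi(U_p^* U_p) = \psi(1) = 1 = \psi(U_p)^*\psi(U_p)$, and likewise with $U_p U_p^*$, so $U_p \in \mathcal{D}$. Since $\mathcal{D}$ is a C*-subalgebra of $C$ it also contains each $U_p^* = U_{-p}$, and being closed under products it contains $U_p^* a U_p = \wt\al_p^{-1}(a)$ for all $a \in A$ and $p \in P$. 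Using the direct-limit description $\wt{A} = \overline{\bigcup_{p \in P} \wt\al_p^{-1}(A)}$ from the construction of the automorphic extension, it follows that $\wt{A} \subseteq \mathcal{D}$; and then $\mathcal{D}$ contains all $U_g$, $g \in G$, because $P$ generates $G$. These generate $C$ as a C*-algebra, so $\mathcal{D} = C$.

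Finally, since $\psi$ is u.c.p.\ and multiplicative on all of $C$, it is a $*$-homomorphism; being a $*$-homomorphism that agrees with $\pi$ on the generating set $A \cup \{U_p : p \in P\}$ (agreement on adjoints being automatic), it coincides with $\pi$ on all of $C$. Hence $\psi = \pi$, the unique extension property holds for $\pi|_{A \times_\al^{\nc} P}$, and $A \times_\al^{\nc} P$ is hyperrigid. The one point requiring genuine care, and the main obstacle, is verifying that $A$ together with $\{U_p\}$ generates the entire crossed product from \emph{within} the multiplicative domain; this rests on the identification $\wt{A} = \overline{\bigcup_p \wt\al_p^{-1}(A)}$ and on the stability of $\mathcal{D}$ under the C*-operations.
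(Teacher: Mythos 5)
Your proof is correct, but it reaches the conclusion by a different mechanism than the paper's. Both arguments rest on the same two pillars: the identification $\cenv(A \times_\al^{\nc} P) \simeq \wt{A} \rtimes_{\wt\al} G$ of Theorem~\ref{T: cenv inj Nc}, and the fact that unitality of the system forces the image of $P$ to consist of unitaries in the envelope. The paper then argues dilation-theoretically: for a representation $\si$ of the envelope, any dilation $\nu$ of $\si|_{A \times_\al^{\nc} P}$ must dilate each unitary trivially (a contractive dilation of a unitary is trivial), hence $\nu$ is a trivial dilation; since $A \times_\al^{\nc} P$ is generated by the unitaries of $A$ together with the unitary images of $P$, maximality holds, and the unique extension property follows from the maximality characterization recalled in the preliminaries. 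You instead take an arbitrary completely positive extension $\psi$ of $\pi|_{A \times_\al^{\nc} P}$ and run Choi's multiplicative-domain argument: the selfadjoint copy of $A$ enters $\mathcal{D}_\psi$ by the Schwarz equality, the $U_p$ enter because unitaries mapped to unitaries do, and you then verify that $\mathcal{D}_\psi$ exhausts the crossed product using $\wt{A} = \ol{\bigcup_{p} \wt\al_p^{-1}(A)}$ and $G = -P+P$. What your route buys: it proves the unique extension property directly, without invoking the equivalence with maximality, and it handles $A$ without needing the (standard but unstated) fact that a unital C*-algebra is spanned by its unitaries, on which the paper's phrase ``the unitaries of $A$ (which span $A$)'' relies. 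What the paper's route buys is brevity: once each generator dilates trivially there is nothing left to check, whereas you must track how $\wt{A}$ and the $U_g$, $g \in G$, are recovered inside the multiplicative domain. One small slip: with the paper's covariance convention $a U_p = U_p \wt\al_p(a)$ one has $U_p^* a U_p = \wt\al_p(a)$ and $U_p a U_p^* = \wt\al_p^{-1}(a)$, so your formula has the conjugation reversed; this is harmless, since $\mathcal{D}_\psi$ contains both conjugates and the inclusion $\wt\al_p^{-1}(A) \subseteq \mathcal{D}_\psi$ that you need follows either way.
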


\begin{proof}
Let $A \times_\al^{\nc} P \hookrightarrow \wt{A} \rtimes_{\wt{\al}} G$ by Theorem \ref{T: cenv inj Nc}. 
Since $(A,\al,P)$ is a unital system, the embedding maps $P$ to unitaries. 
Therefore, $A \times_\al^{\nc} P$ is generated by the unitaries of $A$ (which span $A$) and the unitary generators of $P$. 
Let $\si\colon \wt{A} \rtimes_{\wt{\al}} G \to \B(H)$ be a faithful representation; and let $\nu$ be a dilation of $\si|_{A \times_\al^{\nc} P}$. 
Then $\si(u)$ is a unitary for all unitaries $u$ in $A \times_\al^{\nc} P$.
Hence $\nu(u)$ must be a trivial dilation (being a contractive dilation of a unitary). 
Therefore $\nu$ is a trivial dilation of $\si$.
\end{proof}

\subsection*{A gauge-invariant uniqueness theorem for injective systems}\label{Ss: Nc scp gauge non-inj}

There is an immediate gauge-invariant uniqueness theorem for injective systems.

\begin{corollary}
Let $(A,\al,P)$ be an injective C*-dynamical system. 
Then $\ca(\pi,U) \simeq \cenv(A \times_\al^{\nc} P)$ if and only if $(\pi,U)$ is a unitary Nica-covariant pair that admits a gauge action and $\pi$ is injective.
\end{corollary}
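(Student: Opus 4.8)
The plan is to view this as the Cuntz-type counterpart of the Toeplitz-type gauge-invariant uniqueness Theorem~\ref{T: gau inv un}, and to run everything through the identification $\cenv(A \times_\al^{\nc} P) \simeq \wt{A} \rtimes_{\wt\al} G$ supplied by Theorem~\ref{T: cenv inj Nc}. I read the relation $\ca(\pi,U) \simeq \cenv(A \times_\al^{\nc} P)$ as the canonical isomorphism, carrying $\pi(a)$ and $U_s$ to the images of $a\in A$ and $s\in P$ inside the crossed product.

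For the forward implication, suppose $\ca(\pi,U) \simeq \wt{A} \rtimes_{\wt\al} G$. The crossed product carries the dual gauge action $\{\ga_{\hat g}\}_{\hat g \in \wh G}$, $\ga_{\hat g}(U_g a) = \ip{\hat g,g}\, U_g a$; transporting it through the isomorphism produces a gauge action on $\ca(\pi,U)$ with $\ga_{\hat g}(U_s)=\hat g(s)U_s$ and $\ga_{\hat g}(\pi(a))=\pi(a)$. Inside the crossed product each $U_s$, $s\in P$, is a restriction of a unitary generator of the group crossed product, and a $*$-isomorphism preserves unitarity, so $U$ is a unitary representation; Nica-covariance is then automatic because $G$ is abelian. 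Finally the embedding $A \hookrightarrow \wt A \hookrightarrow \wt A \rtimes_{\wt\al} G$ is injective, since for an injective system the connecting maps $\om_s$ of the direct limit are injective, so $\pi|_A$ is injective.

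For the converse, let $(\pi,U)$ be a unitary Nica-covariant pair with $\pi$ injective admitting a gauge action. Because each $U_s$ is unitary I extend $(\pi,U)$ to $(\wt A,\wt\al,G)$ exactly as in the proof of Theorem~\ref{T: cone aut un 2}: set $\wt\pi\,\om_s(a):=U_s\pi(a)U_s^*$ and $U_{-s+t}:=U_s^*U_t$. As there, $\wt\pi$ is a well-defined $*$-representation of $\wt A$ and $(\wt\pi,U)$ is a unitary covariant pair of $(\wt A,\wt\al,G)$, so it induces a $*$-homomorphism $\wt\pi\times U \colon \wt A \rtimes_{\wt\al} G \to \ca(\pi,U)$ whose range is all of $\ca(\pi,U)$ (it contains $\pi(A)=\wt\pi(A)$ and every $U_s\pi(a)$; in the non-unital case one passes the generators into the range by the approximate-identity argument of Theorem~\ref{T: cone aut un 2}). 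It then remains to prove this map is injective, which yields $\ca(\pi,U)\simeq \wt A \rtimes_{\wt\al} G \simeq \cenv(A\times_\al^{\nc}P)$.

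Injectivity is the crux of the argument and is handled by gauge-invariance. Since $G$ is abelian, hence amenable, $\wt A \rtimes_{\wt\al} G$ equals its reduced form and carries a faithful conditional expectation onto the fixed-point algebra $\wt A$ of the dual $\wh G$-action. The hypothesis that $(\pi,U)$ admits a gauge action makes $\wt\pi\times U$ equivariant for the two $\wh G$-actions, so it suffices to prove injectivity on $\wt A$; but $\wt\pi$ restricted to each $\om_s(A)\cong A$ equals $\ad_{U_s}\circ \pi$ with $U_s$ unitary, hence is injective, and since $\wt A = \overline{\bigcup_s \om_s(A)}$ the representation $\wt\pi$ is faithful on $\wt A$. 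Combining equivariance, the faithful expectation, and faithfulness of $\wt\pi$ gives injectivity of $\wt\pi\times U$ by the standard gauge-invariance argument. The main obstacle is precisely this last propagation step: that faithfulness on the fixed-point algebra forces faithfulness on the whole crossed product. This rests entirely on amenability of $G$ and the resulting faithful conditional expectation, and (as noted in the remark following Theorem~\ref{T: cone aut un 2}) requires no non-degeneracy assumption on $\pi$.
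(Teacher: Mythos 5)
Your proposal is correct and follows essentially the same route as the paper: the forward direction via Theorem \ref{T: cenv inj Nc}, and the converse by inducing the canonical $*$-epimorphism $\Phi \colon \wt{A} \rtimes_{\wt\al} G \to \ca(\pi,U)$ as in Theorem \ref{T: cone aut un 2} and reducing injectivity to the fixed-point algebra $\wt{A}$ via gauge-equivariance and the faithful conditional expectation. The only (cosmetic) difference is how injectivity on $\wt{A}$ is obtained: the paper takes a kernel element of a core $B_F$ and conjugates by $U_{\vee F}$ to land in $A$, whereas you observe that $\wt\pi\,\om_s = \ad_{U_s}\circ\pi$ is isometric on each $\om_s(A)$ and hence on the dense union $\bigcup_{s\in P}\om_s(A)$ --- the same underlying fact, packaged differently.
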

\begin{proof}
The forward implication is immediate by Theorem \ref{T: cenv inj Nc}. 
For the converse note that a unitary Nica-covariant pair induces a representation of $\wt{A} \rtimes_{\wt{\al}} G$, as in the proof of Theorem \ref{T: cone aut un 2}. 
Let $\Phi \colon \wt{A} \rtimes_{\wt{\al}} G \to \ca(\pi,U)$ be the canonical $*$-epimorphism. 
A standard argument shows that there is a grid $F$ and an
\[ 
X = \sum_{s \in F} U_s \widehat{\pi}(a) U_s^* \in \wt{A},
\] 
such that $\Phi(X) = 0$. 
Then for $t = \vee F$ we obtain that $U_t^* X U_t = \sum_{s \in F} \widehat{\pi} \wt{\al}_{t-s}(a) \in \ker\Phi$. 
However $\sum_{s \in F} \widehat{\pi} \wt{\al}_{t-s}(a) = \sum_{s \in F} \widehat{\pi}\al_{t-s}(a) \in A$ and $\Phi|_A = \pi$ is injective. 
Therefore $U_t^* X U_t =0$. Consequently $X = U_tU_t^*XU_tU_t^* = 0$.
\end{proof}

\begin{remark}
The corollary above implies that the C*-envelope in these cases coincides with Fowler's Cuntz-Nica-Pimsner algebra \cite{Fow02}, and it is in accordance with \cite[Theorem 3.3.1]{DKPS10}.
\end{remark}

\section{The Nica-covariant semicrossed product by $\bZ^n_+$ }\label{S: Nc scp by Z+}

For this section, we fix a C*-dynamical system $(A,\al,\bZ^n_+)$ over the usual lattice-ordered abelian group $(\bZ^n, \bZ^n_+)$. 
Recall that Nica-covariant representations of $\bZ^n_+$ are doubly commuting representations; hence they are regular by Corollary \ref{C: reg Zn}. 
We will show that the Nica-covariant semicrossed product $A \times_\al^{\nc} \bZ_+^n$ is a full corner of a crossed product (Theorem \ref{T: cenv corner}). 
When $(A,\al,\bZ^n_+)$ is injective, this is contained in Theorem \ref{T: cenv inj Nc}. 
The aim here is to deal with non-injective systems as well. 

This process will be quite involved. 
We first construct an injective C*-dynamical system $(B, \be, \bZ_+^n)$ that dilates $(A,\al,\bZ_+^n)$. 
This process is similar to the adding tail technique for C*-dynamical systems exhibited in \cite{Kak11-1, KakKat11}. 
However, our multivariate setting introduces new complications. We will then take the automorphic extension $(\wt{B}, \wt{\be}, \bZ^n)$ of $(B,\be, \bZ_+^n)$. 
Ultimately, we will show that the C*-envelope of $A\times_\al^{\nc} \bZ_+^n$ is a full corner of the C*-algebra $\wt{B} \rtimes_{\wt{\be}} \bZ^n$.

We will use some simplifications for the notation. 
We write $e_i \equiv \Bi$ for the elements of the canonical basis of $\bZ_+^n$. 
Under this simplification every element $\un{x} \in \bZ_+^n$ is written as $\sum_{i=1}^{n} x_i\Bi$, for $x_i \in \bZ_+$. 
We write $\un0 = (0,\dots,0)$ and $\un{1} = (1,\dots, 1)$. 
For $\un x = (x_1,\dots,x_n) \in \bZ_+^n$, define 
\begin{align*}
\supp(\un x) = \{\Bi : x_i\ne 0 \} \AND
\un x^\perp = \{\un y \in \bZ_+^n : \supp(\un y) \cap \supp(\un x) = \mt \}.
\end{align*}
For example, $\Bi^\perp = \{\un x : x_i=0\}$. 
Note $\un0 \in \un{x}^\perp$ for all $\un{x} \in \bZ_+^n$ and $\Bi \in \un x^\perp$ if and only if $\Bi \notin \supp(\un x)$.

For each $\un x \in \bZ_+^n\setminus\{\un0\}$, $\big( \bigcap_{\Bi\in \supp(\un x)} \ker\al_\Bi \big)^\perp$ is an ideal of $A$. 
We require the largest ideal contained in this ideal which is invariant for $\al_{\un y}$ when $\un y \in \un x^\perp$, namely 
\begin{align*}
I_{\un x} = \bigcap_{\un y \in \un x^\perp} \al_{\un y}^{-1} \Big(\big( \bigcap_{\Bi\in \supp(\un x)} \ker\al_\Bi \big)^\perp\Big) .
\end{align*}
Note that $I_\un0 = \{0\}$, and $I_{\un x} = I_{\un y}$ if $\supp(\un x) = \supp(\un y)$. 
In particular, $I_{\un x} = I_{\un 1} = \big( \bigcap_{\Bi=\bo1}^\Bn \ker\al_\Bi \big)^\perp$ for all $\un x \ge \un 1$.

Define $B_{\un x} := A/I_{\un x}$; and let $q_{\un x} \colon A \to B_{\un x}$ be the quotient map. 
Note that $B_\un0 = A$. We define a C*-algebra
\[
 B = \sum_{\un x \in \bZ_+^n}\!\!^\oplus\, B_\un x . 
\]
A typical element of $B$ will be denoted as 
\[
 b = \sum_{\un x\in\bZ_+^n}q_{\un x}( a_{\un x}) \otimes e_{\un x},
\]
where $a_{\un x} \in A$. We identify $A$ with the subalgebra $A \otimes e_\un0$ of $B$.

We record the following useful facts about the ideals $I_{\un x}$.

\begin{lemma}\label{L: ideals I_x}
For all $\un x \in \bZ_+^n$ and for all $\Bi \in \supp(x)$ we have that:
\begin{enumerate}
\item the ideal $I_{\un x}$ is $\al_{\Bi}$-invariant for all $\Bi \in \un x^\perp$;
\item $I_{\un x} \subseteq (\ker\al_\Bi)^\perp$;
\item $I_{\un x} \subseteq I_{\un x + \Bi}$ for all $\Bi \in \un x^\perp$.
\end{enumerate}
\end{lemma}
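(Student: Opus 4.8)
The plan is to read all three assertions off the defining intersection
\[
I_{\un x} = \bigcap_{\un y \in \un x^\perp} \al_{\un y}^{-1}(K_{\un x}), \qquad K_{\un x} = \Big(\bigcap_{\Bj \in \supp(\un x)} \ker\al_\Bj\Big)^\perp,
\]
after recording three structural facts. First, each $\al_{\un y}^{-1}(K_{\un x})$ is a closed two-sided ideal (a preimage of the ideal $K_{\un x}$ under the $*$-endomorphism $\al_{\un y}$), so $I_{\un x}$ is itself a closed ideal; and since $\un 0 \in \un x^\perp$ with $\al_{\un 0} = \id$, the term $\un y = \un 0$ already gives $I_{\un x} \subseteq K_{\un x}$. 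Second, $\un x^\perp$ is a submonoid of $\bZ_+^n$: it contains $\un 0$ and is closed under addition, because $\supp(\un y + \un z) = \supp(\un y) \cup \supp(\un z)$, so if both supports avoid $\supp(\un x)$ then so does their sum. Third, $K_{\un x}$ and $\un x^\perp$ are monotone in the support of $\un x$: enlarging $\supp(\un x)$ shrinks $\bigcap_{\Bj}\ker\al_\Bj$ and hence enlarges its annihilator $K_{\un x}$, while it shrinks $\un x^\perp$.

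For (i) I would argue directly. Fix $\Bi \in \un x^\perp$ and $a \in I_{\un x}$; I must show $\al_\Bi(a) \in \al_{\un y}^{-1}(K_{\un x})$ for every $\un y \in \un x^\perp$. Since $\al$ is a semigroup homomorphism, $\al_{\un y}(\al_\Bi(a)) = \al_{\un y + \Bi}(a)$, and $\un y + \Bi \in \un x^\perp$ by the submonoid property; as $a \in I_{\un x}$ this lies in $K_{\un x}$. Hence $\al_\Bi(a) \in I_{\un x}$, i.e.\ $I_{\un x}$ is $\al_\Bi$-invariant. This simultaneously confirms that the intersection is exactly the largest $\{\al_{\un y}\}_{\un y \in \un x^\perp}$-invariant ideal inside $K_{\un x}$.

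For (iii), take $\Bi \in \un x^\perp$, so $\supp(\un x + \Bi) = \supp(\un x) \cup \{\Bi\}$. By the monotonicity above, $K_{\un x} \subseteq K_{\un x + \Bi}$ and $(\un x + \Bi)^\perp \subseteq \un x^\perp$. Therefore
\[
I_{\un x} = \bigcap_{\un y \in \un x^\perp} \al_{\un y}^{-1}(K_{\un x}) \subseteq \bigcap_{\un y \in (\un x + \Bi)^\perp} \al_{\un y}^{-1}(K_{\un x}) \subseteq \bigcap_{\un y \in (\un x + \Bi)^\perp} \al_{\un y}^{-1}(K_{\un x + \Bi}) = I_{\un x + \Bi},
\]
the first inclusion because we intersect over fewer indices and the second because each target ideal has grown. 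This is the desired nesting.

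Assertion (ii) is the crux and the step I expect to be the main obstacle. Its content is that $\al_\Bi$ is injective on $I_{\un x}$ for $\Bi \in \supp(\un x)$, i.e.\ $I_{\un x} \cap \ker\al_\Bi = \{0\}$, equivalently $I_{\un x} \subseteq (\ker\al_\Bi)^\perp$ (for ideals, $I\cap J=\{0\}$ iff $I\subseteq J^\perp$, since $IJ\subseteq I\cap J$). The free inclusion $I_{\un x}\subseteq K_{\un x}$ does \emph{not} settle this, because $K_{\un x}$ is the annihilator of the \emph{joint} kernel and for general ideals $\big(\bigcap_{\Bj\in\supp(\un x)}\ker\al_\Bj\big)^\perp \supseteq (\ker\al_\Bi)^\perp$ — the opposite of what is wanted. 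So the proof must force elements of $I_{\un x}$ into the \emph{smaller} annihilator $(\ker\al_\Bi)^\perp$, exploiting the invariance from (i) and the precise way $I_{\un x}$ is cut out by the maps $\al_{\un y}^{-1}$. I would budget the most care here, in particular checking whether the reduction genuinely needs the factorisation $\al_{\un x} = \prod_{\Bj\in\supp(\un x)}\al_\Bj^{x_j}$, the commutation of the $\al_\Bj$, or an auxiliary injectivity hypothesis, since this is the one place where the required inclusion is not purely formal and must be extracted from the structure of the system rather than from the lattice of ideals alone.
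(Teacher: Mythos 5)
Your proofs of (i) and (iii) are correct, and they are in substance the paper's own arguments: the paper dismisses (i) as immediate (your route via $\al_{\un y}\al_\Bi=\al_{\un y+\Bi}$ and closure of $\un x^\perp$ under addition is the intended one), and its proof of (iii) consists of exactly your two monotonicity observations, namely that $\supp(\un x)\subseteq\supp(\un x+\Bi)$ enlarges the target annihilator while $(\un x+\Bi)^\perp\subseteq\un x^\perp$ shrinks the index set.

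On (ii) your hesitation is vindicated in the strongest sense: the statement is false as written, so the care you budgeted cannot be spent successfully. Take $n=2$, $A=\rC(X)$ with $X=\{0,1,2\}$, and let $\al_{\bo1}(f)=f\circ\phi_1$, $\al_{\bo2}(f)=f\circ\phi_2$, where $\phi_1(0)=\phi_1(1)=0$, $\phi_1(2)=2$, and $\phi_2(0)=\phi_2(2)=0$, $\phi_2(1)=1$; these maps commute (both composites are constantly $0$), so this is a unital C*-dynamical system over $\bZ_+^2$. Then $\ker\al_{\bo1}$ consists of the functions vanishing on $\{0,2\}$ and $\ker\al_{\bo2}$ of those vanishing on $\{0,1\}$, so $\ker\al_{\bo1}\cap\ker\al_{\bo2}=\{0\}$. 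Since $(1,1)^\perp=\{\un0\}$, we get
\[
I_{(1,1)}=\big(\ker\al_{\bo1}\cap\ker\al_{\bo2}\big)^\perp=A\not\subseteq(\ker\al_{\bo1})^\perp=\{f\in A\colon f(1)=0\}.
\]
This is precisely the wrong-way annihilator inclusion you isolated: $(\ker\al_\Bi)^\perp\subseteq K_{\un x}:=\big(\bigcap_{\Bj\in\supp(\un x)}\ker\al_\Bj\big)^\perp$, in general strictly, and no appeal to commutativity or to the factorization of $\al_{\un x}$ can reverse it.

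For comparison, the paper's own proof offers no argument at this point; it declares (i) and (ii) ``immediate'' and only proves (iii). That declaration is accurate only for the weaker statement $I_{\un x}\subseteq K_{\un x}$, i.e.\ your free inclusion coming from the $\un y=\un0$ term of the defining intersection. The weaker statement is also the only form the paper ever uses: item (ii) is invoked just once, in Corollary \ref{C: Four inv}, with $\un x=\Bi$ a single generator, where $\supp(\un x)=\{\Bi\}$ and the literal and weak statements coincide. So the correct resolution is editorial rather than mathematical: restate (ii) as $I_{\un x}\subseteq\big(\bigcap_{\Bj\in\supp(\un x)}\ker\al_\Bj\big)^\perp$ (or restrict it to $\un x=\Bi$), after which your first paragraph already constitutes a complete proof of the corrected lemma.
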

\begin{proof}
The items (i) and (ii) above are immediate. 
For item (iii) note that
\begin{align*}
\supp(\un x) \subseteq \supp(\un x + \Bi) \qand (\un{x} + \Bi)^\perp \subseteq \un{x}^\perp, \foral \Bi \in \un x^\perp.
\end{align*}
Thus, if
\begin{align*}
\al_{\un y}(a) \in (\cap_{\Bj \in \supp(\un x)} \ker\al_\Bj)^\perp \subseteq (\cap_{\Bj \in \supp(\un x + \Bi)} \ker\al_\Bj)^\perp,
\end{align*}
for all $\un y \in \un x^\perp$, then this holds in particular for $\un y \in (\un{x} + \Bi)^\perp$.
\end{proof}

The definition of the ideals $I_{\un x}$ is inspired by the following observation.

\begin{proposition}
Let $(\pi,V)$ be an isometric Nica-covariant pair for a C*-dynamical system $(A,\al,\bZ_+^n)$ such that $\pi$ is a faithful representation of $A$. 
If $\sum_{ \un{0} \leq \un w \leq \un x} V_{\un w} \pi(a_{\un w}) V_{\un w}^* =0$ then $a_{\un{0}} \in I_{\un x}$.
\end{proposition}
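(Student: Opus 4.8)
The plan is to isolate $a_{\un0}$ from the vanishing relation by two successive compressions, each manufacturing exactly one of the two intersections in the definition of $I_{\un x}$. Throughout I will use the covariance relation in the form $V_s^* \pi(a) = \pi\al_s(a) V_s^*$ (the adjoint of $\pi(a)V_s = V_s \pi\al_s(a)$, valid since $A$ is selfadjoint), the isometry of $V$, and the Nica-covariance identity of Proposition~\ref{P:Nc isom}.

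First I would fix $\un y \in \un x^\perp$ and conjugate the relation by $V_{\un y}$. Every $\un w$ with $\un0 \le \un w \le \un x$ satisfies $\supp(\un w) \subseteq \supp(\un x)$, while $\supp(\un y) \cap \supp(\un x) = \mt$; hence $\un y$ and $\un w$ have disjoint support and $\un y \wedge \un w = \un0$. Nica-covariance then gives $V_{\un y}^* V_{\un w} = V_{\un w} V_{\un y}^*$, and combined with $V_{\un y}^* \pi(a) V_{\un y} = \pi\al_{\un y}(a)$ this yields
\[
0 = V_{\un y}^* \Big( \sum_{\un0 \le \un w \le \un x} V_{\un w}\pi(a_{\un w}) V_{\un w}^* \Big) V_{\un y} = \sum_{\un0 \le \un w \le \un x} V_{\un w} \pi\big(\al_{\un y}(a_{\un w})\big) V_{\un w}^* .
\]
Thus conjugation by $V_{\un y}$ merely replaces each coefficient $a_{\un w}$ by $\al_{\un y}(a_{\un w})$ while preserving the shape of the relation.

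Next I would multiply this new relation on the right by $\pi(c)$, where $c \in \bigcap_{\Bi \in \supp(\un x)} \ker\al_{\Bi}$ is arbitrary. For $\un w \neq \un0$ I choose $\Bi \in \supp(\un w) \subseteq \supp(\un x)$; then $\al_{\un w}(c) = \al_{\un w - \Bi}\al_{\Bi}(c) = 0$, so that $V_{\un w}^* \pi(c) = \pi\al_{\un w}(c) V_{\un w}^* = 0$. Hence every term with $\un w \ne \un0$ is annihilated, and since $V_{\un0} = I$ only the $\un0$-term survives:
\[
0 = \sum_{\un0 \le \un w \le \un x} V_{\un w}\pi\big(\al_{\un y}(a_{\un w})\big) V_{\un w}^* \pi(c) = \pi\big(\al_{\un y}(a_{\un0})\, c\big).
\]

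Finally, faithfulness of $\pi$ forces $\al_{\un y}(a_{\un0})\,c = 0$; as $c$ ranges over $\bigcap_{\Bi \in \supp(\un x)} \ker\al_{\Bi}$, this gives $\al_{\un y}(a_{\un0}) \in \big(\bigcap_{\Bi \in \supp(\un x)} \ker\al_{\Bi}\big)^\perp$, i.e. $a_{\un0} \in \al_{\un y}^{-1}\big(\big(\bigcap_{\Bi \in \supp(\un x)} \ker\al_{\Bi}\big)^\perp\big)$, and since $\un y \in \un x^\perp$ was arbitrary we obtain $a_{\un0} \in I_{\un x}$. The computations are routine; the step I would flag as the crux is matching the two compressions to the definition of $I_{\un x}$ — conjugation by $V_{\un y}$ over all $\un y$ disjoint from $\un x$ produces the intersection $\bigcap_{\un y \in \un x^\perp}$, whereas right multiplication by $c$ in the common kernel produces the annihilator, the point being that every nonzero $\un w \le \un x$ necessarily carries a generator lying in $\supp(\un x)$, on which $c$ is killed.
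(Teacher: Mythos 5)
Your proof is correct and takes essentially the same approach as the paper's: both arguments combine conjugation by $V_{\un y}$ for $\un y \in \un x^\perp$ (justified by the doubly commuting/Nica-covariance relation, which applies since $\supp(\un w) \subseteq \supp(\un x)$ forces $\un y \wedge \un w = \un 0$) with annihilation of all terms $\un w \neq \un 0$ by an element of $\bigcap_{\Bi \in \supp(\un x)} \ker\al_{\Bi}$, followed by faithfulness of $\pi$. The only difference is cosmetic: the paper annihilates first (multiplying on the left by $\pi(b)$, using $\pi(b)V_{\un w} = V_{\un w}\pi\al_{\un w}(b) = 0$) and then conjugates, whereas you conjugate first and then annihilate on the right by $\pi(c)$, an adjoint-symmetric variant of the same step.
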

\begin{proof}
First note that for $b \in \bigcap_{\Bi \in \supp(\un x)} \ker \al_\Bi$ we have that $\al_{\un w}(b) =0$ for all $\un{0} \neq \un w \leq \un x$, since $\supp(\un w) \subseteq \supp(\un x)$. 
Thus $\pi(b) V_{\un w} = V_{\un w} \pi\al_{\un w}(b) =0$. 
Therefore
\begin{align*}
0 = \pi(b) \cdot \sum_{ \un{0} \leq \un w \leq \un x} V_{\un w} \pi(a_{\un w}) V_{\un w}^* = \pi(ba_0).
\end{align*}
Since $\pi$ is faithful we get that $a_0 \in (\bigcap_{\Bi \in \supp(\un x)} \ker \al_\Bi)^\perp$. 
Now for $\un y \in \un x^\perp$ observe that
\begin{align*}
\sum_{ \un{0} \leq \un w \leq \un x} V_{\un w} \pi\al_{\un y}(a_{\un w}) V_{\un w}^*
& =
\sum_{ \un{0} \leq \un w \leq \un x} V_{\un w} V_{\un y}^* \pi(a_{\un w}) V_{\un y} V_{\un w}^* \\
& =
V_{\un y}^* \left( \sum_{ \un{0} \leq \un w \leq \un x} V_{\un w} \pi(a_{\un w}) V_{\un w}^* \right) V_{\un y} =0,
\end{align*}
where we have used that if $\un w \leq \un x$ then $\un y \in \un w^\perp$, hence the isometries $V_{\un y}$ and $V_{\un w}$ doubly commute. 
By the first argument of the proof we obtain that $\al_{\un y}(a_{\un{0}}) \in (\bigcap_{\Bi \in \supp(\un x)} \ker \al_\Bi)^\perp$, and the proof is complete.
\end{proof}

By Lemma \ref{L: ideals I_x} we can define $*$-endomorphisms $\be_{\Bi} \in \End(B)$ as follows:
\begin{align*}
\be_{\Bi}(q_{\un x}(a) \otimes e_{\un x} )
=
\begin{cases}
q_{\un x}\al_\Bi(a) \otimes e_{\un x} + q_{\un x + \Bi}(a) \otimes e_{\un x + \Bi} & \text{ for } \Bi \in \un x^\perp,\\
q_{\un x}(a) \otimes e_{\un x + \Bi} & \text{ for } \Bi \in \supp(\un x).
\end{cases}
\end{align*}
Note that when $\Bi \in \un{x}^\perp$, the ideal $I_{\un x}$ is invariant for $\al_\Bi$. 
Therefore there is an endomorphism $\dot\al_\Bi$ on $B_{\un x}$ such that $\dot\al_\Bi q_{\un{x}} = q_{\un{x}} \al_\Bi$. 
Hence $\beta_\Bi$ is well defined. We will show that $(B, \be, \bZ_n^+)$ is an injective C*-dynamical system which dilates $(A, \al, \bZ_n^+)$.

We begin by giving an example of this construction for $n=2$ to keep in mind as a case study.

\begin{example}
When $n=2$ there are three ideals associated to the kernels. These are $I_{(1,0)} = \bigcap_{n \in \bZ_+} \al_{(0,1)}^{-n}(\ker\al_{(1,0)}^\perp)$, $I_{(0,1)}  = \bigcap_{n \in \bZ_+} \al_{(1,0)}^{-n}(\ker\al_{(0,1)}^\perp)$, $I_{(1,1)}  = (\ker\al_{(1,0)} \cap \ker\al_{(0,1)})^\perp.$
Therefore $B$ has the following form
\begin{align*}
\xymatrix@C=2.5em@R=1.5em{ & & & \\
A/I_{(0,1)} \ar@{-}[r] \ar@{->}[u] & A/I_{(1,1)} \ar@{-}[r] \ar@{-}[u] & A/I_{(1,1)} \ar@{-}[r] \ar@{-}[u] &\\
A/I_{(0,1)} \ar@{-}[r] \ar@{-}[u] & A/I_{(1,1)} \ar@{-}[r] \ar@{-}[u] & A/I_{(1,1)} \ar@{-}[r] \ar@{-}[u] &\\
A \ar@{-}[r] \ar@{-}[u] & A/I_{(1,0)} \ar@{-}[r] \ar@{-}[u] & A/I_{(1,0)} \ar@{->}[r] \ar@{-}[u] &
}
\end{align*}
Then the system of the $*$-endomorphisms is illustrated by the diagram
\begin{align*}
\xymatrix@C=2.5em@R=1.5em{ & & & \\
B_{(0,1)} \ar@(l,ul)[]^{\dot\al_1} \ar@{-->}[u]_(.4){\id} \ar[r]^{\dot q_1} & 
B_{(1,1)} \ar[r]^{\id} \ar@{-->}[u]_(.4){\id} & 
B_{(1,1)} \ar[r]^{\id} \ar@{-->}[u]_(.4){\id} & \cdots \\
B_{(0,1)} \ar@(l,ul)[]^{\dot\al_1} \ar@{-->}[u]_(.4){\id} \ar[r]^{\dot q_1} & 
B_{(1,1)} \ar[r]^{\id} \ar@{-->}[u]_(.4){\id} & 
B_{(1,1)} \ar[r]^{\id} \ar@{-->}[u]_(.4){\id} & \cdots \\
A \ar@(l,ul)[]^{\al_1} \ar@{-->}@(dl,d)[]_(.3){\al_2} \ar@{-->}[u]_(.4){q_2} \ar[r]^{q_1} & 
B_{(1,0)} \ar@{-->}@(dl,d)[]_(.3){\dot\al_2} \ar[r]^{\id} \ar@{-->}[u]_(.4){\id} & 
B_{(1,0)} \ar@{-->}@(dl,d)[]_(.3){\dot\al_2} \ar[r]^{\id} \ar@{-->}[u]_(.4){\id} & \cdots \\
& & & }
\end{align*}
where the solid (resp. broken) arrows represent the action $\be_{(1,0)}$ (resp. $\be_{(0,1)}$).
In particular, if $\al_2$ is injective then the diagram is of the form
\begin{align*}
\xymatrix@C=2.5em@R=1.5em{ & & & \\
A \ar@(l,ul)[]^{\al_1} \ar@{-->}@(dl,d)[]_(.3){\al_2} \ar[r]^{q_1} & 
B_{(1,0)} \ar@{-->}@(dl,d)[]_(.3){\al_2} \ar[r]^{\id} & 
B_{(1,0)} \ar@{-->}@(dl,d)[]_(.3){\al_2} \ar[r]^{\id} & \cdots \\
& & & }
\end{align*}
\end{example}

In order to show that $(B, \be, \bZ_n^+)$ is an injective C*-dynamical system we need to show that the $\be_\Bi$ are injective and pairwise commute. 
We start with a lemma.

\begin{lemma}\label{L: com}
Let $(A, \al, \bZ_+^n)$ be a C*-dynamical system. 
If
\[
a \in (\bigcap_{i=1}^k \ker\al_{\Bi})^\perp \qand \al_{\bo{1}}(a) \in (\bigcap_{i=2}^k \ker\al_{\Bi})^\perp,
\]
then $a \in (\bigcap_{i=2}^k \ker\al_{\Bi})^\perp$.
\end{lemma}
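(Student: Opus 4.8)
The plan is to fix an element $b$ in the ideal $J := \bigcap_{i=2}^k \ker\al_{\Bi}$ and prove that $ab = 0$; since $b$ is arbitrary this gives exactly $a \in J^\perp$, the desired conclusion. Writing $K := \ker\al_{\bo{1}}$, I note that $\bigcap_{i=1}^k \ker\al_{\Bi} = K \cap J$, so the two hypotheses become $a \in (K\cap J)^\perp$ and $\al_{\bo{1}}(a) \in J^\perp$. The strategy is to show that the \emph{single} element $ab$ lies simultaneously in $K$ and in $K^\perp$; since a closed two-sided ideal of a C*-algebra meets its annihilator only in $\{0\}$ (if $x\in K\cap K^\perp$ then $xx^*\in xK=0$), this forces $ab=0$.

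First I would establish $ab \in K^\perp$ using only the first hypothesis. For any $c \in K$, the product $bc$ lies in $J$ (as $J$ is an ideal and $b \in J$) and in $K$ (as $K$ is an ideal and $c \in K$), hence $bc \in K \cap J$. Therefore $a(bc) = 0$ by the first hypothesis, that is $(ab)c = 0$. Since $c \in K$ is arbitrary, this says $(ab)K = 0$, so $ab \in K^\perp$.

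Next I would show $ab \in K$, i.e.\ $\al_{\bo{1}}(ab) = \al_{\bo{1}}(a)\,\al_{\bo{1}}(b) = 0$, using the second hypothesis. Here the key point, which I expect to be the main obstacle, is that I need $\al_{\bo{1}}(b)$ to remain in $J$ in order to pair it against $\al_{\bo{1}}(a) \in J^\perp$. This is precisely where the commutativity of the action enters: since $\al_{\bo{1}}\al_{\Bi} = \al_{\bo{1} + \Bi} = \al_{\Bi}\al_{\bo{1}}$, each ideal $\ker\al_{\Bi}$ is $\al_{\bo{1}}$-invariant (if $\al_{\Bi}(c)=0$ then $\al_{\Bi}(\al_{\bo{1}}(c)) = \al_{\bo{1}}(\al_{\Bi}(c)) = 0$), and hence so is their intersection $J$. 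Thus $\al_{\bo{1}}(b) \in J$, and $\al_{\bo{1}}(a) \in J^\perp$ yields $\al_{\bo{1}}(a)\al_{\bo{1}}(b) = 0$, giving $ab \in K$.

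Finally, combining the two containments, $ab \in K \cap K^\perp = \{0\}$, so $ab = 0$; as $b \in J$ was arbitrary this proves $a \in J^\perp$. I anticipate that the only genuinely subtle point is recognizing that the second hypothesis can be brought to bear at all, which rests entirely on the $\al_{\bo{1}}$-invariance of $J$ coming from commutativity of the generators; the remaining steps are routine ideal arithmetic in a C*-algebra.
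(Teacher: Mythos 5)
Your proof is correct and is essentially the paper's argument: both fix $b \in \bigcap_{i=2}^k \ker\al_{\Bi} =: J$, use commutativity of the generators to see that $J$ is $\al_{\bo{1}}$-invariant (so $\al_{\bo{1}}(ab) = \al_{\bo{1}}(a)\,\al_{\bo{1}}(b) = 0$, i.e.\ $ab \in \ker\al_{\bo{1}}$), and then conclude $ab = 0$ because an element lying in a closed ideal and in its annihilator must vanish. The only cosmetic difference is the choice of ideal--annihilator pair: you place $ab$ in $\ker\al_{\bo{1}} \cap (\ker\al_{\bo{1}})^\perp$, deriving $ab \in (\ker\al_{\bo{1}})^\perp$ by pairing against $c \in \ker\al_{\bo{1}}$ and using $bc \in \bigcap_{i=1}^k \ker\al_{\Bi}$, whereas the paper places $ab$ in $\bigl(\bigcap_{i=1}^k \ker\al_{\Bi}\bigr) \cap \bigl(\bigcap_{i=1}^k \ker\al_{\Bi}\bigr)^\perp$ immediately from the ideal property of the annihilator.
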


\begin{proof}
Take $a\in A$ satisfying these properties.
Take any $b \in \bigcap_{i=2}^k \ker\al_{\Bi}$. 
Thus $\al_{\bo{1}}(a)b=0$. 
Note that, since $\al_{\bo{1}}$ commutes with $\al_{\bo{2}},\ldots,\al_{\bo{k}}$, we have
\begin{equation*}
\al_{\bo{1}}(b) \in \bigcap_{i=2}^k \ker\al_{\Bi}.
\end{equation*}
Therefore
\begin{align*}
\al_{\bo{1}}(ab) = \al_{\bo{1}}(a) \al_{\bo{1}}(b) =0,
\end{align*}
since $\al_{\bo{1}}(a) \in (\bigcap_{i=2}^k \ker\al_{\bo{i}})^\perp$. 
Thus $ab\in \ker \alpha_{\bo{1}}$. As $b \in \bigcap_{i=2}^k \ker\al_{\Bi}$, it follows that $ab\in \bigcap_{i=1}^k \ker\al_{\Bi}$.

However, as $a \in (\bigcap_{i=1}^k \ker\al_{\Bi})^\perp$, we also have $ab \in (\bigcap_{i=1}^k \ker\al_{\Bi})^\perp$. 
Hence $ab=0$. 
Therefore $a \in (\bigcap_{i=2}^k \ker\al_{\Bi})^\perp$.
\end{proof}

\begin{proposition} \label{P: inj}
The $*$-endomorphisms $\be_\Bi$ for $1 \le i \le n$ are injective and pairwise commute. 
Hence $(B, \be, \bZ_n^+)$ is an injective C*-dynamical system.
\end{proposition}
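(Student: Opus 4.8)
The statement has two halves: that the $\be_\Bi$ pairwise commute, and that each is injective (injectivity of the whole action $\be$ then follows, since every $\be_{\un x}$ is a composition of the commuting generators $\be_\Bi$). The plan is to dispatch commutativity by a direct case analysis, and to reduce injectivity to the two technical facts already in hand, Lemma \ref{L: ideals I_x} and Lemma \ref{L: com}, with the latter serving as the real engine.

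For commutativity I would fix $i\neq j$ and evaluate $\be_\Bi\be_\Bj$ and $\be_\Bj\be_\Bi$ on a generator $q_{\un x}(a)\otimes e_{\un x}$, splitting into the three cases (a) $\Bi,\Bj\in\supp(\un x)$; (b) $\Bi\in\supp(\un x)$, $\Bj\in\un x^\perp$ (the reversed case being symmetric); and (c) $\Bi,\Bj\in\un x^\perp$. In each case both orders produce the same terms once one uses that $\al_\Bi$ and $\al_\Bj$ commute, that $I_{\un z}$ depends only on $\supp(\un z)$ (so that $B_{\un x+\Bi}=B_{\un x}$ whenever $\Bi\in\supp(\un x)$), and that the induced maps $\dot\al_\Bi$ and $\dot q_\Bi$ on the quotients are well defined by parts (i) and (iii) of Lemma \ref{L: ideals I_x}. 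Case (c) is the only one producing four terms, but they match term by term; this is routine bookkeeping.

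For injectivity, suppose $\be_\Bi\big(\sum_{\un x}q_{\un x}(a_{\un x})\otimes e_{\un x}\big)=0$ and read off the component in each slot $e_{\un z}$, organized by the $i$-th coordinate $z_i$. I expect: for $z_i\ge2$ the slot forces $q_{\un z-\Bi}(a_{\un z-\Bi})=0$, giving $a_{\un w}\in I_{\un w}$ for every $\un w$ with $w_i\ge1$; for $z_i=1$ the slot forces $a_{\un w}\in I_{\un w+\Bi}$ for $\un w$ with $w_i=0$; and for $z_i=0$ the diagonal term forces $\al_\Bi(a_{\un w})\in I_{\un w}$ for $\un w$ with $w_i=0$. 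Thus the only vectors not yet killed are the $a_{\un w}$ with $w_i=0$, for which I will have the two facts $a_{\un w}\in I_{\un w+\Bi}$ and $\al_\Bi(a_{\un w})\in I_{\un w}$.

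The heart of the argument, and the step I expect to be the main obstacle, is to upgrade these two facts to $a_{\un w}\in I_{\un w}$. Writing $K=\bigcap_{\Bj\in\supp(\un w)}\ker\al_\Bj$, membership $a_{\un w}\in I_{\un w}$ amounts to $\al_{\un y}(a_{\un w})\in K^\perp$ for every $\un y\in\un w^\perp$. For $\un y$ with $y_i\ge1$ I would peel off one $\al_\Bi$ and use that $I_{\un w}$ is $\al_\Bj$-invariant for every $\Bj\in\un w^\perp$ (Lemma \ref{L: ideals I_x}(i)) together with $I_{\un w}\subseteq K^\perp$. For $\un y$ with $y_i=0$, setting $c=\al_{\un y}(a_{\un w})$, the hypothesis $a_{\un w}\in I_{\un w+\Bi}$ gives $c\in(K\cap\ker\al_\Bi)^\perp$ while $\al_\Bi(a_{\un w})\in I_{\un w}$ gives $\al_\Bi(c)\in K^\perp$; Lemma \ref{L: com}, applied with $\Bi$ in the role of the distinguished index and $\supp(\un w)$ in the role of the remaining indices, then yields exactly $c\in K^\perp$. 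This shows $a_{\un w}\in I_{\un w}$, hence $b=0$ and $\be_\Bi$ is injective; injectivity of $\be_{\un x}$ for all $\un x$ follows by composition, which completes the proof.
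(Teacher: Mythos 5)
Your proposal is correct and follows essentially the same route as the paper: commutativity by the same case analysis on $\supp(\un x)$, and injectivity reduced to showing that $a\in I_{\un x+\Bi}$ together with $\al_\Bi(a)\in I_{\un x}$ forces $a\in I_{\un x}$, with the case $y_i\ge 1$ handled by peeling off $\al_\Bi$ and the case $y_i=0$ handled by Lemma \ref{L: com} exactly as in the paper. Your slot-by-slot analysis of a general element of $B$ is just a more explicit justification of the paper's opening reduction (``it suffices to show that $\be_{\bo1}$ is injective on every $B_{\un x}$''), which is valid because each slot of the image receives a contribution from exactly one summand of the domain.
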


\begin{proof}
To show that $\be_\bo1,\ldots,\be_\Bn$ commute, it suffices to show that $\be_{\bo1}$ commutes with $\be_{\bo 2}$ on every $B_{\un x}$. 
When $\bo1, \bo2 \in \un{x}^\perp$ then $\bo1 \in (\un x + \bo2)^\perp$ and $\bo2 \in (\un x + \bo1)^\perp$, hence
\begin{align*}
\be_{\bo1}\be_{\bo2}(q_{\un x}(a) \otimes e_{\un x})
& =
\be_{\bo1}(q_{\un x}\al_{\bo2}(a) \otimes e_{\un x} + q_{\un x + \bo 2}(a) \otimes e_{\un x + \bo2}) \\
& =
q_{\un x}\al_{\bo1}\al_{\bo2}(a) \otimes e_{\un x} + q_{\un x + \bo 1}\al_{\bo2}(a) \otimes e_{\un x + \bo1} + \\
& \hspace{1cm}
+ q_{\un x+ \bo 2}\al_{\bo1}(a) \otimes e_{\un x +\bo 2} + q_{\un x + \bo2 + \bo1}(a) \otimes e_{\un x+ \bo1 + \bo2},
\end{align*}
which is symmetric with respect to $\bo1$ and $\bo2$. 
When $\bo1 \in \un{x}^\perp$ and $\bo2 \in \supp(\un x)$ then $\bo1 \in (\un x + \bo2)^\perp$, thus 
\begin{align*}
\be_{\bo1} \be_{\bo2}(q_{\un x}(a) \otimes e_{\un x})
& =
\be_{\bo 1}(q_{\un x}(a) \otimes e_{\un x+ \bo 2}) \\
& =
q_{\un x}\al_{\bo1}(a) \otimes e_{\un x + \bo 2} + q_{\un x+ \bo 1}(a) \otimes e_{\un x + \bo 2 + \bo 1} \\
& =
\be_{\bo2} \left(q_{\un x}(a) \otimes e_{\un x} + q_{\un x + \bo 1}(a) \otimes e_{\un x + \bo1}\right) \\
& =
\be_{\bo 2} \be_{\bo 1}(q_{\un x}(a) \otimes e_{\un x}).
\end{align*}
A similar computation holds when $\bo2 \in \un{x}^\perp$ and $\bo 1 \in \supp(\un x)$. 
Finally, when $\bo1, \bo2 \in \supp(\un x)$, then a direct computation shows that
\begin{align*}
\be_{\bo1}\be_{\bo2}(b_{\un x} \otimes e_{\un x}) = b_{\un x} \otimes e_{\un x + \bo1 + \bo2},
\end{align*}
which is symmetric with respect to $\bo1$ and $\bo2$. 
Hence $(B, \be, \bZ_n^+)$ is a well-defined C*-dynamical system.

To show that the dynamical system is injective it suffices to show that $\be_{\bo1}$ is injective on every $B_{\un x}$. 
If $\bo 1 \in \supp(\un x)$ then
\begin{align*}
\be_{\bo 1}(b_{\un x} \otimes e_{\un x}) = b_{\un x} \otimes e_{\un x + \bo 1} =0,
\end{align*}
implies that $b_{\un x} =0$. 
Now when $\bo 1 \in \un{x}^\perp$ then without loss of generality we may assume that $\supp(\un x) = \{\bo2, \dots, \bo k\}$. 
Then
\begin{align*}
\be_{\bo 1}(q_{\un x}(a) \otimes e_{\un x}) = q_{\un x} \al_{\bo1}(a) \otimes e_{\un x} + q_{\un x + \bo 1}(a) \otimes e_{\un x + \bo1} =0,
\end{align*}
implies that $\al_{\bo 1}(a) \in I_{\un x}$ and $a \in I_{\un x + \bo 1}$. 
We aim to show that $a \in I_{\un x}$. First we have that
\begin{align*}
(1) \quad \al_{\un y} \al_{\bo 1}(a) \in (\bigcap_{\bo i = \bo2}^{\bo k} \ker\al_{\Bi})^\perp \qand (2) \quad \al_{\un z} (a) \in (\bigcap_{\bo i =1}^{ \bo k} \ker\al_\Bi)^\perp,
\end{align*}
for every $\un y = y_1 \bo 1 + \sum_{i= k+1}^n y_i \Bi \in \un x^\perp$ and $\un z = \sum_{i= k+1}^n z_i \Bi \in (\un x + \bo 1)^\perp$. 
It is evident that $\un x ^\perp$ is the disjoint union of $(\un{x} + \bo1)^\perp$ and $(\un x^\perp + x_1\bo1)$ with $x_1 \geq 1$.
For $\un w = \sum_{i=k+1}^n w_i \Bi \in (\un x + \bo 1)^\perp$, we get that
\begin{align*}
\al_{\bo 1} \al_{\un w}(a) \in (\bigcap_{\bo i = \bo2}^{\bo k} \ker\al_{\Bi})^\perp \qand \al_{\un w} (a) \in (\bigcap_{\bo i =1}^{ \bo k} \ker\al_\Bi)^\perp,
\end{align*}
by setting $\un y = \un z = \un w$ in relations $(1)$ and $(2)$ above, and by using commutativity of the $\al_\Bi$. 
By Lemma \ref{L: com}, it follows that $\al_{\un w}(a) \in (\bigcap_{\bo i = \bo2}^{\bo k} \ker\al_\Bi)^\perp$. 
For $\un w = w_1 \bo 1 + \sum_{i= k+1}^n w_i \Bi \in (\un x^\perp + \bo1)$ with $w_1 \geq 1$, we get that
\begin{align*}
\al_{\un w}(a) \in (\bigcap_{\Bi = \bo2}^{ \bo k} \ker\al_\Bi)^\perp,
\end{align*}
by setting $\un y = \un w - \bo 1 \in \un x^\perp$ in equation $(1)$ above. 
Therefore $\al_{\un w}(a) \in (\bigcap_{\Bi = \bo{2}}^{\bo{k}} \ker\al_{\Bi})^\perp$ for all $\un w \in \un x^\perp$, thus $a \in I_{\un x}$.
\end{proof}

We will now show that the injective C*-dynamical system $(B, \be, \bZ_n^+)$ dilates our original C*-dynamical system $(A, \alpha, \bZ_n^+)$. 
Recall that we identify $A$ with $A\otimes e_\un0$ in $B$. 
Let $1 \otimes e_{\un{0}}$ be the projection of $B$ onto $B_{\un{0}} = A$. 
We will show that
\begin{align*}
1 \otimes e_{\un{0}} \cdot \be_{\un x}(a \otimes e_{\un{0}})
=
\be_{\un x}(a \otimes e_{\un{0}}) \cdot 1 \otimes e_{\un{0}}
=
\al_{\un x}(a) \otimes e_{\un{0}},
\end{align*}
for every $\un x \in \bZ_n^+$ and $a \in A$. 
This will follow immediately from the following lemma. 

\begin{lemma}\label{L: on 0}
For $\un{x} \in \bZ_+^n$ and $a\in A$ we have that
\begin{align*}
\be_{\un{x}}(a \otimes e_{\un{0}}) = \sum_{\un{w} + \un{z} = \un{x}} q_{\un{w}}\al_{\un{z}}(a) \otimes e_{\un{w}}.
\end{align*}
\end{lemma}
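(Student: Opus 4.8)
The plan is to prove the formula by induction on the length $|\un x| = \sum_{i=1}^n x_i$, building $\un{x}$ up one generator at a time. The base case $\un x = \un0$ is immediate: $\be_\un0 = \id$, $q_\un0 = \id$, $\al_\un0 = \id$, and the only pair with $\un w + \un z = \un0$ is $\un w = \un z = \un0$, so both sides equal $a \otimes e_\un0$. For the inductive step I would invoke the commutativity of the $\be_\Bi$ (Proposition~\ref{P: inj}) to write $\be_{\un x + \Bi} = \be_\Bi \be_{\un x}$, and then apply $\be_\Bi$ termwise to the inductive hypothesis
\[
\be_{\un x}(a \otimes e_\un0) = \sum_{\un w + \un z = \un x} q_{\un w}\al_{\un z}(a) \otimes e_{\un w}.
\]

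Each summand has the form $q_{\un w}(b) \otimes e_{\un w}$ with $b = \al_{\un z}(a) \in A$, so the definition of $\be_\Bi$ applies directly. I would split the terms according to whether $w_i = 0$ (that is, $\Bi \in \un w^\perp$) or $w_i \ge 1$ (that is, $\Bi \in \supp(\un w)$). In the first case, using $\al_\Bi \al_{\un z} = \al_{\un z + \Bi}$,
\[
\be_\Bi\big(q_{\un w}\al_{\un z}(a) \otimes e_{\un w}\big) = q_{\un w}\al_{\un z + \Bi}(a) \otimes e_{\un w} + q_{\un w + \Bi}\al_{\un z}(a) \otimes e_{\un w + \Bi},
\]
whereas in the second case $\be_\Bi\big(q_{\un w}\al_{\un z}(a) \otimes e_{\un w}\big) = q_{\un w}\al_{\un z}(a) \otimes e_{\un w + \Bi}$, which I would rewrite as $q_{\un w + \Bi}\al_{\un z}(a) \otimes e_{\un w + \Bi}$ using that $\supp(\un w + \Bi) = \supp(\un w)$, hence $I_{\un w} = I_{\un w + \Bi}$ and $q_{\un w} = q_{\un w + \Bi}$, whenever $\Bi \in \supp(\un w)$.

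The heart of the argument, and the step I expect to require the most care, is the reindexing that shows each pair $(\un w', \un z')$ with $\un w' + \un z' = \un x + \Bi$ is produced exactly once and with the correct coefficient $q_{\un w'}\al_{\un z'}(a)$. The key observation preventing double counting is that every ``diagonal'' term above has $i$-th coordinate $w'_i = 0$, while every ``shifted'' term has $w'_i \ge 1$, so the two types never land in the same position. Concretely, a target $\un w'$ with $w'_i = 0$ can arise only from the diagonal term with $\un w = \un w'$ and $\un z = \un z' - \Bi$, which is a valid summand of the inductive hypothesis since $z'_i = x_i + 1 \ge 1$ forces $\un z' - \Bi \ge \un0$ and $\un w' + (\un z' - \Bi) = \un x$. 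A target with $w'_i \ge 1$ arises only from a shifted term with $\un w = \un w' - \Bi$ and $\un z = \un z'$; this is a first-case shifted term when $w'_i = 1$ and a second-case term when $w'_i \ge 2$, but in both situations the coefficient is $q_{\un w + \Bi}\al_{\un z}(a) = q_{\un w'}\al_{\un z'}(a)$, the two cases being reconciled precisely by the identification $q_{\un w} = q_{\un w + \Bi}$ noted above. Collecting all contributions then yields $\sum_{\un w' + \un z' = \un x + \Bi} q_{\un w'}\al_{\un z'}(a) \otimes e_{\un w'}$, which completes the induction.
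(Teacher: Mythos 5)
Your proof is correct and is essentially the paper's argument: both proceed by induction, unwinding the definition of $\be_\Bi$ termwise and using the identification $q_{\un w} = q_{\un w + \Bi}$ when $\Bi \in \supp(\un w)$. The only difference is organizational --- the paper runs a two-stage induction (first proving the formula for $\un x = x_i\Bi$ supported on a single coordinate by induction on $x_i$, then inducting on the number of coordinates in the support and applying $\be_{x_k\bo{k}}$ to each term), whereas you run a single induction on the total degree $|\un x|$, adding one generator at a time; your explicit treatment of the reindexing and the absence of double counting spells out a step the paper leaves as an asserted collapse of a double sum.
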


\begin{proof}
We first prove the result for $\un x$ of the form
\begin{equation*}
\un x=x_i \Bi = (0,\ldots,0,x_i,0,\ldots,0).
\end{equation*}
By induction on $x_i$ calculate
\begin{align*}
\be_{\un{x}}(a \otimes e_{\un{0}})
=
\sum_{k=0}^{x_1} q_{k \bo1}\al_{(x_1 - k)\Bi}(a) \otimes e_{i \Bi}
=
\sum_{\un{w} + \un{z} = \un{x}} q_{\un{w}}\al_{\un{z}}(a) \otimes e_{\un{w}}.
\end{align*}
Assume now that the claim holds for $\un x = \sum_{i=1}^{k-1} x_i \bo{i}$. 
Then for $\un{x'} = \un{x} + x_k\bo{k}$ we have that
\begin{align*}
\be_{\un{x'}}(a \otimes e_{\un{0}})
&=
\be_{x_k\bo{k}}\be_{\un{x}}(a \otimes e_{\un{0}}) 
=
\sum_{\un{w} + \un{z} = \un{x}} \be_{x_k\bo{k}}(q_{\un{w}}\al_{\un{z}}(a) \otimes e_{\un{w}})\\
&=\sum_{\un{w} + \un{z} = \un{x}}\ \sum_{j=0}^{x_k}q_{\un w + j\bo{k}}\al_{\un z + (x_k - j)\bo{k}}(a) \otimes e_{\un w + j\bo{k}} \\
&= \sum_{\un{w'} + \un{z'} = \un{x'}} q_{\un{w'}}\al_{\un{z'}}(a) \otimes e_{\un{w'}}.
\end{align*}
Thus the result holds for $\un{x'}$. 
The proof now follows by induction.
\end{proof}

Proposition \ref{P: inj} shows that $(B, \be, \bZ_n^+)$ is an injective C*-dynamical system, and Lemma \ref{L: on 0} shows that $(B, \be, \bZ_n^+)$ dilates the dynamical system $(A, \al, \bZ_n^+)$. 
Further note that the $(B, \be, \bZ_n^+)$ is a \emph{minimal} dilation of $(A, \al, \bZ_n^+)$ as
\begin{equation*}
B=\bigvee_{\un{x}\in\bZ_n^+} \be_{\un{x}}(A).
\end{equation*}

As $(B, \be, \bZ_n^+)$ is an injective system, we can take its minimal automorphic extension $(\wt{B}, \wt{\be}, \bZ^n)$. 
Our goal is to prove the following theorem.

\begin{theorem}\label{T: cenv corner}
Let $(A,\al,\bZ^n_+)$ be a C*-dynamical system; let $(B,\be,\bZ^n_+)$ be the injective dilation of $(A,\al,\bZ^n_+)$ constructed above; and let $(\wt{B},\wt{\be},\bZ^n)$ be the automorphic extension of $(B,\be,\bZ^n_+)$. 
Then the C*-envelope of $A \times_\al^{\nc} \bZ_+^n$ is a full corner of the crossed product $\wt{B} \rtimes_{\wt{\be}} \bZ^n$.
\end{theorem}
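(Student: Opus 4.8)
The plan is to realize $\cenv(A\times_\al^{\nc}\bZ_+^n)$ as the corner of $C:=\wt B\rtimes_{\wt\be}\bZ^n$ cut by the projection $p:=1\otimes e_{\un 0}$, the unit of $B_{\un 0}=A$ sitting inside $\wt B$ (when $A$ is nonunital one takes $p\in M(\wt B)$ and replaces $pCp$ by the hereditary subalgebra it determines; the arguments are unaffected). Since $(B,\be,\bZ_+^n)$ is injective by Proposition~\ref{P: inj}, Theorem~\ref{T: cenv inj Nc} gives $C\simeq\cenv(B\times_\be^{\nc}\bZ_+^n)$, and I represent $C$ through its crossed product data $\iota(b)$, $b\in\wt B$, and unitaries $U_{\un g}$, $\un g\in\bZ^n$, with $U_{\un g}\iota(b)U_{\un g}^*=\iota(\wt\be_{\un g}(b))$.

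First I would record the projection identities that drive everything. By Lemma~\ref{L: on 0}, $\be_{\un x}(p)=\sum_{\un 0\le\un w\le\un x}p_{\un w}\ge p$ for $\un x\in\bZ_+^n$, where $p_{\un w}$ is the unit of $B_{\un w}$; hence $p\,U_{\un x}(1-p)U_{\un x}^*p=p-p\,\wt\be_{\un x}(p)\,p=0$, so $pU_{\un x}=pU_{\un x}p$ and, dually, $U_{\un x}^*p=pU_{\un x}^*p$. I then set $\pi(a):=a\otimes e_{\un 0}$ and $V_{\un x}:=pU_{\un x}^*p$ and check that $(\pi,V)$ is an isometric Nica-covariant covariant pair of $(A,\al,\bZ_+^n)$ with $\pi$ faithful. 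Indeed $V_{\un x}^*V_{\un x}=pU_{\un x}U_{\un x}^*p=p$ by the identity above, so $V_{\un x}$ is an isometry of the corner; the covariance $\pi(a)V_{\un x}=V_{\un x}\pi\al_{\un x}(a)$ follows from $\iota(b)U_{\un x}^*=U_{\un x}^*\iota(\wt\be_{\un x}(b))$ together with the dilation identity $p\,\be_{\un x}(a\otimes e_{\un 0})\,p=\al_{\un x}(a)\otimes e_{\un 0}$; and since $V_{\un s}V_{\un s}^*=p\,\wt\be_{-\un s}(p)\,p=\wt\be_{-\un s}(p)$ while $\wt\be_{-\un s}(p)\,\wt\be_{-\un t}(p)=\wt\be_{-(\un s\vee\un t)}(p)$ (a direct computation with the ranges $\bigoplus_{\un w\le\,\cdot}B_{\un w}$), Proposition~\ref{P:Nc isom} shows that $V$ is Nica-covariant.

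Next I would prove that $V\times\pi$ is completely isometric on $A\times_\al^{\nc}\bZ_+^n$, so that $pCp$ is a C*-cover, and identify $pCp$ as a full corner. Representing $C$ by its left regular representation on $H\otimes\ell^2(\bZ^n)$ for a faithful $\rho$ of $\wt B$, put $H_0=\rho(p)H$ and $\K=H_0\otimes\ell^2(\bZ_+^n)$; using $\be_{\un x}(p)\ge p$ and the dilation identity, a direct computation shows that the compression $P_\K(V\times\pi)(\cdot)|_\K$ agrees on every monomial $e_{\un x}\otimes a$ with the Fock representation of $(A,\al,\bZ_+^n)$ attached to the faithful representation $\rho|_A$ on $H_0$. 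As a compression is completely contractive and that Fock representation is completely isometric by Theorem~\ref{T: lr}, the chain $\|x\|=\|\mathrm{Fock}(x)\|\le\|(V\times\pi)(x)\|\le\|x\|$ and its matrix amplifications force $V\times\pi$ to be completely isometric. Because $C=\overline{\Span}\{U_{\un g}\iota(b)\}$ and $pU_{\un x}=pU_{\un x}p$, every $pU_{\un g}\iota(b)p$ factors into products of $\pi(A)$ and the $V_{\un x}$, whence $\ca(\pi,V)=pCp$; and the ideal of $C$ generated by $p$ contains $U_{\un g}pU_{\un g}^*=\wt\be_{\un g}(p)$, which, since $\be_{\un x}(p)=\sum_{\un w\le\un x}p_{\un w}\uparrow 1_B$, generates $\wt B$ as an ideal, and $\wt B$ generates $C$, so $p$ is full.

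Finally I would run the gauge argument from the end of Theorem~\ref{T: cone aut un 2}: the dual action $\{\ga_{\hat g}\}_{\hat g\in\bT^n}$ on $C$ fixes $p$, hence restricts to $pCp$ with fixed point algebra $p\wt Bp=A$ and leaves the image of $A\times_\al^{\nc}\bZ_+^n$ invariant, so the \v{S}ilov ideal $\J\subseteq pCp$ is gauge invariant; faithfulness of $E=\int_{\bT^n}\ga_{\hat g}\,d\hat g$ forces $\J\cap A\ne(0)$ whenever $\J\ne(0)$, while $\J\cap A=(0)$ because the quotient is completely isometric on $A$, giving $\J=(0)$ and $\cenv(A\times_\al^{\nc}\bZ_+^n)\simeq pCp$. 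The crux of the whole argument is the bookkeeping inside the corner: that $V_{\un x}=pU_{\un x}^*p$ is isometric and Nica-covariant, and above all that the compression to $\K$ reproduces the $A$-Fock representation \emph{exactly} rather than merely up to a completely contractive map. Both rest on the inequality $\wt\be_{\un x}(p)\ge p$ and on the range identity $\wt\be_{-\un s}(p)\,\wt\be_{-\un t}(p)=\wt\be_{-(\un s\vee\un t)}(p)$, which is precisely where the careful design of the ideals $I_{\un x}$ (making $(B,\be)$ injective with doubly commuting $\be_\Bi$) pays off.
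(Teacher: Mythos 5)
Your first two stages are sound and essentially reconstruct the first half of the paper's argument: the projection identities $\wt{\be}_{\un x}(p)\ge p$ and $\wt{\be}_{\un x}(p)\,\wt{\be}_{\un y}(p)=\wt{\be}_{\un x\wedge\un y}(p)$ are Lemma \ref{L: units}, the isometric Nica-covariant pair living in the corner is Lemma \ref{L: corner Nc} (your route via range projections and Proposition \ref{P:Nc isom} is a clean variant), and your complete-isometry step --- restricting to the invariant Fock-type subspace and invoking Theorem \ref{T: lr} --- is a legitimate alternative to the paper's use of the gauge-invariant uniqueness theorem in Proposition \ref{P: corner}. The first genuine gap is the sentence asserting that ``every $pU_{\un g}\iota(b)p$ factors into products of $\pi(A)$ and the $V_{\un x}$, whence $\ca(\pi,V)=pCp$.'' The projection identities reduce such elements to products of $V$'s with elements of $p\wt{B}p$, but $p\wt{B}p$ is \emph{not} obviously generated by $\pi(A)$ and the $V_{\un x}$: while $pBp=A\otimes e_{\un0}$, the algebra $\wt{B}$ is the closed union of the conjugates $\wt{\be}_{-\un z}(B)$, and the fibres $B_{\un y}=A/I_{\un y}$ with $\un0\ne\un y\le\un z$ survive compression by $p$ after conjugation. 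Showing that these conjugates lie in $\ca(\pi,V)$ is precisely the paper's Lemma \ref{L: kill fp}, proved by induction on $\supp(\un y)$ using the identity $q_{\un y}(a)\otimes e_{\un y}=\be_{\bo 1}\big(q_{\un y-\bo 1}(a)\otimes e_{\un y-\bo 1}\big)-q_{\un y-\bo 1}\al_{\bo 1}(a)\otimes e_{\un y-\bo 1}$; this is real work exploiting the structure of $(B,\be)$, not bookkeeping.

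The second, more serious gap is the final gauge argument. Your claim that the gauge action on $pCp$ has fixed point algebra $p\wt{B}p=A$ is false: since the fixed point algebra of $C$ is $\wt{B}$ and $p$ is gauge-invariant, the fixed point algebra of the corner is $p\wt{B}p$, which coincides with the closed union of the cores $\Span\{V_{\un x}\pi(a)V_{\un x}^*\colon \un x\in F\}$ (Corollary \ref{C: cores}) and strictly contains $A$ except in the automorphic case. Already for an injective, unital, non-automorphic system one has $p=1$ and $p\wt{B}p=\wt{A}\ne A$; for non-injective systems --- the whole point of this theorem, the injective case being Theorem \ref{T: cenv inj Nc} --- it also contains elements such as $\wt{\be}_{-\Bi}\big(q_{\Bi}(a)\otimes e_{\Bi}\big)$. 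Consequently faithfulness of the conditional expectation only yields $\J\cap p\wt{B}p\ne(0)$, not $\J\cap A\ne(0)$, and no contradiction follows. Bridging exactly this gap is the heart of the paper's proof: one picks a minimal $\un y$ with $X=\sum_{\un x\le\un y}U_{\un x}(a_{\un x}\otimes e_{\un0})U_{\un x}^*\in\J\setminus\{0\}$, uses compressions $pU_{\Bi}^*XU_{\Bi}p$ and minimality to show $X=(a_{\un0}\otimes e_{\un0})\prod_{\Bi\in\supp(\un y)}(I-U_{\Bi}pU_{\Bi}^*)$, then uses $\J\cap A=(0)$ to prove $a_{\un0}\in I_{\un x}$ for $\un x=\sum_{\Bi\in\supp(\un y)}\Bi$ (this is where the design of the ideals $I_{\un x}$ actually pays off), and finally applies Lemma \ref{L: on 0} to rewrite $X=U_{\un x}\big(q_{\un x}(a_{\un0})\otimes e_{\un x}\big)U_{\un x}^*=0$, a contradiction. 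Without some version of this argument, your proposal establishes only that $pCp$ is a full corner containing a completely isometric copy of $A\times_\al^{\nc}\bZ_+^n$, not that it is the C*-envelope.
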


The proof will follow from a series of lemmas. 
The first step is to find the full corner of $\wt{B} \rtimes_{\wt{\be}} \bZ^n$. 
We will then identify this C*-algebra $\fA$ with the C*-envelope of $A \times_\al^{\nc} \bZ_+^n$.

First suppose that $(A, \al, \bZ_+^n)$ is unital. 
Fix once and for all a faithful representation $\pi$ of $\wt{B}$ on $K$, and the corresponding bilateral Fock representation $(\wh{\pi},U)$ of $(\wt{B}, \wt{\be},\bZ^n)$ acting on $\wh K = K \otimes \ell^2(\bZ^n)$. 
We will use $\delta_\un x$ for the standard basis for $\ell^2(\bZ^n)$. 
Since $\wt{B} = \overline{ \bigcup_{\un{x} \in \bZ_+^n} \wt{\be}_{-\un{x}}(B)}$, we obtain that
\begin{align*}
\wt{B} = \overline{\Span} \{ U_{\un{x}} \wh{\pi}(b) U_{\un{x}}^* : \un{x} \in \bZ_+^n,\, b\in B\}.
\end{align*}
For simplicity of notation, we will suppress the use of $\wh\pi$ and write $U_\un x b U_\un x^*$ instead. 
The crossed product $\wt{B} \rtimes_{\wt{\be}} \bZ^n$ is spanned by the monomials $U_{\un{y}} c$ and $c U_{\un{y}}^*$ for $\un{y} \in \bZ^n$ and $c\in \wt{B}$. 
Therefore it is also spanned by the monomials of the form
\begin{align*}
U_{\un{y}}U_{\un{x}} b U_{\un{x}}^*
\qand
U_{\un{x}} b U_{\un{x}}^*U_{\un{y}}^*,
\qforal \un{y} \in \bZ^n, \ \un{x} \in \bZ_+^n \AND b\in B .
\end{align*}
We will often use the fact that
\begin{align*}
b U_{\un{x}} = U_{\un{x}} \wt{\be}_{\un{x}}(b) = U_{\un{x}} \be_{\un{x}}(b) \qfor b\in B.
\end{align*}
Hence
\begin{align*}
\be_{\un{x}}(b) = U_{\un{x}}^* b U_{\un{x}} \qforal \un{x} \in \bZ_+^n \AND b\in B .
\end{align*}

When $(A,\al,\bZ_+^n)$ is not unital, that is if at least one $\al_\Bi$ is not unital, then we form \emph{the unitization of the system} $(A,\al,\bZ_+^n)$ in the following way: let $A^{(1)} = A + \bC$ (even when $A$ is unital) and define unital $*$-endomorphisms $\al_\Bi^{(1)}$ on $A^{(1)}$ by
\[ 
\al_\Bi^{(1)}(a + \lambda) = \al_\Bi(a) + \lambda. 
\]
Then each $\al_\Bi^{(1)}$ extends $\al_\Bi$, and the system $(A^{(1)},\al^{(1)},\bZ_+^n)$ is a unital C*-dynamical system extending $(A,\al,\bZ_+^n)$. 
As a consequence, if $(B^{(1)},\be^{(1)},\bZ_+^n)$ is the minimal injective dilation of the system $(A^{(1)},\al^{(1)},\bZ_+^n)$ as constructed above, then $(B^{(1)},\be^{(1)},\bZ_+^n)$ extends $(B,\be,\bZ_+^n)$. 
Similarly, $(\wt{B^{(1)}},\wt{\be^{(1)}},\bZ_+^n)$ extends $(\wt{B},\wt{\be},\bZ_+^n)$.

Note that $(B^{(1)},\be^{(1)},\bZ_+^n)$ is \emph{not} the unitization of the system $(B,\be,\bZ_+^n)$. 
Indeed, by our construction, 
\[ 
B^{(1)} = \sideset{}{^\oplus} \sum_{\un x \in \bZ_+^n} A^{(1)}/ I_{\un x}, 
\]
where
\begin{align*}
I_{\un x} &= \bigcap_{\un y \in \un x^\perp} (\al_{\un y}^{(1)})^{-1} \Big(\big( \bigcap_{\Bi\in \supp(\un x)} \ker\al^{(1)}_\Bi \big)^\perp\Big)\\
& = \bigcap_{\un y \in \un x^\perp} (\al_{\un y})^{-1} \Big(\big( \bigcap_{\Bi\in \supp(\un x)} \ker\al_\Bi \big)^\perp\Big).
\end{align*}

Proceeding as before, we fix a faithful representation of $\wt{B^{(1)}}$ acting on a Hilbert space $K$. 
Recall that $\wt{B} \rtimes_{\wt{\be}} \bZ^n$ embeds isometrically and canonically in $\wt{B^{(1)}} \rtimes_{\wt{\be^{(1)}}} \bZ^n$, by the gauge-invariant uniqueness theorem for crossed products.

We make the convention that when $(A,\al,\bZ_+^n)$ is unital then $\al^{(1)} = \al$ and consequently $\be^{(1)} = \be$ and $\wt{\be^{(1)}} = \wt{\be}$. In any case, we will write $1$ for the unit in $A$ that arises from the unitization of the system. 
Note that when $A$ is already unital then $1$ does not coincide with the original unit. 

Let $p= 1 \otimes e_\un0 \equiv \widehat{\pi}(1 \otimes e_{\un{0}})$. 
We define a C*-algebra
\begin{align*}
\fA = \ca(\{ U_\un x (a \otimes e_\un0) : a \in A,\ \un x \in \bZ_+^n \}) \subseteq \wt{B} \rtimes_{\wt{\be}} \bZ^n.
\end{align*}
We will show that $\fA$ is a full corner of $\wt{B} \rtimes_{\wt{\be}} \bZ^n$.

We start with a useful calculation.

\begin{lemma}\label{L: units}
For all $\un y \in \bZ_+^n$, 
\[ 
 \wt{\be^{(1)}}_\un y(p) 
 = \be^{(1)}_{\un y}(p) 
 = \sum_{\un x \leq \un y} 1 \otimes e_{\un x}
\]
and for all $\un x, \un y \in \bZ^n$, 
\[
 \wt{\be^{(1)}}_{\un{x}}(p) \wt{\be^{(1)}}_{\un{y}}(p) =
 \wt{\be^{(1)}}_{\un{x} \wedge \un{y}}(p).
\]
\end{lemma}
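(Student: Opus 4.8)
The plan is to reduce the whole statement to the explicit description of $\be_{\un{x}}$ on the $\un{0}$-summand given by Lemma~\ref{L: on 0}, and then to bootstrap from the positive cone $\bZ_+^n$ to all of $\bZ^n$ using that $\wt{\be^{(1)}}$ is a genuine group of automorphisms.

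For the first assertion I would apply Lemma~\ref{L: on 0} to $a = 1$, the adjoined unit, obtaining $\be^{(1)}_{\un{y}}(p) = \sum_{\un{w} + \un{z} = \un{y}} q_{\un{w}}(\al^{(1)}_{\un{z}}(1)) \otimes e_{\un{w}}$. Since each $\al^{(1)}_{\un{z}}$ is unital, $q_{\un{w}}(\al^{(1)}_{\un{z}}(1))$ is simply the unit of $A^{(1)}/I_{\un{w}}$, and for every $\un{w} \le \un{y}$ there is a unique $\un{z} = \un{y} - \un{w} \in \bZ_+^n$; this collapses the sum to $\sum_{\un{x} \le \un{y}} 1 \otimes e_{\un{x}}$. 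The equality $\wt{\be^{(1)}}_{\un{y}}(p) = \be^{(1)}_{\un{y}}(p)$ for $\un{y} \in \bZ_+^n$ is then immediate from the construction of the automorphic extension, where $\wt{\be^{(1)}}_{\un{y}}$ restricts to $\be^{(1)}_{\un{y}}$ on $B^{(1)}$.

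For the multiplicative identity I would first treat $\un{x}, \un{y} \in \bZ_+^n$. By the first part, $\wt{\be^{(1)}}_{\un{x}}(p)$ and $\wt{\be^{(1)}}_{\un{y}}(p)$ are finite sums of the mutually orthogonal projections $1 \otimes e_{\un{u}}$ arising from the direct-sum decomposition $B^{(1)} = \bigoplus_{\un{u}} A^{(1)}/I_{\un{u}}$. Multiplying the two sums annihilates all cross terms and leaves exactly the $1 \otimes e_{\un{u}}$ with $\un{u} \le \un{x}$ and $\un{u} \le \un{y}$; as $\un{u} \le \un{x}$ together with $\un{u} \le \un{y}$ is equivalent to $\un{u} \le \un{x} \wedge \un{y}$ (componentwise minimum), the product equals $\sum_{\un{u} \le \un{x} \wedge \un{y}} 1 \otimes e_{\un{u}} = \wt{\be^{(1)}}_{\un{x} \wedge \un{y}}(p)$, using that $\un{x} \wedge \un{y} \in \bZ_+^n$.

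Finally I would pass to arbitrary $\un{x}, \un{y} \in \bZ^n$ by translation: choose $\un{m} \in \bZ_+^n$ with $\un{x} + \un{m}, \un{y} + \un{m} \in \bZ_+^n$ and apply the positive-cone case to $\un{x} + \un{m}$ and $\un{y} + \un{m}$. Using that $\wt{\be^{(1)}}_{\un{m}}$ is a $*$-automorphism, that $\wt{\be^{(1)}}$ is a homomorphism on $\bZ^n$, and the translation invariance $(\un{x} + \un{m}) \wedge (\un{y} + \un{m}) = (\un{x} \wedge \un{y}) + \un{m}$, both sides of the desired identity appear inside $\wt{\be^{(1)}}_{\un{m}}$; injectivity of this automorphism then yields the claim. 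I expect this last reduction to be the only genuinely delicate point, since it is where the passage from the semigroup to the group is used; everything else is bookkeeping with the formula of Lemma~\ref{L: on 0} and the orthogonality of the summand projections.
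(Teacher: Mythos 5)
Your proposal is correct and follows essentially the same route as the paper's proof: the first identity comes from Lemma~\ref{L: on 0} applied to the adjoined unit, the product formula on $\bZ_+^n$ comes from orthogonality of the summand projections $1 \otimes e_{\un x}$, and the general case follows by translating into the positive cone via some $\un r \in \bZ_+^n$ together with $(\un r + \un x) \wedge (\un r + \un y) = \un r + (\un x \wedge \un y)$. The only cosmetic difference is that you conclude by injectivity of the automorphism $\wt{\be^{(1)}}_{\un m}$, whereas the paper applies $\wt{\be^{(1)}}_{-\un r}$ to both sides — these are the same argument.
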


\begin{proof}
First suppose that $(A,\al,\bZ^n_+)$ is unital. Let $1$ denote the unit of any $B_\un x$. 
Then the first statement follows immediately from Lemma~\ref{L: on 0}. 
Therefore, for $\un y \in \bZ_+^n$ we obtain
\begin{align*}
\wt{\be}_{\un{x}}(p)\wt{\be}_{\un{x}}(p)
&= \sum_{\un{z}\leq\un{x}} 1\otimes e_{\un{z}}\ \sum_{\un{z}\leq\un{y}} 1\otimes e_{\un{z}}\\
& = \sum_{\un{z}\leq\un{x} \wedge \un{y}} 1\otimes e_{\un{z}}\\
& = \wt{\be}_{\un{x} \wedge \un{y}}(p).
\end{align*}

Now for $\un x, \un y \in \bZ^n$, choose an $\un{r} \in \bZ_+^n$ so that $\un{x} + \un{r}$ and $\un{y} + \un{r}$ belong to $\bZ_+^n$.
Note that
\[
( \un{r} + \un{x}) \wedge ( \un{r} + \un{y} ) = \un{r} + (\un{x} \wedge \un{y}) .
\]
Therefore
\begin{align*}
 \wt\be_\un x(p) \wt\be_\un y(p)
 & = \wt\be_{-\un{r}} \big( \wt\be_{\un r + \un x}(p) \wt{\be}_{\un r + \un y}(p) \big) \\
 & = \wt\be_{-\un{r}} \big( \wt\be_{(\un r + \un x) \wedge (\un r + \un y)}(p) \big) \\
 & = \wt\be_{-\un{r}} \big( \wt\be_{\un r + (\un x\wedge\un y)}(p) \big) \\
 & = \wt{\be}_{\un{x} \wedge \un{y}}(p).
\end{align*}

In the non-unital case a similar computation follows by substituting the $\wt{\be}_{\un{x}}$ with the $\wt{\be^{(1)}}_{\un{x}}$.
\end{proof}

\begin{lemma}\label{L: pfAp=fA}
The projection $p$ is the unit of $\fA$.
\end{lemma}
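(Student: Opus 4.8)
The plan is to prove the statement in the form $p\fA p = \fA$, which for a projection $p$ is equivalent to the assertion that $px = xp = x$ for every $x \in \fA$ (if $pxp = x$ for all $x$ then $px = p(pxp) = pxp = x$, and symmetrically $xp=x$). Since $\fA$ is the norm-closure of the $*$-algebra generated by the monomials $U_\un x (a \otimes e_\un0)$, and multiplication is bilinear and norm-continuous, it suffices to check that $p$ fixes each such generator and each of its adjoints on both the left and the right; the identity $p(w_1 \cdots w_k) = (pw_1)w_2\cdots w_k = w_1\cdots w_k$ (and its right-handed analogue) then propagates the relation to all words, hence to finite sums, hence by continuity to all of $\fA$.

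The computations rely on Lemma~\ref{L: units} together with the fact that multiplication in $B = \sum^\oplus B_\un x$ is coordinatewise. For a generator $g = U_\un x(a\otimes e_\un0)$ with $a\in A$, I would use the relation $p\,U_\un x = U_\un x \wt\be_\un x(p)$ and Lemma~\ref{L: units} to write
\[
p\,U_\un x(a\otimes e_\un0) = U_\un x\Big(\sum_{\un z \le \un x} 1\otimes e_\un z\Big)(a\otimes e_\un0) = U_\un x(a\otimes e_\un0),
\]
because in the coordinatewise product only the $\un z = \un0$ term survives, and there $(1\otimes e_\un0)(a\otimes e_\un0) = a\otimes e_\un0$ since $1$ is the unit of $A^{(1)} \supseteq A$ acting on the $B_\un0 = A^{(1)}$ coordinate. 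On the right, $g\,p = U_\un x(a\otimes e_\un0)(1\otimes e_\un0) = U_\un x(a\otimes e_\un0)$ for the same reason. For the adjoint $g^* = (a^*\otimes e_\un0)U_\un x^*$, left multiplication is immediate, $p\,g^* = (1\otimes e_\un0)(a^*\otimes e_\un0)U_\un x^* = g^*$; for right multiplication I would move $p$ across $U_\un x^*$ via $U_\un x^* p = \wt\be_\un x(p)\,U_\un x^*$ (obtained from $U_\un x^* p\,U_\un x = \wt\be_\un x(p)$ and unitarity of $U_\un x$), so that $g^* p = (a^*\otimes e_\un0)\wt\be_\un x(p)U_\un x^* = (a^*\otimes e_\un0)U_\un x^* = g^*$, again by the collapse of the sum from Lemma~\ref{L: units}.

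The main obstacle here is not any hard analysis but the careful bookkeeping forced by the unitization: the formula $\wt\be_\un x(p) = \sum_{\un z \le \un x} 1\otimes e_\un z$ must be used to see that compressing against $a\otimes e_\un0$ keeps only the base coordinate, and one must remember that $1$ denotes the unit of $A^{(1)}$ (possibly distinct from any unit of $A$), which is precisely what guarantees $(1\otimes e_\un0)(a\otimes e_\un0) = a\otimes e_\un0$ for arbitrary $a\in A$ even when the system is non-unital. Once the generators and their adjoints are handled, the extension to all of $\fA$ is routine. I would also remark that when the system is unital one has the stronger fact $p = U_\un0(1\otimes e_\un0)\in\fA$ directly, so that $p$ is literally the unit of $\fA$; in general the content needed for the ensuing corner argument of Theorem~\ref{T: cenv corner} is exactly the identity $p\fA p = \fA$ established above.
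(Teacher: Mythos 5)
Your proof is correct and takes essentially the same route as the paper's: both verify $pf = fp = f$ on the generators $f = U_{\un x}(a \otimes e_{\un 0})$ by moving $p$ across $U_{\un x}$ with the covariance relation and collapsing $\be^{(1)}_{\un x}(p) = \sum_{\un y \leq \un x} 1 \otimes e_{\un y}$ (Lemma \ref{L: units}) against $a \otimes e_{\un 0}$ in the coordinatewise product. The additional bookkeeping you spell out (adjoints, words, norm closure, and the unitization caveat in the non-unital case) is exactly what the paper leaves implicit.
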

\begin{proof}
It is sufficient to show that $pf=fp=f$ for the generators $f= U_{\un{x}} (a \otimes e_{\un0})$. 
Evidently $fp =f$. 
Compute
\begin{align*}
p f =
(1 \otimes e_{\un0} ) U_{\un{x}} (a \otimes e_{\un0})
& =
U_{\un{x}}\, \be^{(1)}_{\un{x}}(1 \otimes e_{\un0}) \, (a \otimes e_{\un0}) \\
& =
U_{\un{x}} \Big(\sum_{\un{y}\, \leq \, \un{x}} 1 \otimes e_{\un{y}} \Big) (a \otimes e_{\un0}) = f. \qedhere
\end{align*}
\end{proof}

\begin{lemma}\label{L: corner Nc}
The pair $(\wh{\pi}|_A,Up)$ defines an isometric Nica-covariant representation for $(A,\al,\bZ^n_+)$ on $p \wh K$.
\end{lemma}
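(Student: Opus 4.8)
The plan is to realise both legs of the pair as compressions to $p\wh K$ of the ambient crossed-product representation $(\wh\pi,U)$ and to check the axioms of Definition~\ref{D: cov rel abel} together with Nica-covariance by direct computation. Write $\pi_0(a) = \wh\pi(a \otimes e_\un0)$ and $V_{\un x} = U_{\un x} p$ for $a \in A$ and $\un x \in \bZ_+^n$. Since $1 \otimes e_\un0$ is a central projection of $B$ acting as a unit on the summand $B_\un0 = A$, the projection $p = \wh\pi(1 \otimes e_\un0)$ commutes with every $\pi_0(a)$ and fixes $p\wh K$; hence $\pi_0$ restricts to a $*$-representation (thus completely contractive) of $A$ on $p\wh K$. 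The first identity I would record is that $p\wh K$ is invariant under each $V_{\un x}$: using $\wh\pi(b) U_{\un x} = U_{\un x}\wh\pi\wt\be_{\un x}(b)$ and Lemma~\ref{L: units},
\[
p U_{\un x} p = U_{\un x}\wh\pi\big(\wt\be_{\un x}(p)\,p\big) = U_{\un x}\wh\pi\big(\wt\be_{\un x \wedge \un0}(p)\big) = U_{\un x} p \qfor \un x \in \bZ_+^n,
\]
so that $V_{\un x} = U_{\un x} p$ is precisely the compression $p U_{\un x} p$ to $p\wh K$.

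From here the representation axioms are immediate. As $U_{\un x}$ is unitary, $V_{\un x}^* V_{\un x} = p U_{\un x}^* U_{\un x} p = p$, so each $V_{\un x}$ is an isometry on $p\wh K$; and $V_{\un x} V_{\un y} = U_{\un x}(p U_{\un y} p) = U_{\un x + \un y} p = V_{\un x + \un y}$ with $V_\un0 = p$, so $V$ is an isometric representation of $\bZ_+^n$. For the covariance relation I would compute $\pi_0(a) V_{\un x} = \wh\pi(a \otimes e_\un0) U_{\un x} p = U_{\un x}\wh\pi\big(\wt\be_{\un x}(a \otimes e_\un0)\big) p$. Since $a \otimes e_\un0 \in B$, Lemma~\ref{L: on 0} gives $\wt\be_{\un x}(a \otimes e_\un0) = \be_{\un x}(a \otimes e_\un0) = \sum_{\un w + \un z = \un x} q_{\un w}\al_{\un z}(a) \otimes e_{\un w}$, and right multiplication by $1 \otimes e_\un0$ annihilates every summand except the $\un w = \un0$ term, leaving $\al_{\un x}(a) \otimes e_\un0$ (here $q_\un0 = \id$ as $I_\un0 = \{0\}$). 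Thus $\pi_0(a) V_{\un x} = U_{\un x}\wh\pi(\al_{\un x}(a) \otimes e_\un0) = V_{\un x}\pi_0\al_{\un x}(a)$, which is exactly the covariance relation.

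For Nica-covariance I would invoke the range-projection characterisation of Proposition~\ref{P:Nc isom}(iii). Conjugating $p$ by $U_{\un x}$ via the Fock formula gives $V_{\un x} V_{\un x}^* = U_{\un x} p U_{\un x}^* = \wh\pi\big(\wt\be_{-\un x}(p)\big)$, a subprojection of $p$ since $\wt\be_{-\un x}(p)\,p = \wt\be_{(-\un x) \wedge \un0}(p) = \wt\be_{-\un x}(p)$. The multiplicativity of $\un y \mapsto \wt\be_{\un y}(p)$ from Lemma~\ref{L: units}, together with $(-\un x) \wedge (-\un y) = -(\un x \vee \un y)$, then yields
\[
V_{\un x} V_{\un x}^* V_{\un y} V_{\un y}^* = \wh\pi\big(\wt\be_{-\un x}(p)\,\wt\be_{-\un y}(p)\big) = \wh\pi\big(\wt\be_{-(\un x \vee \un y)}(p)\big) = V_{\un x \vee \un y} V_{\un x \vee \un y}^*,
\]
so Proposition~\ref{P:Nc isom} shows $V$ is Nica-covariant.

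Finally, I would remark that unitality of $A$ is never used: in the non-unital case the identical argument runs inside $\wt{B^{(1)}} \rtimes_{\wt{\be^{(1)}}} \bZ^n$ with $p = 1 \otimes e_\un0$ the adjoined unit, since Lemmas~\ref{L: units} and~\ref{L: on 0} are stated in that generality and $a \otimes e_\un0 \in B \subseteq B^{(1)}$ for $a \in A$. The only step demanding genuine attention — and the crux of the lemma — is the identity $p U_{\un x} p = U_{\un x} p$, which guarantees that the compression of $U_{\un x}$ remains an \emph{isometry} rather than a mere contraction; everything else is the routine bookkeeping of the twisted conjugations $U_{\un x}\wh\pi(\cdot)U_{\un x}^* = \wh\pi\wt\be_{-\un x}(\cdot)$ against the projection relations of Lemma~\ref{L: units}.
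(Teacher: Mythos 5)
Your proof is correct, and for two of its three steps it runs essentially parallel to the paper's: the identity $pU_{\un x}p = U_{\un x}p$ (the paper gets it from Lemma~\ref{L: pfAp=fA}, whose proof is the same Lemma~\ref{L: units} computation you perform), hence isometry; and the covariance relation, which the paper checks only on the generators $\Bi$ directly from the defining formula $\be_\Bi(a\otimes e_\un0) = \al_\Bi(a)\otimes e_\un0 + q_\Bi(a)\otimes e_\Bi$, whereas you check it for arbitrary $\un x$ via Lemma~\ref{L: on 0} — both suffice. Where you genuinely diverge is the Nica-covariance step. The paper verifies that the isometries $U_\Bi p$ doubly commute by computing $(U_\Bi p)(U_\Bj p)^*$ and $(U_\Bj p)^*(U_\Bi p)$ on elementary tensors $\xi\otimes\de_{\un x}$ of the bilateral Fock space, using $p(\xi\otimes\de_{\un x}) = (\wt{\be^{(1)}}_{\un x}(1\otimes e_\un0)\xi)\otimes\de_{\un x}$ and the lattice identities $(\un x-\Bj)\wedge\un x = \un x-\Bj$ and $(\un x+\Bi-\Bj)\wedge\un x = \un x-\Bj$. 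You instead stay entirely at the algebraic level of the crossed product: from $U_{\un x}\,b\,U_{\un x}^* = \wh\pi\wt{\be^{(1)}}_{-\un x}(b)$ you get $V_{\un x}V_{\un x}^* = \wh\pi\big(\wt{\be^{(1)}}_{-\un x}(p)\big)$, and the multiplicativity $\wt{\be^{(1)}}_{\un x}(p)\wt{\be^{(1)}}_{\un y}(p) = \wt{\be^{(1)}}_{\un x\wedge\un y}(p)$ of Lemma~\ref{L: units} (valid on all of $\bZ^n$) together with $(-\un x)\wedge(-\un y) = -(\un x\vee\un y)$ gives the range-projection condition of Proposition~\ref{P:Nc isom}(iii). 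Your route is shorter, coordinate-free, and handles all pairs $\un x,\un y$ at once rather than just generators; the paper's spatial computation has the side benefit of setting up exactly the vector-level manipulations reused immediately afterwards in Proposition~\ref{P: corner}, where the compression to $\wt K = K\otimes\ell^2(\bZ_+^n)$ requires the case $pU_\Bj^*(\xi\otimes\de_{\un x}) = 0$ to be treated by hand, a situation your projection calculus does not directly address. One small point of wording: the identity $U_{\un x}pU_{\un x}^* = \wh\pi\wt{\be^{(1)}}_{-\un x}(p)$ is the crossed-product covariance relation, not the "Fock formula"; and throughout, $p$ lives in $B^{(1)}$, so the endomorphisms applied to it must be $\wt{\be^{(1)}}$ (as you correctly note in your closing remark on the non-unital case).
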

\begin{proof}
First of all, since $p \fA = \fA p = \fA$, we may restrict the pair $(\wh{\pi}, U p)$ to $p \wh K$. 
Note that $pU_{\Bi}p= U_{\Bi}p$ by Lemma \ref{L: pfAp=fA}. 
Therefore
\begin{align*}
p U_{\Bi}^* U_{\Bi} p = p I_{\wh K} p = p.
\end{align*}
Thus $U_{\Bi}p$ is an isometry in $\fA$. 
Moreover, the pair $(\wh{\pi}|_A,Up)$ satisfies the covariance relation since for all $a\in A$, 
\begin{align*}
(a \otimes e_{\un0}) U_{\Bi} p
& =
U_{\Bi} \be_{\Bi}(a \otimes e_{\un0}) (1 \otimes e_{\un0}) \\
& =
U_{\Bi} (\al_{\Bi}(a) \otimes e_{\un0} + q_\Bi(a) \otimes e_\Bi) p \\
& =
U_{\Bi} (\al_{\Bi}(a) \otimes e_{\un0})
=
(U_{\Bi} p) \al_{\Bi}(a \otimes e_{\un0}).
\end{align*}
Finally, we will show that the isometries $U_\Bi p$ doubly commute. Indeed, $U_{\Bi}p$ commutes with $U_{\Bj}p$ since 
\begin{align*}
U_{\Bi} p U_{\Bj} p
= U_{\Bi} U_{\Bj} p
= U_{\Bj} U_{\Bi} p
= U_{\Bj} p U_{\Bi}p.
\end{align*}
Since $p(\xi\otimes \de_\un x) = (\wt{\be^{(1)}}_\un x(1 \otimes e_{\un{0}}) \xi) \otimes \de_\un x$, for $\xi \otimes \de_{\un{x}} \in \wh K$, we have
\begin{align*}
(U_{\Bi} p) (U_{\Bj}p)^* p(\xi \otimes \de_{\un{x}}) 
& = 
U_{\Bi} p U_{\Bj}^* ( \wt{\be^{(1)}}_{\un{x}}(p)\xi) \otimes \de_{\un{x}} \\
& = 
U_i p ( \wt{\be^{(1)}}_{\un{x}}(p)\xi) \otimes \de_{\un{x}-\Bj} \\
& = U_{\Bi} (\wt{\be^{(1)}}_{\un x - \Bj}(p) \, \wt{\be^{(1)}}_{\un x}(p)\xi) \otimes \de_{\un{x} - \Bj} \\
& = (\wt{\be^{(1)}}_{\un x - \Bj}(p)\xi) \otimes \de_{\un{x} - \Bj + \Bi},
\end{align*}
where we have used that $(\un{x} - \Bj) \wedge \un{x} = \un{x} - \Bj$. 
On the other hand,
\begin{align*}
(U_{\Bj}p)^* (U_{\Bi} p) p(\xi \otimes \de_{\un{x}}) 
& = 
p U_\Bj^* U_\Bi (\wt{\be^{(1)}}_{\un x}(p) \xi) \otimes \de_{\un x} \\
& = p \left((\wt{\be^{(1)}}_{\un x}(p) \xi) \otimes \de_{\un x+ \Bi - \Bj} \right)\\
& = (\wt{\be^{(1)}}_{\un{x} + \Bi -\Bj}(p) \, \wt{\be^{(1)}}_{\un{x}}(p) \xi) \otimes \de_{\un{x} + \Bi - \Bj} \\
& = (\wt{\be^{(1)}}_{\un{x} - \Bj}(p)\xi) \otimes \de_{\un{x} + \Bi - \Bj},
\end{align*}
where we have used that $(\un{x} + \Bi - \Bj) \wedge \un{x} = \un{x} - \Bj$. 
Hence $U_{\Bi} p (U_{\Bj}p)^* = (U_{\Bj}p)^* U_{\Bi} p$. 
As these isometries are doubly commuting, it follows that the representation is Nica-covariant. 
\end{proof}

\begin{proposition}\label{P: corner}
The C*-algebra $\fA$ is the closed linear span of monomials of the form $U_{\un{x}} (a\otimes e_{\un0}) U_{\un{y}}^*$ for $\un{x}, \un{y} \in \bZ_+^n$ and $a\in A$. 
Moreover, $\fA$ is a C*-cover of $A \times_\al^{\nc} \bZ_+^n$.
\end{proposition}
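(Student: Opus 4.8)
The plan is to deduce both assertions from the fact, established in Lemma~\ref{L: corner Nc}, that $(\wh\pi|_A, Up)$ is an isometric Nica-covariant covariant pair for $(A,\al,\bZ^n_+)$ on $p\wh K$. I would write $W_\un x := U_\un x p$ for $\un x\in\bZ_+^n$; these are the isometries of that pair, and since $p$ is the unit of $\fA$ (Lemma~\ref{L: pfAp=fA}) we have $p(a\otimes e_\un0)p = a\otimes e_\un0$, so each generator of $\fA$ equals $U_\un x(a\otimes e_\un0) = W_\un x\,\wh\pi(a)$ and hence $\fA = \ca(\wh\pi|_A, W)$. For the first assertion I would simply invoke the description of the $\mathrm C^*$-algebra of an isometric Nica-covariant pair as the closed linear span of the monomials $V_s\pi(a)V_t^*$ (the Proposition following Lemma~\ref{L: mon}). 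Applied to $(\wh\pi|_A, W)$ this yields
\[
\fA = \overline{\Span}\{W_\un x\,\wh\pi(a)\,W_\un y^* : \un x,\un y\in\bZ_+^n,\ a\in A\},
\]
and because $W_\un x\wh\pi(a)W_\un y^* = U_\un x(a\otimes e_\un0)U_\un y^*$, this is exactly the claimed monomial description.

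For the second assertion I must produce a completely isometric copy of $A\times_\al^\nc\bZ_+^n$ inside $\fA$. The pair $(\wh\pi|_A, W)$ is isometric Nica-covariant, hence regular (Remark~\ref{R: iso reg}), so it is an allowable pair and the induced homomorphism $W\times\wh\pi$ is completely contractive; thus $\|(W\times\wh\pi)(x)\|\le\|x\|$ at every matrix level. Since $\fA$ is generated as a $\mathrm C^*$-algebra by the image of $W\times\wh\pi$ by definition, it remains only to prove that $W\times\wh\pi$ is a \emph{complete isometry}.

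The key step is to recognise a Fock representation as a restriction of $W\times\wh\pi$ to an invariant subspace. Since $\pi$ is faithful on $\wt{B^{(1)}}$, its restriction $\pi_0 := \pi|_A$ is a faithful representation of $A$ on $H_0 := \pi(1\otimes e_\un0)K$. I would consider $\mathcal F := H_0\otimes\ell^2(\bZ_+^n)$; using $\wt{\be^{(1)}}_\un x(p) = \sum_{\un w\le\un x}1\otimes e_\un w$ from Lemma~\ref{L: units} one checks that $\eta\otimes\delta_\un x\in p\wh K$ for $\eta\in H_0$ and $\un x\in\bZ_+^n$, so $\mathcal F\subseteq p\wh K$. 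A direct computation using Lemma~\ref{L: on 0} then shows $\wh\pi(a\otimes e_\un0)(\eta\otimes\delta_\un x) = (\pi_0\al_\un x(a)\eta)\otimes\delta_\un x$ — only the $\un w=\un0$ summand of $\be_\un x(a\otimes e_\un0)$ survives against $H_0$ — while $W_\un t(\eta\otimes\delta_\un x) = \eta\otimes\delta_{\un x+\un t}$. Hence $\mathcal F$ is invariant for $W\times\wh\pi$, and the restriction agrees on generators (and so everywhere) with the Fock representation $(\wt\pi_0, V)$ associated to the faithful representation $\pi_0$ of $A$.

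To finish, by Theorem~\ref{T: lr} the Fock representation $\wt\pi_0\times V$ is a completely isometric representation of $A\times_\al^\nc\bZ_+^n$. Being the restriction of $W\times\wh\pi$ to the invariant subspace $\mathcal F$, it is a compression, so at every matrix level
\[
\|x\| = \|(\wt\pi_0\times V)(x)\| = \|(W\times\wh\pi)(x)|_{\mathcal F}\| \le \|(W\times\wh\pi)(x)\| \le \|x\|,
\]
forcing equality. Thus $W\times\wh\pi$ is completely isometric and $\fA = \ca\big((W\times\wh\pi)(A\times_\al^\nc\bZ_+^n)\big)$ is a $\mathrm C^*$-cover. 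I expect the main obstacle to be the bookkeeping in this invariant-subspace computation — confirming that $\mathcal F$ lies in $p\wh K$, is invariant, and that the one surviving term of Lemma~\ref{L: on 0} reproduces the diagonal Fock action — together with the care needed in the non-unital case, where one works inside the unitised system $\wt{B^{(1)}}$ and must check that $\pi_0 = \pi|_A$ remains faithful.
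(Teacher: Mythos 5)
Your proof is correct, and it reaches the same two reductions as the paper: everything hinges on showing that $(\wh{\pi}|_A, Up)$ is a completely isometric representation of $A \times_\al^{\nc} \bZ_+^n$, which is obtained by restricting to an invariant subspace of the form $(\,\cdot\,)\otimes\ell^2(\bZ_+^n)$ and then using the compression/dilation inequality. However, you certify the key step with a genuinely different lemma. The paper restricts only to $\wt K = K \otimes \ell^2(\bZ_+^n)$ and invokes the gauge-invariant uniqueness theorem (Theorem \ref{T: gau inv un}), checking that the restricted pair $(\phi,V)$ is isometric Nica-covariant, admits a gauge action, and satisfies $I_{(\phi,V)}=(0)$; that ideal condition is verified by compressing monomials to $(\pi(1 \otimes e_{\un0})K) \otimes \de_{\un0}$, which is precisely the $\un x = \un0$ slice of your $\mathcal F$. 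You restrict further, to $\mathcal F = H_0 \otimes \ell^2(\bZ_+^n)$, and identify that restriction outright with the Fock representation of $(A,\al,\bZ_+^n)$ attached to the faithful representation $\pi_0$, so that Theorem \ref{T: lr} applies directly; your supporting computations (membership of $\mathcal F$ in $p\wh K$ via Lemma \ref{L: units}, survival of only the $\un w = \un0$ summand of Lemma \ref{L: on 0}, faithfulness of $\pi_0$ in the unitized setting) are all correct. The trade-off: the paper's route needs only a single compression computation at the $\un0$ slice rather than an identification of the whole restricted action, while yours bypasses the gauge machinery of Theorem \ref{T: gau inv un} entirely and runs parallel to the proof of Theorem \ref{T: cenv inj Nc}, where exactly this picture of a Fock representation sitting inside a bilateral regular representation is used for injective systems. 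Your handling of the first assertion, applying the monomial-span proposition following Lemma \ref{L: mon} to the pair $(\wh\pi|_A, Up)$ together with $p(a \otimes e_{\un0})p = a \otimes e_{\un0}$, is also valid and, if anything, more direct than the paper's remark that the first part follows once the C*-cover property is established.
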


\begin{proof}
The first part follows once we show that $\fA$ is a C*-cover of $A \times_\al^{\nc} \bZ_+^n$. 
To this end it suffices to show that the pair $(\wh{\pi}|_A, Up)$ defines a completely isometric representation of $A \times_\al^{\nc} \bZ_+^n$.

Recall that $(\wh{\pi},U)$ acts on $\wh K = K \otimes \ell^2(\bZ^n)$. 
Let $\wt K = K \otimes \ell^2(\bZ_+^n)$ and set $\phi = \wh{\pi}|_A|_{\wt{K}}$ and $V_{\Bi}= U_{\Bi}p|_{\wt{K}}$. 
The same arguments as in the proof of Lemma \ref{L: corner Nc} show that $(\phi,V)$ is a Nica-covariant pair. 
Indeed, $\wt K$ is an invariant subspace for the generators so $(\phi, V)$ is isometric and covariant. 
To establish the doubly commuting property, some care must be taken when $p U_{\Bj}^* (\xi \otimes \de_{\un{x}}) = 0$. 
In this case, the $j$-th co-ordinate $x_j$ of $\un{x}$ is $0$, and the same is true for $\un x + \Bi$ when $\Bi \ne \Bj$. 
Therefore 
\begin{align*}
p U_{\Bj}^*U_\Bi (\xi \otimes \de_{\un{x}})
 = p U_{\Bj}^* (\xi \otimes \de_{\un{x}+\Bi}) 
 = 0 = U_\Bi p U_{\Bj}^*(\xi \otimes \de_{\un{x}}) .
\end{align*}

Moreover $(\phi,V)$ admits a gauge action inherited from $(\wh{\pi},U)$. 
Note that if $H = (\pi(1 \otimes e_{\un0}) K) \otimes \de_{\un0}$, then the compression to $H$ of any monomial $V_{\un{x}} \phi(a) V_{\un{x}}^*$ is $0$ unless $\un{x}=\un0$. 
Therefore the ideal $B_{(0,\infty)}|_H =0$. 
On the other hand we have that $\phi|_H = \pi|_A$, which is faithful. 
Hence $I_{(\phi,V)}=(0)$. 
Therefore by the gauge-invariant uniqueness theorem (Theorem \ref{T: gau inv un}) it follows that the $*$-epimorphism $\Phi \colon \ca(\pi_u, V_u) \to \ca(\phi,V)$ is injective. 
Hence $(\phi,V)$ defines a completely isometric representation of $A \times_\al^{\nc} \bZ_+^n$. 

To finish the proof note that the completely contractive representation $(Up) \times \wt{\pi}|_A$ dilates the completely isometric representation $V \times \phi$, thus $Up \times \wt{\pi}|_A$ is also completely isometric.
\end{proof}

Next, we will show that $\wt{B}p, p\wt{B}$ are contained in $\fA$. 
We begin with a technical lemma. 
Again, we write $b$ instead of $\widehat{\pi}(b)$.

\begin{lemma}\label{L: kill fp}
The C*-algebra $\wt{B}$ is contained in $\fA + p^\perp \wt B p^\perp$.
\end{lemma}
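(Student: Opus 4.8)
The plan is to establish the inclusion by means of a Schur-type compression. Define the bounded linear map $E \colon \wt{B} \to \wt{B} \rtimes_{\wt{\be}} \bZ^n$ by $E(x) = px + xp - pxp$, so that $x - E(x) = p^\perp x p^\perp$ for every $x$. Since $p^\perp x p^\perp \in p^\perp \wt{B} p^\perp$ whenever $x \in \wt{B}$, it suffices to prove that $E(\wt{B}) \subseteq \fA$; the desired inclusion then follows from the decomposition $x = E(x) + p^\perp x p^\perp$. As $E$ is bounded and $\fA$ is norm-closed, I only need to check that $E(x) \in \fA$ on the spanning monomials $x = U_{\un x}\, \wh{\pi}(b)\, U_{\un x}^*$ with $\un x \in \bZ^n_+$ and $b \in B$.

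Next I would compute $E$ on such a monomial. Using $p U_{\un x} = U_{\un x}\, \wt{\be^{(1)}}_{\un x}(p)$ together with Lemma \ref{L: units}, which gives $\wt{\be^{(1)}}_{\un x}(p) = \sum_{\un z \le \un x} 1 \otimes e_{\un z}$, and the fact that multiplication in $B$ is fibrewise, I obtain $p\, U_{\un x}\wh{\pi}(b) U_{\un x}^* = U_{\un x}\, \wh{\pi}\big(\sum_{\un w \le \un x} b_{\un w}\big) U_{\un x}^*$, where $b_{\un w}$ denotes the component of $b$ in $B_{\un w}$; the identical expression results for $xp$ and for $pxp$, whence
\[
E(x) = \sum_{\un w \le \un x} U_{\un x}\, \wh{\pi}(b_{\un w})\, U_{\un x}^* .
\]
The point is that this is a \emph{finite} sum, since the order interval $\{\un w : \un w \le \un x\}$ is finite; so no closure issue arises here. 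The task thus reduces to showing that a single monomial $U_{\un x}\,(q_{\un w}(a') \otimes e_{\un w})\, U_{\un x}^*$ with $\un w \le \un x$ and $a' \in A$ lies in $\fA$.

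I would prove this last claim by induction on $|\un w| = \sum_i w_i$. For $\un w = \un 0$ the monomial is exactly $U_{\un x}\,(a' \otimes e_{\un 0})\, U_{\un x}^*$, a generator of $\fA$ by Proposition \ref{P: corner}. For the inductive step, Lemma \ref{L: on 0} yields
\[
\be_{\un w}(a' \otimes e_{\un 0}) = q_{\un w}(a') \otimes e_{\un w} + \sum_{\un u < \un w} q_{\un u}\big(\al_{\un w - \un u}(a')\big) \otimes e_{\un u} .
\]
Conjugating by $U_{\un x}$ and solving for the top term, each lower monomial $U_{\un x}\,(q_{\un u}(\cdot) \otimes e_{\un u})\, U_{\un x}^*$ (with $\un u < \un w \le \un x$) lies in $\fA$ by the inductive hypothesis, while the remaining term rewrites, via $\be_{\un w}(c) = U_{\un w}^* c\, U_{\un w}$ and $U_{\un x} U_{\un w}^* = U_{\un x - \un w}$, as
\[
U_{\un x}\, \be_{\un w}(a' \otimes e_{\un 0})\, U_{\un x}^* = U_{\un x - \un w}\,(a' \otimes e_{\un 0})\, U_{\un x - \un w}^* ,
\]
which is again a generator of $\fA$ because $\un x - \un w \in \bZ^n_+$. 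This closes the induction and completes the proof. I expect the principal obstacle to be exactly this inductive step: it is where the triangular structure of the dilation $(B,\be,\bZ^n_+)$ and the adding-tail formula of Lemma \ref{L: on 0} are exploited to manufacture the off-diagonal fibre elements $q_{\un w}(a')\otimes e_{\un w}$ inside $\fA$, everything else being a bounded-map-and-finiteness formality.
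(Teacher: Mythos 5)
Your proof is correct, and while its computational core overlaps with the paper's, the organization and the key induction are genuinely different. The paper proves an elementwise dichotomy on the spanning monomials: $U_{\un x}(b_{\un y}\otimes e_{\un y})U_{\un x}^*$ lies in $\fA$ when $\un x \ge \un y$ and in $p^\perp \wt B p^\perp$ when $\un x \not\ge \un y$; the first case is handled by a two-stage reduction, first factoring out a vector $\un r$ supported in $\supp(\un y)$ to reduce to $\un y \le \un 1$, then peeling off one coordinate at a time via the defining formula for $\be_{\Bi}$. You replace the dichotomy by the linear splitting $x = E(x) + p^\perp x p^\perp$ with $E(x) = px + xp - pxp$; since $E$ is bounded and $\fA$ is closed, the reduction to monomials and the passage to closed spans are automatic. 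This actually makes explicit a point the paper leaves implicit: why the a priori non-closed sum $\fA + p^\perp \wt B p^\perp$ absorbs limits (the compressions $h \mapsto php$ and $h \mapsto p^\perp h p^\perp$ are bounded and $p\fA p = \fA$). In place of the paper's two-stage support reduction, you run a single induction on $|\un w|$ driven by Lemma \ref{L: on 0}, which writes $q_{\un w}(a')\otimes e_{\un w}$ as $\be_{\un w}(a'\otimes e_{\un 0}) = U_{\un w}^*(a'\otimes e_{\un 0})U_{\un w}$ minus fibres of strictly smaller total degree; together with $U_{\un x}U_{\un w}^* = U_{\un x - \un w}$ this collapses the paper's two reduction steps into one. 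Both arguments ultimately rest on the same identities, namely $pU_{\un x} = U_{\un x}\,\be^{(1)}_{\un x}(p)$ and Lemma \ref{L: units}, so the content runs parallel; your version is more economical because it reuses Lemma \ref{L: on 0} (which the paper proves anyway) instead of redoing that combinatorics inline.
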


\begin{proof}
We have
\begin{align*}
\wt B
& = 
\overline{\Span} \{ U_{\un{x}} b U_{\un{x}}^* : \un{x} \in \bZ_+^n, b \in B \} \\
& = 
\overline{\Span} \{ U_{\un{x}} (b_{\un{y}}\otimes e_{\un{y}}) U_{\un{x}}^* : \un{x} \in \bZ_+^n, b_{\un{y}} \in B_{\un{y}} \}.
\end{align*}
We will show that $U_{\un{x}} (b_{\un{y}}\otimes e_{\un{y}}) U_{\un{x}}^*$ belongs to $\fA$ when $\un x \ge \un y$ and lies in $p^\perp \wt B p^\perp$ when $\un x\not\ge\un y$.

When $\un{x} \not\ge \un{y}$, then by Lemma~\ref{L: units},
\begin{align*}
 p \big( U_{\un{x}} (b_{\un{y}} \otimes e_{\un{y}}) U_{\un{x}}^* \big) &
 = U_{\un{x}} \be^{(1)}_\un x(p) (b_{\un{y}} \otimes e_{\un{y}}) U_{\un{x}}^* \\&
 = U_{\un{x}}\Big( \sum_{\un{w} \, \leq \, \un{x}} 1 \otimes e_{\un{w}} \Big) (b_{\un{y}} \otimes e_{\un{y}}) U_{\un{x}}^* = 0 .
\end{align*}
Similarly, $\big( U_{\un{x}} (b_{\un{y}} \otimes e_{\un{y}}) U_{\un{x}}^* \big)p = 0$.

Now suppose that $\un x \ge \un y$. 
When $\un{y} = \un{0}$, then $U_{\un{x}} (b_{\un{0}} \otimes e_{\un{0}}) U_{\un{x}}^*$ is in $\fA$ by definition. 
It remains to consider the case when $\un x \ge \un y \ne \un0$. 
We will reduce this to the case when $\un{y} \leq \un{1}$. 
Without loss of generality we can assume that
\begin{align*}
\un{y} = (y_1, \dots, y_l, y_{l+1}, \dots, y_n),
\end{align*}
after a re-arrangement, with $y_1,\dots, y_l >1$ and $y_{l+1}, \dots, y_n \leq 1$. 
With respect to that re-arrangement let 
\begin{align*}
\un{r} = (r_1, \dots, r_l, 0, \dots, 0)
\end{align*}
such that $y_i = 1 + r_i$ for $i=1, \dots, l$, and 
\begin{align*}
\un{y'}= \un{y} - \un{r} = (1, \dots, 1, y_{l+1}, \dots, y_n). 
\end{align*}
Since $\supp(\un{r}) \subseteq \supp(\un{y'})$ we obtain that $\be_{\un{r}}(b \otimes e_{\un{y'}}) = b \otimes e_{\un{y'} + \un r} = b \otimes e_{\un{y}}$, therefore
\begin{align*}
 U_{\un{x}} (b \otimes e_{\un{y}}) U_{\un{x}}^*&
 = U_{\un{x}- \un r} U_\un r (b \otimes e_{\un{y}}) U_{\un{r}}^* U_{\un{x}-\un r}^* \\&
 = U_{\un{x}- \un r} U_\un r \be_\un r(b \otimes e_{\un{y'}}) U_{\un{r}}^* U_{\un{x}-\un r}^* \\&
 = U_{\un{x}- \un r} (b \otimes e_{\un{y'}}) U_{\un{x}-\un r}^*,
\end{align*}
and note that $\un{y'} \leq \un{x} - \un{r}$ and $\un{y'} \leq \un{1}$. 
This reduces the problem to $\un{0} < \un{y} \leq \un{1}$. 
There is at least one co-ordinate of $\un{y}$ that is equal to $1$. 
Rearrange so that
\begin{align*}
\un{y} = (1, y_2, \dots, y_n), \, \un{x}=(x_1, x_2, \dots, x_n),
\end{align*}
and notice that $x_1 \geq y_1 = 1>0$. By the definition of $\be_{\bo{1}}$ we have that
\begin{align*}
\be_{\bo{1}} (q_{\un{y} - \bo{1}}(a) \otimes e_{\un{y} - \bo{1}})
=
q_{\un{y}- \bo{1}}\al_{\bo{1}}(a) \otimes e_{\un{y} - \un{1}} + q_{\un{y}}(a) \otimes e_{\un{y}}.
\end{align*}
In particular
\begin{align*}
q_{\un{y}}(a) \otimes e_{\un{y}} = \be_{\bo{1}} (q_{\un{y} - \bo{1}}(a) \otimes e_{\un{y} - \bo{1}}) - q_{\un{y}- \bo{1}}\al_{\bo{1}}(a) \otimes e_{\un{y} - \un{1}}.
\end{align*}
Therefore
\begin{align*}
U_{\un{x}} (q_{\un{y}}(a) \otimes e_{\un{y}}) U_{\un{x}}^*
& = \\
& \hspace{-1in} =
U_{\un{x} - \bo{1}} U_{\bo{1}}(\be_{\bo{1}} (q_{\un{y} - \bo{1}}(a) \otimes e_{\un{y} - \bo{1}}))U_{\bo{1}}^*U_{\un{x} - \bo{1}}^* -
U_{\un{x}} (q_{\un{y}- \bo{1}}\al_{\bo{1}}(a) \otimes e_{\un{y} - \bo{1}}) U_{\un{x}}^*\\
& \hspace{-1in} =
U_{\un{x} - \bo{1}} (q_{\un{y} - \bo{1}}(a) \otimes e_{\un{y} - \bo{1}}) U_{\un{x} - \bo{1}}^* -
U_{\un{x}} (q_{\un{y}- \bo{1}}\al_{\bo{1}}(a) \otimes e_{\un{y} - \bo{1}}) U_{\un{x}}^*.
\end{align*}
In particular we obtain that
\begin{align*}
U_{\un{x}} (q_{\un{y}}(a) \otimes e_{\un{y}}) U_{\un{x}}^*
\in
\Span\{ U_{\un{w}} (q_{\un{l}}(a) \otimes e_{\un{l}}) U_{\un{w}}^* : \un{w} \in \bbZ_+^n, \un{l}< \un{y} \},
\end{align*}
that is we can describe the element $U_{\un{x}} (q_{\un{y}}(a) \otimes e_{\un{y}}) U_{\un{x}}^*$ by using monomials on a strictly smaller support. 
If $\un{y} - \bo{1} = \un{0}$ then $U_{\un{x}} (q_{\un{y}}(a) \otimes e_{\un{y}}) U_{\un{x}}^* \in \fA$. 
Otherwise $\un{y}$ has one more non-zero co-ordinate, and induction finishes the proof.
\end{proof}

Recall that a corner of a C*-algebra is called \emph{full} if it is not contained in any non-trivial ideal of the C*-algebra.

\begin{lemma}\label{L: fA corner}
The C*-algebra $\fA$ is a full corner of $\wt{B} \rtimes_{\wt{\be}} \bZ^n$.
\end{lemma}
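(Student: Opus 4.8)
The plan is to identify $\fA$ with the corner $p\,(\wt{B} \rtimes_{\wt{\be}} \bZ^n)\,p$ and then check that this corner is full. Write $C = \wt{B} \rtimes_{\wt{\be}} \bZ^n$. Since Lemma~\ref{L: pfAp=fA} asserts that $p$ is the unit of $\fA$, every $x \in \fA$ satisfies $x = pxp$, so $\fA \subseteq pCp$ is immediate. The substance is the reverse inclusion $pCp \subseteq \fA$.

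For that, recall $C$ is the closed linear span of the monomials $U_{\un g}\,d$ with $\un g \in \bZ^n$ and $d \in \wt{B}$, so by continuity it suffices to show $p\,U_{\un g}\,d\,p \in \fA$ for each such monomial. Writing $\un g = \un g_+ - \un g_-$ with $\un g_+ \wedge \un g_- = \un0$ and $U_{\un g} = U_{\un g_+}U_{\un g_-}^*$, I would repeatedly use the covariance relations $\be_{\un x}(b) = U_{\un x}^* b\, U_{\un x}$ (equivalently $U_{\un g}^* b = \wt{\be}_{\un g}(b)\,U_{\un g}^*$ and $p\,U_{\un x} = U_{\un x}\wt{\be}_{\un x}(p)$) to push every unitary to the outside, obtaining
\[
p\,U_{\un g}\,d\,p = U_{\un g_+}\,w\,U_{\un g_-}^*, \qquad w = \wt{\be}_{\un g_+}(p)\,\wt{\be}_{\un g_-}(d)\,\wt{\be}_{\un g_-}(p) \in \wt{B}.
\]
Using Lemma~\ref{L: kill fp} to split $\wt{\be}_{\un g_-}(d) = x + y$ with $x \in \fA$ and $y \in p^\perp \wt{B} p^\perp$, the $\fA$-part contributes $p\,U_{\un g_+}\,x\,U_{\un g_-}^*\,p$; expanding $x$ over the monomials $U_{\un a}(c \otimes e_{\un0})U_{\un b}^*$ of Proposition~\ref{P: corner} and using $p\,U_{\un m}(c\otimes e_{\un0}) = U_{\un m}(c \otimes e_{\un0})$ (because $\wt{\be}_{\un m}(p)$ fixes the $e_{\un0}$-component), each term collapses to a monomial $U_{\un g_+ + \un a}(c \otimes e_{\un0})U_{\un b + \un g_-}^* \in \fA$. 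The remaining piece $U_{\un g_+}\wt{\be}_{\un g_+}(p)\,y\,\wt{\be}_{\un g_-}(p)U_{\un g_-}^*$ must still be placed in $\fA$; here I would not use the coarse decomposition but instead expand $y$ directly over the monomials $U_{\un s}(b \otimes e_{\un t})U_{\un s}^*$ spanning $\wt{B}$ and re-run the support bookkeeping from the proof of Lemma~\ref{L: kill fp}, now carrying the flanking unitaries $U_{\un g_\pm}$: the compressions by $\wt{\be}_{\un g_+}(p)$ and $\wt{\be}_{\un g_-}(p)$ force the surviving monomials to have $\un t \leq \un g_\pm$, after which the inductive reduction of Lemma~\ref{L: kill fp} rewrites them as $\fA$-monomials of the form $U_{\un x}(a \otimes e_{\un0})U_{\un y}^*$.

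Finally I would prove fullness. Let $J = \overline{C\,p\,C}$ be the ideal generated by $p$. Since $\wt{\be}_{\un x}(p) = U_{\un x}^*\, p\, U_{\un x} \in J$ for every $\un x \in \bZ_+^n$ and, by Lemma~\ref{L: units}, $\wt{\be}_{\un x}(p) = \sum_{\un y \leq \un x} 1 \otimes e_{\un y}$, so that $1 \otimes e_{\un y} \leq \wt{\be}_{\un x}(p)$ whenever $\un y \leq \un x$, hereditarity of the ideal $J$ yields $1 \otimes e_{\un y} \in J$ for all $\un y \in \bZ_+^n$. Consequently each summand $B_{\un y} = (1 \otimes e_{\un y})\,B\,(1 \otimes e_{\un y}) \subseteq J$, whence $B \subseteq J$, then $\wt{B} = \overline{\bigcup_{\un x}\wt{\be}_{-\un x}(B)} \subseteq J$, and finally $C = \overline{\Span}\{U_{\un g}\,c : c \in \wt{B}\} \subseteq J$. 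Thus $J = C$ and the corner is full.

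I expect the main obstacle to be the second paragraph: showing that the $p^\perp \wt{B}\,p^\perp$-contribution, once flanked by $U_{\un g_\pm}$ and compressed by the larger projections $\wt{\be}_{\un g_\pm}(p)$ (which dominate $p$ and hence do \emph{not} annihilate it), still reduces into $\fA$. This forces one to re-run the support-tracking induction of Lemma~\ref{L: kill fp} in the presence of the outer unitaries rather than quoting it as a black box.
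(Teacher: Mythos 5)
Your overall skeleton matches the paper's (identify $\fA$ with $p\,(\wt{B}\rtimes_{\wt{\be}}\bZ^n)\,p$ via Lemmas~\ref{L: pfAp=fA} and \ref{L: kill fp}, then prove fullness), but the step you yourself flag as the ``main obstacle'' is a genuine gap, and it is one of your own making. By commuting both copies of $p$ past the unitaries \emph{before} decomposing, you replace $p$ by the strictly larger projections $\wt{\be^{(1)}}_{\un{g}_\pm}(p)$, which indeed do not annihilate $y \in p^\perp \wt{B} p^\perp$; you then leave that term unproved, and the bookkeeping you sketch for it is incorrect. The paper never creates this problem: it applies Lemma~\ref{L: kill fp} while $p$ is still adjacent to the $\wt{B}$-factor, so $(f+g)p = f$ kills the bad part at once, leaving
\[
p U_{\un{y}}(f+g)p \;=\; p U_{\un{y}_-}^* U_{\un{y}_+} f \;=\; (U_{\un{y}_-}p)^*\, U_{\un{y}_+} f ,
\]
which lies in $\fA$ by the monomial description of Proposition~\ref{P: corner}. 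Your route can actually be completed, but not as you describe. For a spanning monomial $U_{\un{s}}(b \otimes e_{\un{t}})U_{\un{s}}^*$ of the $p^\perp\wt{B}p^\perp$-part (so $\un{s} \not\geq \un{t}$), commuting the unitaries gives
\[
\wt{\be^{(1)}}_{\un{g}_+}(p)\, U_{\un{s}}(b \otimes e_{\un{t}})U_{\un{s}}^*\, \wt{\be^{(1)}}_{\un{g}_-}(p)
\;=\; U_{\un{s}}\, \wt{\be^{(1)}}_{\un{s}+\un{g}_+}(p)\,(b \otimes e_{\un{t}})\,\wt{\be^{(1)}}_{\un{s}+\un{g}_-}(p)\, U_{\un{s}}^* ,
\]
and by Lemma~\ref{L: units} this is nonzero only if $\un{t} \leq \un{s}+\un{g}_+$ \emph{and} $\un{t} \leq \un{s}+\un{g}_-$, i.e.\ $\un{t} \leq \un{s} + (\un{g}_+ \wedge \un{g}_-) = \un{s}$, contradicting $\un{s}\not\geq\un{t}$. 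So the leftover piece is exactly zero. Your stated survival condition ``$\un{t} \leq \un{g}_\pm$'' omits the shift by $\un{s}$ coming from conjugation, and your conclusion that nonzero survivors remain and get rewritten as $\fA$-monomials by re-running the induction is not what happens; in any case the proposal contains no proof of this step.

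A secondary issue is your fullness argument. It is genuinely different from the paper's (which builds $B_{\un{x}} \subseteq \I$ inductively on supports, starting from identities like $q_{\Bi}(a^*a)\otimes e_{\Bi} = U_{\Bi}^*(a^*\otimes e_{\un{0}})p(a\otimes e_{\un{0}})U_{\Bi} - \al_{\Bi}(a^*a)\otimes e_{\un{0}} \in \I$), and in the unital case your hereditarity argument is correct and shorter. But this section explicitly covers non-unital systems via the unitization convention, and there $p$, $\wt{\be^{(1)}}_{\un{x}}(p)$ and $1\otimes e_{\un{y}}$ are elements of $\wt{B^{(1)}}\rtimes_{\wt{\be^{(1)}}}\bZ^n$, \emph{not} of $C = \wt{B}\rtimes_{\wt{\be}}\bZ^n$; so the assertions ``$U_{\un{x}}^*pU_{\un{x}} \in J$'' and ``$1\otimes e_{\un{y}} \in J$'' do not parse for an ideal $J$ of $C$, and hereditarity inside $C$ cannot be invoked on them. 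This is repairable (cut everything down by elements of $C$ before applying hereditarity, e.g.\ replace ``$1\otimes e_{\un{y}}\in J$'' by ``$c^*(1\otimes e_{\un{y}})c \in J$ for all $c \in C$''), but as written the argument has a hole exactly in the case the construction was designed to handle.
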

\begin{proof}
Note that $p X \in \wt{B} \rtimes_{\wt{\be}} \bZ^n$ for all $X \in \wt{B} \rtimes_{\wt{\be}} \bZ^n$. 
This follows by the fact that
\[
p U_{\un{x}} b U_{\un{y}}^* = U_{\un{x}} \be_{\un{x}}^{(1)}(p) b U_{\un{y}}^*
\]
for all $\un{x}, \un{y} \in \bZ_+^n$ and $b \in B$, and $\be_{\un{x}}^{(1)}(p) b$ is of course in $B$.

By Lemma~\ref{L: pfAp=fA}, $\fA = p \fA p \subseteq p( \wt{B} \rtimes_{\wt{\be}} \bZ^n) p$. 
To check that $\fA = p (\wt{B} \rtimes_{\wt{\be}} \bZ^n ) p$, recall that $\wt{B} \rtimes_{\wt{\be}} \bZ^n$ is spanned by terms $U_\un y (U_\un x b U_\un x^*)$ and their adjoints, where $\un y \in \bZ^n$, $\un x \in \bZ_+^n$ and $b \in B$. 
So it suffices to show that $p U_\un y (U_\un x b U_\un x^*)p $ lies in $\fA$. 
By Lemma~\ref{L: kill fp}, we can write $U_\un x b U_\un x^* = f + g$ where $f \in \fA$ and $g = p^\perp g p^\perp$. 
Write $\un y = -\un{y}_- + \un{y}_+$. Then
\begin{align*}
p U_\un y (f + g)p 
& = 
p U_{\un{y}_-}^* U_{\un{y}_+} (fp + gp) \\
& = 
(U_{\un{y}_-}p)^* \cdot U_{\un{y}_+} f \in \fA.
\end{align*}

In order to show that this corner is full, let $\I$ be an ideal of the crossed product that contains $\fA$. 
We will show that $\widehat{\pi}(B) \subseteq \I$ and then it follows that $\I$ is trivial. 
In what follows we omit the symbol $\widehat{\pi}$, for convenience. 
A direct computation shows that
\begin{align*}
q_{\bo{i}}(a^*a)\otimes e_{\bo{i}}
& =
\be_{\bo{i}}(a^*a \otimes e_{\un{0}}) - \al_{\bo{i}}(a^*a) \otimes e_{\un{0}} \\
& =
U_{\bo{i}}^* (a^* \otimes e_{\un{0}}) p (a \otimes e_{\un{0}}) U_{\bo{i}} - \al_{\bo{i}}(a^*a) \otimes e_{\un{0}} \in \I.
\end{align*}
Thus $B_{\bo{i}} \subseteq \I$ for all $\bo{i} = \bo{1}, \dots, \bo{n}$.
For $\un{x}= \bo{i} + \bo{j}$ we have that
\begin{align*}
q_{\un{x}}(a^*a) \otimes e_{\un{x}}
& =
\be_{\bo{j}}(q_{\bo{i}}(a^*a) \otimes e_{\bo{i}}) - q_{\bo{i}}\al_{\bo{j}}(a^*a) \otimes e_{\bo{i}} \\
& =
U_{\bo{j}}^*(q_{\bo{i}}(a^*) \otimes e_{\bo{i}}) (1 \otimes e_{\Bi}) (q_{\bo{i}}(a) \otimes e_{\bo{i}}) U_{\bo{j}} - q_{\bo{i}}\al_{\bo{j}}(a^*a) \otimes e_{\bo{i}} \\
& \in
\I \cdot B_{\bo{i}} \cdot \I + B_{\bo{i}} \subseteq \I.
\end{align*}
Hence $B_{\bo{i} + \bo{j}} \in \I$ for all $\bo{i}, \bo{j} = \bo{1}, \dots, \bo{n}$. 
Moreover for $\un{y} = \un{x} + \bo{k} = \bo{i} + \bo{j} + \bo{k}$ we have that
\begin{align*}
q_{\un{y}}(a^*a) \otimes e_{\un{y}}
& =
\be_{\bo{k}}(q_{\un{x}}(a^*a) \otimes e_{\un{x}}) - q_{\un{x}}\al_{\bo{k}}(a^*a) \otimes e_{\un{x}} \\
& =
U_{\bo{k}}^* (q_{\un{x}}(a^*) \otimes e_{\un{x}}) (1 \otimes e_{\un{x}}) (q_{\un{x}}(a) \otimes e_{\un{x}}) U_{\bo{k}} - q_{\un{x}}\al_{\bo{k}}(a^*a) \otimes e_{\un{x}} \\
&\in 
\I \cdot B_{\bo{i} + \bo{j}} \cdot \I + \I \subseteq \I.
\end{align*}
Inductively we get that $B_{\un{x}} \subseteq \I$ for all $\un{x} \leq \un{1}=(1,1, \dots, 1)$. 
Finally for $\un{y} \in \bbZ_+^n$ that is not less or equal to $\un{1}$, we can set $\un{x} = \un{y} \wedge \un{1} \leq \un{1}$ and $\un{r}= \un{y} - \un{x} \notin \un{x}^\perp$. 
Then
\begin{align*}
q_{\un{y}}(a^*a) \otimes e_{\un{y}}
& =
\be_{\un{r}}(q_{\un{x}}(a^*a) \otimes e_{\un{x}})\\
& =
U_{\un{r}}^* (q_{\un{x}}(a^*) \otimes e_{\un{x}}) (1 \otimes e_{\un{x}}) (q_{\un{x}}(a) \otimes e_{\un{x}}) U_{\un{r}} \\
& \in 
\I \cdot B_{\un{x}} \cdot \I + \I \subseteq \I.
\end{align*}
Therefore $B_{\un{y}} \subseteq \I$ for all $\un{y} \in \bbZ_+^n$. 
Consequently $B \subseteq \I$.
\end{proof}

The projections $U_\Bi p U_\Bi^*$ can be seen as a system of units for the fibres of the C*-algebra $\fA$. 
This is shown by the following lemma.

\begin{lemma} \label{L:U_ipU_i^*}
If $\Bi \in \supp(\un x)$, then for all $a\in A$,
\begin{align*}
U_\Bi p U_\Bi^* \cdot U_{\un x} (a \otimes e_\un0) U_{\un x}^* = 
U_{\un x} (a \otimes e_\un0) U_{\un x}^* =
U_{\un x} (a \otimes e_\un0) U_{\un x}^* \cdot U_\Bi p U_\Bi^*.
\end{align*}
If $\Bi \in \un x^\perp$, then for all $a\in A$,
\begin{align*}
U_\Bi p U_\Bi^* \cdot U_{\un x} (a \otimes e_\un0) U_{\un x}^* = 
U_{\un x + \Bi} (\al_\Bi(a) \otimes e_\un0) U_{\un x + \Bi}^* =
U_{\un x} (a \otimes e_\un0) U_{\un x}^* \cdot U_\Bi p U_\Bi^*.
\end{align*}
\end{lemma}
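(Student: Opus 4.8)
The statement is Lemma \ref{L:U_ipU_i^*}, which computes how the ``fibre units'' $U_\Bi p U_\Bi^*$ act on the generating monomials $U_{\un x}(a \otimes e_\un0) U_{\un x}^*$ of $\fA$. The plan is to reduce everything to two computational facts already available to us: first, the formula $\wt{\be^{(1)}}_{\un x}(p) = \sum_{\un w \le \un x} 1 \otimes e_{\un w}$ from Lemma \ref{L: units}, and second, the basic relations $b U_{\un x} = U_{\un x}\be_{\un x}(b)$, equivalently $\be_{\un x}(b) = U_{\un x}^* b U_{\un x}$, recorded just before the statement. The whole proof should be a direct symbol-pushing argument; there is no dilation theory or C*-structure to invoke beyond these algebraic identities.

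**Key steps.** First I would rewrite $U_\Bi p U_\Bi^*$ in a more usable form. Since $p = 1 \otimes e_\un0$, we have $U_\Bi p U_\Bi^* = U_\Bi (1 \otimes e_\un0) U_\Bi^*$, and using $b U_\Bi = U_\Bi \be_\Bi(b)$ this can be pushed past $U_{\un x}$-type terms. The cleanest route is to commute $U_\Bi p U_\Bi^*$ with the central-looking piece $U_{\un x}(\cdot) U_{\un x}^*$ by moving the inner $U_\Bi$'s through: the product $U_\Bi p U_\Bi^* U_{\un x}(a \otimes e_\un0)U_{\un x}^*$ rearranges (using commutativity of the $U$'s, which are honest unitaries on $\wh K$) into $U_{\un x}\bigl(\be_{\un x}^{(1)}(U_\Bi p U_\Bi^*)\cdot (a \otimes e_\un0)\bigr)U_{\un x}^*$, or more directly I would compute $U_{\un x}^* (U_\Bi p U_\Bi^*) U_{\un x} = \wt{\be^{(1)}}_{\un x}\bigl(\wt{\be^{(1)}}_{-\Bi}(p)\bigr)$-type expressions and then apply the meet formula $\wt{\be^{(1)}}_{\un a}(p)\wt{\be^{(1)}}_{\un b}(p) = \wt{\be^{(1)}}_{\un a \wedge \un b}(p)$ from Lemma \ref{L: units}. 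The case split $\Bi \in \supp(\un x)$ versus $\Bi \in \un x^\perp$ is exactly the split that determines whether the relevant meet absorbs the extra $\Bi$-coordinate (giving back the same monomial) or genuinely shifts the support by $\Bi$ (producing $U_{\un x + \Bi}(\al_\Bi(a)\otimes e_\un0)U_{\un x + \Bi}^*$). I would verify the left-multiplication identity explicitly and then obtain the right-multiplication identity either by a symmetric calculation or by taking adjoints of the left identity applied to $a^*$.

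**Main obstacle.** The one place demanding care is the case $\Bi \in \un x^\perp$, where the answer changes the support: I must check that $U_\Bi p U_\Bi^* \cdot U_{\un x}(a \otimes e_\un0)U_{\un x}^* = U_{\un x + \Bi}(\al_\Bi(a)\otimes e_\un0)U_{\un x + \Bi}^*$, which secretly uses the defining formula $\be_\Bi(a \otimes e_\un0) = \al_\Bi(a)\otimes e_\un0 + q_\Bi(a)\otimes e_\Bi$ together with the fact that multiplying by the projection $U_\Bi p U_\Bi^*$ kills the $q_\Bi(a) \otimes e_\Bi$ fibre and keeps only the $\al_\Bi(a) \otimes e_\un0$ part after conjugating up one level. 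Tracking which fibres survive the projection $p$ (i.e.\ which $1 \otimes e_{\un w}$ summands of $\wt{\be^{(1)}}_{\un x}(p)$ multiply nontrivially against $a \otimes e_\un0$) is the delicate bookkeeping; everything else is routine once the meet formula is applied. I expect no conceptual difficulty, only the need to be scrupulous about the support indices.
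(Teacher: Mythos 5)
Your proposal is correct and follows essentially the same route as the paper's proof: both rest on the relation $\be_{\un x}(b) = U_{\un x}^* b U_{\un x}$, the formula and meet identity for $\wt{\be^{(1)}}_{\un x}(p)$ from Lemma \ref{L: units}, commutativity of the $U_\Bi$'s, and, in the $\Bi \in \un x^\perp$ case, the expansion $\be_\Bi(a \otimes e_\un0) = \al_\Bi(a)\otimes e_\un0 + q_\Bi(a)\otimes e_\Bi$ with the projection $p$ killing the second summand. Your adjoint trick (applying the left identity to $a^*$ and using that $U_\Bi p U_\Bi^*$ is a self-adjoint projection and $\al_\Bi$ a $*$-map) is a slightly tidier way to get the right-hand identities than the paper's ``handled similarly,'' but it is the same computation.
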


\begin{proof}
First suppose that $\Bi \in \supp(\un x)$. Then
\begin{align*}
U_\Bi p U_\Bi^* \cdot U_{\un x} (a \otimes e_\un0) U_{\un x}^* 
& =
U_\Bi p U_{\un x - \Bi} (a \otimes e_\un0) U_{\un x}^* \\
& =
U_\Bi U_{\un x - \Bi} \be^{(1)}_{\un x - \Bi}(p) (a \otimes e_\un0) U_{\un x}^* \\
& =
U_{\un x} (a \otimes e_\un0) U_{\un x}^*.
\end{align*}
Multiplication on the right is handled in the same manner.

Now consider $\un x$ with $x_i=0$. 
Then $U_\Bi U_{\un{x}}^* = U_{\un x}^* U_\Bi$, and $\be^{(1)}_{\un x}(p) \be^{(1)}_{\Bi}(p) = p$ by Lemma~\ref{L: units}. Hence
\begin{align*}
U_\Bi p U_\Bi^* \cdot U_{\un x} (a \otimes e_\un0) U_{\un x}^* 
& =
U_\Bi p U_{\un x} U_\Bi^* (a \otimes e_\un0) U_{\un x}^* \\
& =
U_{\un x+\Bi} \be^{(1)}_{\un x}(p) \be_\Bi(a \otimes e_\un0) U_{\un x+\Bi}^* \\
& =
U_{\un x+\Bi} \be^{(1)}_{\un x}(p) \be^{(1)}_{\Bi}(p) \be_\Bi(a \otimes e_\un0) U_{\un x+\Bi}^* \\
& =
U_{\un x+\Bi} p \be_\Bi(a \otimes e_\un0) U_{\un x+\Bi}^* \\
& =
U_{\un x+\Bi} (\al_\Bi(a) \otimes e_\un0) U_{\un x+\Bi}^*.
\end{align*}
Multiplication on the right is handled similarly.
\end{proof}

\begin{corollary}\label{C:U_ipU_i^*}
For $X = \sum_{ \un{0} \leq \un{x} \leq \un{y}} U_{\un x} (a_{\un x} \otimes e_{\un{0}}) U_{\un x}^*$, we have
\begin{align*}
X \cdot \prod_{\Bi \in \supp(\un y)} (I - U_\Bi p U_\Bi^*)
& = 
\prod_{\Bi \in \supp(\un y)}^n (I - U_\Bi p U_\Bi^*) \cdot a_{\un{0}}\otimes e_{\un{0}} \\
& = 
a_{\un{0}}\otimes e_{\un{0}} \cdot \prod_{\Bi \in \supp(\un y)}^n (I - U_\Bi p U_\Bi^*).
\end{align*}
\end{corollary}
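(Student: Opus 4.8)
The plan is to treat $E := \prod_{\Bi \in \supp(\un y)}(I - U_\Bi p U_\Bi^*)$ as a product of commuting projections and to show that multiplying $X$ by $E$ annihilates every summand $U_{\un x}(a_{\un x} \otimes e_{\un0})U_{\un x}^*$ with $\un x \neq \un0$, leaving only the $\un x = \un0$ term $a_{\un0} \otimes e_{\un0}$. The whole argument is bookkeeping layered on top of Lemma~\ref{L:U_ipU_i^*}, together with the multiplicativity of the units recorded in Lemma~\ref{L: units}.

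First I would establish the two structural facts that drive everything. Since the relation $bU_{\un x} = U_{\un x}\wt{\be^{(1)}}_{\un x}(b)$ gives $U_\Bi p U_\Bi^* = \wt{\be^{(1)}}_{-\Bi}(p)$, Lemma~\ref{L: units} yields $U_\Bi p U_\Bi^* \cdot U_\Bj p U_\Bj^* = \wt{\be^{(1)}}_{-\Bi \wedge -\Bj}(p)$, which is symmetric in $\Bi,\Bj$; hence the factors of $E$ pairwise commute and $E$ is a well-defined projection, independent of the order of its factors. Next, in both of its cases $\Bi \in \supp(\un x)$ and $\Bi \in \un x^\perp$, Lemma~\ref{L:U_ipU_i^*} asserts that the left and right products of $U_\Bi p U_\Bi^*$ with a monomial $U_{\un x}(a\otimes e_{\un0})U_{\un x}^*$ coincide; that is, each $U_\Bi p U_\Bi^*$ commutes with every such monomial, and therefore so does $E$.

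With these in hand I would dispose of the off-diagonal terms. Expand $X \cdot E = \sum_{\un0 \le \un x \le \un y} U_{\un x}(a_{\un x} \otimes e_{\un0})U_{\un x}^* \cdot E$ and fix $\un x$ with $\un x \neq \un0$ and $\un x \le \un y$. Since $x_i \le y_i$ for each coordinate we have $\supp(\un x) \subseteq \supp(\un y)$, so we may pick $\Bi \in \supp(\un x) \subseteq \supp(\un y)$. Using commutativity of the factors I isolate $E = (I - U_\Bi p U_\Bi^*)E'$; the first case of Lemma~\ref{L:U_ipU_i^*} gives $U_\Bi p U_\Bi^* \cdot U_{\un x}(a_{\un x}\otimes e_{\un0})U_{\un x}^* = U_{\un x}(a_{\un x}\otimes e_{\un0})U_{\un x}^*$, so $(I - U_\Bi p U_\Bi^*)$ kills this monomial, and after commuting $E$ past it I conclude $U_{\un x}(a_{\un x}\otimes e_{\un0})U_{\un x}^* \cdot E = 0$.

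Finally, only the $\un x = \un0$ summand survives, and because $U_{\un0} = I$ this gives $X \cdot E = (a_{\un0}\otimes e_{\un0})E$; commuting $E$ with $a_{\un0}\otimes e_{\un0} = U_{\un0}(a_{\un0}\otimes e_{\un0})U_{\un0}^*$ then produces the two asserted identities $X\cdot E = E\,(a_{\un0}\otimes e_{\un0}) = (a_{\un0}\otimes e_{\un0})\,E$. I do not anticipate a genuine obstacle here; the one point that warrants care is the double role of Lemma~\ref{L:U_ipU_i^*} — once (via Lemma~\ref{L: units}) to get that $E$ and its factors commute, so that a single factor of $E$ can be brought adjacent to a chosen monomial, and once to get the actual annihilation. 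The only side checks are the inclusion $\supp(\un x) \subseteq \supp(\un y)$ and keeping the possibly-unitized system straight so that $p = 1\otimes e_{\un0}$ is legitimate.
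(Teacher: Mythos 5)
Your proof is correct and is essentially the paper's argument: both hinge on Lemma~\ref{L:U_ipU_i^*}, using absorption ($U_\Bi p U_\Bi^*$ acts as an identity on the monomial) when $\Bi \in \supp(\un x)$ and two-sided commutation when $\Bi \in \un x^\perp$, and both exploit that $\un x \le \un y$ forces $\supp(\un x) \subseteq \supp(\un y)$, so only the $\un x = \un0$ term survives. The difference is purely organizational — the paper splits $X = X_{\bo1} + X_{\bo1^\perp}$ and peels off the factors $(I - U_\Bi p U_\Bi^*)$ one at a time by induction in a fixed order, so it never needs your preliminary observation (via $U_\Bi p U_\Bi^* = \wt{\be^{(1)}}_{-\Bi}(p)$ and Lemma~\ref{L: units}) that these projections pairwise commute, whereas you establish that commutativity first and then annihilate each nonzero summand individually; both routes are sound.
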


\begin{proof}
Without loss of generality assume that $\supp(\un y) = \{\bo1, \cdots, \bo{k}\}$ for $X = \sum_{\un0 \leq \un{x} \leq \un{y}} U_{\un x} (a_{\un x} \otimes e_{\un{0}}) U_{\un x}^*$. 
Let 
\begin{align*}
X_{\bo1} = \sum_{\bo1 \in \supp(\un{x}), \un{x} \leq \un{y}} U_{\un{x}} (a_{\un x} \otimes e_{\un{0}}) U_{\un x}^* 
\end{align*}
and $X_{\bo1^\perp} = X- X_{\bo1}$. 
Note that the monomials of $X_{\bo1^\perp}$ are supported on $\supp(\un x) \setminus \{\bo{1} \}$. 
Hence $X_{\bo1} \cdot U_{\bo1}pU_{\bo1}^* = X_{\bo1}$ and $X_{\bo1^\perp} U_{\bo1}pU_{\bo1}^* = U_{\bo1} p U_{\bo1}^* X_{\bo1^\perp}$.
Thus
\begin{align*}
X(I - U_{\bo1} p U_{\bo1}^*)
=
(X_{\bo1} + X_{\bo1^\perp})(I - U_{\bo1} p U_{\bo1}^*)
=
(I - U_{\bo1} p U_{\bo1}^*)X_{\bo1^\perp}.
\end{align*}
The proof is then completed by induction on $\bo2, \dots, \bo{k}$.
\end{proof}

{\em\textbf{Proof of Theorem}}~\textbf{\ref{T: cenv corner}.}
By Lemma \ref{L: fA corner} it suffices to show that $\fA$ is the C*-envelope of $A \times_\al^{\nc} \bZ_+^n$. 
We have already shown in Proposition \ref{P: corner} that $\fA$ is a C*-cover of $A \times_\al^{\nc} \bZ_+^n$. 
To this end, suppose that the \v{S}ilov ideal $\J$ of $A \times_\al^{\nc} \bZ_+^n$ in $\fA$ is non-trivial. 
Hence $\J$ must intersect the fixed point algebra $\fA^\ga$ non-trivially. 
By Lemma~\ref{L: corner Nc} and Corollary \ref{C: cores}, the fixed point algebra $\fA^\ga$ is the inductive limit of the cores 
\begin{align*}
B_F = \Span\{ U_\un x (a \otimes e_\un0) U_\un x^* : a \in A,\,\un x \in F\} 
\end{align*}
where $F$ is a grid in $\bZ_+^n$. 
Hence $\J$ must intersect some $B_F$ non-trivially. 
Without loss of generality, we may assume that $F = \{ \un x : \un x \le \un y\}$. 
Select a minimal value of $\un y$ so that $B_F \cap \J \ne \{0\}$. 
Therefore $\J$ contains a non-zero element of the form
\begin{align*}
X = \sum_{\un x \le \un y} U_\un x (a_\un x \otimes e_\un0)U_\un x^* .
\end{align*}
We wish to establish that $X=0$, which will imply that $\J = \{0\}$.

For $\Bi \in \supp(\un{y})$, note that $x_i=0$ and $\un x \leq \un{y}$ imply that $\un{x} \leq \un{y} - \Bi$. 
Hence
\begin{align*}
pU_\Bi^* \cdot X \cdot U_\Bi p
& =
pU_\Bi^* \sum_{\un{x} \leq \un{y},\, x_i>0} U_\un x (a_\un x \otimes e_\un0)U_{\un x}^*\ \cdot U_\Bi p \ + \\
& \hspace{3cm} +
pU_\Bi^*\sum_{\un x \le \un y,\, x_i=0} U_\un x (a_\un x \otimes e_\un0)U_\un x^* \ \cdot U_\Bi p \\
& =
\sum_{\un{x} \leq \un{y},\, x_i>0} pU_{\un x-\Bi} p (a_\un x \otimes e_\un0) p U^*_{\un x - \Bi} p \ + \\
& \hspace{3cm} +
\sum_{\un x \le \un y - \Bi,\, x_i=0} U_\un x p U_\Bi^* (a_\un x \otimes e_\un0) U_\Bi pU_{\un x}^*\\
& =
\sum_{\un{x} \leq \un{y},\, x_i>0} U_{\un x-\Bi} (a_\un x \otimes e_\un0) U^*_{\un x - \Bi} \ + \\
& \hspace{3cm} +
\sum_{\un x \le \un y - \Bi,\, x_i=0} U_\un x (\al_{\Bi}(a_\un x) \otimes e_\un0) U_{\un x}^* \\
& = \sum_{\un0 \leq \un{x} \leq \un{y} - \Bi } U_{\un{x}} (a_{\un x}' \otimes e_{\un0}) U_{\un{x}}^*,
\end{align*}
where we have used that the $U_\Bi p$ doubly commute in the second part of the second equation above. 
By the minimality of $\un{y}$ we get that $pU_\Bi \cdot X \cdot U_{\Bi}^* p =0$, therefore
\begin{align*}
U_\Bi p U_\Bi^* \cdot X = U_\Bi (p U_\Bi^* \cdot X \cdot U_{\Bi} p) U_{\Bi}^* = 0,
\end{align*}
because of Lemma \ref{L:U_ipU_i^*}. 
Since this holds for all $\Bi \in \supp(\un y)$ we obtain that
\begin{align*}
X 
& = 
X \cdot \prod_{\Bi \in \supp(\un y)} (I - U_\Bi p U_\Bi^*) = a_\un0 \otimes e_{\un{0}} \cdot \prod_{\Bi \in \supp(\un y)} (I - U_\Bi p U_\Bi^*).
\end{align*}
Without loss of generality we assume henceforth that $\supp(\un y) = \{ \bo1, \dots, \bo{k}\}$.

\smallskip
\noindent {\em\textbf{Claim.}} With this notation, $a_{\un0} \in I_{\un{x}}$ for $\un{x} = \sum_{i=1}^k \Bi$.\\
{\em\textbf{Proof of Claim.}} For $b \in \bigcap_{i=1}^k \ker\al_\Bi$, 
\[ 
(U_\Bi p U_\Bi^*)(b \otimes e_{\un0}) = U_\Bi p \big( \be_\Bi(b) \otimes e_{\un0} \big) U_\Bi^* = U_\Bi \big( \al_\Bi(b) \otimes e_0 \big) U_\Bi^* = 0 .
\]
Therefore
\begin{align*}
\J \ni X (b\otimes e_{\un0}) = (a_{\un0} \otimes e_{\un0})\ \prod_{i=1}^k (I - U_\Bi p U_\Bi^*) \ (b \otimes e_{\un0}) = (a_{\un0} b) \otimes e_{\un0}.
\end{align*}
Since the \v{S}ilov ideal cannot intersect $A$ we obtain that $a_{\un0} b = 0$ which shows that $a_{\un0} \in (\bigcap_{i=1}^k \ker\al_\Bi)^\perp$. 
Furthermore for any $\un{y} \in \un{x}^\perp$ we have that
\begin{align*}
& \J \ni pU_{\un y}^* (a_{\un0} \otimes e_{\un0}) \Big(\prod_{i=1}^k (I - U_\Bi p U_\Bi^*)\Big) U_{\un y} p = \\
& \hspace{5cm} =
\al_{\un y}(a_{\un0}) \Big(\prod_{i=1}^k (I - U_\Bi p U_\Bi^*)\Big) U_{\un y}^* U_{\un y} p \\
& \hspace{5cm} =
\al_{\un y}(a_{\un0}) \prod_{i=1}^k (I - U_\Bi p U_\Bi^*).
\end{align*}
Therefore $\al_{\un y}(a_{\un0}) \in (\bigcap_{i=1}^k \ker\al_\Bi)^\perp$, as well. 
Hence $a_{\un0} \in I_{\un x}$ and the proof of the claim is complete.

\smallskip

Our aim is to show that for $\un{x} = \sum_{i=1}^k \Bi$ we get,
\begin{align*}
X = a_{\un0} \otimes e_{\un{0}} \cdot \prod_{\Bi \in \supp(\un y)} (I - U_\Bi p U_\Bi^*) = U_{\un x} (q_{\un x}(a_{\un0}) \otimes e_{\un x}) U_{\un x}^*=0.
\end{align*}
This contradicts the assumption that $\J$ is non-trivial.

To this end, recall that the $U_\Bi$ are unitaries. 
Hence
\begin{align*}
a_{\un0} \otimes e_{\un{0}}
& =
(a_{\un0} \otimes e_{\un{0}}) U_{\un x} U_{\un x}^* \\
& =
U_{\un{x}} \be_{\un x}(a_{\un0} \otimes e_{\un{0}}) U_{\un x}^* \\
& =
U_{\un x} \Big( \sum_{\un{w} + \un{z} = \un{x}} q_{\un{w}} \al_{\un z}(a_{\un0}) \otimes e_{\un{w}} \Big) U_{\un x}^*,
\end{align*}
by Lemma \ref{L: on 0}. 
We compute
\begin{align*}
&U_{\un x} \Big( \sum_{\un{w} + \un{z} = \un{x}} q_{\un{w}} \al_{\un z}(a_{\un0}) \otimes e_{\un{w}} \Big) U_{\un x}^* \cdot U_{\bo1}p U_{\bo1}^* = \\
&\hspace{4cm} =
U_{\un x} \Big( \sum_{\un{w} + \un{z} = \un{x}} q_{\un{w}} \al_{\un z}(a_{\un0}) \otimes e_{\un{w}}\Big) \cdot \be^{(1)}_{\un x - \bo1}(p) U_{\un x}^* \\
&\hspace{4cm} =
U_{\un x} \Big( \sum_{\un{w} + \un{z} = \un{x}} q_{\un{w}} \al_{\un z}(a_{\un0}) \otimes e_{\un{w}} \cdot \sum_{0 \leq \un{w} \leq \un {x} - \bo 1} 1 \otimes e_{\un{w}} \Big) U_{\un x}^* \\
&\hspace{4cm} =
U_{\un x} \Big( \sum_{ \un{w} + \un{z} = \un{x}, 0 \leq \un{w} \leq \un {x} - \bo 1} q_{\un{w}} \al_{\un z}(a_{\un0}) \otimes e_{\un{w}} \Big) U_{\un x}^*.
\end{align*}
Consequently,
\begin{align*}
(a_{\un0} \otimes e_{\un{0}}) (I - U_{\bo1} p U_{\bo1}^*)
& =
U_{\un x} \Big( \sum_{\un{w} + \un{z} = \un{x}} q_{\un{w}} \al_{\un z}(a_{\un0}) \otimes e_{\un{w}} \Big) U_{\un x}^* -\\
& \hspace{1cm}
-U_{\un x} \Big( \sum_{ \un{w} + \un{z} = \un{x}, 0 \leq \un{w} \leq \un {x} - \bo 1} q_{\un{w}} \al_{\un z}(a_{\un0}) \otimes e_{\un{w}} \Big) U_{\un x}^* \\
& =
U_{\un x} \Big( \sum_{\un{w} + \un{z} = \un{x}, \bo 1 \in \supp(\un{w})} q_{\un{w}} \al_{\un z}(a_{\un0}) \otimes e_{\un{w}} \Big) U_{\un x}^*.
\end{align*}
Inductively we get that
\begin{align*}
a_{\un0} \otimes e_{\un{0}} \cdot \prod_{\Bi \in \supp(\un y)} (I - U_\Bi p U_\Bi^*) 
& =
U_{\un x} \Big( \sum_{\un{w} + \un{z} = \un{x}, \bo 1, \dots, \bo{k} \in \supp(\un{w})} q_{\un{w}} \al_{\un z}(a_{\un0}) \otimes e_{\un{w}} \Big) U_{\un x}^* \\
& =
U_{\un x} \Big( \sum_{\un{w} + \un{z} = \un{x}, \un{w} = \un{x}} q_{\un{w}} \al_{\un z}(a_{\un0}) \otimes e_{\un{w}} \Big) U_{\un x}^* \\
& =
U_{\un{x}} (q_{\un{x}}(a_{\un0}) \otimes e_{\un x}) U_{\un{x}}^*,
\end{align*}
which completes the proof. \hfill\qedsymbol

\subsection*{A gauge-invariant uniqueness theorem}\label{Ss: CNP}

An immediate consequence of our methods is the existence of a second universal object and a gauge-invariant uniqueness theorem.

\begin{definition}\label{D: CN}
An isometric Nica-covariant pair $(\pi,T)$ is called \emph{Cuntz-Nica covariant} if
\[
\pi(a) \cdot \prod_{\Bi \in \supp(\un x)} (I - T_\Bi T_\Bi^*) = 0, \foral a \in I_{\un x}.
\]

The C*-algebra $\N\O(A,\al)$ generated by all Cuntz-Nica covariant representations is the \emph{Cuntz-Nica-Pimsner} C*-algebra of $(A,\al,\bZ_+^n)$.
\end{definition}

The first observation is that $\N\O(A,\al)$ is non-trivial. 
The pair $(\widehat{\pi}|_A, Up)$ we used in the proof of Theorem \ref{T: cenv corner} is a non-trivial Cuntz-Nica pair. 
Furthermore this pair implies that $A$ embeds in $\N\O(A,\al)$ isometrically.

\begin{theorem}\label{T: CNP}
A Cuntz-Nica pair $(\pi,T)$ defines an injective representation of $\N\O(A,\al)$ if and only if $(\pi,T)$ admits a gauge action and $\pi$ is injective.
\end{theorem}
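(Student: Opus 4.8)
The plan is to transcribe the gauge-invariant uniqueness theorem for the Toeplitz-Nica-Pimsner algebra (Theorem \ref{T: gau inv un}) into the Cuntz-Nica setting, feeding in the Cuntz-Nica relations and the core computation from the proof of Theorem \ref{T: cenv corner}. Write $(\pi_u, T_u)$ for a universal Cuntz-Nica pair, so that $\N\O(A,\al) = \ca(\pi_u, T_u)$ and every Cuntz-Nica pair $(\pi, T)$ induces a canonical $*$-epimorphism $\Psi \colon \N\O(A,\al) \to \ca(\pi, T)$. For the forward implication, note first that the defining relations of $\N\O(A,\al)$ are gauge-invariant: the assignment $T_{u,\Bi} \mapsto \hat{g}(\Bi) T_{u,\Bi}$, $\pi_u(a) \mapsto \pi_u(a)$ fixes each projection $T_{u,\Bi} T_{u,\Bi}^*$ and each $\pi_u(a)$, hence preserves both Nica-covariance and the Cuntz-Nica relation; a standard $\varepsilon/3$-argument upgrades this to a point-norm continuous gauge action $\{\be_{\hat{g}}\}$ on $\N\O(A,\al)$. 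Moreover $\pi_u$ is injective, because the concrete pair $(\wh{\pi}|_A, Up)$ of Theorem \ref{T: cenv corner} is a Cuntz-Nica pair with injective $A$-part that factors through $\N\O(A,\al)$. If $(\pi, T)$ is faithful, then $\pi = \Psi \circ \pi_u$ is injective and $\ga_{\hat{g}} := \Psi \be_{\hat{g}} \Psi^{-1}$ is a gauge action.

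For the converse, suppose $(\pi, T)$ is a Cuntz-Nica pair with $\pi$ injective admitting a gauge action $\{\ga_{\hat{g}}\}$. Then $\Psi$ intertwines $\be_{\hat{g}}$ and $\ga_{\hat{g}}$, so it commutes with the faithful conditional expectations obtained by averaging over $\wh{G}$, and the usual argument reduces the injectivity of $\Psi$ to the injectivity of its restriction to the fixed-point algebra $\N\O(A,\al)^\be$. By Corollary \ref{C: cores} applied to $(\pi_u, T_u)$, this fixed-point algebra is the inductive limit of the directed family of cores $B_F = \Span\{T_{u,\un x}\pi_u(a) T_{u,\un x}^* \colon a \in A, \un x \in F\}$ over grids $F$, so it suffices to show each $\Psi|_{B_F}$ is injective. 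Enlarging the grid we may take $F = \{\un x \colon \un x \le \un y\}$, and we consider $X = \sum_{\un x \le \un y} T_{u,\un x}\pi_u(a_{\un x}) T_{u,\un x}^* \in B_F \cap \ker\Psi$.

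I would show $X = 0$ by induction on $\un y$. The covariance relation $\pi_u(a) T_{u,\Bi} = T_{u,\Bi}\pi_u\al_\Bi(a)$ together with the double-commuting of the $T_{u,\Bi}$ shows that each projection $T_{u,\Bi} T_{u,\Bi}^*$ commutes with every monomial $T_{u,\un x}\pi_u(a) T_{u,\un x}^*$ (acting as the identity when $\Bi \in \supp(\un x)$ and as a shift in the $\Bi$-direction otherwise), just as in Lemma \ref{L:U_ipU_i^*}. For $\Bi \in \supp(\un y)$ the compression $T_{u,\Bi}^* X T_{u,\Bi}$ is a core element supported on the strictly smaller grid $\{\un{x'} \colon \un{x'} \le \un y - \Bi\}$ and still lies in $\ker\Psi$, so by induction $T_{u,\Bi}^* X T_{u,\Bi} = 0$, whence $T_{u,\Bi} T_{u,\Bi}^* X = T_{u,\Bi}(T_{u,\Bi}^* X T_{u,\Bi}) T_{u,\Bi}^* = 0$. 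Using the commutation above this collapses $X$ to a single term,
\[
X = X \prod_{\Bi \in \supp(\un y)} (I - T_{u,\Bi} T_{u,\Bi}^*) = \pi_u(a_{\un{0}}) \prod_{\Bi \in \supp(\un y)} (I - T_{u,\Bi} T_{u,\Bi}^*),
\]
and applying $\Psi$ gives $\pi(a_{\un{0}}) \prod_{\Bi \in \supp(\un y)} (I - T_\Bi T_\Bi^*) = 0$ in $\ca(\pi, T)$. Setting $\un{v} = \sum_{\Bi \in \supp(\un y)} \Bi$, I would then reprove the Claim from Theorem \ref{T: cenv corner}: multiplying this relation by $\pi(b)$ for $b \in \bigcap_{\Bi \in \supp(\un y)} \ker\al_\Bi$ and conjugating it by $T_{\un{y'}}$ for $\un{y'} \in \un{v}^\perp$, and using that $\pi$ is injective, forces $\al_{\un{y'}}(a_{\un{0}}) \in \big( \bigcap_{\Bi \in \supp(\un y)} \ker\al_\Bi \big)^\perp$ for every $\un{y'} \in \un{v}^\perp$, that is $a_{\un{0}} \in I_{\un{v}}$. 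Finally the Cuntz-Nica covariance of $(\pi_u, T_u)$ gives $\pi_u(a_{\un{0}}) \prod_{\Bi \in \supp(\un{v})} (I - T_{u,\Bi} T_{u,\Bi}^*) = 0$, and since $\supp(\un{v}) = \supp(\un y)$ this is precisely $X = 0$, closing the induction.

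The main obstacle is the Claim $a_{\un{0}} \in I_{\un{v}}$: one must extract this membership purely from the single operator identity $\pi(a_{\un{0}}) \prod_{\Bi \in \supp(\un y)} (I - T_\Bi T_\Bi^*) = 0$ and the injectivity of $\pi$, carefully transporting each $\al_{\un{y'}}$ with $\un{y'} \in \un{v}^\perp$ across the isometries by means of the covariance and double-commuting relations, exactly as in the proof of Theorem \ref{T: cenv corner}. The remaining ingredients --- the bookkeeping in the inductive compression, the verification that the compressed element is genuinely a core element on the smaller grid, and the reduction of the non-unital case to the unitization $(A^{(1)}, \al^{(1)})$ as in Theorem \ref{T: cenv corner} --- are technical but routine.
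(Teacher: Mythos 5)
Your proposal is correct and follows essentially the same route as the paper's proof: reduce via the gauge action to the cores, collapse a kernel element to $\pi_u(a_{\un 0})\prod_{\Bi\in\supp(\un y)}(I - T_{u,\Bi}T_{u,\Bi}^*)$ using the commutation relations of Lemma \ref{L:U_ipU_i^*}, deduce $a_{\un 0}\in I_{\un y}$ from the injectivity of $\pi$ exactly as in the Claim of Theorem \ref{T: cenv corner}, and then invoke Cuntz-Nica covariance of the universal pair to conclude. The only cosmetic differences are that you run an induction on $\un y$ where the paper takes a minimal counterexample, and that you spell out the details the paper cites from the proof of Theorem \ref{T: cenv corner}.
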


\begin{proof}
Let $(\rho,V)$ be a universal pair of $\N\O(A,\al)$. 
The forward implication is trivial since $(\rho,V)$ has these properties.

For the converse let $(\pi,T)$ be a pair that admits a gauge action where $\pi$ is injective on $A$. 
Denote by $\Phi$ the canonical $*$-epimorphism 
\begin{equation*}
\Phi \colon \N\O(A,\al) \to \ca(\pi,T).
\end{equation*}
By the standard argument let 
\begin{equation*}
0 \neq X = \sum_{\un0 \leq \un{x} \leq \un{y}} U_{\un{x}}\rho(a_{\un{x}})U_{\un x}^* \in \ker\Phi
\end{equation*}
for a minimal value of $\un{y} \in \bZ_+^n$. 
By the same argument as in the proof of Theorem \ref{T: cenv corner} we have that
\begin{align*}
\rho(a_\un0) \cdot \prod_{\Bi \in \supp(\un y)} (I -V_\Bi V_{\Bi}^*) \in \ker\Phi.
\end{align*}
Thus
\begin{align*}
\pi(a_\un0) \cdot \prod_{\Bi \in \supp(\un y)} (I -T_\Bi T_{\Bi}^*) =0.
\end{align*}
Following the same arguments as in the claim in the proof of Theorem \ref{T: cenv corner} we see that $a_\un0 \in I_{\un y}$. 
It is here that we use that $\pi$ is injective. As $(\rho, V)$ is Cuntz-Nica covariant we have
\begin{align*}
X = \rho(a_\un0) \cdot \prod_{\Bi \in \supp(\un y)} (I -V_\Bi V_{\Bi}^*) =0,
\end{align*}
which is a contradiction.
\end{proof}

Since $\fA$ is the C*-algebra of such a Cuntz-Nica pair, the following corollary is immediate.

\begin{corollary}\label{C: CNP cenv}
Let $(A,\al,\bZ_+^n)$ be a C*-dynamical system. 
Then the Cuntz-Nica-Pimsner algebra $\N\O(A,\al)$ is the C*-envelope of the regular Nica-covariant semicrossed product $A \times_\al^{\nc} \bZ_+^n$.
\end{corollary}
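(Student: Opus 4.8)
The plan is to assemble the corollary directly from Theorem~\ref{T: cenv corner} together with the gauge-invariant uniqueness theorem for $\N\O(A,\al)$ (Theorem~\ref{T: CNP}). By Theorem~\ref{T: cenv corner} we already know that $\cenv(A \times_\al^{\nc} \bZ_+^n) \simeq \fA$, the full corner of $\wt{B} \rtimes_{\wt{\be}} \bZ^n$ cut down by the projection $p = 1 \otimes e_\un0$. It therefore suffices to identify $\fA$ with $\N\O(A,\al)$. The natural candidate is the isometric Nica-covariant pair $(\wh\pi|_A, Up)$ on $p\wh K$ from Lemma~\ref{L: corner Nc}, with coefficient map $\wh\pi|_A$ and isometries $T_\Bi = U_\Bi p$; by Proposition~\ref{P: corner} this pair generates $\fA$.

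First I would check that $(\wh\pi|_A, Up)$ is a Cuntz-Nica covariant pair in the sense of Definition~\ref{D: CN}. This is exactly the content of the final computation in the proof of Theorem~\ref{T: cenv corner}: for $\un x = \sum_{\Bi \in \supp(\un x)} \Bi$ and $a \in I_{\un x}$ one obtains
\[
(a \otimes e_\un0) \cdot \prod_{\Bi \in \supp(\un x)} (I - U_\Bi p U_\Bi^*) = U_{\un x}(q_{\un x}(a) \otimes e_{\un x}) U_{\un x}^* = 0,
\]
since $a \in I_{\un x}$ forces $q_{\un x}(a) = 0$. Thus the Cuntz-Nica relation holds, and the universal property of $\N\O(A,\al)$ yields a canonical $*$-epimorphism $\Phi \colon \N\O(A,\al) \to \fA$ carrying the universal Cuntz-Nica pair onto $(\wh\pi|_A, Up)$.

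Next I would verify the two hypotheses of Theorem~\ref{T: CNP} for this pair. Injectivity of the coefficient map is immediate: as recorded in the proof of Proposition~\ref{P: corner}, the compression of $\wh\pi|_A$ to $H = (\pi(1 \otimes e_\un0)K) \otimes \de_\un0$ coincides with the faithful representation $\pi|_A$. For the gauge action, recall that $\wt{B} \rtimes_{\wt{\be}} \bZ^n$ carries its dual action $\{\ga_{\hat g}\}_{\hat g \in \wh G}$ determined by $\ga_{\hat g}(U_{\un x} b) = \hat g(\un x) U_{\un x} b$; since $\fA$ is generated by the monomials $U_{\un x}(a \otimes e_\un0)$, it is invariant under each $\ga_{\hat g}$, and the restricted action satisfies $\ga_{\hat g}(U_\Bi p) = \hat g(\Bi) U_\Bi p$ and $\ga_{\hat g}(\wh\pi(a)) = \wh\pi(a)$, i.e.\ it is an honest gauge action for the pair.

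With these checks in place, Theorem~\ref{T: CNP} shows that $\Phi$ is injective, hence a $*$-isomorphism, and combining with Theorem~\ref{T: cenv corner} gives
\[
\N\O(A,\al) \simeq \fA \simeq \cenv(A \times_\al^{\nc} \bZ_+^n),
\]
as claimed. I do not expect a genuine obstacle here, since every ingredient has already been constructed; the corollary is essentially a bookkeeping step. The only point demanding care is matching conventions---confirming that the pair generating $\fA$ really is a Cuntz-Nica pair with injective coefficient map and a bona fide gauge action---but all three facts are read off verbatim from the proofs of Proposition~\ref{P: corner} and Theorem~\ref{T: cenv corner}.
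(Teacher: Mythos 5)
Your proposal is correct and follows exactly the route the paper takes: the paper declares the corollary ``immediate'' from Theorem~\ref{T: CNP} precisely because $\fA$ is the C*-algebra of the Cuntz--Nica pair $(\wh\pi|_A, Up)$, which admits a gauge action and has injective coefficient map, so $\N\O(A,\al)\simeq\fA\simeq\cenv(A\times_\al^{\nc}\bZ_+^n)$ by Theorem~\ref{T: cenv corner}. Your write-up merely makes explicit the bookkeeping (Cuntz--Nica covariance via $q_{\un x}(a)=0$, faithfulness via compression to $H$, and the restricted dual action) that the paper leaves implicit.
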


\section{Minimality and ideal structure}\label{S: minimality}

We connect minimality of a unital C*-dynamical system $(A,\al,P)$ over a lattice-ordered abelian group $(G,P)$ with the ideal structure of the C*- envelope of $A \times_\al^{\nc} P$. 

\begin{definition}
A C*-dynamical system $(A,\al,P)$ over a semigroup $P$ is called \emph{minimal} if $A$ does not contain non-trivial $\al$-invariant ideals, i.e., if $\al_p(J) \subseteq J$ for all $p\in P$, then $J$ is trivial.
\end{definition}

This definition contains the usual definition of minimality of C*-dynamical systems over a group $G$, i.e., if $\al_g(J)=J$ for all $g\in G$, then $J$ is trivial.

Note that a minimal unital system is necessarily injective. 
For if some $\al_s$ has kernel, then $I = \ker \al_s$ is a proper ideal since $\al_s(1)=1$. 
But if $\al_s(a)=0$, then $\al_s(\al_t(a)) = \al_t(\al_s(a)) = 0$ for all $t \in P$.
So $I$ is $\al$-invariant.

For the next proposition, recall that a semigroup action $\al \colon P \to \Aut(A)$ extends to a group action $\al \colon G \to \Aut(A)$, denoted by the same symbol.

\begin{proposition} \label{P:auto_min}
Let $(G,P)$ be a lattice-ordered abelian group. 
Then an automorphic unital C*-dynamical system $(A,\al,P)$ is minimal if and only if $(A,\al,G)$ is minimal.
\end{proposition}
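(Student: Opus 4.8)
The plan is to prove the two directions of the equivalence separately, exploiting the fact that $\al$ is automorphic so that $\al_g$ makes sense for every $g\in G$ and each $\al_g$ is an automorphism of $A$.

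First I would treat the easy direction: if $(A,\al,G)$ is minimal then $(A,\al,P)$ is minimal. Suppose $J$ is a $P$-invariant ideal, i.e.\ $\al_p(J)\subseteq J$ for all $p\in P$. I would show that in fact $\al_p(J)=J$ for all $p\in P$. Indeed, since $\al_p$ is an automorphism and $\al_p(J)\subseteq J$, applying $\al_p^{-1}=\al_{-p}$ gives $J\subseteq \al_{-p}(J)$, so the family $\{\al_{-p}(J)\}_{p\in P}$ is increasing; but one must be careful, as $\al_p(J)\subseteq J$ alone does not immediately force equality for a single $p$. The clean argument is that $G=P-P$, so every $g\in G$ is $g=-s+t$ with $s,t\in P$, and $\al_g(J)=\al_s^{-1}\al_t(J)\subseteq \al_s^{-1}(J)$; combined with the reverse inclusion obtained from $\al_s(J)\subseteq J$ one deduces $\al_g(J)=J$ for all $g\in G$. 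Hence $J$ is $G$-invariant, so minimality of $(A,\al,G)$ forces $J$ trivial.

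For the converse, suppose $(A,\al,P)$ is minimal and let $J$ be a $G$-invariant ideal, $\al_g(J)=J$ for all $g\in G$. Then in particular $\al_p(J)\subseteq J$ for all $p\in P$, so $J$ is $P$-invariant, and minimality of $(A,\al,P)$ gives that $J$ is trivial. This direction is essentially immediate once the definitions are unwound, since $G$-invariance is formally stronger than $P$-invariance.

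The only real subtlety, and the step I expect to require the most care, is upgrading the one-sided condition $\al_p(J)\subseteq J$ to genuine $G$-invariance $\al_g(J)=J$ in the first direction. The point is that a $*$-automorphism of a C*-algebra carries ideals to ideals of the same ``size,'' and the lattice-ordered (hence $G=P-P$) structure lets me write any group element as a formal difference of semigroup elements; I would verify that $\al_p(J)\subseteq J$ together with the automorphism property yields $\al_p(J)=J$ by a short closure/surjectivity argument, and then extend multiplicatively across all of $G$. Once that equality is established, both implications close immediately and the proof is complete.
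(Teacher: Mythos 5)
Your second implication is fine: a $G$-invariant ideal is in particular $P$-invariant, so minimality of $(A,\al,P)$ forces it to be trivial; this is the trivial direction in the paper as well. The genuine gap is in the other direction, exactly at the step you flag as the ``only real subtlety.'' The claimed upgrade from $\al_p(J)\subseteq J$ for all $p\in P$ to $\al_g(J)=J$ for all $g\in G$ is false, and your ``clean argument'' does not close it: from $\al_t(J)\subseteq J$ you correctly get $\al_g(J)=\al_s^{-1}\al_t(J)\subseteq \al_s^{-1}(J)$, but the other inclusion you invoke, namely $J\subseteq \al_s^{-1}(J)$ (which is all that $\al_s(J)\subseteq J$ yields), only says that both $\al_g(J)$ and $J$ sit inside $\al_s^{-1}(J)$; since $\al_s^{-1}(J)$ may strictly contain $J$, no comparison between $\al_g(J)$ and $J$ follows. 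Indeed, a $P$-invariant ideal of a unital automorphic system need not be $G$-invariant: take $A=\rC(X)$ with $X$ the two-point compactification of $\bZ$, $\al(f)(x)=f(x-1)$ induced by the shift, and $J$ the ideal of functions vanishing on $\{n<0\}\cup\{-\infty\}$; then $\al(J)\subsetneq J$, so $J$ is $\bZ_+$-invariant but $\al^{-1}(J)\supsetneq J$.

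A telling symptom is that your proof never uses the hypothesis that the system is \emph{unital}, yet the proposition fails without it: for $A=\rC_0(\bZ)$ with the shift, $(A,\al,\bZ)$ is minimal, while $I=\rC_0(\bZ_{\geq 0})$ is a proper non-zero $\bZ_+$-invariant ideal. The paper's proof avoids your false claim as follows: given a non-zero $P$-invariant ideal $I$, it does \emph{not} try to show $I$ itself is $G$-invariant, but instead forms $I_s=\al_{-s}(I)$, observes $I_s\subseteq I_t$ for $s\le t$ (so the family is directed, using $s\vee t$), and shows that $J=\ol{\bigcup_{s\in P}I_s}$ is a $G$-invariant ideal containing $I$. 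Minimality of $(A,\al,G)$ then gives $J=A$, and only now unitality enters: there exist $s\in P$ and $x\in I_s$ with $\nor{x-1}<1/2$, whence $\al_s(x)\in I$ satisfies $\nor{\al_s(x)-1}<1/2$, so $I$ contains an invertible element and $I=A$. To repair your argument you would need to replace the algebraic upgrade by a construction of this kind (pass to a larger, genuinely $G$-invariant ideal, then pull the conclusion back into $I$ using the unit).
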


\begin{proof}
The forward implication is trivial. 
For the converse, let $I$ be a non-zero ideal of $A$ such that $\al_s(I) \subseteq I$ for all $s\in P$. 
Let $I_s = \al_{-s}(I) = \al_s^{-1}(I)$ for $s\in P$. 
Since $\al_s$ is an automorphism of $A$, each $I_s$ is an ideal of $A$. 
The invariance $\al_s(I) \subseteq I$ implies that $I \subseteq \al_{-s}(I) = I_s$.
Moreover if $s \le t$, we have 
\[ 
I_t = \al_{-s}\al_{s-t}(I) \supseteq \al_{-s}(I) = I_s .
\]
So this is a directed set of ideals.
We show that $J = \ol{\bigcup_{s\in P} I_s}$ is an invariant ideal over $G$.

Since every element of $G$ can be written $g=t-u$ for $t,u \in P$, we see that
\[ 
\al_g(I_s) = \al_{-u}\al_t \al_{-s}(I) \subseteq \al_{-u-s} (I) = I_{s+u} \subseteq J .
\]
Hence $J$ is invariant under the action of $G$. 
To see that $J$ is an ideal, note that if $x\in I_s$ and $y \in I_t$, then $x,y \in I_{s\vee t}$.
So $x+y$, $ax$ and $xa$ all lie in $I_{s\vee t} \subseteq J$.

Since $(A,\al,G)$ is minimal, and $J$ contains $I$ is non-zero, we get that $J=A$. 
Therefore there is a $s \in P$ and an $x \in I_s$ such that $\nor{x-1} < 1/2$. 
Since $\al_t$ is unital and isometric, we obtain that $\nor{\al_t(x) - 1} < 1/2$, and $\al_t(x) \in I$. 
Therefore $I$ contains the invertible $\al_t(x)$ and consequently $I = A$.
So there are no proper invariant ideals for $P$.
\end{proof}

We will also need some preliminaries on the basic theory of crossed products. 
Let $\al \colon G \to \Aut(A)$ be a group homomorphism.
There is a conditional expectation $E \colon A \rtimes_\al G \to A$ associated to the gauge action $\{\ga_{\hat{g}}\}$:
\[
 E(F):= \int_{\hat{g} \in \hat{G}} \ga_{\hat{g}}(F) d\hat{g} .
\]
For $g\in G$, define the Fourier co-eff{}icients $E_g \colon A \rtimes_\al G \to A$ by
\[
 E_g (F) \mapsto E(U_{-g} F).
\]

If $\I$ is an $\ad_{U^*}$-invariant ideal of $A \rtimes_\al G$ over $G$, then $E_{g}(\I)$ is a $\al$-invariant ideal of $A$ over $G$ for each $g\in G$. 
An ideal $\I$ of $A \rtimes_\al G$ is called \emph{Fourier-invariant} if $E_{g}(\I) \subseteq I_0 := \I \cap A$ for all $g\in G$. 
Thus in a Fourier-invariant ideal $\I$, we have $E_g(F) \in I_0$ for all $F$. 

\begin{proposition}\label{P: Four inv}
Let $(G,P)$ be a lattice-ordered abelian group. 
Let $(A,\al,P)$ be a unital injective C*-dynamical system. 
Then the following are equivalent:
\begin{enumerate}
\item $(A,\al,P)$ is minimal;
\item $(\wt{A},\wt{\al},G)$ is minimal;
\item the C*-envelope $\wt{A} \rtimes_{\wt{\al}} G$ of $A \times_\al^{\nc} P$ 
has no non-trivial Fourier-invariant ideals.
\end{enumerate}
\end{proposition}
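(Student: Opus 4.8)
The plan is to prove the two equivalences $(i)\Leftrightarrow(ii)$ and $(ii)\Leftrightarrow(iii)$ separately, using on one side the inductive-limit structure $\wt A = \varinjlim(A_s,\al_{t-s})$ and on the other the ideal theory of the crossed product $\wt A \rtimes_{\wt\al} G$. Throughout I would exploit that, since the system is unital and injective, $\wt A$ is unital with unit $\om_0(1_A)=\om_s(1_A)$ for every $s$, the embeddings $\om_s$ are unital isometries, and hence the canonical unitaries $U_g$ lie in $\wt A \rtimes_{\wt\al} G$; consequently every ideal of the crossed product is automatically $\ad_{U^*}$-invariant. I would also record the structural identities $\wt\al_p\om_s=\om_s\al_p$, $\om_s=\wt\al_{-s}\om_0$ and $\wt\al_{-p}\om_s=\om_{s+p}$, all read off from the construction of the automorphic extension.

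For $(i)\Rightarrow(ii)$ I would take a nonzero $\wt\al$-invariant ideal $\wt J$ of $\wt A$ and set $J_s=\om_s^{-1}(\wt J)$. Because $\wt A$ is the inductive limit of the C*-subalgebras $\om_s(A)$ with injective connecting maps, the standard correspondence between ideals of an inductive limit and compatible families of ideals gives $\wt J=\ol{\bigcup_s\om_s(J_s)}$, so $\wt J\neq 0$ forces some $J_s\neq 0$; the relation $\wt\al_p\om_s=\om_s\al_p$ shows $J_s$ is $\al$-invariant, and minimality forces $J_s=A$, i.e. $\om_s(A)\subseteq\wt J$. Using $\om_s=\wt\al_{-s}\om_0$ and the $G$-invariance of $\wt J$ I then obtain $\om_0(A)\subseteq\wt J$ and hence $\om_t(A)=\wt\al_{-t}(\om_0(A))\subseteq\wt J$ for all $t$, so $\wt J=\ol{\bigcup_t\om_t(A)}=\wt A$. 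For $(ii)\Rightarrow(i)$, given a nonzero $\al$-invariant ideal $J$ of $A$, I would form $\wt J=\ol{\bigcup_s\om_s(J)}$; the inclusions $\om_s(J)=\om_t(\al_{t-s}(J))\subseteq\om_t(J)$ together with $\al_p(J)\subseteq J$ make $\wt J$ a closed two-sided ideal, and the identities above make it $G$-invariant. Minimality of $(\wt A,\wt\al,G)$ gives $\wt J=\wt A$; since $\wt A$ is unital there are $s$ and $j\in J$ with $\|\om_s(j)-\om_s(1_A)\|<\tfrac12$, and as $\om_s$ is isometric $\|j-1_A\|<\tfrac12$, so $j\in J$ is invertible and $J=A$.

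For $(ii)\Leftrightarrow(iii)$ I would use the quoted fact that each Fourier coefficient $E_g(\I)$ of an $\ad_{U^*}$-invariant ideal $\I$ is a $\wt\al$-invariant ideal of $\wt A$ over $G$, together with faithfulness of the canonical conditional expectation $E=E_0$ (valid since $G$ is abelian, hence amenable, so the crossed product is reduced). For $(iii)\Rightarrow(ii)$: a nontrivial $G$-invariant ideal $\wt J$ of $\wt A$ yields the induced ideal $\wt J\rtimes_{\wt\al}G$, which is nontrivial and Fourier-invariant because its Fourier coefficients lie in $\wt J=(\wt J\rtimes_{\wt\al}G)\cap\wt A$. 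For $(ii)\Rightarrow(iii)$: if $\I$ is a nonzero Fourier-invariant ideal, then $I_0:=\I\cap\wt A=E_0(\I)$ is a $G$-invariant ideal of $\wt A$; choosing $0\neq F\in\I$, faithfulness of $E$ gives $0\neq E(F^*F)\in E_0(\I)\subseteq I_0$, so $I_0\neq 0$, and minimality forces $I_0=\wt A$, whence $\I$ contains the unit and equals the whole crossed product.

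I expect the main obstacle to be the careful handling of the inductive-limit ideal correspondence in $(i)\Rightarrow(ii)$, namely guaranteeing that a nonzero $G$-invariant ideal of $\wt A$ genuinely meets some finite stage $\om_s(A)$ nontrivially rather than only approximately; the analogous step in $(ii)\Rightarrow(iii)$ is the precise invocation of the faithfulness of $E$. The remaining bookkeeping with the structural identities relating $\om_s$ and $\wt\al$, and with the automatic $\ad_{U^*}$-invariance in the unital case, is routine.
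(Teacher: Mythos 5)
Your proof is correct and follows essentially the same route as the paper: both directions of $(i)\Leftrightarrow(ii)$ use the direct limit structure $\wt{A}=\ol{\bigcup_{s\in P}\wt{\al}_{-s}(A)}$, transporting ideals back and forth via $\om_s$ and $\wt\al_{\pm s}$ and finishing with the unital/isometric trick $\|j-1_A\|<\tfrac12$ exactly as in the paper's Proposition on automorphic minimality. The only difference is that the paper dismisses $(ii)\Leftrightarrow(iii)$ as ``a standard result for C*-crossed products,'' whereas you spell it out (induced ideals $\wt J\rtimes_{\wt\al}G$, Fourier coefficients, and faithfulness of the canonical expectation via amenability of $G$) --- a welcome but not divergent elaboration.
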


\begin{proof}
Suppose that $(A,\al,P)$ is minimal.
Let $J$ be an $(\wt A,\wt{\al},G)$-invariant ideal.
Since $\wt{A} = \ol{ \bigcup_{s\in P} \wt{\al}_{-s}(A)}$, there is an $r \in P$ such that $J \cap \wt{\al}_{-r}(A) \neq (0)$; say $0 \ne \wt\al_{-r}(a) \in J$ for some $a \in A$.
Then $a = \wt\al_r(\wt\al_{-r}(a)) \ne 0$ belongs to $A \cap \wt\al_r(J) \subseteq A \cap J$.
As both $A$ and $J$ are $\wt\al$-invariant, $A \cap J$ is a non-zero $\al$-invariant ideal of $A$.
By minimality, $A \subset J$. 
But then $J = \al_{-s}(J) \supset \al_{-s}(A)$ for all $s\in P$.
Hence $J = \wt A$. So $(\wt{A},\wt{\al},G)$ is minimal.

For the converse, assume that $(\wt{A}, \wt{\al},G)$ is minimal.
Let $J$ be a non-trivial $\al$-invariant ideal of $(A,\al,P)$. 
Then arguing as in Proposition~\ref{P:auto_min}, the ideal $\wt J = \ol{\bigcup_{s\in P} \wt\al_{-s}(J) }$ is a $(\wt A,\wt{\al},G)$-invariant ideal of $\wt{A}$.
By minimality, $\wt J = \wt A$.
Again arguing as in Proposition~\ref{P:auto_min}, it follows that $J=A$.
So $(A,\al,P)$ is minimal.

The equivalence of (ii) and (iii) is a standard result for C*-crossed products.
\end{proof}

For $(\bZ^n, \bZ_+^n)$ we can remove the injectivity hypothesis.

\begin{corollary}\label{C: Four inv}
Let $(A,\al,\bZ_+^n)$ be a unital C*-dynamical system, and let $(B, \be, \bZ_n^+)$ be the minimal injective dilation of $(A,\al,\bZ_+^n)$ constructed in Section \ref{S: Nc scp by Z+}. 
Then the following are equivalent:
\begin{enumerate}
\item $(A,\al,\bZ_+^n)$ is minimal;
\item $(B,\be,\bZ_+^n)$ is minimal;
\item $(\wt{B},\wt{\be},\bZ^n)$ is minimal;
\item $\wt{B} \rtimes_{\wt{\be}} \bZ^n$ has no non-trivial Fourier-invariant ideals.
\end{enumerate}
If any of the above holds then $(A,\al,\bZ_+^n)$ is injective, 
hence $(A,\al,\bZ_+^n) \equiv (B,\be,\bZ_+^n)$ and 
$\wt{A} \rtimes_{\wt{\al}} \bZ^n \simeq \cenv(A \times_\al^{\nc} \bZ_+^n)$.
\end{corollary}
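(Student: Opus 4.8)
The plan is to reduce the whole statement to the injective case and then quote Proposition~\ref{P: Four inv}. The pivotal auxiliary fact I would establish first is that the minimal injective dilation collapses onto the original system exactly when $(A,\al,\bZ_+^n)$ is already injective: if every $\al_\Bi$ is injective, then $\bigcap_{\Bi \in \supp(\un x)} \ker\al_\Bi = (0)$ for each $\un x \neq \un0$, so $I_{\un x} = A$ and $B_{\un x} = (0)$ for all $\un x \neq \un0$, while $I_{\un0} = (0)$ and $B_{\un0} = A$. Hence $B = A$ and $\be = \al$, i.e. $(B,\be,\bZ_+^n) \equiv (A,\al,\bZ_+^n)$.

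Granting this, I would organize the argument around the implications (i)$\Rightarrow$(ii)$\Rightarrow$(i) together with (ii)$\Leftrightarrow$(iii)$\Leftrightarrow$(iv). For the latter three, note that $(B,\be,\bZ_+^n)$ is a unital injective C*-dynamical system (injectivity is Proposition~\ref{P: inj}, and unitality of each $\be_\Bi$ comes from that of the $\al_\Bi$); its automorphic extension is $(\wt B,\wt\be,\bZ^n)$, and by Theorem~\ref{T: cenv inj Nc} its Nica-covariant semicrossed product has C*-envelope $\wt B \rtimes_{\wt\be}\bZ^n$. Thus Proposition~\ref{P: Four inv}, applied verbatim to $(B,\be,\bZ_+^n)$, delivers (ii)$\Leftrightarrow$(iii)$\Leftrightarrow$(iv) at once.

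For (i)$\Rightarrow$(ii) I would invoke the remark preceding Proposition~\ref{P:auto_min}: a minimal unital system is automatically injective. The collapse above then makes (ii) literally a restatement of (i). The substantive direction is (ii)$\Rightarrow$(i), which I would prove in contrapositive form. Starting from a proper nonzero $\al$-invariant ideal $J \trianglelefteq A$, I would form the ideal $\K = \sumoplus_{\un x \in \bZ_+^n} q_{\un x}(J)$ of $B$ and check that it is $\be$-invariant and non-trivial. Non-triviality is immediate, since its $\un0$-fibre is $q_{\un0}(J) = J$, which is neither $(0)$ nor $A = B_{\un0}$. For $\be$-invariance one checks fibrewise that $\al_\Bi(J + I_{\un x}) \subseteq J + I_{\un x}$ when $\Bi \in \un x^\perp$ and $J + I_{\un x} \subseteq J + I_{\un x + \Bi}$ for every $\Bi$; both follow from $\al$-invariance of $J$ together with Lemma~\ref{L: ideals I_x}, which supplies the $\al_\Bi$-invariance of $I_{\un x}$ for $\Bi \in \un x^\perp$ and the inclusions $I_{\un x} \subseteq I_{\un x + \Bi}$ (with equality when $\Bi \in \supp(\un x)$). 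This exhibits a non-trivial $\be$-invariant ideal, so $(B,\be,\bZ_+^n)$ is not minimal, giving (ii)$\Rightarrow$(i).

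Finally, once any of (i)--(iv) holds, (i) holds, so $(A,\al,\bZ_+^n)$ is minimal and hence injective; the collapse yields $(A,\al,\bZ_+^n) \equiv (B,\be,\bZ_+^n)$, $\wt A = \wt B$ and $\wt\al = \wt\be$, and Theorem~\ref{T: cenv inj Nc} identifies $\cenv(A \times_\al^{\nc}\bZ_+^n)$ with $\wt A \rtimes_{\wt\al}\bZ^n$. The one place I expect to take genuine care is the fibrewise $\be$-invariance verification for $\K$ — keeping the two regimes $\Bi \in \un x^\perp$ and $\Bi \in \supp(\un x)$ separate and matching each to the correct clause of Lemma~\ref{L: ideals I_x}; everything else in the chain is formal bookkeeping or a direct citation.
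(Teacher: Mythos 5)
Your handling of (i)$\Leftrightarrow$(ii) is correct, and it genuinely differs from the paper's: you push a non-trivial $\al$-invariant ideal $J \trianglelefteq A$ fibrewise into $B$ via $\K=\bigoplus_{\un x\in\bZ_+^n}q_{\un x}(J)$, whereas the paper argues in the opposite direction, showing that (ii) forces injectivity because the tail $\bigoplus_{\un x\geq \un 1}B_{\un x}$ is a $\be$-invariant ideal of $B$ whose triviality yields $I_{\Bi}=A$, hence $\ker\al_{\Bi}=(0)$ by Lemma \ref{L: ideals I_x}(ii). Both arguments work, and your collapse observation (injectivity of $(A,\al)$ gives $I_{\un x}=A$ for $\un x\neq\un0$, so $B=A$ and $\be=\al$) is exactly what the paper uses.

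The genuine gap is your claim that $(B,\be,\bZ_+^n)$ is a \emph{unital} system, which you need in order to quote Proposition \ref{P: Four inv} ``verbatim'' for (ii)$\Leftrightarrow$(iii)$\Leftrightarrow$(iv). This is false precisely when $(A,\al,\bZ_+^n)$ is not injective, which is the only case where the corollary says anything new: $B$ is the $c_0$-direct sum over the infinite index set $\bZ_+^n$ (the paper repeatedly uses that $B$ is the closed span of its fibres, e.g.\ in Lemma \ref{L: kill fp}), and since $B_{\un x}$ depends only on $\supp(\un x)$, $B$ has a unit if and only if $B_{\un x}=(0)$ for all $\un x\neq\un0$, i.e.\ if and only if the system is injective, i.e.\ if and only if $B=A$. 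So asserting unitality of $(B,\be)$ presupposes the injectivity that the corollary is designed to deduce. Nor is the hypothesis cosmetic: for the injective non-unital system given by the unilateral shift on $c_0(\bZ_+)$, the ideal of sequences vanishing at $0$ is non-zero, proper and invariant, yet the automorphic extension (the bilateral shift on $c_0(\bZ)$) is minimal; thus the direction of Proposition \ref{P: Four inv} you need for (iii)$\Rightarrow$(ii) genuinely fails without a unit (the unit enters through the argument of Proposition \ref{P:auto_min}). The implications (i),(ii)$\Rightarrow$(iii),(iv) are salvageable --- under (ii) you already have injectivity by your own argument, so $B=A$ is unital and Proposition \ref{P: Four inv} then applies legitimately, which is exactly the paper's ordering of steps --- but the implications out of (iii) and (iv) remain unproven in your proposal, and they require a real argument rather than a citation: for instance, show that (iii) forces injectivity by lifting the $\be$-invariant tail ideal $J_0=\bigoplus_{\un x\neq\un0}B_{\un x}$ to the $\wt\be$-invariant ideal $\ol{\bigcup_{\un s}\wt\be_{-\un s}(J_0)}$ of $\wt B$ and checking, via Lemma \ref{L: on 0} and unitality of $A$, that it stays at distance $1$ from $1_A\otimes e_{\un0}$, hence is proper, hence zero by minimality. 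Incidentally, the step you flagged as delicate (the fibrewise invariance of $\K$) is fine; the danger was here.
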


\begin{proof}
\noindent It suffices to show that items (i) and (ii) give injectivity of $\al_p$ for all $p\in P$.

The ideal $\ker\al_{\Bi}$ is obviously $\al_{\Bi}$-invariant. 
Moreover for any other $\Bj$ we have that
\[
\al_{\Bi} \al_{\Bj}(a) = \al_{\Bj} \al_{\Bi}(a) = 0,
\]
for all $a\in \ker\al_{\Bi}$. Consequently $\ker\al_{\Bi}$ is $\al$-invariant. 
If $(A,\al,\bZ_+^n)$ is minimal, then $\ker\al_{\Bi} = (0)$, hence $(A,\al,\bZ_+^n)$ is injective. 
Therefore $(B,\be,\bZ_+^n)$ coincides with $(A,\al,\bZ_+^n)$ (thus it is minimal).

Conversely suppose that $(B,\be,\bZ_+^n)$ is minimal. 
It is immediate by construction that the C*-subalgebra $\oplus_{\un{x} \, \geq \, \un{1}} B_{\un{x}}$ is an $\al$-invariant ideal of $B$. 
Therefore it is trivial. 
In particular $A = I_{\Bi}$.
Recall that $I_{\Bi} \subseteq \ker\al_{\Bi}^\perp$.
Hence $\ker\al_\Bi = (0)$ for all $\Bi = \bo{1}, \dots, \bo{n}$.
So $(A,\al,\bZ_+^n)$ is injective and coincides with $(B,\be,\bZ_+^n)$ (thus it is minimal).
\end{proof}

By an application of Theorem \ref{T: cenv corner} and Proposition \ref{P: Four inv}, we obtain that when $\cenv(A \times_\al^{\nc} \bZ_+^n)$ is simple, the system $(A,\al,\bZ_+^n)$ is minimal. 
The converse is not true in general. 
For $n=1$ examples can be found in \cite{DavRoy11, Kak11-1}.
Another example, for $n=2$, is the following: For the system $(A,\id,\bZ_+^2)$ we get that the C*-envelope of the Nica-covariant semicrossed product is $A \otimes \rC(\bT^2)$. 
If $A$ is a simple C*-algebra then the system is trivially minimal but $A \otimes \rC(\bT^2)$ admits non-trivial ideals $A \otimes J$ for any ideal $J$ of $\rC(\bT^2)$.

Nevertheless, \emph{a} converse to Corollary~\ref{C: Four inv} is true for classical systems.
Recall that a \emph{classical dynamical system} is a C*-dynamical system $(A,\al,\bZ_+^n)$ where $A$ is commutative; say $A \simeq \rC(X)$. 
In this case, we use the notation $(X, \phi,\bZ_+^n)$ instead of $(\rm{C}(X), \al,\bZ_+^n)$, where $\phi\colon X \to X$ is a continuous map. 
The corresponding endomorphism is $\al_s(f) = f \circ \phi_s$.
So automorphisms correspond to homeomorphisms, and injective endomorphisms correspond to surjective maps on $X$.
Since we will only consider unital systems, the space $X$ is compact.

\begin{definition}
A classical dynamical system $(X,\phi,P)$ over a semigroup $P$ is called \emph{topologically free} if $\{x\in X \colon \phi_s(x) \neq \phi_r(x)\}$ is dense in $X$ for all $s \ne r$, $s,r \in P$.
\end{definition}

Equivalently, the system is topologically free if $\{ x\in X \colon \phi_s(x) = \phi_r(x)\}$ has empty interior for all $s \ne r$, $r,s\in P$. 
It is easy to deduce from \cite[Remarks]{ArcSpi93} that this coincides with the usual definition described in \cite[Definition 1]{ArcSpi93}.

\begin{lemma}\label{L: fin orb cov}
Let $X$ be a compact Hausdorff space.
Let $(X, \phi, G)$ be a minimal homeomorphic dynamical system on $X$. 
If $V$ is a non-empty open subset of $X$, then there are finitely many elements $g_1, \dots, g_k$ in $G$ such that $X = \bigcup_{j=1}^k \phi_{g_j}(V)$.
\end{lemma}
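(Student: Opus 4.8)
The plan is to exploit minimality together with a compactness argument. First I would observe that for a minimal homeomorphic system $(X,\phi,G)$, the orbit $\{\phi_g(x) : g\in G\}$ of every point $x\in X$ is dense in $X$. This is the standard equivalence between minimality (no non-trivial invariant closed subsets, dually no non-trivial invariant ideals of $\rC(X)$) and density of every orbit; it follows because the closure of an orbit is a non-empty closed $\phi$-invariant subset, which by minimality must be all of $X$.

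Given the non-empty open set $V$, I would next show that the translates $\{\phi_g(V)\}_{g\in G}$ cover $X$. Indeed, fix any $x\in X$. Since the orbit of $x$ is dense, and in fact I want a point of the orbit landing inside $V$: by density of the orbit of $x$ there is some $g\in G$ with $\phi_g(x)\in V$, equivalently $x\in \phi_{g}^{-1}(V)=\phi_{-g}(V)$. Hence $x$ lies in some translate of $V$, and so $X=\bigcup_{g\in G}\phi_g(V)$.

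Finally I would invoke compactness. Each $\phi_g$ is a homeomorphism, so each $\phi_g(V)$ is open, and $\{\phi_g(V)\}_{g\in G}$ is an open cover of the compact space $X$. Extracting a finite subcover yields finitely many $g_1,\dots,g_k\in G$ with
\[
X=\bigcup_{j=1}^k \phi_{g_j}(V),
\]
which is the desired conclusion.

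I do not expect a serious obstacle here; the only point requiring a little care is the precise bookkeeping of the direction of the group action (whether density of orbits produces $x\in\phi_g(V)$ for some $g$, with the correct sign/inverse in the exponent), but this is purely a matter of rewriting $\phi_g(x)\in V$ as $x\in\phi_{-g}(V)$ using that each $\phi_g$ is invertible. The substantive input is the equivalence of minimality with density of all orbits, which I would either cite or prove in the one line indicated above using the correspondence between closed invariant sets and invariant ideals of $\rC(X)$.
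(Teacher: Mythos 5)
Your proof is correct and follows essentially the same route as the paper: show that the translates $\{\phi_g(V)\}_{g\in G}$ cover $X$ using minimality, then extract a finite subcover by compactness. The only cosmetic difference is that you pass through the density of every orbit, whereas the paper applies the invariant-set argument directly to the union $U=\bigcup_{g\in G}\phi_g(V)$ (an open invariant set whose complement is closed and invariant, hence empty) — the same mechanism that underlies the orbit-density characterization you cite.
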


\begin{proof}
Let $U = \bigcup \{ \phi_{g}(V) \colon g \in G\}$. 
This is an invariant open set, so its complement is an invariant closed set.
By minimality, $U=X$.
Hence the result follows by compactness.
\end{proof}

We will use the following algebraic characterization of topological freeness.

\begin{proposition}\label{P: top free}
Let $P$ be a spanning cone of an abelian group $G$.
Let $(X, \phi, G)$ be a minimal homeomorphic dynamical system on a compact Hausdorff space $X$.
Then the following are equivalent:
\begin{enumerate}
\item $\phi_s \neq \phi_t$ for all $s\neq t$, $s, t \in P$;
\item $\phi_{g} \neq \id_X$ for all $g \in G$;
\item $(X,\phi,G)$ is topologically free.
\end{enumerate}
\end{proposition}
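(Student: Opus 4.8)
The plan is to prove the three equivalences by a cyclic argument $(i)\Rightarrow(ii)\Rightarrow(iii)\Rightarrow(i)$, exploiting the minimality hypothesis on $(X,\phi,G)$ throughout. The implications $(iii)\Rightarrow(i)$ and $(ii)\Rightarrow(i)$ are essentially formal: if the system is topologically free, then for $s\neq t$ the set where $\phi_s$ and $\phi_t$ disagree is dense, hence non-empty, so $\phi_s\neq\phi_t$; and writing a general $g\in G$ as $g=-s+t$ with $s,t\in P$ (possible since $P$ is a spanning cone), the condition $\phi_g\neq\id_X$ is equivalent after applying $\phi_s$ to $\phi_t\neq\phi_s$, which gives $(ii)\Leftrightarrow(i)$ directly. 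So the real content is the implication $(ii)\Rightarrow(iii)$, i.e.\ that having no non-identity element act trivially forces topological freeness.

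The heart of the matter is therefore: assuming $(X,\phi,G)$ minimal and $\phi_g\neq\id_X$ for all $g\neq 0$, show that for each $g\neq 0$ the set $F_g=\{x: \phi_g(x)=x\}$ has empty interior. First I would note $F_g$ is closed (as $X$ is Hausdorff and $\phi_g$ continuous). Suppose toward a contradiction that $F_g$ contains a non-empty open set $V$. The key tool is Lemma~\ref{L: fin orb cov}: by minimality there are finitely many $h_1,\dots,h_k\in G$ with $X=\bigcup_{j=1}^k \phi_{h_j}(V)$. On each translate $\phi_{h_j}(V)$, the homeomorphism $\phi_{h_j}\phi_g\phi_{h_j}^{-1}=\phi_g$ (using commutativity of $G$) acts as the identity, since $\phi_g$ fixes $V$ pointwise and $G$ is abelian so $\phi_g$ commutes with $\phi_{h_j}$. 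Thus $\phi_g$ is the identity on every $\phi_{h_j}(V)$, and these cover $X$, forcing $\phi_g=\id_X$, contradicting $(ii)$. This shows $F_g$ has empty interior for every $g\neq 0$, which is precisely topological freeness once one observes $\{x:\phi_s(x)=\phi_r(x)\}=F_{s-r}$ after applying the homeomorphism $\phi_r^{-1}$ (again using that $\phi$ extends to a $G$-action by homeomorphisms).

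The main obstacle I anticipate is the passage from ``$\phi_g$ fixes $V$ pointwise'' to ``$\phi_g=\id_X$'' cleanly via the covering. The subtlety is that $\phi_g$ fixing $V$ translates to $\phi_g$ fixing $\phi_{h_j}(V)$ only because $G$ is abelian: one computes, for $y=\phi_{h_j}(x)$ with $x\in V$, that $\phi_g(y)=\phi_g\phi_{h_j}(x)=\phi_{h_j}\phi_g(x)=\phi_{h_j}(x)=y$. I would be careful to record that this commutation is exactly where abelianness of $G$ (hence of $P$) is used, and that the extension of $\phi$ from $P$ to $G$ by homeomorphisms is what allows us to work with a genuine group action and to translate the two-parameter condition $\phi_s=\phi_r$ into the single-parameter fixed-point set $F_{s-r}$. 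Everything else is routine point-set topology combined with the finite-cover lemma.
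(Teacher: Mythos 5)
Your proof is correct and follows essentially the same route as the paper: the equivalence $(\textup{i})\Leftrightarrow(\textup{ii})$ via $\phi_g=\phi_s^{-1}\phi_t$ for $g=-s+t$, and the key implication $(\textup{ii})\Rightarrow(\textup{iii})$ by covering $X$ with finitely many translates $\phi_{h_j}(V)$ of the (assumed non-empty) interior $V$ of a fixed-point set using Lemma \ref{L: fin orb cov}, then invoking commutativity to conclude $\phi_g=\id_X$, a contradiction. The only divergence is in the easy direction, where you obtain $(\textup{iii})\Rightarrow(\textup{i})$ simply from density implying non-emptiness, whereas the paper's proof of that direction goes further than needed and shows that $V=\{x\in X:\phi_h(x)\neq x\}$ is invariant under all $\phi_g$ and hence equals $X$ by minimality.
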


\begin{proof}
As $G=P-P$, for any $g\in G$ we can find $s,t\in P$ such that $g=t-s$. 
Thus $\phi_g=\phi_s^{-1}\phi_t$.
The equivalence of (i) and (ii) follows immediately from this fact.

Assume (ii) and suppose that the system $(X, \phi, G)$ is not topologically free. 
Then there is an $h \in G \setminus \{0\}$ such that the open set
\[
 V = \{ x\in X \colon \phi_h(x) = x\}^\circ
\]
is non-empty.
By minimality and Lemma \ref{L: fin orb cov}, there are $g_1, \dots, g_k$ in $G$ such that $X = \bigcup_{j=1}^k \phi_{g_k}(V)$. 
Let $y \in X$, and pick $x\in V$ and $g_j$ so that $y = \phi_{g_j}(x)$. 
Then
\begin{align*}
\phi_{-h}(y) = \phi_{-h} \phi_{g_j}(x) = \phi_{-h} \phi_{g_j} \phi_{h}(x) = \phi_{g_j}(x) = y.
\end{align*}
Therefore $\phi_{-h} = \id_X$, contradicting (ii).

For the converse, suppose that $(X,\phi,G)$ is topologically free. 
Take any $0 \ne h$ in $G$. 
Then the open set $V = \{x\in X \colon \phi_h(x) \neq x \}$ is dense, and thus is non-empty. 
By Lemma \ref{L: fin orb cov}, we have that there are $g_1, \dots, g_k$ in $G$ so that $X = \bigcup_{j=1}^k \phi_{g_j}(V)$. 
It suffices to show that $\phi_g(V) \subseteq V$ for all $g \in G$. 
Let $u = \phi_g(x)$ for some $x\in V$. 
If $u \notin V$ then $\phi_h(u) = u$. 
But then
\[
\phi_g(x) = u = \phi_h(u) = \phi_h\phi_g(x) = \phi_g\phi_h(x).
\]
Since $\phi_g$ is one-to-one, this implies that $x = \phi_h(x)$ which is a contradiction, because $x\in V$.
\end{proof}

An injective C*-dynamical system in the classical case is given as a topological dynamical system $(X,\phi,P)$ where $\phi_s \colon X \to X$ are surjective. 
Moreover, the direct limit process translates to the projective limit
\[
\wt{X} = \{ (x_p) \in \prod_{p \in P} X \colon x_t = \phi_{p-t}(x_p), \foral t \leq p \}, 
	\]
with the induced homeomorphisms on $\wt{X}$ given by 
\[
\wt{\phi}_r((x_p)) = (\phi_r(x_p)) \foral r\in P.
\]
The space $\wt{X}$ is a closed subspace of $\prod_{s \in \bZ_+^n} X$ endowed with the product topology.
Since $(X,\phi,P)$ is surjective, there is a surjective projection $q \colon \wt{X} \to X$ given by $q((x_p)) = x_0$.
The projection $q$ satisfies $\phi_r q = q \wt{\phi}_r$ for all $r\in P$.

\begin{theorem}\label{T: min Z+}
Let $(X,\phi,P)$ be a surjective classical system over a lattice-ordered abelian group $(G,P)$. 
Then the following are equivalent:
\begin{enumerate}
\item $(X, \phi, P)$ is minimal and $\phi_s \neq \phi_r$ for all $s, r \in P$;
\item $(\wt{X}, \wt{\phi}, G)$ is minimal and topologically free;
\item the C*-envelope $\rm{C}(\wt{X}) \rtimes_{\wt{\phi}} G$ of $\rm{C}(X) \times_\phi^{\nc} P$ is simple.
\end{enumerate}
Moreover if $\I$ is an ideal in any C*-cover of $\rm{C}(X) \times_\phi^{\nc} P$, then it is boundary.
\end{theorem}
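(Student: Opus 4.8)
The plan is to derive the boundary property from the simplicity of the C*-envelope together with hyperrigidity. Granting the equivalences just established, the system is minimal and unital (as $X$ is compact), hence injective, so by Corollary~\ref{C: un ext pr} the Nica-covariant semicrossed product $\rC(X) \times_\phi^{\nc} P$ is hyperrigid; and by the equivalence of (i) and (iii) its C*-envelope $\rC(\wt{X}) \rtimes_{\wt{\phi}} G$ is simple. Write $\A = \rC(X) \times_\phi^{\nc} P$, let $\iota \colon \A \to \cenv(\A)$ be the canonical embedding, and recall that an ideal $\I$ of a C*-cover $(C,j)$ is boundary exactly when $q_\I \circ j$ is completely isometric.

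First I would reduce the statement to representations. Let $(C,j)$ be an arbitrary C*-cover and $\I \subsetneq C$ a proper ideal; choose a nondegenerate $*$-representation $\sigma \colon C \to \B(H)$ with $\ker \sigma = \I$, obtained from a faithful representation of $C/\I$. Since the induced map $\bar{\sigma} \colon C/\I \to \B(H)$ is an injective $*$-homomorphism, hence completely isometric, the ideal $\I$ is boundary if and only if $\sigma \circ j$ is a completely isometric representation of $\A$. Thus it suffices to prove that $\sigma \circ j$ is completely isometric for every nondegenerate $*$-representation $\sigma$ of every C*-cover.

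The crux is to recognize $\sigma \circ j$ as a maximal representation. Here I would invoke hyperrigidity in the form that is insensitive to the choice of C*-cover \cite{Arv11}: because $\A$ is hyperrigid, the restriction $\sigma \circ j$ of the $*$-representation $\sigma$ of $C$ has the unique extension property, i.e., it is a maximal representation of $\A$. Consequently $\sigma \circ j$ extends to a $*$-representation $\wh{\sigma} \colon \cenv(\A) \to \B(H)$ with $\wh{\sigma} \circ \iota = \sigma \circ j$. As $\sigma \circ j$ is unital and nonzero, $\wh{\sigma} \neq 0$; and since $\cenv(\A) = \rC(\wt{X}) \rtimes_{\wt{\phi}} G$ is simple, $\wh{\sigma}$ is faithful and therefore completely isometric. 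Then $\sigma \circ j = \wh{\sigma} \circ \iota$ is a composition of the completely isometric embedding $\iota$ with a completely isometric $*$-representation, hence completely isometric, which yields the claim.

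The main obstacle is precisely the passage that hyperrigidity resolves. The naive route --- dilate $\sigma \circ j$ to a maximal representation, extend that dilation to $\cenv(\A)$, and use simplicity to see it is completely isometric --- fails, because complete isometry of a dilation does not descend to a compression (nor to a direct summand). What saves the argument is that hyperrigidity forces $\sigma \circ j$ \emph{itself}, and not merely some dilation of it, to be maximal; simplicity of the envelope then upgrades its extension to a faithful, completely isometric $*$-representation. I would be careful to use the version of Arveson's theorem that transfers the unique extension property from representations of $\cenv(\A)$ to representations of an arbitrary C*-cover, since $\sigma$ is a representation of $C$ rather than of the envelope.
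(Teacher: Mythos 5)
Your proposal has two genuine gaps: it omits the main body of the theorem, and the step on which your argument for the remaining clause turns is false.

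First, the scope. You prove only the final ``moreover'' sentence and take the equivalences (i)$\Leftrightarrow$(ii)$\Leftrightarrow$(iii) as ``just established''. They are not established anywhere prior to the theorem; they \emph{are} the theorem, and they are what the paper's proof consists of: (i)$\Leftrightarrow$(ii) is reduced, via the intertwining projection $q\colon\wt{X}\to X$, to the equivalence $\phi_s=\phi_r \Leftrightarrow \wt{\phi}_s=\wt{\phi}_r$, and then follows from Propositions \ref{P: Four inv} and \ref{P: top free}; and (ii)$\Leftrightarrow$(iii) is obtained from Archbold--Spielberg \cite{ArcSpi93}. None of this appears in your write-up.

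Second, the crux of what you do prove is the assertion that, since $\A:=\rC(X)\times_\phi^{\nc}P$ is hyperrigid, $\sigma\circ j$ has the unique extension property for \emph{every} $*$-representation $\sigma$ of \emph{every} C*-cover $(C,j)$. There is no such cover-insensitive form of Arveson's theorem, and the assertion is false. Hyperrigidity, in \cite{Arv11} and as defined in this paper, quantifies over $*$-representations of $\cenv(\A)$ --- equivalently, over those representations of a cover $C$ which annihilate its \v{S}ilov ideal $\J$ --- and it cannot be promoted to all representations of all covers: the unique extension property is equivalent to maximality \cite{Arv06, DriMcC05}, and a maximal completely isometric representation generates the C*-envelope (Section \ref{S:op alg}). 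Hence if $(C,j)$ is any cover with $\J\neq\{0\}$ and $\sigma$ is a faithful representation of $C$, then $\sigma\circ j$ is completely isometric but cannot be maximal (otherwise $C\cong\cenv(\A)$ by a $*$-isomorphism fixing $\A$), so it fails the unique extension property. Such covers exist here: $\ca(\pi_u,V_u)$ is a C*-cover of $\A$ by Theorem \ref{T: lr}, and the canonical surjection onto $\cenv(\A)=\rC(\wt{X})\rtimes_{\wt{\phi}}G$ of Theorem \ref{T: cenv inj Nc} kills the nonzero defect projections $1-V_{u,s}V_{u,s}^*$, $s\neq0$, so its \v{S}ilov ideal is nonzero. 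Your intermediate claim therefore already fails for $\sigma=\id$ on this cover. What hyperrigidity and simplicity genuinely give are the two comparable cases: if $\I=\ker\sigma\supseteq\J$, then $\sigma\circ j$ factors through $\cenv(\A)$, your argument runs, and simplicity forces $\I=\J$; if $\I\subseteq\J$, then $\I$ is boundary for trivial reasons, with no hyperrigidity needed. Ideals not comparable with $\J$ are untouched.

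Finally, that untreated case is a true obstruction, so the gap cannot be closed by a better citation. Consider the pair $(\pi,T)$ with $\pi(f)=f(x_0)$ for a point $x_0\in X$ and $T_s=0$ for all $s\neq0$: it is a semigroup homomorphism because $P\cap(-P)=\{0\}$, it is contractive, covariant and Nica-covariant for trivial reasons, and it is regular because its regular extension is the characteristic function of $\{0\}$, which is positive definite on $G$. It therefore induces a character $\chi$ of $\A$ with $\chi(e_s\otimes f)=\delta_{s,0}\,f(x_0)$. Then $j'=\iota\oplus\chi$ embeds $\A$ completely isometrically into $C'=\cenv(\A)\oplus\bC$, and $C'=\ca(j'(\A))$ because $j'(e_s\otimes1)^*\,j'(e_s\otimes1)=1\oplus0$ (recall that $\iota$ sends $e_s\otimes1$ to a unitary). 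So $(C',j')$ is a C*-cover, and its proper ideal $\cenv(\A)\oplus0$ is \emph{not} boundary: the quotient by it is $\chi$, which annihilates the norm-one elements $e_s\otimes1$, $s\neq0$. Thus the ``moreover'' clause, read literally for all C*-covers and all ideals, cannot be derived from simplicity and hyperrigidity --- it fails for this cover --- and for the same reason the paper's own one-sentence justification (``immediate from simplicity'') only delivers the statement for ideals comparable with the \v{S}ilov ideal, which is also the exact reach of your argument. A correct treatment must restrict to such ideals, or to concrete covers such as $\ca(\pi_u,V_u)$ and $\cenv(\A)$ whose ideal structure can be analyzed directly.
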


\begin{proof}
By Propositions \ref{P: Four inv} and \ref{P: top free}, it suffices to show that
\[
\phi_s = \phi_r \Leftrightarrow \wt{\phi}_s = \wt{\phi}_r, \text{ for } s, r \in P
\]
for the equivalence of items (i) and (ii).
To this end, suppose that $\phi_s = \phi_r$. 
Then
\[
\wt{\phi}_s((x_p)) = (\phi_s(x_p)) = (\phi_r(x_p)) = \wt{\phi}_r((x_p))
\]
for all $(x_p) \in \wt{X}$. Conversely if $\wt{\phi}_s = \wt{\phi}_r$ then
\[
\phi_s = \phi_s q = q \wt{\phi}_s = q \wt{\phi}_r = \phi_r q= \phi_r.
\]
Now for $g\in G$, find $s,r \in P$ such that $g=-s +r$. 
Then $\wt{\phi}_g = \id_{\wt{X}}$ is equivalent to $\wt{\phi}_r = \wt{\phi}_s$.

Archbold and Spielberg \cite[Corollary]{ArcSpi93} show that (ii) implies that $\rm{C}(\wt{X}) \rtimes_{\wt{\phi}} G$ is simple.
Conversely, when this C*-algebra is simple, Proposition \ref{P: Four inv} shows that $(\wt{X}, \wt{\phi}, G)$ is minimal; and \cite[Theorem 2]{ArcSpi93} shows that it is topologically free.

The last statement is an immediate consequence of the simplicity of the C*-envelope.
\end{proof}

For the next corollary, note that if $(A,\al,\bZ_+^n)$ is a commutative system, then the injective system $(B,\be, \bZ_+^n)$ that we construct will also be commutative. 
So we can write them as classical systems $(X, \phi, \bZ_+^n)$ and $(Y, \tau, \bZ_+^n)$.
Since we observed at the beginning of this section that minimality implies injectivity, we obtain the following corollary.

\begin{corollary}\label{C: min Z+}
Let $(X,\phi,\bZ_+^n)$ be a classical system. 
Then the following are equivalent:
\begin{enumerate}
\item $(X, \phi, \bZ_+^n)$ is minimal and $\phi_{\un{x}} \neq \phi_{\un{y}}$ for all $\un{x}, \un{y} \in \bZ_+^n$.
\item $(Y, \tau, \bZ_+^n)$ is minimal and $\tau_{\un{x}} \neq \tau_{\un{y}}$ for all $\un{x}, \un{y} \in \bZ_+^n$.
\item $(\wt{Y}, \wt{\tau}, \bZ^n)$ is minimal and topologically free.
\item $\mathrm{C}(\wt{Y}) \rtimes_{\wt{\tau}} \bZ^n$ is simple.
\item the C*-envelope $\N\O(X,\vpi)$ is simple.
\end{enumerate}
If any of the above holds then $(Y, \tau, \bZ_+^n) \equiv (X, \phi, \bZ_+^n)$, and consequently 
\[ 
\cenv(\rm{C}(X) \times_\phi^{\nc} \bZ_+^n) \simeq \rm{C}(\wt{X}) \rtimes_{\wt{\phi}} \bZ^n.
\] 
Moreover if $\I$ is an ideal in a C*-cover of $\rm{C}(X) \times_\phi^{\nc} \bZ_+^n$ then it is boundary.
\end{corollary}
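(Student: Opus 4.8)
The plan is to assemble the corollary from three earlier results: Corollary~\ref{C: Four inv}, Theorem~\ref{T: min Z+}, and the corner description in Theorem~\ref{T: cenv corner} together with Corollary~\ref{C: CNP cenv}. The first observation I would record is that the dilated system $(B,\be,\bZ_+^n)$ is injective by construction (Proposition~\ref{P: inj}); since $A=\rC(X)$ is commutative, so is $B$, and hence the associated classical system $(Y,\tau,\bZ_+^n)$ is \emph{surjective}. This is precisely the hypothesis of Theorem~\ref{T: min Z+}, which I would then apply verbatim to $(Y,\tau,\bZ_+^n)$ with $(G,P)=(\bZ^n,\bZ_+^n)$. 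This immediately yields the equivalence of (ii), (iii) and (iv).

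Next I would establish (i)$\Leftrightarrow$(ii). By Corollary~\ref{C: Four inv}, $(X,\phi,\bZ_+^n)$ is minimal if and only if $(Y,\tau,\bZ_+^n)$ is minimal; and minimality forces injectivity of the $\al_\Bi$, so that $(X,\phi,\bZ_+^n)\equiv(Y,\tau,\bZ_+^n)$ (again by the \emph{moreover} clause of Corollary~\ref{C: Four inv}). Under this identification the two separation conditions $\phi_{\un x}\neq\phi_{\un y}$ and $\tau_{\un x}\neq\tau_{\un y}$ literally coincide, so (i) and (ii) are the same statement.

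For (iv)$\Leftrightarrow$(v) I would argue through the corner picture. By Corollary~\ref{C: CNP cenv}, $\N\O(X,\phi)\simeq\cenv(\rC(X)\times_\phi^{\nc}\bZ_+^n)$, and by Theorem~\ref{T: cenv corner} this is a \emph{full} corner of $\wt{B}\rtimes_{\wt{\be}}\bZ^n=\rC(\wt Y)\rtimes_{\wt\tau}\bZ^n$; crucially this holds without assuming injectivity. Since a full corner is Morita equivalent to the ambient C*-algebra, the two have isomorphic ideal lattices, and in particular one is simple exactly when the other is. Hence $\N\O(X,\phi)$ is simple if and only if $\rC(\wt Y)\rtimes_{\wt\tau}\bZ^n$ is, which is (iv).

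Finally, for the \emph{moreover} clause: any of (i)--(v) entails minimality, hence injectivity, so $(Y,\tau,\bZ_+^n)\equiv(X,\phi,\bZ_+^n)$, and in particular $\wt Y=\wt X$ and $\wt\tau=\wt\phi$. Injectivity then lets me invoke Theorem~\ref{T: cenv inj Nc} to identify $\cenv(\rC(X)\times_\phi^{\nc}\bZ_+^n)\simeq\rC(\wt X)\rtimes_{\wt\phi}\bZ^n$, and the closing statement that every ideal of any C*-cover of $\rC(X)\times_\phi^{\nc}\bZ_+^n$ is a boundary ideal follows from the simplicity of the C*-envelope exactly as in the proof of Theorem~\ref{T: min Z+}. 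The work here is genuinely bookkeeping rather than a new argument; the two care points are checking that passing to the dilation preserves commutativity and yields a surjective system (so that Theorem~\ref{T: min Z+} applies to $(Y,\tau,\bZ_+^n)$), and the step (iv)$\Leftrightarrow$(v). The main (if only mildly subtle) obstacle is the latter: this equivalence must be proved before minimality, and hence injectivity, is in hand, which is exactly why I route it through the full corner of Theorem~\ref{T: cenv corner} rather than through the crossed-product identification of Theorem~\ref{T: cenv inj Nc}.
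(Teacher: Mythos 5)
Your overall architecture coincides with the paper's: Corollary~\ref{C: Four inv} for the minimality bookkeeping, Theorem~\ref{T: min Z+} for the surjective classical case, and the full-corner identification from Theorem~\ref{T: cenv corner} together with Corollary~\ref{C: CNP cenv} for $(\textup{iv})\Leftrightarrow(\textup{v})$. The $(\textup{iv})\Leftrightarrow(\textup{v})$ step, the $(\textup{i})\Leftrightarrow(\textup{ii})$ reduction, and the closing clause all match the paper's proof.

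However, your first step has a genuine gap. You assert that injectivity of $(B,\be,\bZ_+^n)$ and commutativity of $B$ place $(Y,\tau,\bZ_+^n)$ ``precisely'' under the hypotheses of Theorem~\ref{T: min Z+}, and you apply that theorem verbatim to obtain $(\textup{ii})\Leftrightarrow(\textup{iii})\Leftrightarrow(\textup{iv})$. But Theorem~\ref{T: min Z+} and the results feeding into it (Proposition~\ref{P: Four inv}, Proposition~\ref{P: top free}, Lemma~\ref{L: fin orb cov}) are proved under the standing assumption of that section that the system is \emph{unital}, equivalently that the underlying space is \emph{compact}. The dilation $B=\sum^\oplus_{\un{x}\in\bZ_+^n} A/I_{\un{x}}$ is a $c_0$-direct sum over the infinite index set $\bZ_+^n$, so when $(X,\phi,\bZ_+^n)$ fails to be injective, infinitely many summands can be non-zero, $B$ is then non-unital, and $Y$ is non-compact. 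For instance, take $n=1$, $X$ a two-point space and $\phi$ a constant map: then $B_x\simeq\bC$ for every $x\ge 1$, so $B$ has no unit. Thus the hypothesis you need --- compactness of $Y$ --- is exactly what is unavailable at the moment you invoke the theorem; surjectivity and commutativity alone do not suffice. This is precisely the point the paper addresses first: it uses Corollary~\ref{C: Four inv} to show that \emph{each} of the conditions (i)--(v) (with (v) first converted to (iv) via the full corner) forces injectivity, hence $Y=X$ compact, and only then applies Theorem~\ref{T: min Z+}. Your proposal contains all the needed ingredients --- indeed your $(\textup{i})\Leftrightarrow(\textup{ii})$ paragraph derives $Y=X$ from minimality --- but the logical order must be reversed: deduce $Y=X$ from whichever condition is being assumed \emph{before} applying Theorem~\ref{T: min Z+}, rather than applying it unconditionally to $(Y,\tau,\bZ_+^n)$. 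Note also that of the two ``care points'' you flag, the ones you check (commutativity, surjectivity) are unproblematic, while the one that actually requires an argument (compactness of $Y$) is the one you omit.
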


\begin{proof}
Before applying Theorem \ref{T: min Z+}, we have to make sure that $Y$ is compact. 
By Corollary \ref{C: Four inv} we obtain that any minimality condition or simplicity (hence non-existence of non-trivial Fourier ideals) implies that $Y=X$.

Moreover we note that (iv) and (v) are equivalent since the Cuntz-Nica-Pimsner algebra $\N\O(X,\vpi)$ is a full corner of the crossed product $\rC(\wt{Y}) \rtimes_{\wt{\tau}} \bZ^n$ by Theorem~\ref{T: cenv corner} and Corollary~\ref{C: CNP cenv}.
\end{proof}

\section{Comparison with C*-correspondences and product systems}\label{S: over C*-cor}

The reader familiar with the theory of C*-correspondences may have already noticed that there is a similarity between techniques used in that setting and the techniques in our proofs for the Nica-covariant semicrossed products. 
This suggests the natural question whether a semicrossed product in the categories we investigate here coincides with the tensor algebra of a C*-correspondence. 
Here we show that this cannot be the case in general. 
In fact this doesn't hold even for the trivial system $(\bC,\id)$.

Let $(G,P)$ be a lattice-ordered abelian group.
It is easily verified that the isometric, unitary and Nica-covariant semicrossed products for $(\bC, \id, P)$ coincide, since the representations of $P$ in each category dilate to a unitary representation. 
Let us denote these algebras by $\alg(P)$. 
Evidently $\alg(P)$ is commutative.

Our goal is to show that if $\alg(P)$ is the tensor algebra of a C*-correspon\-dence, then $(G,P)$ is isomorphic to $(\bZ, \bZ_+)$. 
We will need the following known group theoretic result.

\begin{lemma}
If $(\bZ,P)$ is a lattice-ordered abelian group, then $(\bZ,P)$ is isomorphic to $(\bZ,\bZ_+)$.
\end{lemma}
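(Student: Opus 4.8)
The plan is to determine $P$ explicitly and show it must be $\bZ_+$ up to the sign automorphism of $\bZ$. First I would observe that $P$ is a positive cone. Indeed, a lattice order is in particular a partial order, so the relation $\le$ defined by $s \le t \iff t-s \in P$ is antisymmetric; hence if both $g$ and $-g$ lie in $P$, then $0 \le g$ and $g \le 0$, forcing $g=0$, i.e.\ $P \cap (-P) = \{0\}$.

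Next I would show that $P$ lies on one side of $0$. If $P$ contained some $a>0$ and some $-b$ with $b>0$, then closure under addition would give $ba = a+\dots+a \in P$ and $-ab = (-b)+\dots+(-b) \in P$, so the nonzero integer $ab$ would lie in $P \cap (-P)$, contradicting positivity. Thus $P \subseteq \bZ_{\ge 0}$ or $P \subseteq \bZ_{\le 0}$. Since the group automorphism $n \mapsto -n$ carries $P$ to $-P$ and is a lattice isomorphism $(\bZ,P) \simeq (\bZ,-P)$ (it reverses $\le$, interchanging $\vee$ and $\wedge$), I may assume without loss of generality that $P \subseteq \bZ_{\ge 0}$.

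The key step is then to upgrade this to $P = \bZ_{\ge 0}$, using unperforation of lattice-ordered groups \cite[Proposition 3.6]{Darnel}. Since $G = \bZ = P-P$ is nontrivial and $P \subseteq \bZ_{\ge 0}$, the cone $P$ contains some integer $n \ge 1$, that is $n \cdot 1 \in P$; unperforation then forces $1 \in P$. As $P$ is a unital subsemigroup, this gives $P \supseteq \{0,1,2,\dots\} = \bZ_{\ge 0}$, and combined with $P \subseteq \bZ_{\ge 0}$ we conclude $P = \bZ_+$, whence $(\bZ,P) \simeq (\bZ,\bZ_+)$. I do not anticipate a genuine obstacle: once unperforation is available the argument is immediate, and the only points requiring care are the reduction to one-sidedness and the verification that the sign flip is an isomorphism of lattice-ordered groups rather than merely of groups.
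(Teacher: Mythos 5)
Your proof is correct. Note that the paper itself offers no argument for this lemma --- it is stated as a ``known group theoretic result'' and left uncited --- so there is no internal proof to compare against; your write-up in effect supplies the missing verification. Each step checks out: positivity $P \cap (-P) = \{0\}$ follows from antisymmetry (and is in any case built into the paper's definition of a lattice-ordered group, which requires a positive spanning cone); the doubling trick ($b$ copies of $a$ and $a$ copies of $-b$ producing $ab \in P \cap (-P)$) correctly forces $P$ to lie on one side of $0$; the sign map $n \mapsto -n$ is indeed an isomorphism of lattice-ordered groups $(\bZ,P) \simeq (\bZ,-P)$ since it reverses the order and so swaps $\vee$ and $\wedge$; and the final step is a legitimate application of unperforation, which the paper itself quotes with the citation to Darnel, together with the spanning hypothesis $P - P = \bZ$ to guarantee $P \ne \{0\}$. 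One could avoid unperforation by arguing directly that a subsemigroup of $\bZ_{\ge 0}$ generating $\bZ$ but missing $1$ fails the lattice property (as in the paper's example $\{0,2,3,4,\dots\}$), but since the paper already records unperforation as a standing fact about lattice-ordered groups, your route is the cleaner one.
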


\begin{proposition}\label{P: not corre}
Let $(G,P)$ be a lattice-ordered abelian group.
Then $\alg(P)$ is unital completely isometric isomorphic to the tensor algebra of a C*-correspon\-dence 
if and only if $P\simeq \bZ_+$.
\end{proposition}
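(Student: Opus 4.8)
The plan is to prove the two implications separately. The forward direction is routine: if $P\simeq\bZ_+$ then $G\simeq\bZ$ and $\alg(P)$ is the (isometric $=$ unitary $=$ Nica-covariant) semicrossed product of $(\bC,\id,\bZ_+)$, i.e.\ the universal algebra generated by a single isometry, which is the disc algebra $A(\bD)$; and $A(\bD)$ is exactly the tensor algebra $\T_+(X)$ of the one-dimensional correspondence $X=\bC$ over $\bC$ (the Fock space is $\ltwo(\bZ_+)$ and the creation operator is the unilateral shift). So I would concentrate on the converse, assuming throughout that $G\neq\{0\}$ (otherwise $\alg(P)=\bC=\T_+(0)$ while $P=\{0\}\not\simeq\bZ_+$, so some such nondegeneracy is needed for the statement to hold).

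For the converse, suppose $\Phi\colon\T_+(X)\to\alg(P)$ is a unital completely isometric isomorphism, where $X$ is a C*-correspondence over a C*-algebra $A$. First I would fix the ambient picture: by Theorem~\ref{T: cenv inj Nc} applied to $(\bC,\id,P)$, one has $\cenv(\alg(P))\simeq\ca(G)=\rC(\wh G)$, so I regard $\alg(P)$ as the ``analytic'' subalgebra $\overline{\Span}\{\delta_p:p\in P\}$ of $\rC(\wh G)$. The elementary but crucial observation is that $\alg(P)$ is antisymmetric: each $f\in\alg(P)$ has its Fourier transform supported on $P$ and each $f\in\alg(P)^*$ has $\wh f$ supported on $-P$ (the $g$-th Fourier coefficient is a bounded functional vanishing off $P$, resp.\ $-P$), so any element of $\alg(P)\cap\alg(P)^*$ has Fourier support in $P\cap(-P)=\{0\}$, whence $\alg(P)\cap\alg(P)^*=\bC 1$.

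The heart of the argument is to identify the coefficient algebra $A$ with $\bC$. Since $A$ embeds into $\T_+(X)$ as a unital C*-subalgebra (isometrically, via the degree-zero summand of the Fock representation, regardless of faithfulness of $X$), $\Phi$ restricts to a unital isometric homomorphism $\Phi|_A\colon A\to\rC(\wh G)$. A unital contractive homomorphism between C*-algebras is automatically a $*$-homomorphism: for self-adjoint $a$ the elements $e^{ita}$ are unitary, so $e^{it\Phi(a)}$ and $e^{-it\Phi(a)}$ are both contractions, forcing $e^{it\Phi(a)}$ to be unitary for all $t$ and hence $\Phi(a)$ self-adjoint. Therefore $\Phi(A)$ is a self-adjoint C*-subalgebra of $\rC(\wh G)$ contained in $\alg(P)$, so $\Phi(A)=\Phi(A)^*\subseteq\alg(P)\cap\alg(P)^*=\bC 1$; unitality gives $\Phi(A)=\bC 1$, i.e.\ $A=\bC$.

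With $A=\bC$ the correspondence $X$ is a Hilbert space $H$ and $\T_+(X)$ is Popescu's noncommutative disc algebra on $\dim H$ generators; this is commutative only when $\dim H\le 1$, while $\dim H\ge 1$ because $\alg(P)\neq\bC$. Hence $\dim H=1$ and $\Phi$ is a completely isometric isomorphism $\alg(P)\simeq A(\bD)$. Passing to C*-envelopes yields $\rC(\wh G)\simeq\cenv(A(\bD))=\rC(\bT)$, so $\wh G$ is homeomorphic to the circle. (As a byproduct, $A(\bD)$ is Dirichlet, so $\alg(P)+\alg(P)^*$ is dense in $\rC(\wh G)$, forcing $P\cup(-P)=G$ and hence $(G,P)$ totally ordered.) Now a compact abelian group homeomorphic to $S^1$ is a one-dimensional compact connected Lie group, hence topologically isomorphic to $\bT$; equivalently, among discrete abelian groups the only one with circle dual is $\bZ$ (by Pontryagin duality, $\wh G$ connected gives $G$ torsion-free, dimension one gives rank one, and the solenoidal dual of a noncyclic rank-one group is not locally connected). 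Thus $G\simeq\bZ$, and the Lemma preceding the Proposition gives $(\bZ,P)\simeq(\bZ,\bZ_+)$, i.e.\ $P\simeq\bZ_+$. I expect the main obstacle to be the identification $A=\bC$ of the coefficient algebra --- matching the abstract C*-correspondence data to the concrete analytic picture --- together with the closing duality step that a discrete group with circle dual must be $\bZ$.
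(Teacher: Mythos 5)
Your proof is correct and follows essentially the same route as the paper's: reduce the coefficient algebra to $\bC$ using $P\cap(-P)=\{0\}$, use commutativity of $\alg(P)$ to force the correspondence (now a Hilbert space) to be one-dimensional, conclude $\alg(P)\simeq A(\bD)$, and then compare C*-envelopes to get $\ca(G)\simeq\ca(\bZ)$ and hence $P\simeq\bZ_+$ via the lemma on lattice orders on $\bZ$. The differences are only in level of detail, and they are in your favor: where the paper asserts without proof that $A$ is the maximal C*-subalgebra of $\alg(P)$ and that $\ca(G)\simeq\ca(\bZ)$ forces $G\simeq\bZ$, you supply the Fourier-support argument showing $\alg(P)\cap\alg(P)^*=\bC 1$ together with automatic self-adjointness of $\Phi|_A$, and the Pontryagin-duality (solenoid) argument, respectively; you also correctly flag the degenerate case $P=\{0\}$, $X=0$, which the paper's wording overlooks.
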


\begin{proof}
Let $X_A$ be a C*-correspondence such that $\T_X^+ \simeq \alg(P)$. 
Then $A$ is a maximal C*-subalgebra of $\alg(P)$. 
Since $P \cap (-P) = \{0\}$, we get that the maximal C*-algebra in $\alg(P)$ is $\bC$. 
Therefore $X_A$ is actually a Hilbert space. 
Suppose that $\dim X \geq 2$ and let $\xi_1, \xi_2 \in X$ be orthonormal.
Then $\T_X^+$ contains at least two non-zero generators, say $s_1, s_2$ such that $s_1^*s_2 = \sca{\xi_1,\xi_2}=0$ and $s_1^*s_1 = \sca{s_1,s_1}= I$. 
Since $\alg(P)$ is commutative, $\T_X^+$ is also commutative.
Hence
\[
s_2 = s_1^*s_1 \cdot s_2 = s_1^* s_2 \cdot s_1 =0,
\]
which is a contradiction. 
Therefore $\dim X =1$, and thus $\T_X^+ = A(\bD)$.

Since $\alg(P) \simeq A(\bD)$, we obtain that
\[
\ca(G) \simeq \cenv(\alg(P)) \simeq \cenv(A(\bD)) \simeq \ca(\bZ).
\]
Therefore $G \simeq \bZ$; whence $P \simeq \bZ_+$. The converse implication is trivial.
\end{proof}

Since Nica-covariant isometric semicrossed products are an example of the non-involutive part of the Toeplitz algebra of a product system, the next corollary is now obvious.

\begin{corollary}
The category of product systems does not coincide with the category of C*-correspondences.
\end{corollary}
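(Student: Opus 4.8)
The plan is to produce a single explicit counterexample: a product system whose Toeplitz tensor algebra fails to be the tensor algebra of any C*-correspondence. The natural candidate is the product system over $\bZ_+^2$ coming from the trivial dynamical system $(\bC,\id,\bZ_+^2)$ on the lattice-ordered abelian group $(\bZ^2,\bZ_+^2)$.

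First I would invoke the observation recorded immediately before the statement, namely that the Nica-covariant isometric semicrossed product is the non-involutive part of the Toeplitz algebra of a product system. Applied to $(\bC,\id,\bZ_+^2)$, this identifies the algebra $\alg(\bZ_+^2)$ --- in which the isometric, unitary and Nica-covariant semicrossed products all coincide, as noted at the start of this section --- with the tensor algebra of a concrete product system over $\bZ_+^2$. Thus $\alg(\bZ_+^2)$ lies in the image of the assignment sending a product system to its tensor algebra.

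Next I would apply Proposition \ref{P: not corre} with $P=\bZ_+^2$. Since $\bZ_+^2\not\simeq\bZ_+$, the proposition asserts that $\alg(\bZ_+^2)$ is \emph{not} unital completely isometrically isomorphic to the tensor algebra of any C*-correspondence. Combining the two facts, $\alg(\bZ_+^2)$ is the tensor algebra of a product system but of no C*-correspondence, and so the two categories cannot coincide under the identification through tensor algebras.

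The only genuine subtlety is the reading of the word ``coincide'': it is to be interpreted as asserting that every tensor algebra arising from a product system also arises from a C*-correspondence, the reverse inclusion being automatic since a C*-correspondence is a product system over $\bZ_+$. A single object in the first class and outside the second, as exhibited above, then suffices. The real work has already been carried out in Proposition \ref{P: not corre}, whose argument pins the C*-correspondence possibility down to the one-dimensional case $A(\bD)$ and hence forces $P\simeq\bZ_+$; so I expect no obstacle beyond citing these two inputs correctly.
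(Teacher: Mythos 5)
Your proposal is correct and follows essentially the same route as the paper: the paper's own justification is precisely that Nica-covariant semicrossed products are tensor algebras of product systems, so Proposition \ref{P: not corre} (applied to any lattice-ordered abelian $P\not\simeq\bZ_+$, such as your choice $\bZ_+^2$) immediately yields the corollary. Your only addition is to make the witnessing example explicit, which the paper leaves implicit.
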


\begin{remark}\label{R: DP false}
Proposition \ref{P: not corre} shows that \cite[Section 5]{DunPet10} is incorrect. 
The updated version of their paper on arXiv has corrected this.
Their main result \cite[Theorem 3]{DunPet10} is generalized in 
our Theorem \ref{T: cenv corner} to non-classical and non-injective systems.
\end{remark}

\begin{remark}
Isometric Nica-covariant representations of C*-dynamical systems $(A,\al,P)$ over lattice-ordered abelian lattices $(G,P)$ are examples of Toeplitz-Nica-Pimsner representations of a particular product system in the sense of Fowler \cite{Fow02}. 
When $(A,\al,P)$ is injective the unitary covariant representations are, in particular, Cuntz-Nica-Pimsner representations \cite{Fow02}. 
For non-injective product systems, the theory of Cuntz-Nica-Pimsner representations of product systems was continued by Sims and Yeend \cite{SimYee10} and lately by Carlsen, Larsen, Sims and Vittadello \cite{CLSV11}. 
It may seem hard to check whether the Cuntz-Nica representations of $(A,\al,\bZ_+^n)$ as in Definition \ref{D: CN} satisfy the (CNP)-condition of \cite{SimYee10}. 
Nevertheless, we showed that a gauge-invariant uniqueness theorem holds for the universal Cuntz-Nica-Pimsner algebra $\N\O(A,\al)$. 
By \cite[Corollary 4.14]{CLSV11} it is identified with the Cuntz-Nica-Pimsner algebra of the associated product system. 
Furthermore we showed that $\N\O(A,\al)$ is the C*-envelope of the tensor algebra of the Toeplitz-Nica-Pimsner algebra, which answers both the question raised in \cite[Introduction]{CLSV11} for C*-dynamics over $(\bZ^n,\bZ_+^n)$, and the classification program on the C*-envelope.
\end{remark}

\chapter{Semicrossed products by non-abelian semigroups}\label{S: nonabelian}

To this point, we have only dealt with semicrossed products over abelian semigroups. 
The reader will have noticed that the commutativity of our semigroups has been vital to many of our calculations. 
That said, it is possible to define semicrossed products for non-abelian semigroups. 
Indeed, such objects have previously been studied in \cite{DavKat11, Dun08, KakKat11}. 
In this chapter we explore these objects further. 
After considering possible ways for extending our ideas to general non-abelian semigroups, we look at the more concrete setting of semicrossed products by Ore semigroups and free semigroups.

\section{Possible extensions to non-abelian semigroups}\label{S: over ext}

In all of our results, the semigroup $P$ was assumed to be abelian. 
This is not just for convenience but it is implied by our covariance relations. 
A covariant pair of $(A,\al,P)$ was defined to consist of a semigroup homomorphism 
$T \colon P \to \B(H)$ and a representation $\pi \colon A \to \B(H)$ such that
\[
\pi(a) T_s = T_s \pi\al_s(a) \foral a\in A \AND s\in P.
\]
Moreover $\al \colon P \to \End(A)$ is a semigroup homomorphism. 
Associativity of multiplication implies that
\[
T_{st} \pi\al_{st}(a) = \pi(a) T_{st} = \pi(a) T_s T_t = T_s T_t \pi\al_t \al_s(a) = T_{st} \pi\al_{ts}(a)
\]
for all $a\in A$ and $s, t \in P$. 
If we are aiming for a nice dilation theory, then we should be able to dilate $(\pi,T)$ to an isometric pair 
$(\rho,V)$ that will satisfy the same relation
\[
V_{st} \rho\al_{st}(a) = V_{st} \rho\al_{ts}(a).
\]
Hence $\rho\al_{st} = \rho\al_{ts}$. 
Thus an isometric covariant representation $(\rho, V)$ for $(A,\al, P)$ gives an abelian action $\rho\al$ on $\rho(A)$.
Of course, if $P$ is not abelian, we need not have that $\al_{st}=\al_{ts}$ for all $s, t\in P$.
One could define a semicrossed product as in Section \ref{S: pr on scp}, however this would not represent the fact that the action of $P$ on $A$ is not abelian.

Of course, one could consider $\al \colon P \to \End(A)$ to be a semigroup anti-homo\-morphism. 
This results in some different complications. 
For example if $\al_s \in \Aut(A)$, then $\al$ does not extend to a group homomorphism $\al \colon G \to \Aut(A)$. 
Even more problematic is that the direct limit extension to automorphic systems can no longer be used.

One way out of this confusion is to consider the dual covariance relation, i.e., representations such that
\begin{equation}\label{E:rt_cov_rel}
T_s \pi(a) = \pi\al_s(a) T_s, \foral a\in A, s\in P.
\end{equation}
Indeed this relation does not contradict associativity, since
\[
\pi\al_{st}(a)T_{st} = T_{st} \pi(a) = T_s T_t \pi(a) = \pi\al_{s}\al_t(a) T_s T_t = \pi\al_{s}\al_t(a) T_{st}.
\]
Again, an effective dilation theory would lead to isometric pairs $(\rho, V)$ satisfying these relations. 
However this necessarily annihilates the ideal $R_\al = \ol{\bigcup_{s \in P} \ker\al_s}$.
So we would actually be studying the injective system $(\dot{A},\dot{\al},P)$ where $\dot{A} = A/ R_\al$.
This is also one of the obstructions for creating non-trivial covariant pairs of this kind (cf.\ comments before \cite[Remark 1.6]{Lac00}). 
Therefore we lose considerable information about the dynamics when the kernels of $\al_s$ are non-trivial.

Another approach could be used when the semigroup $P$ is determined by a finite number of generators. 
Then one can ask for the covariance relation in Definition~\ref{D: cov rel abel} (iii) to hold just for these elements. 
If we followed this approach, though, then we wouldn't be able to deal even with all totally ordered systems.

Nica-covariant representations of a large class of (not necessarily abelian) semigroups have been successfully examined in the study of semigroup crossed-product C*-algebras (e.g., \cite{LacRae, Nic92}) and of product systems (e.g., \cite{Fow02}).
Our methods do not extend to this setting. 
This is due to the fact that unitary representations of a semigroup need not be Nica-covariant.

\begin{example}\label{E: free not pro sys}
Recall that a quasi-lattice ordered group $(G,P)$ consists of a group $G$ and a semigroup $P$ such that $P \cap P^{-1} = \{e\}$ with the left partial order $g \le h$ if $g^{-1}h \in P$ that satisfies the  condition: if two elements $p,q \in G$  have a common upper bound in $P$, then they have a least common upper bound $p \vee q$ in $P$.	
Then for the trivial system $X_p = \bC$ the Nica-covariant representations of the quasi-lattice ordered group $(G,P)$ are defined by isometries $\{V_s\}_{s\in P}$ satisfying
\begin{align*}
V_sV_s^* V_tV_t^*
=
\begin{cases}
V_{s \vee t}V_{s \vee t}^* & \text{ if } s \vee t \text{ exists},\\
0 & \text{ if } s \text{ and } t \text{ have no common upper bound.}
\end{cases}
\end{align*}

Recall that the pair $(\bF^2, \bF_+^2)$ forms a quasi-lattice ordered group. 
As the generators $a$ and $b$ have no common upper bound, $V_a$ and $V_b$ can never be chosen to be unitaries.
\end{example}

The previous example excludes a whole category of ``nice'' representations, and does not fit in the classification program we presented in Section \ref{S: cpd of sgrp}. 
In fact all completely positive definite representations are excluded because of Proposition \ref{P: cpd sgp}.

As in the abelian case, the structure of the semigroup can inform the choice of covariance relations.
In the next two sections, we consider C*-dynamical systems over Ore semigroups and free semigroups.

\section{The semicrossed product by an Ore semigroup}\label{S: Ore sgps}

\subsection*{Representations of Ore semigroups}

Ore semigroups were defined in Section~\ref{S:sgrp}, where their basic properties were described.
A result of Laca \cite[Theorem 2.1]{Lac00} uses the direct limit construction to extend an injective C*-dynamical system $(A,\al,P)$ over an Ore semigroup to an automorphic system $(\wt{A},\wt{\al},G)$, where $G=P^{-1}P$. 
This argument can be used when the system is not injective, but then a certain degeneracy is inevitable.
As in Section~\ref{Ss: un scp ii}, define the \textit{radical} of a non-injective C*-dynamical systems $(A,\al,P)$ to be
\[ 
R_\al = \ol{\bigcup_{s \in P} \ker\al_s} . 
\]

We begin with an easy lemma which relies on the fact that $R_\al$ is invariant for each $\al_s$.

\begin{lemma} \label{L:radical}
Let $(A,\al,P)$ be a C*-dynamical system over a semigroup $P$.
Let $\dot A = A/R_\al$ and define 
\[ 
\dot\al_s(a+R_\al) = \al_s(a) + R_\al \qfor a \in A \AND s \in P .
\]
Then $(\dot A, \dot\al,P)$ is an injective C*-dynamical system. 
\end{lemma}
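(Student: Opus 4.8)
The plan is to check, in order, that $R_\al$ is a closed two-sided ideal invariant under every $\al_s$, that the induced maps $\dot\al_s$ are $*$-endomorphisms forming a semigroup homomorphism, and finally that each $\dot\al_s$ is isometric, hence injective. The one structural fact driving everything is that the Ore condition $Ps \cap Pt \neq \mt$ makes the family $\{\ker\al_s\}_{s\in P}$ directed. First I would note that because $\al$ is a semigroup homomorphism, $\al_{ps} = \al_p\al_s$, so $\ker\al_s \subseteq \ker\al_{ps}$ for every $p\in P$; given $s,t$, any element $u \in Ps\cap Pt$, say $u = ps = qt$, then satisfies $\ker\al_s \cup \ker\al_t \subseteq \ker\al_u$. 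Thus $\bigcup_{s} \ker\al_s$ is a directed union of ideals, and $R_\al$ is its closure, so it is a closed two-sided ideal of $A$.

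Next I would verify the invariance $\al_s(R_\al) \subseteq R_\al$. For $a \in \ker\al_t$, choose $u = ps = qt$ in $Ps \cap Pt$; then
\[
\al_p\big(\al_s(a)\big) = \al_{ps}(a) = \al_{qt}(a) = \al_q\big(\al_t(a)\big) = 0,
\]
so $\al_s(a) \in \ker\al_p \subseteq R_\al$. Since $\al_s$ is continuous and $R_\al$ is closed, this extends to $\al_s(R_\al) \subseteq R_\al$, so $\dot\al_s(q(a)) := q(\al_s(a))$ is well defined, where $q\colon A \to \dot A$ is the quotient map; it is a $*$-endomorphism of $\dot A$ as the map induced on the quotient by the $*$-homomorphism $\al_s$, and $\dot\al_s\dot\al_t = \dot\al_{st}$ is inherited directly from $\al_s\al_t = \al_{st}$.

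The substantive step is injectivity. Here I would use the standard identity
\[
\|q(a)\| = \inf_{s\in P} \|a + \ker\al_s\| = \lim_{s\in P} \|\al_s(a)\|,
\]
where the limit is along the directed family: the first equality is the formula for the distance of $a$ to the closure of a directed union of ideals, and the second uses that $\al_s$ induces an isometric $*$-isomorphism of $A/\ker\al_s$ onto its range. Applying this to $\al_s(a)$ gives $\|\dot\al_s(q(a))\| = \lim_t \|\al_{ts}(a)\|$. The Ore condition again shows that $Ps$ is cofinal in $P$ for the directed order (given $u$, pick $ps = qu \in Ps\cap Pu$, so $ps \in Pu$ and hence $\ker\al_u \subseteq \ker\al_{ps}$), whence $\lim_t \|\al_{ts}(a)\| = \lim_u \|\al_u(a)\| = \|q(a)\|$. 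Thus $\dot\al_s$ is isometric and therefore injective, completing the proof.

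The only real obstacle is the bookkeeping with the two uses of the Ore property — once to obtain directedness and invariance, and once to obtain cofinality of $Ps$ — which together replace the transparent monotonicity $s \le t \Rightarrow \ker\al_s \subseteq \ker\al_t$ available in the spanning-cone computation of Section~\ref{Ss: un scp ii}; the remaining verifications are routine.
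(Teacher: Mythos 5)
Your proof is correct, and it in fact yields slightly more than the lemma asserts, namely that each $\dot\al_s$ is isometric (equivalent to injectivity here, since injective $*$-homomorphisms of C*-algebras are isometric). The skeleton matches the paper's proof — show $R_\al$ is $\al_s$-invariant so that $\dot\al_s$ is well defined, then show injectivity — but the mechanisms differ at both steps. For invariance, the paper runs an $\ep$-argument directly on $R_\al$: given $a \in R_\al$, choose $s$ with $\|\al_s(a)\| < \ep$, choose $ps = qt$ by the Ore property, and estimate $\|\al_q(\al_t(a))\| = \|\al_{ps}(a)\| \le \|\al_s(a)\| < \ep$; your version works algebraically on the dense union of kernels and then invokes continuity, which is equally valid and arguably more transparent. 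For injectivity, the paper proves $\al_t^{-1}(R_\al) \subseteq R_\al$ outright: if $\al_t(a) \in R_\al$, then for every $\ep$ there is $s$ with $\|\al_{st}(a)\| < \ep$, so $a$ lies within $\ep$ of $\ker\al_{st}$, which is already one of the kernels in the defining union — notably, this step needs neither the Ore condition nor any cofinality. Your route instead goes through the quotient-norm formula $\|q(a)\| = \lim_{s} \|\al_s(a)\|$ together with cofinality of $Ps$ in the preorder; this is correct and buys the explicit isometry statement, but at the cost of the net and cofinality bookkeeping that the paper's shorter pullback argument avoids. Both proofs ultimately rest on the same C*-fact, that the induced injection $A/\ker\al_s \to A$ is isometric, i.e., $\|\al_s(a)\| = \dist(a,\ker\al_s)$.
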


\begin{proof}
We will show that $\al_t(R_\al) \subset R_\al$ and $\al_t^{-1}(R_\al) \subset R_\al$. 
It will follow that $(\dot A, \dot\al,P)$ is an injective C*-dynamical system. 
	
If $a \in R_\al$ and $t \in P$ and $\ep>0$, there is some $s\in P$ so that $\|\al_s(a)\| < \ep$.
Choose $p,q\in P$ so that $ps=qt$. 
Then 
\[ 
\|\al_q(\al_t(a))\| = \|\al_{qt}(a)\| = \|\al_{ps}(a) \| \le \|\al_s(a) \| < \ep .
\]
Hence $\al_t(a) \in R_\al$.

If $\al_t(a) \in R_\al$, then for any $\ep>0$, there is some $s\in P$ so that $\|\al_{st}(a)\|<\ep$.
Therefore $a \in R_\al$. Hence $\al_t^{-1}(R_\al) \subset R_\al$.
\end{proof}

\begin{theorem} \label{T:dirlim Ore}
Let $(A,\al,P)$ be a C*-dynamical system over an Ore semigroup $P$ with group $G = P^{-1}P$.
There is an automorphic C*-dynamical system $(\wt{A},\wt{\al},G)$ together with a natural $*$-homomorphism $w_e \colon A \to \wt A$ such that 
\[ 
w_e \al_s (a) = \wt\al_s w_e (a) \qforal a \in A \AND s \in P .
\]
The kernel of $w_e $ is $R_\al$.
In particular, this is a $*$-monomorphism if and only if the system is injective.
Moreover, this limit coincides with the automorphic system $(\wt{\dot A},\wt{\dot \al},G)$ built from the injective system $(\dot A, \dot\al,P)$.
\end{theorem}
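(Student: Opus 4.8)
The plan is to construct the automorphic dilation $(\wt A,\wt\al,G)$ directly from the (possibly non-injective) system via a direct limit of Hilbert-module-free C*-algebras, mirroring the construction in Theorem~\ref{T:Ore}, and then to verify the stated properties one at a time. The cleanest route is to leverage the injective system $(\dot A,\dot\al,P)$ already produced by Lemma~\ref{L:radical}, apply Laca's automorphic extension (the direct limit construction recalled before Theorem~\ref{T: cone aut un 2} and in Laca \cite{Lac00}) to it, and then argue that the map $w_e$ factors as $A \xrightarrow{q} \dot A \xrightarrow{\dot w_e} \wt{\dot A}$, where $q$ is the quotient map with kernel $R_\al$. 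This reduces the ``moreover'' clause to essentially a definitional matter.

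\textbf{Step 1: build the limit.} I would first form the injective system $(\dot A,\dot\al,P)$ from Lemma~\ref{L:radical}, set $\dot A_s = \dot A$ for each $s\in P$, and take connecting $*$-monomorphisms $\dot\al_{?}$ along the right-ordering $s\le t \Leftrightarrow t\in Ps$ (exactly as in the proof of Theorem~\ref{T:Ore}). Since $\dot\al_s$ is injective, the connecting maps are injective, so the direct limit $\wt{\dot A} := \varinjlim(\dot A_s, \dot\al_{\cdot})$ carries injective structure maps $\dot w_s\colon \dot A_s \to \wt{\dot A}$ with dense union. The induced endomorphisms $\wt{\dot\al}_p$ are automorphisms by the same telescoping argument used in Section~\ref{Ss: un scp ii} (the image of $\wt{\dot\al}_p$ contains the dense union), and they extend to an action of $G=P^{-1}P$ by $\wt{\dot\al}_{s^{-1}t} = \wt{\dot\al}_s^{-1}\wt{\dot\al}_t$; here one must invoke Theorem~\ref{T:Ore} to see the extension is well defined.

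\textbf{Step 2: define $w_e$ and compute its kernel.} Define $w_e := \dot w_e \circ q\colon A \to \wt{\dot A} =: \wt A$, where $q\colon A\to \dot A$ is the quotient. The intertwining relation $w_e\al_s = \wt\al_s w_e$ follows from $q\al_s = \dot\al_s q$ (definition of $\dot\al$) composed with $\dot w_e\dot\al_s = \wt{\dot\al}_s \dot w_e$. For the kernel: since $\dot w_e$ is injective, $\ker w_e = \ker q = R_\al$. In particular $w_e$ is a $*$-monomorphism iff $R_\al = \{0\}$ iff each $\al_s$ is injective iff the system is injective. The final ``moreover'' is then automatic, because by construction $\wt A = \wt{\dot A}$ and $\wt\al = \wt{\dot\al}$.

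\textbf{The main obstacle} will be the well-definedness of the extension of $\wt{\dot\al}$ from the semigroup $P$ to the group $G$, i.e.\ showing $\wt{\dot\al}_{s^{-1}t}$ depends only on $s^{-1}t\in G$ and not on the representation $s^{-1}t$. This is precisely where the Ore condition $G=P^{-1}P$ and the cancellation arguments from Theorem~\ref{T:Ore} must be deployed: given two representations $s^{-1}t = u^{-1}v$, one finds a common element via $Ps\cap Pu\ne\emptyset$ and uses the compatibility of the connecting maps to equate the two candidate automorphisms. The remaining verifications (that $\wt{\dot\al}$ is a genuine group homomorphism, point-norm continuity is not needed here since $G$ is discrete, and that the union is dense) are routine telescoping computations that I would not grind through. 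The one subtlety worth flagging explicitly is confirming that the direct limit built from the non-injective system $(A,\al,P)$ with connecting maps $\al_{\cdot}$ genuinely coincides with $\wt{\dot A}$; this follows from the commuting-square diagram (as in Section~\ref{Ss: un scp ii}) in which the vertical quotient maps $q$ identify the two inductive systems cofinally, forcing equal limits.
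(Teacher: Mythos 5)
Your construction of $(\wt A,\wt\al,G)$ and of $w_e$ is sound, and it is essentially the paper's proof run in the opposite order: the paper first builds the direct limit out of the \emph{non-injective} system, taking $A_s=A$ with connecting maps $w_s^t=\al_r$ (where $t=rs$), computes $\ker w_e=R_\al$ from the norm identity $\|w_e(a)\|=\lim_{s}\|\al_s(a)\|$, and only then identifies this limit with $\wt{\dot A}$; you instead take $\wt{\dot A}$ (outsourced to Laca's theorem for injective systems, which is a legitimate citation here) as the \emph{definition} of $\wt A$ and factor $w_e=\dot w_e\circ q$, which makes the kernel statement, the injectivity criterion, and the intertwining relation immediate. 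That reorganization is clean and buys you the first two assertions of the theorem with almost no analysis.

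The gap is in the ``moreover'' clause, which you cannot get by fiat. The content of that clause is that the direct limit of the non-injective system --- the object the theorem actually calls ``this limit'' --- coincides with $\wt{\dot A}$, and your justification (``the vertical quotient maps $q$ identify the two inductive systems cofinally, forcing equal limits'') is not an argument: a levelwise surjective morphism of inductive systems does not in general induce an isomorphism of the limits. For instance, take $A_n=\rC([0,1])$ with identity connecting maps, quotienting levelwise onto $\bC$ by evaluation at $0$; the two limits are $\rC([0,1])$ and $\bC$. What makes the identification work in the present setting is precisely the analytic fact your outline never states: since each $\al_t$ is a $*$-homomorphism, $\|\al_t(a)\|=\dist(a,\ker\al_t)$, and since the kernels increase along the right-order on $P$, one gets $\lim_{t}\|\al_t(a)\|=\dist(a,R_\al)=\|q(a)\|$ for every $a\in A$. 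This shows the canonical map $\pi\colon \dirlim(A_s,w_s^t)\to\wt{\dot A}$ induced by the commuting squares is isometric, hence injective; it is surjective because it has dense range and $*$-homomorphisms have closed range. That norm computation is exactly the core of the paper's proof of this theorem (it is how the paper finds $\ker w_e$ in the first place), so without supplying it your proposal proves the existence statement but leaves the identification of the two limits unestablished.
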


\begin{proof}
Begin with a C*-dynamical system $(A,\al,P)$, which need not be injective.
Define $A_s=A$ for $s\in P$; and define connecting $*$-homomorphisms $w_s^t \colon A_s \to A_t$ for $s \le t$ by
\[
w_s^t(a) = \al_r(a) \qfor a \in A \AND t = rs \ge s \in P .
\]
In the injective case, these are monomorphisms.
Form the direct limit C*-algebra $\wt{A} = \dirlim (A_s, w_s^t)$. \vspace{.3ex}
Let $w_s\colon A \to \wt A$ be the induced $*$-homomorphisms of $A_s$ into $\wt A$. 

We claim that $\ker w_e = R_\al$. 
Observe that $a \in \ker w_e $ if and only if 
\[ 
0 = \| w_e (a)\| = \lim_{s \in P} \nor{w_e ^s(a)} = \lim_{s \in P} \nor{\al_s(a)} . 
\]
It is now clear that $\ker w_e $ contains $\bigcup_{s \in P} \ker\al_s$, and hence $R_\al$.
But conversely, if $a \in \ker w_e $ and $\ep>0$, there is an $s\in P$ so that $\|\al_s(a)\| < \ep$.
Hence $\dist(a,R_\al) < \ep$ for all $\ep>0$; whence $a \in R_\al$.

We can consider $w_e $ as the quotient map onto $\dot A$ considered as a subalgebra of $\wt A$.
It is evident that this is an injective system, and that $\dot\al_s w_e = w_e \al_s$ for $s \in P$.
Therefore if we set up the direct limit system for $(\dot A, \dot\al,P)$, we obtain a commutative diagram for $t = rs \ge s$ in $P$:
\[
\xymatrix{
A_s \ar[r]^{w_s^t} \ar[d]^(.4){w_s} & A_t \ar[r]^{w_t} \ar[d]^(.4){w_t} & \wt A \ar[d]^(.4){\pi}\\
\dot A_s \ar[r]^{\dot w_s^t} & \dot{A}_t \ar[r]^{\dot w_t} & \wt{\dot A}
}
\]
For any $\wt a = w_s(a) \in \wt A$, there is some $t>s$ so that $\|w^t_s w_s(a)\| < \|\wt a\|+\ep$.
Hence the map $\pi$ is injective. It is clearly surjective, and thus an isomorphism.

Now define a semigroup homomorphism $\wt\al \colon P \to \Aut(\wt{A})$ by
\[
\wt{\al}_t w_s(a) = w_q(\al_p(a)) \quad\text{where }ps=qt .
\]
First we show that this is well defined.
If $w_{s_1}(a_1) = w_{s_2}(a_2)$ and $\ep>0$, there is some $s \ge s_1$ and $s \ge s_2$ so that $\| w_{s_1}^s (a_1) - w_{s_2}^s(a_2) \| < \ep$.
So it will suffice to show that the definition of $w_t{\al}_t w_{s_1}(a_1)$ and $w_t{\al}_t w_s(w_{s_1}^s (a_1))$ coincide. 
This will simultaneously deal with the issue of different choices for $p$ and $q$ in the case $s=s_1$.
To this end, suppose that $ps=qt$ and $p_1s_1 = q_1t$.
Since $s\ge s_1$, we can write $s = rs_1$.
Find $a,a_1 \in P$ so that $a_1p_1 = a(pr)$.
Then
\[ 
a_1q_1t = a_1p_1s_1 = aprs_1 = aps = aqt .
\]
By cancellation, $a_1q_1 = aq$.
Therefore
\begin{align*}
\wt{\al}_t w_{s_1}(a_1) &= w_{q_1}\al_{p_1}w_{s_1}(a_1) = w_{a_1q_1} w_{q_1}^{a_1q_1} \al_{p_1}w_{s_1}(a_1) \\&
= w_{aq} \al_{a_1p_1}w_{s_1}(a_1) = w_{aq} \al_{apr}w_{s_1}(a_1) \\&
= w_{aq} \al_{ap} w_{s_1}^{rs_1}w_{s_1}(a_1) = w_q \al_p w_s(a_1) = \wt{\al}_t w_s(a_1) .
\end{align*}

It is easy to see that the map $\wt{\dot\al}_s$ arising from the injective system $(\dot A, \dot\al,P)$ coincides with $\wt\al_s$. 
Hence it is immediate that these maps are injective.
Surjectivity follows as in the commutative case; as does the fact that $\wt\al$ is a faithful homomorphism.
So $\wt\al$ is a representation of $P$ into $\Aut(\wt A)$.
Therefore it extends to a representation of $G$ into $\Aut(\wt A)$ by Theorem~\ref{T:Ore}.
\end{proof}

\subsection*{Semicrossed products}

Our methods can be used to attack the C*-envelope problem for C*-dynamical systems $(A,\al,P)$ where $P$ is an Ore semigroup. 
This means that $\al \colon P \to \End(A)$ is a semigroup homomorphism. 
We define an algebraic structure on $c_{00}(P,A)$ as follows:
\[
(a \otimes e_s) \cdot (b \otimes e_t) = (a \al_s(b)) \otimes e_{st}.
\]
Note that this is the dual of the multiplication rule we have used in the commutative case. 
Associativity is straightforward since
\begin{align*}
 \left( (a \otimes e_s) \cdot (b \otimes e_t) \right) \cdot (c \otimes e_p)
 & =
 (a \al_s(b) \al_{st}(c)) \otimes e_{stp} \\
 & =
 (a \al_s(b) \al_s\al_t(c)) \otimes e_{stp} \\
 & =
 (a \al_s(b \al_t(c)) \otimes e_{s(tp)} \\
 & =
 (a \otimes e_s) \cdot \left((b \otimes e_t) \cdot (c \otimes e_p) \right).
\end{align*}
To avoid unnecessary complications with previous notation (and notation established in the literature), we will let $\fA(A,P, \iso)_r$ denote the enveloping operator algebra of $c_{00}(P,A)$ above with respect to the \emph{right} covariant pairs $(\pi,T)$ such that
\begin{enumerate}
 \item $\pi \colon A \to \B(H)$ is a $*$-representation;
 \item $T \colon P \to \B(H)$ is an isometric semigroup homomorphism;
 \item $T_s \pi(a) = \pi\al_s(a) T_s$ for all $a \in A$ and $s \in P$.
\end{enumerate}
Thus $\fA(A,P, \iso)_r$ is generated by a copy of $A \cdot P$. 
Our objective is to prove the following.

\begin{theorem}\label{T: Ore scp}
Let $(A,\al,P)$ be a unital C*-dynamical system over an Ore semigroup.
Let $(\wt{A},\wt{\al},G)$ be the automorphic direct limit C*-dynamical system related to $(A,\al,P)$.
Then
\[
\cenv( \fA(A,P,\iso)_r ) \simeq \wt{A} \rtimes_{\wt{\al}} G.
\]
\end{theorem}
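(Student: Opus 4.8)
The strategy is to follow the template established in Theorem~\ref{T: cone aut un 2} (the unitary semicrossed product for a spanning cone), adapting each step to the right-covariance relation~\eqref{E:rt_cov_rel} and the non-abelian Ore setting. First I would reduce to the injective case: by Lemma~\ref{L:radical} every isometric right-covariant pair annihilates the radical $R_\al$, so $\fA(A,P,\iso)_r \simeq \fA(\dot A, P, \iso)_r$, and by Theorem~\ref{T:dirlim Ore} the automorphic extension $(\wt A,\wt\al,G)$ is unchanged under passing to $(\dot A,\dot\al,P)$. Thus I may assume $\al_s$ is injective for all $s$, so that $w_e\colon A \to \wt A$ is a $*$-monomorphism and $A$ embeds in $\wt A$.

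\textbf{Embedding and C*-cover.} The second step is to show $\fA(A,P,\iso)_r$ embeds completely isometrically into $\wt A \rtimes_{\wt\al} G$. Given any isometric right-covariant pair $(\pi,T)$, I would run the direct-limit dilation of Theorem~\ref{T: cone aut is} (in its co-isometric/right-handed form, cf.\ Remark~\ref{R:C*coiso}): build $\wt H = \dirlim(H_s, v_s^t)$ with $v_s^t(\xi)=T_{?}\xi$ along the Ore order, dilate $T$ to a \emph{unitary} representation $U\colon G \to \B(\wt H)$ using $U_{s^{-1}t}=U_s^*U_t$ (well-definedness uses the Ore condition exactly as in Theorem~\ref{T:Ore} and Proposition~\ref{P: iso span}), and extend $\pi$ to a $*$-representation $\wt\pi$ of $\wt A$ satisfying the right-covariance relation over $G$. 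This gives a representation of the crossed product, so the crossed-product norm dominates the $\fA(A,P,\iso)_r$ norm. Conversely, since $G$ is amenable is \emph{not} assumed here, I would instead use the left regular representation directly: any faithful representation of $\wt A$ induces a regular representation of $\wt A \rtimes_{\wt\al} G$ whose restriction to $c_{00}(P,A)$ is an isometric right-covariant pair, giving the reverse domination. Hence the norms coincide, and the embedding is completely isometric. That $\wt A \rtimes_{\wt\al} G$ is generated as a C*-algebra by the image is the same monomial computation as in Theorem~\ref{T: cone aut un 2}: the generators $U_s\wt\pi(a)$ ($s\in P$, $a\in A$) produce all $U_g \wt\pi(x)$ for $g\in G$, $x\in\wt A$, using unitality to absorb adjoints $U_s\wt\pi(b)^*$ via an approximate-identity argument in $\ca(b^*b)$.

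\textbf{Silov ideal via the gauge action.} The final and most delicate step is to verify that $\wt A \rtimes_{\wt\al} G$ is the C*-\emph{envelope}, i.e.\ that the \v{S}ilov ideal $\J$ of $\fA(A,P,\iso)_r$ inside it is trivial. As in Theorem~\ref{T: cone aut un 2}, I would use the dual gauge action $\{\ga_{\hat g}\}_{\hat g\in\wh G}$ with $\ga_{\hat g}(U_g a)=\ip{\hat g,g}\,U_g a$; the subalgebra $\fA(A,P,\iso)_r$ is gauge-invariant, hence so is $\J$, and a standard averaging argument forces $\J$ to meet the fixed-point algebra $\wt A$ nontrivially if $\J\neq\{0\}$. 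Using $\wt A = \ol{\bigcup_{s\in P}\wt\al_{s}^{-1}(A)}$ (the Ore analogue of the $\bigcup \om_s(A)$ identity), any nonzero element of $\J\cap\wt A$ can be moved by some $\wt\al_s$ into a copy of $A$, contradicting that the quotient by $\J$ is completely isometric on $A$. The main obstacle I anticipate is precisely the \emph{non-abelian} bookkeeping: verifying well-definedness of $U$ and of $\wt\pi$ over $G=P^{-1}P$ requires the Ore interpolation (replacing $s+t$ by common multiples $ps=qt$), and the monomial-spanning and gauge-invariance arguments, which in the abelian case rely freely on commuting shifts, must be re-checked so that left/right distinctions in~\eqref{E:rt_cov_rel} are handled consistently. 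Since $G$ need not be amenable, I must also be careful that the crossed product appearing is the full one and that the regular representation still yields the correct lower bound; the cleanest route is to verify the coincidence of norms directly through the dilation rather than invoking amenability.
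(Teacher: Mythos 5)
Your reduction to the injective case and the dilation direction of the embedding are sound, and they match the paper (which simply cites \cite[Lemma 2.3]{Lac00} for the passage from isometric right-covariant pairs to unitary covariant pairs of $(\wt{A},\wt{\al},G)$, rather than redoing the direct-limit construction). But the other two steps contain genuine gaps. First, your use of the left regular representation for the reverse norm domination is the wrong tool precisely because $G$ need not be amenable: the regular representation of $\wt{A} \rtimes_{\wt{\al}} G$ factors through the \emph{reduced} crossed product, so restricting it to $c_{00}(P,A)$ only shows that the reduced norm is dominated by the $\fA(A,P,\iso)_r$ norm. Combined with your first direction (semicrossed norm $\leq$ full crossed product norm), this leaves exactly the gap between reduced and full norms, which is nonzero in general; indeed Corollary~\ref{C: Connes} shows that complete isometry of regular-type representations in the non-amenable setting is a deep open issue. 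The correct and much simpler argument needs no regular representation at all: every representation of the full crossed product is the integrated form of a unitary covariant pair $(\si,U)$ of $(\wt{A},\wt{\al},G)$, and any such pair restricts to an isometric right-covariant pair of $(A,\al,P)$ satisfying \eqref{E:rt_cov_rel}, giving the domination of the full crossed product norm by the universal one.

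Second, and more seriously, the \v{S}ilov-ideal argument via the dual gauge action $\{\ga_{\hat g}\}_{\hat g \in \wh G}$ is structurally unavailable here, not merely in need of ``re-checking.'' For non-abelian $G$ (e.g.\ $G$ generated by a non-abelian Ore semigroup), $\wh G$ consists of characters and only sees the abelianization $G/[G,G]$; averaging over $\wh G$ does \emph{not} produce a conditional expectation onto $\wt{A}$, and the fixed-point algebra of this action contains $U_g \wt{A}$ for every $g$ in the commutator subgroup. Moreover, on the full crossed product there is no faithful conditional expectation onto $\wt{A}$ without amenability, so the standard ``a nonzero invariant ideal meets the fixed-point algebra'' step collapses. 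This is exactly where the paper's proof takes a different route, and where the unital hypothesis — which your proposal never actually uses — becomes essential: since the system is unital, $\fA(A,P,\iso)_r$ is spanned inside $\wt{A} \rtimes_{\wt{\al}} G$ by unitaries (the elements $\pi(u)U_s$ with $u$ unitary in $A$ and $s \in P$), so the restriction of any faithful representation of the crossed product is a \emph{maximal} representation, since contractive dilations of unitaries are trivial (as in Corollary~\ref{C: un ext pr}). By Dritschel--McCullough, the C*-algebra generated by a maximal completely isometric representation is the C*-envelope, and this identifies $\wt{A} \rtimes_{\wt{\al}} G$ as $\cenv(\fA(A,P,\iso)_r)$ with no gauge action, no \v{S}ilov ideal, and no amenability. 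You should replace your third step with this maximality argument.
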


\begin{proof}
There is no loss in replacing $(A, \al, P)$ with $(\dot{A},\dot{\al},P)$, where $\dot{A} = A/ \R_\al$. 
Therefore we may assume that the system is injective.
If we consider a unitary pair of $(\wt{A}, \wt{\al}, G)$, then it defines a covariant pair for $(A,\al,P)$. 
Conversely an isometric covariant pair for $(A,\al,P)$ extends to a unitary covariant pair of $(\wt{A}, \wt{\al}, G)$ by \cite[Lemma 2.3]{Lac00}. 
(Note that the covariance relation $\pi\al_s(a) = T_s \pi(a) T_s^*$ used  in \cite{Lac00} implies \eqref{E:rt_cov_rel} for isometric covariant pairs, and for unitary covariant pairs they are equivalent.)
Therefore $\fA(A,P, \iso)_r$ embeds by a unital completely isometric map into $\wt{A} \rtimes_{\wt{\al}} G$. 
Since the system is unital, $\fA(A,P, \iso)_r$ is spanned by a set of unitary generators. 
Therefore the restriction of the universal representation of $\wt{A} \rtimes_{\wt{\al}} G$ to $\fA(A,P, \iso)_r$ is maximal as in the proof of Corollary~\ref{C: un ext pr}. 
Finally it is easy to check that $\wt{A} \rtimes_{\wt{\al}} G$ is a C*-cover of $\fA(A,P, \iso)_r$.
\end{proof}

The fact that $(A,\al,P)$ is a \textit{unital} C*-dynamical system is crucial in the proof of Theorem \ref{T: Ore scp}, as in this case $\fA(A,P, \iso)_r$ is generated by a copy of $A$ and a copy of $P$. 
For non-unital $*$-endomorphisms, there are two ways to impose this condition that leads to a similar result.

The first way is to define, a priori, the universal object to be generated by a copy of $A$ and $\Bt_i$ subject to the right covariance relation $\Bt_s a = \al_s(a) \Bt_s$. 
This is an augmented algebra of $\fA(A,P, \iso)_r$, which we denote by $\fA(A,P, \iso)_{r,\aug}$. 
The proof of the following theorem follows as in the proof of Theorem \ref{T: Ore scp} and \cite[Theorem 3.19]{KakPet12}.
The details are omitted.

\begin{theorem}\label{T: Ore augmented}
Let $(A,\al,P)$ be a (possibly non-unital) C*-dynamical system of a C*-algebra $A$ by an Ore semigroup $P$.
Let $(\wt{A},\wt{\al},G)$ be the automorphic direct limit C*-dynamical system related to $(A,\al,P)$. 
Then the C*-envelope of $\fA(A,P, \iso)_{r,\aug}$ is the C*-subalgebra $\ca(\wt{A}, U_g \colon g\in G)$ of the multiplier C*-algebra $\M(\wt{A} \rtimes_{\al} G)$.
\end{theorem}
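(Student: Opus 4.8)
The plan is to adapt the proof of Theorem~\ref{T: Ore scp} to the non-unital setting, replacing the unitary generators of the unital case with elements of the multiplier algebra $\M(\wt{A} \rtimes_{\al} G)$. The essential structural difference is that when the $\al_s$ are not unital, the isometries $\Bt_s$ no longer live inside $\wt{A} \rtimes_{\al} G$ as unitaries multiplying $A$; instead the group elements $U_g$ become multipliers that need not belong to the crossed product itself. So the target C*-algebra is $\ca(\wt{A}, U_g \colon g\in G) \subseteq \M(\wt{A}\rtimes_\al G)$, and the work is to show that $\fA(A,P,\iso)_{r,\aug}$ embeds completely isometrically into this algebra with it as C*-envelope.

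First I would recall, via Lemma~\ref{L:radical} and Theorem~\ref{T:dirlim Ore}, that the radical $R_\al$ is annihilated by any covariant pair satisfying the right covariance relation \eqref{E:rt_cov_rel}, so without loss of generality I pass to the injective system $(\dot A,\dot\al,P)$ and its automorphic extension $(\wt A,\wt\al,G)$. Next I would produce the embedding: a right covariant isometric pair $(\pi,T)$ for $(A,\al,P)$ extends by \cite[Lemma 2.3]{Lac00} to a unitary covariant pair $(\wt\pi,U)$ of $(\wt A,\wt\al,G)$, and conversely such a unitary pair restricts to a right covariant pair, exactly as in Theorem~\ref{T: Ore scp}. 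The new point is that in the augmented algebra $\fA(A,P,\iso)_{r,\aug}$ we treat $A$ and the isometries $\Bt_s$ as separate generators, so the relevant universal C*-object is generated by $\wt A$ together with the multiplier images of the $U_g$. The crossed product $\wt A \rtimes_\al G$ is an ideal in its multiplier algebra, and the $U_g$ act as multipliers, so $\ca(\wt A, U_g\colon g\in G)$ is a well-defined C*-subalgebra of $\M(\wt A\rtimes_\al G)$ containing $\wt A\rtimes_\al G$; the embedding of $\fA(A,P,\iso)_{r,\aug}$ is completely isometric because its norm is computed by the faithful regular representation of the crossed product, just as in the unital case.

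The final and most delicate step is to show $\ca(\wt A, U_g\colon g\in G)$ is the C*-envelope, i.e., that the \v{S}ilov ideal is trivial. Following \cite[Theorem 3.19]{KakPet12}, I would invoke the gauge action $\{\ga_{\hat g}\}_{\hat g\in\wh G}$ of $\wh G$, which extends from $\wt A\rtimes_\al G$ to the multiplier-algebra level and fixes $\fA(A,P,\iso)_{r,\aug}$, hence fixes any \v{S}ilov ideal $\J$. A standard averaging argument then forces $\J$ to meet the fixed-point algebra, and one reduces to showing $\J\cap \wt A = (0)$; but any nonzero element of $\wt A$ meeting $\J$ would, after translating by some $\wt\al_s$, produce a nonzero element of $A$ inside $\J$, contradicting the fact that the quotient map is completely isometric on $\fA(A,P,\iso)_{r,\aug}\supseteq A$.

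\textbf{The hard part} will be handling the multiplier-algebra subtleties rigorously: unlike the unital case, one must verify that the gauge action is well-defined and point-norm continuous on $\ca(\wt A, U_g\colon g\in G)$ as a subalgebra of $\M(\wt A\rtimes_\al G)$ (rather than of the crossed product itself), and that the conditional-expectation/Fourier-coefficient machinery used to locate $\J$ in the fixed-point algebra survives the passage to multipliers. This is precisely the point where the cited \cite[Theorem 3.19]{KakPet12} does the analogous work, so I expect the argument to go through by the same mechanism; the remaining verifications are the routine computations that the paper elects to omit.
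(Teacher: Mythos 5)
Your reduction to the injective system via the radical and your use of Laca's lemma to pass between right isometric covariant pairs of $(A,\al,P)$ and unitary covariant pairs of $(\wt{A},\wt{\al},G)$ follow the route the paper intends. The gap is in your final step. The enveloping group $G=P^{-1}P$ of an Ore semigroup is in general \emph{non-abelian} (this chapter of the paper exists precisely to treat non-abelian $P$), so there is no gauge action of a dual group $\wh{G}$: characters of $G$ see only its abelianization, the fixed-point algebra of the character action is far larger than $\wt{A}$, and the Fourier-coefficient/averaging machinery you invoke is available in the paper only in the abelian chapters (Theorems \ref{T: cone aut un} and \ref{T: cone aut un 2}). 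A coaction substitute does not rescue the argument as written either, because the canonical expectation of the \emph{full} crossed product $\wt{A}\rtimes_{\wt{\al}}G$ onto $\wt{A}$ need not be faithful when $G$ is not amenable; for the same reason your claim that the norm is computed by ``the faithful regular representation of the crossed product'' fails in general, since the regular representation of the full crossed product is faithful only for amenable $G$. The paper's completely isometric embedding uses no regular representation at all, only the bijective correspondence between isometric covariant pairs and unitary covariant pairs.

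The step you replaced is carried out in the paper (and in \cite[Theorem 3.19]{KakPet12}) by a maximality argument that is insensitive to commutativity and amenability: the embedding sends each generator $\Bt_s$ to a unitary $U_s\in\M(\wt{A}\rtimes_{\wt{\al}}G)$ and restricts to a $*$-representation of $A$; completely contractive dilations of unitaries, and of $*$-representations of C*-algebras, are trivial (exactly as in Corollary \ref{C: un ext pr} and the last part of the proof of Theorem \ref{T: Ore nd}), so every dilation of this representation is trivial, i.e.\ the representation is maximal. By the Dritschel--McCullough characterization, the C*-envelope is then the C*-algebra generated by its range, and this equals $\ca(\wt{A}, U_g \colon g\in G)$ because $U_s^* A U_s = \wt{\al}_s^{-1}(A)$ exhausts $\wt{A}$ as $s$ runs over $P$ and $U_{s^{-1}t}=U_s^*U_t$. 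Replacing your \v{S}ilov-ideal/gauge-action argument with this dilation argument repairs the proof.
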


The second way is to consider right covariant pairs $(\pi,T)$ such as
\begin{enumerate}
 \item $\pi \colon A \to \B(H)$ is a \emph{non-degenerate} $*$-representation;
 \item $T \colon P \to \B(H)$ is an isometric semigroup homomorphism;
 \item $T_s \pi(a) = \pi\al_s(a) T_s$ for all $a \in A$ and $s \in P$.
\end{enumerate}
We will denote this object by $\fA(A_{\nd},P, \iso)_r$.
Note that Theorem \ref{T: Ore scp} is a corollary of the following theorem.

Note that we are assuming that $A$ is unital, so that non-degenerate representations satisfy $\pi(1_A) = I$.
However we are not assuming that the dynamical system is unital, so that $\al(1_A)$ could be a proper projection.
In this case, $1_A$ will not be an identity element for the semicrossed product.

\begin{theorem}\label{T: Ore nd}
Let $(A,\al,P)$ be a $($possibly non-unital$)$ C*-dynamical system of a unital C*-algebra $A$ by an Ore semigroup $P$.
Let $(\wt{A},\wt{\al},G)$ be the automorphic direct limit C*-dynamical system related to $(A,\al,P)$.
Then the C*-envelope of $\fA(A_{\nd},P,\iso)_r$ is a full corner of $\wt{A} \rtimes_{\wt{\al}} G$.
\end{theorem}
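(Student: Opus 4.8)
The plan is to realize $\fA(A_{\nd},P,\iso)_r$ as a full corner of $\fM:=\wt{A}\rtimes_{\wt{\al}}G$ cut down by the projection $p:=w_e(1_A)\in\wt A$, and then to identify that corner with the C*-envelope. First I would reduce to the injective case: if $(\pi,T)$ is a non-degenerate isometric right-covariant pair and $a\in\ker\al_s$, then $T_s\pi(a)=\pi\al_s(a)T_s=0$, so $\pi(a)=T_s^*T_s\pi(a)=0$; thus $\pi$ annihilates $R_\al=\ol{\bigcup_s\ker\al_s}$ and factors through a non-degenerate representation of the unital quotient $\dot A=A/R_\al$. Since by Theorem~\ref{T:dirlim Ore} the direct limit of $(A,\al,P)$ coincides with that of $(\dot A,\dot\al,P)$, I may assume $(A,\al,P)$ injective, so $w_e$ is faithful and $p$ is a genuine projection of $\wt A\subseteq\fM$. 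Here $\wt\al_s(p)=w_e(\al_s(1_A))\le p$, while the increasing net $\{\wt\al_{s^{-1}}(p)\}_{s\in P}$ consists of the units of the subalgebras $\wt\al_{s^{-1}}(w_e(A))$, whose union is dense in $\wt A$; hence it is an approximate unit for $\wt A$. Consequently the ideal of $\fM$ generated by $p$ contains an approximate unit of $\wt A$, hence contains $\wt A$; as $\wt A$ generates $\fM$ as an ideal, $p$ is full.

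Next I would set up the embedding $w_e(a)\leftrightarrow a$ and $T_s:=pU_sp\leftrightarrow\Bt_s$. A direct check using $U_s w_e(a)U_s^*=\wt\al_s(w_e(a))=w_e(\al_s(a))$, the multiplicativity of $\al_s$, and $pw_e(a)=w_e(a)=w_e(a)p$ yields the right-covariance $T_s\,w_e(a)=w_e(\al_s(a))\,T_s$; moreover $T_s^*T_s=pU_s^*pU_sp=p\,\wt\al_{s^{-1}}(p)\,p=p$ because $\wt\al_{s^{-1}}(p)\ge p$, so each $T_s$ is an isometry of the corner $p\fM p$. Thus $(w_e|_A,\,pU(\cdot)p)$ is itself a non-degenerate isometric right-covariant pair, which shows the universal norm of $\fA(A_{\nd},P,\iso)_r$ dominates the corner norm. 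For the reverse inequality I would take an arbitrary non-degenerate pair $(\pi,T)$ on $H$ and invoke Laca's dilation \cite[Lemma~2.3]{Lac00} to extend it to a unitary covariant pair of $(\wt A,\wt\al,G)$, hence a representation of $\fM$; since $\pi$ is non-degenerate and $A$ is unital, $H$ embeds into the $p$-corner of this representation, and compressing back to $H$ returns $(\pi,T)$, giving the matching domination. This makes the embedding completely isometric. The C*-algebra it generates is the closed span of the monomials $T_s^* w_e(a)T_t=pU_s^*w_e(a)U_tp$; writing any $g\in G$ as $s^{-1}t$ and using the covariance relations, these exhaust $p\fM p$, so $p\fM p$ is a C*-cover, and a full corner by the previous paragraph.

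The final task is to show $p\fM p$ is the C*-envelope. Let $\J$ be the \v{S}ilov ideal of $p\fM p$ for $\fA(A_{\nd},P,\iso)_r$ and lift it to $\wh\J:=\ol{\fM\,\J\,\fM}\trianglelefteq\fM$, so that $\J=p\wh\J p$ under the full-corner correspondence of ideals. As an ideal of $\fM$, $\wh\J$ is $\wt\al$-invariant, and the boundary property gives $\wh\J\cap w_e(A)=\J\cap w_e(A)=0$, whence $\wh\J\cap\wt\al_{s^{-1}}(w_e(A))=\wt\al_{s^{-1}}(\wh\J\cap w_e(A))=0$ for every $s\in P$. I would first prove $\wh\J\cap\wt A=0$: the quotient map $\wt A\to\wt A/(\wh\J\cap\wt A)$ is an injective, hence isometric, $*$-homomorphism on $w_e(A)$, so for $y\in\wh\J\cap\wt A$ and $\ep>0$, choosing $s,b$ with $\|y-\wt\al_{s^{-1}}(w_e(b))\|<\ep$ and applying the isometric $\wt\al_s$ gives $\|w_e(b)\|=\|q(w_e(b))\|<\ep$ (since $\wt\al_s(y)\in\wh\J\cap\wt A$ is killed by $q$), forcing $\|y\|<2\ep$; thus $y=0$.

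The remaining, and genuinely hard, step is to pass from $\wh\J\cap\wt A=0$ to $\wh\J=0$. When $G$ is abelian this is the gauge argument of Theorems~\ref{T: cone aut un 2} and~\ref{T: cenv corner}: the dual group acts, the \v{S}ilov ideal is gauge-invariant, and the faithful conditional expectation $E(F)=\int_{\wh G}\ga_{\hat g}(F)\,d\hat g$ sends $\wh\J$ into $\wh\J\cap\wt A=0$. For a general, possibly non-abelian and non-amenable, Ore group $G=P^{-1}P$ the dual group must be replaced by the dual coaction $\hat\delta$ on $\fM$, and the argument requires both that $\wh\J$ be coaction-invariant, so that the canonical conditional expectation $E\colon\fM\to\wt A$ carries $\wh\J$ into $\wt A$, and that $E$ be faithful. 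I expect these two points to be the main obstacle: coaction-invariance of the \v{S}ilov ideal does not follow from naive averaging, and faithfulness of $E$ is precisely the full-versus-reduced crossed-product issue when $G$ is non-amenable. A cleaner alternative I would pursue in parallel is to deduce the statement from the augmented case: by Theorem~\ref{T: Ore augmented} the C*-envelope of $\fA(A,P,\iso)_{r,\aug}$ is $\ca(\wt A,U_g:g\in G)\subseteq\M(\fM)$, and since $p$ is full one has $p\,\ca(\wt A,U_g)\,p=p\fM p$ while the image of $\fA(A_{\nd},P,\iso)_r$ is exactly $p\,\fA(A,P,\iso)_{r,\aug}\,p$; it would then suffice to show that compression by the full projection $p$ takes C*-envelope to C*-envelope.
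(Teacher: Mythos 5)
Your first two paragraphs are essentially sound and run parallel to the paper's proof: the reduction to the injective case, the concrete pair $(w_e|_A,\, pU(\cdot)p)$, the two-sided norm comparison via Laca's dilation lemma, and the identification of the generated C*-algebra with the corner $p(\wt{A}\rtimes_{\wt{\al}}G)p$ all work; your fullness argument via the increasing approximate unit $\{\wt{\al}_{s^{-1}}(p)\}_{s\in P}$ of $\wt A$ is a valid alternative to the paper's (which instead shows that any ideal containing the corner must contain $\si(\wt A)$). The genuine gap is exactly where you flag it: the passage from $\wh\J\cap\wt A=0$ to $\wh\J=0$. For a general Ore semigroup the group $G=P^{-1}P$ need be neither abelian nor amenable, so there is no dual-group gauge action, no guarantee of a faithful conditional expectation of the full crossed product onto $\wt A$, and no argument that the lifted \v{S}ilov ideal is invariant under a dual coaction; the averaging strategy that works in Theorems \ref{T: cone aut un 2} and \ref{T: cenv corner} genuinely breaks down here. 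Your fallback via Theorem \ref{T: Ore augmented} rests on the unproven claim that compression by a full projection carries C*-envelopes to C*-envelopes; this is not a formal consequence of the full-corner ideal correspondence (it amounts to knowing that the \v{S}ilov ideal of the corner generates a boundary ideal of the ambient algebra, which is the same difficulty restated).

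The paper closes this gap without any expectation, coaction, or \v{S}ilov-ideal analysis, by proving that the completely isometric representation $(\si|_A)\times(Up)$ is \emph{maximal}; since the C*-algebra generated by a maximal completely isometric representation is the C*-envelope (Dritschel--McCullough), this finishes the proof. Concretely, by Arveson's Invariance Principle it suffices to show that the identity representation of the image of $\fA(A_{\nd},P,\iso)_r$ inside $p(\wt{A}\rtimes_{\wt{\al}}G)p$ is maximal. Let $\nu$ be a maximal dilation and extend it to a $*$-representation of the corner. Then $\nu|_{\si(A)}$ dilates a $*$-representation of a C*-algebra, hence is a trivial (block-diagonal) dilation; and since each $U_sp$ is an isometry and $\nu$ is completely contractive, $\nu(U_sp)$ is an extension, i.e.\ of the form $\begin{bmatrix} U_sp & x\\ 0 & \ast\end{bmatrix}$. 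The key idea missing from your proposal is to apply $\nu$ to the C*-identity $U_sp\,pU_s^*=\si\al_s(1_A)$: the left side becomes $\begin{bmatrix} U_sppU_s^*+xx^* & \ast\\ \ast & \ast\end{bmatrix}$ while the right side is block diagonal with $(1,1)$-entry $\si\al_s(1_A)$, so comparing $(1,1)$-entries forces $xx^*=0$, i.e.\ $x=0$. Hence every $\nu(U_sp)$, and therefore $\nu$ itself, is a trivial dilation, so the identity representation is maximal and the full corner is the C*-envelope. This dilation-theoretic step is what you should substitute for your third paragraph.
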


\begin{proof}
Let $(\si,U)$ be a covariant pair that defines a faithful $*$-representation $\si \times U$ of $\wt{A} \rtimes_{\wt{\al}} G$. 
If $1_A$ is the identity for $A$, let $p = \si(1_A)$.
We aim to show that $\cenv(\fA(A_{\nd},P,\iso)_r)$ is the full corner $p(\wt{A} \rtimes_{\wt{\al}} G)p$ of $\wt{A} \rtimes_{\wt{\al}} G$.

First we show that $\cenv(\fA(A_{\nd},P,\iso)_r) \simeq p(\wt{A} \rtimes_{\wt{\al}} G)p$. 
If $(\pi,T)$ acting on $H$ is a covariant pair for $\fA(A_{\nd},P,\iso)_r$, then it extends to a unitary covariant pair $(\si,U)$ of $(\wt{A},\wt{\al},G)$ acting on $K$, as in the proof of Theorem \ref{T: Ore scp}. 
Then $p(\si \times U)p$ defines a $*$-representation of $p(\wt{A} \rtimes_{\wt{\al}} G)p$ acting on $K'=pK$. 
Moreover the pair $(p\si|_A p, p U_s p)$ defines a covariant pair for $\fA(A_{\nd},P,\iso)_r$. 
Indeed, for $s, t\in P$ we obtain
\begin{align*}
 pU_s p \cdot pU_ t p 
 = p U_s \si\al_t(1_A) U_t 
 = \si\al_{st}(1_A) U_{st}
 = p U_{st} p .
\end{align*}
For $a\in A$, we obtain that
\begin{align*}
 p U_sp \cdot p\si(a)p
 = pU_s \si(a)
 = \si\al_s(a) U_s
 = p\si\al_s(a) p \cdot pU_s p.
\end{align*}
In addition this extends $\pi \times T$ since
\begin{align*}
 P_H \left( p(\si \times U) p \right) (a \Bt_s) |_H
 & = P_H (p \si(a) U_s p) |_{H}\\
 & = \pi(1_A) P_H(\si(a) U_s)|_H \pi(1_A) \\
 & = \pi(1_A) \pi(a) T_s \pi(1_A) \\
 &  = \pi(a) T_s,
\end{align*}
for all $s\in P$ and $a\in A$, where we used that $\pi(1_A) = I_H$. 

Conversely, a faithful $*$-representation of $p(\wt{A} \rtimes_{\wt{\al}} G) p$ is given by $p(\si \times U)p$, where $\si \times U$ defines a faithful $*$-representation of $\wt{A} \rtimes_{\wt{\al}} G$. 
Then $(p\si|_A p, p U p)$ defines a covariant pair for $\fA(A_{\nd},P,\iso)_r$. 
This shows that the canonical mapping $\fA(A_{\nd},P,\iso)_r \to p(\wt{A} \rtimes_{\wt{\al}} G) p$ defined by
\[
 a \Bt_s \mapsto \si(a) U_s p,
\]
is a unital completely isometric representation (where $\si \times U$ defines a faithful $*$-representation of $\wt{A} \rtimes_{\wt{\al}} G$). 
Note that $p(\si|_A \times U) p = (\si|_A) \times (Up)$. 

In order to show that the C*-algebra generated by the range of $(\si|_A) \times (Up)$ is $p(\wt{A} \rtimes_{\wt{\al}} G) p$, it suffices to show that it contains the generators $p \si(\wt{A}) p$ and $pU_g p$, for $g \in G$. 
By construction $\wt{A}$ is the direct limit associated to $(A,\al,P)$. 
Therefore for $a \in A_s$, with $s\in P$, and the covariance relation we obtain
\begin{align*}
 U_s \si w_s(a) U_s^* = \si \wt{\al}_s w_s(a) = \si w_s \al_s(a) = \si(a) .
\end{align*}
Hence
\[
 p \si w_s(a) p = p U_s^* \si(a) U_s p = (U_s p)^* \si(a) (U_s p).
\]
Since the union of $w_s(A)$ is dense in $\wt{A}$, we obtain that $p\si(\wt{A})p$ is generated by the range of $(\si|_A) \times (Up)$. 
Moreover for $g\in G$, there are $s, t \in P$ such that $g = s^{-1} t$. 
Hence
\begin{align*}
 p U_g p = p U_s^* U_t p = (U_s p )^* (U_t p).
\end{align*}

To end the first part of the proof, we show that $(\si|_A) \times (Up)$ is a maximal representation of $\fA(A_{\nd},P,\iso)_r)$. 
By the Invariance Principle \cite[Proposition 3.1]{Arv06}, it suffices to show that the identity representation $\id$ on $(\si|_A) \times (Up)(\fA(A_{\nd},P,\iso)_r))$ is maximal. 
To this end, let $\nu$ be a maximal dilation of $\id$. 
For convenience we will denote the unique extension of $\nu$ to a $*$-representation of $p(\wt{A} \rtimes_{\wt{\al}} G) p$ by the same symbol. 
Since $\nu|_{\si(A)}$ is a dilation of the $*$-representation $\id_{\si(A)}$, we obtain that it is trivial, i.e.,
\[
 \nu(\si(a)) = \begin{bmatrix} \si(a) & 0 \\ 0 & \ast \end{bmatrix}.
\]
Furthermore, since the $U_sp$ are isometries (on $K'$) and $\nu$ is contractive, each $\nu(U p)$ is an extension of $U_s p$, for all $s \in P$. 
We will use the fact that
\[
 U_s p p U_s^* = \si\al_s(1_A) U_s U_s^* = \si\al_s(1_A) \qforal s \in P.
\] 
Applying $\nu$ to this C*-equation, we get that
\begin{align*}
 \begin{bmatrix} \si\al_s(1_A) & 0 \\ 0 & \ast \end{bmatrix}
 & = \nu(\si\al_s(1_A)) \\
 & = \nu(U_s p pU_s^*) \\
 & = \nu(U_s p) \nu(U_s p )^* \\
 & = \begin{bmatrix} U_sp & x \\ 0 & \ast \end{bmatrix}
 \begin{bmatrix} U_sp & x \\ 0 & \ast \end{bmatrix}^* \\
 & = \begin{bmatrix} U_s p p U_s^* + xx^* & \ast \\ \ast & \ast \end{bmatrix} \\
 & = \begin{bmatrix} \si\al_s(1_A) + xx^* & \ast \\ \ast & \ast \end{bmatrix}.
\end{align*} 
By equating the $(1,1)$-entries we obtain that $x=0$. 
Therefore $\nu(U_s p)$ is a trivial dilation of $\id(U_s p)$, for all $s \in P$. 
Hence $\nu$ is a trivial dilation of $\id$, which implies that $\id$ is a maximal representation.

Finally we show that the corner $p( \wt{A} \rtimes_{\wt{\al}} G) p$ of $\wt{A} \rtimes_{\wt{\al}} G$ is full. 
To this end, let $\I$ be an ideal of the crossed product that contains the corner. 
We will show that $\I$ contains $\si(b)U_g$ for all $g\in G$ and $b\in \wt{A}$. 
For $x = \wt{\al}_s^{-1}(a) \in A_s$, with $s\in P$, $a\in A$, and $(e_i)$ an approximate unit of $\wt{A}$, we obtain
\begin{align*}
\si(x) = \lim_i U_s^* \si(e_i) \cdot \si(a) \cdot \si(e_i) U_s \in \I,
\end{align*}
since $\si(A) \subseteq \I$. Thus $\si(\wt{A}) \subseteq \I$. 
Consequently, 
\begin{align*}
\si(b) U_g &= \lim_i \si(e_i) \cdot \si(b) U_g \qforal g \in G \AND b \in \wt A,
\end{align*}
which completes the proof.
\end{proof}

\begin{remark}
The semicrossed product algebras $\fA(A,P, \iso)_r$, $\fA(A,P, \iso)_{r,\aug}$ and $\fA(A_{\nd},P, \iso)_r$ are hyperrigid. This follows from Theorems \ref{T: Ore scp}, \ref{T: Ore augmented} and \ref{T: Ore nd} respectively.
\end{remark}

\section{The semicrossed product by $\bF_+^n$}\label{S: free sgp}

In this section we consider semicrossed products of C*-algebras by $\bF_+^n$. 
We note that $\bF_+^n$ is not an Ore semigroup. It does, however, generate the group $\bF_n$.
A C*-dynamical system $(A,\al,\bF_+^n)$ consists simply of $n$ $*$-endomorphisms $\al_i$ of $A$. Therefore we will write $(A,\{\al_i\}_{i=1}^n)$ instead of $(A,\al,\bF_+^n)$.
We will consider both left and right covariance relations in this context.

Following the definition of the semicrossed product for Ore semigroups, we define the \emph{right free semicrossed product} $\fA(A,\bF_+^n)_r$ with respect to the right covariant pairs $(\pi,T)$. 
That is, $\fA(A,\bF_+^n)_r$ is the enveloping operator algebra (of the right version of $c_{00}(P,A)$) with respect to the pairs $(\pi,T)$ such that
\begin{enumerate}
 \item $\pi \colon A \to \B(H)$ is a $*$-representation;
 \item $T \colon \bF_n^+ \to \B(H)$ is a contractive semigroup homomorphism;
 \item $T_w \pi(a) = \pi\al_w(a) T_w$ for all $a \in A$ and $w \in \bF_n^+$.
\end{enumerate}
Note that $T$ is defined by $n$ contractions $T_i$ such that $T_i \pi(a) = \pi \al_i(a) T_i$. Hence we can also write $(\pi,\{T_i\}_{i=1}^n)$ instead of $(\pi,T)$.

In the literature operator algebras related to free semigroup actions appear in their ``left'' form. 
Note that a semigroup homomorphism of $\bF_+^n$ is defined uniquely by the image of the generators.
Thus, following the remarks of the Section \ref{S: over ext}, we define the \emph{left free semicrossed product $A \times_\al \bF_+^n$} with respect to pairs $(\pi, \{T_i\}_{i=1}^n)$ with $n$ contractions $T_i$ such that $\pi(a) T_i = T_i \pi\al_i(a)$.
One can build $A \times_\al \bF_+^n$ as the enveloping operator algebra of an appropriate algebra by extending associatively a multiplication rule. We omit the details; nevertheless they become apparent in what follows.

It is immediate that $A \times_\al \bF_+^n$ is the closure of the linear span of the monomials
\[ 
\Bt_w a \equiv \Bt_{i_1} \cdots \Bt_{i_k} a,
\]
where $\Bt_1, \dots, \Bt_n$ are the universal contractions. 
Notice that a covariance relation of the form $a \Bt_w = \Bt_w \al_w(a)$ does \emph{not} hold. 
What \emph{does} hold is the covariance relation
\begin{align*}
a \cdot \Bt_w 
= a \cdot \Bt_{i_k} \cdots \Bt_{i_1} 
= \Bt_{i_k} \dots \Bt_{i_1} \cdot \al_{i_1} \cdots \al_{i_k}(a) 
= \Bt_w \al_{\ol{w}}(a),
\end{align*}
where we make the convention that for a word $w = i_k \dots i_2 i_1 \in \bF_+^n$ the symbol $\overline{w}$ denotes the reversed word of $w$, i.e., 
\begin{align*}
\overline{w} = \overline{i_k \dots i_2 i_1} = i_1i_2\dots i_k. 
\end{align*}
We use the symbol $\al_w$ in the usual way. 
That is, if $w=i_k \dots i_2 i_1 \in \bF^n_+$, then
\begin{align*}
 \al_w \colon = \al_{i_k} \cdots \al_{i_2} \al_{i_1}.
\end{align*}
We write the \textit{source} of a sequence of symbols $w = i_k\dots i_2i_1$ by $s(w) = i_1$. 
Observe that $\ol{w} = s(w)\ol{i_k\dots i_2}$.

Since $\bF_n^+$ is finitely generated, we can connect the dilation theory of the algebra $\fA(A,\bF_n^+)_r$ to the dilation theory of the algebra $A \times_\al \bF_+^n$. 
It is easy to check that a contractive (respectively isometric, co-isometric, unitary) pair $(\pi,\{T_i\}_{i=1}^n)$ satisfies the right covariance relation $T_i \pi(a) = \pi\al_i(a) T_i$ for all $a\in A$ and $i=1, \dots, n$ if and only if the pair $(\pi,\{T_i^*\}_{i=1}^n)$ is contractive (respectively co-isometric, isometric, unitary) and satisfies the left covariance relation $\pi(a) T_i^*= T_i ^* \pi\al_i(a)$ for all $a\in A$ and $i=1, \dots, n$. 
We will write $\pi \times T$ (respectively $T \times \pi$) for the integrated representation of $\fA(A,\bF_+^n)_r$ (respectively $A \times_\al \bF_+^n$).

\begin{example}\label{E: free scp repn}
For a C*-dynamical system $(A, \{\al_i\}_{i=1}^n)$ we can define an analogue of the Fock representation for $A \times_\al\bF_+^n$.
Let $\pi$ be a representation of $A$ on a Hilbert space $H$. Let $\wt H := H \otimes \ell^2(\bF_n^+)$. 
We define a representation $\wt \pi$ of $A$ in $\B(\wt H)$ by
\[ 
\wt \pi(a) h \otimes e_w = (\pi \al_{\ol w} h) \otimes e_w, 
\]
where $\ol w$ is the reversed word of $w$, as above.
Let $L_1,\ldots, L_n$ be the left creation operators on $\wt H$. That is
\[ 
L_i (h \otimes e_w) = h \otimes e_{iw}. 
\]
Then $(\pi, \{ L_i \}_{i=1}^n )$ satisfies
\[ 
\wt \pi(a) L_i = L_i \wt \pi \al_i(a) \text{ for all } a\in A. 
\]
Thus $(\wt \pi, \{L_i\}_{i=1}^n)$ is a left covariant pair for $(A, \al, \bF_n^+)$. 

Taking adjoints, we have that $(\wt \pi, L^*)$ is a right covariant pair with the understanding that \[
(L^*)_w = L^*_{i_k} \dots L^*_{i_2} L^*_{i_1}, 
\]
for any word $w= i_k \dots i_2 i_1$.
\end{example}

The following proposition extends \cite[Proposition 2.12]{DavKat11} to non-classical systems.

\begin{proposition}\label{P: is dil free}
Let $(\pi, \{T_i\}_{i=1}^n)$ be a left covariant contractive pair of a C*-dynamical system $(A,\{\al_i\}_{i=1}^n)$,
namely $\pi(a) T_i = T_i \pi\al_i(a)$ for $1 \le i \le n$. 
Then there is a left covariant isometric pair $(\rho,\{V_i\}_{i=1}^n)$ such that $\rho$ extends $\pi$ and each $V_i$ co-extends $T_i$. Consequently $V\times \rho$ dilates $T \times \pi$.
\end{proposition}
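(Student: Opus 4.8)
The plan is to dilate each $T_i$ separately to an isometry, while maintaining the covariance relation $\pi(a)T_i = T_i\pi\al_i(a)$, and then assemble these individual dilations into a single covariant isometric pair acting on a common Hilbert space. Since $\bF_+^n$ is a free semigroup with no relations among its generators beyond associativity of concatenation, there is no compatibility constraint linking the different $T_i$ to each other (unlike the abelian case, where commutation had to be preserved). This is precisely what makes the free case more tractable here: I can treat the generators independently.

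First I would fix $i$ and perform a one-variable covariant co-extension. Given the contraction $T_i$ with $\pi(a)T_i = T_i\pi\al_i(a)$, I would build a co-extension modeled on the Schaeffer/Sz.-Nagy minimal isometric co-extension. Concretely, following the idea of the Fock-type construction in Example \ref{E: free scp repn}, I would pass to the space $\wt H = H \otimes \ell^2(\bZ_+)$ (one copy for each power of the single generator $i$), define $\rho_i$ on $\wt H$ by twisting $\pi$ through the iterates $\al_i^k$ on the $k$-th level, and define $V_i$ as a shift-plus-defect operator whose compression to $H$ recovers $T_i$. The covariance relation on each level reduces to the identity $\pi\al_i^k(a)\,D_{T_i} = D_{T_i}\,\pi\al_i^k(a)$, where $D_{T_i} = (I - T_i^*T_i)^{1/2}$; this commutation follows exactly as in the proof of Theorem~\ref{T: Ando}, since $\pi(a)T_i = T_i\pi\al_i(a)$ forces $\pi\al_i(a)$ to commute with $T_i^*T_i$ and hence with $D_{T_i}$. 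This yields, for each $i$, an isometry $V_i$ co-extending $T_i$ and a $*$-representation $\rho_i$ extending $\pi$, with $\rho_i(a)V_i = V_i\rho_i\al_i(a)$ on the enlarged space.

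The main obstacle, and the step requiring the most care, is assembling the $n$ separate co-extensions into a single pair $(\rho,\{V_i\}_{i=1}^n)$ on one Hilbert space $K\supseteq H$, with a common representation $\rho$ extending $\pi$ and each $V_i$ co-extending $T_i$ relative to that same $\rho$. The clean way is to take $K = H \otimes \ell^2(\bF_+^n)$ and to mimic the adjoint Fock representation of Example~\ref{E: free scp repn}: set $\rho(a)(h\otimes e_w) = (\pi\al_{\ol w}(a)h)\otimes e_w$, and let each $V_i$ be the operator co-extending $T_i$ by acting as the backward creation operator $L_i^*$ on the words beginning with $i$ and inserting the defect data $D_{T_i}$ at the empty-word level. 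I would verify directly that $P_H V_i|_H = T_i$, that each $V_i$ is isometric, and that the covariance relation $\rho(a)V_i = V_i\rho\al_i(a)$ holds levelwise; the freeness of $\bF_+^n$ guarantees that distinct generators act on disjoint families of basis vectors $e_w$ indexed by their initial letter, so no commutation or Nica-type compatibility among the $V_i$ needs to be checked. Once these verifications are complete, $(\rho,\{V_i\}_{i=1}^n)$ is a left covariant isometric pair with $\rho \supseteq \pi$ and each $V_i$ co-extending $T_i$, and the integrated representation $V\times\rho$ is a dilation of $T\times\pi$ because both $\rho$ and the $V_i$ restrict correctly to $H$.
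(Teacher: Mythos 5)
Your overall architecture is exactly that of the paper's proof: work on $K = H \otimes \ltwo(\bF_+^n)$, define $\rho$ levelwise by $\rho(a)(x \otimes \xi_w) = \pi\al_{\ol{w}}(a)x \otimes \xi_w$, build each $V_i$ as a Schaeffer-type column using the defect $D_i = (I-T_i^*T_i)^{1/2}$, and rest everything on the commutation $\pi\al_i(a)D_i = D_i\pi\al_i(a)$, which does follow from the covariance relation just as you say. However, the one operator you actually write down is wrong: you define $V_i$ to act ``as the backward creation operator $L_i^*$ on the words beginning with $i$.'' The operator $L_i^*$ annihilates every basis vector $\xi_w$ with $w$ not beginning with $i$, so the resulting $V_i$ would kill all levels $H_w$ with $s$-letter different from $i$ and could not possibly be an isometry; moreover even on $H_i$ one checks directly that mapping $x\otimes\xi_i \mapsto x\otimes\xi_\mt$ while the vacuum level carries the column $(T_i, D_i)$ destroys inner products (e.g.\ $\ip{V_i(x\otimes\xi_i), V_i(y\otimes\xi_\mt)} = \ip{x,T_iy} \neq 0$ in general). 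The direction of the shift is forced by the notion of co-extension: $H=H_\mt$ must be \emph{co-invariant} for $V_i$ (so that $V_i^*|_H = T_i^*$), which means the shift part must move vectors \emph{away} from the vacuum. The correct operator is the forward creation operator $L_i(x \otimes \xi_w) = x \otimes \xi_{iw}$ on all of $K' = K \ominus H_\mt$, giving the lower-triangular matrix
\[
V_i = \begin{bmatrix} T_i & 0 \\ J_iD_i & L_i \end{bmatrix},
\]
whose range in $K'$ consists precisely of the words beginning with $i$ --- probably the source of your confusion.

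With that correction, the remaining steps you outline (isometry via $T_i^*T_i + D_i^2 = I$, levelwise verification of $\rho(a)V_i = V_i\rho\al_i(a)$ on $H_\mt$ and on $H_w$ for $w \neq \mt$, and the dilation property $P_H V_w \rho(a)|_H = T_w\pi(a)$ from lower-triangularity) go through exactly as in the paper, and your observation that freeness removes any compatibility constraints among the $V_i$ is correct and is indeed what makes this construction work where the abelian case requires much more care. Your preliminary one-variable warm-up on $H \otimes \ltwo(\bZ_+)$ is harmless but unnecessary; the paper proceeds directly to the full Fock-space construction.
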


\begin{proof}
Assume that $(\pi,\{T_i\})$ acts on the Hilbert space $H$. 
Since $A$ is selfadjoint, the covariance relation implies that
\[
 \pi\al_i(a) T_i^* =T_i^* \pi(a) \qfor 1 \le i \le n.
\]
Hence for all $a \in A$ and $1 \le i \le n$, 
\[
\pi\al_i(a) T_i^*T_i = T_i^* \pi(a) T_i = T_i^*T_i \pi\al_i(a) .
\]
Consequently, the defect operators $D_i \equiv D_{T_i} = (I - T_i^*T_i)^{1/2}$ also satisfy
\[
 \pi\al_i(a) D_i = D_i \pi\al_i(a) \qforal a \in A \AND 1 \le i \le n.
\]

For $w \in \bF^n_+$, let $H_w=H$. 
Let $J_i$ be the unitary map of $H_\mt$ onto $H_i$ given by this identification.
Define
\[
 K = H \otimes \ltwo(\bF_+^n) = \bigoplus_{w\in\bF_+^n} H_w = H_\mt \oplus K' .
\]
We identify $H$ with $H_\mt$. 
An element of $H_w$ will be written as $x \otimes \xi_w$.
For $1 \le i \le n$, define an isometric Scha\"effer dilation of $T_i$ as a $2\times2$ operator matrix on $K$ with respect to the decomposition $K = H_\mt \oplus K'$:
\[
V_i :=
\begin{bmatrix} T_i & 0 \\ J_iD_i & L_i \end{bmatrix}.
\]
Then
\begin{align*}
 V_i^*V_i = (T_i^*T_i + D_i^2) \oplus L_i^*L_i = I.
\end{align*}
Therefore $V_i$ is an isometry. (Note that this is never a minimal isometric dilation of $T_i$. See Example \ref{E: is dil free 2} for a concrete picture of this construction in the case $n=2$.)

Define a $*$-representation $\rho$ of $A$ on $K$ by
\[
 \rho(a) \big(\sum_{w\in\bF_+^n} x_w \otimes \xi_w \,\big) 
 = \sum_{w\in\bF_+^n} \pi\al_{\overline{w}}(a) x_w \otimes \xi_w .
\]
Clearly, $\rho$ extends $\pi$.

The isometric pair $(\rho, \{V_i\})$ satisfies the covariance relations of $(A,\{\al_i\})$. 
To see this, fix $i \in \{1, \dots, n\}$, $a\in A$ and $x\otimes\xi_\mt \in H_\mt$. 
Compute
\begin{align*}
 \rho(a)V_i (x\otimes\xi_\mt)
 &= \rho(a)\big( T_i x \otimes \xi_\mt + D_i x \otimes \xi_i \big) \\
 & = \pi(a)T_ix \otimes \xi_\mt + \pi\al_i(a)D_i x \otimes \xi_i \\
 &= T_i\pi\al_i(a)x \otimes \xi_\mt + D_i\pi\al_i(a)x \otimes \xi_i \\
 &  = V_i( \pi\al_i(a)x \otimes \xi_\mt) \\
 & = V_i \rho\al_i(a) (x\otimes\xi_\mt).
\end{align*}
For $w \in \bF_+^n\setminus\{\mt\}$ and $x\otimes\xi_w \in H_w$,
\begin{align*}
 \rho(a)V_i (x\otimes\xi_w)
 & = \pi\al_{\overline{iw}}(a)x\otimes\xi_{iw} \\
 & = \pi\al_{\overline{w}}\al_i(a) x\otimes\xi_{iw} \\ 
 & = V_i (\pi\al_{\overline{w}}\al_i(a) x\otimes\xi_w) \\
 & = V_i \rho \al_i(a) (x\otimes\xi_w) . 
\end{align*}

To prove that $V\times \rho$ dilates $T \times \pi$, consider a universal monomial of the form $\Bt_w a$ for some word $w\in \bF_+^n$ and $a\in A$. 
Then
\begin{align*}
(T\times \pi)(\Bt_w a)= T_w \pi(a) \qand
(V\times \rho)(\Bt_w a)= V_w \rho(a).
\end{align*}
If $w=i_k\dots i_2 i_1$, then since every $V_i$ co-extends $T_i$,
\begin{align*}
 P_H (V_w \rho(a))|_H
 & = P_H V_{i_k} \dots V_{i_1} \rho(a)|_H \\
 &  = P_H V_{i_k} P_H \dots P_H V_{i_1} P_H \rho(a)|_H\\
 & = T_{i_k} \dots T_{i_1} \pi(a) \\
 & = T_w \pi(a),
\end{align*}
which completes the proof.
\end{proof}

\begin{corollary}
Let $(\pi,\{T_i\}_{i=1}^n)$ be a right covariant contractive pair of a C*-dynamical system $(A,\{\al_i\}_{i=1}^n)$, namely $T_i \pi(a) = \pi\al_i(a) T_i$ for $1 \le i \le n$. 
Then it extends to a right covariant co-isometric pair $(\rho,\{V_i\}_{i=1}^n)$ of $\fA(A,\bF_+^n)_r$ such that $\rho \times V$ dilates $\pi \times T$.
\end{corollary}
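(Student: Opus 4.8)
The plan is to reduce the statement to the left-handed Proposition~\ref{P: is dil free} by passing to adjoints. Recall the correspondence recorded just before that proposition: a pair $(\pi,\{T_i\}_{i=1}^n)$ is right covariant contractive, i.e. $T_i\pi(a)=\pi\al_i(a)T_i$, if and only if $(\pi,\{T_i^*\}_{i=1}^n)$ is left covariant contractive, i.e. $\pi(a)T_i^*=T_i^*\pi\al_i(a)$; and the same equivalence interchanges ``isometric'' with ``co-isometric''. So the first step is simply to form the left covariant contractive pair $(\pi,\{T_i^*\}_{i=1}^n)$ from the given data.

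Second, I would apply Proposition~\ref{P: is dil free} to $(\pi,\{T_i^*\}_{i=1}^n)$ to obtain a left covariant isometric pair $(\rho,\{W_i\}_{i=1}^n)$ acting on some $K\supseteq H$, with $\rho$ extending $\pi$ (and $H$ reducing for $\rho(A)$, as in the proof there) and each $W_i$ co-extending $T_i^*$. Setting $V_i:=W_i^*$, the adjoint correspondence shows that $(\rho,\{V_i\}_{i=1}^n)$ is a right covariant co-isometric pair: each $W_i$ is an isometry, hence each $V_i$ is a co-isometry, and taking adjoints in $\rho(a)W_i=W_i\rho\al_i(a)$ and using that $A$ is selfadjoint yields $V_i\rho(a)=\rho\al_i(a)V_i$ for all $a\in A$. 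Moreover, since $W_i$ co-extends $T_i^*$ it has, relative to $K=H\oplus H^{\perp}$, the form $\left[\begin{smallmatrix}T_i^*&0\\ \ast&\ast\end{smallmatrix}\right]$, so $V_i=W_i^*$ has the form $\left[\begin{smallmatrix}T_i&\ast\\ 0&\ast\end{smallmatrix}\right]$; that is, $H$ is invariant for each $V_i$ and $V_i|_H=T_i$, which is precisely the assertion that $(\rho,\{V_i\})$ extends $(\pi,\{T_i\})$.

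Third, I would verify the dilation statement $P_H(\rho\times V)(x)|_H=(\pi\times T)(x)$ for $x\in\fA(A,\bF_+^n)_r$. It is enough to check this on monomials $a\,\Bt_w$, where $(\rho\times V)(a\,\Bt_w)=\rho(a)V_w$ and $(\pi\times T)(a\,\Bt_w)=\pi(a)T_w$. Since $H$ is invariant for each $V_i$ with $V_i|_H=T_i$, iterating along the word $w$ gives $V_wH\subseteq H$ and $V_w|_H=T_w$; and since $H$ is reducing for $\rho(A)$ with $\rho|_H=\pi$, for $h\in H$ we obtain $\rho(a)V_wh=\pi(a)T_wh\in H$. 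Hence $P_H\,\rho(a)V_w|_H=\pi(a)T_w$, as required.

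The argument carries no analytic content beyond Proposition~\ref{P: is dil free}; the only thing that needs care is the bookkeeping. The main obstacle, such as it is, will be keeping straight the three simultaneous swaps effected by taking adjoints---isometric $\leftrightarrow$ co-isometric, co-extension $\leftrightarrow$ extension, and left covariance $\leftrightarrow$ right covariance---together with the fact that passing from $A\times_\al\bF_+^n$ to $\fA(A,\bF_+^n)_r$ reverses which factor is starred in the integrated representation, so that the final compression identity lands on $\pi(a)T_w$ with the word order unchanged.
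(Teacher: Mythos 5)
Your proposal is correct and is exactly the argument the paper intends: the corollary is stated immediately after Proposition~\ref{P: is dil free} precisely because it follows by the adjoint correspondence (right covariant contractive $\leftrightarrow$ left covariant contractive, isometric $\leftrightarrow$ co-isometric, co-extension $\leftrightarrow$ extension) recorded before Example~\ref{E: free scp repn}. Your bookkeeping of the three swaps and the compression check on monomials $a\,\Bt_w$ matches what the paper leaves implicit.
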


\begin{remark}\label{R:iso dil free}
The dilation we use in Proposition \ref{P: is dil free} can be chosen to be minimal in a strong sense, when the system is unital.
Let $(A,\{\al_i\}_{i=1}^n)$ be a unital C*-dynamical system as in Proposition~$\ref{P: is dil free}$. 
The covariant isometric pair $(\rho, \{V_i\}_{i=1}^n)$ can be chosen to be minimal in the sense that $\bigvee_{w\in \bF^n_+} V_w \pi(A) H = K$.

The existence is a standard argument. 
We want to point out that it can be explicitly described, much as in the case of a single contraction.
We may assume that $\pi(1) = I$, so that $[\pi(A)H] = H$. 

To obtain a minimal dilation we follow the same construction, but make the Hilbert spaces $H_w$ smaller for $w \neq \mt$. 
Let $H_i : = \fD_i = \ol{D_i H}$ be the defect spaces, and set $H_w = H_{s(w)}$ for all $w \neq \mt$. 
The isometries $V_i$ are defined in the same way by noting that the operators $L_i \colon H_w \to H_{iw}$ are well defined because $s(iw) = s(w)$. 
Details are left to the reader.
\end{remark}

\begin{example}\label{E: is dil free 2}
The construction of the isometries $V_i$ that extend the contractions $T_i$ in Proposition \ref{P: is dil free} can be visualized via a graph of the free semigroup. 
For $n=2,$ the corresponding graph of the Scha\"effer-type dilation has the following form:
{\small
\begin{align*}
\xymatrix@C=1em@R=2em{
& & & & H_{\mt} \ar@(u,ul)[]_{T_1} \ar@{-->}@(u,ur)[]^{T_2} \ar@/_1pc/[dll]_{D_1} 
\ar@{-->}@/^1pc/[drr]^{D_2} & & & & \\
& & H_1 \ar@/_.5pc/[dl]_{L_1} \ar@{-->}@/^.5pc/[dr]^{L_2} & & & & H_2 \ar@/_.5pc/[dl]_{L_1}
 \ar@{-->}@/^.5pc/[dr]^{L_2} & & \\
& H_{11} \ar@/_.5pc/[dl]_{L_1} \ar@{-->}@/_.2pc/[d]^{L_2} & & H_{21} \ar@/_.5pc/[dl]_{L_1} 
\ar@{-->}@/_.2pc/[d]^{L_2} & & H_{12} \ar@/^.2pc/[d]_{L_1} 
\ar@{-->}@/^.5pc/[dr]^{L_2} & & H_{22} \ar@/^.2pc/[d]_{L_1} \ar@{-->}@/^.5pc/[dr]^{L_2} & \\
H_{111} \ar@{~}[d] & H_{211} \ar@{~}[d] & H_{121} \ar@{~}[d] & H_{221} \ar@{~}[d] & 
& H_{112} \ar@{~}[d] & H_{212} \ar@{~}[d] & H_{122} \ar@{~}[d] & H_{222} \ar@{~}[d]\\
& & & & & & & & \\
}
\end{align*}
}\noindent
where $H_w = H$ for all $w \in \bF^n_+$, and the operator $V_1$ (respectively $V_2$) is determined by the solid (respectively. broken) arrows.
For the minimal dilation, the graph has the same form except that $H_{w}=\fD_{s(w)}$ for $w \neq \mt$.
\end{example}

\subsection*{Automorphic dynamical systems}

Next we show that the covariant isometric pairs dilate to covariant unitary pairs in the automorphic case.

\begin{lemma} \label{L:free unitary diln}
Let $(A,\{\al_i\}_{i=1}^n)$ be a unital automorphic C*-dynamical system. 
Then every left covariant contractive pair $(\pi,\{T_i\}_{i=1}^n)$ dilates to a left covariant unitary pair $(\si,\{U_i\}_{i=1}^n)$, such that $U \times \si$ dilates $T \times \pi$.
\end{lemma}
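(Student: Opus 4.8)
The plan is to proceed in two stages: first replace the contractive pair by an isometric one, and then dilate the isometric pair to a unitary one, exploiting that each $\al_i$ is an automorphism. For the first stage I would apply Proposition \ref{P: is dil free} to co-extend $(\pi,\{T_i\}_{i=1}^n)$ to a left covariant \emph{isometric} pair $(\rho,\{V_i\}_{i=1}^n)$ on a space $K_0\supseteq H$, with $\rho$ extending $\pi$ and each $V_i$ co-extending $T_i$. Since $V\times\rho$ then dilates $T\times\pi$, it suffices to dilate the isometric pair $(\rho,\{V_i\})$ to a unitary covariant pair, and the composite dilation will do the job.

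For the second stage the key observation is that, because $\al_i$ is an automorphism, the relation $\rho(a)V_i=V_i\rho\al_i(a)$ says precisely that $(\rho,V_i)$ is a left covariant isometric pair for the one-generator automorphic system $(A,\al_i,\bZ_+)$. By the construction in the proof of Theorem \ref{T: cone aut is} (the direct limit $\dirlim(K_0,V_i)$ together with the attendant extension of the representation), a single such pair dilates to a unitary covariant pair. The freeness of $\bF_+^n$ means there are no relations linking distinct generators: the only coupling between the $n$ dilation problems is that they must share one $*$-representation $\si$ of $A$. I would therefore carry out the unitarizations one generator at a time and iterate. At stage $k$ I unitarize $V_i$ for $i\equiv k \pmod n$ by the Theorem \ref{T: cone aut is} direct limit, thereby enlarging the Hilbert space and extending the ambient representation, and then extend the remaining, not-yet-unitarized, isometries across the new summands so that they stay isometric and covariant with respect to the enlarged representation. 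Passing to the inductive limit $K=\dirlim K^{(k)}$ of this tower, each generator has been unitarized cofinally often, so its limit operator $U_i$ is a unitary; the limiting representation $\si$ is the common $*$-representation; the covariance relations $\si(a)U_i=U_i\si\al_i(a)$ survive the limit; and compressing back to $H$ recovers $T\times\pi$ because every stage was a dilation.

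The main obstacle is the coherent bookkeeping of the second stage: when one generator is unitarized, the others must be extended across the adjoined summands $U_i^{-m}K^{(k)}$ while preserving both isometry and the covariance relation for the \emph{conjugated} automorphisms $\al_i^{-m}\al_j\al_i^{m}$. Extending a covariant intertwiner from a subspace to the whole enlarged space is not automatic, and I expect to arrange it by passing to infinite ampliations, so that the relevant $*$-representations of $A$ have infinite multiplicity and intertwiners extend freely. That the tower is genuinely increasing is harmless, since $U_iK^{(k)}=V_iK^{(k)}\subseteq K^{(k)}$ forces $K^{(k)}\subseteq U_i^{-1}K^{(k)}$; the delicate point that the write-up must pin down is that the inductively extended operators converge to honest unitaries, rather than mere isometries, in the limit, which uses that each generator is revisited infinitely often. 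An alternative route worth keeping in mind, bypassing this iteration, is a free-product dilation in the spirit of Duncan (compare Corollary \ref{C: amalg}), writing $\bF_+^n$ as a free product of copies of $\bZ_+$ and amalgamating the single-variable automorphic dilations of Theorem \ref{T: cone aut is} over the common representation $\si$ of $A$.
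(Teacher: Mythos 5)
Your first stage is exactly the paper's: Proposition \ref{P: is dil free} supplies the covariant isometric co-extension $(\rho,\{V_i\})$, and the extension-after-co-extension bookkeeping for monomials is unproblematic. The gap is in your second stage, and it sits precisely where you write ``I expect to arrange it.'' The direct-limit unitarization of Theorem \ref{T: cone aut is} uses commutativity of the semigroup in an essential way, and this is what breaks when you try to carry the remaining generators along. Write $K^{(1)}=\dirlim(K_0,V_1)$ with embeddings $\omega_m\colon K_0\to K^{(1)}$ satisfying $\omega_m=\omega_{m+1}V_1$. The natural levelwise extension $V_2^{(1)}\omega_m\xi:=\omega_m V_2\xi$ is compatible with the connecting maps only if $V_1V_2=V_2V_1$, which is unavailable in $\bF_+^n$; so $V_2$ must be extended by hand on $K^{(1)}\ominus K_0$. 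Covariance and isometry then force $V_2^{(1)}|_{K^{(1)}\ominus K_0}$ to be an isometric intertwiner of $\si^{(1)}\al_2$ restricted to $K^{(1)}\ominus K_0$ into $\si^{(1)}$ restricted to $K^{(1)}\ominus V_2K_0$, and the representation on $K^{(1)}\ominus K_0$ is not something you get to choose: it is dictated by the limit (twisted copies of the defect space of $V_1$). Ampliating to create room starts a regress rather than ending it: every enlargement obliges you to extend the already-unitarized $U_1$ (and, later, all previously treated generators) across the new summand covariantly, which either requires unitary equivalences of the form $\tau\al_1\simeq\tau$ on summands you must now control, or else drops you back to merely isometric extensions and hence to your cofinal re-unitarization scheme. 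All of this can plausibly be closed up --- using that $\pi_u\circ\al_i$ is unitarily equivalent to the universal representation $\pi_u$, that $\pi_u^{(\kappa)}$ absorbs every representation of dimension at most $\kappa$, and that an increasing limit of isometries which are unitary cofinally often is unitary --- but none of it is carried out in your proposal, and this verification is exactly the content of the lemma.

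What your proposal misses is the paper's one-line reduction that makes the whole iteration unnecessary. Since $A$ is selfadjoint and each $\al_i$ is an automorphism, the covariance $\rho(a)V_i=V_i\rho\al_i(a)$ gives $\rho(a)V_i^*=V_i^*\rho\al_i^{-1}(a)$, so $(\rho,\{V_i^*\})$ is a covariant \emph{co-isometric} pair for the inverse system $(A,\{\al_i^{-1}\})$; hence it suffices to co-extend a covariant co-isometric pair to a covariant unitary pair. That co-extension is then written down in one shot, with no limits: the defect spaces $M_i=\ker V_i$ of the co-isometries are attached along the subtree of the Cayley graph of $\bF_n$ obtained by deleting at the root the branches beginning with $i^{-1}$, the representation is $\si(a)=\bigoplus_{w}\rho_{s(w)}\al_{\ol{w}}(a)$, and each $U_i$ is a $2\times 2$ operator matrix built from $V_i$, the projection onto $M_i$, and a left-regular-type shift, whose unitarity is checked directly from the ranges. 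Finally, your closing remark about the free-product route is sound and not circular in this paper: Corollary \ref{C: amalg} is proved independently of the present lemma (via Theorem \ref{T: amalg} and hyperrigidity of the one-variable semicrossed products), the paper itself observes that it re-proves Theorem \ref{T: free} for automorphic systems, and a maximal dilation of $T\times\pi$ extends to a $*$-representation of $\bigast_A(A\rtimes_{\al_i}\bZ)$ in which the canonical generators are unitaries, which yields the statement. But as written that is an aside in your proposal, not the argument you give.
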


\begin{proof}
By Proposition~\ref{P: is dil free}, there is a co-extension of $(\pi,\{T_i\})$ to an isometric pair $(\rho,\{V_i\})$.
Since $A$ is selfadjoint and each $\al_i$ is an automorphism, the covariance relations $\rho(a) V_i = V_i \rho\al_i(a)$ imply that
\[
 \rho\al_i(a) V_i^* = V_i^* \pi(a) \quad\text{whence}\quad \rho(a) V_i^* = V_i^* \rho \al_i^{-1}(a).
\]
Hence $(\rho,\{V_i^*\})$ is a covariant co-isometric pair for the inverse system $(A,\{\al_i^{-1}\})$. 
Suppose that there is a covariant unitary pair $(\si,\{U_i^*\})$ for $(A,\{\al_i^{-1}\})$ which co-extends $(\rho,\{V_i^*\})$.
Then $(\si,\{U_i\})$ is a unitary extension of $(\rho,\{V_i\})$.
Moreover, $\si(a) U_i^* = U_i^* \si\al_i^{-1}(a)$ implies that 
\[
 \si(a) U_i = \big(U_i^*\si(a^*) \big)^* = \big(\si\al_i(a^*)U_i^* \big)^* = U_i \si \al_i(a).
\]
Thus $(\si,\{U_i\})$ is a covariant unitary pair for $(A,\{\al_i\})$ that dilates $(\pi,\{T_i\})$. Since $\si \times U$ is an extension of $\rho \times V$, which in turn is a co-extension of $\pi \times T$, we obtain that $U \times \si$ is a dilation of $T \times \pi$.

By switching the role of $(A,\{\al_i^{-1}\})$ with $(A,\{\al_i\})$, it is now evident that it suffices to show that a \emph{left covariant co-isometric pair $(\rho,\{V_i\})$} for $(A,\{\al_i\})$ on a Hilbert space $H$ co-extends to a covariant unitary pair of $(A,\{\al_i\})$. 
We will construct this co-extension.

Let $P_i = I - V_i^*V_i$ be the projections onto $M_i = \ker V_i$. 
As in the proof of Proposition~\ref{P: is dil free}, one shows that $P_i$ commutes with $\rho\al_i(A)$; and $\rho\al_i(A) = \pi(A)$ because $\al_i$ are automorphisms.
Thus $\rho_i(a) = \rho(a)|_{M_i}$ is a $*$-representation. We will also write $\rho_\mt = \rho$.

The Cayley graph of the free group $\bF_n$ has an edge for each word in $\bF_n$ and $2n$ edges connecting the vertex for $w$ to the vertices for $iw$ and $i^{-1}w$ for $1 \le i \le n$.
Normally it is drawn with the empty word $\mt$ in the middle.
We are interested in the subgraph that deletes the $n$ branches off the central vertex beginning with $i^{-1}$.
The remaining graph $G$ consists of $\mt$ and $n$ branches $G_i$ consisting of vertices $V(G)$ labelled by reduced words of the form $gi$ for $1 \le i \le n$.
We will write $s(gi) = i$ for the source of a reduced word labelling a vertex of $G$, except $s(\mt) = \mt$.

For $w=gi \in V(G)$, set $H_w = M_i$ and set $H_\mt = H$. 
Let $K = \sum^\oplus_{w \in V(G)}  H_w$.
We can consider $K$ as a subspace of $H \otimes \ltwo(\bF_n)$.
A typical vector in $H_w$ will be denoted as $x_w \otimes \xi_w$, where $\{\xi_w\}$ are the standard basis vectors for $\ltwo(\bF_n)$.

For a word $w$ in $\bF_n$, let $\ol{w}$ denote the word formed by reversing the letters of $w$.
Define a $*$-representation of $A$ by 
\[ 
\si(a) = \bigoplus_{w \in V(G)} \rho_{s(w)}\al_{\ol{w}}(a) .
\]

If $K$ is decomposed as $K = H \oplus K'$, we can write down unitary co-extensions of $V_i$ as follows.
Let $L_i$ denote the operator acting on $K'$ by $L_i (x \otimes \xi_w) = x \otimes \xi_{iw}$.
This is just the restriction of the unitary operator $I\otimes W_i$ to $K'$, where $W_i$ comes from the left regular representation of $\bF_n$ on $\ltwo(\bF_n)$.
Thus $L_i$ is easily seen to be an isometry with cokernel $H_i$.
Let $J_i$ denote the partial isometry mapping $M_i\subset H_\mt$ onto $H_i$ for $1 \le i \le n$.
Define a co-extension of $V_i$ by 
\[ 
U_i = \begin{bmatrix} V_i & 0\\ J_iP_i & L_i\end{bmatrix} .
\]
The first column is an isometry of $H_\mt$ onto $H_\mt \oplus H_i$.
Therefore, by the remarks above, $U_i$ is unitary.
It is evident by construction that $(\si,\{U_i\})$ is a co-extension of $(\rho,\{V_i\})$. 
See Example \ref{E: dil free 2} for a concrete picture of this construction in the case $n=2$.

Let us verify that the unitary pair $(\si,\{U_i\})$ is covariant. For $x \in H$, we obtain
\begin{align*}
 \si(a)U_i(x \otimes \xi_\mt)
 & = \si(a) \big( V_i x \otimes \xi_\mt + P_i x \otimes \xi_i \big) \\
&  = \rho(a)V_i x \otimes \xi_\mt + \rho\al_i(a)P_i x \otimes \xi_i \\
 & = V_i\rho\al_i(a)x \otimes\xi_\mt + P_i\rho\al_i(a)x \otimes\xi_i \\
& = U_i (\rho\al_i(a)x \otimes \xi_\mt) \\
& = U_i \si\al_i(a)(x \otimes \xi_\mt) .
\end{align*}
Also if $w\in V(G)$ with $s(w) = i$ and $x \in M_i$, we find that
\begin{align*}
 \si(a)U_i(x \otimes \xi_w) &= \si(a) (x \otimes \xi_{iw}) \\
& = \rho\al_{\overline{iw}}(a)(x \otimes \xi_{iw} ) \\
& = \rho\al_{\overline{w}}\al_i(a)(x \otimes \xi_{iw})  \\
& = U_i (\rho\al_{\overline{w}}\al_i(a)(x \otimes \xi_w) \\
& = U_i\si\al_i(a)(x \otimes \xi_w).
\end{align*}
Since $K$ is the direct sum of such $H_w$ and $H_{\mt}$, the proof is complete.
\end{proof}

\begin{example}\label{E: dil free 2}
The construction of the unitaries $U_i$ that co-extend the co-isometries $V_i$ in Lemma~ \ref{L:free unitary diln} when $n=2$ is illustrated below.
We write $a$ and $b$ for the generators of $\bF^2$.
The operators $U_a$ and $U_b$ are represented by the solid arrows and broken arrows, respectively.
{\small
\begin{align*}
\xymatrix@C=1em@R=2.3em{
& & & & H_{\mt} \ar@(u,ul)[]_{V_a} \ar@{-->}@(u,ur)[]^{V_b} \ar@/_3pc/[ddll]_{P_a} \ar@{-->}@/^3pc/[ddrr]^{P_b} & & & & \\
& H_{a^{-1}ba} \ar[d]_{L_a} & & H_{a^{-1}b^{-1}a} \ar[d]_{L_a} & & H_{b^{-1}ab} \ar@{-->}[d]_{L_b} & & H_{b^{-1}a^{-1}b} \ar@{-->}[d]_{L_b} & \\
& H_{ba} \ar[d]_{L_a} \ar@{-->}[l]_{L_b} & H_{a} \ar@{-->}[l]_{L_b} \ar[dd]_{L_a} & H_{b^{-1}a} \ar@{-->}[l]_{L_b} \ar[d]_{L_a} & \ar@{-->}[l]_{L_b} \phantom{ooo} & H_{ab} \ar@{-->}[d]_{L_b} \ar[l]_{L_a} & H_{b} \ar[l]_{L_a} \ar@{-->}[dd]_{L_b} & H_{a^{-1}b} \ar[l]_{L_a} \ar@{-->}[d]_{L_b} & \ar[l]_{L_a} \\
& H_{aba} & & H_{ab^{-1}a} & & H_{bba} & & H_{ba^{-1}b} & \\
& H_{baa} & H_{aa} \ar@{-->}[l]_{L_b} \ar[d]_{L_a} & H_{b^{-1}aa} \ar@{-->}[l]_{L_b} & & H_{abb} & H_{bb} \ar[l]_{L_a} \ar@{-->}[d]_{L_b} & H_{a^{-1}bb} \ar[l]_{L_a} & \\
& & H_{aaa} \ar@{~}[d] & & & & H_{bbb} \ar@{~}[d] & \\
& & & & & & &
}
\end{align*}
}
One may think of this process as follows. We take the graph over a distinguished vertex $\mt$ and $n$ copies of the graph of $\bF^n$, from which we delete paths starting with $i^{-1}$, for $i=1,\dots,n$. 
The cycles on the $\mt$ vertex are the original co-isometries. 
The projections $P_i$ are the defect operators on the edges from $\mt$ to $i$, for $1 \le i \le n$. 
The other edges represent unitary maps from the space at the source vertex onto the space at the range vertex. 
\end{example}

\begin{theorem}\label{T: free}
Let $(A,\{\al_i\}_{i=1}^n)$ be a unital automorphic C*-dynamical system. Then
\[
\cenv(A\times_\al \bF_+^n) \simeq A \rtimes_\al \bF^n.
\]
\end{theorem}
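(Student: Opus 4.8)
The plan is to identify the unitary covariant pairs of $A \times_\al \bF_+^n$ with the covariant representations of the crossed product $A \rtimes_\al \bF^n$, and then run the dilation-plus-maximality argument familiar from Theorem~\ref{T: cone aut un} and Corollary~\ref{C: un ext pr}. First I would set up the correspondence. Given a unitary covariant pair $(\si,\{U_i\}_{i=1}^n)$ for $A\times_\al\bF_+^n$, that is, with $\si(a)U_i = U_i\si\al_i(a)$, put $v_i := U_i^*$. Since $\bF^n$ is free, the $n$ unitaries $v_i$ extend uniquely to a unitary representation $v\colon \bF^n \to \B(H)$, and the covariance relation becomes $v_i \si(a) = \si\al_i(a) v_i$, i.e.\ $\ad_{v_i}\si = \si\al_i$. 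Because $\al\colon \bF_+^n \to \End(A)$ is a semigroup homomorphism extending to $\al\colon \bF^n \to \Aut(A)$, a routine induction gives $\ad_{v_g}\si = \si\al_g$ for every $g\in\bF^n$, so $(\si,v)$ is a covariant representation of $(A,\al,\bF^n)$. Conversely every such covariant representation yields a unitary covariant pair of $A\times_\al\bF_+^n$ by setting $U_i = v_i^*$, and this correspondence is a bijection. Tracking the word-reversal that appears in the covariance relation before Example~\ref{E: free scp repn}, one checks $U_w = v_{\ol{w}^{-1}}$, so the generator $\Bt_w a$ is sent to the fixed element $v_{\ol{w}^{-1}}\si(a)$ of the crossed product; multiplicativity of the resulting map then follows from $\si(a)v_g = v_g\si\al_{g^{-1}}(a)$.

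Next I would use Lemma~\ref{L:free unitary diln} to pin down the norm. Since every contractive covariant pair dilates to a unitary covariant pair with $U\times\si$ dilating $T\times\pi$, the supremum defining the $A\times_\al\bF_+^n$ matrix norms may be taken over unitary covariant pairs alone. By the bijection above, this supremum is exactly the supremum over all covariant representations of $(A,\al,\bF^n)$, which is the full crossed product norm. Hence the canonical map $\iota\colon A\times_\al\bF_+^n \to A\rtimes_\al\bF^n$, $\Bt_w a \mapsto v_{\ol{w}^{-1}}\si(a)$, is a unital completely isometric homomorphism; note this argument uses the full crossed product directly and so bypasses any appeal to amenability, which is unavailable for $\bF^n$. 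To see that $\iota$ gives a C*-cover, I would fix a faithful nondegenerate covariant representation, observe that the image contains $\si(A)$ (hence $\si(1_A)=I$ and all the unitaries of $A$) together with each $v_i^* = \iota(\Bt_i)$, and conclude that $\ca(\iota(A\times_\al\bF_+^n))$ contains every $v_g$ and all of $\si(A)$, so it equals $A\rtimes_\al\bF^n$.

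Finally, to show $\iota$ realizes the C*-envelope, I would argue that the identity representation of $\iota(A\times_\al\bF_+^n)$ is maximal, exactly as in Corollary~\ref{C: un ext pr}. The operator algebra is generated by unitary elements: the unitaries of the unital C*-algebra $A$, which span $A$, and the unitaries $\Bt_i$. Any contractive dilation of a unitary is trivial, so a dilation $\nu$ of the identity representation acts trivially on each of these generators, hence on the algebra they generate; therefore $\nu$ is a trivial dilation and the representation is maximal. Since $\ca(\iota(A\times_\al\bF_+^n)) = A\rtimes_\al\bF^n$ is generated by a maximal completely isometric representation, the Dritschel--McCullough description of the C*-envelope yields $\cenv(A\times_\al\bF_+^n)\simeq A\rtimes_\al\bF^n$.

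I expect the main obstacle to be the bookkeeping in the first paragraph: verifying that the word-reversal in the semicrossed covariance relation is precisely absorbed by passing to $v_i = U_i^*$ and the group inverse, so that $\iota$ is well defined and multiplicative on $c_{00}(\bF_+^n,A)$ and the norm identification with the full crossed product is clean. Once the correspondence is unwound correctly, the freeness of $\bF^n$ makes the passage from $n$ unitaries to a group representation automatic, and the remaining steps are the standard dilation and unitary-generation arguments already used in the abelian setting.
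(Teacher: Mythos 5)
Your proof is correct and follows essentially the same route as the paper: both rest on Lemma~\ref{L:free unitary diln} to dilate contractive covariant pairs to unitary ones, identify unitary covariant pairs with covariant representations of $(A,\al,\bF^n)$, and conclude by maximality of representations sending the unitary generators to unitaries (as in Corollary~\ref{C: un ext pr}). The only differences are organizational: you obtain the complete isometry into the full crossed product by a norm-supremum comparison (contractive pairs $=$ unitary pairs $=$ crossed-product covariant representations) rather than the paper's pair of canonical epimorphisms composing to the identity, and you absorb the $\al$-versus-$\al^{-1}$ twist explicitly by setting $v_i = U_i^*$, a point the paper defers to the remark following the theorem.
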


\begin{proof}
Let $(\pi,\{T_i\})$ be a covariant contractive pair for $(A,\{\al_i\})$ such that $T \times \pi$ yields a completely isometric representation of $A\times_\al \bF_+^n$. 
By Lemma~\ref{L:free unitary diln}, $(\pi,\{T_i\})$ dilates to a covariant unitary pair $(\si,\{U_i\})$.
Then the representation $U \times \si$ of $A\times_\al \bF_+^n$ is also unital completely isometric.
Moreover, it is a maximal representation because the generators are sent to unitaries (compare with the proof of Corollary~\ref{C: un ext pr}).
Hence it extends to a faithful $*$-representation of the C*-envelope onto $\ca(\si(A), \{U_i\})$; so $\cenv(A\times_\al \bF_+^n) \simeq \ca(\si(A), \{U_i\})$.

On the other hand, any covariant unitary pair $(\si, \{U_i\})$ for $(A,\{\al_i\})$ is also covariant for the whole group $\bF_n$.
Thus there is a $*$-representation, also denoted by $U \times \si$, of the C*-algebra $A\times_\al \bF^n$ onto $\ca(\si(A), \{U_i\})$.
Conversely, any such system yields a quotient of $\cenv(A\times_\al \bF_+^n)$ by the arguments of the previous paragraph.
The maps produced are canonical, and in particular the generators are sent to generators.
So this actually yields a $*$-isomorphism: $\cenv(A\times_\al \bF_+^n) \simeq A \rtimes_\al \bF^n$.
\end{proof}

\begin{remark}
Note that the proof of Theorem \ref{T: free} actually shows that $\cenv(A\times_\al \bF_+^n) \simeq A \rtimes_{\al^{-1}} \bF^n$, since by definition of the C*-crossed products the covariant pairs satisfy $\si\al_i(a) = U_i \si(a) U_i^*$. 
Nevertheless, the embedding is canonical after the identification $A \rtimes_{\al^{-1}} \bF^n \simeq A \rtimes_{\al} \bF^n$.
\end{remark}

The unitary dilation we finally constructed in Theorem \ref{T: free} preserves the strong minimality as in Remark \ref{R:iso dil free}.

\begin{corollary} \label{C: dil free}
Let $(A,\{\al_i\}_{i=1}^n)$ be a unital automorphic C*-dynamical system. 
Under the hypotheses of Lemma \ref{L:free unitary diln} we can choose the covariant unitary pair $(\si, \{U_i\}_{i=1}^n)$ such that $\bigvee_{g \in \bF^n} U_g \si(A) H = K$.
\end{corollary}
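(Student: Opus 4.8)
The goal is to establish the strong minimality condition $\bigvee_{g \in \bF^n} U_g \si(A) H = K$ for the unitary dilation constructed in Lemma~\ref{L:free unitary diln}. The plan is to trace through the construction of Lemma~\ref{L:free unitary diln} and verify that the reduced space obtained by taking defect spaces (in the spirit of Remark~\ref{R:iso dil free}) produces a cyclic dilation, and that the group action of $\bF^n$ (not just the semigroup $\bF_+^n$) reaches every summand of $K$.

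First I would invoke the strong minimality of the intermediate isometric co-extension from Remark~\ref{R:iso dil free}: replacing the Hilbert spaces $H_w$ (for $w \neq \mt$) by the defect spaces $\fD_{s(w)} = \ol{D_{s(w)}H}$, the covariant isometric pair $(\rho,\{V_i\})$ satisfies $\bigvee_{w \in \bF_+^n} V_w \rho(A)H = H \odot P$ in the sense of that remark. In the automorphic setting, since each $\al_i$ is an automorphism, $\rho\al_i(A) = \pi(A)$, so the representation $\rho_i = \rho(\cdot)|_{M_i}$ on each defect space is nondegenerate whenever $\pi$ is unital; thus $[\rho_{s(w)}(A)H_w] = H_w$ for every vertex $w$ of the Cayley subgraph $G$. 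This is the analogue of the assumption $[\pi(A)H]=H$ needed to start the minimal construction.

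Next I would carry out the core verification. The space $K = \sum_{w \in V(G)}^\oplus H_w$ decomposes over the vertices of the pruned Cayley graph $G$, whose branches $G_i$ consist of reduced words ending in $i$. The key observation is that the unitaries $U_i$ and their inverses $U_i^*$, together forming a unitary representation of the full group $\bF^n$, act transitively on the grading: the operators $L_i = (I\otimes W_i)|_{K'}$ shift $H_w$ to $H_{iw}$, while the partial isometries $J_iP_i$ map the defect part of $H_\mt$ into $H_i$. Hence starting from $\si(A)H = H_\mt$ (which equals $H$ since $\pi$ is unital) and applying words $U_g$ for $g \in \bF^n$, I would show by induction on word length that each summand $H_w$ is contained in $\bigvee_{g} U_g \si(A)H$. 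The base case $w=\mt$ is immediate; for the inductive step, a reduced word $w = iw'$ is reached by applying $U_i$ to $H_{w'}$ (via $L_i$), using that $L_i H_{w'} = H_{iw'}$, while words beginning with $i^{-1}$ are handled by $U_i^*$, exploiting precisely the edges of $G$ that were retained. Since $U_g$ ranges over the \emph{group} $\bF^n$, both forward and backward edges are available, and the defect spaces $H_i$ are reached from $H_\mt$ through $J_iP_i$.

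The main obstacle I anticipate is the bookkeeping at the distinguished vertex $\mt$ and its immediate neighbours: one must check that the defect space $H_i = \fD_i$ sitting in the first level is genuinely in the cyclic span, which requires that $P_i H = \ol{D_i H}$ is cyclically reached — this follows from the $(2,1)$-entry $J_iP_i$ of $U_i$ being onto $H_i$, but it must be reconciled with the fact that $H_\mt$ itself already contributes $V_i x \otimes \xi_\mt$ into $H_\mt$. Once it is confirmed that the wandering structure of the $L_i$ on $K'$ and the surjectivity of $J_i$ onto the defect spaces combine to exhaust each $H_w$, the result follows. I expect no genuinely new difficulty beyond this careful tracking, since the construction was explicitly engineered in Lemma~\ref{L:free unitary diln} and Remark~\ref{R:iso dil free} to be minimal; the corollary is essentially a matter of recording that minimality is preserved when passing from the isometric co-extension to the unitary dilation.
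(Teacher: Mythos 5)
Your overall strategy coincides with the paper's: take the strongly minimal isometric co-extension of Remark~\ref{R:iso dil free}, and then check that the Cayley-graph unitary dilation of Lemma~\ref{L:free unitary diln} is swept out from it once all elements of the \emph{group} $\bF^n$ are allowed. The paper's proof is precisely this observation, left as a sketch. However, your verification conflates the two dilation stages, and this makes the base case of your induction false as stated. In the decomposition $K = \sum^\oplus_{w \in V(G)} H_w$ of Lemma~\ref{L:free unitary diln}, the summand $H_\mt$ is \emph{not} the original space $H$: the Cayley-graph construction is applied to the co-isometric pair $(\rho,\{V_i^*\})$ of the inverse system $(A,\{\al_i^{-1}\})$, which lives on the isometric dilation space of Proposition~\ref{P: is dil free}/Remark~\ref{R:iso dil free}; so $H_\mt$ is that entire space, which properly contains $H$ unless the $T_i$ were already isometries. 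Hence your claim ``$\si(A)H = H_\mt$'' fails. The correct base case uses the ingredient you quoted but did not connect: $H_\mt$ is invariant for each $U_i$ with $U_i|_{H_\mt} = V_i$, and $H_\mt$ reduces $\si$ with $\si|_{H_\mt} = \rho$, so the minimality of Remark~\ref{R:iso dil free} gives $\bigvee_{w\in\bF_+^n} U_w\si(A)H = \bigvee_{w\in\bF_+^n} V_w\rho(A)H = H_\mt$.

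Two further assertions need repair. The adjoined summands $H_w$, $w\neq\mt$, are copies of $M_{s(w)} = \ker V_{s(w)}^*$, the cokernels of the stage-one \emph{isometries}; they are not the contraction defect spaces $\fD_i = \ol{D_iH}$, and your identification ``$P_iH = \ol{D_iH}$'' mixes the defect operators $D_i = (I-T_i^*T_i)^{1/2}$ of Proposition~\ref{P: is dil free} with the projections $I - V_iV_i^*$ of the second stage, which act on different Hilbert spaces. Also, the operator with matrix form $\left[\begin{smallmatrix} \ast & 0 \\ J_iP_i & L_i \end{smallmatrix}\right]$ is $U_i^*$, not $U_i$: the construction co-extends the $V_i^*$, which is exactly the point the paper flags with ``we have to be careful\dots\ the unitary dilation concerned the $V_i^*$.'' This last slip happens to be harmless for the conclusion, since $\bigvee_{g\in\bF^n} U_g\si(A)H = \bigvee_{g\in\bF^n} U_g^*\si(A)H$ by reindexing $g\mapsto g^{-1}$ over the group. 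With the corrected base case and the correct attribution of the matrix entries, your induction on the length of reduced words does close: $U_i^*$ carries $M_i\subset H_\mt$ onto $H_i$ and carries $H_w$ onto $H_{iw}$, while $U_i$ provides the backward edges, so every $H_w$ lies in $\bigvee_{g\in\bF^n} U_gH_\mt = \bigvee_{g\in\bF^n} U_g\si(A)H$.
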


\begin{proof}
Again we can replace $[\pi(A)H]$ with $H$, since $A$ is unital and hence the automorphisms are unital. 
By Remark \ref{R:iso dil free} we co-extend the pair $(\pi,\{T_i\})$ to a covariant isometric pair $(\rho,\{V_i\})$ that satisfies the strong minimal condition. 
When passing to the unitary pair we have to be careful, because of the argument at the beginning of the proof of Theorem \ref{T: free}. 
Indeed the unitary dilation concerned the $V_i^*$. 
Nevertheless, these unitaries, say $U_g^*$, satisfy the strong minimal condition stated above (recall the use of the defect spaces).
\end{proof}

\begin{remark}
A different dilation of isometric covariant pairs for the free semicrossed product is established by the first author and Katsoulis \cite{DavKat11} for arbitrary classical systems over metrizable spaces. 
In this process they construct full isometric dilations \cite[Theorem 2.16]{DavKat11} which are proved to be maximal \cite[Proposition 2.17]{DavKat11}. 
In these results the existence of Borel measures is extensively used due to the structure of the dynamical system. 
However, the authors in \cite{DavKat11} were not able to associate the C*-envelope to a C*-crossed product, even for homeomorphic systems which are included in our Theorem \ref{T: free}. 
In fact at the end of \cite[Example 2.20]{DavKat11} it is claimed that the ``general structure of the maximal representations appears to be very complicated'' making it ``difficult to describe the algebraic structure in the C*-envelope.'' 
For non-invertible systems, the structure of the C*-envelope remains unclear.
But for automorphic systems, this task is accomplished by Theorem \ref{T: free} even for non-commutative systems.
\end{remark}

\begin{corollary}
Let $(A,\{\al_i\}_{i=1}^n)$ be a unital automorphic C*-dynamical system and $(\pi,\{T_i\}_{i=1}^n)$ be a right covariant contractive pair of the right free semicrossed product $\fA(A,\bF_+^n)_r$. 
Then $(\pi,\{T_i\}_{i=1}^n)$ dilates to a right covariant unitary pair $(\si,\{U_i\}_{i=1}^n)$ of $\fA(A,\bF_+^n)_r$ such that $\si \times U$ dilates $\pi \times T$.
\end{corollary}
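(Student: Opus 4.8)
The plan is to reduce everything to the left-handed case already settled in Lemma~\ref{L:free unitary diln}, using the generator-level correspondence between right and left covariant pairs recorded just before this subsection: a pair $(\pi,\{T_i\}_{i=1}^n)$ is right covariant and contractive precisely when $(\pi,\{T_i^*\}_{i=1}^n)$ is left covariant and contractive, and the same equivalence holds verbatim with ``contractive'' replaced by ``unitary''. Since taking adjoints is an anti-isomorphism of operator algebras, it should convert the left dilation produced by the lemma into the desired right dilation; the one thing requiring attention is that this anti-isomorphism reverses words, so I must track that reversal when comparing the two integrated representations.

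Concretely, I would start from the given right covariant contractive pair $(\pi,\{T_i\})$, so that $T_i\pi(a)=\pi\al_i(a)T_i$. Passing to adjoints generator by generator yields $\pi(a)T_i^*=T_i^*\pi\al_i(a)$, i.e. $(\pi,\{T_i^*\})$ is a left covariant contractive pair of $(A,\{\al_i\})$. As the system is unital and automorphic, Lemma~\ref{L:free unitary diln} applies and produces a left covariant unitary pair $(\si,\{W_i\})$ on some $K\supseteq H$ such that $W\times\si$ dilates $T^*\times\pi$, where $T^*\times\pi$ denotes the integrated representation of $A\times_\al\bF_+^n$ attached to $(\pi,\{T_i^*\})$ and $(T^*)_w$ is the product of the $T_i^*$ in the order prescribed by $w$. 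Applying the generator-level correspondence once more, now in the unitary case, the pair $(\si,\{U_i\})$ with $U_i:=W_i^*$ is a right covariant unitary pair, $U_i\si(a)=\si\al_i(a)U_i$; being contractive it integrates to a completely contractive representation $\si\times U$ of $\fA(A,\bF_+^n)_r$, and $\si$ compresses to $\pi$ on $H$ since the $w=\mt$ case of the dilation gives $P_H\si(a)|_H=\pi(a)$.

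It remains to check that $\si\times U$ dilates $\pi\times T$, and this is where the word reversal matters. For each word $w$ the lemma yields $P_H W_w\si(a)|_H=(T^*)_w\,\pi(a)$ for every $a\in A$. Taking adjoints of this identity, using that the compression $X\mapsto P_H X|_H$ commutes with the adjoint and that $X\mapsto X^*$ is anti-multiplicative so that $(W_w)^*=U_{\ol w}$ for the reversed word $\ol w$, I obtain
\begin{align*}
 P_H\,\si(a^*)\,U_{\ol w}|_H = \pi(a^*)\,\big((T^*)_w\big)^* = \pi(a^*)\,T_{\ol w}.
\end{align*}
As $a$ ranges over $A$ so does $a^*$, and as $w$ ranges over $\bF_+^n$ so does $\ol w$; hence $P_H\,\si(b)\,U_v|_H=\pi(b)\,T_v$ for all $b\in A$ and $v\in\bF_+^n$. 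These are exactly the compressions of the monomials $b\,\Bt_v$ spanning $\fA(A,\bF_+^n)_r$, so by linearity and continuity $P_H\,(\si\times U)(\cdot)|_H=\pi\times T$, which is the assertion.

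The genuine obstacle here is notational rather than conceptual: one must confirm that the right integrated representation $\pi\times T$ is precisely the word-reversed adjoint of the left integrated representation $T^*\times\pi$ built from the same data, and that the anti-multiplicative adjoint sends $W_{i_k}\cdots W_{i_1}$ to $U_{i_1}\cdots U_{i_k}$, accounting for $w\mapsto\ol w$. Once this identification is pinned down, the statement follows immediately from Lemma~\ref{L:free unitary diln} with no new dilation construction. Alternatively, I could phrase the whole argument intrinsically, noting that the adjoint map is a completely isometric anti-isomorphism of $A\times_\al\bF_+^n$ onto $\fA(A,\bF_+^n)_r$ that intertwines the two families of covariant pairs, and hence transports the dilation of the lemma directly.
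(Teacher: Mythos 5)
Your proposal is correct and is exactly the route the paper intends: the corollary is stated there as an immediate consequence of Lemma \ref{L:free unitary diln} together with the adjoint correspondence between right and left covariant pairs recorded before Example \ref{E: free scp repn}. Your only addition is to spell out the routine bookkeeping (compression commutes with adjoints, anti-multiplicativity gives the word reversal $w\mapsto\ol{w}$), which is precisely the verification the paper leaves to the reader.
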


\begin{corollary}\label{C: right free}
Let $(A,\{\al_i\}_{i=1}^n)$ be a unital automorphic C*-dynam\-ical system. 
Then
\begin{align*}
\cenv(\fA(A,\bF_+^n)_r) \simeq A \rtimes_\al \bF^n.
\end{align*}
\end{corollary}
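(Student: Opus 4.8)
The plan is to mirror the proof of Theorem~\ref{T: free}, replacing Lemma~\ref{L:free unitary diln} with the dilation result for right covariant pairs established in the preceding corollary. First I would fix a right covariant contractive pair $(\pi,\{T_i\}_{i=1}^n)$ for which $\pi \times T$ is a completely isometric representation of $\fA(A,\bF_+^n)_r$. By the preceding corollary, this pair dilates to a right covariant unitary pair $(\si,\{U_i\}_{i=1}^n)$ with $\si \times U$ dilating $\pi \times T$; since any dilation of a completely isometric representation is again completely isometric, $\si \times U$ is completely isometric as well.

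Next I would observe that $\si \times U$ is a maximal representation: each generator $U_i$ is a unitary, and a contractive dilation of a unitary must be a trivial (direct sum) dilation, exactly as in the proof of Corollary~\ref{C: un ext pr}. Hence $\si \times U$ extends to a faithful $*$-representation of the C*-envelope, which yields $\cenv(\fA(A,\bF_+^n)_r) \simeq \ca(\si(A),\{U_i\}_{i=1}^n)$.

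The final step identifies this C*-algebra with the group crossed product. For a right covariant unitary pair, the relation $U_i\si(a) = \si\al_i(a) U_i$ is equivalent to $\si\al_i(a) = U_i\si(a)U_i^*$, which is precisely the covariance relation defining $A \rtimes_\al \bF^n$; in contrast to the left case discussed in the Remark after Theorem~\ref{T: free}, the right convention produces $\al$ directly rather than $\al^{-1}$. Thus every right covariant unitary pair extends to a $*$-representation of $A \rtimes_\al \bF^n$ onto $\ca(\si(A),\{U_i\})$, and conversely every covariant representation of $A \rtimes_\al \bF^n$ restricts on the generators to a right covariant unitary pair and hence factors through $\cenv(\fA(A,\bF_+^n)_r)$. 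As these canonical maps send generators to generators, they are mutually inverse $*$-isomorphisms, giving $\cenv(\fA(A,\bF_+^n)_r) \simeq A \rtimes_\al \bF^n$.

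I expect the only subtle point to be the bookkeeping of the direction of the covariance relation, so that the crossed product emerges as $A \rtimes_\al \bF^n$ and not $A \rtimes_{\al^{-1}} \bF^n$; everything else is a direct transcription of the automorphic argument already carried out for the left semicrossed product in Theorem~\ref{T: free}.
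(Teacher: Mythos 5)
Your proof is correct and is essentially the argument the paper intends: the corollary is stated without proof precisely because it follows by transcribing the proof of Theorem \ref{T: free}, with Lemma \ref{L:free unitary diln} replaced by the preceding corollary on dilating right covariant contractive pairs to right covariant unitary pairs. Your bookkeeping point is also right and consistent with the Remark after Theorem \ref{T: free}: the right covariance relation $U_i\si(a)=\si\al_i(a)U_i$ is exactly $\si\al_i=\ad_{U_i}\si$, so the crossed product $A\rtimes_\al\bF^n$ appears directly, with no need to pass through $A\rtimes_{\al^{-1}}\bF^n$.
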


In the proof of Theorem \ref{T: free}, we noted that the generators are mapped to unitaries inside the C*-envelope. 
Thus the following corollary is immediate.
In particular, it holds when $A = \bC$ and $\al_i = \id_A$, for all $i=1, \dots, n$.

\begin{corollary}
Let $(A,\{\al_i\}_{i=1}^n)$ be a unital automorphic C*-system. 
Then the free semicrossed products $A \times_\al \bF^n_+$ and $\fA(A,\al)_r$ are hyperrigid.

In particular, the universal operator algebra $\O(\bF^n_+)$ $($with respect to families of $n$ contractions$)$ is hyperrigid. 
\end{corollary}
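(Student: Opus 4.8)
The plan is to deduce hyperrigidity from the identification of the C*-envelope in Theorem~\ref{T: free}, combined with the elementary principle recorded in Section~\ref{S:op alg} that a unital operator algebra generated by unitary elements is hyperrigid. First I would invoke Theorem~\ref{T: free} to fix the completely isometric embedding $A \times_\al \bF_+^n \hookrightarrow A \rtimes_\al \bF^n = \cenv(A \times_\al \bF_+^n)$, under which the copy of $A$ maps onto $A$ and each universal generator $\Bt_i$ maps to the unitary $U_i \in A \rtimes_\al \bF^n$ implementing $\al_i$; this is precisely the observation made just before the corollary. Since $A$ is a unital C*-algebra, it is the closed linear span of its unitaries, so $A \times_\al \bF_+^n$ is generated, as a unital operator algebra inside its C*-envelope, by the unitaries of $A$ together with $U_1, \dots, U_n$.

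The second step is the multiplicative-domain argument. Let $\pi$ be an arbitrary $*$-representation of $\cenv(A \times_\al \bF_+^n)$ on a Hilbert space $H$, and let $\phi$ be any unital completely positive extension of $\pi|_{A \times_\al \bF_+^n}$ to $\cenv(A \times_\al \bF_+^n)$. For every generating unitary $u$ (a unitary of $A$, or one of the $U_i$) we have $\phi(u) = \pi(u)$, and since $u^*u = uu^* = 1$ lies in $A \times_\al \bF_+^n$, the Schwarz inequality gives $\phi(u)^*\phi(u) = I = \phi(u^*u)$ and $\phi(u)\phi(u)^* = I = \phi(uu^*)$; hence $u$ lies in the multiplicative domain of $\phi$. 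As the multiplicative domain is a C*-subalgebra, it contains the C*-algebra generated by these unitaries, namely all of $\cenv(A \times_\al \bF_+^n)$. Thus $\phi$ is a $*$-homomorphism agreeing with $\pi$ on the generators, whence $\phi = \pi$. This says exactly that $\pi|_{A \times_\al \bF_+^n}$ has the unique extension property, and as $\pi$ was arbitrary, $A \times_\al \bF_+^n$ is hyperrigid.

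The same reasoning applies verbatim to $\fA(A,\al)_r$, using Corollary~\ref{C: right free} in place of Theorem~\ref{T: free}: there too the C*-envelope is $A \rtimes_\al \bF^n$ and the generators $\Bt_i$ are sent to unitaries, so the right free semicrossed product is generated by unitaries inside its C*-envelope and the multiplicative-domain argument goes through unchanged. Finally, for the last assertion I would specialize to $A = \bC$ with $\al_i = \id_\bC$: a covariant contractive pair then consists of an arbitrary family of $n$ contractions, so $\bC \times_{\id} \bF_+^n \simeq \O(\bF_+^n)$, while $(\bC,\{\id\})$ is a unital automorphic system, so hyperrigidity of $\O(\bF_+^n)$ is immediate from the preceding case.

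I expect no serious obstacle here, since the substantive work---producing the covariant unitary dilations and identifying the C*-envelope---is already carried out in Lemma~\ref{L:free unitary diln} and Theorem~\ref{T: free}. The only point requiring a little care is the verification that the relevant unital operator algebra is genuinely generated by unitaries, which rests on $A$ being spanned by its unitaries and on the generators $\Bt_i$ becoming unitary in the C*-envelope.
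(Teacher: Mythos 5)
Your proof is correct and follows essentially the same route as the paper: Theorem~\ref{T: free} (resp.\ Corollary~\ref{C: right free}) identifies the C*-envelope and shows that the generators become unitaries there, so the semicrossed product is generated by unitaries (using that the unital C*-algebra $A$ is spanned by its unitaries), and hyperrigidity follows from the standard principle the paper invokes for such algebras. The only cosmetic difference is that you verify this principle via the multiplicative-domain argument for ucp extensions, while the paper argues as in Corollary~\ref{C: un ext pr} that any dilation must act trivially on unitaries; the two verifications are interchangeable.
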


\begin{remark}\label{R: free id}
When the automorphisms $\al_i$ are all equal to the identity $\id_A$, we can have a simpler extension of covariant isometric pairs $(\pi, \{T_i\}_{i=1}^n)$ (acting on $H$) to covariant unitary pairs $(\si, \{U_i\}_{i=1}^n)$. 
In this case let $K = H \oplus H$, and let the usual unitary dilation
\[
U_i
=
\begin{bmatrix}
T_i & P_i \\
0 & T_i^*
\end{bmatrix}
\]
where $P_i = I - T_iT_i^*$. Moreover let $\si = \pi \oplus \pi$. 
Then the pair $(\si, \{U_i\}_{i=1}^n)$ is unitary, co-extends $(\pi,\{T_i\}_{i=1}^n)$ and it is covariant since
\begin{align*}
\si(a) U_i
=
\begin{bmatrix}
\pi(a)T_i & \pi(a)P_i \\
0 & \pi(a)T_i^*
\end{bmatrix}
=
\begin{bmatrix}
T_i\pi(a) & P_i \pi(a) \\
0 & T_i^* \pi(a)
\end{bmatrix}
=
U_i \si(a),
\end{align*}
for all $a\in A$ and $i=1, \dots, n$. Here we have used that $(\pi,\{T_i\}_{i=1}^n)$ is covariant for $(A,\{\id_A\}_{i=1}^n)$, so that $\pi(a) T_i = T_i \pi\al_i(a) = T_i \pi(a)$. 
Thus, $T_i^* \pi(a) = \pi(a) T_i^*$; and hence $\pi(a) T_iT_i^* = T_iT_i^* \pi(a)$. 
Therefore $\pi(a) P_i = P_i \pi(a)$. As a consequence
\[
\cenv( A \otimes_{\max} \O(\bF^n_+) ) \simeq \cenv(A \times_\id \bF^n_+) \simeq A \rtimes_\id \bF^n \simeq A \otimes_{\max} \ca(\bF^n),
\]
where $\O(\bF^n_+)$ is the universal operator algebra with respect to families of $n$ contractions, and $\otimes_{\max}$ denotes the universal operator algebra generated by commuting unital completely contractive representations of the factors of the algebraic tensor product.
\end{remark}

In the case of automorphic systems over spanning cones, we were able to prove that the left regular representation is a complete isometry for the semicrossed product by viewing it as a dilation of the completely isometric Fock representation. 
For trivial automorphic systems $(A,\{\id_A\}_{i=1}^n)$ over the free semigroup $\bF^n_+$, the Fock representation is defined in a similar way. 
That is, for a faithful $*$-representation $\pi \colon A \to \B(H)$, let $K = H \otimes \ell^2(\bF^n_+)$, and define a $*$-representation $\wt{\pi} \colon A \to \B(K)$ by
\[
 \wt{\pi}(a)(\xi \otimes e_w) = (\pi(a)\xi) \otimes e_w,
\]
and isometries on $K$ by 
\[
 V_i x \otimes \xi_w = x \otimes \xi_{iw}.
\]
Whether the Fock representation is a unital completely isometric representation of $A \times_\id \bF^n_+$ is connected to Connes' Embedding Problem.
See \cite{Cap2010} for a survey.

\begin{corollary}\label{C: Connes}
Consider the trivial system $(\ca(\bF^2), \id, \id)$. The following are equivalent
\begin{enumerate}
\item the left regular representation of $\ca(\bF^2) \times_\id \bF_+^2$ is a complete isometry;
\item $\ca(\bF^2) \otimes_{\textup{max}} \ca(\bF^2) \simeq \ca(\bF^2) \otimes_{\textup{min}} \ca(\bF^2)$.
\end{enumerate}
If the Fock representation of $\ca(\bF^2) \times_\id \bF^2_+$ is unital completely isometric then the above hold.
\end{corollary}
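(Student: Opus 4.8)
The plan is to recognize Corollary~\ref{C: Connes} as a concrete instance of the general phenomenon captured in Remark~\ref{R: free id}, specialized to $A = \ca(\bF^2)$ with the trivial action $\al_i = \id_A$. The key identification to establish first is that
\[
\ca(\bF^2) \times_\id \bF_+^2 \simeq \ca(\bF^2) \otimes_{\max} \O(\bF^2_+),
\]
which is exactly the content of the displayed isomorphism in Remark~\ref{R: free id} (with $n=2$ and the $\bF^2_+$-action trivial). Here $\O(\bF^2_+)$ is the universal operator algebra on two contractions and $\otimes_{\max}$ denotes the universal operator algebra generated by commuting completely contractive representations of the two factors. The equivalence of (i) and (ii) should then follow by computing both sides through their C*-covers and comparing the reduced (left regular) norm with the universal norm.

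For the forward direction, I would argue as follows. A completely contractive representation of $\ca(\bF^2) \otimes_{\max} \O(\bF^2_+)$ amounts to a pair consisting of a $*$-representation $\pi$ of $\ca(\bF^2)$ together with two contractions $T_1, T_2$ commuting with $\pi(\ca(\bF^2))$; by the trivial-action covariance computation in Remark~\ref{R: free id}, each $T_i$ may be dilated to a unitary $U_i$ still commuting with the representation, so $(U_1,U_2)$ together with a $*$-representation of $\ca(\bF^2)$ generate a representation of $\ca(\bF^2) \otimes_{\max} \ca(\bF^2)$. On the other hand, the left regular representation is precisely the Fock-type representation, whose isometries $V_i$ are the left creation operators; the C*-algebra these generate, together with $\wt{\pi}(\ca(\bF^2))$, maps onto the \emph{reduced} tensor product, which is $\ca(\bF^2) \otimes_{\min} \ca(\bF^2)$ by amenability/freeness considerations for the reduced free-group C*-algebra. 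The left regular representation being a complete isometry on the semicrossed product is then equivalent to the max and min norms agreeing on the algebraic tensor product $\ca(\bF^2) \odot \ca(\bF^2)$, which is exactly statement (ii). I would carry this out by tracking the norm of a generic element $\sum \Bt_w \otimes a_w$ under both representations and matching it to the norm of the corresponding element of the relevant tensor product.

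For the final clause, that unital complete isometry of the Fock representation implies (i) and hence (ii), the observation is that the Fock representation factors through (i.e., is a restriction of) the left regular representation: the left creation operators on $H \otimes \ell^2(\bF^2_+)$ are the compressions of the bilateral creation operators to the Fock subspace, as in the passage from $\ca(\bF^2) \times_\al^\iso P$ to $\A(A,\al,P)$ developed in the proofs of Theorem~\ref{T: lr scp} and Theorem~\ref{T: lr scp 2}. Consequently the Fock norm is dominated by the left regular norm, and if the Fock representation is already completely isometric on $\ca(\bF^2) \times_\id \bF^2_+$, then a fortiori the left regular representation is too, giving (i).

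I expect the main obstacle to be the precise identification of the left regular (reduced) norm with the minimal tensor norm $\ca(\bF^2) \otimes_{\min} \ca(\bF^2)$, and of the universal norm with $\otimes_{\max}$. The $\otimes_{\max}$ side is clean because commuting completely contractive pairs dilate to commuting unitaries by the trivial-action argument of Remark~\ref{R: free id}, so the universal object is genuinely $\ca(\bF^2) \otimes_{\max} \ca(\bF^2)$. The subtle point is the reduced side: one must check that the C*-algebra generated by $\wt{\pi}(\ca(\bF^2))$ and the left creation operators $V_i$, after passing to the unitaries generated in the C*-envelope, is canonically the spatial (minimal) tensor product rather than some intermediate completion. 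This requires care about which representation of $\ca(\bF^2)$ on $\ell^2(\bF^2)$ one obtains from the creation operators and an appeal to the standard fact that the reduced free group C*-algebra realizes the minimal tensor norm here. Everything else — the dilation steps, the monomial norm estimates, and the factoring of the Fock representation through the left regular one — is routine given the machinery already developed in the paper.
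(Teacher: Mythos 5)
Your two framing steps do match the paper: the identification of the universal norm with the $\otimes_{\textup{max}}$-norm via dilation of commuting contractions to commuting unitaries (Remark \ref{R: free id}, Theorem \ref{T: free}), and the proof of the final clause (the Fock representation is the restriction of the left regular representation to the invariant subspace $H \otimes \ltwo(\bF^2_+)$, so its norm is dominated by the left regular norm). However, the step on which you build the equivalence of (i) and (ii) is false. The left regular representation sends the semigroup generators to the unitaries $1 \otimes \lambda_i$, so the C*-algebra it generates is $\ca(\bF^2) \otimes_{\textup{min}} \ca_{\textup{red}}(\bF^2)$, with the \emph{reduced} free group C*-algebra in the second leg; it is not $\ca(\bF^2) \otimes_{\textup{min}} \ca(\bF^2)$. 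There is no ``standard fact'' identifying the two: since $\bF^2$ is not amenable, $\lambda$ is not faithful on $\ca(\bF^2)$ and $\id \otimes \lambda$ has non-trivial kernel, so your appeal to ``amenability/freeness considerations'' runs exactly backwards. All that holds a priori is the chain
\[
\text{left regular norm} \;\le\; \otimes_{\textup{min}}\text{-norm} \;\le\; \otimes_{\textup{max}}\text{-norm} \;=\; \text{universal norm},
\]
where the first inequality is functoriality of $\otimes_{\textup{min}}$ under the $*$-epimorphism $\lambda$ and the last equality is Theorem \ref{T: free}. The first inequality can be strict: already for scalar coefficients, $\|1 + \lambda_a + \lambda_b\| < 3 = \|1 + u_a + u_b\|_{\ca(\bF^2)}$ (where $u_a,u_b$ are the universal unitary generators), since norm $3$ on the left would produce almost invariant vectors for $\bF^2$. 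So the left regular norm cannot be ``identified'' with the min norm by tracking norms of generic elements, which is what your plan requires.

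Because of this, your argument proves neither implication, and it also omits the idea that actually powers the paper's proof of (i) $\Rightarrow$ (ii). Granting (i), the chain above collapses on the semicrossed product, so the canonical map into $\ca(\bF^2) \otimes_{\textup{min}} \ca(\bF^2)$ is completely isometric and the min tensor product is a C*-cover. But the semicrossed product only spans the analytic monomials $a \otimes u_w$ with $w \in \bF^2_+$; equality of norms there says nothing, by itself, about general elements of $\ca(\bF^2) \odot \ca(\bF^2)$ (adjoints, mixed words), which is what (ii) asserts. The paper closes this gap with a maximality argument: the embedding into the min tensor product sends the generators $\Bt_i$ to unitaries, hence is a maximal representation, hence $\ca(\bF^2) \otimes_{\textup{min}} \ca(\bF^2)$ is not merely a C*-cover but the C*-envelope; by Theorem \ref{T: free} the C*-envelope is $\ca(\bF^2) \rtimes_\id \bF^2 \simeq \ca(\bF^2) \otimes_{\textup{max}} \ca(\bF^2)$, and since all identifications fix the generators, (ii) follows. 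This is also why your conflation of the left regular representation with the Fock representation (``whose isometries $V_i$ are the left creation operators'') is not harmless: the creation operators are proper isometries, for which the maximality argument fails; it is precisely the unitarity of the $\lambda_i$ that makes the paper's argument run. Without this C*-envelope/maximality step, or some substitute for it, the proposal does not establish the corollary.
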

\begin{proof}
It is immediate that when the Fock representation is unital completely isometric, then the left regular representation is also unital completely isometric. 
Also it is trivial to check that item (ii) above implies (i).

If item (i) above holds then $\ca(\bF^2) \otimes_{\textup{min}} \ca(\bF^2)$ is a C*-cover of the free semicrossed product. 
Since the left regular representation maps generators to unitaries it is also maximal. 
Therefore $\ca(\bF^2) \otimes_{\textup{min}} \ca(\bF^2)$ is the C*-envelope of the free semicrossed product. 
By Theorem \ref{T: free} it coincides with $\ca(\bF^2) \rtimes_\id \bF^2$, and so with $\ca(\bF^2) \otimes_{\textup{max}} \ca(\bF^2)$. 
The proof is completed by noting that all isomorphisms are canonical, i.e., preserve the index of the generators.
\end{proof}

\subsection*{Injective dynamical systems}

In \cite{Dun08} Duncan uses the notion of free products to compute the C*-envelope of the free semicrossed product of a surjective classical system. 
Recognizing the semicrossed product as a free product is an important observation that led him to the description of the C*-envelope as a free product of C*-algebras.
The proof of the main result \cite[Theorem 3.1]{Dun08} has a gap, because he does not show that the imbedding of the free product of the subalgebras into the free product of the C*-algebras is completely isometric.
However this turns out to be valid, as we show. We also show that some of his arguments extend to the non-commutative setting. 

Surprisingly, this description of the C*-envelope of the free product of injective unital C*-dynamical systems does not look like a C*-crossed product. So we ask whether it is a C*-crossed product in some natural way?

\begin{definition}
Let $A_i$, for $1 \le i \le n$, be unital operator algebras which each contain a faithful copy of a unital C*-algebra $D$, say $\ep_i\colon D\to A_i$.
Also suppose that the unit of $D$ is the unit of each $A_i$.
An \emph{amalgamation of $\{A_i\}$ over $D$} is an operator algebra $A$ together with unital completely isometric imbeddings
$\phi_i\colon A_i\to A$ such that $\phi\ep_i = \phi_j\ep_j$ for all $i,j$ and $A$ is generated as an operator algebra by
$\bigcup_{i=1}^n \phi(A_i)$. 
The \emph{free product} is the amalgamation $A = \bigast_D A_i$ with the universal property that
whenever $\pi_i\colon A_i \to \B(H)$ are unital completely contractive representations such that $\pi_i\ep_i = \pi_j\ep_j$ for all $i,j$,
then there is a unital completely contractive representation $\pi\colon A\to\B(H)$ such that $\pi\phi_i=\pi_i$ for $1 \le i \le n$.
\end{definition}

The existence of an amalgamation of a family of C*-algebras was observed by Blackadar \cite[Theorem~3.1]{Black78}.
As every operator algebra is contained in a C*-algebra, this also implies the existence of amalgamations in general.
To see that the free product exists, one can begin with the algebraic free product $A_0$ consisting of finite combinations of words
$a_{i_1}\dots a_{i_k}$, where $1 \le i_j\le n$, $a_{i_j} \in A_{i_j}$ and $i_j \ne i_{j+1}$, subject to the relations
\[ a_{i_1}\dots (a_{i_j}\ep_{i_j}(d)) a_{i_{j+1}} \dots a_k = a_{i_1}\dots a_{i_j} (\ep_{i_{j+1}}(d) a_{i_{j+1}}) \dots a_k \]
for all words and all $d\in D$. 
The existence of an amalgamation guarantees that there are representations of $A_0$ which are completely isometric on each $A_i$.
For any family of unital completely contractive representations $\pi_i\colon A_i \to \B(H)$  such that $\pi_i\ep_i = \pi_j\ep_j$ for all $i,j$, define the unique representation of $A_0$ such that $\pi\phi_i=\pi_i$ for $1 \le i \le n$.
This induces a point-wise bounded family of matrix seminorms on $A_0$.
The completion of $A_0$ with respect to the supremum of this family of matrix seminorms yields the free product.

One can deal with the non-unital case by adjoining a unit as in \cite{Meyer01} (see also \cite[Corollary 2.1.15]{BleLeM04}.
Then the argument for C*-algebras in Armstrong, Dykema, Exel and Li \cite[Lemma 2.3]{ADEL04} goes through verbatim.
For simplicity, we will only give arguments for the unital case and leave the details of the non-unital case to the interested reader.

It is shown by Pedersen \cite[Theorem 4.2]{Ped99} that if $D \subset A_i \subset B_i$ are
unital inclusions of C*-algebras, then the natural map of $\bigast_D A_i$ into $\bigast_D B_i$ provided by
the universal property of the free product is always injective.
Duncan \cite{Dun08}  implicitly assumed that if $A_i$ are operator algebras, then the
natural injection of $\bigast_D A_i$ into $\bigast_D \cenv(A_i)$ would be completely isometric.
This does not follow from the C*-algebra result.
We will fill this gap in his argument. 

Armstrong, Dykema, Exel and Li \cite[Proposition 2.2]{ADEL04} provide another proof of Pedersen's result.
Part of their proof establishes the following technical lemma when $n=2$. 
Since 
\[
 \big(\bigast\!\!\strut_D^{1 \le i \le j} B_i \big) \bigast\!\!\strut_D B_{j+1} = \bigast\!\!\strut_D^{1 \le i \le j+1} B_i ,  
\]
this lemma can be  deduced from from the $n=2$ construction by induction.
Instead we describe how their argument extends directly.

\begin{lemma}\label{L: extend *reps}
Let $D$ be a unital C*-algebra.
Let $B_i$, for $1 \le i \le n$, be unital C*-algebras and let $\ep_i\colon D \to B_i$ be faithful unital imbeddings. 
Suppose that $H=H_1 \oplus \dots \oplus H_n$ is a Hilbert space, 
and that $\rho\colon D\to\B(H)$ is a $*$-representation such that each $H_i$ reduces $\rho(D)$.
Then there is a Hilbert space $K$ containing $H$ and $*$-representations $\pi_i\colon B_i \to \B(K \ominus H_i)$ such that the $\pi_i$ agree on $D$ in the sense that 
\[
\rho|_{H_i} \oplus \pi_i \ep_i = \rho|_{H_j} \oplus \pi_j \ep_j
 \qforal 1 \le i < j \le n .
\]
\end{lemma}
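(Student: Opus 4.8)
Looking at this lemma, I need to prove a result about extending representations of $D$ to representations of the $B_i$ on a common Hilbert space, where the representations agree on $D$.

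The plan is to construct the Hilbert space $K$ and the representations $\pi_i$ by a Stinespring-type dilation argument, adapting the construction from Armstrong-Dykema-Exel-Li from the case $n=2$ to general $n$. First I would observe that for each index $i$, I need a $*$-representation of $B_i$ on a space containing $H_j$ for every $j \neq i$, such that its restriction to $D$ agrees with $\rho|_{H_j}$ on each such $H_j$. The natural approach is to induce representations: for each pair $(i,j)$ with $i \neq j$, the representation $\rho|_{H_j}$ of $D$ on $H_j$ can be dilated to a $*$-representation of the larger C*-algebra $B_i$ via the relative commutant / induced representation construction. Concretely, I would form $B_i \otimes_D H_j$ (the interior tensor product associated to viewing $H_j$ as a $D$-module through $\ep_i$), which carries a natural $*$-representation of $B_i$ and contains a copy of $H_j$ on which the $B_i$-action restricts to $\rho|_{H_j}$ over $D$.

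Next I would assemble these pieces. For each $i$, define the space on which $\pi_i$ acts as the direct sum over $j \neq i$ of the induced modules $B_i \otimes_{\ep_i,D} H_j$, and let $\pi_i$ be the direct sum of the induced $B_i$-representations. The key bookkeeping step is to choose the ambient Hilbert space $K$ so that $K \ominus H_i$ accommodates $\pi_i$ and so that all the copies of the various $H_j$ are identified consistently across the different $i$. The compatibility condition $\rho|_{H_i} \oplus \pi_i\ep_i = \rho|_{H_j} \oplus \pi_j\ep_j$ on $D$ then amounts to checking that, restricted to $D$, every $\pi_i\ep_i$ acting on $K \ominus H_i$ recovers $\rho$ on all the $H_k$ with $k \neq i$, together with the extra summands, and that these extra summands match up between indices $i$ and $j$. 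I would verify this by tracking where each copy of $H_k$ sits and observing that the induced tensor-module construction places a distinguished copy of $H_k$ (the image of $1 \otimes H_k$) inside $B_i \otimes_D H_k$ on which $B_i$ restricts over $D$ to $\rho|_{H_k}$.

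The main obstacle I anticipate is the consistency of the identifications: arranging a single Hilbert space $K \supseteq H$ for which the orthogonal complements $K \ominus H_i$ simultaneously host the representations $\pi_i$ while the $D$-actions agree across all indices. This is precisely the delicate part of the Armstrong-Dykema-Exel-Li argument, and extending it from $n=2$ to arbitrary $n$ requires care that the pairwise induced modules glue into one coherent space rather than introducing conflicting copies of the $H_j$. I expect that the cleanest route is to define $K$ as $H$ together with one induced summand $B_i \otimes_D H_j$ for each ordered pair $i \neq j$ (with the distinguished copy of $H_j$ identified with the corresponding summand of $H$), and then to confirm that the $D$-representation on the common part agrees by direct inspection. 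The routine verifications that each $\pi_i$ is a genuine $*$-homomorphism and that the required direct-sum decomposition of the $D$-action holds I would leave to straightforward computation.
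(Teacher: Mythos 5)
Your construction stops one layer too soon, and this is a genuine gap rather than a routine verification. In your proposal $K = H \oplus \bigoplus_{i \ne j} N_{ij}$, where $N_{ij} = (B_i \otimes_D H_j) \ominus (1 \otimes H_j)$, and $\pi_i$ is the direct sum of the induced representations of $B_i$, which acts only on $\bigoplus_{j \ne i} (H_j \oplus N_{ij})$. But the lemma requires $\pi_i$ to be a representation of $B_i$ on \emph{all} of $K \ominus H_i$, which also contains every $N_{kl}$ with $k \ne i$. Those summands carry only a representation of $D$ (the restriction of the induced $B_k$-representation); nothing in your construction gives an action of $B_i$ on them, and generically none exists without enlarging the space again. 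This failure occurs already for $n=2$: the space $N_{21}$ sits inside $K \ominus H_1$ but has no $B_1$-action. Curing it forces you to dilate the $D$-representation on each new summand to every other algebra, producing new summands, and so on ad infinitum. This is exactly what the paper's proof does: it iterates the extension theorem for representations of C*-subalgebras (\cite[Proposition~2.10.2]{Dix77}) to build a tree of Hilbert spaces $H_w$ indexed by words $w$ with no two consecutive equal letters, and then defines $\pi_j$ as the direct sum of all the representations attached to words ending in $j$; the combinatorics of the tree is precisely what makes the domain of $\pi_j$ equal to $K \ominus H_j$. (The paper's remark offers an alternative that avoids the infinite tree, but it uses a large multiple of the universal representation of $D$ so that all the complements can be identified with one another --- a cardinality argument, not a one-step gluing.)

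A secondary issue: the object ``$B_i \otimes_D H_j$'' is not well defined as you describe it. To form an interior tensor product inducing representations from $D$ up to $B_i$, you need $B_i$ to be a right Hilbert $D$-module, i.e.\ a $D$-valued inner product $\langle b, b' \rangle_D$, which amounts to a conditional expectation $B_i \to \ep_i(D)$; such an expectation need not exist for a general unital inclusion. The correct tool for the single extension step is the standard result that any $*$-representation of a C*-subalgebra dilates (on a larger Hilbert space, with the original space reducing for the subalgebra) to a $*$-representation of the ambient C*-algebra --- which is the Dixmier result the paper invokes. With that substitution your first layer is fine; the essential missing content is the recursion described above.
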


\begin{proof}
First recall a well-known result \cite[Proposition~2.10.2]{Dix77} that if $D\subset B$ are C*-algebras, and $\rho\colon D\to \B(H)$ is a $*$-representation, then there is a Hilbert space $H'$ and a $*$-representation $\pi\colon B\to\B( H \oplus H')$ so that $\pi(d)|_H = \rho(d)$ for $d\in D$.
Note that $H'$ is then $\pi(D)$-reducing as well.
We will write the inclusion as $\ep\colon D \to B$ and say that $\pi\ep|_H = \rho$.

Apply this result to the $*$-representation $\rho|_{H_i}$ for $1 \le i \le n$ and $j \ne i$ to obtain $n(n-1)$ new Hilbert spaces, say $H_{ij}$, and $*$-representations $\pi_{ij} \colon B_j \to \B(H_i \oplus H_{ij})$ such that $\pi_{ij}\ep_j|_{H_i} = \rho|_{H_i}$.
By construction, every $H_{ij}$ is reducing for $\pi_{ij}\ep_j(D)$.
Therefore we can define the representation
\[
 \rho_2 = \sumoplus_{w=ij,\, j \ne i} \!\!\! \pi_{ij}\ep_j|_{H_{ij}} \colon D \to \B(\oplus_{w=ij,\, j \ne i} H_w).
\]

We are now in the same setting as before and set for induction.
That is for every $\pi_{ij}\ep_j|_{H_{ij}} = \rho_1|_{H_{ij}}$ and $k \ne j$, we produce a Hilbert space  $H_{ijk}$, and a $*$-representations $\pi_{ijk} \colon B_k \to \B(H_{ij} \oplus H_{ijk})$ such that $\pi_{ijk}\ep_k|_{H_{ijk}} = \rho_2|_{H_{ijk}}$.
And we define 
\[
 \rho_3 = \sumoplus_{w=ijk,\, i \ne j \ne k}\!\!\!\!\!\!\! \pi_{ijk}\ep_k|_{H_{ijk}} \colon D \to \B(\oplus_{w=ijk,\, i \ne j \ne k} H_w).
\]
Recursively we obtain Hilbert spaces $H_w$ for every word in the set $S$ consisting of $w \in \bF_+^n$ that contains no subword $jj$ for any $j = 1, \dots, n$, and $*$-representations $\pi_{wi} \colon B_i \to \B(H_w \oplus H_{wi})$ for $w \in S$ such that $s(w) \ne i$.

This can be depicted in the following diagram
\[
\xymatrix@C=1ex{
H_1 \ar@{--}[d]!<-10ex,0ex>;[rr]!<0ex,0ex>  \ar@{--}[d]!<0ex,0ex>;[rrrr]!<0ex,0ex>  \ar@{--}[dd]!<-15ex,0ex>;[drr]!<-5ex,0ex> \ar@{--}[dd]!<-10ex,0ex>;[drr]!<5ex,0ex> \ar@{--}[dd]!<1ex,0ex>;[drrrr]!<25ex,0ex> & & H_2  & \dots & H_n \\ 
H_{21}, \dots, H_{n1} & & H_{12}, H_{32}, \dots, H_{n2} & \dots & H_{1n}, H_{2n}, \dots, H_{(n-1)n} \\
H_{121}, H_{321}, \dots, H_{(n-1)n1} & & \dots & & \\
}
\]
where the broken segments denote the association provided by the extension of the $*$-representations on $D$ to $B_1$.

Let $K = \sumoplus_{w \in S} H_w$.
We define a $*$-representation of $B_j$ on $K \ominus H_j$ for $1 \le j \le n$ by
\begin{align*}
\pi_j 
 : = \big(\sumoplus_{i \ne j} \pi_{ij} \big) \oplus \big( \sumoplus_{i \ne k \ne j} \pi_{ikj} \big) \oplus \dots 
 = \sumoplus_{w \in S, s(w) = j, |w| \ge 2 } \!\!\!\!\!\! \pi_w .
\end{align*}
By construction, every $H_w$ reduces $\pi_{w}\ep_{s(w)}(D)$, and thus
\[
 \pi_i \ep_i|_{K\ominus (H_i\oplus H_j)} = \pi_j \ep_j|_{K\ominus (H_i\oplus H_j)},
\]
for all $1 \le i,j \le n$, which completes the proof.
\end{proof}

\begin{remark}
There is an alternative, less constructive approach. 
The universal representation $\pi_u$ of $D$ is the direct sum of all cyclic representations 
$\pi_f$ for states $f$ on $D$. 
Every representation $\pi$ is the direct sum of cyclic representations, and thus is
a subrepresentation of $\pi_u^{(\kappa)}$ for a sufficiently large cardinal $\kappa$;
and moreover $\pi \oplus \pi_u^{(\kappa)} \simeq \pi_u^{(\kappa)}$.

Take $\kappa = \max\{ \Dim H, \Dim B_i, \aleph_0 \}$. Consider $\rho'=\rho\oplus \pi_u^{(\kappa)}$ on $H'=H \oplus H_u^{(\kappa)}$.
Use \cite[Proposition~2.10.2]{Dix77} to build larger Hilbert spaces $K_i$ of dimension $\kappa$
and representations $\pi_i\colon B_i\to \B(K_i \ominus H_i)$ extending $\rho'|_{H'\ominus H_i}$.
Then the restriction of $\pi_i\ep_i$ to $K_i \ominus H$ are all unitarily equivalent to $\pi_u^{(\kappa)}$.
So identify these spaces with a common space $K \ominus H$ so that the new representations 
$\pi'_i$ on $K$ all agree on $D$.
\end{remark}

\begin{theorem} \label{T: free cover}
Let $A_i$, for $1 \le i \le n$, be unital operator algebras which each contain a faithful unital copy of a C*-algebra $D$. 
Then $\bigast_D \cenv(A_i)$ is a C*-cover of $\bigast_D A_i$.
\end{theorem}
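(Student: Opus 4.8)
The plan is to verify the two defining properties of a C*-cover: that $\bigast_D A_i$ embeds completely isometrically, and that its image generates. Generation is immediate, since $\cenv(A_i)=\ca(A_i)$ is generated by the faithful copy of $A_i$, so the free-product C*-algebra $\bigast_D \cenv(A_i)$ is generated as a C*-algebra by the images of the $A_i$, hence by the image of $\bigast_D A_i$ under the natural map $j$. It therefore remains to prove that $j\colon \bigast_D A_i \to \bigast_D \cenv(A_i)$ is completely isometric. Writing the free-product matrix norms as suprema over compatible families of representations, one inequality is routine: every compatible family of $*$-representations $\{\Sigma_i\colon \cenv(A_i)\to\B(K)\}$ restricts to a compatible family of unital completely contractive representations $\{\Sigma_i|_{A_i}\}$ of the $A_i$, so that $\|j^{(m)}(x)\|_{\bigast_D \cenv(A_i)} \le \|x\|_{\bigast_D A_i}$ for every $x \in M_m(A_0)$.

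For the reverse inequality I would fix a unital completely isometric representation $\pi$ of $\bigast_D A_i$ on a Hilbert space $H$, arising from a compatible family of unital completely contractive representations $\{\pi_i\colon A_i\to\B(H)\}$ with common restriction $\rho_0:=\pi_i\ep_i\colon D\to\B(H)$. The goal is to manufacture a single $*$-representation $\Sigma$ of $\bigast_D \cenv(A_i)$ on a Hilbert space $K\supseteq H$ which dilates $\pi$, in the sense that $P_H \Sigma(j(x))|_H = \pi(x)$ for all $x\in A_0$; granting this, $\|\pi^{(m)}(x)\|\le \|\Sigma^{(m)}(j(x))\|\le \|j^{(m)}(x)\|_{\bigast_D \cenv(A_i)}$, and taking $\pi$ completely isometric supplies the missing inequality. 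To build the pieces, for each $i$ I would take a maximal dilation $\sigma_i$ of $\pi_i$ (Dritschel--McCullough \cite{DriMcC05}), which is a $*$-representation of $\cenv(A_i)$ on some space containing $H$. Since the compression of the $*$-representation $\sigma_i\ep_i$ to $H$ equals the $*$-representation $\rho_0$, a short computation shows that $H$ in fact reduces $\sigma_i(D)$, so that relative to the $D$-decomposition $H\oplus H_i'$ we have $\sigma_i\ep_i = \rho_0 \oplus \tau_i$ for a $*$-representation $\tau_i$ of $D$ on $H_i'$.

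The separate dilations $\sigma_i$ disagree on $D$ over their extra summands, and to splice them into a single representation of $\bigast_D \cenv(A_i)$ that is compatible on $D$ I would invoke Lemma~\ref{L: extend *reps} with $B_i:=\cenv(A_i)$. That lemma compatibly extends a decomposed $*$-representation of $D$ to $*$-representations of all the $\cenv(A_i)$ on a common, word-indexed Hilbert space $K\supseteq H$, into which the $\sigma_i$ embed as the low-order pieces. By the universal property of the free product the resulting compatible family assembles into a $*$-representation $\Sigma$ of $\bigast_D \cenv(A_i)$ on $K$, which also supplies the generation needed to complete that part of the argument.

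The main obstacle---and precisely the point glossed over in \cite{Dun08}---is the word-compression identity $P_H \Sigma(j(x))|_H = \pi(x)$. It suffices to check it on a reduced word $a_{i_1}\cdots a_{i_k}$ with $a_{i_j}\in A_{i_j}$ and $i_j\ne i_{j+1}$. Applying $\Sigma_{i_k}(a_{i_k}),\dots,\Sigma_{i_1}(a_{i_1})$ to a vector of $H$ and compressing, the diagonal contributions reproduce $\pi_{i_1}(a_{i_1})\cdots\pi_{i_k}(a_{i_k})$, and the problem reduces to showing that every overflow term---the component of each $\sigma_{i_j}(a_{i_j})$ orthogonal to $H$---is pushed into summands of $K$ indexed by strictly longer reduced words beginning with $i_j$, and is never returned to $H$ by the subsequent factors $\Sigma_{i_{j-1}},\dots,\Sigma_{i_1}$. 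This orthogonality across word-summands is exactly what the Fock-type bookkeeping of Lemma~\ref{L: extend *reps} is designed to guarantee; carrying it out carefully---in particular checking that the incoming parts of the two-sided maximal dilations sit in their own word-summands and are not activated by the word acting on $H$, so that the argument does not require the impossible step of co-extending each $\pi_i$ to a $*$-representation of $\cenv(A_i)$---is the technical heart of the proof. Once it is in place the two inequalities combine to show that $j$ is completely isometric, and with the generation already noted this proves that $\bigast_D \cenv(A_i)$ is a C*-cover of $\bigast_D A_i$.
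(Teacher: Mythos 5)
Your overall plan coincides with the paper's: handle the hard inequality by taking maximal dilations $\si_i$ of the restrictions $\pi_i$, splice them into a family of $*$-representations of the $\cenv(A_i)$ agreeing on $D$ via Lemma~\ref{L: extend *reps}, and invoke the universal property of $\bigast_D \cenv(A_i)$; the generation statement and the easy inequality are fine. But the step you yourself flag as the technical heart --- the compression identity $P_H\Sigma(j(x))|_H=\pi(x)$ --- is a genuine gap, and the mechanism you propose for it cannot work. You keep only the two-block decomposition $H\oplus H_i'$ coming from $D$-reduction, and claim that overflow from $H$ is pushed into longer word summands and ``never returned to $H$.'' This fails as soon as a letter recurs in a reduced word. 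Take a word $a_{i_1}a_{i_2}a_{i_3}$ with $i_1=i_3\neq i_2$ (type $aba$) and $h\in H$: the overflow of $\si_{i_3}(a_{i_3})h$ lands in the \emph{outgoing} part $K_{i_3}^+\subseteq H_{i_3}'$ of the Sarason decomposition $K_{i_3}^-\oplus H\oplus K_{i_3}^+$; the middle factor $\Sigma_{i_2}(a_{i_2})$ acts on it through the extension representation supplied by the lemma, which is an arbitrary $*$-representation on $H_{i_3}'\oplus H_{i_3 i_2}$ --- it respects the $D$-action but has no triangularity, so it can move this vector into the \emph{incoming} part $K_{i_3}^-=K_{i_1}^-$ of the very same summand; and then the $(2,1)$-corner of the maximal dilation $\si_{i_1}(a_{i_1})$ maps $K_{i_1}^-$ back into $H$. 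Since maximal dilations of unital completely contractive representations genuinely have nonzero incoming parts (which is exactly why, as you note, one cannot work with co-extensions), these returning terms cannot be ruled out by any word-indexed orthogonality argument that ignores the $\pm$ splitting.

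The paper closes precisely this gap by organizing the construction around the three-block decomposition rather than the two-block one. Decompose each maximal dilation space as $K_i=K_i^-\oplus H\oplus K_i^+$ so that $\wt\si_i|_{A_i}$ is lower triangular; since $\si_i(D)$ is diagonal (your reducing argument, applied blockwise), the plus parts $\{K_i^+\}$ and the minus parts $\{K_i^-\}$ each carry $D$-representations with reducing summands. Now apply Lemma~\ref{L: extend *reps} \emph{twice} --- once to the plus parts, once to the minus parts --- to obtain $K^{\pm}$ and $*$-representations $\pi_i^{\pm}$ of $\cenv(A_i)$ on $K^{\pm}\ominus K_i^{\pm}$, and set $\tau_i:=\pi_i^-\oplus\wt\si_i\oplus\pi_i^+$ on the common space $K^-\oplus H\oplus K^+$. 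These agree on $D$, and every $\tau_i|_{A_i}$ is lower triangular with respect to this single fixed decomposition: $K^+$ is invariant and $K^-$ is co-invariant for all of them simultaneously. Since a product of lower triangular operators is lower triangular with multiplicative $(2,2)$-entry, the compression identity holds for all words at once, with no combinatorial tracking of overflow. If you reorganize your construction to keep the incoming and outgoing parts of the dilations separated in this way, your argument becomes the paper's proof.
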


\begin{proof}
Let $\rho$ be any completely contractive representation of $\bigast_D A_i$;
and let $\rho_i$ be the restriction of $\rho$ to $A_i$.
Dilate each $\rho_i$ to a maximal representation $\si_i\colon A_i\to \B(K_i)$. 
This extends to a $*$-representation $\wt\si_i$ of $\cenv(A_i)$ on $K_i$.
Decompose each $K_i = K^-_i \oplus H \oplus K^+_i$ so that $\si_i$ is lower triangular
with respect to this decomposition. The {\small (2,2)} entry of $\si_i(a)$ is $\rho_i(a)$ for $a \in A_i$. 
Since $D\subset A_i$ is self-adjoint, $\si_i(D)$ is diagonal, and thus it reduces the
subspaces $K^\pm_i$. Let $\si^+ = \sum_{i=1}^n \oplus\ \si_i\ep_i|_{K^+_i}$ and similarly define $\si^-$.
Apply Lemma~\ref{L: extend *reps} to build Hilbert spaces 
\[ K^+ \supset K^+_1 \oplus \dots \oplus K^+_n \qand  K^- \supset K^-_1 \oplus \dots \oplus K^-_n \]
and $*$-representations $\pi_i^\pm\colon \cenv(A_i) \to \B(K^\pm \ominus K^\pm_i)$ so that
the $\pi_i^\pm\ep_i$ agree on $D$, and agree with $\si^\pm$ on $K^\pm$.
Then 
\[ \tau_i = \pi^-_i \oplus \wt\si_i \oplus \pi^+_i \colon \cenv(A_i) \to \B\big((K^- \ominus K_i^-) \oplus K_i \oplus (K^+ \ominus K_i^+) \big) \]
are $*$-representations that agree on $D$. 
Note here that
\[
(K^- \ominus K_i^-) \oplus K_i \oplus (K^+ \ominus K_i^+) = K^- \oplus H \oplus K^+
\]
for all $i=1, \dots, n$.
Therefore the restriction of $\tau_i|_{A_i}$ is lower triangular with respect to
the decomposition $K^-\oplus H \oplus K^+$.

By the universal property of free products, there is a representation $\tau = \bigast_D \tau_i$
of $\bigast_D \cenv(A_i)$ into $\B(K^-\oplus H \oplus K^+)$ such that $\tau|_{\cenv(A_i)} = \tau_i$.
The restriction of $\tau$ to $\bigast_D A_i$ is a product of lower triangular representations,
and thus is lower triangular, with the {\small (2,2)} entry being the free product of the $\rho_i$, namely the original representation $\rho$. 
It follows that the norm induced by $\rho$ is completely dominated by the norm coming from $\tau$.
Therefore the injection of $\bigast_D A_i$ into $\bigast_D \cenv(A_i)$ is completely isometric.
Finally it is evident that $\bigast_D \cenv(A_i)$ is generated by $\bigast_D A_i$ as a C*-algebra.
Therefore it is a C*-cover.
\end{proof}

In \cite{Dun08}, Duncan says that an operator algebra $A$ has the unique extension property if for every 
$*$-representation $\pi$ of $\cenv(A_i)$, the restriction $\pi|_{A_i}$ has the unique extension property.
By Arveson \cite{Arv11}, this property is equivalent to hyperrigidity.
Duncan's main result \cite[Theorem 3.1]{Dun08} shows that if all $A_i$ are hyperrigid, and contain a common C*-subalgebra $D$, 
then the C*-envelope of the free product is the free product of the C*-envelopes.
Theorem~\ref{T: free cover} fills the omission mentioned above, and the rest of his proof is valid.

\begin{theorem}[Duncan]\label{T: amalg}
Let $A_i$, for $1 \le i \le n$, be hyperrigid unital operator algebras 
which each contain a faithful copy of a unital C*-algebra $D$. 
Then  
\[ \cenv(\bigast\!\!\strut_D A_i) = \bigast\!\!\strut_D \cenv(A_i) .\]
\end{theorem}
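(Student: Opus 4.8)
The plan is to observe that the substantive content of this theorem has already been isolated in Theorem~\ref{T: free cover}: writing $A = \bigast_D A_i$ and $C = \bigast_D \cenv(A_i)$, that lemma shows $C$ is a C*-cover of $A$, which is exactly the point omitted in \cite{Dun08}. By the co-universal property of the C*-envelope there is then a canonical $*$-epimorphism $q\colon C \to \cenv(A)$ restricting to the identity on $A$, and it remains only to prove that $q$ is faithful, i.e.\ that its kernel, the \v{S}ilov ideal, vanishes. I would do this by showing that the canonical inclusion $\iota\colon A \hookrightarrow C$ is a maximal representation: since $\iota$ is completely isometric and $\ca(\iota(A)) = C$, the Dritschel--McCullough realization of the C*-envelope as the C*-algebra generated by a maximal completely isometric representation \cite{DriMcC05, Arv08} forces $C \simeq \cenv(A)$. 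By the theory recalled in Section~\ref{S:op alg} (see \cite{Arv11}) it suffices to verify that $\iota$ has the unique extension property.

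To do this I would fix a faithful $*$-representation $\sigma$ of $C$ on a Hilbert space $H$ and suppose $\psi\colon C \to \B(H)$ is a unital completely positive map with $\psi|_A = \sigma|_A$; the goal is $\psi = \sigma$. The key input is the hyperrigidity of each factor. For fixed $i$, the restriction $\sigma|_{\cenv(A_i)}$ is a $*$-representation of $\cenv(A_i)$, so hyperrigidity of $A_i$ says that $\sigma|_{A_i} = (\sigma|_{\cenv(A_i)})|_{A_i}$ has the unique extension property; that is, $\sigma|_{\cenv(A_i)}$ is the \emph{only} unital completely positive map on $\cenv(A_i)$ restricting to $\sigma|_{A_i}$. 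Since $\psi|_{\cenv(A_i)}$ is such a map, uniqueness gives $\psi|_{\cenv(A_i)} = \sigma|_{\cenv(A_i)}$, which is in particular a $*$-homomorphism.

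The final step is to globalize this factorwise information through a multiplicative-domain argument. Each $\cenv(A_i)$ then lies in the multiplicative domain of $\psi$; as the multiplicative domain of a unital completely positive map is a C*-subalgebra and the $\cenv(A_i)$ generate $C$, the multiplicative domain is all of $C$, so $\psi$ is a $*$-homomorphism. A $*$-homomorphism agreeing with $\sigma$ on the generating set $\bigcup_i \cenv(A_i)$ must equal $\sigma$, whence $\psi = \sigma$. This establishes the unique extension property of $\iota$ and completes the unital case. The non-unital case I would handle, as indicated before the statement, by adjoining units following \cite{Meyer01} and then running the argument of \cite[Lemma~2.3]{ADEL04} verbatim.

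The genuine difficulty of the theorem is not here but in Theorem~\ref{T: free cover}, namely that $\bigast_D A_i \hookrightarrow \bigast_D \cenv(A_i)$ is completely isometric rather than merely contractive. Granting that, the only delicate point in the present argument is the passage from the per-factor data supplied by hyperrigidity (uniqueness of the completely positive extension on each $\cenv(A_i)$ \emph{separately}) to a conclusion about $\psi$ on products of elements drawn from different factors; the multiplicative-domain observation is precisely what bridges this, since it upgrades ``$\psi$ is multiplicative on each $\cenv(A_i)$'' to membership of every factor in a single C*-subalgebra, which is then forced to exhaust $C$.
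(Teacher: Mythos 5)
Your proposal is correct and is essentially the paper's approach: the paper's actual contribution to this theorem is Theorem~\ref{T: free cover} (that the free product of the C*-envelopes is a C*-cover of the free product of the $A_i$), and for the remainder it simply defers to Duncan's original argument in \cite{Dun08}, which is precisely what you reconstruct --- hyperrigidity of each $A_i$ forces any unital completely positive map agreeing with a faithful representation on $\bigast_D A_i$ to agree with it on each $\cenv(A_i)$, and the multiplicative-domain argument upgrades this factorwise agreement to all of the free product, so the inclusion is maximal and the C*-cover is the C*-envelope. The only difference is cosmetic: the paper never writes out this second half, whereas you do, and your identification of Theorem~\ref{T: free cover} as the genuinely hard step matches the paper's own assessment.
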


Note that Duncan's claim  \cite[Proposition 4.2]{Dun08} that the semicrossed product of an injective C*-dynamical system over $\bZ_+$ is always Dirichlet is false.
In fact it is Dirichlet if and only if the system is automorphic \cite[Example 3.4]{Kak13}.
Nevertheless it is always hyperrigid \cite[Theorem 3.5]{Kak13}.
As an immediate consequence we obtain the following. 

\begin{corollary}\label{C: amalg}
Let $(A,\{\al_i\}_{i=1}^n)$ be an injective unital dynamical system. 
For $1 \le i \le n$, let $(\wt{A}_i, \wt{\al}_i, \bZ)$ be the minimal automorphic extension associated to each $(A,\al_i,\bZ_+)$. Then
\[
 \cenv( A \times_\al \bF_+^n) \simeq \bigast\!\!\strut_A (\wt{A}_i \rtimes_{\wt{\al}_i} \bZ),
\]
and
\[
\cenv( \fA(A,\bF_+^n)_r) \simeq *_A (\wt{A}_i \rtimes_{\wt{\al}_i} \bZ).
\]
\end{corollary}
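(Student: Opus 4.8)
The plan is to realize Corollary~\ref{C: amalg} as a direct application of Duncan's free product theorem (Theorem~\ref{T: amalg}), so the work lies entirely in checking that each hypothesis holds for the data at hand. The key observation is that the left free semicrossed product $A \times_\al \bF_+^n$ is a free product amalgamated over $A$ of the one-variable semicrossed products. More precisely, I would first identify $A \times_\al \bF_+^n \simeq \bigast_A (A \times_{\al_i} \bZ_+)$, where each $A \times_{\al_i} \bZ_+$ is the one-variable (contractive) semicrossed product of the system $(A,\al_i,\bZ_+)$. This identification is the content of Duncan's recognition that imposing the left covariance relation $\pi(a)T_i = T_i\pi\al_i(a)$ separately on each generator $T_i$, with no relations among the distinct $T_i$, is exactly the universal property of the amalgamated free product of the $n$ subalgebras $A \times_{\al_i} \bZ_+$ over their common unital C*-subalgebra $A$. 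I would verify the universal property directly: a family of unital completely contractive representations of the $A \times_{\al_i}\bZ_+$ agreeing on $A$ is precisely a representation $\pi$ of $A$ together with $n$ contractions $T_i$ satisfying the respective covariance relations, which is exactly a contractive covariant pair for $(A,\{\al_i\})$.

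Next I would assemble the two ingredients that Theorem~\ref{T: amalg} requires: hyperrigidity of each factor and knowledge of each factor's C*-envelope. Since $(A,\al_i,\bZ_+)$ is an \emph{injective} unital C*-dynamical system, the cited result \cite[Theorem 3.5]{Kak13} guarantees that $A \times_{\al_i}\bZ_+$ is hyperrigid, and \cite[Theorem 2.6]{KakKat10} (the one-variable case already discussed before Theorem~\ref{T: cone aut un 2}) identifies its C*-envelope as $\cenv(A \times_{\al_i}\bZ_+) \simeq \wt{A}_i \rtimes_{\wt{\al}_i} \bZ$, where $(\wt{A}_i,\wt{\al}_i,\bZ)$ is the minimal automorphic extension of $(A,\al_i,\bZ_+)$. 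Each factor contains the faithful unital copy of $D = A$, and the unit of $A$ is the unit of each factor (using that the system is unital), so the amalgamation is over the genuinely common C*-subalgebra $A$. Applying Theorem~\ref{T: amalg} then yields
\[
\cenv\big( A \times_\al \bF_+^n \big)
= \cenv\big( \bigast\!\!\strut_A (A \times_{\al_i} \bZ_+) \big)
= \bigast\!\!\strut_A \cenv(A \times_{\al_i}\bZ_+)
= \bigast\!\!\strut_A (\wt{A}_i \rtimes_{\wt{\al}_i} \bZ),
\]
which is the first displayed isomorphism. For the second, the right free semicrossed product $\fA(A,\bF_+^n)_r$ is handled identically: it is the amalgamated free product $*_A \fA(A,\bZ_+)_r$ of the right one-variable semicrossed products, whose C*-envelopes coincide with $\wt{A}_i \rtimes_{\wt{\al}_i}\bZ$ by the same one-variable theory (as in Theorem~\ref{T: Ore scp} specialized to $\bZ_+$), and which are hyperrigid by the same argument; applying Theorem~\ref{T: amalg} again gives $\cenv(\fA(A,\bF_+^n)_r) \simeq *_A(\wt{A}_i \rtimes_{\wt{\al}_i}\bZ)$.

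The main obstacle I anticipate is not any of the citations but the first identification itself: carefully justifying that the operator-algebra free product $\bigast_A (A \times_{\al_i}\bZ_+)$ is \emph{completely isometrically} isomorphic to $A \times_\al \bF_+^n$, rather than merely isomorphic as abstract algebras carrying the correct universal property. This requires matching the matrix seminorms defining the semicrossed product completion against the supremum-of-matrix-seminorms completion defining the free product, and observing that both are dictated by the same class of contractive covariant pairs $(\pi,\{T_i\})$. I would argue that a covariant pair restricts on each generator to a covariant pair of $(A,\al_i,\bZ_+)$ agreeing with the others on $A$, and conversely that such a compatible family integrates to a single covariant pair; since the defining families of representations coincide, the enveloping operator-algebra norms agree at every matrix level. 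The one subtlety worth flagging explicitly is that Theorem~\ref{T: free cover} (which underlies Theorem~\ref{T: amalg}) is precisely what certifies that the embedding $\bigast_A (A\times_{\al_i}\bZ_+) \hookrightarrow \bigast_A \cenv(A\times_{\al_i}\bZ_+)$ is completely isometric, closing the gap in \cite{Dun08}; so once the amalgamated-free-product description is in hand, the passage to C*-envelopes is exactly the situation Theorem~\ref{T: amalg} was designed for, and no further dilation-theoretic work is needed.
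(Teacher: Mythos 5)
Your proposal is correct and follows essentially the same route as the paper's proof: decompose $A \times_\al \bF_+^n$ as $\bigast_A (A\times_{\al_i}\bZ_+)$ following Duncan, invoke hyperrigidity of the one-variable factors from \cite[Theorem 3.5]{Kak13}, identify $\cenv(A\times_{\al_i}\bZ_+) \simeq \wt{A}_i\rtimes_{\wt{\al}_i}\bZ$ from the one-variable theory of \cite{KakKat10}, and apply Theorem~\ref{T: amalg}, with Theorem~\ref{T: free cover} closing the gap in \cite{Dun08}. The only step the paper makes explicit that you fold into your citation of the one-variable theory is the dilation of contractive covariant pairs to isometric ones (Muhly--Solel \cite{MuhSol06}), which is what allows the crossed-product identification of \cite{KakKat10} --- stated there for the isometric semicrossed product --- to be applied to the contractive semicrossed products appearing in the free product decomposition.
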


\begin{proof}
First observe that the arguments of \cite[Theorem 4.1]{Dun08} apply to C*-dynamical systems in general, that is
\[ 
 A\times_\al \bF_+^n \simeq \bigast\!\!\strut_A (A \times_{\al_i} \bZ_+).
\]
By \cite[Theorem 3.5]{Kak13} the semicrossed products $A \times_{\al_i} \bZ_+$ are hyperrigid. 
Muhly and Solel \cite{MuhSol06} show that every contractive covariant pair of $(A,\al_i,\bZ_+)$ co-extends to an isometric pair. 
Then by \cite{KakKat10}, every isometric covariant pair extends to a unitary covariant pair of $(\wt{A}_i, \wt{\al}_i,\bZ)$, 
and hence extends to a $*$-representation of the C*-crossed product $\wt{A}_i \rtimes_{\wt{\al}_i} \bZ$.
By \cite[Theorem 2.5]{KakKat10},  $\cenv(A \times_{\al_i} \bZ_+) \simeq \wt{A}_i \rtimes_{\wt{\al}_i} \bZ$.
The result now follows from Duncan's Theorem~\ref{T: amalg}.

The case of the right free semicrossed product is treated likewise. 
This follows by the discussion preceding Example \ref{E: free scp repn} and by \cite[Lemma 2.3, Section 3]{Kak11-1} when reducing to the one-variable case.
\end{proof}

\begin{remark}
Let $(A, \{\al_i\}_{i=1}^n)$ be a unital automorphic C*-dynamical system. In this special case, Corollary \ref{C: amalg} says
\[
 \cenv(A \times_\al \bF_+^n) \simeq \bigast\!\!\strut_A (A \rtimes_{\al_i} \bZ) \simeq A \rtimes_\al \bF^n,
\]
and
\[
 \cenv(\fA(A,\bF_+^n)_r)) \simeq \bigast\!\!\strut_A (A \rtimes_{\al_i} \bZ) \simeq A \rtimes_\al \bF^n.
\]
Hence Corollary \ref{C: amalg} provides an alternate proof of Theorem \ref{T: free} and Corollary \ref{C: right free}.
\end{remark}


\end{document}